\newtheorem{theorem}{Theorem}
\newtheorem*{theorem*}{Theorem}
\newtheorem{lemma}[theorem]{Lemma}
\newtheorem{proposition}[theorem]{Proposition}
\newtheorem{corollary}[theorem]{Corollary}
\theoremstyle{definition}
\newtheorem{definition}[theorem]{Definition}
\newtheorem{example}[theorem]{Example}
\theoremstyle{remark}
\newtheorem*{remark}{Remark}
\numberwithin{theorem}{section}
\numberwithin{equation}{section}
\newcommand{\F}{\mathcal{F}}
\newcommand{\pr}{\mathtt{pr}}
\renewcommand{\O}{\mathcal{O}}
\newcommand{\R}{\mathbb{R}}
\newcommand{\G}{\mathcal{G}}
\renewcommand{\S}{\mathbb{S}}
\newcommand{\Id}{\mathtt{Id}}
\renewcommand{\H}{\mathcal{H}}
\newcommand{\Mod}{\mathbf{Mod}}
\newcommand{\K}{\mathcal{K}}
\newcommand{\GStacks}{\mathtt{GStacks}}
\newcommand{\s}{\mathbf{s}}
\newcommand{\from}{\leftarrow}
\renewcommand{\t}{\mathbf{t}}
\DeclareMathOperator{\Aut}{Aut}
\DeclareMathOperator{\OutAut}{OutAut}
\DeclareMathOperator{\InnAut}{InnAut}
\DeclareMathOperator{\Pic}{Pic}
\DeclareMathOperator{\Diff}{Diff}
\DeclareMathOperator{\Hom}{Hom}
\DeclareMathOperator{\End}{End}
\newcommand{\Z}{\mathbb{Z}}
\newcommand{\Isom}{\mathbf{Isom}}
\newcommand{\Gr}{\mathfrak{Gr}}
\newcommand{\isom}{\cong}
\newcommand{\inv}{^{-1}}
\newcommand{\m}{\mathbf{m}}
\newcommand{\zero}{^{(0)}}
\newcommand{\two}{^{(2)}}
\newcommand{\enn}{^{(n)}}
\newcommand{\C}{\mathcal{C}}
\newcommand{\Hol}{\mathbf{Hol}}
\newcommand{\hol}{\mathbf{hol}}
\renewcommand{\u}{\mathbf{u}}
\newcommand{\Modd}{\mathrm{Mod}}
\newcommand{\Proj}{\Pr}
\newcommand{\Cy}{\mathcal{C}}
\newcommand{\aff}{\mathfrak{aff}}
\renewcommand{\i}{\mathbf{i}}
\newcommand{\g}{\mathfrak{g}}
\renewcommand{\til}{\widetilde}
\newcommand{\morpoid}{\mathtt{GMor}}
\newcommand{\dR}{\mathrm{dR}}
\newcommand{\diff}{\dif}
\newcommand{\Caff}{\mathcal{C}\mbox{aff}}
\newcommand{\caff}{\mathfrak{caff}}
\newcommand{\dd}[1]{\frac{\partial}{\partial {#1}}}
\newcommand{\Aff}{\mathrm{Aff}}
\newcommand{\into}{\hookrightarrow}
\newcommand{\Sub}{\mathbf{Sub}}
\newcommand{\darrow}{\arrow[d, shift left]\arrow[d, shift right]}
\newcommand{\laction}{\arrow[loop left]}
\newcommand{\raction}{\arrow[loop right, leftarrow]}
\newcommand{\X}{\mathcal{X}}
\newcommand{\Y}{\mathcal{Y}}
\renewcommand{\L}{\mathcal{L}}
\renewcommand{\P}{\mathcal{P}}
\newcommand{\bP}{\mathbf{P}}
\newcommand{\T}{\mathbb{T}}
\newcommand{\Zcal}{\mathcal{Z}}
\newcommand{\B}{\mathbf{B}}
\newcommand{\Man}{\mathtt{Man}}
\newcommand{\grpd}{\rightrightarrows}
\newcommand{\W}{\mathcal{W}}
\newcommand{\D}{\mathcal{D}}
\newcommand{\DMan}{\mathtt{DMan}}
\newcommand{\GRPD}{\mathtt{Grpd}}
\newcommand{\Set}{\mathtt{Set}}
\newcommand{\Top}{\mathtt{Top}}
\newcommand{\Scal}{\mathcal{S}}
\title{Stacks in Poisson Geometry}
\author{Joel Villatoro}
\date{}
\begin{document}
\maketitle
\bibliographystyle{spmpsci}      
\pagenumbering{roman}

\begin{abstract}
  This thesis is divided into four chapters. The first chapter discusses the relationship between stacks on a site and groupoids internal to the site. It includes a rigorous proof of the folklore result that there is an equivalence between the bicategory of internal groupoids and the bicategory of geometric stacks.
  The second chapter discusses standard concepts in the theory of geometric stacks, including Morita equivalence, stack symmetries, and some Morita invariants.
  The third chapter introduces a new site of Dirac structures and provides a rigorous answer to the question: What is the stack associated to a symplectic groupoid?
  The last chapter discusses a remarkable class of Poisson manifolds, called b-symplectic manifolds, giving a classification of them up to Morita equivalence and computing their Picard group.
\end{abstract}
\newenvironment{acknowledgements} {\renewcommand\abstractname{Acknowledgements}\begin{abstract}}{\end{abstract}}

\begin{acknowledgements}
  The author would like to express his deepest thanks to his Doctoral Advisor Rui Fernandes for his indispensable help and and guidance. The author would also like to extend his thanks to Eugene Lerman, Matias del Hoyo, Henrique Bursztyn, and Eva Miranda for their guidance at various points in the course of investigating the research presented here. Finally, the author would like to extend his thanks to the thesis committee: Rui Fernandes, Eugene Lerman, Pierre Albin and James Pascaleff.
\end{acknowledgements}

\spacing{1.5}
\tableofcontents
\spacing{1}

\newpage
\pagenumbering{arabic}
%
\chapter*{Introduction}
\addcontentsline{toc}{chapter}{Introduction}
\section*{Background}
One of the main goals of this thesis is to provide a rigorous foundation for the study of stacks in the context of Poisson geometry.
The motivation for this originates with the introduction of symplectic groupoids independently by Weinstein~\cite{SymplecticGroupoidW}, Karas\"{e}v~\cite{SymplecticGroupoidK}, and Zakrzewski~\cite{SymlpecticGroupoidZ} in the late 1980s.

The original intent of symplectic groupoids was as a method for attacking the quantization problem in Poisson geometry.
The hope was that it would be possible to reformulate the quantization problem of a Poisson manifold to an (easier) quantization problem on the associated symplectic groupoid.
While this has not been accomplished, many other applications of symplectic groupoids have been found towards understanding the geometry of Poisson manifolds.

A few years before symplectic groupoids were introduced, Hilsum and Skandalis~\cite{HilsumSkandalis} had noted the connections between bibundles of `foliation groupoids' and the transverse geometry of the foliation.
Such objects had a straightforward generalization to arbitrary Lie groupoids.
In the topological setting, Moerdijk~\cite{MoerdijkClass}~\cite{MoerdijkClass2}~\cite{MoerdijkClass3} related groupoid bibundles to morphisms of their associated sheaves.
This work and the subsequent work of other authors built upon this and led to the general belief that studying Lie groupoids and their bibundles is equivalent to studying geometric stacks (groupoid valued sheaves) on the site of manifolds.
From this point of view, an equivalence (called a \emph{Morita equivalence}) of Lie groupoids is a principal bibundle.

To connect these developments to Poisson geometry, consider the notion of a \emph{dual pair}~\cite{WeinsteinLocal}.
This consists of a pair of Poisson manifolds $M$ and $N$, together with a symplectic manifold $S$ equipped with Poisson maps:
\[ M \from S \to N  \]
such that the fibers of $M$ are symplectically orthogonal to the fibers of $N$.
Algebraically, a dual pair relates the Lie algebra of functions on $M$ to the Lie algebra of functions on $N$,
\[ C^\infty (M) \to C^\infty (S) \from C^\infty (N) \,  .\]
Marsden and Weinstein observed that the Poisson manifolds $M$ and $N$ seemed to be closely related.
In particular, under some connectedness and completeness conditions, they have isomorphic leaf spaces as well as isomorphic transverse geometry.
Furthermore, examples of dual pairs made frequent natural appearances in the subject.
For instance, an example of a dual pair arises from the moment map $S \to \g^*$ of a free and proper Hamiltonian actions of a Lie group on a symplectic manifold:
\[ \g^* \from S \to S/G \, . \]

The relationship between symplectic groupoids and dual pairs arises from the observation due to Xu~\cite{Morping} that a sufficiently well behaved dual pair inherits a principal bibundle structure:
\[
M \from S \to N
\quad \Rightarrow \quad
\begin{tikzcd}
  \G \darrow & S \arrow[dl] \arrow[dr] \raction \laction & \H \darrow \\
  M & & N
\end{tikzcd}
\]
In particular, $\G$ and $\H$ are Morita equivalent as Lie groupoids.
This formed the basis for Xu's definition of symplectic Morita equivalence of symplectic groupoids (and consequently Poisson manifolds).

Although Morita equivalence of Lie groupoids was motivated by a correspondence of sheaves.
It was not clear at the time how to understand symplectic groupoids sheaf theoretically and it was certainly not understood what sort of equivalence of sheaves one should expect to obtain from a symplectic Morita equivalence.

The study of symplectic Morita equivalences has been well developed since the pioneer work we have described above.
For instance, dual pairs and symplectic groupoids have appeared in classical mechanics~\cite{fluids} and deformation quantization~\cite{Karabegov}.
New Morita invariants, such as the Picard group, have been defined~\cite{BPic} and studied~\cite{Radko}.

Our goals here are twofold.
One is to develop a rigorous treatment of symplectic groupoids as presentations of stacks.
The other goal is to obtain classification results for interesting families symplectic groupoids, up to Morita equivalence. The first three chapters are concerned with the first goal, while the last chapter accomplishes the second.

\section*{Contents}
Although our primary interest is in Poisson geometry, it turns out that Poisson and symplectic geometry are not sufficiently well behaved categorically to provide a rigorous stacky treatment of symplectic Morita equivalences.
In order to resolve this, we will first make a detour into the general theory of sites, stacks, and groupoids in the first two chapters.
Thereby, we will obtain a nice criteria for exactly what sorts of categories are amenable to a theory of geometric stacks and groupoids.
The result is a general theory for understanding stacks over sites that are `similar to manifolds.'

In the third chapter we introduce and study a site called $\DMan$ which makes the stack of a symplectic groupoid rigorous.
Lastly, the fourth chapter is dedicated to solving the classification problem, up to Morita equivalence, for a special class of Poisson manifolds called b-symplectic. We will now proceed with a more detailed description of the contents of each chapter.

\subsection*{Chapter 1}
Chapter~\ref{chap:stacks} is dedicated to making the correspondence between groupoids and geometric stacks rigorous.
The setting that we will use is that of a \emph{good site} (see Definition~\ref{defn:goodsite}), which we have introduced in this thesis.

The main theorem (Theorem~\ref{thm:chapter1}) that we prove in Chapter~\ref{chap:stacks} is:
\begin{theorem*}[Fundamental theorem of geometric stacks]
  Suppose $\C$ is a good site. There is an equivalence of bicategories between the bicategory of groupoids internal to $\C$ (with bibundles as morphisms) and the bicategory of geometric stacks on $\C$.
\end{theorem*}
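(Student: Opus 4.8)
The plan is to exhibit a pseudofunctor from the bicategory of internal groupoids to the bicategory of geometric stacks and to prove it is a biequivalence by verifying the two standard criteria. Recall the bicategorical analogue of the ``fully faithful and essentially surjective implies equivalence'' principle: a pseudofunctor $\B$ is a biequivalence precisely when (i) every geometric stack is equivalent to $\B\G$ for some internal groupoid $\G$, and (ii) for every pair $\G,\H$ the induced functor on bibundle categories $\mathbf{Bibun}(\G,\H) \to \Hom(\B\G,\B\H)$ is an equivalence of categories. I would organize the whole proof around establishing these two facts.

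First I would construct the pseudofunctor $\B$ itself. On objects it sends an internal groupoid $\G \grpd M$ to its \emph{classifying stack} $\B\G$, the stack on $\C$ whose objects over $U$ are principal $\G$-bundles $P \to U$ and whose morphisms are $\G$-equivariant isomorphisms; the good-site axioms are what guarantee this is a well-defined stack and, crucially, that it is geometric, the map $M \to \B\G$ classifying the trivial bundle being a representable surjective submersion, i.e.\ an atlas. On a bibundle $S \colon \G \to \H$ I would define $\B S \colon \B\G \to \B\H$ by the associated-bundle construction $P \mapsto P \times_\G S$, and on a bibundle isomorphism the induced equivariant map. Verifying the coherence data, that $\B(S'\circ S)$ is canonically $2$-isomorphic to $\B S' \circ \B S$ and that units are preserved, is routine but must be checked, and reduces to the compatibility of associated bundles with the tensor product of bibundles.

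Next I would prove essential surjectivity. Given a geometric stack $\X$, by definition it admits an atlas $x\colon X \to \X$ with $X$ representable and $x$ a representable surjective submersion. Forming the fiber product $X \times_\X X$, which exists and is representable by the good-site axioms, yields an internal groupoid $X \times_\X X \grpd X$, the \emph{\v Cech groupoid} of the atlas, with structure maps the two projections and composition induced by the universal property. The key step is to produce a canonical equivalence $\B(X \times_\X X) \simeq \X$; this comes from descent, realizing an object of $\X$ over $U$ as a $\bigl(X \times_\X X\bigr)$-bundle built from the pullback of $x$ along a local section, and using that $\X$ is a stack to glue.

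The remaining and most delicate step is the local equivalence. \emph{Faithfulness and fullness on $2$-cells} amounts to showing that natural transformations $\B S \Rightarrow \B S'$ are in natural bijection with isomorphisms of bibundles $S \isom S'$; this is obtained by evaluating a transformation on the tautological $\G$-bundle $M \to \B\G$ and tracking equivariance. The hard part will be \emph{essential surjectivity on $1$-cells}: showing that an arbitrary morphism of stacks $f\colon \B\G \to \B\H$ arises, up to $2$-isomorphism, from a bibundle. The strategy is to set $S$ to be the tautological $\H$-bundle pulled back along the composite $M \to \B\G \xrightarrow{f} \B\H$, equip it with its residual $\G$-action coming from functoriality, and then verify that $S$ is genuinely a principal $\H$-bibundle, checking properness, freeness, and principality of the $\H$-action. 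This is where the good-site hypotheses do the real work, since one must know that the relevant pullbacks are representable and that the submersion and cover conditions defining principality are preserved; I expect this verification, rather than any formal bicategorical bookkeeping, to be the crux of the argument.
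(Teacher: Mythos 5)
Your proposal follows essentially the same route as the paper: the same pseudofunctor $\B$ via principal bundles and the tensor-product action on bibundles, biessential surjectivity via the \v{C}ech groupoid $\bar M \til\times_\X \bar M$ of an atlas together with a descent/gluing argument, and hom-wise equivalence by pulling back the tautological bundle along $\bar M \to \B\G \xrightarrow{F} \B\H$ and extracting bibundle isomorphisms from natural transformations evaluated on trivial bundles. One small caution: in a general good site principality is not phrased via ``properness and freeness'' of the $\H$-action but via the total action $\G \times_M P \to P \times_N P$ being an isomorphism (equivalently, the existence of a division map), which is how the paper carries out the verification you correctly identify as the crux.
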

We should make a few comments about the context of this result.
The above theorem is false for an arbitrary (i.e. not necessary good) site.
We define a site $\C$ to be \emph{good enough} if the above theorem holds.
The the content of our work is therefore to prove that a good site is good enough, where good is a set of criteria that is fairly reasonable for geometric categories. For example, the sites of schemes, manifolds, and topological spaces are all good.

In the case that $\C$ is the site of smooth manifolds, the theorem roughly says that studying Lie groupoids and bibundles is equivalent to studying geometric stacks on the site of manifolds. This is fairly well known and the earliest sketch of the result is in Blohmann~\cite{BlohmannSLGs}. Later, Carchedi~\cite{carchedi} included a complete proof in his thesis. We will see in Chapter~\ref{chap:dman} that there is a good reason for us to generalize this result to sites beyond manifolds.

\subsection*{Chapter 2}
Chapter~\ref{chap:morita} continues the work of Chapter~\ref{chap:stacks} with a more in depth look at equivalences of stacks (Morita equivalence).
In particular, it develops the relationship between morphisms of groupoids internal to $\C$ and groupoid bibundles.
The key concept is that of a \emph{weak equivalence} of groupoids in $\C$.
The main theorem of the chapter is Theorem~\ref{thm:weakequivalences} which says:
\begin{theorem*}
  The following are equivalent:
  \begin{itemize}
    \item $\G$ and $\H$ are Morita equivalent as $\C$-groupoids.
    \item There exists a $\C$-groupoid $\K$ and a pair of weak equivalences $\H \from \K \to \G$.
  \end{itemize}
\end{theorem*}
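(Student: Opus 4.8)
The plan is to route everything through the correspondence between morphisms of $\C$-groupoids and bibundles. Recall that any morphism $\phi \colon \K \to \G$ has an associated bibundle $\langle\phi\rangle$, obtained from the fibered product $\G\one \, {}_{\s}\!\times_{\phi\zero} \K\zero$ with the evident left $\G$- and right $\K$-actions, and that this bibundle is always principal on one side. The key fact I would use---which I take to be established in the course of setting up weak equivalences---is that $\phi$ is a weak equivalence if and only if $\langle\phi\rangle$ is biprincipal, i.e.\ invertible as a $1$-morphism in the bicategory of groupoids and bibundles. Granting this, a Morita equivalence is by definition an invertible bibundle $P \colon \G \to \H$, and the theorem becomes a statement about factoring such invertible bibundles through spans of weak equivalences.

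For the implication from spans to Morita equivalence, suppose we are given weak equivalences $\psi \colon \K \to \H$ and $\phi \colon \K \to \G$. By the key fact, $\langle\psi\rangle$ and $\langle\phi\rangle$ are invertible, so the composite $\langle\psi\rangle \circ \langle\phi\rangle\inv$ is an invertible bibundle from $\G$ to $\H$, exhibiting $\G$ and $\H$ as Morita equivalent. This direction is essentially formal once the key fact is in hand.

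The substantive direction is the converse. Given a biprincipal bibundle $P$ with moment maps $a \colon P \to \G\zero$ and $b \colon P \to \H\zero$, I would build a single $\C$-groupoid $\K$ sitting over both $\G$ and $\H$ by taking $P$ itself as the object space. Concretely, set $\K\zero = P$ and let $\K\one$ consist of triples $(g,p,h)$ with $\s(g) = a(p)$ and $\t(h) = b(p)$, with source $p$ and target $g \cdot p \cdot h\inv$ given by the two commuting actions; composition is inherited from multiplication in $\G$ and $\H$. The projections $(g,p,h) \mapsto g$ and $(g,p,h) \mapsto h$ then assemble into morphisms $\phi \colon \K \to \G$ and $\psi \colon \K \to \H$ covering $a$ and $b$ on objects, producing the desired span $\H \from \K \to \G$.

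It remains to check that $\phi$ and $\psi$ are weak equivalences, and this is where the real work lies. Essential surjectivity of $\phi$ amounts to the statement that $a \colon P \to \G\zero$ is a covering, which is exactly right principality of $P$; fully faithfulness amounts to the square relating $\K\one$ to $\G\one$ over $\K\zero \times \K\zero$ being cartesian, which unwinds to the freeness and transitivity of the $\H$-action along the fibers of $a$. The symmetric statements for $\psi$ follow from left principality of $P$. The main obstacle I anticipate is not conceptual but structural: one must ensure that all the fibered products defining $\K\one$ and the weak-equivalence conditions actually exist in $\C$ and that the relevant maps really are coverings. This is precisely the role of the good-site hypothesis, so the crux of a careful proof is to verify that biprincipality of $P$ feeds the axioms of a good site in exactly the right way to make $\K$ a legitimate $\C$-groupoid and the projections honest weak equivalences.
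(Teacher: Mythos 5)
Your proposal is correct, and after unwinding it is essentially the paper's proof in a symmetric presentation. The paper takes $\K := (\s^P)^! \G = P \times_M \G \times_M P$, the pullback groupoid along the right moment map, so that the leg $\K \to \G$ is a weak equivalence for free by Lemma~\ref{lemma:pullbackequiv}; the other leg $F \colon \K \to \H$ is then defined via the division map of the left principal action, $p_1 \cdot g = F(p_1,g,p_2) \cdot p_2$, and shown to be a weak equivalence by exhibiting an inverse to the induced map $(\s^P)^!\G \to (\t^P)^!\H$. Your double action groupoid, with arrows $(g,p,h)$ over object space $P$, is canonically isomorphic to this pullback groupoid via $(g,p,h) \mapsto (g \cdot p \cdot h\inv,\, g,\, p)$ --- the inverse of this identification being precisely the division map --- so your two projections are the paper's two legs read through that isomorphism, and your full-faithfulness check for $\phi$ (freeness and transitivity of the $\H$-action on the fibers of $a$) is the same invocation of principality, just relocated. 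What your symmetric presentation buys is that both legs are handled by one argument; what it costs is that you must verify both legs and carry out by hand the structural checks you defer (existence of $\K\one$ as an object of $\C$, submersivity of source and target), which the paper's pullback-groupoid lemma packages automatically since $\s^P$ is a submersion whenever $P$ is biprincipal. One convention slip to repair: with your requirement $\t(h) = b(p)$ the expression $p \cdot h\inv$ is not composable for a right action; you want $\s(h) = b(p)$ (or else target $g \cdot p \cdot h$ with $\psi$ composed with inversion), after which the verification goes through as you describe. Your first direction is indeed formal, resting on the paper's lemma that $\bP(F)$ is a Morita equivalence if and only if $F$ is a weak equivalence, together with Lemma~\ref{lemma:invertiblegmor}.
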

When $\C$ is the site of smooth manifolds, this fact is well known.
Our main contribution is the observation that the result holds for any (good) site.
This theorem is a weaker version of a stronger fact that (in some sense) bibundles are the universal object which inverts weak equivalences and the construction of a (bicategory) of fractions. For more details on these ideas see Pronk~\cite{PronkBicat} or Pradines~\cite{PradinesFractions}. More recent work on the subject, and in a fairly general setting was done by Meyer and Zhu~\cite{MeyerZhu} as well as Roberts~\cite{RobertsAnafunctors}.
This result has significant utility in the study of Morita invariants since it is often-times much easier to prove invariance of a property under weak equivalence than bibundles.

\section*{Chapter 3}
Chapter~\ref{chap:dman} finally relates all of our work to Poisson geometry by introducing the good site of Dirac manifolds $\DMan$.
Dirac manifolds were originally described by Theodore Courant~\cite{Courant}.
They unify Poisson structures, foliations, and symplectic forms as Dirac structures.
Intuitively speaking, a Dirac structure should be thought of as a (possibly singular) foliation of the manifold by pre-symplectic manifolds.
Although we will state all of the most important details about Dirac structures, we refer the interested reader to \cite{BDiracintro} or \cite{MMbook} for more detail.

For us, Dirac structures are a convenient relaxation of the notion of a Poisson structure to improve categorical behavior.
The category Poisson manifolds is rather poorly behaved and a symplectic groupoid is not a groupoid internal to Poisson manifolds.
However, in $\DMan$ we have D-Lie groupoids and the punchline of the chapter is the Theorem~\ref{thm:main1} which says:
\begin{theorem*}
Let $\G$ and $\H$ be symplectic groupoids. Then $\G$ and $\H$ are also D-Lie groupoids and the following are equivalent:
\begin{enumerate}[(1)]
\item $\G$ and $\H$ are Morita equivalent as symplectic groupoids.
\item $\B\G$ is isomorphic to $\B\H$.
\item There exists a principal $(\G,\H)$-bibundle of D-Lie groupoids.
\item There exists a pre-symplectic groupoid $\G'$ and a pair of weak equivalences of D-Lie groupoids $\G \from \G' \to \H$.
\end{enumerate}
\end{theorem*}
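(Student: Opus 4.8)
The plan is to let the two structural theorems of the preceding chapters absorb most of the equivalences, so that the only genuinely Poisson-geometric work is the comparison between Xu's symplectic Morita equivalence and the abstract notion of a bibundle internal to $\DMan$. Granting the earlier observation in this chapter that a symplectic groupoid $\G \grpd M$ is a groupoid internal to $\DMan$ (with $\G$ carrying its symplectic Dirac structure, $M$ its induced Poisson structure, and source, target, multiplication, unit and inverse all Dirac morphisms of the correct variance), the classifying functor $\B$ of Theorem~\ref{thm:chapter1} applies. Because $\B$ is an equivalence of bicategories, it carries an isomorphism of stacks to an invertible $1$-morphism, that is, to a principal bibundle, and conversely; this yields (2)$\Leftrightarrow$(3) with no further work. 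Applying Theorem~\ref{thm:weakequivalences} to the good site $\DMan$ gives that a principal bibundle of D-Lie groupoids exists precisely when some third D-Lie groupoid admits weak equivalences to both, which yields (3)$\Leftrightarrow$(4) up to one refinement: that the intermediate groupoid may be taken to be \emph{pre-symplectic} rather than an arbitrary D-Lie groupoid.

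The heart of the proof is therefore (1)$\Leftrightarrow$(3). For (1)$\Rightarrow$(3) I would begin with a symplectic Morita equivalence in Xu's sense: a principal $(\G,\H)$-bibundle $S$ with a symplectic form $\omega_S$ whose two moment maps are mutually symplectically orthogonal Poisson maps and with respect to which both groupoid actions are compatible. The form $\omega_S$ makes $S$ a Dirac manifold, and I would check directly that the orthogonality and compatibility hypotheses are exactly what is required for the two actions to be actions internal to $\DMan$ and for the anchors to be Dirac morphisms of the appropriate variance; freeness and principality transfer verbatim from the underlying Lie-groupoid bibundle, so $S$ is a principal bibundle of D-Lie groupoids.

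The reverse implication (3)$\Rightarrow$(1) is where I expect the main obstacle to lie, because a principal bibundle of D-Lie groupoids carries only a Dirac structure a priori, and one must \emph{recover non-degeneracy} before Xu's conditions can even be formulated. The leverage is that the two commuting actions are by \emph{symplectic} groupoids: principality identifies the orbit directions of each action with cotangent data of the corresponding base, and the symplectic orthogonality of source and target fibers in $\G$ and $\H$ should force the Dirac structure on $S$ to be the graph of a non-degenerate $2$-form, with closedness coming from the Dirac integrability and the orthogonality of the two moment maps coming from the commutation of the actions. The pre-symplectic refinement demanded by (4) is handled in the same spirit: one must verify that the fiber-product covers producing the intermediate groupoid in Theorem~\ref{thm:weakequivalences} pull back the multiplicative symplectic forms of $\G$ and $\H$ to a closed multiplicative $2$-form, so that the intermediate object is realized as a pre-symplectic groupoid $\G'$. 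These two non-degeneracy and integrability checks on pulled-back Dirac structures are, I expect, the technically delicate part of the argument.
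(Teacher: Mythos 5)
Your proposal is correct and follows essentially the same route as the paper: the equivalences (2)$\Leftrightarrow$(3)$\Leftrightarrow$(4) are delegated to the general machinery (Theorem~\ref{thm:chapter1} and Theorem~\ref{thm:weakequivalences} applied to the good site $\DMan$, packaged in the paper as Theorem~\ref{thm:stackequiv}), while the genuinely geometric content is isolated in (1)$\Leftrightarrow$(3), which is exactly Proposition~\ref{prop:symform}. You also correctly identify the delicate direction — recovering non-degeneracy of the characteristic $2$-form on a (target-aligned) principal D-Lie bibundle — and name the same ingredients the paper's proof of that proposition uses (principality, the symplectic forms on $\G$ and $\H$, the leafwise symplectic forms on orbits, and transversality of the induced base map).
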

At first sight it may seem that this theorem follows as an immediate corollary of the main theorems of the first two chapters.
However, there is some work to be done in order to show that $\DMan$ actually does what we want.
The main reason this all works is the, slightly surprising, Proposition~\ref{prop:symform}, which relates Morita equivalenc of D-Lie groupoids to the existing notion of symplectic Morita equivalence.
Throughout the chapter, we also take the time to give geometric characterizations of groupoids and bibundles in $\DMan$, as the categorical definition is not particularly amenable to doing geometry.

\subsection*{Chapter 4}
Chapter~\ref{chap:bsymp} shifts gears from theory building to computation.
The main goal is to classify, up to Morita equivalence, a special class of Poisson manifolds called b-symplectic (or log-symplectic).
A b-symplectic manifold is a type of Poisson manifold which is a mild degeneration of the notion of a symplectic manifold, where the symplectic form has a log type singularity along a prescribed hypersurface. The relative tameness of b-symplectic manifolds means that they form a tractable class for the explicit calculation of many invariants of Poisson manifolds (see \cite{GMP1}, \cite{BDirac}, \cite{Radko} and \cite{Radko2}).

In particular, the Picard group of a b-symplectic compact surface $M$ was computed by Radko and Shlyakhtenko in \cite{Radko2}. The goal of the chapter is to prove a generalization of their result to arbitrary (even) dimension: we calculate the Picard group of any \emph{stable} b-symplectic manifold. Every b-symplectic structure on an orientable compact manifold can be perturbed to a stable one, so in this sense stable structures are fairly generic.

For any stable b-symplectic manifold $M$ we will construct a collection of discrete data $\Gr$ called a \emph{discrete presentation} of $M$. The discrete presentation is a combinatorial object which takes the form of a heavily decorated graph that encodes the topological configuration of the symplectic leaves of $M$.
This graph resembles the data that was used previously in the aforementioned calculation of the Picard group of a compact surface and by Gualtieri and Li to classify integrations of b-symplectic manifolds~\cite{Gualt}.
The edges of the graph $\Gr$ represent the connected components of the singular locus of $M$ while the vertices represent the orbits.
The decorations take the form of the fundamental groups of open orbits $\pi_1(U)$, the fundamental groups of symplectic leaves $\pi_1(L)$ of the singular locus and homomorphisms between them.

The main results of the chapter are the following two theorems. The first says:
\begin{theorem*}
Suppose $M$ is a stable b-symplectic manifold and $\mathfrak{Gr}$ is a discrete presentation of $M$. Then:
\[ \Pic(M) \isom \left( \OutAut(\mathfrak{Gr}) \ltimes \R^N \right)/ H,  \]
where $H\subset \OutAut(\mathfrak{Gr}) \ltimes \R^N$ is a discrete normal subgroup.
\end{theorem*}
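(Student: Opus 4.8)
The plan is to reduce the computation to a problem about self-Morita-equivalences and then to build the isomorphism by exhibiting a surjective homomorphism onto $\Pic(M)$ from the semidirect product, whose kernel is the claimed discrete subgroup $H$. By Theorem~\ref{thm:main1}, $\Pic(M)$ is the group of isomorphism classes of self-principal-bibundles of the symplectic groupoid $\G$ integrating $M$ (equivalently, the automorphism group of the stack $\B\G$ in $\DMan$). I would work entirely in this bibundle picture, since Morita invariance is then automatic and the combinatorial data of $\Gr$ is manifestly preserved by any element of $\Pic(M)$.

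First I would construct a group homomorphism $\phi \colon \OutAut(\Gr) \ltimes \R^N \to \Pic(M)$ out of its two constituent pieces. For the $\OutAut(\Gr)$ factor, I would realize each outer automorphism of the discrete presentation by an explicit self-equivalence assembled from the combinatorial symmetries of the orbit/leaf configuration: a graph automorphism permutes the open orbits and the components of the singular locus compatibly with the fundamental-group decorations, and such a permutation lifts to a self-bibundle. Inner automorphisms of the presentation lift to bibundles isomorphic to the identity, which is precisely why only the outer automorphism group survives. For the $\R^N$ factor, whose coordinates are indexed by the $N$ edges of $\Gr$ (the connected components of the singular locus), I would construct continuous families of self-equivalences by twisting the identity bibundle in the modular direction along each component, using the modular period attached to that component. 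The semidirect product structure is forced because a graph automorphism permutes the edges and hence permutes the corresponding $\R$-factors.

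Next I would prove that $\phi$ is surjective. Any self-Morita-equivalence preserves every Morita invariant of $M$: the leaf space, the decomposition of the singular locus into its connected components, and the fundamental-group data of the open orbits and singular leaves. Consequently its underlying combinatorial effect is an element of $\OutAut(\Gr)$; after composing with the inverse of the corresponding $\phi$-image, the residual self-equivalence acts trivially on $\Gr$ and is therefore supported in a neighborhood of the singular locus. A local normal-form analysis there shows that every such residual equivalence is $\phi$ of an element of $\R^N$, giving surjectivity.

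The main obstacle is the final step: identifying the kernel $H = \ker \phi$ precisely and verifying that it is a discrete normal subgroup. This is where the genuinely geometric (as opposed to combinatorial) content lies. I would determine which combinations of graph automorphisms and continuous modular twists produce a bibundle isomorphic to the identity. The continuous twists are periodic — translating by an integer multiple of the modular period returns the identity bibundle — which contributes a lattice of relations; and certain graph automorphisms can be undone by a specific continuous correction, which couples the two factors inside $H$. Computing these relations requires a careful analysis of the monodromy and holonomy of the leaves along the singular hypersurfaces and of the periods of the modular vector field; assembling them shows $H$ is discrete and normal, yielding $\Pic(M) \isom \left( \OutAut(\Gr) \ltimes \R^N \right)/H$.
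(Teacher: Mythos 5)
Your overall architecture is the right one, and essentially the paper's: exhibit a surjective homomorphism $\OutAut(\Gr)\ltimes\R^N\to\Pic(M)$ built from outer automorphisms of the discrete presentation and from modular twists, then identify a discrete kernel $H$. But your description of $H$ contains a concrete error. You assert that the modular twists are periodic, i.e.\ that flowing by an integer multiple of the modular period returns the identity bibundle, contributing a ``lattice of relations'' inside the $\R^N$ factor. This is false in general: by Lemma~\ref{lemma:cafftwist} (and its globalization), the time-$\rho_i$ modular flow around the $i$-th component of $Z$ is isomorphic not to the identity but to the bibundle of the \emph{twisting automorphism} $\mathcal{F}_i$ of $\Gr$ --- an automorphism whose underlying graph map is trivial but whose holonomy component is $(\Hol_i,e)$ and whose cocycles are the $\gamma_i^\pm$. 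This class in $Z\Pic(\G)\isom\OutAut(\Gr)$ is generally nontrivial, so $H$ contains no pure lattice in $\R^N$; as stated in Theorem~\ref{maintheorem2}, it is generated by the mixed elements $([\mathcal{F}_i]\inv,Q_i^{\rho_i})$. The Dehn-twist/Lefschetz example in Section~\ref{section:examples4} is a direct counterexample to your claim: there $\mathcal{F}_i$ has infinite order, $\Mod(M)\isom\R$ is noncompact, and $\Pic(M)\isom\R\times\Z\times\Z_2$, whereas your periodicity would force a circle factor. Relatedly, your phrase ``certain graph automorphisms can be undone by a continuous correction'' mislocates the coupling: it involves automorphisms of the decorated data that fix the graph, not graph symmetries.

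The surjectivity step also has a gap. A residual self-equivalence inducing the trivial automorphism of $\Gr$ is not ``supported in a neighborhood of the singular locus'' --- bibundles have no support, and a modular twist is a global object whose only trace on the orbit space is a rotation of the singular circles. The argument that actually closes this step runs in the opposite order and rests on machinery your sketch omits: first reduce $\Pic(M)=\Pic(\G)$ via a complete two-dimensional Poisson transversal $\C$ (a disjoint union of affine cylinders); then classify natural integrations by isotropy and holonomy data and establish the correspondence between $Z$-static \emph{marked} bibundles and isomorphisms of discrete data, including the cocycle bookkeeping needed to glue across open orbits (Proposition~\ref{prop:bsympcoresp}); finally, given an arbitrary $P$, use the fact that $P$ preserves the modular class, hence rotates the singular circles, to compose with modular flows and render it $Z$-static \emph{before} extracting the element of $\OutAut(\Gr)$. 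Without this correspondence, your ``local normal-form analysis'' has nothing to latch onto, and the identification of which residual equivalences are trivial --- exactly Lemma~\ref{lemma:lastinner} --- is unproved. A smaller discrepancy: the paper's semidirect product is taken with respect to the conjugation action of $\R^N$ on $Z\Pic(\G)\isom\OutAut(\Gr)$ (compare Theorem~\ref{thm:picofghol}), which records the failure of static classes to commute with modular flows; the permutation action of graph automorphisms on the $\R$-factors that you invoke is not by itself the relevant structure.
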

We give an explicit description of $H$ and the action of $\R^N$ on $\OutAut(\Gr)$. Of course, to make sense of this result we will define isomorphisms and inner automorphisms of discrete presentations. It turns out that isomorphisms of discrete presentations are a powerful tool for the classification of stable b-symplectic structures. This is the content of our second main theorem:
\begin{theorem*}
Suppose $M_1$ and $M_2$ are stable b-symplectic manifolds and $\Gr_1$ and $\Gr_2$ are discrete presentations of each, respectively. Then $M_1$ and $M_2$ are Morita equivalent if and only if there exists an isomorphism $\Gr_1 \to \Gr_2$.
\end{theorem*}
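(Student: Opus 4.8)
The plan is to show that the discrete presentation $\Gr$ is a \emph{complete} invariant of the Morita equivalence class of a stable b-symplectic manifold. By Theorem~\ref{thm:main1}, if $\G_1$ and $\G_2$ are the symplectic groupoids integrating $M_1$ and $M_2$, then $M_1$ and $M_2$ are Morita equivalent precisely when the associated stacks satisfy $\B\G_1 \isom \B\G_2$. The theorem therefore reduces to the claim that $\Gr_i$ is both determined by, and determines, the isomorphism class of $\B\G_i$. I would split the argument into two implications: (i) \emph{invariance}, that a Morita equivalence induces an isomorphism $\Gr_1 \isom \Gr_2$, and (ii) \emph{reconstruction}, that an isomorphism $\Gr_1 \to \Gr_2$ can be upgraded to a Morita equivalence.

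For the invariance direction the key point is that every piece of data decorating $\Gr$ is canonically recoverable from the stack $\B\G$, i.e.\ is a Morita invariant. The underlying graph is extracted from the coarse orbit space: the vertices are the open symplectic orbits (the connected components of $M \setminus Z$) and the edges are the connected components of the singular locus $Z$, with the incidence relation recording which open orbits have a given component in their closure. Since the orbit space is preserved by Morita equivalence, so is the graph. The decorations $\pi_1(U)$ and $\pi_1(L)$ are recovered as the isotropy groups of $\G$ over the corresponding orbits (the source-simply-connected groupoid over an open symplectic orbit is a fundamental groupoid, so its isotropy is $\pi_1(U)$), and the gluing homomorphisms $\pi_1(L)\to\pi_1(U)$ are read off from the specialization of isotropy as one passes from a singular component to an adjacent open orbit. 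As isotropy groups up to isomorphism are Morita invariants, checking that an isomorphism of stacks carries each of these identifications, compatibly with incidence, yields $\Gr_1 \isom \Gr_2$.

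For the reconstruction direction I would use the local normal form for stable b-symplectic structures: near each open orbit and each singular component, $M$, and hence its integrating groupoid, is determined up to exactly the fundamental-group data recorded in $\Gr$. Given an isomorphism $\Gr_1 \to \Gr_2$ one builds local Morita equivalences from these standard models, matching them edge by edge and vertex by vertex using the combinatorial isomorphism, and then invokes Theorem~\ref{thm:weakequivalences} to assemble them: concretely, the combinatorial isomorphism is used to produce a common refinement $\K$ with weak equivalences $\G_1 \from \K \to \G_2$, which by Theorem~\ref{thm:main1} is equivalent to a principal bibundle and hence to a Morita equivalence.

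The main obstacle is this final gluing step. The local models over each vertex and edge are rigid and easy to match, but the difficulty is to verify that the locally built equivalences agree on the overlaps where a singular component meets an open orbit, so that they patch into a single globally defined span of weak equivalences (or bibundle). This is precisely where the homomorphisms $\pi_1(L)\to\pi_1(U)$ do their work: they encode the compatibility that makes the local pieces glue, and showing that an isomorphism of presentations supplies exactly this gluing data is the technical heart of the argument. I expect this to run closely parallel to the computation of $\Pic(M)$ in the preceding theorem, since the self-equivalences counted there are exactly the case $\Gr_1 = \Gr_2$ of the reconstruction carried out here.
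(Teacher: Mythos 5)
Your high-level architecture (two directions, gluing as the crux, the analogy with the Picard computation) matches the spirit of the paper, but there is a genuine gap in both directions: you are working with a strictly weaker notion of ``isomorphism of discrete presentations'' than the one the theorem actually asserts. An isomorphism $\Gr_1 \to \Gr_2$ in the paper is not merely a graph isomorphism together with isomorphisms of the groups $\pi_1(U)$, $\pi_1(L)$ intertwining the maps $\phi^\pm$; it also comprises, for each edge, a holonomy isomorphism $(\Psi_i, h_i)$ subject to $\Hol_2 \circ \Psi_i = C_{h_i} \circ \Psi_i \circ \Hol_1$ and $\phi^\pm(h_i) = \gamma_2^\pm\, \Psi_i(\gamma_1^\pm)\inv$ (Proposition \ref{prop:propertiesofmaps}), and, for each open orbit, cocycles $g^\pm_{ij}$ satisfying (\ref{eqn:cocyclecondition1}) and (\ref{eqn:cocyclecondition2}), plus preservation of the modular periods $\rho_i$. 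Your invariance direction --- ``isotropy groups up to isomorphism are Morita invariants'' --- can only produce unconstrained group isomorphisms; it cannot see the elements $h_i$ (which measure the failure of the canonical static bisection to descend along the covering $\aff \to \caff$) nor the cocycles (which compare markings of distinct cylinders bounding the same open orbit). The paper extracts exactly this data by first reducing, via a complete $2$-dimensional Poisson transversal $\mathcal{C}$ of affine cylinders, to a natural integration $\G \rightrightarrows \mathcal{C}$, then choosing base points to promote an arbitrary bibundle to a $Z$-static \emph{marked} bibundle, and applying the pointed-bibundle machinery (Lemma \ref{lemma:phi}, Lemma \ref{lemma:transbimid}); without markings, the correspondence of Proposition \ref{prop:bsympcoresp} does not even get stated.

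Your reconstruction direction has a parallel problem at the gluing step you correctly flag as the heart. You propose to glue \emph{weak equivalences} into a span $\G_1 \from \K \to \G_2$ and then invoke Theorem \ref{thm:weakequivalences}, but that theorem converts a single global Morita equivalence into a span; it provides no mechanism for assembling local data, and the paper contains no gluing lemma for weak equivalences. What actually glues are bibundles over a saturated cover (Lemma \ref{lemma:cocycle}), and the transition isomorphisms needed there are exactly the pointed-bibundle isomorphisms associated to the cocycles $g^\pm_{ij}$ by Lemma \ref{lemma:transbimid}, with the cocycle condition (\ref{eqn:cocycleforbim}) following from (\ref{eqn:compositionisom}). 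Relatedly, your claim that near each piece ``$M$, and hence its integrating groupoid, is determined up to exactly the fundamental-group data recorded in $\Gr$'' is false as stated for the edge pieces: the Morita type of the restriction of $\G$ to a cylinder $\caff^{\rho_i}$ is governed by the full holonomy data $(I_i, \hol_i, \gamma_i^\pm)$ and the period $\rho_i$ (Theorem \ref{thm:holcorrespondence}), not by fundamental groups and maps alone. Repairing your sketch essentially forces you to reinstate the paper's intermediate layer --- isotropy data over $\aff$, holonomy data over $\caff$, and marked bibundles over $\mathcal{C}$ --- at which point you recover the paper's proof.
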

The relatively simple statements of Theorem \ref{maintheorem2} and Theorem \ref{maintheorem1} are somewhat deceptive as the definition of an isomorphism of discrete presentations $\Gr_1 \to \Gr_2$ is not so straightforward. On the other hand, the data can often be simplified when computing specific examples (see Section \ref{section:examples4}).

\chapter{Sites and stacks}\label{chap:stacks}
In this chapter, we will give a general treatment of stacks on a site.
Our aim is to develop a general theory of geometric stacks which resembles that of stacks over differentiable manifolds.
The focus of our treatment is to relate geometric stacks on a well behaved site $\C$ to groupoids internal to $\C$.

A significant background in category theory is not necessary to understand our work here.
However, it is not possible (or advisable) to completely avoid category theory and we will apply simple categorical reasoning when appropriate.
After all, the objects of study (stacks) are inherently categorical.
The author also hopes that the following will provide a reasonable bridge between category theory and geometry and hence we use a language that is intelligible to a person in either subject.
This chapter will proceed as follows:
\begin{itemize}
  \item In Section~\ref{section:categories_and_topology} we will introduce our notation and recall the basic definitions for categories as well as establish the notion of a category with topology.
  \item In Section~\ref{section:stacks} we will proceed to define categories fibered in groupoids and stacks and provide a precise definition for what we mean by \emph{geometric} stack.
  \item In Section~\ref{section:groupoids} we will define what we mean by a groupoid internal to a site. We will then proceed to define the functor $\B$ which will relate groupoids to geometric stacks. This section will conclude with a proof of Theorem~\ref{thm:chapter1} which states that $\B$ is an equivalence.
  \item In Section~\ref{section:examples1} we will take a look at a few simple example of sites to which we can apply our results.
\end{itemize}
The general strategy we will use is piecemeal borrowed from a variety of expository texts on the subject. In particular, our work is guided by Xu and Behrend~\cite{PXstacks} as well as Metlzer~\cite{Mstacks} which have both written excellent treatments of the subject.
Lerman~\cite{LOrbifolds} and del Hoyo~\cite{MR3089760} also provided the author with significant inspiration and intuition. Lastly, another short treatment of the main theorem of this chapter, in the setting of smooth manifolds, can be found in Blohmann~\cite{BlohmannSLGs}.
\section{Categories and topology}\label{section:categories_and_topology}
\subsection{Notation}
Categories will play a central role in both the study of groupoids and sites. It will be convenient to recall our basic definitions and notation.

\begin{definition}\label{defn:category}
A \emph{category} $\C$ is composed of a collection $\C_0$ called the objects, and a collection $\C_1$ called the morphisms, together with the following:
\begin{itemize}
\item to every morphism $f \in \C_1$ we associate two objects $M$ and $N$ called the \emph{source} and \emph{target} of $f$ respectively. This situation is summarized by the notation: $f\colon M \to N$;
\item to every object $M \in \C_0$ we associate a morphism $\Id_M$ called the \emph{unit} of $M$;
\item to every pair of morphisms $f\colon M \to N$ and $g \colon N \to O$ we associate a morphism $g \circ f\colon M \to O$.
\end{itemize}
This data must satisfy the following axioms:
\begin{enumerate}[(C1)]
\item the source and target of $\Id_M$ is $M$,
\item the source of $f \circ g$ is the source of $f$,
\item the target of $f \circ g$ is the target of $f$,
\item given any $f\colon M \to N$ and $\Id_M\colon M \to M$ then $f \circ 1_M =f$,
\item given any $f\colon M \to M$ and $\Id_M\colon y \to y$ then $\Id_M \circ f = f$,
\item given any three morphisms $f,g$ and $g$ then $(h \circ g) \circ f = h \circ (g \circ f)$ whenever defined.
\end{enumerate}
\end{definition}
\begin{example}[Sets]
The category whose objects are sets and whose morphisms are set theoretic mappings will be denoted by $\Set$.
\end{example}
\begin{example}[Topological spaces]
The category whose objects are topological spaces and whose morphisms are continuous mappings will be denoted by $\Top$.
\end{example}
\begin{example}[Smooth manifolds]
The category whose objects are smooth manifolds and whose morphisms are smooth maps will be denoted by $\Man$.
\end{example}
\begin{example}[Open sets]
  Given a topological space $X$, then we will denote the category whose objects are open subsets of $X$ and morphisms are given by subset inclusions.
\end{example}
\begin{example}[Opposite category]
  Suppose $\C$ is any category. Then we can construct a new category $\C^{op}$ called the \emph{opposite category} for which the source and target are swapped, and the order of composition is reversed.
\end{example}
Given any two objects, $M$ and $N \in \C_0$, we denote by $\Hom(M,N)$ the subcollection of $\C_1$ which consists of morphisms whose source is $M$ and target is $N$. Sometimes we may write $\End(M)$ to denote $\Hom(M,M)$ the \emph{endomorphisms} of $M$.

Given an arbitrary object $M \in \C$, then a \emph{generalized element} $x$ is a morphism $x$ whose codomain is $M$.
Generalized elements and morphisms will be distinguished by our notation.
For example, generalized elements of $M$ will usually be denoted by $x$ or $y$ whereas morphisms in $\C$ are denoted by $f$ or $g$.
One can `push forward' generalized elements along a morphisms  $f \colon M \to N$ by taking $f(x) := f \circ x$ which is a generalized element of $N$.
\begin{example}[Slice category]
  Given any category $\C$ and an object $M$ in $\C_0$, one can define a new  category $\C/M$ called the \emph{slice category of} $M$.
  An object in $\C/M$ is a morphism $f\colon N \to M$.
  A morphism in $\C/M$ is a commutative triangle:
  \begin{equation}\label{eqn:slice}
  \begin{tikzcd}
    N_2 \arrow[r, "f_1"] \arrow[d, "g"] & M \arrow[d, "1_M" swap] \\
    N_2 \arrow[r, "f_2"] & M \\
  \end{tikzcd}
\end{equation}
where the source is $f_1$, the target is $f_2$ and composition is defined by composing in $\C$ vertically.
\end{example}
\begin{definition}
A \emph{commutative} square consists of four morphisms illustrated below:
\[
\begin{tikzcd}
  M \arrow[r, "f"] \arrow[d, "h"] & N \arrow[d, "g"] \\
  O \arrow[r, "k"]                & P
\end{tikzcd}
\]
such that $g \circ f = k \circ h$. A commutative square is a \emph{pullback square} if for any $f' \colon M' \to N$ and $h' \colon M' \to O$ there is a unique morphism $e \colon M' \to M$ such that $f \circ e = f'$ and $g \circ e = g'$.

The pullback property implies that $g$ and $k$ define $M$ up to a unique isomorphism. For this reason we may write $M = O \times_{g,k} N$ or $M = O \times_P N$ and call $M$ the \emph{fiber product} of $k$ and $g$. Following this notational convention, we may use $f' \times h'$ or $(f', h')$ to denote the unique morphism defined by the universal property (called $e$ above). $\pr_1$ and $\pr_2$ will denote the projection maps $O \times_P N \to O$ and $O \times_P N \to N$, respectively.

For most categories, when fiber products exists there is a canonical such object. However, the universal property only guarantees uniqueness up to a unique isomorphism. To justify our use of the notation $O \times_{g,k} N$ we will assume that for any given category, we have made a choice of fiber product for every pair of morphisms $g$ and $k$ such that the fiber product exists.
For example, when $C = \Set$ then the most natural choice is the set $\{ (x,y) : g(x) = k(y) \}$.
\end{definition}
\begin{definition}
  Given a morphism $f \colon M \to N$ in a category $\C$, we say that $f$ is an \emph{isomorphism} if there exists a morphism such that $ f \circ g = \Id_N$ and $g \circ f = \Id_M$. In such a case, we may write $g = f\inv$.

  A morphism is $f \colon M \to N$ is called \emph{cartesian} if for all $g \colon M' \to N$ the fiber product $M' \times_M M$ exists. How strong the cartesian condition is depends on how many fiber products the category admits. For instance, in the category of manifolds the cartesian maps are the (local) submersions. On the other hand, in the category of sets every morphism is cartesian.
\end{definition}

\begin{definition}\label{defn:functor}
  A \emph{functor} $F\colon \C \to \D$ is a pair of mappings of collections $F_i \colon \C_i \to \D_i$, $i=0,1$ such that $F$ respects the source, target, and composition relationships. That is:
  \[  \left(f\colon M \to N \right)\, \mapsto \,  \left(F_1(f)\colon F_0(M) \to F_0(N)\right) \quad \mbox{ and } \quad F_1(f \circ g) = F_1(f) \circ F_1(g). \]
  For general categories, we will typically omit the subscripts in $F_0$ and $F_1$ for brevity.

  A functor $F\colon \C \to \D$ is called
  \begin{itemize}
    \item \emph{full} if the mapping $\Hom(M,N) \to \Hom(F(M),F(N))$ is surjective,
    \item \emph{faithful} if that same mapping is injective,
    \item \emph{fully faithful} if both of the above hold,
    \item \emph{surjective on objects} if $F_0$ is surjective,
    \item \emph{essentially surjective on objects} (or just \emph{essentially surjective}) if every object in $\D$ is isomorphic to one in the image of $F_0$,
    \item an \emph{isomorphism} if there exists another functor $G\colon \D \to \C$ such that $F \circ G$ and $G \circ F$ are identity functors, \item an \emph{equivalence of categories} if it is both essentially surjective and fully faithful.
  \end{itemize}
\end{definition}
\begin{remark}
  We do not assume that our categories are \emph{concrete} (meaning that they have a fully faithful functor into the category of sets).
  For this reason, all definitions of morphisms or proofs of equalities of morphisms should technically be done without reference to `points' or `elements'. However, the notation for composition of morphisms can be cumbersome and most computations done in terms of elements can be turned into computations involving compositions of morphisms in some category.

  Our use of the generalized element notation helps to alleviate this problem.
  For instance, if we want to define a morphism $f \colon M_1 \times_X M_2 \to N_1 \times_Y N_2$.
  Then we may write:
  \[ f(m_1,m_2) := (f_1(m_1),f_(m_2)) \, . \]
  Formally, this means $f := (f_1 \circ \pr_1) \times (f_2 \circ \pr_2)$. The expression can be made precise if we think of $m_1$ as the generalized element $\pr_1 \colon M_1 \times_X M_2 \to M_1$ and $m_2$ as the generalized element $\pr_2 \colon M_1 \times_X M_2 \to M_2$.
\end{remark}
\subsection{Grothendieck topology}
The categories that we are interested in come with topological structures.
The concept of a Grothendieck topology allows us to explore the consequences of such a structure in the abstract.
We will begin with the concept of a sieve, which is just a collection of morphisms closed under right composition. Throughout this subsection, $\C$ is a fixed category.

\begin{definition}\label{sieve}
  A \emph{sieve $S$ on an object $M \in \C_0$} is a (possibly empty) collection of morphisms whose target is $M$, such that $S$ is closed under precomposition. That is, for all $f \in S$ and $g \in \C_1$ then $f \circ g \in S$ whenever it is defined.
\end{definition}
Let us make a few remarks about sieves:
  \begin{itemize}
    \item An arbitrary subcollection $S' \subset \Hom(-,M)$ defines always defines a sieve $S := \{ f \circ g : f \in S', \, g \in \C_1 \}$. In such a case we say that $S$ is \emph{generated by} $S'$.
    \item Given a sieve $S$ on $M$ and a morphism $g \colon N \to M$, the collection $g^* S := \{ f \colon g \circ f \in S \}$ is called the \emph{pullback sieve} along $g$.
    \item Given two sieves $S$ and $T$ on $M$, the intersection $S \cap T$ is also a sieve.
    \item Give a sieves $S = \{ s_i \colon U_i \to M \}_{i \in I}$ and $T = \{ t_j \colon V_j \to M \}_{J \in J}$ let $S \times_M T$ denote the collection of fiber products $\{s_i \times_M t_j \}_{(i,j) \in I \times J}$.
    $S_1 \times_M S_2$ is a sieve. However, if either $S_1$ or $S_2$ is generated by cartesian morphisms, then $S_1 \times_M S_2$ generates $S_1 \cap S_2$.

    When $S = T$ we denote the elements of $S \times_M S$ by $\{ s_{i j} \colon U_{i j} \to M \}$. We extend this notation to higher products as well, where $s_{ijk} \colon U_{ijk} \to M$ denotes the map $U_i \times_M U_j \times_M U_k \to M$.
  \end{itemize}
  We can now define a Grothendieck topology. Roughly speaking, a Grothendieck topology is a collection of distinguished sieves which are designated covering sieves.
  Covering sieves should roughly behave like a sieve generated by open embeddings whose images cover the target.
  \begin{definition}\label{defn:grotop}
    A Grothendieck topology on a category $\C$ is an assignment to each object $M$, a collection of subsets of $\Hom(-,M)$ called \emph{covering sieves}. This assignment must satisfy the following properties:
  \begin{enumerate}[(T1)]
      \item The whole of $\Hom(-,M)$ is a covering sieve of $M$.
      \item If $S$ is a covering sieve of $M$ and $g\colon Y \to M$ is any morphism, then the pullback $g^*S$ is a covering sieve of $Y$.
      \item Suppose $S$ is a covering sieve on $M$ and $T$ is an arbitrary sieve on $M$ such that $g^*T$ is a covering sieve for all $g \in S$. Then $T$ is a covering sieve.
  \end{enumerate}
  \end{definition}
  The condition (T2) should be interpreted as being analogous to the fact that the inverse image of an open cover is an open cover. (T3) is corresponds to the observation that a collection of subsets is an open cover if and only if it covers every element of a covering. In practice, defining a Grothendieck topology is often easier to do in terms of generators called covering families.
\begin{definition}\label{defn:pretop}
Let $\C$ be a category.
A Grothendieck pre-topology is an assignment to each object $M$ in $\C$ of a collection of subcollections of the set $\Hom(-,M)$ called \emph{covering families}.
This assignment must satisfy some properties.
\begin{enumerate}[(PT1)]
\item If $f\colon N \to M$ is an isomorphism then $\{ f \}$ is a covering family.
\item If $\{ u_i\colon N_i \to M \}$ is a covering family then each $u_i$ is cartesian and $\{ \pr_2\colon N_i \times_M N \to N \}$ is a covering family of $N$.
\item If ${\{ u_i\colon N_i \to M \}}_{i \in I}$ is a covering family of $M$ and ${\{ N_{ij} \to N_i \}}_{j \in J_i}$ is covering family of $N_i$ for each $i$, then the compositions $\{ N_{ij} \to N_i \to M \}$ constitute a covering family of $M$.
\end{enumerate}
To a pre-topology, we associated a Grothendiek topology by defining the covering sieves to be those sieves which are generated by covering families.
\end{definition}
\begin{example}
  We can equip the category of open subsets of a topological sets $X$ with a pre-topology by defining $\{ u_i: U_i \into U \}$ be a covering family if $\bigcup U_i = U$. That is, a covering family is a covering in the conventional sense.
\end{example}
\begin{example}\label{example:pretopman1}
If $\C = \Man$, then we can give $\Man$ the following pre-topology: a covering $\{ u_i\colon U_i \to M \}$ of manifold $M$ is a collection of \'etale smooth maps $u_i$ whose images cover $M$.
This is the same pre-topology used in~\cite{PXstacks}.
\end{example}
\begin{example}\label{example:pretopman2}
If $\C = \Man$, then we can give $\Man$ the following pre-topology: a covering $\{ u_i\colon U_i \to M \}$ of manifold $M$ is a collection of open embeddings $u_i$ whose images cover $M$.
\end{example}

While Example~\ref{example:pretopman1} and Example~\ref{example:pretopman2} are different pre-topologies, it is easy to check that they generate the same Grothendieck topology.

\begin{definition}\label{defn:site}
  A \emph{site} is a category $\C$ together with a Grothendieck topology
\end{definition}
\section{Stacks}\label{section:stacks}

\subsection{Fibered categories}
There are a few different ways to present the subject, but for our purposes a fibered category will be a functor into $\C$ satisfying certain properties.
We will typically denote the domain of this functor with $\X$, $\Y$, or $\Zcal$. Objects in these categories will be typically denoted with upper case letters $X$, $Y$ and $Z$, while morphisms will be denoted $a$, $b$, and $c$.
\begin{definition}\label{defn:cfg}
A \emph{category fibered in groupoids} (abbreviated CFG) over a category $\C$ is a category $\X$ together with a functor $\pi\colon \X \to \C$ such that the following properties hold.
\begin{enumerate}[(CFG1)]
\item Given any morphism $f\colon M \to N$ in $\C$ and object $Y$ in $\X$ such that $\pi(Y) = N$, then there exists a morphism $a\colon X \to Y$ in $\X$ such that $\pi(a)=f$.
\[
\begin{tikzcd}
   \exists X \arrow[dashed, r, "\exists a"] \arrow[dash, d] & Y \arrow[dash, d] \\
   M \arrow[r, "f"] & N
\end{tikzcd}
\]
\item Given morphisms $f\colon M_1 \to M_2$ and $g\colon M_2 \to M_3$ in $\C$ together with $a\colon X \to Z$ and $b\colon Y \to Z$ such that $\pi(a) = g \circ f$ and $\pi(b) = g$, then there exists a unique morphism $c$ such that $\pi(c) = f$ and $b \circ c = a$.
\[
\begin{tikzcd}
  X \arrow[dash, d] \arrow[rr, bend left, "a"] \arrow[r, dashed, "\exists! c"] & Y \arrow[dash, d] \arrow[r, "b"] & Z\arrow[dash, d] \\
  M_1 \arrow[r, "f"]& M_2 \arrow[r, "g"] & M_3
\end{tikzcd}
\]
\end{enumerate}
\end{definition}
\begin{example}
  Given any object $M$ in $\C$, let $\bar{M}$ denote the slice category of $M$. Let $\pi\colon \bar{M} \to \C$ be the functor:
  \[
  \left(\begin{tikzcd}
    N_1 \arrow[d, "g"]\arrow[r, "f_1"] & M \arrow[d, equal] \\
    N_2 \arrow[r, "f_2"] & M
  \end{tikzcd}\right)
  \mapsto g \colon N_1 \to N_2
  \]
  It is straightforward to verify that this functor satisfies the axioms of a category fibered in groupoids.
  \end{example}
  \begin{example}
    Let $F \colon \C^{\mathbf{op}} \to \mathbf{Set}$ be a presheaf. That is, $F$ is a contravariant functor from $\C$ to the category of sets. Then we can define a CFG over $\C$ as follows. The object collection of $\X$ is defined to be the disjoint union of the sets $F(M)$ for each object $M$ in $\C$. There is a morphism $a \colon X \to Y$ in $\X$ if and only if there exists an $f$ such that $F(f)(Y) = X$.
  \end{example}
The name `fibered in groupoids' arises from the following observation.
Fix an object $M$ of $\C$ and consider the subcategory $X_M$ which is defined as below:
\[ (\X_M)_0 := \{X \in \X_0 : \pi(X) = M \} \qquad (\X_M)_1 := \{ a \in \X_1 : \pi(a) = 1_M \} \, . \]
\begin{lemma}\label{lemma:CFGgroupoid}
  $\X_M$ is a groupoid. That is, every morphism in $\X_M$ has an inverse.
\end{lemma}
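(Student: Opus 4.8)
The plan is to show that every morphism $a\colon X \to Y$ in $\X_M$ admits a right inverse, and then to promote this to a genuine two-sided inverse by a short formal argument. Since $\pi(a) = \Id_M$ is an isomorphism in $\C$, everything will take place over the identity $\Id_M$, so that every auxiliary morphism produced along the way automatically lies in $\X_M$; this is what makes the fiber, rather than merely the total category, behave well.

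For the right inverse I would apply (CFG2) with all three base objects equal to $M$ and with $f = g = \Id_M$. Taking the two distinguished morphisms into the common target $Y$ to be $a\colon X \to Y$ (covering $g = \Id_M$) and $\Id_Y\colon Y \to Y$ (covering $g \circ f = \Id_M$), the axiom yields a unique morphism $b\colon Y \to X$ with $\pi(b) = \Id_M$ and $a \circ b = \Id_Y$. Thus $b$ is a morphism of $\X_M$ and a right inverse of $a$. The only thing requiring care here is the bookkeeping: matching the source, target, and covering data of (CFG2) to the situation at hand, and checking that $\Id_M \circ \Id_M = \Id_M$ so that the hypotheses of the axiom are genuinely met.

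Since $a$ was arbitrary, the same construction applied to $b$ produces a morphism $c\colon X \to Y$ in $\X_M$ with $b \circ c = \Id_X$. A purely formal computation then finishes the proof: from $a \circ b = \Id_Y$ and $b \circ c = \Id_X$ one gets $c = (a \circ b) \circ c = a \circ (b \circ c) = a$, whence $b \circ a = b \circ c = \Id_X$. Together with $a \circ b = \Id_Y$ this exhibits $b = a\inv$. The main subtlety I anticipate lies not in any single step but in recognizing that (CFG2) by itself delivers only \emph{right} inverses; the essential trick is to invoke it twice and use associativity to collapse the two one-sided inverses into one. This is precisely the standard argument that a morphism lying over an isomorphism in a category fibered in groupoids is itself invertible.
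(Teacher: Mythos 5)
Your proof is correct, and it diverges from the paper's in the second half. The first step is identical: both you and the paper apply (CFG2) over $f = g = \Id_M$, with $\Id_Y$ as the morphism covering $g \circ f$ and $a$ as the one covering $g$, producing $b \colon Y \to X$ in $\X_M$ with $a \circ b = \Id_Y$. The paper then finishes by a second appeal to (CFG2) with the composite $a$ along the top and $a$ itself as the second leg: both $b \circ a$ and $\Id_X$ solve this lifting problem, so the \emph{uniqueness} clause forces $b \circ a = \Id_X$. You instead iterate the right-inverse construction on $b$ to obtain $c$ with $b \circ c = \Id_X$, then collapse $c = (a \circ b) \circ c = a \circ (b \circ c) = a$ by associativity. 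Your route is the generic argument that a category in which every morphism has a right inverse is a groupoid, and notably it uses only the \emph{existence} half of (CFG2) (in both invocations), whereas the paper's leans on uniqueness; the trade-off is that you introduce one extra morphism and one extra formal computation, while the paper's version is a line shorter. Both arguments correctly keep every auxiliary morphism over $\Id_M$, so the inverses produced do lie in $\X_M$, as you point out explicitly — a detail the paper leaves implicit. One cosmetic remark: the morphism $b$ with $a \circ b = \Id_Y$ is a right inverse of $a$ (your terminology), and your bookkeeping of which morphism plays which role in (CFG2) is accurate throughout.
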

\begin{proof}
  Let $a\colon X \to Y$ be a morphism in $\X_M$. Then consider the following diagram:
  \[
  \begin{tikzcd}
    Y \arrow[dash, d] \arrow[rr, bend left, "1_Y"]  & X \arrow[dash, d] \arrow[r, "a"] & Y\arrow[dash, d] \\
    M \arrow[r, "1_M"]& M \arrow[r, "1_M"] & M
  \end{tikzcd}
  \]
  (CFG2) says that there exists a unique $b: Y \to X$ such that $a \circ b = 1_Y$.
  This shows that $a$ admits a left inverse. On the other hand, consider the related diagram:
  \[
  \begin{tikzcd}
    X \arrow[dash, d] \arrow[rr, bend left, "a"]  & X \arrow[dash, d] \arrow[r, "a"] & Y\arrow[dash, d] \\
    M \arrow[r, "1_M"]& M \arrow[r, "1_M"] & M
  \end{tikzcd}
  \]
  We can see immediately that both $b \circ a$ and $1_X$ satisfy the existence invoked in (CFG2) and, since such a morphism must be unique, they are equal.
\end{proof}
The objects guaranteed by (CFG1) are called a \emph{pullback} of $Y$ along $f$ and we will denote them by $f^*Y$ or $Y|_M$.
Due to the fact that the pullback is only unique up to a unique isomorphism, this is a slight abuse of notation.
Generally, in the following text one should interpret statements about $f^* Y$ and $Y|_M$ as holding for an arbitrary choice of pullback representative.
Hence, we are claiming implicitly that our arguments and definitions are invariant under this choice.

Linguistically, we think of a CFG $\X$ as sitting `above' $\C$.
Hence a morphism in $a \in \X$ is said to \emph{over} a morphism $f$ in $\C$ if $\pi(a) = f$.
Similarly, an object $X \in \X$ is said to be \emph{over} $\pi(x) \in \C$.
\begin{lemma}
Given $\pi(Y) = N$ and $f: M \to N$, then $f^* Y$ is unique up to a unique isomorphism. That is, suppose we are given $a_1 \colon X_1 \to Y$ and $a_1 \colon X_1 \to Y$ such that $\pi(a_i) = f$ for $i = 1,2$.
Then there exists a unique isomorphism $b: X_1 \to X_2$ such that $a_2 \circ b = a_1$.
\end{lemma}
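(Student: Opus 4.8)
The plan is to build $b$ directly from the factorization axiom (CFG2) and then observe that it is automatically invertible because it is a morphism in a fiber groupoid.

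First I would set up the application of (CFG2). Both $X_1$ and $X_2$ lie over $M$, since $\pi(a_i) = f \colon M \to N$. Take the base factorization $M \xrightarrow{1_M} M \xrightarrow{f} N$, and in the notation of (CFG2) let the target object be $Y$, let the morphism ``$a$'' be $a_1 \colon X_1 \to Y$ and the morphism ``$b$'' be $a_2 \colon X_2 \to Y$. The hypotheses hold, since $\pi(a_1) = f = f \circ 1_M$ and $\pi(a_2) = f$. Hence (CFG2) produces a unique morphism $b \colon X_1 \to X_2$ with $\pi(b) = 1_M$ and $a_2 \circ b = a_1$. This simultaneously delivers existence and, via the uniqueness clause of (CFG2), uniqueness of $b$ among morphisms lying over $1_M$.

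It remains to see that $b$ is an isomorphism. Since $\pi(b) = 1_M$ and $X_1, X_2$ both lie over $M$, the morphism $b$ belongs to the fiber category $\X_M$. By Lemma~\ref{lemma:CFGgroupoid}, $\X_M$ is a groupoid, so $b$ is invertible and hence an isomorphism. If one prefers a self-contained argument, run the construction symmetrically to obtain $b' \colon X_2 \to X_1$ over $1_M$ with $a_1 \circ b' = a_2$; then $b' \circ b$ and $1_{X_1}$ both lie over $1_M$ and satisfy $a_1 \circ (-) = a_1$, so they agree by the uniqueness in (CFG2), and symmetrically $b \circ b' = 1_{X_2}$.

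No step involves a genuine computation, so there is no real obstacle beyond the bookkeeping of which object and base map play which role in (CFG2). The one point requiring care is the reading of ``unique'': it should be understood as uniqueness of the \emph{vertical} comparison isomorphism, i.e. among morphisms over $1_M$. Indeed, a nontrivial automorphism $\phi$ of $M$ with $f \circ \phi = f$ could in principle yield a non-vertical isomorphism also satisfying $a_2 \circ b = a_1$, so the canonical statement is the uniqueness of the over-identity isomorphism supplied by (CFG2). This is precisely the slogan that $f^* Y$ is well defined up to unique (vertical) isomorphism, which is what legitimizes the notations $f^* Y$ and $Y|_M$.
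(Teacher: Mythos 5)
Your proof is correct and matches the paper's argument: both apply (CFG2) to the factorization $f = f \circ 1_M$ to obtain the unique $b$ over $1_M$ with $a_2 \circ b = a_1$, and then invoke Lemma~\ref{lemma:CFGgroupoid} to conclude $b$ is invertible. Your added remark that uniqueness should be read among morphisms over $1_M$ is a sensible clarification, but the route is the same as the paper's.
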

\begin{proof}
  Suppose there exists $a_1 \colon X_1 \to Y$ and $a_2\colon X_2 \to Y$ such that $\pi(a_1) = \pi(a_2) = f$. Then we have a diagram:
  \[
  \begin{tikzcd}
    X_1 \arrow[dash, d] \arrow[rr, bend left, "a_1"]  & X_2 \arrow[dash, d] \arrow[r, "a_2"] & Y\arrow[dash, d] \\
    M \arrow[r, "1_M"]& M \arrow[r, "f"] & N
  \end{tikzcd}
  \]
  Then (CFG2) tells us that there exists a unique morphism $b$ such that $a_2 \circ b = a_1$.
  Since $b \in \X_M$, Lemma~\ref{lemma:CFGgroupoid} tells us that $b$ is an isomorphism.
\end{proof}
\begin{definition}\label{defn:morphismofcfgs}
A morphism of CFGs $\F\colon \X \to \Y$ is a functor which commutes with the projections to $\C$.
A morphism of CFGs is called an \emph{isomorphism} if it is an equivalence of categories.
A 2-morphism $\eta\colon \F_1 \to \F_2$ is a (necessarily invertible) natural transformation of functors.
\end{definition}
\begin{definition}
  A CFG is called \emph{representable} if it is isomorphic to $\bar M$ for some object $M \in \C$.
\end{definition}
Formally, this notion of morphisms and 2-morphisms makes CFGs over $\C$ into a strict bicategory (see Definition~\ref{defn:strict2category}). We will keep the more cumbersome technical results regarding bicategories in the appendix for the sake of brevity.
The above bicategory data is clearly coherent since it is strict.
Bicategories have their own version of the fiber product which we call the \emph{homotopy fiber product} and is denoted with $\til\times$ (see Definition~\ref{defn:2pullback}).

\subsection{Stacks}\label{subsection:stacks}

A stack is just a CFG over a site which satisfies some `gluing' axioms. These axioms can roughly be translated as saying that both morphisms and objects are constructed from local data. Before we state the definition, we should clarify a minor bit of notation:

Let $\X$ be a CFG over $\C$. Let $u \colon U \to M$ be a morphism in $\C$
 and $\phi\colon P \to Q$ be a morphism in $\X$ covering $\Id_M$. Given pullbacks $P|_U$ and $Q|_U$, then we denote by $\phi|_U: P|_U \to Q|_U$ the unique morphism such that:
  \[
  \begin{tikzcd}
    P \arrow[r, "\phi"] & Q \\
    P|_U \arrow[u] \arrow[r, "\phi|_U"] & Q|_U \arrow[u]
  \end{tikzcd}
  \]
  commutes.
The existence and uniqueness of this morphism is an immediate consequence of (CFG2).

\begin{definition}\label{defn:stack}
A CFG $\X$ over a site $\C$ is called a \emph{stack} if it satisfies:
\begin{enumerate}[(S1)]
  \item Given the following data:
    \begin{itemize}
      \item a covering sieve ${\{ s_i \colon U_i \to M \}}_{i \in I}$,
      \item objects $P$ and $Q$ in $\X_M$,
      \item morphisms $\phi_i \colon P|_{U_i} \to Q|_{U_i}$ covering the identity,
    \end{itemize}
    such that for all $s_i$ and $s_j$:
  \[ \phi_i|_{U_{ij}} = \phi_j|_{{U_{ij}}} \, . \]
  There exists a unique morphism $\phi: P \to Q$ such that $\phi|_{P|_{U_i}} = \phi_i$.
\item Given the following data:
      \begin{itemize}
        \item a covering family ${\{ s_i \colon U_i \to M \}}_{a \in A}$ of an object $M$ in $\C$,
        \item a collection of objects ${\{ P_i \in \X_{U_i} \}}_{i \in A}$,
        \item morphisms $\phi_{ij}: P_j|_{U_{ij}} \to P_i|_{U_{ij}}$
      \end{itemize}
        such that, for all $i,j,k \in I$, $\phi_{ij}|_{U_{ijk}} \circ \phi_{jk}|_{U_{ijk}} = \phi_{ik}|_{U_{ijk}}$.

        There exists an object $P$ over $M$ together with morphisms ${\{ \phi_i: P|_{U_i} \to P_i \}}_{i \in A}$ such that
        \[ \phi_{ij} \circ \phi_{j}|_{U_{ij}} = \phi_i|_{U_{ij}}. \]
\end{enumerate}
A CFG which satisfies at least (S1) is called a \emph{pre-stack}.
\end{definition}
\begin{example}
  Let $\C$ be any site. Then $\Id_\C \colon \C \to \C$ is a stack. It is representable if and only if $\C$ has a terminal object.
\end{example}
\begin{example}
  Let $\C$ be any site, then any representable CFG $\X \isom \C/M$ is a stack if and only if $\Id \colon \C \to \C$ is a (pre-)stack.
\end{example}
\begin{example}
  Let $\C$ be the site of smooth manifolds and let $G$ be a Lie group. Then $\B G$, the category of principal $G$-bundles, is a stack.
\end{example}
\begin{example}
  Let $F\colon \C^{op} \to \mathbf{Set}$ be a presheaf. Then the associated CFG $\X$ is a pre-stack if an only if it satisfies the locality axiom, i.e.:
  \[ s|_{U_i} = t|_{U_i} \quad \forall U_i \quad \Rightarrow \quad s = t \, . \]
  Furthermore $F$ is a sheaf if and only if $\X$ is a stack.
\end{example}
\subsection{Morphisms of stacks}

Morphisms of stacks are defined to be morphisms of the underlying CFGs.
In this subsection we will review a few important classes of such morphisms.
\begin{definition}
  Let $F: \X \to \Y$ be a morphism of CFGs over a site.
  \begin{itemize}
  \item We say $F$ is \emph{locally essentially surjective} or an \emph{epimorphism} if for any object $Y \in \Y_M$, there is a covering sieve ${\{ U_a \}}_{a \in A}$ of $M$ such that each $Y|_{U_a}$ is isomorphic to an object in the image of $F$.
  \item We say $F$ is \emph{representable} if for any morphism $\bar{M} \to \Y$ with $M \in \C$, the fiber product $\X \til\times_\Y \bar{M}$ is representable.
  \item We say $F$ is a \emph{submersion} if it is a representable epimorphism.
  \item We say $F$ is a \emph{monomorphism} if it is fully faithful.

\end{itemize}
\end{definition}

There is a one-to-one correspondence between morphisms of representable stacks $F: \bar{M} \to \bar{N}$ and morphisms $F'\colon M \to N$ in $\C$. In other words, the functor $M \mapsto \bar{M}$ is fully faithful (the Yoneda lemma).
We should clarify that that for our examples, the term submersion will actually correspond to \emph{surjective} submersions.
\begin{example}\label{ex:morphismsofreps}
  Let $f\colon M \to N$ be a morphism in $\C$ and let $\bar{f}\colon \bar M \to \bar N$ be the associated morphism of representable CFGs. Then $\bar f$ is an epimorphism if and only if $f$ admits local sections. That is there exists a covering $S = \{ s_i \colon U_i \to M \}$ of $N$ together with morphisms $\{ \sigma_i \colon U_i \to M \}$ such that $f \circ \sigma_i = s_i$ for all $s \in S$.
  \[ \begin{tikzcd}
    & U_i \arrow[dr, "{s_i}"] \arrow[dl, dashed, "{\exists \sigma_i}" swap] & \\
    M \arrow[rr, "f"] & & N
  \end{tikzcd} \]

  The functor $\bar f$ is always a monomorphism. Lastly, $\bar{f}$ is representable if and only if it is cartesian. Generally, we will say that a morphism $f$ in a site $\C$ is a submersion if and only if $\bar{f}$ is a submersion.
\end{example}

\begin{definition}
  A stack $\X$ is \emph{geometric} or \emph{presentable} if there exists a submersion (i.e. a representable epimorphism) $\bar{M} \to \X$.
\end{definition}
The significance of this definition will be made more clear in the next section, where we begin our discussion of groupoids.

\section{Groupoids}\label{section:groupoids}

Throughout this section $\C$ will be some choice of site as defined above. Objects of $\C$ will be denoted with letters such as $M$ and $N$. Our goal is to develop the correspondence between groupoids internal to a site and geometric stacks. We will develop this correspondence in a relatively high level of generality to unify the proofs for specific cases. To begin, we will need to make sure our working conditions are reasonable, and so we will need to make a few additional assumptions about $\C$.

\begin{definition}\label{defn:sub}
To any site $\C$, there is a distinguished class of maps that we call \emph{submersions}. A morphism $f \colon M \to N$ in $\C$ is called a \emph{submersion} if it is cartesian (always admits fiber products) and $f$ admits local sections.
By local sections, we mean that there exists a covering $S = \{ s_i \colon U_i \to M \}$ of $N$ together with morphisms $\{ \sigma_i \colon U_i \to M \}$ such that $f \circ \sigma_i = s_i$ for all $s \in S$.
\[ \begin{tikzcd}
  & U_i \arrow[dr, "{s_i}"] \arrow[dl, dashed, "{\exists \sigma_i}" swap] & \\
  M \arrow[rr, "f"] & & N
\end{tikzcd} \]
The submersions in $\C$ are precisely those morphisms which give rise to submersions of the associated representable stacks (see Example~\ref{ex:morphismsofreps}.)

There is a CFG of submersions which we call $\mathbf{Sub}$.
The objects $\mathbf{Sub}$ are defined to be the submersions in $\C$. The morphisms of $\mathbf{Sub}$ are defined to be pullback squares:
\[
\begin{tikzcd}
  f^* P \arrow[r] \arrow[d] & P \arrow[d, "p"] \\
  N \arrow[r, "f"] & M
\end{tikzcd}
\]
Composition is defined by horizontal composition of pullback squares. The CFG projection $\mathbf{Sub} \to \C$ is defined by:
\[
\begin{tikzcd}
  f^* P \arrow[r] \arrow[d] & P \arrow[d, "p"] \\
  N \arrow[r, "f"] & M
\end{tikzcd}
\mapsto
\begin{tikzcd}
  N \arrow[r, "f"] & M
\end{tikzcd}
\]
It is not difficult to check that $\mathbf{Sub}$ is a CFG. Axiom (CFG1) follows from the fact that submersions are stable under base change, (corollary of Lemma~\ref{lemma:basechangelemma}). Axiom (CFG2) follows from the universal property of pullback squares.
\end{definition}
\begin{definition}\label{defn:goodsite}
  A site $\C$ is \emph{good} if the following hold:
  \begin{enumerate}[(GS1)]
    \item $\C$ has an initial object. That is, there exists an object $\emptyset \in \C$ such that for anther other object $M$ in $\C$, there is a unique morphism $\emptyset \to M$.
    \item Every covering sieve is generated by some set of cartesian morphisms.
    \item Morphisms in $\C$ are local. That is, given a pair of objects $M$ and $N$ together with a covering sieve $S = \{ s_i \colon U_i \to M \}$ on $M$ and maps $f_i \colon U_i \to N$ such that the following diagram commutes:
    \[
    \begin{tikzcd}
     U_{ij} \arrow[d] \arrow[r] &  U_i \arrow[d, "f_i"] \\
     U_j \arrow[r, "f_j"] & N
    \end{tikzcd}
    \]
    there exists a unique morphism $f \colon M \to N$ such that $f \circ s_i = f_i$ for all $s_i$.
    \item The CFG of submersions $\Sub$ (Definition~\ref{defn:sub}) is a stack.
    \item If $f \circ g$ is a submersion, then $f$ is a submersion.
  \end{enumerate}
\end{definition}

The requirement of the existence of an initial object is for fairly minor technical reasons.
For $\Man$ the initial object is the \emph{empty manifold}. We need such an object so that fiber products will exist in cases where the images of the smooth maps are disjoint. We do not require the existence of a terminal object as this fails for some of the examples of sites which we are interested (i.e $\DMan$). The requirement of cartesian generators is necessarily standard either, but it prevents some aberrant behavior and the author is not aware of an example of an interesting site which does not satisfy this property.

Condition (GS3) is necessary since we need a way of constructing new morphisms in $\C$.
It is also reasonable to expect that morphisms in $\C$ are local with respect to the ambient topology.
Note that (GS3) implies that $\Sub$ is a pre-stack and so (GS4) is about gluing of objects.
(GS4) also implies that being a submersion is local in the codomain.
The last condition, (GS5) is needed to prove Lemma~\ref{lemma:invertiblegmor}.
There are weaker versions that suffice to prove this lemma, but we have chosen an easy to state version that resembles the smooth case.

From here on out, and throughout the appendix, we assume that $\C$ is a good site.

\begin{remark}
  For some settings, it may be prudent to be more restrictive with the definition of a submersion. For instance, it may be the case that the set of cartesian epimorphisms do not satisfy (GS5). In this situation, the arguments in this text will still work so long as our designated class of maps satisfy the following properties:
\begin{itemize}
  \item they must be submersions in our weaker sense (cartesian epimorphisms);
  \item they must include isomorphisms;
  \item they must make $\C$ into a good site (i.e. they satisfy (GS4) and (GS5)).
\end{itemize}
In such a setting, one should adjust the definition of a presentation accordingly.
\end{remark}
\subsection{Internal groupoids}

Roughly speaking, an internal groupoid is a groupoid whose arrows and units are objects of a category (or a site). To do this, we will transfer the groupoid data into a collection of morphisms such that a collection of diagrams commutes. Throughout this subsection, $\C$ denotes an

\begin{definition}\label{defn:internalgroupoid}
  A groupoid internal to a site $\C$, also called a $\C$-groupoid, consists of the following data:
  \begin{itemize}
    \item objects $\G$ and $M$ in $\C$ called the arrows and units,
    \item submersions $\s, \t \colon \G \to M$, called the \emph{source} and \emph{target} (see Definition~\ref{defn:sub} for the definition of a submersion in a general site),
    \item morphisms $\u\colon M \to \G$ and $\i \colon \G \to \G$ called the \emph{unit} and \emph{inverse}
    \item a morphism $\m \colon \G \times_{\s,\t} \G \to \G$.
  \end{itemize}
    such that (A)-(F) of Definition~\ref{defn:category} hold (when interpreted as commutative diagrams in $\C$). We also require that the following diagrams commute:
    \begin{enumerate}[(G1)]
      \item ($\i$ is an involution)
      \[
      \begin{tikzcd}
         & \G \arrow[dr, "\i"] & \\
         \G \arrow[ur, "\i"] \arrow[rr, "\Id"] & & \G
      \end{tikzcd}
      \]
      \item (compatibility of $\i$ with $\s$ and $\t$)
      \[\begin{tikzcd}
       & M & \\
       \G \arrow[ur, "\s"] \arrow[rr, "\i"] & & \G \arrow[ul, "\t" swap]
      \end{tikzcd}\]
      \item (compatibility of $\i$ with $\m$)
      \[
      \begin{tikzcd}
        \G \arrow[d, "\t"] \arrow[r, "\Id \times \i"] & \G \times_{\s,\t} \G \arrow[d, "\m"] \\
        M \arrow[r, "\u"] & \G
      \end{tikzcd}
      \]
    \end{enumerate}
\end{definition}
\begin{example}
  A $\Man$-groupoid is called a \emph{Lie groupoid}.
\end{example}
\begin{example}
  Given any $\C$-groupoid $\G \grpd M$, we can construct the \emph{opposite} groupoid $\G^{op}$ for which the source and target maps are reversed, and the multiplication morphism comes from the composition:
   \[  \G \times_{\t, \s} \G \to \G \times_{\s,\t} \G \to \G . \]
\end{example}
\begin{example}
  Any object $M \in \C$ has an associated \emph{trivial groupoid} $1_M$ for which the arrows and objects are both $M$ and the source and target maps are identities.
\end{example}
We will use the notation $G \rightrightarrows M$ to denote a $\C$-groupoid whose arrows are $\G$ and units are $M$. The notation $\G\enn$ will denote the $n$-fold fiber product $\G \times_{\s,\t} \cdots \times_{\s, \t} \G$. By convention we define $\G\zero := M$.

Since $\G$ comes with two morphisms to $M$, there is some ambiguity when writing things such as $\G \times_M N$.
To alleviate this, given $f \colon N \to M$, when we take a fiber product on the right side, as in $\G \times_M N$ we always mean $\G \times_{\s, f} N$.
Similarly, $N \times_M \G = N \times_{f, \t} \G$. From this convention, it should be clear that we think of arrows in the groupoid as going from right to left.

\begin{definition}\label{defn:internalmorphism}
  Let $\G \grpd M$ and $\H \grpd N$ be $\C$-groupoids. A \emph{morphism} of $\C$-groupoids, denoted $F \colon \G \to \H$ consists of two morphisms $F_1 \colon \G \to \H$ and $F_0 \colon M \to N$ such that the following diagrams commute:
  \begin{enumerate}[(GM1)]
  \item (Compatibility of $F$ with $\s$ and $\t$)
  \[
    \begin{tikzcd}
      \G \arrow[d,"\s"] \arrow[r, "F_1"] & \H \arrow[d,"\s"] \\
      M \arrow[r, "F_0"] & N
    \end{tikzcd}
    \quad \mbox{ and } \quad
    \begin{tikzcd}
      \G \arrow[d,"\t"] \arrow[r, "F_1"] & \H \arrow[d,"\t"] \\
      M \arrow[r, "F_0"] & N
    \end{tikzcd}
    \]
    \item
    \[
    \begin{tikzcd}[column sep= large]
      \G\two \arrow[rr, "{(F \circ \pr_1) \times (F \circ \pr_2)}"] \arrow[d, "\m"] & & \H\two  \arrow[d, "\m"] \\
      \G \arrow[rr, "F_1"] & & \H
    \end{tikzcd}
    \]
\end{enumerate}
\end{definition}
\begin{remark}
  We may sometimes denote the multiplication morphism with $\cdot$. For instance, $f \cdot g$ is a synonym for $\m \circ (f \times g)$. Of course, the associativity of $\m$ implies that $(f \cdot g) \cdot h = f \cdot (g \cdot h)$ whenever such morphisms are well defined.
\end{remark}
\subsection{Internal actions}

\begin{definition}\label{defn:action}
Let $\G \grpd M$ be a $\C$-groupoid. Then a \emph{left} $\G$ action on an object $P$ in $\C$ consists of the following data:
\begin{itemize}
  \item a morphism $\t^P \colon P \to M$,
  \item a morphism $\m_L \colon \G \times_M P \to P$
\end{itemize}
such that the following diagrams commute:
\begin{enumerate}[GA1]
  \item (Compatibility of $\m_L$ with units)
  \[\begin{tikzcd}
     & \G \times_M P \arrow[dr, "\m_L"] & \\
     P \arrow[ur, "{(\u \circ J) \times \Id}"] \arrow[rr, "\Id"] & & P
  \end{tikzcd}\]
  \item (Associativity)
  \[
  \begin{tikzcd}[column sep = huge]
    \G \times_M \G \times_M P
    \arrow[d, "{(\m(\pr_1,\pr_2),\pr_3)}" swap]
    \arrow[rr, "{(\pr_1, (\m_L(\pr_2, \pr_3)))}"] & & \G \times_M P \arrow[d, "\m_L"] \\
    \G \times_M P \arrow[rr, "\m_L"]  & & P
  \end{tikzcd}
  \]
\end{enumerate}
A right action is defined in a symmetrical fashion. That is, we swap all instances of $\s$ and $\t$ and write $\m_R$ instead of $\m_L$.
\end{definition}
Like with groupoids, we will sometimes write $f \cdot g$ to mean $\m_L(f \times g)$.

In the case where $\C$ is manifolds or topological spaces. This corresponds to the standard definition of a groupoid action.

\begin{definition}
  Let $\G \grpd M$ be a $\C$-groupoid and $P$, $\m_L^{P}$, $\t^P$ be a left action of $\G$ on $P$ as above. Suppose $Q$, $m_L^Q$ and $\t^{Q}$ is another left $\G$ action. Then we say a morphism $\phi: P \to Q$ is $\G$-equivariant if:
  \begin{enumerate}[(GE1)]
    \item (Preserves the anchor)
    \[\begin{tikzcd}
       & M &
      \\
      P \arrow[ur, "\t^P"] \arrow[rr, "\phi"] & & Q \arrow[ul, "{\t^{Q}}" swap]
    \end{tikzcd}\]
    \item (Preserves the product)
    \[
    \begin{tikzcd}
      \G \times_M P \arrow[d, "\m_L^P"] \arrow[rr, "{(\pr_1,\phi \circ \pr_2)}"] & & \G \times_M Q \arrow[d, "\m_L^Q"] \\
      P \arrow[rr, "\phi"] & & Q
    \end{tikzcd}
    \]
  \end{enumerate}
\end{definition}

\begin{definition} Let $\G \grpd M$ be a $\C$-groupoid. A left $\G$-bundle over an object $N$ in $\C$, consists of a left $\G$ action on $P$, together with a submersion $\s^P \colon P \to M$ such that:
  \begin{enumerate}[GB1]
    \item (compatibility of $\m_L$ with $\s^P$)
    \[
    \begin{tikzcd}
      \G \times_M P \arrow[r, "\m_L"] \arrow[d, "\pr_2"] & P \arrow[d, "\s^P"] \\
      P \arrow[r, "\s^P"] & N
    \end{tikzcd}
    \]
  \end{enumerate}
  Furthermore, we say that the $\G$ is
  \begin{enumerate}[(GB2)]
    \item \emph{almost principal} if $\s^P \colon P \to N$ is a submersion and $\m_L \times \pr_2 \colon \G \times_M P \to P \times_{N} P$ is a submersion
    \item \emph{principal} if it is almost principal and $\m_L \times \pr_2 \colon \G \times_M P \to P \times_{N} P$ is an isomorphism (i.e. diagram (GB1) is a pullback square).
  \end{enumerate}
  Right $\G$-bundles are defined in the obvious symmetric manner.

  A left action on $P$ is called \emph{(almost) principal} if it can be made into a (almost) principal $\G$-bundle over some manifold $N$. Since such a submersion $P \to N$ is unique up to a unique isomorphism (see Lemma~\ref{lemma:equivariantmaps}) it makes sense to write $P /\G$ to indicate some canonical choice of $N$.

  If $P \to N$ and $Q \to N'$ are equipped with the structure of a $\G$-bundle, then a morphism of $\G$-bundles is a pair of maps $\phi \colon P \to Q$ and $f \colon N \to N'$ such that $\phi$ is $\G$-equivariant and the following commutes:
  \begin{enumerate}[(GBM)]
    \item (Compatible with projection)
    \[
    \begin{tikzcd}
      P \arrow[r, "\phi"] \arrow[d, "\s^P"] & Q \arrow[d, "\s^Q"] \\
      N \arrow[r, "f"] & N'
    \end{tikzcd}
    \]
    \end{enumerate}
\end{definition}
A left $\G$-bundle is illustrated with a diagram:
\[
\begin{tikzcd}
\G \darrow & P \laction \arrow[dl, "\t"] \arrow[dr, "\s" swap] & \\
M & & N
\end{tikzcd}
\]
The reader should check that in the case of $\C = \Man$ a principal $\G$-bundle is the standard object.
The morphism $A \colon \G \times_M P \to P \times_N P$ defined by
\[ A(g, p) = (g \cdot p, p) \] is called the \emph{total action}.
Note that the total action is an isomorphism implies the existence of a division map. That is, a morphism $\til \m_L \colon P \times_N P \to \G$ such that:
\[
\til \m_L(p,q) \cdot q = p
\]
The condition that the action is principal implies that the division map is uniquely defined by this property.

Given a $\G$-action on $P$. We say that the action is \emph{principal} if there exists an object $N$ and a submersion $P \to N$ which makes $P$ a principal $\G$-bundle over $N$.
In such a case, we denote $N$ by $P/\G$.
This notation is sensible since, by Lemma~\ref{lemma:equivariantmaps}, $N$ is unique up to a unique isomorphism.

To keep notation from getting out of hand, we will distinguish between $\s^P$ and $\s$ when it is not necessary. We think of ``arrows'' in a $\G$-bundle as going from right to left. For this reason, if $P$ is a \emph{right} $\G$-bundle, the morphism $\t^P$ is the projection to the base instead of $\s^P$.
\begin{example}
  Any $\C$-groupoid $\G \grpd M$ can be made into a $\G$-bundle over $M$ by left multiplication. The inverse of the total action can be constructed directly using the division map on the groupoid $\m \circ (\pr_1 \times (\i \circ pr_2))$.
\end{example}
\begin{example}
  Suppose $P$ is a left $\G$-bundle. Then we can form a right $\G$-bundle called $P^{op}$ whose total space is the same as $P$ and whose source is the target of $P$. The right action is define in the obvious way:
  \[ p \cdot g := \i(g) \cdot p \]
\end{example}
\begin{example}
  Let $\G \grpd M$ be a $\C$-groupoid and suppose $f \colon N \to M$ is a morphism in $\C$. Then $\G \times_M N$ naturally inherits the structure of a left principal $\G$-bundle. Such a bundle is called the \emph{trivial bundle} associated to $f$.
\end{example}
\begin{example}\label{ex:pullbackbundle}
  Suppose $P \to N$ is a principal (left) $\G$-bundle and $f \colon N' \to N$ is a morphism in $\C$. Then there exists a unique $\G$-bundle structure on $f^* P := P \times_N N'$ such that $f^* P \to P$ is a morphism of $\G$-bundles. Such a bundle is called the \emph{pullback} bundle. The multiplication map on $f^* P$ is defined below:
  \[ \m_L^{f^* P}(g,(g',p)) := (g \cdot g',p ) \]

  From this point of view, the trivial bundle associated to a morphism $f \colon N \to M$ is the pullback of $\G$ as a $\G$-bundle. For this reason we will use $f^* \G$ to denote the trivial $\G$ bundle associated to $f$.
\end{example}
A \emph{trivialization} of a $\G$-bundle $P$ is a $\G$-bundle isomorphism $f^* \G \to P$.
Our next lemma gives us some insight into the local geometry of principal $\G$-bundles.
\begin{lemma}\label{lemma:bundletrivialization}
  Suppose $P \to N$ is a principal left $\G$-bundle.
  There is a one-to-one correspondence between trivializations $\phi: f^* \G \to P$ and sections $\sigma \colon N \to P$ such that $\t^P \circ \sigma = f$.
\end{lemma}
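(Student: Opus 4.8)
The plan is to write down an explicit map in each direction and then check the two constructions are mutually inverse. Recall that the trivial bundle $f^*\G = \G \times_{\s, f} N$ carries the anchor $\t \circ \pr_1$, the projection $\pr_2 \colon f^*\G \to N$, and the left action $h \cdot (g', n) = (\m(h, g'), n)$ (Example~\ref{ex:pullbackbundle}). It has a canonical \emph{unit section} $e \colon N \to f^*\G$ given in generalized elements by $e(n) = (\u(f(n)), n)$; this is well defined because $\s \circ \u = \Id_M$ forces $\s(\u(f(n))) = f(n)$. Given a trivialization $\phi \colon f^*\G \to P$ I set $\sigma_\phi := \phi \circ e$, and given a section $\sigma$ with $\t^P \circ \sigma = f$ I set $\phi_\sigma \colon f^*\G \to P$, $\phi_\sigma(g,n) := g \cdot \sigma(n) = \m_L(g, \sigma(n))$. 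The map $\phi_\sigma$ is well defined on the fiber product because $(g,n) \in f^*\G$ gives $\s(g) = f(n) = \t^P(\sigma(n))$, so $(g, \sigma(n))$ lies in $\G \times_{\s, \t^P} P$, the domain of $\m_L$.

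Next I would verify that each construction outputs the right kind of object. For $\sigma_\phi$: since $\phi$ preserves the anchor (GE1), $\t^P \circ \sigma_\phi = \t^{f^*\G} \circ e$, and $\t^{f^*\G}(e(n)) = \t(\u(f(n))) = f(n)$ using $\t \circ \u = \Id_M$; moreover $\phi$ covers $\Id_N$, so $\s^P \circ \sigma_\phi = \pr_2 \circ e = \Id_N$, whence $\sigma_\phi$ is genuinely a section with $\t^P \circ \sigma_\phi = f$. For $\phi_\sigma$: equivariance (GE2) is precisely the associativity axiom (GA2), since $\phi_\sigma(h \cdot (g,n)) = (h \cdot g)\cdot \sigma(n) = h \cdot (g \cdot \sigma(n)) = h \cdot \phi_\sigma(g,n)$; preservation of the anchor (GE1) follows from the moment-map compatibility $\t^P \circ \m_L = \t \circ \pr_1$ (needed already for GA2 to typecheck), giving $\t^P(\phi_\sigma(g,n)) = \t(g) = \t^{f^*\G}(g,n)$; and $\phi_\sigma$ covers $\Id_N$ because (GB1) gives $\s^P(g \cdot \sigma(n)) = \s^P(\sigma(n)) = n$.

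The main obstacle is showing $\phi_\sigma$ is an \emph{isomorphism} of $\G$-bundles, and this is exactly where principality enters. Since $P$ is principal, the total action is invertible and we obtain a division map $\til\m_L \colon P \times_N P \to \G$ with $\til\m_L(p,q)\cdot q = p$, uniquely characterized by this identity. I would define $\psi \colon P \to f^*\G$ by $\psi(p) := (\til\m_L(p, \sigma(\s^P(p))), \s^P(p))$; this is well defined since $\s^P \circ \sigma = \Id_N$ makes $(p, \sigma(\s^P(p)))$ a point of $P \times_N P$. Then $\phi_\sigma(\psi(p)) = \til\m_L(p, \sigma(\s^P(p))) \cdot \sigma(\s^P(p)) = p$ directly from the division identity, and $\psi(\phi_\sigma(g,n)) = (g,n)$ because $\s^P(g\cdot\sigma(n)) = n$ while the uniqueness clause for $\til\m_L$ forces $\til\m_L(g \cdot \sigma(n), \sigma(n)) = g$. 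Hence $\psi = \phi_\sigma^{-1}$.

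Finally I would check the two assignments are mutually inverse. Starting from a section $\sigma$, $\sigma_{\phi_\sigma}(n) = \phi_\sigma(e(n)) = \u(f(n)) \cdot \sigma(n) = \sigma(n)$ by the unit axiom (GA1), using $\t^P(\sigma(n)) = f(n)$. Starting from a trivialization $\phi$, equivariance of $\phi$ gives $\phi_{\sigma_\phi}(g,n) = g \cdot \phi(e(n)) = \phi(g \cdot e(n)) = \phi(\m(g,\u(f(n))), n) = \phi(g,n)$, where the last step uses the right-unit law $\m(g, \u(\s(g))) = g$ of the groupoid together with $\s(g) = f(n)$. This closes the bijection. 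I expect the only genuinely delicate points to be the invertibility argument via the division map and making sure each fiber-product map is well defined; the remaining algebraic identities are immediate consequences of the action and groupoid axioms.
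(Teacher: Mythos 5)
Your proposal is correct and follows essentially the same route as the paper: the canonical unit section $e(n) = (\u(f(n)), n)$ of $f^*\G$ to go from trivializations to sections, the formula $\phi_\sigma(g,n) = g \cdot \sigma(n)$ in the other direction, and the division map $\til\m_L$ to construct $\phi_\sigma^{-1}(p) = (\til\m_L(p, \sigma(\s^P(p))), \s^P(p))$, which is exactly the paper's inverse. Your explicit verifications of equivariance, well-definedness on fiber products, and mutual inverseness fill in details the paper leaves as routine, but the underlying argument is identical.
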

\begin{proof}
  Suppose we have an $f \colon N \to M$ and recall $f^* \G := \G \times_M N$. Let $\til \sigma := (\u \circ f, \Id)$ be the canonical section of $f^*\G$. Then $\sigma := \phi \circ \til \sigma$ is a section of $P$. Furthermore, it clearly satisfies $\t \circ \sigma = f$.

  For the other direction of the correspondence, suppose $\sigma \colon N \to P$ is a section. Then we can define
  \[ \phi(g,x) = g \cdot \sigma(x) \, . \]
  That $\phi$ is equivariant is fairly routine. To see that $\phi$ is an isomorphism we will the division map to construct its inverse:
  \[ \phi\inv(p) := (\til \m_L(p,\sigma \circ \s(p)), \s(p)) \]
\end{proof}
\begin{definition}
  Let $\G \grpd M$ and $\H \grpd N$ be $\C$-groupoids. A $(\G, \H)$-bibundle consists of an object $P$ in $\C$ as well as a left $\G$ bundle structure on $P$ and right $\H$ bundle structure on $P$ such that the source and target maps are shared between them. Lastly, we require that:
  \begin{enumerate}[(BB)]
    \item (the actions commute)
    \[
    \begin{tikzcd}[column sep = huge]
      \G \times_M P \times_N \H
      \arrow[rr, "{(\m_L(\pr_1,\pr_2),\pr_3)}"]
      \arrow[d, "{(\pr_1,(\m_R (\pr_2 ,\pr_3))}" swap] & & P \times_N \H \arrow[d, "\m_R"] \\
      \G \times_M P \arrow[rr, "\m_L"] & & P
    \end{tikzcd}
    \]
  \end{enumerate}
  A bibundle is \emph{left(right) principal} if the left(right) action forms a principal left(right) $\G$ bundle over $N$($M$).
  We say that the bibundle is \emph{principal} or \emph{biprincipal} if it is both left and right principal.

  If $P$ and $P'$ are both equipped with $(\G, \H)$-bibundle structures, then a morphism $\phi \colon P \to P'$ is a morphism of $(\G, \H)$-bibundles if it is equivariant with respect to both actions.
\end{definition}
A $(\G,\H)$-bibundle can be illustrated with a diagram of the form:
\[
\begin{tikzcd}
\G \darrow & P \laction \raction \arrow[dl, "\t"] \arrow[dr, "\s" swap] & \H \darrow \\
M & & N
\end{tikzcd}
\]
\begin{example}
  Any $\C$-groupoid $\G \grpd M$ is naturally a biprincipal $(\G, \G)$-bibundle. It is called the \emph{trivial} $(\G,\G)$-bibundle.
\end{example}
\begin{example}
  Suppose $F \colon \G \to \H$ is a $\C$-groupoid homomorphism covering $f \colon M \to N$. Then we can construct a bibundle $P := f^* \H$. The left action on $P$ is the trivial $\H$ action while the left action of $\G$ is defined by the rule:
  \[ (h,x) \cdot g = (h F(g), x) \, . \]
\end{example}
We will take a closer look at this last example in Chapter~\ref{chap:morita}.
\subsection{Generalized morphisms}

Bibundles allow us to generalize weaken the notion of a groupoid morphism. We will see later that this weaker notion of morphism is precisely what is needed to study the stack associated to a $\C$-groupoid.

\begin{definition}
  Let $\G \grpd M$ and $\H \grpd N$ be $\C$-groupoids. A \emph{Hilsum-Skandalis map} or a \emph{generalized morphism} is a left principal $(\G, \H)$-bibundle.

  An \emph{equivalence} of generalized morphisms is a morphism of $(\G,\H)$-bibundles.
\end{definition}
We use the term equivalence due to the fact that all bibundle morphisms are automatically isomorphisms (see Lemma~\ref{lemma:bibundlemorphisms}).
Generalized morphisms can be composed via the following construction.
Let $P$ be a $(\G, \H)$-bibundle and $Q$ be a $(\H, \K)$-bibundle illustrated below:
  \[
  \begin{tikzcd}
  \G \darrow & P \laction \raction \arrow[dl, "\t"] \arrow[dr, "\s" swap] & \H \darrow & Q \arrow[dl, "{\t}"] \arrow[dr, "{\s}" swap] \laction \raction & \K \darrow \\
  M & & N & & O
\end{tikzcd}
  \]
  Consider the object $P \times_{N} Q$ and the canonical morphism $\hat \t \colon P \times_N Q \to N$. Note that this is the target map of a (left) action of $\H$ whose product is given by:
  \[ \hat \m_L(h,(p,q)) := (p \cdot \i(h), h \cdot q)  \]
  By Lemma~\ref{lemma:diagonalaction}, this action is principal. We denote $(P \times_N Q) / \H$ with $P \otimes_\H Q$.

  Notably, $P \otimes Q$ inherits a $(\G, \K)$-bibundle structure. The anchor maps are the unique maps which make the below diagram commute:
  \[
  \begin{tikzcd}
     &  \arrow[dl] P \times_N Q \arrow[dd] \arrow[dr] & \\
  M &               & O \\
    & \arrow[ul] P \otimes_\H \arrow[ur] Q
  \end{tikzcd}
  \]
  These maps exist due to Lemma~\ref{lemma:equivariantmaps}. Similarly, the left action map is the unique morphism such that the diagram below commutes.
  \[
  \begin{tikzcd}[column sep = huge]
  \G \times_{M} P \times_N Q \arrow[rr, "(\m_L ( \pr_1 \times \pr_2)) \times \pr_3"] \arrow[d, "{\Id \times (\pi \circ (\pr_1 \times \pr_2))}"]
  & & P \times_N Q \arrow[d, "\pi"]\\
  \G \times_{M} P \otimes_\H Q \arrow[rr] & & P \otimes_\H Q
  \end{tikzcd}
  \]
  The right action is defined similarly.

  \begin{lemma}
    These actions make $P \otimes_\H Q$ a left principal $(\G, \K)$-bibundle.
  \end{lemma}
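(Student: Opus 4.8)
The plan is to treat the two assertions in the statement in turn: first, that the left $\G$-action and right $\K$-action constructed above genuinely assemble into a $(\G,\K)$-bibundle, and second — the substantive point — that the left $\G$-action is principal over $O$. For the bibundle structure I would check that every relevant axiom (the action axioms GA1--GA2 on each side, equivariance of the anchor maps $\t^{P\otimes_\H Q}$ and $\s^{P\otimes_\H Q}$, and the commutativity axiom BB) can be verified after precomposition with the quotient map $\pi\colon P\times_N Q\to P\otimes_\H Q$. Since $\pi$ is the projection of a principal action it is a submersion, hence an epimorphism, so equalities of morphisms out of $P\otimes_\H Q$ may be tested on $P\times_N Q$, where each axiom collapses to the corresponding axiom for $P$ or for $Q$. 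The axiom BB is in fact automatic here, as the $\G$-action touches only the $P$-leg and the $\K$-action only the $Q$-leg of $P\times_N Q$.

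Next I would show the base projection $\s^{P\otimes_\H Q}\colon P\otimes_\H Q\to O$ is a submersion. The map $\s^Q\circ\pr_2\colon P\times_N Q\to O$ factors as $P\times_N Q\xrightarrow{\pi}P\otimes_\H Q\xrightarrow{\s^{P\otimes_\H Q}}O$. Now $\pr_2\colon P\times_N Q\to Q$ is a base change of the submersion $\s^P$, hence itself a submersion, and composing with the submersion $\s^Q$ shows $\s^Q\circ\pr_2$ is a submersion. By (GS5), the second factor $\s^{P\otimes_\H Q}$ is therefore a submersion.

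For principality I would work locally over $O$ using Lemma~\ref{lemma:bundletrivialization}. Since $\s^Q$ is a submersion it admits local sections, so there is a covering $\{U_a\to O\}$ with sections $\sigma_a$ of $\s^Q$ inducing trivializations $Q|_{U_a}\isom f_a^*\H$, where $f_a=\t^Q\circ\sigma_a\colon U_a\to N$. The heart of the argument is the identification $P\otimes_\H f_a^*\H\isom f_a^*P$ as left $\G$-bundles, compatibly with the residual $\K$-action: acting by $\i(h)$ normalizes the $\H$-component of a representative $(p,(h,u))$ of $P\times_N f_a^*\H$ to a unit, so every $\H$-orbit is uniquely represented by a pair $(p\cdot h,u)$ with $\s^P(p\cdot h)=f_a(u)$, which is exactly a point of $f_a^*P$. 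As $f_a^*P$ is a pullback of the principal bundle $P$, it is principal by Example~\ref{ex:pullbackbundle}, and in particular carries a division map. By uniqueness of division maps, these local division maps agree on overlaps and glue, via (GS3), to a global division map inverting the total action $\G\times_M(P\otimes_\H Q)\to(P\otimes_\H Q)\times_O(P\otimes_\H Q)$; together with the submersion property this yields principality.

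The main obstacle is the local model isomorphism $P\otimes_\H f_a^*\H\isom f_a^*P$ and the accompanying compatibility $(P\otimes_\H Q)|_{U_a}\isom P\otimes_\H(Q|_{U_a})$, both of which amount to commuting the principal-quotient construction with base change and must be justified through the universal property of the quotient and Lemma~\ref{lemma:equivariantmaps} rather than a pointwise manipulation. If one prefers to avoid localization, the alternative is to build the division map directly: on $(P\times_N Q)\times_O(P\times_N Q)$ set $D=\til\m_L^P\!\big(p'\cdot\til\m_L^Q(q',q),\,p\big)$ and verify that $D$ is invariant under the two independent diagonal $\H$-actions — a check that uses freeness of the $\H$-actions on $P$ and $Q$ and the commutativity of the $\G$- and $\H$-actions on $P$ — so that $D$ descends, again by Lemma~\ref{lemma:equivariantmaps}, to the required division map $(P\otimes_\H Q)\times_O(P\otimes_\H Q)\to\G$.
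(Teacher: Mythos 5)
Your proof is correct, but it takes a genuinely different route from the paper's. The paper disposes of the lemma in a few lines by pure descent: the total action of $\G$ on $P \otimes_\H Q$ is, by construction, the reduction modulo $\H$ of the total action of $\G$ on $P \times_N Q$ (an isomorphism, being a base change of the total action on the principal bundle $P$), and Corollary~\ref{corollary:equivariantmaps} transfers the isomorphism property across the quotient; the same corollary gives commutation of the $\G$- and $\K$-actions, since they are reductions of commuting actions, while the existence of the quotient and of the anchor maps was already supplied by Lemma~\ref{lemma:diagonalaction} and Lemma~\ref{lemma:equivariantmaps} before the lemma was stated. You instead test the axioms along the epimorphism $\pi$, prove submersivity of $\s^{P\otimes_\H Q}$ via (GS5), and establish principality either by locally trivializing $Q$ (Lemma~\ref{lemma:bundletrivialization}), identifying $P \otimes_\H f_a^*\H \isom f_a^* P$, and gluing division maps via (GS3), or by the explicit formula $D = \til\m_L^P\bigl(p'\cdot\til\m_L^Q(q',q),\,p\bigr)$, which does indeed invert the total action and descends once its $\H \times \H$-invariance is checked (note that what this check really uses is uniqueness of division maps, i.e.\ principality itself, rather than ``freeness,'' which is not a primitive notion in this setting). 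Your approach buys explicitness on two points the paper glosses over: that $\s^{P\otimes_\H Q}$ is a submersion at all, and that the total action upstairs is literally an isomorphism onto $(P\times_N Q)\times_Q(P\times_N Q)$ rather than $(P\times_N Q)\times_O(P\times_N Q)$ as the paper's diagram is written --- your direct division map over $\times_O$ absorbs exactly this discrepancy, since $\H$ acts transitively on the fibers of $\s^Q$. What the paper's approach buys is brevity and uniformity: a single descent corollary handles the isomorphism and the commutation simultaneously, with no localization or gluing required.
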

  \begin{proof}
  We must show the total action is an isomorphism. Note that that the total action makes the following diagram commute:
  \[
  \begin{tikzcd}
    \G \times_M (P \times_N Q) \arrow[d] \arrow[r] & (P \times_N Q) \times_O (P \times_N Q) \arrow[d] \\
    \G \times_M (P \otimes_\H Q) \arrow[r] & (P \otimes_\H Q) \times_O (P \otimes_\H Q)
  \end{tikzcd}
  \]
  Where the vertical arrows are the projections under the action of $\H$. By Corollary~\ref{corollary:equivariantmaps}, the fact that the top arrow is an isomorphism implies that the bottom arrow is as well.

  Similarly, Corollary~\ref{corollary:equivariantmaps} implies that the left and right actions must commute, since they are obtained by reducing commuting left and right actions modulo $\H$.
  \end{proof}

  \begin{remark}
    We will use tensor product notation for constructing maps involving $P \otimes Q$. For instance, we may sometimes write $f \otimes g$ to denote the post composition of $f \times g$ with the projection $P \times_N Q \to P \otimes Q$. Similarly, if we write:
    \[ p \otimes q \mapsto f(p,q) \]
    We mean to denote the unique morphism obtained from the $\H$ invariant map $f$.
  \end{remark}

  With this product in hand, we can define the bicategory of generalized morphisms.
  \begin{definition}
    Let $\morpoid$ (generalized morphisms) denote the following bicategory:
    \begin{itemize}
      \item The objects of $\morpoid$ are $\C$-groupoids.
      \item The 1-morphisms are generalized morphisms of $\C$-groupoids.
      \item The 2-morphisms isomorphisms of generalized morphisms.
    \end{itemize}
    If $P$ is a $(\G, \H)$-bibundle, then we say the source of $P$ is $\H$ and the target of $P$ is $\G$.
    Composition of generalized morphisms is by the tensor product defined above.
    Vertical composition of 2-morphisms is by composition in $\C$.
    Suppose $P_1$ and $Q_1$ and $(\G, \H)$-bibundles and $P_2$ and $Q_2$ are $\H$, $\K$ bibundles.
    Given 2-morphisms $\alpha \colon P_1 \to P_2$ and $\beta \colon Q_1 \to Q_2$ then vertical composition is the unique 2-morphism $\alpha \otimes_\H \beta$ such that the below diagram commutes.
    \[
    \begin{tikzcd}
      P_1 \times_N Q_1 \arrow[d, "(\alpha \circ \pr_1) \times (\beta \circ \pr_2)" swap] \arrow[r] & P_1 \otimes_\H Q_1 \arrow[d, "\alpha \otimes_\H \beta"] \\
      P_2 \times_N Q_2 \arrow[r] & P_2 \otimes_\H Q_2
    \end{tikzcd}
    \]
    Again, existence and uniqueness of this morphism is given by Corollary~\ref{corollary:equivariantmaps}.

    Given a $\C$-groupoid $\G$, the unit 1-morphism is $\G$ (viewed as a $\G$-bibundle with the obvious actions).
    The left unit $\mathbb{U}_L$ is the unique 2-morphism $\G \otimes_\G P \to P$ obtained by reducing the left multiplication modulo $\G$.
    The right unit is defined similarly.

    Suppose $P_1$, $P_2$ and $P_3$ are $(\G_1, \G_2)$, $(\G_2, \G_3)$, and $(\G_3, \G_4)$-bimodules, respectively. Where $\G_i \grpd M_i$ are $\C$-groupoids.
    Associativity is the unique 2-morphism obtained by reducing the canonical isomorphism:
    \[ (P_1 \times_{M_2} P_2) \times_{M_3} P_3 \to P_1 \times_{M_2} (P_2 \times_{M_3} P_3) \, .  \]
    modulo the actions of $\G_2$ and $\G_3$.
  \end{definition}
  We have omitted the proof that this data satisfies the coherence conditions for a bicategory since we will show later that $\morpoid$ is equivalent to a (strict) bicategory and must therefore be coherent.

  \begin{lemma}\label{lemma:invertiblegmor}
    A generalized morphism is (weakly) invertible if and only if it is biprincipal.
  \end{lemma}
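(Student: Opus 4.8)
The plan is to prove Lemma~\ref{lemma:invertiblegmor} by establishing both directions: that biprincipality implies weak invertibility, and conversely that weak invertibility forces biprincipality. The statement asserts a generalized morphism (by definition a left principal $(\G,\H)$-bibundle) $P$ is weakly invertible in $\morpoid$ exactly when it is also right principal. Recall that weak invertibility means there exists a $(\H,\G)$-bibundle $Q$ together with isomorphisms $P \otimes_\H Q \isom \G$ and $Q \otimes_\G P \isom \H$ of the respective trivial bibundles.

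For the \emph{easier direction} (biprincipal $\Rightarrow$ invertible), the natural candidate for the inverse is $Q := P^{op}$, the opposite bibundle described in the example above, whose total space is $P$ with source and target interchanged and actions conjugated by $\i$. First I would verify that $P^{op}$ is indeed a $(\H,\G)$-bibundle, which follows formally from the bibundle axiom (BB) and the involution axioms (G1)--(G3). Then I must produce the two isomorphisms $P \otimes_\H P^{op} \isom \G$ and $P^{op} \otimes_\G P \isom \H$. The map $P \otimes_\H P^{op} \to \G$ should be the one induced on the $\H$-quotient by the division map $\til\m_L \colon P \times_N P \to \H$ coming from right principality (the total action of the right $\H$-action being an isomorphism is exactly what supplies this division map). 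I would check this descends to the tensor product via Corollary~\ref{corollary:equivariantmaps}, is $(\G,\G)$-equivariant, and is an isomorphism by exhibiting its inverse built from a section, using Lemma~\ref{lemma:bundletrivialization}. The symmetric argument handles $P^{op} \otimes_\G P \isom \H$ using left principality.

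For the \emph{converse} (invertible $\Rightarrow$ biprincipal), suppose $Q$ is a weak inverse with $P \otimes_\H Q \isom \G$. Since $P$ is already left principal by hypothesis, the task is to show $P$ is right principal, i.e.\ that the right total action $P \times_M P \to P \times_N \H$ (in the appropriate form) is an isomorphism and $\t^P$ is a submersion. The key idea is to transport structure through the isomorphisms: because $\G$ is biprincipal as a $(\G,\G)$-bibundle (by the example above) and tensoring is compatible with the bibundle structure, the right principality of $\G \isom P \otimes_\H Q$ should force right principality of the factor $P$. Concretely, I would use that submersions and isomorphisms are detected after base change and passage to $\H$-quotients via Corollary~\ref{corollary:equivariantmaps}, pulling back the division structure on $P \otimes_\H Q$ to recover a division map on $P$.

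\textbf{The main obstacle} I anticipate is the converse direction: extracting right principality of $P$ from right principality of the composite $P \otimes_\H Q$ is not purely formal, because the quotient by $\H$ can a priori destroy information about the individual factors. The crux will be showing that the canonical projection $P \times_N Q \to P \otimes_\H Q$ interacts with the action maps in a way that lets one \emph{cancel} the $Q$ factor, which should ultimately rest on $Q$ itself being left principal (so that the $\H$-action on $P \times_N Q$ used to form the tensor product is itself principal, per Lemma~\ref{lemma:diagonalaction}). Managing these compatibilities carefully, and invoking Corollary~\ref{corollary:equivariantmaps} to guarantee that the relevant induced maps are isomorphisms, is where the real work lies; the equivariance and involution bookkeeping in the forward direction is comparatively routine.
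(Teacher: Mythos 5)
Your overall route matches the paper's: take $Q := P^{op}$, descend division maps to the tensor product for the forward direction, and in the converse transport the division structure of the trivial composite back to $P$, cancelling the $Q$ factor by local sections. However, two of your specific mechanisms are misidentified, and one of them is a genuine gap. In the forward direction you have the two principalities playing the wrong roles: the map $P \otimes_\H P^{op} \to \G$ is induced by the \emph{left} division map $\til \m_L \colon P \times_N P \to \G$, which exists because the left $\G$-action is principal (true of any generalized morphism) — it cannot be induced by an $\H$-valued map, so your displayed $\til\m_L \colon P\times_N P \to \H$ ``from right principality'' does not typecheck. Right principality enters elsewhere: it is what makes $P^{op}$ a left principal $\H$-bundle (hence a legitimate $1$-morphism) and what makes the diagonal $\H$-action defining $P \otimes_\H P^{op}$ principal via Lemma~\ref{lemma:diagonalaction}. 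This is a fixable slip; note also that the paper concludes the descended map is an isomorphism simply because it is a bundle morphism covering the identity (Lemma~\ref{lemma:principalbundlemorphisms}), whereas your plan to build a global inverse from a section would fail when $P$ admits no global section.

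The more serious gap is in the converse. Before any division-map argument can begin you must show that $\t^P \colon P \to M$ (and likewise $\s \colon Q \to M$) is a submersion, and you propose to obtain this from Corollary~\ref{corollary:equivariantmaps}. That corollary only produces induced morphisms on quotients and says equivariant isomorphisms descend; it contains no criterion for detecting submersions. The paper's mechanism is axiom (GS5) of a good site: the composite $P \times_N Q \to P \otimes_\H Q \isom \G \to M$ equals $\t \circ \pr_1$ and is a submersion, so (GS5) (if $f \circ g$ is a submersion then so is $f$) forces $\t$ to be a submersion, and the same argument for $\s \colon Q \to M$ is what supplies the local sections $\sigma_i \colon U_i \to Q$ needed to cancel the $Q$ factor. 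The paper explicitly notes that (GS5) was included in Definition~\ref{defn:goodsite} precisely to prove this lemma, so omitting it leaves your converse without any way to produce these submersions. A smaller point in the same direction: to extract an $\H$-valued division map for the right action on $P$ you should work with the composite $Q \otimes_\G P \isom \H$, not $P \otimes_\H Q \isom \G$, whose division data is $\G$-valued and only recovers the left principality you already have. With these repairs, the remainder of your plan — local division maps glued by locality of morphisms into a global $\delta \colon P \times_{\t,\t} P \to \H$ — is exactly the paper's argument.
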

  \begin{proof}
    Suppose $P$ is a biprincipal $(\G,\H)$-bundle for $\C$-groupoids $\G \grpd M$ and $\H \grpd N$. Let $P^{op}$ be the $(\H,\G)$-bibundle obtained by taking the opposite actions on $P$.

    Then we claim that $P \otimes_\H P^{op} \isom \G$ as $\G$-bundles. By the definition of $P \otimes_\H P^{op}$ we have that:
    \[ P \otimes_\H P^{op} = P \times_N P /\H \]
    We have division map $\til \m_L \colon P \times_N P \to \G$ and it is clear that $\til \m_L$ is $\H$-invariant.
    By Lemma~\ref{lemma:equivariantmaps} we obtain a morphism $\Phi \colon P \otimes_\H P^{op} \to \G$. Since $\til \m_L$ is $\G$-equivariant we have that $\Phi$ is equivariant and so $\Phi$ is a $\G$-bundle morphism. Since $\Phi$ covers the identity it is an isomorphism (see Lemma~\ref{lemma:principalbundlemorphisms}).
    The argument is symmetric for $P^{op} \times_\G P$.

    On the other hand, suppose $P$ has a weak inverse $Q$. Then since $P \otimes Q \isom \H$ we can conclude that $\t \circ \pr_1 \colon P \times_N Q \to M$ is a submersion. By (GS5) from the definition of a good site, we conclude that $\t$ is a submersion.

    To conclude the proof, we need to show that the right action on $P$ is principal.
    First note that $\s \colon Q \to M$ is a submersion (by a similar argument as before).
    Therefore, we can choose a covering $\{ U_i \to M \}$ of $M$ with sections $\sigma_i \colon U_i \to Q$ of $\s \colon Q \to M$.
    Such sections let us construct a family of maps $U_i \times_M P \to Q \otimes P$ by using the rule:
    \[ (x, p) \mapsto \sigma_i(x) \otimes p \]
    Similarly, we have a family of morphisms:
    \[ U_i \times_M P \times_{\t,\t} P \to (Q \otimes P) \times_{\t,\t} (Q \otimes P) \quad (x,p,q) \mapsto ((\sigma_i(x) \otimes p), (\sigma_i(x) \otimes q))  \]
    By using the division map for the right action on $Q \otimes P$ we get local division maps $\delta_i \colon U_i \times_M \times P \times_{\t,\t} P \to \H$ for the right action on $P$. Since $\C$ is a good site, we can glue the $\delta_i$ together into a global division morphism $\delta \colon P \times_{\t,\t} P \to \H$. That this division map gives inverts the total action follows from the definition of the action on $Q \otimes_\H P$.
  \end{proof}
  \subsection{The classifying stack}

  Suppose $\G$ is a $\C$-groupoid. Then consider the category $\B \G$ such that
  \begin{itemize}
    \item an object in $\B \G$ is a principal $\G$ bundle $P \to N$
    \item a morphism in $\B \G$ is an equivariant map $\phi \colon P \to Q$.
  \end{itemize}
  Observe that there is a natural forgetful functor $\B \G \to \mathbf{Sub}$. This follows from:
  \begin{lemma}\label{lemma:bgtosub}
    Suppose $\G$ is a $\C$-groupoid. Let $P \to N_1$ and $Q \to N_2$ be principal left $\G$-bundles and $\phi \colon P \to Q$ be a $\G$-bundle morphism covering $f \colon N_1 \to N_2$. Then
    \[
    \begin{tikzcd}
      P  \arrow[d] \arrow[r, "\phi"] & Q \arrow[d]\\
      N_1 \arrow[r, "f"] & N_2
    \end{tikzcd}
    \]
    is a pullback square in $\C$.
  \end{lemma}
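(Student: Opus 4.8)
The plan is to realize the fiber product as a pullback bundle and to exhibit the evident comparison map as an isomorphism of principal bundles. Write $\s^P \colon P \to N_1$ and $\s^Q \colon Q \to N_2$ for the bundle projections and $\t^P, \t^Q$ for the anchors. Compatibility of $\phi$ with the projections (GBM) gives $\s^Q \circ \phi = f \circ \s^P$, so there is a canonical morphism $c \colon P \to N_1 \times_{N_2} Q$ defined on generalized elements by $c(p) = (\s^P(p), \phi(p))$; by the universal property the square is a pullback precisely when $c$ is an isomorphism. By Example~\ref{ex:pullbackbundle}, the target $f^* Q := N_1 \times_{N_2} Q$ carries a canonical principal left $\G$-bundle structure over $N_1$, with action $g \cdot (n_1, q) = (n_1, g \cdot q)$, projection the map to $N_1$, and anchor $(n_1, q) \mapsto \t^Q(q)$.

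Next I would check that $c$ is a morphism of principal left $\G$-bundles covering $\Id_{N_1}$. It covers $\Id_{N_1}$ since the $N_1$-component of $c(p)$ is $\s^P(p)$. It is equivariant: because the action on $P$ preserves $\s^P$ (GB1) and $\phi$ preserves the product (GE2), one has $c(g \cdot p) = (\s^P(p), g \cdot \phi(p)) = g \cdot c(p)$; and it preserves the anchor since $\t^Q(\phi(p)) = \t^P(p)$ by GE1. A morphism of principal $\G$-bundles over a common base that covers the identity is automatically an isomorphism — this is Lemma~\ref{lemma:principalbundlemorphisms}, used in exactly the same way as in the proof of Lemma~\ref{lemma:invertiblegmor}. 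Hence $c$ is an isomorphism and the square is a pullback.

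The only genuine content is recognizing the pullback-bundle structure on $N_1 \times_{N_2} Q$ and verifying the equivariance of $c$; the rest is formal, so I do not expect a serious obstacle. Should one prefer not to cite Lemma~\ref{lemma:principalbundlemorphisms}, the inverse of $c$ can be produced directly: since $\s^P$ is a submersion, choose a covering $\{ U_i \to N_1 \}$ with local sections $\sigma_i$ of $\s^P$, and set $d_i(n_1, q) = \til\m_L^Q(q, \phi(\sigma_i(n_1))) \cdot \sigma_i(n_1)$ using the division map of $Q$; a short computation shows $\s^P \circ d_i = \pr_{N_1}$ and $\phi \circ d_i = \pr_Q$, and the $d_i$ glue to a global inverse by locality of morphisms (GS3). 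This is the same pattern used in Lemma~\ref{lemma:bundletrivialization} and Lemma~\ref{lemma:invertiblegmor}.
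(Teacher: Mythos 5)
Your proposal is correct and takes essentially the same route as the paper: both reduce the pullback property to showing that the comparison map $P \to f^*Q$ is a morphism of principal $\G$-bundles covering the identity, and both conclude it is an isomorphism via Lemma~\ref{lemma:principalbundlemorphisms} together with local inverses glued using the good-site axioms. Your explicit equivariance and anchor checks, and your fallback construction of the inverse from local sections and the division map, merely spell out steps the paper's proof leaves implicit.
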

  \begin{proof}
  We must show that $\til \phi \colon P \to f^* Q = Q \times_{N_2} N_1$ is an isomorphism. In other words, it suffices to prove the lemma in the case where $f = \Id$. Therefore, without loss of generality assume $N_1 = N_2 = N$ and $f = \Id$.

  Let $S = \{ s_i \colon U_i \to N_1 \}$ be a covering sieve such that the the pullback bundles $ \{ s_i^* P \to U_i \}$ and $\{ s_i^*Q \to U_i \}$ admit sections. Let $\phi_i \colon s_i^* P \to s_i^* Q \}$ be defined as below:
  \[
  \phi_i(p,x) := (\phi(p),x)
  \]
  By Lemma~\ref{lemma:principalbundlemorphisms} each $\phi_i$ is invertible. Hence the collection $\phi_i\inv$ defines the inverse of $\phi$ locally. Since $\C$ is a good site, the $\phi_i\inv$ can be glued to obtain a global inverse of $\phi$.
  \end{proof}

The functor $\B \G \to \mathbf{Sub}$ is a faithful inclusion. In general, it is not full or essentially surjective.
Now consider the forgetful functor $\pi:= \B \G \to \C$ which passes to the base of the $\G$-bundle.
  \begin{theorem}
    The functor $\pi \colon \B \G \to \C$ makes $\B \G$ a category fibered in groupoids.
  \end{theorem}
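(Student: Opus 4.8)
The plan is to verify the two axioms (CFG1) and (CFG2) directly, leaning on the pullback bundle construction of Example~\ref{ex:pullbackbundle} and on the key fact, established in Lemma~\ref{lemma:bgtosub}, that every morphism in $\B\G$ is a pullback square in $\C$. First, though, I would record that $\pi$ is genuinely a functor: on objects it sends a principal bundle $P \to N$ to $N$, and on a morphism $\phi \colon P \to Q$ it returns the base morphism $f = \pi(\phi)$ that $\phi$ covers. This base map is part of the data of a $\G$-bundle morphism, and it is \emph{unique} because the source $\s^P \colon P \to N$ is a submersion, hence an epimorphism; compatibility with identities and with composition is then immediate.

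For (CFG1), given $f \colon M \to N$ in $\C$ and a principal bundle $Q \to N$ with $\pi(Q) = N$, I would set $P := f^* Q = Q \times_N M$, the pullback bundle of Example~\ref{ex:pullbackbundle}. That example equips $f^* Q$ with a canonical principal $\G$-bundle structure over $M$ together with a $\G$-bundle morphism $a \colon f^* Q \to Q$, and by construction this morphism covers $f$. Thus $a \colon P \to Q$ is a morphism of $\B\G$ with $\pi(a) = f$, which is exactly what (CFG1) demands.

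For (CFG2), suppose we are given $f \colon M_1 \to M_2$ and $g \colon M_2 \to M_3$ in $\C$ together with morphisms $a \colon P \to R$ and $b \colon Q \to R$ of $\B\G$ satisfying $\pi(a) = g \circ f$ and $\pi(b) = g$; here $P \to M_1$, $Q \to M_2$, and $R \to M_3$ are principal bundles. By Lemma~\ref{lemma:bgtosub}, $b$ exhibits the square with corners $Q$, $R$, $M_2$, $M_3$ (horizontal maps $b$ and $g$) as a pullback square in $\C$, so $Q \isom g^* R$ as $\G$-bundles, the bundle structure matching by the uniqueness clause of Example~\ref{ex:pullbackbundle}. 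Since $\pi(a) = g\circ f$, the outer rectangle assembled from $a$ and $f \circ \s^P$ commutes, and the universal property of this pullback produces a \emph{unique} morphism $c \colon P \to Q$ in $\C$ with $b \circ c = a$ and $\s^Q \circ c = f \circ \s^P$, the latter saying precisely that $c$ covers $f$.

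The one step that requires genuine care — and which I expect to be the main obstacle — is promoting this $\C$-morphism $c$ to a morphism of $\B\G$, i.e. checking that $c$ is $\G$-equivariant in the sense of both (GE1) and (GE2). Using the identification $Q \isom g^* R$ and the explicit pullback action $\m_L^{g^*R}(h,(r,m)) = (h \cdot r, m)$ from Example~\ref{ex:pullbackbundle}, I would compute $c(h\cdot p) = (a(h\cdot p),\, f \circ \s^P(h\cdot p))$; equivariance of $a$ gives $a(h\cdot p) = h \cdot a(p)$, while (GB1) gives $\s^P(h\cdot p) = \s^P(p)$, so $c(h\cdot p) = h\cdot c(p)$, which is (GE2). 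Anchor-preservation (GE1) follows similarly from $\t^{g^*R} = \t^R \circ \pr_1$ together with the fact that $a$ preserves anchors. Finally, uniqueness of $c$ as a morphism of $\B\G$ reduces to its uniqueness in $\C$, already guaranteed by the pullback universal property. This establishes (CFG2) and completes the verification that $\pi \colon \B\G \to \C$ is a category fibered in groupoids.
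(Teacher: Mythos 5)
Your proof is correct and follows essentially the same route as the paper: (CFG1) via the pullback bundle of Example~\ref{ex:pullbackbundle}, and (CFG2) by using Lemma~\ref{lemma:bgtosub} to identify $Q$ with $g^*R$ and invoking the pullback universal property in $\C$ to produce the unique $c$. Your explicit equivariance computation for $c$ simply fills in the step the paper dispatches with ``$f^*g^*R \to g^*R$ is clearly $\G$-equivariant,'' so no substantive difference remains.
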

  \begin{proof}
    We must show (CFG1) and (CFG1) from Definition~\ref{defn:cfg}. First of all, (CFG1) follows from the construction of the pullback bundle (Example~\ref{ex:pullbackbundle}).

    To show (CFG2), suppose we are given morphisms $f\colon M_1 \to M_2$ and $g\colon M_2 \to M_3$ in $\C$ together with $a\colon P \to R$ and $b\colon Q \to R$ such that $\pi(a) = g \circ f$ and $\pi(b) = g$. We must show that there exists a unique morphism $c$ such that $\pi(c) = f$ and $b \circ c = a$.
    \[
    \begin{tikzcd}
      P \arrow[d] \arrow[rr, bend left, "a"] \arrow[r, dashed, "\exists! c"] & Q \arrow[d] \arrow[r, "b"] & R\arrow[d] \\
      M_1 \arrow[r, "f"]& M_2 \arrow[r, "g"] & M_3
    \end{tikzcd}
    \]
    Since $\B \G \to \mathbf{Sub}$ is a faithful inclusion, we know that there exists a morphism $c$ which makes the diagram commute. We only need to show that $c$ is $\G$-equivariant. By Lemma~\ref{lemma:bgtosub} we know that we can replace $Q$ with $g^*R$ and $P$ with $f^*g^*R$. Since $f^* g^* R \to g^* R$ is clearly $\G$-equivariant, the theorem follows.
  \end{proof}
  \begin{theorem}\label{thm:bgstack}
    $\B \G$ is a stack.
  \end{theorem}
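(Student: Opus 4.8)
The plan is to verify the two stack axioms (S1) and (S2) from Definition~\ref{defn:stack} for the CFG $\B\G \to \C$. We already know from Theorem~\ref{thm:bgstack}'s hypotheses (the preceding theorem) that $\B\G$ is a CFG, and the key structural input will be the gluing available in $\C$ itself, since $\C$ is a good site. The governing idea is that both axioms should be reduced to gluing of submersions and of morphisms in $\C$, for which we may invoke (GS3) (locality of morphisms) and (GS4) (the CFG $\Sub$ is a stack). In effect, $\B\G$ inherits its gluing from $\C$ because a principal $\G$-bundle is, locally, just a trivial bundle $f^*\G$ determined by a map $f$ into $M$ together with equivariant glueing data.

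First I would verify (S1), the gluing of morphisms. Given principal bundles $P, Q \in \B\G_M$, a covering sieve $\{s_i \colon U_i \to M\}$, and equivariant morphisms $\phi_i \colon P|_{U_i} \to Q|_{U_i}$ agreeing on overlaps $U_{ij}$, I want a unique global equivariant $\phi \colon P \to Q$. The underlying morphisms in $\C$ glue by (GS3) applied to the total spaces: restricting along the pullback bundles gives a cover of $P$, and the $\phi_i$ provide locally defined maps that agree on overlaps, so locality yields a unique $\phi \colon P \to Q$ in $\C$. The only remaining point is that $\phi$ is automatically $\G$-equivariant; this follows because equivariance (diagrams (GE1), (GE2)) is an equality of morphisms that holds after restriction to a cover, hence holds globally again by (GS3). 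Uniqueness is immediate from the uniqueness clause in (GS3).

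Next I would verify (S2), the gluing of objects, which is the substantive half. Given a covering family $\{s_i \colon U_i \to M\}$, principal bundles $P_i \to U_i$, and equivariant cocycle isomorphisms $\phi_{ij} \colon P_j|_{U_{ij}} \to P_i|_{U_{ij}}$ satisfying the cocycle condition, I must build a principal $\G$-bundle $P \to M$ restricting to the $P_i$. The strategy is to glue the underlying total spaces using (GS4): since $\Sub$ is a stack, the submersions $P_i \to U_i$ together with the $\phi_{ij}$ descend to a submersion $P \to M$ (after composing $U_i \to M$ appropriately and using that being a submersion is local in the codomain, as noted after Definition~\ref{defn:goodsite}). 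I would then transport the anchor maps $\t^{P_i} \colon P_i \to M$ and the action maps $\m_L^{P_i}$ along the gluing: because the $\phi_{ij}$ are equivariant and satisfy the cocycle condition, the local anchors and actions agree on overlaps and hence glue, again via (GS3), to a global anchor $\t^P$ and action $\m_L \colon \G \times_M P \to P$. The action axioms (GA1), (GA2), (GB1), and principality are all equalities or isomorphism conditions on morphisms that may be checked locally on the cover $\{P|_{U_i}\}$, where they reduce to the corresponding properties of the $P_i$; they therefore hold globally.

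The main obstacle I anticipate is the object-gluing in (S2): assembling the total space $P$ as a genuine object of $\C$ and confirming that $P \to M$ is a submersion, rather than merely a morphism. This is precisely where (GS4)—that $\Sub$ is a stack—is indispensable, and care is needed because the cover $\{U_i \to M\}$ does not directly give a cover of the putative total space; one must descend the submersions $P_i \to U_i$ along this cover and check that the descended object carries a compatible projection to $M$. A secondary subtlety is verifying that principality (the total action $\G \times_M P \to P \times_M P$ being an isomorphism) survives gluing; here I would argue that an isomorphism which restricts to isomorphisms on a cover is itself an isomorphism, appealing to the same gluing-of-inverses technique used in the proof of Lemma~\ref{lemma:bgtosub}, so that $\C$ being a good site again does the work.
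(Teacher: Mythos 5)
Your proof is correct and follows essentially the same route as the paper: both halves are reduced to the good-site axioms, with (S1) and the equivariance/action-axiom checks handled by locality of morphisms and of commuting diagrams (GS3), and the object-gluing in (S2) handled by the stack property of $\Sub$ (GS4). If anything, you are slightly more careful than the paper's own proof, which glosses over the verification that principality of the glued bundle survives descent---a point you correctly settle by gluing local inverses of the total action, exactly as in the proof of Lemma~\ref{lemma:bgtosub}.
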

  \begin{proof}
    We need to show that $\B \G$ satisfies (S1) and (S2) from Definition~\ref{defn:stack}.
    \begin{enumerate}[(S1)]
      \item Suppose we are given the following data:
        \begin{itemize}
          \item a covering sieve ${\{ s_i \colon U_i \to N \}}_{i \in I}$,
          \item objects $P$ and $Q$ in $\B\G_N$,
          \item morphisms $\phi_i \colon P|_{U_i} \to Q|_{U_i}$ covering the identity,
        \end{itemize}
        such that
      \[ \phi_i|_{U_{ij}} = \phi_j|_{{U_{ij}}} \, . \]
      We must show there exists a unique morphism $\phi: P \to Q$ such that $\phi|_{P|_{U_i}} = \phi_i$.
      Since $\B\G \to \mathbf{Sub}$ is an inclusion and $\mbox{Sub}$ is a stack, we already know that there is a morphism $\phi\colon P \to Q$. We must show that $\phi$ is $\G$ equivariant. In other words, we need the following diagram to commute:
      \[
      \begin{tikzcd}
        \G \times_M P \arrow[d, "\m_L"] \arrow[rr, "\pr_1 \times (\phi \circ \pr_2)"] & & \G \times_M Q \arrow[d, "\m_L"]  \\
        P \arrow[rr, "\phi"]  &  & Q
      \end{tikzcd}
      \]
      The maps $P|_{U_i} \to P$ and $Q|_{U_i} \to Q$ generate covering sieves of $P$ and $Q$, respectively. Since we know each $\phi_i \colon P|_{U_i} \to Q|_{U_i}$ is $\G$-equivariant. We can conclude that the above diagram commutes locally. Since $\C$ is a good site, it must commute globally.
      \item Now suppose we are given the following data:
          \begin{itemize}
            \item a covering sieve ${\{ s_i \colon U_a \to N \}}_{i \in I}$ of an object $N$ in $\C$,
            \item a collection of objects ${\{ P_i \in \X_{U_i} \}}_{i \in I}$,
            \item morphisms $\phi_{ij}: P_j|_{U_{ij}} \to P_i|_{U_{ij}}$
          \end{itemize}
            such that, for all $i,j,k \in I$ such that $s_{ijk}$ exists, $\phi_{ij}|_{U_{ijk}} \circ \phi_{jk}|_{U_{ijk}} = \phi_{ik}|_{U_{ijk}}$.

            We must show there exists a principal $\G$-bundle $P$ over $N$ together with morphisms ${\{ \phi_i: P|_{U_i} \to P_i \}}_{i \in I}$ such that
            \[ \phi_{ij} \circ \phi_{j}|_{U_{ij}} = \phi_i|_{U_{ij}}. \]

            Again, we know that there exists $P \to N$ in $\Sub_N$ which satisfies this data. We claim that there is an action of $\G$ on $P$ which makes each $\phi_i$ equivariant. In other words, we must construct a morphism $\m_L \colon \G \times_M P \to P$. To do this, observe that $\{ \G \times_M P_i \to \G \times_M P \}$ generates a covering sieve of $\G \times_M P$. Hence we can treat each left action of $\G$ on $P_i$ as a locally defined left action on $P$. Since these actions are assumed to agree on intersections and $\C$ is a good site, we conclude that they can be glued together into a unique map $\G \times_M P \to P$.
            The resulting morphism will constitute an action since diagrams which commute locally in a good site must commute globally.
    \end{enumerate}

  \end{proof}

\subsection{Geometric stacks}
  So far we have observed that a $\C$-groupoid gives rise to a stack. Let $\mathbf{Stacks}$ be the bicategory of stacks over $\C$. In this section we will extend the relationship $\G \mapsto \B \G$ to a pseudofunctor (see Definition~\ref{defn:pseudofunctor}) of bicategories $\B \colon \morpoid \to \mathbf{Stacks}$ and prove the main properties of $\B$. A pseudofunctor of bicategories is just a map of objects, 1-morphisms, and 2-morphisms, together with a compatibility map relating the compositions.

  We have already defined the action of $\B$ on objects.
  To define $\B$ on 1-morphisms we must construct a morphism of CFGs $\B P \colon \B \H \to \B \G$ for a left principal $(\G,\H)$-bibundle $P$.

  Suppose $Q$ is a principal left $\H$ bundle (i.e. an object of $\B \H$). Then we define $\B P(Q) := P \otimes_\H Q$. Given $\phi\colon Q_1 \to Q_2$ a $\H$-bundle morphism. Then $\B P( \phi) \colon P \otimes_\H Q_1 \to P \otimes_\H Q_2$ is the unique morphism making the following diagram commute:
  \[
  \begin{tikzcd}[column sep = huge]
    P \times_N Q_1 \arrow[rr, "\pr_1 \times (\phi \circ \pr_2)"] \arrow[d] &  &P \times_N Q_2 \arrow[d] \\
    P \otimes_\H Q_1 \arrow[rr, "\B P(\phi)"] & & P \otimes_\H Q_2
  \end{tikzcd}
  \]
  It is clear that $\B P (\phi \circ \psi) = \B P(\phi) \circ \B P(\psi)$ and so $\B P$ is a functor. This defines $\B$ on 1-morphisms.

  Now suppose $\Phi: P_1 \to P_2$ is a $(\G, \H)$-bibundle isomorphism. We must define a natural transformation $\B \Phi \colon \B \P_1 \to \B \P_2$.
  In other words, we need a function $\eta \colon (\B \H)_0 \to (\B \G)_1$. Given an object $Q$ in $\B \G$, let
  \[ \eta(Q) \colon P_1 \otimes_\H Q \to P_2 \otimes Q  \]
  be the unique morphism obtained by reducing $(\Phi \circ \pr_1) \times \pr_2 \colon P_1 \times_N Q \to P_2 \times_N Q$ modulo $\H$.
  Clearly this makes the below diagram commutes for any morphism $\phi$ in $\B\G$ and so $\eta$ defines a natural transformation.
  \[
  \begin{tikzcd}
    P_1 \otimes_\H Q_1 \arrow[r, "\eta(Q_1)"] \arrow[d, "\B P_1 (\phi)"] & P_2 \otimes_\H Q_1 \arrow[d, "\B P_2 (\phi)"] \\
    P_1 \otimes_\H Q_2 \arrow[r, "\eta(Q_2)"] & P_2 \otimes Q_2
  \end{tikzcd}
  \qquad
  \begin{tikzcd}
  p_1 \otimes q_1 \arrow[r, mapsto] \arrow[d, mapsto] &  \Phi(p_1) \otimes q_1 \arrow[d, mapsto] \\
  p_1 \otimes \phi(q_1) \arrow[r, mapsto] & \Phi(p_1) \otimes \phi(q_1)
\end{tikzcd}
  \]
  Suppose $P_1 \colon \H \to \G$ and $P_2 \colon \K \to \H$ are 1-morphisms in $\morpoid$. For $\B$ to be a pseudofunctor, we should exhibit a natural transformation $\B (P_1 \otimes P_2) \isom \B(P_1) \circ \B(P_2)$. In other words, for each object $Q$ in $\B\G$ we need an isomorphism $\mathbb{B}_1(P_1,P_2,P_3) \colon (P_1 \otimes P_2) \otimes Q \to P_1 \otimes (P_2 \otimes Q)$.
  This isomorphism is constructed canonically by the associator of the fiber product in $\C$. Lastly, for each $\C$-groupoid $\G$, we should exhibit a natural isomorphism $\mathbb{B}_2(\G)$ between the functor:
  \[ \B \G \to \B \G \qquad P \mapsto \G \otimes_\G P \]
  and the identity functor on $\B\G$. This natural transformation comes from the canonical isomorphism between $\G \otimes_\G P$ and $P$.

  Technically, to show that $\B$ is a well defined pseudofunctor, we need to check coherence of this data. Since this calculation is straightforward and not particularly enlightening, so we will omit the proof.

  \begin{proposition}
    Let $\G \grpd M$ be a $\C$-groupoid for $\C$ a good site. Then $\B \G$ is a geometric stack.
  \end{proposition}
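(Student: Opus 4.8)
The plan is to exhibit an explicit submersion $\bar{M} \to \B\G$, where $M$ is the object of units of $\G$. Recall from Example~\ref{ex:pullbackbundle} that every morphism $f \colon N \to M$ in $\C$ gives rise to the trivial bundle $f^*\G = \G \times_M N$, a principal left $\G$-bundle over $N$. First I would define a morphism of CFGs $F \colon \bar{M} \to \B\G$ by sending an object $(f \colon N \to M)$ of the slice category to $f^*\G$, and sending a commuting triangle (a morphism in $\bar{M}$) to the induced map of pullback bundles. Checking that $F$ commutes with the projections to $\C$ and respects composition is routine, so $F$ is a well-defined morphism of stacks. Note that at $f = \Id_M$ this recovers $\G$ itself as the ``universal'' $\G$-bundle over $M$.

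Next I would verify that $F$ is an epimorphism, i.e. locally essentially surjective. Given any principal $\G$-bundle $Q \to N$ representing an object of $(\B\G)_N$, the structure map $\s^Q \colon Q \to N$ is a submersion and hence admits local sections over some covering sieve $\{s_a \colon U_a \to N\}$. By Lemma~\ref{lemma:bundletrivialization}, each such section $\sigma_a$ furnishes a trivialization $f_a^*\G \isom Q|_{U_a}$, where $f_a := \t^Q \circ \sigma_a$; thus each restriction $Q|_{U_a}$ lies in the image of $F$, as required.

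The crux of the argument is showing that $F$ is representable. Fix a morphism $\bar{N} \to \B\G$, which by the Yoneda lemma corresponds to a choice of principal $\G$-bundle $Q \to N$. I would compute the homotopy fiber product $\bar{M} \til\times_{\B\G} \bar{N}$ directly from the definition of the $2$-fiber product of CFGs: an object over a test object $T$ is a triple $(a \colon T \to M,\ b \colon T \to N,\ \phi)$, where $\phi \colon a^*\G \isom b^*Q$ is an isomorphism of principal $\G$-bundles over $T$. Here I would again invoke Lemma~\ref{lemma:bundletrivialization}: since $a^*\G$ carries the canonical unit section, such an isomorphism $\phi$ is the same data as a section of $b^*Q = Q \times_N T$, which in turn is precisely a morphism $q \colon T \to Q$ whose composites with $\s^Q$ and $\t^Q$ recover $b$ and $a$ respectively. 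Because $b$ and $a$ are then determined by $q$, the whole triple collapses to the single datum $q \colon T \to Q$, so the objects of the fiber product over $T$ are naturally in bijection with $\Hom(T, Q)$, exhibiting $\bar{M} \til\times_{\B\G} \bar{N} \simeq \bar{Q}$ as representable.

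The main obstacle I anticipate is the bookkeeping in this last step: one must check that the bijection between bundle isomorphisms and sections is genuinely functorial, both in the test object $T$ (compatibility with pullback along morphisms $T' \to T$) and with the morphisms of $\bar{M} \til\times_{\B\G} \bar{N}$, so that it assembles into an equivalence of CFGs rather than merely a bijection on objects. Once representability is established, $F$ is a representable epimorphism, hence a submersion in the sense of the definition of a geometric stack, and $\B\G$ is geometric.
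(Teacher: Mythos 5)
Your proposal is correct and follows essentially the same route as the paper: the presentation $f \mapsto f^*\G$, the epimorphism argument via local sections and Lemma~\ref{lemma:bundletrivialization}, and the identification $\bar M \til\times_{\B\G} \bar N \simeq \bar Q$ via the section--trivialization correspondence are exactly the paper's argument, except that you inline the content of Lemma~\ref{lemma:bundleispullback} where the paper cites it. The functoriality bookkeeping you flag at the end is precisely what that lemma's proof carries out, so nothing further is missing.
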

  \begin{proof}
    Let $p \colon \bar M \to \B \G$ be the functor $f \mapsto f^* \G$.
    That is, it sends a morphism $f \colon N \to M$ to the associated trivial bundle $f^* \G$.
    We claim that $p$ is a presentation of $M$.

    Since every $\G$-bundle is locally trivial, it follows that $p$ is an epimorphism.
    Hence, we only need to show that $p$ is a representable morphism of CFGs.
    Suppose $q \colon \bar N \to \B \G$ is a morphism of CFGs. Without loss of generality, we can assume that $q(g) = g^* q(\Id_N)$.
    Then Lemma~\ref{lemma:bundleispullback} says that there is a pullback square:
    \[
    \begin{tikzcd}
      \bar Q \arrow[r, "\s"] \arrow[d, "\t"] & \bar N \arrow[d, "q"] \\
      \bar M \arrow[r, "p"]           & \B\G
    \end{tikzcd}
    \]
    Which immediately implies that $p$ is representable.
  \end{proof}

  Let $\GStacks \subset \mathbf{Stacks}$ be the full sub-bicategory of geometric stacks. Then the previous result says that we can restrict the codomain of $\B$ to the full sub-2-category of geometric stacks.
  We now state the main theorem of the chapter.
  \begin{theorem}[Fundamental theorem of geometric stacks]\label{thm:chapter1}
    Suppose $\C$ is a good site.
    The pseudofunctor $\B$ is an equivalence between $\morpoid$, the bicategory of $\C$-groupoids and bibundles, and $\GStacks$, the bicategory of geometric stacks.
  \end{theorem}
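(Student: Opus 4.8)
The plan is to invoke the standard criterion that a pseudofunctor of bicategories is a biequivalence precisely when it is essentially surjective on objects and induces an equivalence of categories $\morpoid(\H,\G) \to \GStacks(\B\H,\B\G)$ on each pair of hom-categories (a \emph{local equivalence}). Thus the proof splits into two largely independent tasks: showing that every geometric stack is equivalent to some $\B\G$, and showing that for fixed $\C$-groupoids $\G$ and $\H$ the assignment $P \mapsto \B P$ is an equivalence between left principal $(\G,\H)$-bibundles and stack morphisms $\B\H \to \B\G$.

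For essential surjectivity on objects I would start from a geometric stack $\X$ together with a presentation $p \colon \bar M \to \X$ (a representable epimorphism, which exists by definition). Since $p$ is representable, the homotopy fibre product $\G := \bar M \til\times_\X \bar M$ is representable, say by an object of $\C$ that I also call $\G$, equipped with two projections $\s, \t \colon \G \to M$. Because $p$ is a submersion and submersions are stable under base change, both $\s$ and $\t$ are submersions. The groupoid structure on $\G \grpd M$ is then forced by universal properties: the unit comes from the diagonal $\bar M \to \bar M \til\times_\X \bar M$, the inverse from the symmetry of the fibre product, and the multiplication from the canonical map out of the triple homotopy fibre product $\bar M \til\times_\X \bar M \til\times_\X \bar M$. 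Each groupoid axiom is an equality of maps into a homotopy fibre product and can be verified after composing with the two projections, which jointly determine any map into such a product. Finally I would construct an equivalence $\B\G \simeq \X$: the tautological object $p \in \X_M$, together with the descent datum encoded by $\G$, determines for each principal $\G$-bundle $R \to N$ an object of $\X_N$, obtained by choosing local trivializations of $R$ (Lemma~\ref{lemma:bundletrivialization}), pulling back $p$ along the resulting maps to $M$, and gluing via (S2); conversely an object of $\X_N$ pulls back to a $\G$-bundle. Checking that this functor is fully faithful and essentially surjective uses the epimorphism property of $p$ together with the stack axioms (S1) and (S2) for $\X$ and for $\B\G$ (Theorem~\ref{thm:bgstack}).

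For the local equivalence the key tool is the $2$-categorical Yoneda correspondence $\Hom(\bar M, \B\G) \simeq (\B\G)_M$, identifying morphisms from a representable with principal $\G$-bundles over $M$. Faithfulness and fullness on $2$-morphisms follow by evaluating a natural transformation $\eta \colon \B P_1 \to \B P_2$ at the tautological bundle: its component $\eta(\H) \colon P_1 \otimes_\H \H \to P_2 \otimes_\H \H$ is an isomorphism $P_1 \isom P_2$, and one checks that this recovers a unique bibundle isomorphism inducing $\eta$. For essential surjectivity on $1$-morphisms I would, given a stack morphism $F \colon \B\H \to \B\G$ with $\H \grpd N$, apply $F$ to the trivial $\H$-bundle to obtain a principal left $\G$-bundle $P := F(\H)$ over $N$. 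The right $\H$-action is recovered from $F$ by pulling the tautological $\H$-bundle back along $\s$ and $\t$, observing the canonical multiplication isomorphism $\t^*\H \isom \s^*\H$ over $\H$, and applying $F$ to obtain $\t^* P \isom \s^* P$; this isomorphism over $\H$ is exactly the data of a right $\H$-action, whose axioms follow from functoriality of $F$ applied to the groupoid axioms of $\H$. One then verifies that $P$ is left principal, so that it is a genuine generalized morphism, and that $\B P \isom F$ naturally.

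I expect the main obstacle to be this last reconstruction step: extracting an honest principal $(\G,\H)$-bibundle from an abstract morphism of stacks and checking that the inherited right $\H$-action is principal and compatible with the left $\G$-action, together with producing the natural isomorphism $\B P \isom F$. This is where the good-site hypotheses do the real work, since verifying principality and the bibundle axioms reduces, after passing to a cover trivializing $P$, to gluing locally defined morphisms, an operation justified by (GS3) and (GS4). The essential-surjectivity-on-objects argument is similar in flavour but more routine, being largely a descent computation once the groupoid $\G$ has been produced from the presentation.
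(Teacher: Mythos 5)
Your decomposition is the same as the paper's: the proof splits into biessential surjectivity and an equivalence on hom-categories, and your treatment of 2-morphisms (evaluate a natural transformation at the tautological bundle, use the canonical isomorphism $P \otimes_\H \H \isom P$ to extract a bundle isomorphism, then check it is $\H$-equivariant) is precisely the paper's argument. The differences are in mechanics. For essential surjectivity on 1-morphisms you evaluate $F$ at the tautological bundle, $P := F(\H)$, and reconstruct the right $\H$-action from the canonical isomorphism $\t^*\H \isom \s^*\H$ over $\H$; the paper instead realizes $P$ as a representative of the homotopy fiber product $\bar N \til\times_{\B\H} \bar M$, obtains both actions from Lemma~\ref{lemma:CFGbundle}, and then reduces, via a section, to the case of a trivial bundle. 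Both routes then produce the natural isomorphism $\B P \isom F$ the same way: define it on trivial bundles and glue over a trivializing cover, which is where the good-site axioms enter, exactly as you predicted. For biessential surjectivity you propose building $\B\G \simeq \X$ directly, by locally trivializing a given principal bundle, pulling back the tautological object $p$, and gluing via (S2); the paper routes this through the pre-stack $\Y$ of trivial $\G$-bundles and a pair of local equivalences $\B\G \from \Y \to \X$, invoking Lemma~\ref{lemma:stackification}. That detour buys something concrete: it absorbs the bookkeeping you would otherwise face in showing that your glued object is independent of the chosen trivializations and functorial in the bundle, since both checks are packaged into the statement that stackification carries local equivalences to equivalences of stacks.

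One step of your outline is wrong as stated. You propose verifying the groupoid axioms for $\G \simeq \bar M \til\times_\X \bar M$ by composing with the two projections, on the grounds that these ``jointly determine any map into such a product.'' They do not: a morphism into a homotopy fiber product carries, besides the two projections, the 2-cell component (the paper's $\pr_{1.5}$), and that extra datum is where the content lives. Concretely, take $\C = \Man$ and $\X = \B G$ for a Lie group $G$, presented by the one-point manifold: then $\G$ is $G$ over a point, $\s = \t$ is the map to the point, and composing with $\s$ and $\t$ carries no information whatsoever, so associativity of $\m$ cannot be checked this way. The repair is what the paper does in Proposition~\ref{prop:CFGgroupoidstruct}: exhibit the strict groupoid structure on $\bar M \til\times_\X \bar M$ explicitly at the level of CFGs, where multiplication is concatenation of the 2-cell components and associativity holds because composition in $\X$ is associative, and then transfer the structure to $\G$ through the representing equivalence via Yoneda. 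With that substitution your argument goes through; the rest of the proposal is sound.
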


  Before we proceed to prove this theorem, we should mention some of the context for this result.
  In the case of $\C = \Man$, it is folklore. For instance, one can find a statement and (partial) proof of the result in Blohmann~\cite{BlohmannSLGs}. In the case where $\C$ is the site of topological spaces, we are not aware of a reference which states the above theorem in this form.
  However, since the study of higher structures in $\Top$ is very well developed, the author considers it likely that it is at least an easy corollary of an existing theorem.
  The main utility of the Theorem is not in applying it to a well known site.
  Rather, it is mainly useful for understanding geometric stacks for newly defined or uncommon sites.
  In fact, this is precisely what we will do in Chapter~\ref{chap:dman}.

  There are two (equivalent) definitions of equivalence of bicategories (see Leinster~\cite{Leinster} for a discussion of this). The one that we will use, (and the one that appears in Definition~\ref{defn:pseudofunctor}) is as follows: a pseudofunctor $F$ is an equivalence if and only if $F$ is an equivalence of categories at the level of Hom-categories and $F$ is surjective up to weak equivalences. Hence, we can split the proof of Theorem~\ref{thm:chapter1} into the next two propositions.
  \begin{proposition}
    The pseudofunctor $\B$ is biessentially surjective onto geometric stacks. That is, every geometric stack $\X$ is equivalent to $\B \G$ for some $\C$-groupoid $\G$.
  \end{proposition}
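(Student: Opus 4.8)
The plan is to manufacture a presenting groupoid directly from the atlas and then identify $\X$ with its stack of bundles. By hypothesis $\X$ is geometric, so there is a submersion $p \colon \bar{M} \to \X$ with $M \in \C$. Since $p$ is representable, the homotopy fiber product $\bar{M} \til\times_\X \bar{M}$ is a representable stack; I would call the representing object $\G$. The two projections $\s, \t \colon \G \to M$ are base changes of $p$ and hence are themselves submersions, so $\G \grpd M$ has the right underlying maps. I would then equip it with the remaining $\C$-groupoid structure: the unit $\u$ comes from the diagonal $\bar{M} \to \bar{M} \til\times_\X \bar{M}$, the inverse $\i$ from the swap of the two factors, and the multiplication $\m$ from the (again representable) triple product $\bar{M} \til\times_\X \bar{M} \til\times_\X \bar{M}$ together with its two projections that forget the middle factor. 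All of the groupoid axioms would then follow formally from the universal property and coherence of the homotopy fiber product, so this first step is bookkeeping rather than genuine content.

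The heart of the argument is to exhibit an equivalence between $\B\G$ and $\X$. I would define a morphism of CFGs $\Psi \colon \X \to \B\G$ by pulling back the atlas: to an object $X$ over $N = \pi(X)$, which corresponds to a map $\bar{N} \to \X$, I assign $\Psi(X) := \bar{N} \til\times_\X \bar{M}$. Representability of $p$ guarantees this is representable, its projection to $\bar{M}$ and to $\bar{N}$ supply the anchor maps to $M$ and $N$, and it inherits a principal left $\G$-action from the $\G = \bar{M} \til\times_\X \bar{M}$ factor. Principality -- that the total action $\G \times_M \Psi(X) \to \Psi(X) \times_N \Psi(X)$ is an isomorphism -- would follow from the canonical identification $\Psi(X) \times_N \Psi(X) \isom \G \times_M \Psi(X)$ of iterated homotopy pullbacks over $\X$. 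Functoriality of $\Psi$ on morphisms and $2$-isomorphisms, and the fact that it commutes with the projection to $\C$, are likewise inherited from functoriality of the homotopy fiber product.

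It then remains to verify that $\Psi$ is fully faithful and essentially surjective, and this is where the stack axioms do the real work. For essential surjectivity I would take an arbitrary principal $\G$-bundle $Q \to N$, choose a cover $\{ s_i \colon U_i \to N \}$ over which $Q$ trivializes (Lemma~\ref{lemma:bundletrivialization}), so that the sections give maps $f_i \colon U_i \to M$ and hence objects $p \circ \bar{f_i}$ of $\X$ over each $U_i$; the transition maps of $Q$ are maps $U_{ij} \to \G$, which -- because $\G = \bar{M} \til\times_\X \bar{M}$ -- are exactly $2$-isomorphisms of the pulled-back objects, and the bundle cocycle condition is precisely the descent cocycle, so (S2) produces an object $X$ with $\Psi(X) \isom Q$. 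For full faithfulness I would reduce a prospective morphism of bundles to local data over a trivializing cover, glue the corresponding morphisms in $\X$ using (S1), and check that this is inverse to the map induced by $\Psi$ on Hom-sets.

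The main obstacle will be this final equivalence step, and within it the careful matching of the $\G$-bundle cocycle data with descent data for $\X$. Keeping track of left versus right actions, the source/target conventions for the fiber products $\G \times_M P$, and the various canonical isomorphisms of iterated homotopy fiber products -- so that the bundle cocycle identities line up \emph{exactly} with the gluing hypotheses of (S1) and (S2) -- is where the argument is most delicate; once that dictionary is established, both essential surjectivity and full faithfulness become direct applications of the stack axioms.
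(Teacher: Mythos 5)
Your proof is correct, but it takes a genuinely different route from the paper's. The first step---representing $\bar M \til\times_\X \bar M$ by a $\C$-groupoid $\G$ via Proposition~\ref{prop:CFGgroupoidstruct}---is common to both. After that, however, the paper never constructs a functor $\X \to \B\G$ at all: it interposes the pre-stack $\Y$ of \emph{trivial} $\G$-bundles (objects are maps $f \colon N \to M$, morphisms are bundle isomorphisms $f_1^*\G \to f_2^*\G$) and produces two local equivalences $\B\G \from \Y \to \X$, the second built from the cocycle correspondence of Lemma~\ref{lemma:gbundlecycle} together with the 2-morphism witnessing the pullback square; the equivalence $\B\G \isom \X$ then falls out of the stackification machinery (Lemma~\ref{lemma:stackification}), which silently absorbs all the gluing. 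You instead define the comparison functor $\Psi \colon \X \to \B\G$, $X \mapsto \bar N \til\times_\X \bar M$, and verify full faithfulness and essential surjectivity by hand via (S1) and (S2). Both work, and your descent dictionary is the right one: local trivializations of a principal $\G$-bundle give maps $f_i \colon U_i \to M$, the transition maps $U_{ij} \to \G = \bar M \til\times_\X \bar M$ are exactly isomorphisms $(p \circ \bar f_j)|_{U_{ij}} \to (p \circ \bar f_i)|_{U_{ij}}$ in $\X$, and the bundle cocycle condition matches the descent cocycle because the multiplication on $\G$ was defined through the fiber product (cf.\ Proposition~\ref{prop:groupoidaspullback}). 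The trade-off is that the paper's route confines every computation to trivial bundles, where morphisms literally are maps into $\G$, and outsources all gluing to general lemmas, whereas your route yields an explicit functor (essentially a quasi-inverse to the one the paper implicitly inverts) at the cost of exactly the bookkeeping you flag: since 2-Yoneda gives an equivalence, not an isomorphism, between $\X_N$ and morphisms $\bar N \to \X$, defining $\Psi$ requires coherent choices of representing objects to obtain a strict functor over $\C$. One point worth making explicit in your write-up: principality of $\Psi(X)$ needs not merely \emph{some} identification of $\G \times_M \Psi(X)$ with $\Psi(X) \times_N \Psi(X)$ as iterated fiber products, but that the canonical identification \emph{is} the total action map; this is precisely the content of Lemma~\ref{lemma:CFGbundle} in the appendix, which the paper proves (and uses in the bi-fully-faithfulness proposition) so that arguments of exactly your shape go through.
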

  \begin{proof}
    Suppose $p \colon \bar M \to \X$ is a presentation.
    Lemma~\ref{prop:CFGgroupoidstruct} says that $\bar M \til\times_{\X} \bar M$ is a strict groupoid internal to CFGs.
    Furthermore, since the CFG of arrows is representable, there exists a $\C$-groupoid $\G$ and a 2-pullback square:
    \begin{equation}\label{eqn:biessentiallysurj}
    \begin{tikzcd}
      \bar \G \arrow[r, "\s"] \arrow[d, "\t"] & \bar M \arrow[d] \\
      \bar M \arrow[r]                   & \X
    \end{tikzcd}
  \end{equation}

    We claim that $\B \G$ is equivalent to $\X$. By Lemma~\ref{lemma:stackification} it suffices to show that there exists a pre-stack $\Y$ together with a pair of local equivalences (see Definition~\ref{defn:localequivalence}) $\B \G \from \Y \to \X$.
    Let $\Y$ be defined to be the pre-stack of trivial $\G$ bundles. That is: the objects of $\Y$ are defined to be morphisms $f \colon N \to M$. The morphisms of $\Y$ are isomorphisms $\phi \colon f_1^* \G \to f_2^* \G$.

    The forgetful functor $\Y \to \B\G$ is clearly a local equivalence. We need to construct a local equivalence $E \colon \Y \to \X$.
    Recall that a local equivalence is a fully faithful and locally essentially surjective functor.
    Since $\Y$ has the same objects as $\bar M$, we can define $E \colon \Y \to \X$ to be equal to $p$ at the level of objects.
    Since we are working with CFGs, it suffices to define $E$ for morphisms covering the identity.
    Suppose $\phi \colon f_1^* \G \to f_2^* \G$ is a morphism in $\Y$ covering the identity for $f_i \colon N \to M$.

    By Lemma~\ref{lemma:gbundlecycle} there is a unique $\gamma \colon N \to \G$ such that
    \[ \phi(g,x) = (g \cdot \gamma(x), x) . \]
    Let $\eta \colon \bar M_0 \to \X_1$ be the natural transformation which witnesse the 2-commutativity of Diagram~\ref{eqn:biessentiallysurj}.
    Then we define $E(\phi) := \eta(\gamma)$ where $\gamma \colon N \to M$ is the morphism just described.

    To see that $E$ is a functor, observe that
    \[ E( \phi_1 \circ \phi_2) = \eta(\m(\gamma_2,\gamma_1)) = \eta(\gamma_1) \circ \eta(\gamma_2) = E(\phi_1) \circ E(\phi_2) \]
    The second equality comes from two facts:
    \begin{itemize}
      \item if $F \colon \bar \G \to \bar M \til\times_{\X} \bar M$ is the canonical map, then $\eta = \pr_{1.5} \circ F_0 \colon \G_0 \to \bar M \til\times_{\X} \bar M$;
      \item the groupoid operation on $\G$ is, by definition, the one obtained via $F \colon \bar \G \to \bar M \til\times_{\X} \bar M$.
    \end{itemize}

    Now that we have defined $E$, it is clear that $E$ is locally essentially surjective so we only need to show that it is fully faithful.
    It suffices to show this for morphisms covering the identity.
    Fix a pair of objects $f$ and $g$ in $\Y$.
    Suppose $a \colon f^* X_0 \to g^* X_0$ is a morphism in $\X$ covering the identity.
    Consider the object $(f, a, g)$ in $\bar M \til\times_{\B\G} \bar M$. Since $F$ is an equivalence, $(f, a, g)$ is in the orbit of $F(\gamma)$ for a unique $\gamma \in \bar \G$.
    Since this process inverts $E$ we conclude that $E|_{\Hom(f,g)}$ is a bijection.
  \end{proof}
  \newcommand{\Bun}{\mathbf{Bun}}
  \begin{proposition}
    Let $\G \grpd M$ and $\H \grpd N$ be $\C$-groupoids and let $\Bun(\G,\H)$ be the category whose objects are generalized morphisms $P \colon \G \to \H$ and morphisms are isomorphisms of generalized morphisms.
    On the other hand, let $\Hom(\B\G, \B\H)$ be the category whose objects are CFG morphisms $\B\G \to \B\G$ and morphisms are natural transformations.
    Then $\B \colon \Bun(\G,\H) \to \Hom(\B\G, \B\H)$ is an equivalence of categories.

    In other words, the psuedofunctor $\B$ is bi-fully faithful.
  \end{proposition}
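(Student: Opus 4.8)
The plan is to establish the two conditions for an equivalence of categories---essential surjectivity and full faithfulness---both resting on a $2$-Yoneda principle: a left principal $\G$-bundle over $N$ is the same data as a CFG morphism $\bar N \to \B\G$ (Lemma~\ref{lemma:bundleispullback} packaged with representability), so morphisms out of $\B\H$ are controlled by the presentation $p_\H \colon \bar N \to \B\H$ and by the groupoid $\bar N \til\times_{\B\H} \bar N \isom \bar\H$ recovered exactly as in the preceding proposition (Lemma~\ref{prop:CFGgroupoidstruct}). I follow the orientation dictated by the definition of $\B$, namely that a left principal $(\G,\H)$-bibundle $P$ is sent to $\B P \colon \B\H \to \B\G$, so I compare $\Bun(\G,\H)$ with $\Hom(\B\H,\B\G)$; the opposite orientation is identical.

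For essential surjectivity, given a CFG morphism $F \colon \B\H \to \B\G$, I would let $\H_{\mathrm{triv}} \in \B\H_N$ denote the canonical trivial bundle $\H \to N$ and set $P := F(\H_{\mathrm{triv}})$. Since $F$ commutes with the projection to $\C$, $P$ is a left principal $\G$-bundle over $N$; this is the prospective total space. To produce the right $\H$-action I first record right multiplication on $\H_{\mathrm{triv}}$ as descent data: pulling $\H_{\mathrm{triv}}$ back along $\t,\s \colon \H \to N$ gives left principal $\H$-bundles $\t^*\H_{\mathrm{triv}}$ and $\s^*\H_{\mathrm{triv}}$ over $\H$, and right multiplication supplies a canonical isomorphism $\Phi \colon \t^*\H_{\mathrm{triv}} \to \s^*\H_{\mathrm{triv}}$ over $\H$ satisfying a cocycle identity over $\H\two$ (precisely associativity). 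Because $F$ preserves pullbacks up to canonical isomorphism, it transports $\Phi$ to an isomorphism $F(\Phi) \colon \t^*P \to \s^*P$ over $\H$ obeying the same cocycle, and this datum is exactly a right $\H$-action on $P$; it commutes with the left $\G$-action since $F(\Phi)$ is a morphism of $\G$-bundles. Left principality of $P$ is automatic as $P \in \B\G_N$, while the submersion and principality conditions on the right action follow after pulling everything back along $p_\H$ and invoking (GS4)--(GS5). Finally, $\B P \isom F$ holds because both are stack morphisms $\B\H \to \B\G$ (Theorem~\ref{thm:bgstack}) that agree on $\H_{\mathrm{triv}}$, hence on each pullback $f^*\H_{\mathrm{triv}}$ by functoriality, and since every object of $\B\H$ is locally isomorphic to such a pullback the good-site gluing upgrades this to a natural isomorphism.

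For full faithfulness, I would argue by evaluation at $\H_{\mathrm{triv}}$. Given an isomorphism of bibundles $\alpha \colon P \to P'$, the transformation $\B\alpha$ evaluated at $\H_{\mathrm{triv}}$ recovers $\alpha$ under the unit isomorphism $P \otimes_\H \H \isom P$, so $\B\alpha = \B\alpha'$ forces $\alpha = \alpha'$ and $\B$ is faithful. For fullness, given a natural transformation $\eta \colon \B P \to \B P'$, I set $\alpha := \eta_{\H_{\mathrm{triv}}}$, an isomorphism $P \to P'$ of left principal $\G$-bundles; naturality of $\eta$ against the action isomorphism $\Phi$ shows $\alpha$ is right $\H$-equivariant, hence a bibundle morphism, and $\B\alpha = \eta$ because both transformations agree at $\H_{\mathrm{triv}}$ and are determined by that value via local triviality together with gluing of morphisms in the stack $\B\G$ (using Corollary~\ref{corollary:equivariantmaps} to descend the comparison through the $\H$-action).

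The hard part will be the essential surjectivity step, specifically the passage from the $2$-categorical descent datum $F(\Phi) \colon \t^*P \to \s^*P$ to an honest right $\H$-action verifying all of (GA1)--(GA2), (GB1)--(GB2) and the commuting-actions axiom (BB), and extracting principality of that action from (GS5) after base change along $p_\H$. Matching the $\H\two$-cocycle for $F(\Phi)$ exactly with associativity of the action, and then checking $\B P \isom F$ on \emph{all} of $\B\H$ rather than only on trivial bundles---where the stack axioms and the good-site gluing carry the load---are the principal technical hurdles; cleanly establishing the dictionary ``right $\H$-action $\Leftrightarrow$ isomorphism $\t^*P \isom \s^*P$ with $\H\two$-cocycle'' is what the whole argument turns on.
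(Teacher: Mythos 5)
Your proposal is correct in substance, but your essential surjectivity argument takes a genuinely different route from the paper's. The paper never evaluates $F$ at the unit bundle; instead it forms the homotopy fiber product $\bar N \til\times_{\B\H} \bar M$ of the canonical presentations, invokes Lemma~\ref{lemma:CFGbundle} to endow that CFG with commuting principal actions, represents it by an object $P$ of $\C$, and then (after replacing the presentation so the comparison square strictly 2-commutes, which produces a section of $P$ and reduces to the trivial case $P = f^*\G$) builds the isomorphism $\B P \isom F$ on trivial bundles and extends by choosing local trivializations and gluing. Your construction --- set $P := F(\H_{\mathrm{triv}})$ and transport the right-multiplication isomorphism $\Phi \colon \t^*\H_{\mathrm{triv}} \to \s^*\H_{\mathrm{triv}}$ with its cocycle over $\H\two$ through $F$ --- is the 2-Yoneda/evaluation argument; it avoids homotopy fiber products of CFGs and Lemma~\ref{lemma:CFGbundle} entirely, at the price of verifying the dictionary between cocycle data and actions. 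That dictionary is sound: an isomorphism $\t^*P \to \s^*P$ over $\H$ is literally a morphism $P \times_{\s^P,\t} \H \to P$ compatible with the projection, the $\H\two$-cocycle is associativity, and you should note that the unit axiom (GA1) is not extra data --- it follows by pulling the cocycle back along $\u \colon N \to \H$ and cancelling the invertible $\u^* F(\Phi)$; the commuting-actions axiom (BB) holds because $F(\Phi)$ is a morphism in $\B\G$, hence $\G$-equivariant. Your closing gluing step ($\B P \isom F$ on trivial bundles via $P \otimes_\H f^*\H \isom f^*P \isom F(f^*\H)$, then everywhere by local triviality and (S1)) is the same move the paper makes, and your full-faithfulness argument --- evaluate at $\H_{\mathrm{triv}}$, then use naturality against the morphisms $\gamma \colon N' \to \H$ of Lemma~\ref{lemma:gbundlecycle} to extract $\H$-equivariance --- coincides with the paper's.

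One caveat: your claim that \emph{principality} of the right action follows from (GS4)--(GS5) is both wrong and unnecessary. Objects of $\Bun(\G,\H)$ are generalized morphisms, i.e.\ \emph{left} principal bibundles; only the left action is principal, and that is automatic since $P \in \B\G_N$. The right action of a generalized morphism is principal only when the bibundle is biprincipal (a Morita equivalence, Lemma~\ref{lemma:invertiblegmor}), so if your claim held, every CFG morphism $\B\H \to \B\G$ would be a weak equivalence --- false. Likewise the only submersion you need is the bundle projection $\s^P \colon P \to N$, again automatic from $P \in \B\G_N$; the transported cocycle supplies exactly the remaining data (an $\H$-action anchored at $\s^P$ commuting with the $\G$-action). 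Delete that sentence and your proof stands.
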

  \begin{proof}
    We will first show essential surjectivity. Suppose $F \colon \B\G \to \B\H$ is a CFG morphism. Then consider $\bar N \til\times_{\B\H} \bar M$ which fits into the 2-commutative diagram:
    \[
    \begin{tikzcd}
      \bar N \til\times_{\B\H} \bar M \arrow[rr] \arrow[dd]  &    & \bar M \arrow[d] \\
                                                            &      & \B \G \arrow[d, "F"] \\
      \bar N \arrow[rr]                                       & & \B\H
    \end{tikzcd}
    \]
    By Lemma~\ref{lemma:CFGbundle}, $\bar N \til\times_{\B\H} \bar M$ has a left $\bar \H$ action and a right $\bar \G$ action. Its clear from the definitions of these actions that they commute. Since $\bar N \to \B\H$ is a presentation, there exists a $(\H,\G)$-bundle $P$ which fits into a 2-commutative square:
    \[
    \begin{tikzcd}
      \bar P \arrow[rr, "\s" ] \arrow[dd, "\t"]            &      & \bar M \arrow[d] \\
                                                           &      & \B \G \arrow[d, "F"] \\
      \bar N \arrow[rr]                                    &      & \B\H
    \end{tikzcd}
    \]
    The right action of $\G$ is principal since $\s$ is a submersion (see Lemma~\ref{lemma:CFGbundle} for why the action is principal).
    Note that $P \to \B\G$ is a presentation. Therefore, by replacing $M$ with $P$, we can assume without loss of generality that there exists a 2-commutative square:
    \[
    \begin{tikzcd}
      \bar M \arrow[r, "f"] \arrow[d] & \bar N \arrow[d] \\
      \B \G \arrow[r, "F"] & \B\H
    \end{tikzcd}
    \]
    In such a case, we get a morphism $\sigma \colon M \to P$ which splits $\s \colon P \to M$.
    Since $P$ admits a section, we can assume without loss of generality that $P = \G \times_{\s,f} N = f^* \G$.

    Now we claim that there is a natural transformation $\eta \colon \B P \to F$. Such a natural transformation must associate to each object $Q$ in $\B\G$ an isomorphism $P \otimes_\G Q \to F(Q)$.
    We will first define $\eta$ for trivial $\G$ bundles. Suppose $g^*\G \to N'$ is such a $\G$-bundle. Then $P \otimes_\G g^* \G$ can be identified with $g^*f^* \H$ (due to the fact that $P$ admits a section).

    The equivalence $\bar P \to \bar N \til\times_{\B\H} \bar N$ sends $\rho \colon N' \to P$ to the triple $(\t \circ \rho, \alpha(\rho), \s \circ \rho)$ where
    \[ \alpha(\rho) \colon (t \circ \rho)^*\H \to F((\s \circ \rho)^*\G) , . \]
    Then we can define $\eta$ on trivial $\G$-bundles to be $\eta(g^* \G) := \alpha(\sigma \circ g)$. Therefore:
    \[ \eta(g^* \G) \colon g^* f^* \H \to F(g^* \G) \, . \]

    To define $\eta$ for non-trivial $P$, we extending it (non canonically) by making a choice (for each $P$) of a local trivialization and define $\eta(P)$ to be the unique $\H$-bundle isomorphism obtained from gluing the identifications coming from $\eta$ applied to each trivialized piece. Since $\eta$ satisfies the condition for being a natural transformation locally, it also satisfies the condition globally after glueing. This shows $\B \colon \Bun(\G,\H) \to \Hom(\G,\H)$ is essentially surjective.

    We will now show that it is fully faithful. Fix a left principal $(\G, \H)$-bibundle $P$ for $\C$-groupoids $\G\grpd M$ and $\H \grpd N$. Now consider the function $\B \colon \Isom(P,P) \to \Isom(\B P, \B P)$. Suppose that $\B \phi = \Id_P$ for $\phi \colon P \to P$.
    Then for all $Q \in \B \G$, $\phi \otimes \Id \colon P \otimes Q \to P \otimes Q$ is the identity, hence $\phi \colon P \to P$ must be the identity. This shows that the function is injective.

    Now let $\eta \colon \B P \to \B P$ be a natural transformation, we want to show $\eta$ is in the image. We know that $\eta(Q) \colon P \otimes Q \to P \otimes Q$ and
    \[
    \begin{tikzcd}
      P \otimes Q_1 \arrow[r, "\eta(Q_1)"] \arrow[d, "\B P(\psi)"] & P \otimes Q_1 \arrow[d, "\B P(\psi)"] \\
      P \otimes Q_2 \arrow[r, "\eta(Q_1)"]                           & P \otimes Q_2
    \end{tikzcd}
    \]
    commutes for any morphism $\psi \colon Q_1 \to Q_2$ in $\B \G$.
    In particular, by taking $Q = \H$ and observing that there is a canonical isomorphism $P \otimes \H \isom P$ then we get a $\G$-bundle isomorphism $\phi \colon P \to P$. We claim that $\phi$ is a bibundle morphism. To show this we need to explain why $\phi$ is $\H$-equivariant. To see this, consider an arbitrary morphism $\gamma \colon N' \to \H$, by the natural isomorphism condition the following diagram commutes:
    \[
    \begin{tikzcd}
      P \otimes (\t \circ \gamma)^* \H \arrow[rr, "\eta(Q_1)"]
      \arrow[d, "{p \otimes (h,x) \mapsto p \otimes (h \cdot \gamma(x), x)}" swap]
      & & P \otimes (\t \circ \gamma)^* \H
      \arrow[d, "{p \otimes (h,x) \mapsto p \otimes (h \cdot \gamma(x), x)}"]  \\
      P \otimes (\s \circ \gamma)^* \H
      \arrow[rr, "\eta(Q_1)"]              &             & P \otimes (\s \circ \gamma)^* \H  \\
  \end{tikzcd}
    \]
    Using the definition of $\phi$, we can extend this diagram as below:
    \[
    \begin{tikzcd}
      (\t \circ \gamma)^* P \arrow[d, "{(p,x) \mapsto p \otimes (\gamma(x),x))}" swap] \arrow[rr, "{(p,x) \mapsto (\phi(p),x)}"]  & & (\t \circ \gamma)^* P  \arrow[d, "{(p,x) \mapsto p \otimes (\gamma(x),x))}"] \\
      P \otimes (\t \circ \gamma)^* \H \arrow[rr, "\eta(Q_1)"]
      \arrow[d, "{p \otimes (h,x) \mapsto p \otimes (h \cdot \gamma(x), x)}" swap]
      & & P \otimes (\t \circ \gamma)^* \H
      \arrow[d, "{p \otimes (h,x) \mapsto p \otimes (h \cdot \gamma(x), x)}"]  \\
      P \otimes (\s \circ \gamma)^* \H
      \arrow[d, "{p \otimes (h,x) \mapsto (p \cdot h, x)}" swap] \arrow[rr, "\eta(Q_1)"]              &             & P \otimes (\s \circ \gamma)^* \H \arrow[d, "{p \otimes (h,x) \mapsto (p \cdot h, x)}"] \\
        (\t \circ \gamma)^* P\arrow[rr, "{(p,x) \mapsto (\phi(p),x)}"] & & (\t \circ \gamma)^* P  \\
    \end{tikzcd}
    \]
    Which implies that $(\phi(p) \cdot \gamma(x),x) = (\phi(p) \cdot \gamma(x), x)$ and so $\phi$ must be $\H$-equivariant.

    To finish, we need to show that $\B (\phi) = \eta \colon \B P \to \B \P$. Which is equivalent to showing that
    \[ \eta(Q) = \phi \otimes \Id_Q \colon P \otimes Q \to P \otimes Q \]
    for all objects $Q \in \B\G$. It suffices to show this for trivial $\H$-bundles since we only need to check equality of maps locally.
    Suppose $Q = f^* \H$ then by the definition of $\phi$ we have that
    \[
    \begin{tikzcd}
      P \otimes f^* \H \arrow[rr, "\eta(f^* \H)"] \arrow[d] & & P \otimes f^* \H \arrow[d] \\
      f^* P \arrow[rr, "{(p,x) \mapsto (\phi(p),x)}"] &  & f^* P
    \end{tikzcd}
    \]
    commutes, where the vertical arrows are the canonical identifications. Since $\phi$ is equivariant, we also know that:
    \[
    \begin{tikzcd}
      P \otimes f^* \H \arrow[rr, "\eta(f^* \H)"] \arrow[d] & & P \otimes f^* \H \arrow[d] \\
      f^* P \arrow[rr, "{(p,x) \mapsto (\phi(p),x)}"] &  & f^* P
    \end{tikzcd}
    \]
    Since $\phi$ is $\H$-equivariant, we have that:
    \[
    \begin{tikzcd}
      P \otimes f^* \H \arrow[rr, "{\phi \otimes \Id_\H}"] \arrow[d] & & P \otimes f^* \H \arrow[d] \\
      f^* P \arrow[rr, "{(p,x) \mapsto (\phi(p),x)}"] &  & f^* P
    \end{tikzcd}
    \]
    commutes as well. Since the vertical arrows are the same isomorphisms in both of these diagrams, we can put them together to obtain the desired equality.
  \end{proof}
    These two propositions complete the proof of the main theorem. In the next section we will look at a few examples of sites and study the geometric meaning of these concepts in each setting.

    \section{Examples}\label{section:examples1}
    Here we will briefly explore a few examples of good sites and the geometric interpretation of our theorems.
    \subsection{Manifolds}
    Let $\C$ be the site of manifolds. Then a $\C$-groupoid is a Lie groupoid. A principal $\C$-groupoid bundle is the same as a principal bundle of the associated Lie groupoid. Principal bibundles of Lie groupoids are called Morita equivalences. The theorem says the study of geometric stacks over the site of smooth manifolds is equivalent to the study of principal groupoid bundles.

    It is clear that a submersion in the sense we have discussed thus far is the same as a surjective submersion of manifolds. However, if we wish, we can be more strict with the definition of submersion, and we will obtain a corresponding class of geometric stack represented by a Lie groupoid.

    \begin{example}
      Suppose we redefine submersion to mean \emph{locally trivial fibration}. Then a $\C$-groupoid becomes a Lie groupoid $\G \grpd M$ for which the source map makes $\G$ into a locally trivial fibration. By our theorem, the stacks represented by such groupoids are precisely those which admit a presentation $p \colon M \to \X$ which is a locally trivial fibration. In this case, we mean that $p$ is a representable epimorphism such that for any $\bar N \to X$, the associated map $\bar M \times_{\X} \bar N \to N$ is a locally trivial fibration of manifolds.

      A sub-example of this case are the stacks represented by \emph{source proper} Lie groupoids. That is, groupoids whose source maps are proper submersions.
    \end{example}
    Apart from alternative notions of submersion, we can also enrich the category of manifolds with additional data.
    \begin{example}
      Let $\C$ be the category whose objects are pairs $(M,X)$ where $X$ is a vector field on the smooth manifold $M$. A morphism in this category is a smooth map $f \colon (M,X) \to (N,Y)$ such that $X$ and $Y$ are $f$-related.

      We can give $\C$ a Grothendiek topology by setting a covering family to be a collection of open embeddings $(U_i, X|_{U_i}) \to (M, X)$ whose images cover $M$. This site comes with a forgetful functor to $\Man$. A submersion in this category is just a surjective submersion. A $\C$-groupoid is a Lie groupoid $\G$ equipped with a vector field $X$ which is \emph{multiplicative}.
      \[ \m_*(X,X) = X \]

      Recent work by Berwick-Evans and Lerman \cite{Berwick} has shown that such a vector field is a reasonable candidate for the definition of a vector field on a geometric stack.
      Hence, we obtain our first example of an interesting strategy. One can try to understand structures on a geometric stack by equipping the site itself with additional structure.
    \end{example}

    \subsection{Schemes}

    The site of schemes over a base satisfies our axioms for a \emph{good site} when equipped with the etale topology. Gluing along submersions is easy since schemes are already defined locally. If one is more strict and requires more topological properties such as seperatedness, then one might need to be careful, but most of the time these properties are preserved as long as you gluing inside of $\Sub$.
    In this context, it is also reasonable to replace site theoretic submersions with `smooth' morphisms of stacks.

    Our definition of a geometric stack is very close to the definition of an \emph{Artin} stack. For an Artin stack, one commonly requires that the diagonal embedding:
    \[ \X \to \X \times \X \]
    be a representable morphism of stacks. We have a good reason for not requiring this in the manifold setting since it would imply that:
    \[ (\t,\s) \colon \G \to M \times M \]
    is a local submersion, due to the following pullback square:
    \[
    \begin{tikzcd}
      \bar \G \arrow[r, "{(\t,\s)}"] \arrow[d]     & \bar M \times \bar M \arrow[d] \\
      \B\G     \arrow[r]               & \B\G \times \B\G
    \end{tikzcd}
    \]
    Since every map of schemes is cartesian, the condition of representability is much weaker in the algebraic geometry setting.
    A nice property of this Artin condition is that it is equivalent to requiring that for each $X$ and $Y$ in $\X_N$, there is a scheme $S$ such that maps into $S$ classify isomorphisms $X \to Y$ which cover the identity. For Lie groupoids, we can only expect this to be the case when $X$ and $Y$ are trivial $\G$-bundles.

    \subsection{Algebroids}

    A \emph{Lie algebroid} is a vector bundle $A \to TM$, equipped with a Lie bracket $[ \cdot , \cdot]_A $ on its space of sections and a \emph{anchor map} $\rho \colon A \to TM$ such that:
    \begin{equation}\label{eqn:algebroidleibnitz}
      [ \alpha , f\beta]_A = f [\alpha, \beta]_A + \rho(\alpha)(f) \beta \quad \forall \alpha, \beta \in \Gamma(A) \quad \forall f \in C^\infty(M)
    \end{equation}

    If $A$ and $B$ are Lie algebroids, a morphism of Lie algebroids is a vector bundle morphism $F \colon A \to B$ which commutes with the anchors and is compatible with the brackets. We will not state the bracket compatibility condition here, but we will comment that it implies that
    \[ F_*([\alpha,\beta]_A) = [F_*(\alpha),F_*(\beta)]_B \]
    whenever the pushforward of $\alpha$ and $\beta$ by $F$ are well defined.

    Let $\C$ be the category whose objects are algebroids and whose morphisms are algebroid morphisms. The cartesian morphisms in this category are precisely the ones whose underlying vector bundle map is a vector bundle epimorphism covering a (local) submersion.
    Equip $\C$ with the Grothendieck (pre)topology whose covering families are the inclusion $A|_U \into A$ for open embeddings $U \to M$.

    Under this definition of Grothendieck topology, the ``submersions'' are algebroid maps $F \colon A \to B$ which cover submersions and admit local algebroid splittings $B|_U \to A$. Unlike with ordinary vector bundles, finding splittings of algebroid morphisms is non-trivial and we cannot expect them to exist in general. Hence, a submersion in this category is a fairly strong condition.

    A $\C$-groupoid is a groupoid internal to algebroids $A \grpd B$ such that the source (and target) morphisms are submersions in the above sense. This is the same as a LA-groupoid~\cite{Mackenzie} except for the local splitting condition.
    \[
    \begin{tikzcd}
      A \darrow \arrow[r] & \G \darrow \\
      B \arrow[r]         & M
    \end{tikzcd}
    \]
    The forgetful functor from $\C$ to manifolds can be extended to a functor $\B A \to \B\G$ where $\G$ is the underlying Lie groupoid associated to $A \grpd B$. This is a candidate for a definition of a Lie algebroid on a geometric stack. One caveat to this is that $\B A$ is not a stack over $\Man$. This is in contrast to the non-singular setting, where the total space of a Lie algebroid is also a manifold.
    One can get around this issue by first applying the ``total space'' functor to $\B A$ and then stackifying.

    One should take care that a principal $A \grpd B$ bundle internal to algebroids is not the same as a principal bundle as a $\C$-groupoid. The important distinction is the local splitting conditions. The main distinction comes from the fact that we require submersions to admit local sections and, in general, one cannot expect a surjective morphism of algebroids $F \colon A \to B$ to admit local sections, even if the base of the morphism is the identity.

\chapter{Morita equivalence}\label{chap:morita}

In this chapter, we will will introduce and study various notions of Morita equivalence and the Picard group.
We will also review some strategies for computing Morita equivalences as well as take a look at a few examples.
Throughout this chapter, $\C$ is a good site (Definition~\ref{defn:goodsite}).

Recall that principal bibundles of $\C$-groupoids are precisely the isomorphisms in $\morpoid$.
Isomorphism in $\morpoid$ has a special name:
\begin{definition}
  Let $\G$ and $\H$ be $\C$-groupoids. A \emph{Morita equivalence} between $\G$ and $\H$ is a principal $(\G,\H)$-bibundle.
\end{definition}
This chapter will be organized as follows:
\begin{itemize}
  \item In Section~\ref{section:maplike} we explore the relationship between bibundles of $\C$-groupoids and $\C$-groupoid homomorphisms. In particular, we define a special class of $\C$-groupoid homomorphisms called \emph{weak equivalences}.
  \item In Section~\ref{section:moritaequiv} we prove the main results of this chapter. The main theorem essentially states that the equivalence relation generated by weak equivalences is the same as Morita equivalence.
  \item In Section~\ref{section:picard} we define the Picard group of a $\C$-groupoid $\G$, which turns out to be the group of automorphisms of $\B \G$.
  \item In Section~\ref{section:examples2} we look at a few examples of Morita equivalences and the Picard group for specific sites.
\end{itemize}

\section{Generalized morphisms and homomorphisms}\label{section:maplike}

In Chapter~\ref{chap:stacks} we saw that there is a functor $\B \colon \morpoid \to \mathbf{GStacks}$ from Groupoids with generalized morphisms to geometric stacks.
In this section we will note that there is a pseudofunctor (Definition~\ref{defn:pseudofunctor})
\[ \bP \colon \C\GRPD \to \morpoid \]
from the category of $\C$-groupoids (with homomorphisms as morphisms) to $\C$-groupoids with generalized morphisms.

\subsection{Homomorphisms of groupoids}

Recall that a $\C$-groupoid homomorphism from $\G \grpd M$ to $\H \grpd N$ covering $f \colon M \to N$, is a morphism $F \colon \G \to \H$ which is compatible with the groupoid structure maps.
\begin{definition}\label{defn:groupoidmaps}
  Let $F\colon \G \to \H$ be a $\C$-groupoid morphism covering $f \colon M \to N$. Then:
  \begin{itemize}
    \item $F$ is called \emph{essentially surjective} if $\t \circ \pr_1 \colon \H \times_N M \to N$ is a submersion.
    \item $F$ is called \emph{fully faithful} if $M \times_{f, \t} \H \times_{f, \s} M$ exists and:
    \[ (\t, F, \s ) \colon \G \to M \times_{f, \t} \H \times_{\s, f} M \]
    is an isomorphism.
    \item $F$ is called an \emph{weak equivalence} if it is both essentially surjective and fully faithful.
  \end{itemize}
  Observe that the essentially surjective condition depends only of the base map $f$. For this reason we say an arbitrary morphism $f \colon M \to N$ is essentially surjective if $1_M \to \H$ is essentially surjective.
\end{definition}
Unsurprisingly, one can make groupoids and groupoid homomorphisms into a bicategory.
To do this, we must give the analogue of a natural transformation internal to $\C$.
\begin{definition}\label{defn:cgroupoidnaturaltrasn}
  Let $F \colon \G \to \H$ and $G \colon \G \to \H$ be groupoid homomorphisms covering $f \colon M \to N$ and $g \colon M \to N$, respectively.
  A \emph{natural transformation} from $F$ to $G$ is a morphism $\eta \colon M \to \H$ such that:
  \begin{itemize}
  \item $ \s \circ \eta = f$ and $\t \circ \eta = g$;
  \item $\m(\eta \circ \t, F) = \m(G, \eta \circ s) \colon \G \to \H$
  \end{itemize}

  Given $F_i \colon \G \to \H$ covering $f_i \colon M \to N$ for $i=1,2,3$ and a pair of natural transformations $\eta_1 \colon F_1 \to F_2$ and $\eta_2 \colon F_2 \to F_3$ then the vertical composition $\eta_2 * \eta_1$ is defined to be:
  \[ \eta_2 * \eta_1  := \eta_2  \cdot \eta_1 \colon M \to \H \, . \]

  Now suppose we are given $\eta \colon F_1 \to F_2$ and $\zeta \colon G_1 \to G_2$ for $F_i \colon \G \to \H$ and $G_i \colon \H \to \K$. Then the horizontal product $\zeta \circ \eta$ is defined as below:
  \[
  \zeta \circ \eta (x) := (G_2 \circ \eta(x)) \cdot  (\zeta \circ f_1(x))
  \]
\end{definition}
These constructions give a natural bicategory structure on $\C$-groupoids which is analogous to the bicategory of standard groupoids.
The important caveat is that everything is performed internally to $\C$.
\newpage
\begin{definition}
Let $\C\GRPD$ be the bicategory defined as follows:
\begin{itemize}
  \item The objects of $\C\GRPD$ are $\C$-groupoids.
  \item The 1-morphisms of $\C\GRPD$ are $\C$-groupoid homomorphisms.
  \item The 2-morphisms of $\C\GRPD$ are natural transformations.
\end{itemize}
Composition of 1-morphisms is defined in the obvious manner. Horizontal and vertical composition of 2-morphisms is defined as above.
\end{definition}
It is not difficult to check that this satisfies the axioms of a bicategory. In fact, it is a \emph{strict} bicategory since the horizontal product is strictly associative.
We should offer a word of caution, however. Unlike the case of set-theoretic groupoids, a fully faithful and essentially surjective morphism of $\C$-groupoids does not necessarily admit a weak inverse.

\subsection{Pullback groupoids}

We would like to point out one construction which is particularly useful for computing Morita equivalences.
\begin{lemma}
  Let $\G \grpd M$ be a $\C$-groupoid and suppose $f \colon N \to M$ is essentially surjective. Then $N \times_M \G \times_M N$ exists and inherits a canonical groupoid structure such that projection to the middle component $N \times_M \G \times_M N \to \G$ is a groupoid homomorphism.
\end{lemma}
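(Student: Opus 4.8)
The plan is to construct $\P := N \times_{f,\t} \G \times_{\s,f} N$ as a nested fiber product, equip it with structure maps specified on generalized elements, and then observe that every groupoid axiom for $\P$ is inherited from the corresponding axiom for $\G$. A generalized element of $\P$ should be thought of as a triple $(n_2, g, n_1)$ satisfying $f(n_2) = \t(g)$ and $\s(g) = f(n_1)$, i.e.\ an arrow $g$ of $\G$ together with lifts of its target and source to $N$; the convention that arrows run right to left tells us this represents an arrow of $\P$ from $n_1$ to $n_2$.

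The first step, and the main obstacle, is existence of the fiber product, which is exactly where the essential surjectivity hypothesis enters. I would build $\P$ in two stages. The inner product $\G \times_{\s,f} N$ exists because $\s$ is a submersion, hence cartesian. Writing $\pr_1$ for its projection to $\G$, the composite $\t \circ \pr_1 \colon \G \times_{\s,f} N \to M$ is precisely the map whose being a submersion is the definition of $f$ being essentially surjective (Definition~\ref{defn:groupoidmaps}). Consequently $\t \circ \pr_1$ is cartesian, so I may form the outer product $N \times_{f,\,\t\circ\pr_1}\!\big(\G \times_{\s,f} N\big)$, which I identify with $\P$. This is the only place the hypothesis is used, and without it the triple fiber product need not exist in a general site.

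Next I would define the structure maps by $\s_\P(n_2,g,n_1) := n_1$, $\t_\P(n_2,g,n_1):= n_2$, $\u_\P(n) := (n, \u\circ f(n), n)$, $\i_\P(n_2,g,n_1) := (n_1, \i(g), n_2)$, and $\m_\P\big((n_3,g',n_2),(n_2,g,n_1)\big) := (n_3,\, g'\cdot g,\, n_1)$. Each is checked to land in $\P$ using the defining equations of $\G$: for instance $\i_\P$ is well defined because $\t\circ\i = \s$ and $\s\circ\i=\t$ by (G2), and $\m_\P$ is well defined because $\s(g') = f(n_2) = \t(g)$, so that $g',g$ are $\G$-composable. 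That $\t_\P$ is a submersion is immediate, since it is the base change of the submersion $\t\circ\pr_1$ along $f$ (submersions being stable under base change); and since $\i_\P$ is an involution, hence an isomorphism by (G1), with $\s_\P = \t_\P \circ \i_\P$, the map $\s_\P$ is a submersion as well.

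It then remains to verify the groupoid axioms and the homomorphism claim, both of which reduce to element-level identities in $\G$. Associativity, the unit laws, and (G1)--(G3) for $\P$ follow termwise from the same properties of $\m$, $\u$, $\i$ in $\G$, the $N$-entries carrying only the bookkeeping imposed by the fiber product conditions. Finally, the projection $\pr_\G \colon \P \to \G$, $(n_2,g,n_1)\mapsto g$, covering $f \colon N \to M$, is a homomorphism: (GM1) holds because $\s(g) = f(n_1) = f\circ\s_\P$ and $\t(g) = f(n_2) = f\circ\t_\P$, while (GM2) holds because $\pr_\G\circ\m_\P = g'\cdot g = \m\circ(\pr_\G\times\pr_\G)$. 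Since these verifications are routine diagram chases once the structure maps are in place, the substance of the argument is concentrated in the existence step, where essential surjectivity does precisely the work needed.
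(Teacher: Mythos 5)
Your proof is correct and takes essentially the same route as the paper's: the same triple fiber product $N \times_{f,\t} \G \times_{\s,f} N$ with source and target given by the outer projections, the same componentwise multiplication, and the same appeal to essential surjectivity to exhibit the source and target as base changes of the submersion $\t \circ \pr_1 \colon \G \times_{\s,f} N \to M$. The only difference is one of detail: you spell out the two-stage existence argument for the fiber product and the unit and inverse maps, which the paper's terse proof leaves implicit.
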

\begin{proof}
  Let $f^! \G$ be shorthand for $N \times_M \G \times_M N$. To give a groupoid structure on $f^! \G$ we should first define the source and target maps.

  Let the source $f^! \G \to N$ be projection to the last component and the target be projection to the first component. The source and target maps are submersions since they are the base changes of a submersion.

  Let $F \colon f^! \G \to \G$ be projection to the middle component. Then we define the product on $f^! \G$ as follows:
  \[ (z,g_2,y) \cdot (y,g_1, x):= (z, g_2 \cdot g_1 ,x). \]
  This clearly makes $F$ a $\C$-groupoid homomorphism.
  \end{proof}

\begin{definition}\label{defn:pullbackgroupoid}
  Let $G \grpd M$ be a $\C$-groupoid and $f \colon N \to M$ be essentially surjective. Then
  \[ f^! \G := N \times_M \G \times_M N \]
  is called the \emph{pullback groupoid} along $f$.
\end{definition}

The reason we do note use the notation $f^* \G$ to denote the pullback groupoid is due to possible confusion with the pullback of the trivial $\G$-bundle.
Recall that $f^* \G$ is defined to be $\G \times_M N$ while $f^! \G$ is $N \times_M \G \times_M N$.
Since the pullback of $\G$ as a bundle and as a groupoid do not coincide, it will spare us some possible confusion if we use distinct notation.

It is not necessary that $f \colon N \to M$ be essentially surjective for this construction to result in a $\C$-groupoid. It is clear however that it will not always work. Even if $f^! \G$ exists as an object in $\C$, it may not necessarily be the case that the source and target maps are submersions.
\begin{lemma}\label{lemma:pullbackequiv}
  Let $\G \grpd M$ be a $\C$-groupoid and suppose $f \colon N \to M$ is essentially surjective.
  The canonical homomorphism $F \colon f^! \G \to \G$ is a weak equivalence.
\end{lemma}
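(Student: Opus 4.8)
The plan is to verify the two defining conditions of a weak equivalence from Definition~\ref{defn:groupoidmaps} directly for the homomorphism $F \colon f^! \G \to \G$ covering $f \colon N \to M$, and to observe that once the fiber-product conventions are unwound, both conditions are essentially tautological. Recall that $F$ is projection to the middle factor of $f^! \G = N \times_M \G \times_M N$, that the source and target of $f^! \G$ are the projections to the last and first factors respectively, and that by our conventions $N \times_M \G = N \times_{f, \t} \G$ and $\G \times_M N = \G \times_{\s, f} N$, so that
\[ f^! \G = N \times_{f, \t} \G \times_{\s, f} N. \]

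For essential surjectivity, the relevant test map is $\t \circ \pr_1 \colon \G \times_{\s, f} N \to M$. By the remark following Definition~\ref{defn:groupoidmaps}, essential surjectivity of a homomorphism depends only on its base map, and a base map $f \colon N \to M$ is \emph{defined} to be essentially surjective precisely when this composite is a submersion. Thus essential surjectivity of $F$ is literally the hypothesis on $f$, and nothing remains to prove here.

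For full faithfulness, I would substitute $M' = N$, $\H = \G$, and $f' = f$ into the fully faithful test object $M' \times_{f', \t} \G \times_{\s, f'} M'$, obtaining $N \times_{f, \t} \G \times_{\s, f} N$, which is exactly $f^! \G$ by the identification above. Under this identification the comparison morphism $(\t, F, \s) \colon f^! \G \to N \times_{f, \t} \G \times_{\s, f} N$ sends a generalized element $(a, g, b)$ to $(\t(a,g,b), F(a,g,b), \s(a,g,b)) = (a, g, b)$, i.e. it is the identity, hence trivially an isomorphism. The only genuine care needed is the bookkeeping: one must check that the source and target of $f^! \G$ are indeed submersions so that $f^! \G$ is a legitimate $\C$-groupoid and the object $N \times_{f,\t}\G\times_{\s,f} N$ really exists. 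This is handled in the preceding lemma, where both projections are exhibited as base changes of the submersions $\s, \t \colon \G \to M$ along $f$; stability of submersions under base change (the corollary of Lemma~\ref{lemma:basechangelemma}) supplies the claim. The main obstacle, therefore, is not conceptual but notational—correctly matching the two overloaded uses of $\times_M$ so that the domain and codomain of the comparison map coincide on the nose—after which the statement falls out immediately.
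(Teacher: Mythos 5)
Your proof is correct and takes the same route as the paper, which simply declares the result ``clear from the definition of fully faithful'': with the paper's fiber-product conventions, $f^!\G = N \times_{f,\t} \G \times_{\s,f} N$ is literally the fully-faithful test object, $(\t,F,\s)$ is the identity, and essential surjectivity of $F$ is the hypothesis on $f$ verbatim. You have merely made explicit the notational bookkeeping (and the base-change argument for the source and target submersions, already recorded in the preceding lemma) that the paper leaves implicit.
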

\begin{proof}
  The result is clear from the definition of fully faithful.
\end{proof}

\subsection{Map-like bibundles}

Given a $\C$-groupoid homomorphism $F \colon \G \to \H$ covering $f \colon M \to N$, there is a canonical construction of a generalized morphism.
Let
\[
\bP(F) := f^* \H  = \H \times_N M \, .
\]
Since this is just the trivial bundle associated to $f$, there is a canonical left principal $\H$-bundle structure over $M$.
To make $\bP(F)$ into a bibundle (and hence a generalized morphism) we must equip it with a right $\G$-action. Let:
\[
(h, \t(g)) \cdot  g := (h \cdot F(g), \s(g))
\]
\begin{definition}
  A 1-morphism $\G \to \H$ in $\morpoid$ is called \emph{map-like} if it is isomorphic to $\bP(F)$ for some homomorphism $F \colon \G \to \H$.
\end{definition}
The next lemma gives a geometric characterization of the map-like bibundles.
\begin{lemma}\label{lemma:maplike}
  A generalized morphism is map-like if and only if it admits a section (as a left $\G$-bundle).
\end{lemma}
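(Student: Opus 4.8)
The plan is to play both implications off the section–trivialization correspondence of Lemma~\ref{lemma:bundletrivialization}, so that the only real work is reconstructing a homomorphism out of a section. Write $\G \grpd M$ and $\H \grpd N$, and regard the generalized morphism as a left principal bibundle $P$ whose principal projection lands on $M$.

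\emph{Map-like $\Rightarrow$ section.} If $P \isom \bP(F) = f^*\H = \H \times_N M$ for some homomorphism $F\colon\G\to\H$ covering $f\colon M\to N$, then the canonical unit section $\til\sigma := (\u\circ f, \Id_M)$ used in the proof of Lemma~\ref{lemma:bundletrivialization} is a section of the bundle projection $\H\times_N M \to M$; transporting it through the isomorphism produces a section of $P$.

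\emph{Section $\Rightarrow$ map-like.} Conversely, suppose $\sigma\colon M \to P$ is a section, and set $f := \t^P\circ\sigma\colon M \to N$. By Lemma~\ref{lemma:bundletrivialization} the rule $\phi(h,x) := h\cdot\sigma(x)$ defines an isomorphism of left principal bundles $\phi\colon f^*\H \to P$, so $P$ is trivial as a bundle. The task is to manufacture a homomorphism $F$ realizing the right $\G$-action through $\sigma$, and then check that $\phi$ is equivariant for that action. I would define $F\colon\G\to\H$ by the (point-free) identity $F(g)\cdot\sigma(\s(g)) = \sigma(\t(g))\cdot g$. This is legitimate because the left action is principal: its division map $\til\m\colon P\times_M P \to \H$ exists, and one checks that $\sigma(\t(g))\cdot g$ and $\sigma(\s(g))$ lie in a common fibre of the projection $P\to M$ over $\s(g)$ — the right $\G$-action carries the base point from $\t(g)$ to $\s(g)$ while preserving $\t^P$ — so $F := \til\m\bigl((\sigma\circ\t)\cdot\Id_\G,\, \sigma\circ\s\bigr)$ is a well-defined morphism. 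Applying the two anchors and using $\t^P\circ\sigma = f$ shows $F$ covers $f$, and multiplicativity $F(g_1 g_2) = F(g_1)\,F(g_2)$ follows from the defining identity together with axiom (BB) asserting that the two actions commute.

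To conclude, I would verify that $\phi$ is right $\G$-equivariant, hence an isomorphism of bibundles $\bP(F)\isom P$: using the right action $(h,\t(g))\cdot g = (hF(g),\s(g))$ on $f^*\H$, associativity of the left action, and (BB), we compute $\phi((h,x)\cdot g) = (hF(g))\cdot\sigma(\s(g)) = h\cdot\bigl(F(g)\cdot\sigma(\s(g))\bigr) = h\cdot\bigl(\sigma(\t(g))\cdot g\bigr) = \bigl(h\cdot\sigma(x)\bigr)\cdot g = \phi(h,x)\cdot g$. Since $\phi$ is already a left-principal-bundle isomorphism, this upgrades it to a bibundle isomorphism, so $P$ is map-like. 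The main obstacle is this middle step: confirming that $F$ is a genuine $\C$-groupoid homomorphism, i.e.\ that the division-map formula is both well defined and multiplicative purely internally to $\C$; the generalized-element identity $F(g)\cdot\sigma(\s(g)) = \sigma(\t(g))\cdot g$ is the bookkeeping device that keeps this computation manageable.
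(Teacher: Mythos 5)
Your proof is correct and follows essentially the same route as the paper's: both directions rest on Lemma~\ref{lemma:bundletrivialization}, the homomorphism is pinned down by the same identity $F(g)\cdot\sigma(\s(g)) = \sigma(\t(g))\cdot g$, and the same map $\phi(h,x) = h\cdot\sigma(x)$ with the same equivariance computation completes the argument. Your explicit formula $F = \til\m\bigl((\sigma\circ\t)\cdot\Id_\G,\,\sigma\circ\s\bigr)$ via the division map merely spells out the well-definedness that the paper invokes implicitly when it declares $F$ to be ``the unique morphism'' satisfying that identity.
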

\begin{proof}
  Let $F \colon \G \to \H$ be a $\C$-groupoid homomorphism and suppose $Q \isom \bP(F)$ is map-like. Since $\bP(F)$ clearly admits a section (it is trivial as a left $\H$-bundle) then $Q$ must admit a section.

  On the other hand, suppose $N \from Q \to M$ is a left principal $(\H, \G)$-bibundle and admits a section. We must show it is isomorphic to $\bP(F)$ for some $F \colon \G \to \H$. Let $\sigma \colon M \to Q$ be a section of $Q$. Now let $F$ be the unique morphism such that:
  \[
   \sigma(\t(g)) \cdot g = F(g) \cdot \sigma(\s(g))
  \]
  To see that this is a homomorphism, observe that:
  \begin{align*}
    F(g_1 \cdot g_2) \cdot \sigma(\s(g_1 \cdot g_2)) &= \sigma(\t(g_1 \cdot g_2)) \cdot g_1 \cdot g_2 \\
    &= \sigma(\t(g)) \cdot g_1 \cdot g_2 \\
    &= F(g_1) \sigma(\s(g_2)) \cdot g_2 \\
    &= F(g_1) \cdot F(g_2) \cdot  \sigma(\s(g_2)) \\
    &= F(g_1) \cdot F(g_2) \cdot \sigma(\s(g_1 \cdot g_2))
  \end{align*}
  Now we construct an isomorphism $\bP(F) = \H \times_N M \to Q$. Let
  \[ \phi(h,x) \mapsto h \cdot \sigma(x)  \]
  By Lemma~\ref{lemma:bundletrivialization} we know that $\phi$ is an isomorphism of principal $\H$-bundles. That $\phi$ is $\G$ equivariant follows from a simple computation:
  \[ \phi(h,x) \cdot g = h \cdot \sigma(x) \cdot g = h \cdot F(g) \cdot \sigma(\s(g)) = \phi(h F(g), \s(g)) \]
  \end{proof}
The assignment $F \mapsto \bP(F)$ is bifunctorial in the following sense.
\begin{lemma}
  Suppose $F \colon \G \to \H$ and $G \colon \H \to \K$ are $\C$-groupoid homomorphisms covering $f \colon M \to N$ and $g \colon N \to O$, respectively. Then there is a canonical isomorphism:
  \[ \mathbb{P}(G,F) \colon \bP(G) \otimes_\H \bP(F) \to \bP({G \circ F}) \]
\end{lemma}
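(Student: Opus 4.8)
The plan is to compute both sides explicitly and write down the canonical comparison map by hand. By definition $\bP(G\circ F) = (g\circ f)^*\K = \K\times_O M$, where $O$ denotes the units of $\K$; this is a left principal $\K$-bundle over $M$ whose right $\G$-action is $(k,x)\cdot\gamma = (k\cdot (G\circ F)(\gamma),\,\s(\gamma))$. Writing $N$ for the units of $\H$, the composite $\bP(G)\otimes_\H\bP(F)$ is by definition the quotient $(\bP(G)\times_N\bP(F))/\H$, a generalized element of which is a triple $((k,y),(h,x))$ with $(k,y)\in\K\times_{\s,g}N$, $(h,x)\in\H\times_{\s,f}M$ and $y=\t(h)$, and where $\H$ acts diagonally via $\eta\cdot((k,y),(h,x)) = ((k,y)\cdot\i(\eta),\,\eta\cdot(h,x))$.

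First I would define the comparison morphism $\mu\colon\bP(G)\times_N\bP(F)\to\K\times_O M$ on generalized elements by $\mu((k,y),(h,x)) := (k\cdot G(h),\,x)$. A short check, using $\s(k)=g(\t(h))=\t(G(h))$ and $\s(G(h))=g(f(x))=(g\circ f)(x)$, shows that the product $k\cdot G(h)$ is well defined and that the target lands in $\K\times_{\s,g\circ f}M$. The key step is to verify that $\mu$ is $\H$-invariant: evaluating $\mu$ on $\eta\cdot((k,y),(h,x))$ yields $k\cdot G(\i(\eta))\cdot G(\eta)\cdot G(h)$, and since $G$ is a homomorphism together with the groupoid identity $\i(\eta)\cdot\eta=\u(\s(\eta))$, the factor $G(\i(\eta))\cdot G(\eta)$ collapses to the unit $\u(\s(k))$, recovering $k\cdot G(h)$. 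By Lemma~\ref{lemma:equivariantmaps} the invariant map $\mu$ descends to a morphism $\bar\mu\colon\bP(G)\otimes_\H\bP(F)\to\bP(G\circ F)$.

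The main obstacle is showing $\bar\mu$ is an isomorphism, and I would do this by exhibiting an explicit inverse rather than appealing to abstract nonsense. Using the canonical unit section of the trivial bundle $\bP(F)$, define $\nu\colon\K\times_O M\to(\bP(G)\times_N\bP(F))/\H$ by $\nu(k,x):=[\,((k,f(x)),(\u(f(x)),x))\,]$. Then $\bar\mu\circ\nu=\Id$ is immediate from $G(\u(f(x)))=\u(g(f(x)))=\u(\s(k))$. For $\nu\circ\bar\mu=\Id$ I would check that the $\H$-orbit of $((k,y),(h,x))$ contains the normalized representative $((k\cdot G(h),f(x)),(\u(f(x)),x))$, witnessed by acting with $\eta=\i(h)$: indeed $\i(h)\cdot h=\u(\s(h))=\u(f(x))$, while $(k,y)\cdot\i(\i(h))=(k,y)\cdot h=(k\cdot G(h),f(x))$. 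This shows $\bar\mu$ is bijective on generalized elements and hence an isomorphism in $\C$.

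Finally I would verify that $\bar\mu$ intertwines the bibundle structures. The left $\K$-action on both sides is left multiplication on the $\K$-coordinate, so $\K$-equivariance is immediate. For the right $\G$-action, the computation $\mu(((k,y),(h,x))\cdot\gamma)=(k\cdot G(h\,F(\gamma)),\s(\gamma))=(k\cdot G(h)\cdot(G\circ F)(\gamma),\s(\gamma))$ matches the right action on $\bP(G\circ F)$ after expanding $G(h\,F(\gamma))=G(h)\cdot G(F(\gamma))$. Since every morphism involved is assembled solely from the structure maps of $\G,\H,\K$ and the homomorphisms $F,G$, with no arbitrary choices, the resulting isomorphism $\mathbb{P}(G,F)$ is canonical, as claimed.
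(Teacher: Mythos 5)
Your proof is correct and takes essentially the same approach as the paper: the paper's own argument defines exactly the same $\H$-invariant comparison map $\Phi((k,y),(h,x)) = (k \cdot G(h), x)$, checks invariance and equivariance with the actions of $\K$ and $\G$, and descends it to the tensor product. The only difference is in how invertibility is handled: where the paper simply asserts the descended map is a bibundle isomorphism (implicitly invoking Lemma~\ref{lemma:bibundlemorphisms}, that any bibundle morphism covering the identity is automatically invertible), you construct the inverse explicitly via the unit section of $\bP(F)$ and normalize representatives by acting with $\i(h)$ --- a harmless, self-contained substitute that fills in the checks the paper calls routine.
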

\begin{proof}
  It suffices to construct an $\H$-invariant map $\Phi \colon \bP(G) \times_N \bP(F) \to \bP({F \circ G})$ which respects the left and right actions. Recall that
  \[ \bP(G) := g^* \K = \K \times_O N \quad \bP(F) := f^* \H = \H \times_O N \quad \bP({G \circ F}) := (g \circ f)^*\K = \K \times_O M \, . \]
  Hence we define
  \[
  \Phi((k,y),(h,x)) := ( k \cdot G(h), x) \, .
  \]
  It is a routine check that $\Phi$ is $\H$-invariant and respects the actions of $\K$ and $\G$. Therefore, it must descend to a bibundle isomorphism $\mathbb{P}(G,F)$.
\end{proof}

Isomorphisms of $\C$-groupoid homomorphisms can be related to bibundle isomorphisms by the following process.
Suppose $\eta \colon F \to G$ is a natural transformation of $\C$-groupoid homomorphisms from $\G$ to $\H$ covering $f \colon M \to N$ and $g \colon M \to N$, respectively.

Then let
\[ \bP(\eta)(h,x) := (h \cdot (\i \circ \eta)(x),x) \]
Note that this uses the (opposite) correspondence from Lemma~\ref{lemma:gbundlecycle}.
By that same lemma, this is $\H$ equivariant. That is is $\G$-equivariant follows from the natural transformation condition.
\begin{proposition}
  The mapping $\eta \mapsto \bP(\eta)$ gives a one-to-one correspondence between natural transformations $\eta \colon F \to G$ of $\C$-groupoid homomorphisms $\G \to \H$ and bibundle isomorphisms $\bP(F) \to \bP(G)$.

  Furthermore, this correspondence respects the vertical and horizontal products in the following senses:
  \begin{itemize}
    \item For all $\zeta \colon G_1 \to G_2$ and $\eta \colon F_1 \to F_2$ which are horizontally composable, the below diagram commutes:
    \begin{equation}\label{diagram:pcompatibility}
    \begin{tikzcd}
      \bP(G_1) \otimes \bP(F_1) \arrow[d, "{\mathbb{P}(G_1,F_1)}"]\arrow[rr, "{\bP(\zeta) \otimes \bP(\eta)}"] & & \bP(G_2) \otimes \bP(F_2)\arrow[d, "{\mathbb{P}(G_2,F_2)}"] \\
      \bP(G_1 \circ F_1) \arrow[rr, "{\bP(\zeta \circ \eta)}"] & & \bP(G_2 \circ F_2)
    \end{tikzcd}
  \end{equation}
    \item For all $\zeta$ and $\eta$ which are vertically composable $\bP(\zeta * \eta) = \bP(\zeta) \circ \bP(\eta)$.
  \end{itemize}
\end{proposition}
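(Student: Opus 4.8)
The plan is to prove the bijection first and then the two product identities, reducing each claim to an identity of morphisms valued in $\H$ (respectively $\K$) that can be checked on generalized elements and promoted to a genuine equality by the uniqueness clauses of Lemma~\ref{lemma:gbundlecycle} and Corollary~\ref{corollary:equivariantmaps}. For the correspondence itself, I would first forget the right $\G$-action: a bibundle isomorphism $\Psi \colon \bP(F) \to \bP(G)$ is in particular an isomorphism of left principal $\H$-bundles over $M$ covering $\Id_M$ (it covers the identity, and is automatically invertible, by Lemma~\ref{lemma:bibundlemorphisms}). Since $\bP(F) = f^*\H$ and $\bP(G) = g^*\H$ are trivial left $\H$-bundles, Lemma~\ref{lemma:gbundlecycle} supplies a \emph{unique} morphism $\eta \colon M \to \H$ with $\Psi(h,x) = (h \cdot \i(\eta(x)), x) = \bP(\eta)(h,x)$; the uniqueness of $\eta$ gives injectivity of $\eta \mapsto \bP(\eta)$ at once. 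The constraints $\s \circ \eta = f$ and $\t \circ \eta = g$ are exactly what is needed for composability of $h \cdot \i(\eta(x))$ and for the output to lie in $g^*\H$, so they are equivalent to $\bP(\eta)$ being a well-defined map $\bP(F) \to \bP(G)$.

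The heart of the bijection is then to show that, granting these source and target conditions, $\bP(\eta)$ is $\G$-equivariant if and only if $\eta$ satisfies the second clause in the definition of a natural transformation. Expanding $\bP(\eta)\bigl((h,x) \cdot \gamma\bigr)$ and $\bP(\eta)(h,x) \cdot \gamma$ using the right $\G$-actions $(h,\t(\gamma)) \cdot \gamma = (h \cdot F(\gamma), \s(\gamma))$ on $\bP(F)$ and the analogous one (with $G$) on $\bP(G)$, and canceling the common left factor $h$ (legitimate since the left $\H$-action is free), equivariance becomes the identity $F(\gamma) \cdot \i(\eta(\s(\gamma))) = \i(\eta(\t(\gamma))) \cdot G(\gamma)$ of morphisms $\G \to \H$. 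Rearranging this by multiplying on the left by $\eta(\t(\gamma))$ and on the right by $\eta(\s(\gamma))$ yields precisely $\m(\eta \circ \t, F) = \m(G, \eta \circ \s)$. Thus bibundle isomorphisms $\bP(F) \to \bP(G)$ correspond bijectively to morphisms $\eta \colon M \to \H$ satisfying both clauses, i.e.\ to natural transformations $F \to G$, completing the first assertion.

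For the products, the vertical identity $\bP(\zeta * \eta) = \bP(\zeta) \circ \bP(\eta)$ is a one-line computation: both composites send $(h,x)$ to $(h \cdot \i(\eta(x)) \cdot \i(\zeta(x)), x)$, using $(\zeta * \eta)(x) = \zeta(x) \cdot \eta(x)$ together with $\i(\zeta(x)\cdot\eta(x)) = \i(\eta(x)) \cdot \i(\zeta(x))$. The horizontal compatibility, Diagram~\eqref{diagram:pcompatibility}, is where the real work lies and is the step I expect to be the main obstacle. Both routes around the square are maps out of the balanced tensor product $\bP(G_1) \otimes_\H \bP(F_1)$ into the honest object $\bP(G_2 \circ F_2)$, so I would precompose with the quotient projection and compare the two composites on a representative $(k,y) \otimes (h,x)$ with the balancing constraint $y = \t(h)$, unwinding simultaneously the isomorphisms $\mathbb{P}(G_i,F_i)\colon (k',y')\otimes(h',x') \mapsto (k'\cdot G_i(h'),x')$ from the preceding lemma, the tensor map $\bP(\zeta)\otimes\bP(\eta)$, and the horizontal-composite formula $(\zeta\circ\eta)(x) = G_2(\eta(x)) \cdot \zeta(f_1(x))$. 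After cancellation the equality of the two outputs reduces to $\i(\zeta(\t(h))) \cdot G_2(h) = G_1(h) \cdot \i(\zeta(\s(h)))$, which is exactly the natural transformation condition $G_2(h) \cdot \zeta(\s(h)) = \zeta(\t(h)) \cdot G_1(h)$ for $\zeta$. The subtlety is precisely that this identity is \emph{not} formal: it uses that $\zeta$ is natural, and the bookkeeping of the balancing factor and of the asymmetric definition of $\zeta \circ \eta$ must be handled carefully to avoid a spurious factor of $G_2(h)$ or a transposed inverse. Invoking Corollary~\ref{corollary:equivariantmaps} then upgrades the element-level agreement to equality of the descended morphisms.
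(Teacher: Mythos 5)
Your proposal is correct and follows essentially the same route as the paper's proof: the bijection comes from the uniqueness clause of Lemma~\ref{lemma:gbundlecycle} (with $\G$-equivariance of $\bP(\eta)$ shown equivalent to the naturality condition $\m(\eta\circ\t,F)=\m(G,\eta\circ\s)$, which the paper treats in the discussion preceding the proposition), the vertical identity is the same one-line computation, and the horizontal square is verified exactly as in the paper by chasing a representative $(k,\t(h))\otimes(h,x)$ around both paths and reducing, via $\s(h)=f_1(x)$, to the rearranged naturality identity $(\i\circ\zeta)(\t(h))\cdot G_2(h)=G_1(h)\cdot(\i\circ\zeta)(\s(h))$. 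Your explicit appeal to Corollary~\ref{corollary:equivariantmaps} to descend the element-level equality through the $\H$-quotient, and your identification of the exact naturality identity needed, are only slightly more careful versions of what the paper does implicitly.
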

\begin{proof}
  The correspondence is one-to-one thanks to Lemma~\ref{lemma:gbundlecycle}.
  \begin{itemize}
    \item Suppose $\zeta$ and $\eta$ are horizontally composable such that $F_i \colon \G \to \H$ and $G_i \colon \H \to \K$. Then the bottom left path of Diagram~\ref{diagram:pcompatibility} is:
    \[
    \begin{tikzcd}
      (k , \t(h)) \otimes (h , x) \arrow[d, mapsto]  & \\
      (k \cdot G_1(h), x) \arrow[r] & (k \cdot G_1(h) \cdot (\i \circ \zeta)(f_1(x)) \cdot (\i \circ G_2 \circ \eta)(x), x)
      \end{tikzcd}
    \]
    Going the other direction, we get:
    \[
    \begin{tikzcd}
      (k , \t(h)) \otimes (h , x) \arrow[r, mapsto]
      & \left(k \cdot (\i \circ \zeta) (\t(h)), \t(h) \right) \otimes \left( h \cdot (\i \circ \eta)(x),x \right) \arrow[d, mapsto] \\
       & (k \cdot (\i \circ \zeta)(\t(h)) \cdot G_2(h) \cdot (\i \circ G_2 \circ \eta)(x),x )
      \end{tikzcd}
    \]
    But since $\zeta$ is a natural transformation, we have:
    \[
    (\i \circ \zeta)(\t(h)) \cdot G_2(h)  = G_1(h) \cdot (\i \circ \zeta)(\s(h)) = G_1(h)
    \]
    So the diagram commutes.
    \item Suppose $\zeta \colon F_2 \to F_3$ and $\eta \colon F_1 \to F_2$ are horizontally composable for $F_i \colon \G \to \H$ covering $f_i \colon M \to N$. Recall that $(\zeta * \eta)(x) := \zeta(x) \cdot \eta(x)$. Therefore:
    \[ \bP(\zeta * \eta)(h,x) := (h \cdot \i \circ (\zeta(x) \cdot \eta(x)), x) = (h \cdot (\i \circ \eta)(x) \cdot (\i \circ \zeta)(x), x) \]
    Which is the same as $\bP(\zeta) \circ \bP(\eta)$.
  \end{itemize}
\end{proof}
The following theorem summarizes our results so far:
\begin{theorem}
  $P \colon \C\GRPD \to \morpoid$ is a psuedofunctor from the bicategory of $\C$-groupoids to the bicategory of generalized morphisms.
  Furthermore, $P$ is a bijection on objects. The restriction of $P$ to the $\Hom$ categories
  \[ \Hom{(\G,\H)}_{\C\GRPD} \to \Hom{(\G,\H)}_{\morpoid} \]
  is a fully faithful embedding whose essential image consists of the the map-like generalized morphisms.
\end{theorem}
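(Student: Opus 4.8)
The plan is to recognize that nearly all of the content has already been established in this section, so the proof is a matter of assembling the earlier constructions into the data of a pseudofunctor (Definition~\ref{defn:pseudofunctor}) and then invoking the preceding lemmas for the remaining claims. First I would fix the pseudofunctor data. On objects $\bP$ is the identity assignment $\G \mapsto \G$, so it is immediately a bijection on objects. On $1$-morphisms and $2$-morphisms we use the assignments $F \mapsto \bP(F)$ and $\eta \mapsto \bP(\eta)$ constructed above. The compositor is the canonical isomorphism $\mathbb{P}(G,F) \colon \bP(G) \otimes_\H \bP(F) \to \bP(G \circ F)$ of the preceding lemma, and the unitor is the evident identification $\bP(\Id_\G) = \Id_M^* \G \isom \G$ of the trivial bibundle of the identity homomorphism with the unit generalized morphism $\G$.

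Next I would verify coherence, i.e.\ that $\mathbb{P}(G,F)$ and the unitor satisfy the associativity and unit axioms of a pseudofunctor. Since the compositor $\mathbb{P}(G,F)$ is built entirely from the canonical associator and unit isomorphisms of fiber products in $\C$ (descended through the relevant $\H$-reductions), each coherence diagram reduces to a commuting diagram of canonical isomorphisms of iterated fiber products, which holds automatically by the universal property. Following the convention of this chapter, this bookkeeping can be carried out directly but is not enlightening; it is the main---though entirely routine---obstacle in the proof, the genuine mathematical content having been dispatched by the earlier lemmas.

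It remains to analyze the restriction to Hom-categories. The functor
\[ \bP \colon \Hom{(\G,\H)}_{\C\GRPD} \to \Hom{(\G,\H)}_{\morpoid} \]
sends a homomorphism $F$ to the generalized morphism $\bP(F)$ and a natural transformation $\eta \colon F \to G$ to the bibundle isomorphism $\bP(\eta) \colon \bP(F) \to \bP(G)$. Full faithfulness is precisely the assertion that $\eta \mapsto \bP(\eta)$ is a bijection between natural transformations $F \to G$ and bibundle isomorphisms $\bP(F) \to \bP(G)$, which is the content of the preceding Proposition; its compatibility with vertical composition, also established there, shows that this assignment is a functor and not merely a bijection on each Hom-set. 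In particular, full faithfulness forces $\bP$ to reflect isomorphisms, so two homomorphisms with isomorphic associated bibundles are already isomorphic in $\C\GRPD$, which is the sense in which $\bP$ is an embedding.

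Finally, for the essential image: a generalized morphism lies in the essential image of $\bP|_{\Hom}$ exactly when it is isomorphic to $\bP(F)$ for some homomorphism $F$, which is verbatim the definition of \emph{map-like}. By Lemma~\ref{lemma:maplike} these are exactly the generalized morphisms admitting a section as left bundles. Collecting the pseudofunctoriality, the bijection on objects, the full faithfulness on Hom-categories, and this identification of the essential image yields the theorem.
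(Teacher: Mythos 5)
Your proposal is correct and takes essentially the same route as the paper: the paper's proof likewise takes $\bP$ to be the identity on objects, assembles the preceding constructions ($F \mapsto \bP(F)$, $\eta \mapsto \bP(\eta)$, the compositor $\mathbb{P}(G,F)$) together with the one-to-one correspondence proposition and the definition of map-like morphisms, and omits the coherence check as routine. Your write-up merely spells out this assembly in more detail than the paper does.
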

\begin{proof}
  We take $P$ to be the identity on objects. Except for coherence, the theorem follows from the propositions and lemmas we proved in this subsection. We will leave the proof of coherence for this functor to the bored reader.
\end{proof}

\section{Morita equivalence}\label{section:moritaequiv}
\subsection{Weak equivalences}
It is natural to ask whether the bifunctor defined in the previous section allows us to to study Morita equivalences purely in terms of $\C$-groupoid homomorphismse
To answer this question, we will begin with characterizing those morphisms which give rise to Morita equivalences.
\begin{lemma}
  Suppose $F \colon \G \to \H$ is a homomorphism of $\C$-groupoids. Then $\bP(F)$ is a Morita equivalence if and only if $F$ is a weak equivalence.
\end{lemma}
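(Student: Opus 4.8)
The plan is to reduce the statement to the single question of when the bibundle $\bP(F) = f^*\H = \H \times_N M$ is \emph{right} principal. By construction $\bP(F)$ is the trivial left $\H$-bundle over $M$, hence automatically left principal; so it is biprincipal -- that is, a Morita equivalence -- if and only if its right $\G$-action makes it a principal right $\G$-bundle over $N$. I would therefore unwind right principality into its two constituent requirements: that the base projection $\t^P := \t \circ \pr_1 \colon \H \times_N M \to N$ be a submersion, and that the total right action $\Phi \colon \bP(F) \times_M \G \to \bP(F) \times_N \bP(F)$, $\Phi(p, g) = (p \cdot g, p)$, be an isomorphism.

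The first requirement is nothing but essential surjectivity of $F$: by Definition~\ref{defn:groupoidmaps}, $F$ is essentially surjective exactly when $\t \circ \pr_1 \colon \H \times_N M \to N$ is a submersion, which is literally the condition that $\t^P$ be a submersion. It thus remains to prove that $\Phi$ is an isomorphism if and only if $F$ is fully faithful, i.e. if and only if $(\t, F, \s) \colon \G \to M \times_{f,\t} \H \times_{\s, f} M$ is an isomorphism. This equivalence is the technical core of the argument, and I would establish it by exhibiting explicit mutually inverse morphisms. Writing generalized elements $(h,x)$ for points of $\bP(F)$ (so $\s(h) = f(x)$), a point of $\bP(F) \times_N \bP(F)$ is a pair $((h_1, x_1),(h_2, x_2))$ with $\t(h_1) = \t(h_2)$, and $\Phi$ sends $((h,\t(g)),g)$ to $((h \cdot F(g), \s(g)),(h,\t(g)))$.

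Assuming $F$ fully faithful, with $\Theta := (\t, F, \s)\inv$, I would define the candidate inverse sending $((h_1, x_1),(h_2, x_2))$ to $\bigl((h_2, x_2),\, \Theta(x_2,\, \i(h_2)\cdot h_1,\, x_1)\bigr)$; here $\i(h_2)\cdot h_1$ is an arrow of $\H$ from $f(x_1)$ to $f(x_2)$, so it lies in the domain of $\Theta$, and a short computation with the unit and inverse laws of Definition~\ref{defn:internalgroupoid} shows this is a two-sided inverse of $\Phi$. Conversely, assuming $\Phi$ is an isomorphism, I would use the canonical unit section $\sigma(x) = (\u(f(x)), x)$ of $\bP(F)$ to build the map $j \colon M \times_{f,\t} \H \times_{\s,f} M \to \bP(F) \times_N \bP(F)$, $j(x_2, k, x_1) = ((k, x_1), \sigma(x_2))$, and then set $\Theta := \pr_\G \circ \Phi\inv \circ j$; verifying $(\t, F, \s) \circ \Theta = \Id$ and $\Theta \circ (\t, F, \s) = \Id$ reduces, via $\Phi \circ \Phi\inv = \Id$, to the unit laws in $\H$, yielding that $(\t, F, \s)$ is invertible.

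The main obstacle is precisely this identification of the total-action isomorphism with fully faithfulness: the difficulty is bookkeeping rather than depth. One must keep the conventions straight (arrows compose right to left, and $\bP(F)$ carries $\H$ on the left, $\G$ on the right), argue entirely with generalized elements since $\C$ need not be concrete, and repeatedly invoke the groupoid axioms (G1)--(G3) together with the unit and associativity laws to collapse composites such as $\i(h)\cdot h$ and $\u(f(x))\cdot k$. Once the two inverse constructions are in place, chaining the two equivalences gives $\bP(F)$ Morita $\iff$ [$\t^P$ a submersion and $\Phi$ an isomorphism] $\iff$ [$F$ essentially surjective and fully faithful] $\iff$ $F$ a weak equivalence.
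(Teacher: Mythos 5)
Your proof is correct, and it is in fact more complete than the paper's own. The paper only writes out the forward implication: assuming $F$ is a weak equivalence, it uses fully faithfulness to replace $\G$ (without loss of generality) by the pullback groupoid $f^!\H = M \times_N \H \times_N M$, so that the right action takes the coordinate form $(h,x)\cdot(x,h',y) = (h\cdot h',y)$, and then exhibits the division map $((h_1,x),(h_2,y)) \mapsto (x, h_1\inv h_2, y)$ directly, concluding principality from its uniqueness; the converse, that $\bP(F)$ being a Morita equivalence forces $F$ to be a weak equivalence, is not proved there. Your forward direction is the same computation done without the substitution: your $\Theta = (\t,F,\s)\inv$ plays exactly the role of the $f^!\H$-coordinates, and your inverse $((h_1,x_1),(h_2,x_2)) \mapsto \bigl((h_2,x_2),\, \Theta(x_2, \i(h_2)\cdot h_1, x_1)\bigr)$ of the total action is the paper's division map transported along $(\t,F,\s)$. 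What your route buys is the missing converse, which you handle cleanly by feeding triples into $\Phi\inv$ against the unit section $\sigma(x) = (\u(f(x)),x)$; your identification of the submersion condition on $\t^P = \t \circ \pr_1$ with essential surjectivity is a literal match with Definition~\ref{defn:groupoidmaps}, in both directions. One small point worth making explicit in the converse: fully faithfulness demands that the fiber product $M \times_{f,\t} \H \times_{\s,f} M$ \emph{exist}, and this should be justified before $\Theta$ is defined --- it does exist because this object is $M \times_{f,\t^P} \bP(F)$ and $\t^P$ is a submersion (hence cartesian) by the right principality you are assuming, i.e., by the essential-surjectivity half you have already extracted. With that remark added, your closing chain of equivalences is airtight.
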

\begin{proof}
Suppose $F$ is a weak equivalence.
We only need to show that the right action of $\G$ on $\bP(F)$ is principal.
Since $F$ is a weak equivalence, we can assume without loss of generality that $\G \isom f^! \H$.
Recall that $\bP(F) := \H \times_N M$ and $f^!\H := M \times_N \H \times_N M$.
If $\C$ is made of sets and functions, the right action of $\bP(F)$ is of the form:
\[ (h, x) \cdot (x, h', y) = (h \cdot h', y) \, . \]
The total action is therefore:
\[ ((h,x), (x,h',y)) \mapsto ((h,x),(h \cdot h', y)) \]
This leads us to notice that
\[ ((h_1,x),(h_2,y)) \mapsto (x, h_1\inv h_2 , y) \]
is a division map for the right action.
Since this division map is unique, the action must be principal.
\end{proof}
Now that we have identified the $\C$-groupoid morphisms which give rise to Morita equivalences. It is reasonable to wonder if there are enough of these maps to generate the Morita equivalence relation. That is, given two Morita equivalent $\C$-groupoids, does there exist a chain of weak equivalences connecting them? The answer is yes and actually two weak equivalences suffice.
\begin{theorem}\label{thm:weakequivalences}
  The following are equivalent:
  \begin{itemize}
    \item$\G$ and $\H$ are Morita equivalent.
    \item There exists a $\C$-groupoid $\K$ and a pair of weak equivalences $\H \from \K \to \G$.
  \end{itemize}
\end{theorem}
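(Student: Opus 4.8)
The plan is to prove the two implications separately: the implication from a span of weak equivalences to a Morita equivalence is a short formal argument using results already in hand, whereas the converse requires an explicit construction, which is where all the real work lies.

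For the direction asserting that a span yields a Morita equivalence, suppose we are given weak equivalences $F \colon \K \to \G$ and $G \colon \K \to \H$. By the lemma characterizing when $\bP(F)$ is biprincipal, both $\bP(F)$ and $\bP(G)$ are Morita equivalences, i.e.\ isomorphisms in $\morpoid$ (Lemma~\ref{lemma:invertiblegmor}). Isomorphisms compose and invert, so
\[ \bP(G) \otimes_\K \bP(F)^{op} \colon \G \to \H \]
is again an isomorphism in $\morpoid$, hence a principal $(\G,\H)$-bibundle. This settles the backward implication with essentially no work beyond the prior results.

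For the converse, let $P$ be a principal $(\G,\H)$-bibundle with left anchor $\t^P \colon P \to M$ and right anchor $\s^P \colon P \to N$; both are submersions since $P$ is biprincipal (GB2). My plan is to exhibit a single $\C$-groupoid $\K$ with unit object $P$ that is simultaneously a pullback groupoid of $\G$ and of $\H$. First I would record that any submersion $f \colon P \to N$ is essentially surjective: a local section $\sigma_i$ of $f$ over $s_i \colon U_i \to N$ yields the section $u \mapsto (\u(s_i(u)), \sigma_i(u))$ of $\t \circ \pr_1 \colon \H \times_N P \to N$ through the unit, while $\t \circ \pr_1$ is cartesian as a composite of a base change of $f$ with $\t$; hence it admits local sections. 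In particular $\t^P$ is essentially surjective, so the pullback groupoid
\[ \K := (\t^P)^! \G = P \times_M \G \times_M P \]
is defined, and Lemma~\ref{lemma:pullbackequiv} furnishes the first weak equivalence $\K \to \G$ (projection to the middle factor) at once.

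The heart of the proof is constructing the second weak equivalence $\Phi \colon \K \to \H$ covering $\s^P$. Using the right division map $\til\m_R \colon P \times_M P \to \H$ from right principality, I would define $\Phi$ to send $(p_1, g, p_2)$ to the unique arrow $h$ with $p_1 \cdot h = g \cdot p_2$; internally this is $\til\m_R \circ (\pr_1, \m_L(\pr_2,\pr_3))$. A direct computation, using $g(g' p_3) = (g p_2) h' = (p_1 h) h'$, shows $\Phi$ is a homomorphism. For fully faithfulness one checks that $(\t, \Phi, \s) \colon \K \to P \times_N \H \times_N P$ is an isomorphism: given $(p_1, h, p_2)$, the left principality of $P$ produces the unique $g$ with $g \cdot p_2 = p_1 \cdot h$, which inverts the map. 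Equivalently, $\Phi$ exhibits a groupoid isomorphism $(\t^P)^!\G \isom (\s^P)^!\H$ over $P$, so $\K$ is also a pullback groupoid of $\H$ along the essentially surjective $\s^P$, and Lemma~\ref{lemma:pullbackequiv} again shows $\Phi$ is a weak equivalence. This produces the desired span $\H \from \K \to \G$. The main obstacle is exactly this forward direction: translating the element-level recipe for $\Phi$ into honest morphisms of $\C$ and verifying the weak-equivalence conditions purely through the universal properties of the division maps and fiber products, with the essential-surjectivity bookkeeping (submersions are essentially surjective, and the relevant composites are cartesian) handled carefully in the abstract setting.
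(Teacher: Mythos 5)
Your proposal is correct and follows essentially the same route as the paper: take $\K$ to be the pullback groupoid of $\G$ along the relevant anchor of the bibundle (the paper uses an $(\H,\G)$-bibundle and pulls back along $\s^P$, you use the mirror-image $(\G,\H)$ convention and pull back along $\t^P$), get one weak equivalence from Lemma~\ref{lemma:pullbackequiv}, and define the homomorphism to the other groupoid via the division map, with fully faithfulness checked by inverting through the opposite division map. You additionally spell out two points the paper treats as implicit --- that submersions are essentially surjective (needed to form the pullback groupoid) and that the easy direction follows from Lemma~\ref{lemma:invertiblegmor} plus composition in $\morpoid$ --- which is careful but does not change the argument.
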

\begin{proof}
  Going from the second to the first is clear.

  Let $P$ be a biprincipal $(\H,\G)$-bibundle for $\H \grpd N$ and $\G \grpd M$. By Lemma~\ref{lemma:pullbackequiv}, the mapping $(\s^P)^! \G \to \G$ is a weak equivalence. Let $\K := (\s^P)^! \G $. To conclude the proof, we must construct a weak equivalence $F \colon \K \to \H$. Let $F$ be the unique morphism satisfying:
  \[ p_1 \cdot g = F(p_1,g,p_2) \cdot p_2 \]
  The existence and uniqueness of $F$ follows from the existence and uniqueness of the division map for the left $\H$-action on $P$.
  A quick check verifies that this is indeed a homomorphism of $\C$-groupoids covering $\t^P \colon P \to N$.
  To prove that this is a weak equivalence it suffices to show that the induced map
  \[ \K = (\s^P)^! \G \to (\t^P)^! \H \]
  is an isomorphism.
  Since the right action is principal as well, by the same trick we can construct a homomorphism $F' \colon (\t^P)^! \H \to \K$ using the rule:
  \[ (p_1, h, p_2) \mapsto (p_1, g, p_2) \quad \Leftrightarrow \quad p_1 \cdot g = g \cdot p_2  \]
  Since $F'$ is clearly the inverse of $\K \to (\t^P)^! \H$ the result follows.
\end{proof}
The above proposition tells us that if we wish to show that something is Morita invariant, it suffices to show that it is invariant under weak equivalences.
\subsection{Examples of Morita equivalences}
\begin{example}[Manifolds]
    When $\C$ is the site of smooth manifolds. Then the $\C$-groupoids are precisely the Lie groupoids. Given a Lie groupoid $\G \grpd M$, the following are Morita invariants:
    \begin{itemize}
      \item The orbit space of $\G$, that is the topological space obtained by taking the quotient of $M$ by the equivalence relation on $M$ induced by the map $(\s,\t) \colon \G \to M \times M$.
      This is sometimes called the \emph{coarse moduli space} of $\G$.
      \item Suppose one is given an orbit $\O \subset M$ of $\G$ in $M$. In other words, a point in the coarse moduli space of $\G$. Then for any pair of points, $x,y \in \O$ the isotropy groups $\G_x := (\s,\t)\inv(x,x)$ and $\G_y$ are isomorphic.
      Hence, to each orbit we can associate an isomorphism class of an isotropy group, and these classes are preserved by Morita equivalences.
    \end{itemize}
    This is (not even close) to being an exhaustive list of the extensive number of Morita invariants which are known.
    However, these two Morita invariants fundamental in that they provide one with some justification for the intuition that the stack associated to a Lie groupoid should be thought of as a singular manifold equipped with additional symmetry groups.
\end{example}
\begin{example}[Dirac Structures]
  Suppose $\C = \DMan$ (see Chapter~\ref{chap:dman}). Then a $\C$-groupoid is a D-Lie groupoid. In the previous chapter, we say that a Morita equivalence of D-Lie groupoids always has an underlying Morita equivalence of Lie groupoids. Hence any Morita invariant of Lie groupoids is also a Morita invariant of D-Lie groupoids.

  A D-Lie groupoids often come with additional interesting invariants.
  \begin{itemize}
    \item Suppose $\G \grpd M$ is a symplectic groupoid. Then $M$ inherits a Poisson bivector. By choosing a density $\mu$ on $M$ one can construct what is called the \emph{modular vector field} $X_\mu$. It turns out that different choices of densities on $M$ give rise to modifications of $X_\mu$ by Hamiltonian vector fields. Hence, we obtain what is called the \emph{modular class}
    \[ [X_\mu] \in \X(M) / \X_{ham}(M) := H^1_\pi(M) \, .  \]
    Ginzburg and Lu showed~\cite{GinzburgPoissonCohomology} that symplectic Morita equivalences induced isomorphisms in $H^1_\pi(M)$.
    Later, it was shown by Crainic~\cite{crainicmod} that the modular class is preserved by such isomorphisms.
  \end{itemize}
\end{example}

\section{The Picard group}\label{section:picard}

Roughly, the Picard group is the group of Morita self equivalences.
\begin{definition}
  Suppose $\G \grpd M$ is a $\C$-groupoid. The Picard group of $\G$, denoted $\Pic(\G)$ is the set of principal $(\G,\G)$-bibundles, up to isomorphism.
\end{definition}
Under the correspondence developed in Chapter~\ref{chap:stacks}, we know that the Picard group is the automorphisms of the associated stack $\B\G$ modulo natural transformations. Of course, that point of view does not lend itself to calculations. In general, we should expect that the Picard group of a given groupoid is difficult, if not impossible, to compute.

\subsection{Outer automorphisms}
Let $P$ represent an element of the Picard group $\Pic(\G)$. Then we say that $[P] \in \Pic(\G)$ is \emph{map-like} if $P$ is map-like. Map-like elements of the Picard group do not form a subgroup in general since $[P]\inv$ need not be map-like.

\begin{definition}
  Consider the group homomorphism $\bP \colon \Aut(\G) \to \Pic(\G)$ which sends a $\C$-groupoid automorphism to the associated map-like element of $\Pic(\G)$. The kernel of this map is called the \emph{inner automorphisms} of $\G$, denoted $\InnAut(\G)$. The image of this map is canonically isomorphic to $\OutAut(\G) := \Aut(\G)/\InnAut(\G)$.
\end{definition}

We might wonder if one can characterize the image of $\bP$ geometrically. In order to do this, we need to introduce the notion of a bisection.

\begin{definition}
  Suppose $P$ is a principal $(\G,\H)$-bibundle for $\C$-groupoids $\G \grpd M$ and $\H \grpd N$. A \emph{bisection} of $P$ is a morphism $\sigma \colon N \to P$ such that $\t \circ \sigma \colon N \to M$ is an isomorphism.
\end{definition}

\begin{lemma}
  An element $[Q] \in \Pic(\G)$ comes from an automorphism $F \colon \G \to \G$ if and only if $Q$ admits a bisection.
\end{lemma}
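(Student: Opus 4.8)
The plan is to prove both implications by identifying the elements of $\Pic(\G)$ coming from automorphisms with the map-like elements whose underlying homomorphism covers an isomorphism, and then to detect such elements geometrically through the existence of a bisection.

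For the forward direction, suppose $[Q]$ lies in the image of $\bP \colon \Aut(\G) \to \Pic(\G)$, so that $Q \isom \bP(F)$ for an automorphism $F \colon \G \to \G$ covering an isomorphism $f \colon M \to M$. Recalling that $\bP(F) = f^*\G = \G \times_M M$ is trivial as a left $\G$-bundle, I would take its canonical section $\til\sigma = (\u \circ f, \Id)$ and observe that $\t \circ \til\sigma = \t \circ \u \circ f = f$ is an isomorphism, so that $\til\sigma$ is a bisection of $\bP(F)$. Since a bibundle isomorphism preserves both anchors, transporting $\til\sigma$ along $\bP(F) \isom Q$ produces a bisection of $Q$.

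For the converse, suppose $Q$ admits a bisection $\sigma \colon M \to Q$, that is, a section of the left-bundle projection with $f := \t \circ \sigma$ an isomorphism. As $\sigma$ is in particular a section of $Q$ as a left $\G$-bundle, Lemma~\ref{lemma:maplike} shows that $Q$ is map-like, so $Q \isom \bP(F)$ for a homomorphism $F \colon \G \to \G$; following the construction in that lemma, $F$ covers $f = \t \circ \sigma$, which is an isomorphism. Because $[Q] \in \Pic(\G)$, the bibundle $Q$, and hence $\bP(F)$, is principal, so by the lemma characterizing which map-like morphisms are Morita equivalences, $F$ is a weak equivalence; in particular $F$ is fully faithful.

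It remains to upgrade $F$ from a weak equivalence to an automorphism, and this is the main obstacle, since a weak equivalence of $\C$-groupoids need not admit an inverse. Here I would exploit that the base map $f$ is an isomorphism. By Definition~\ref{defn:groupoidmaps}, full faithfulness means that $(\t, F, \s) \colon \G \to M \times_{f,\t} \G \times_{\s,f} M$ is an isomorphism. Since $f$ is invertible, the projection onto the middle factor $(x,g,y) \mapsto g$ is itself an isomorphism, with inverse $g \mapsto (f\inv \t(g), g, f\inv \s(g))$; composing it with $(\t, F, \s)$ recovers $F_1 \colon \G \to \G$, which is therefore an isomorphism on arrows. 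As $F$ also covers the isomorphism $f$ on units and is a groupoid homomorphism, $F$ is an isomorphism of $\C$-groupoids, i.e. an automorphism. Consequently $[Q] = [\bP(F)]$ lies in the image of $\bP$, which completes the equivalence.
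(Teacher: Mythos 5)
Your proof is correct and follows essentially the same route as the paper: the forward direction via the canonical section of $\bP(F)$, and the converse via Lemma~\ref{lemma:maplike}, the principality of $Q$ forcing $F$ to be a weak equivalence, and the base isomorphism upgrading $F$ to an automorphism. Your explicit verification that full faithfulness plus invertibility of $f$ makes $F_1$ an isomorphism on arrows merely spells out the step the paper states tersely as ``since $F$ is an isomorphism on objects it must be an isomorphism of $\C$-groupoids.''
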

\begin{proof}
  Suppose $Q$ admits a bisection. Then by Lemma~\ref{lemma:maplike} we can construct an $F \colon \G \to \G$ so that $Q \isom \bP(F)$. Since $P$ is principal we know that $F$ is a weak equivalence. Since $F$ is an isomorphism on objects it must be an isomorphism of $\C$-groupoids.

  The other direction is clear by taking the canonical section of $\bP(F)$ when $F$ is an isomorphism.
\end{proof}

We also have a characterization of inner automorphisms.

\begin{lemma}\label{lemma:innaut}
  A morphism $F \colon \G \to \G$ is an inner automorphism if and only if there exists a bisection $\sigma \colon M \to \G$ of $\G$ such that:
  \[ F(g) = (\sigma \circ \t)(g) \cdot g \cdot (\i \circ \sigma \circ \s(g))  \]
\end{lemma}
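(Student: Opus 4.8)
The plan is to identify the inner automorphisms as exactly those $F$ for which $\bP(F)$ is isomorphic to the trivial $(\G,\G)$-bibundle $\G$, and then to translate that bibundle isomorphism into the asserted conjugation formula using the correspondence between natural transformations and bibundle isomorphisms established in the preceding proposition.

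First I would recall that $\InnAut(\G) = \ker\bigl(\bP\colon \Aut(\G)\to\Pic(\G)\bigr)$ and that the identity of $\Pic(\G)$ is the class of the trivial bibundle $\G\isom\bP(\Id_\G)$. Hence $F$ is an inner automorphism precisely when $F\in\Aut(\G)$ and $\bP(F)\isom\bP(\Id_\G)$ as $(\G,\G)$-bibundles. By the preceding proposition, such bibundle isomorphisms $\bP(\Id_\G)\to\bP(F)$ correspond bijectively to natural transformations $\eta\colon\Id_\G\to F$ in the sense of Definition~\ref{defn:cgroupoidnaturaltrasn}. Unwinding that definition, such an $\eta$ is a morphism $\sigma:=\eta\colon M\to\G$ with $\s\circ\sigma=\Id_M$ and $\t\circ\sigma=F_0$, subject to $\m(\sigma\circ\t,\Id_\G)=\m(F,\sigma\circ\s)$; in generalized-element notation this reads $\sigma(\t(g))\cdot g = F(g)\cdot\sigma(\s(g))$, which rearranges to exactly $F(g)=\sigma(\t(g))\cdot g\cdot\i(\sigma(\s(g)))$. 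This single calculation is the heart of both directions.

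For the forward direction I would take $F$ inner, extract the natural transformation $\eta$ from the trivializing isomorphism $\bP(\Id_\G)\isom\bP(F)$, and set $\sigma=\eta$. Since $F\in\Aut(\G)$ its base map $F_0=\t\circ\sigma$ is an isomorphism, so $\sigma$ is a bisection, and the displayed identity is merely the natural-transformation condition transcribed. For the converse I would start from a bisection $\sigma$ satisfying the formula and first observe that well-definedness of the right-hand side already forces $\s\circ\sigma=\Id_M$: the product $g\cdot\i(\sigma(\s(g)))$ requires $\s(g)=\s(\sigma(\s(g)))$, i.e. $\s=\s\circ\sigma\circ\s$, and since a submersion admits local sections and morphisms in $\C$ are local by (GS3), $\s$ is an epimorphism, whence $\s\circ\sigma=\Id_M$. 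With this, $\sigma$ meets every clause of Definition~\ref{defn:cgroupoidnaturaltrasn}, so it is a natural transformation $\Id_\G\to F$ and therefore $\bP(F)\isom\bP(\Id_\G)\isom\G$, placing $[\bP(F)]$ at the identity of $\Pic(\G)$.

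The remaining step of the converse — and the one I expect to be most delicate — is to confirm that $F$ genuinely lies in $\Aut(\G)$, so that it is a legitimate element of the domain of $\bP$ and hence an inner automorphism rather than merely a morphism with trivial associated bibundle. Here I would use that conjugation by a bisection is invertible: setting $\sigma':=\i\circ\sigma\circ(\t\circ\sigma)\inv$, one checks that $\sigma'$ is again a bisection (a section of $\s$ with $\t\circ\sigma'=(\t\circ\sigma)\inv$) and that conjugation by $\sigma'$ is a two-sided inverse for $F$; the homomorphism property of $F$ and of its inverse is the routine cocycle computation flowing from associativity and unitality of $\m$. All these verifications are carried out internally to $\C$ via generalized elements, as licensed by our conventions. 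Everything apart from the epimorphism argument forcing $\s\circ\sigma=\Id_M$ and the explicit construction of the inverse bisection is a direct transcription of the natural-transformation condition, so I expect no further obstacles.
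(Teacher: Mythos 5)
Your proof is correct, and its skeleton is the same as the paper's: inner automorphisms are exactly those $F \in \Aut(\G)$ with $\bP(F) \isom \G$, and the displayed formula is the trivializing data unwound. The difference is in execution. The paper works directly with the bibundle isomorphism $\Phi \colon \bP(F) \to \G$: it sets $\sigma(x) := \Phi(\u \circ f(x), x)$, derives the formula from two equivariance computations, and in the converse defines $\Phi(g,x) := g \cdot \sigma(x)$ and asserts it is the desired isomorphism. You instead reduce everything to the previously established one-to-one correspondence $\eta \mapsto \bP(\eta)$ between natural transformations and bibundle isomorphisms, so the conjugation formula is literally the naturality condition for $\eta = \sigma \colon \Id_\G \to F$ transcribed; this avoids redoing the computation and makes both directions a single translation. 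Your version also supplies two points the paper leaves implicit: first, that $\s \circ \sigma = \Id_M$ (forced, as you observe, by composability of the right-hand side together with $\s$ being an epimorphism — or simply built into the usual definition of bisection); second, and more substantively, that in the converse the $F$ defined by conjugation genuinely lies in $\Aut(\G)$, which is required for $F$ to be in the domain of $\bP$ and hence to qualify as \emph{inner}. Your explicit inverse bisection $\sigma' = \i \circ \sigma \circ (\t \circ \sigma)\inv$ settles this (one checks $\s \circ \sigma' = \Id_M$, $\t \circ \sigma' = (\t \circ \sigma)\inv$, and that conjugation by $\sigma'$ inverts $F$ using $\i(\sigma(y)) \cdot \sigma(y) = \u(y)$), whereas the paper's ``straightforward to check'' addresses only the bibundle isomorphism and never verifies invertibility of $F$. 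So your route buys a cleaner derivation by reuse of the correspondence plus a genuinely more complete converse.
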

\begin{proof}
  By assumption there is an isomorphism $\Phi \bP(F) \to \G$ as $(\G,\G)$-bibundles. Let:
  \[
  \sigma(x) := \Phi(\u \circ f(x), x) \colon M \to \bP(F) \to \G
  \]
  Then we have that:
  \[
    \Phi(F(h),\s(h))  = \Phi(F(h)\cdot \u \circ f(\s(h)),\s(h)) = F(h) \cdot \sigma(\s(h))
  \]
  On the other hand:
  \[
  \Phi(F(h),x) = \Phi(F(h),x) = \Phi(\u \circ f(\t(h)),\t(h)) \cdot h =  \sigma(\t(h)) \cdot h
  \]
  Hence:
  \[ F(h) = \sigma(\t(h)) \cdot h \cdot (\i \circ \sigma)(\s(h)) \]
  On the other hand, suppose $F$ is given by such a formula. Then it is straightforward to check that:
  \[ \Phi \colon \P(F) \to \G \qquad \Phi(g,x) \mapsto g \cdot \sigma(x) \]
  yields the desired isomorphism.
\end{proof}
\section{Examples}\label{section:examples2}
\subsection{Manifolds}
Suppose $\C$ is the site of smooth manifolds.
\begin{definition}\label{defn:staticpicardgroup}
  Let $\G \grpd M$ be a Lie groupoid and let $M / \G$ denote the orbit space of $\G$ (as a purely topological object). The \emph{static Picard group} is the kernel of the homomorphism $\Pic(\G) \to \Aut(M / \G)$ denoted by $\Pic_Z (\G)$. In other words, it is the elements of the Picard group which act trivially on the orbit space.
\end{definition}
The static Picard group fits into an exact sequence:
\[  1 \to \Pic_Z (\G) \to \Pic(\G) \to \Aut(M / \G) \, . \]
Note that the last arrow is not surjective.
There is no guarantee that all topological symmetries of $M /\G$ can be witnessed by elements of the Picard group. We will treat a few instances for which the Picard group has been calculated.
\begin{example}[Trivial groupoids]
  Suppose $1_M \grpd M$ and $1_N \grpd N$ are trivial Lie groupoid. That is, every morphism is a unit. Then $1_M$ is Morita equivalent to $1_N$ if and only if they are diffeomorphic.
  A principal $(1_M,1_M)$-bibundle is the same as a diffeomorphism $f \colon M \to M$. Hence
  \[ \Pic(1_M) \isom \Diff(M) \, . \]
  Every element of the Picard group is map-like and the group of inner automorphisms of $1_M \grpd M$ is trivial. Lastly, the static Picard group is trivial as well since $\Pic(1_M) \to \Aut(M)$ is injective.
\end{example}
\begin{example}[Trivial line bundle]
  Let $ \G = \R_M \grpd M$ be the trivial line bundle. That is, the groupoid operation is just fiber-wise addition in $\R$. Suppose $P$ is a principal $(\R_M, \R_M)$-bibundle.
  The left action of $\G$ makes $P$ into an (affine) line bundle.
  Since every affine line bundle admits a global section, we know that $P$ comes from an automorphism of $\G$. Since $\G$ is abelian, the inner automorphisms of $\G$ are trivial and we conclude that
  \[ \Pic(\G) \isom \OutAut(\G) \isom \Diff(M). \]
  The static Picard group is trivial.
\end{example}
\begin{example}[Bundles of abelian groups]
  To the best of our knowledge, this case first appeared in a paper by Moerdijk~\cite{moerdijkgerbes}. It is a, more interesting, generalization of the previous example.

  Consider the case where $\G \grpd M$ is a Lie groupoid such that $\s = \t$ and is such that $\G_x := \t\inv(x)=\s\inv(x)$ is an abelian group for all $x \in M$. We can think of $\G$ as defining a sheaf whose local sections are bisections of $\G$. Any principal $(\G,\G)$-bundle $P$ defines a diffeomorphism $f_P \colon M \to M$ of the orbit space of $\G$.
  Furthermore, any $(\G,\G)$ bibundle is, in particular, a principal $\G$-bundle. Hence, there is an associated \v{C}ech cohomology class $c_P \in H^1(M,\G)$. This gives rise to a group isomorphism:
  \[ \Pic(\G) \to H^1(M,\G) \ltimes \Diff(M) \]
  where the action of $\Diff(M)$ on $H^1(M,\G)$ is by pullbacks.
\end{example}
\begin{example}[Lie groups]
  Suppose $\G \grpd \{ * \}$ is a Lie group. In other words, it is a Lie groupoid over a point. Then every $(\G,\G)$-bibundle admits a bisection and so $\Pic(\G) \isom \OutAut(\G)$. Note that, by Lemma~\ref{lemma:innaut}, the outer automorphisms of $\G$ as a groupoid are the same as the outer automorphisms of $\G$ as a Lie group.
\end{example}
\begin{example}[Transitive groupoids]
  Suppose $\G \grpd M$ is a Lie groupoid such that the orbit space $M /\ G$ is a point. For any $x \in M$, the inclusion of the isotropy group $\G_x \into M$ is a weak equivalence. Hence $\Pic(\G) \isom \Pic(\G_x) = \OutAut(\G_x)$.

  For example, if $\G = \Pi_1(M) \grpd M$ is the fundamental groupoid of $M$ and $M$ is connected, then we obtain $\Pic(\G) \isom \OutAut(\pi_1(M))$.
\end{example}
\subsection{Morita equivalence of Lie algebroids}

Recall that a Lie algebroid is a vector bundle $A \to M$ together with a Lie bracket
\[ [ \cdot , \cdot ] \colon \Gamma(A) \times \Gamma(A) \to \Gamma(A) \]
satisfying a Leibnitz type identity. When Pradines~\cite{Pradines} introduced the notion of Lie algebroid, he showed that any Lie groupoid can be differentiated to a Lie algebroid. If a Lie algebroid is isomorphic to one of this type, then we say it is integrable.
It is well known that not every Lie algebroid is integrable.

When a Lie algebroid is integrable, there is a unique (up to isomorphism) integration with simply connected source fibers called the \emph{canonical} integration.
For a reader interested in more details, we refer them to \cite{lecturesonint} and \cite{Cint}.
\begin{definition}
  Suppose $A \to M$ and $B \to N$ are integrable Lie algebroids, with canonical integrations $\G \grpd M$ and $\H \grpd N$, respectively. Then a Morita equivalence between $A$ and $B$ is a Morita equivalence of $\G$ and $\H$. Similarly, the Picard group of $A$ denoted $\Pic(A)$ is defined to be $\Pic(\G)$.
\end{definition}

\begin{example}[Tangent]
  The canonical integration of the tangent algebroid is the fundamental groupoid. Therefore, if $M$ and $N$ are connected, then $TM$ and $TN$ are Morita equivalent if and only if $\pi_1(M) \isom \pi_1(N)$. Furthermore, $\Pic(TM) = \OutAut(\pi_1(M))$.
\end{example}
\begin{example}
  If $A \to \{ * \}$ is a Lie algebroid over a point, then $A$ is the same as a Lie algebra. The canonical integration of $A$ is just the simply connected integration $G$ of the Lie algebra. Hence
  \[ \Pic(A) \isom  \OutAut(G) \isom \OutAut(\g). \]
  Two Lie algebras are Morita equivalent if and only if they are isomorphic.
\end{example}
\subsection{Symplectic Morita equivalence}
For $\C = \DMan$ (see Chapter~\ref{chap:dman}), there is a special class of $\C$-groupoids called \emph{symplectic groupoids}.
Morita equivalence of symplectic groupoids was originally developed by Xu~\cite{Morping} (without the use of Dirac structures). Picard groups of symplectic groupoids were first introduced and studied by Bursztyn and Weinstein\cite{BPic}.
We will recall the definition of these objects to make the following discussion more clear.
\begin{definition}
  A \emph{symplectic groupoid} is a Lie groupoid $\G$ together with a symplectic form $\Omega \in \Omega^2(\G)$ which is multiplicative:
  \[ \m^* \Omega = \pr_1^* \Omega + \pr_2^* \Omega \, . \]
  A \emph{Morita equivalence} of symplectic groupoids $\G$ and $\H$ is a principal $(\G,\H)$-bibundle $P$ together with a \emph{left} and \emph{right mulitplicative} 2-form $\omega$ on $P$.
  \[ \m_L^* \omega = \pr_1^* \Omega^\G + \pr_2^* \omega \qquad \m_R^* \omega = \pr_1^* \omega + \pr_2^* \Omega^\H \, . \]
  The \emph{Picard group} of a symplectic groupoid $\G$ is the set of isomorphism classes of Morita self equivalences.
\end{definition}

One important reason to be interested in symplectic groupoids is that they are the objects which integrate Poisson manifolds.
Given $(M, \pi)$ a Poisson manifold, then $T^*M$ inherits a Lie algebroid structure. When $T^*M$ is integrable, we say that $(M,\pi)$ is an integrable Poisson manifolds. The canonical integration of a Poisson manifold inherits a multiplicative symplectic form from the canonical symplectic form on $T^* M$.
\begin{definition}
  Suppose $(M, \pi_M)$ and $(N, \pi_N)$ are integrable Poisson manifolds. Then we say that $M$ and $N$ are Morita equivalent as Poisson manifolds if their canonical integrations are \emph{symplectically} Morita equivalent. Similarly, the Picard group of a Poisson manifold is defined to by the Picard group of its symplectic integration (as a $\DMan$-groupoid).
\end{definition}
\begin{example}
  Suppose $M$ and $N$ are any manifolds.
  Then $T^* M \grpd M$ and $T^* N \grpd N$ are symplectic groupoids via fiberwise addition.
  The Poisson structure that they integrate is given by the zero bivector.
  A principal $(T^* M, T^* N)$-bibundle $(P, \omega)$ always admits a bisection, $\sigma \colon N \to P$.
  The bisection gives rise to a diffeomorphism $f_P \colon N \to M$ and a 2-form $\beta := \sigma^* \omega$.
  Bursztyn and Weinstein showed that the resulting map:
  \[ \Pic(T^* M) \to H^2(M) \ltimes \Diff(M)  \]
  is a group isomorphism.
\end{example}
\begin{example}
  Suppose $(M, \omega\inv)$ is a connected Poisson manifold such that $\omega\inv \colon T^* M \to TM$ is the inverse of a symplectic form.
  It follows that $M$ is integrable and the canonical integration is the fundamental groupoid $\Pi_1(M)$.
  The symplectic structure on the fundamental groupoid is obtained by the pullbacks $\t^* \omega - \s^* \omega$.

  As we saw earlier, $\Pi_1(M)$ is Morita equivalent to the group $\pi_1(M,x)$ for any given base-point $x$.
  It turns out that this is Morita equivalence of symplectic groupoids.
  Hence, symplectic manifolds (interpreted as Poisson manifolds) are Morita equivalent if and only if their fundamental groups are isomorphic.

  This example tells us that we should not expect Morita equivalence to tell us very much about the \emph{leaf-wise} geometry of a Poisson manifold. Morita equivalences are fundamentally about the transverse geometry of the groupoid.
\end{example}
\begin{example}
  Suppose $\g$ is a Lie algebra. Then the dual $\g^*$ inherits a canonical Poisson structure.
  The canonical integration is $T^* G \isom G \times \g^*$ equipped with the (coadjoint) action groupoid structure, where $G$ is the simply connected integration of $\g$.

  For the compact and semi-simple case, Bursztyn and Fernandes~\cite{ruipic} proved the following theorem:
  \begin{theorem}
    Suppose $\g$ is a compact semi-simple Lie algebra. Then:
    \[ \Pic(\g^*) \isom \OutAut(\g) \]
  \end{theorem}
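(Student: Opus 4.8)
The plan is to exhibit the isomorphism as the homomorphism $\OutAut(\g) \to \Pic(\g^*)$ obtained by lifting Lie algebra automorphisms to symplectic groupoid automorphisms, and then to prove bijectivity by showing that every symplectic Morita self-equivalence is map-like and that its base map is forced to be linear near the origin. Throughout I write $\G = G \ltimes \g^*$ for the coadjoint action groupoid on $T^* G \isom G \times \g^*$ integrating $\g^*$. Since $\g$ is compact, $G$ is compact, so $\G$ is a \emph{proper} $\C$-groupoid whose source fibres are copies of $G$ and whose orbit space $\g^*/G$ is a closed Weyl chamber, i.e. a contractible cone of dimension $\rank \g$ with cone point the image of $0 \in \g^*$.

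First I would build $\Psi \colon \OutAut(\g) \to \Pic(\g^*)$. Given $\phi \in \Aut(\g)$, simple connectivity of $G$ integrates $\phi$ to $\Phi \in \Aut(G)$, while the contragredient $(\phi\inv)^*$ is a linear Poisson automorphism of $\g^*$; together $F_\phi := \Phi \ltimes (\phi\inv)^*$ is an automorphism of $\G$ preserving the multiplicative symplectic form inherited from $T^* G$, so $\bP(F_\phi)$ is a symplectic Morita self-equivalence. If $\phi$ is inner, $\Phi$ is conjugation by a fixed $g \in G$, which is precisely the inner-automorphism normal form of Lemma~\ref{lemma:innaut} applied to the constant bisection $\xi \mapsto (g,\xi)$; hence $\bP(F_\phi)$ is trivial and $\Psi$ descends to $\OutAut(\g)$. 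For injectivity, if $\bP(F_\phi)$ is trivial in $\Pic(\g^*)$ then Lemma~\ref{lemma:innaut} makes $F_\phi$ inner via some bisection $\sigma$, and evaluating the conjugation formula at the fixed point $0$ --- where the isotropy is all of $G$ --- shows $\Phi|_G$ is conjugation by $\sigma(0)$, so $[\phi] = 1$ in $\OutAut(\g)$.

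The substance is surjectivity, which I would split in two. The first half is to show every $[P] \in \Pic(\g^*)$ is map-like, so by Lemma~\ref{lemma:maplike} it admits a bisection and $P \isom \bP(F)$ for a symplectic groupoid automorphism $F$ (a weak equivalence that is an isomorphism on units, hence an isomorphism). Restricting $P$ over the one-point leaf $\{0\}$ gives a biprincipal $(G,G)$-bibundle, which admits a bisection by the Lie group computation $\Pic(G) \isom \OutAut(G)$ already recorded; properness of $\G$ and contractibility of the Weyl chamber should then let me propagate this bisection outward --- via the normal-form/linearization theorem for proper groupoids around the invariant point $0$ --- and glue it to a global one. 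The second half is to identify $\OutAut(\G)$ (symplectic automorphisms modulo inner ones) with $\OutAut(\g)$: after composing $F$ with an inner automorphism I may assume its base Poisson diffeomorphism $f$ fixes $0$, its linearization at $0$ is a linear Poisson automorphism of $\g^*$ dual to some $\phi \in \Aut(\g)$, and multiplicativity of the symplectic form together with the rigidity below forces $F$ to agree with $F_\phi$ up to an inner automorphism.

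The step I expect to be the main obstacle is the rigidity underlying both halves: a symplectic groupoid automorphism inducing the identity on the Weyl chamber must be inner, equivalently the static Picard group $\Pic_Z(\G)$ vanishes, and there is no nontrivial twist of the multiplicative form. This is exactly where compactness and semisimplicity enter: Whitehead's lemmas give $H^1(\g) = H^2(\g) = 0$, the coadjoint orbits $G/T$ are simply connected, and averaging over the compact group $G$ annihilates the relevant differentiable groupoid cohomology, leaving no room for vertical automorphisms or area-type invariants. I would isolate this vanishing as a separate lemma and invoke it both to upgrade the linearization of $f$ to a global identification $F \isom F_\phi$ and to control the extension of bisections, thereby closing the proof that $\Psi$ is an isomorphism.
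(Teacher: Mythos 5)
First, a point of reference: the paper contains no proof of this statement at all --- it is quoted verbatim from Bursztyn and Fernandes~\cite{ruipic} --- so your attempt can only be measured against their argument. Your architecture is the right one and matches theirs in outline: integrate $\phi \in \Aut(\g)$ to a symplectic groupoid automorphism $F_\phi$ of $G \ltimes \g^*$, kill inner automorphisms via the bisection characterization, and reduce surjectivity to showing every self-equivalence is map-like. Your injectivity step, evaluating the conjugation formula of Lemma~\ref{lemma:innaut} at the fixed point $0$, where the isotropy is all of $G$, is sound.

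The genuine gap is that the two steps you flag as ``the main obstacle'' are precisely the content of the theorem, and the mechanisms you propose do not deliver them as stated. (a) In the symplectic setting the relevant statement is not Lemma~\ref{lemma:maplike} applied naively but Proposition~\ref{prop:lagbisection}: $P \isom P_F$ if and only if $P$ admits a \emph{Lagrangian} bisection. Your well-definedness step survives this (the constant bisection $\xi \mapsto (g,\xi)$ sits inside a cotangent fibre, hence is Lagrangian), but your surjectivity argument never engages the Lagrangian constraint: the restriction of $P$ over the leaf $\{0\}$ is a $G$-bitorsor, so producing a ``bisection'' there is content-free, and proper-groupoid linearization around $0$ gives a smooth groupoid identification, not a symplectic one adapted to $P$; extending a \emph{Lagrangian} bisection over the whole noncompact chamber is exactly where \cite{ruipic} does its real work, and the paper itself warns, immediately after Proposition~\ref{prop:lagbisection}, that these obstructions form a symplectic Nielsen-realization-type problem. (b) Your rigidity lemma ($\Pic_Z(\G)$ trivial, no multiplicative-form twists) is asserted, not proved, and the averaging argument you offer does not obviously apply: vanishing of differentiable cohomology of a proper groupoid concerns coefficients in linear representations, whereas triviality of a static symplectic bibundle is a nonlinear statement about bitorsors carrying multiplicative $2$-form data over the noncompact base $\g^*$. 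Whitehead's lemmas together with the Picard Lie algebra sequence recalled in Section~\ref{section:picgroups} do give $\mathfrak{pic}(\g^*) = 0$ and hence discreteness of $\Pic(\g^*)$, but discreteness does not exclude exotic discrete elements that admit no Lagrangian bisection --- ruling those out is the missing core, so as written the proposal is a correct frame around an unproven center.
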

\end{example}

\chapter{The site of Dirac structures}\label{chap:dman}

In this chapter we will construct a site whose stacks include those associated to symplectic groupoids.
\footnote{Most of the following chapter has already appeared in an article by the author in Letters in Mathematical Physics~\cite{Villatoro2018}.}
The main concept used here is that of a Dirac structure which is simultaneously a generalization of a foliation and a symplectic manifold.
Intuitively, one should think of a Dirac structure as a smooth manifold equipped with a (poissibly singular) foliation by pre-symplectic manifolds.
More exposition on Dirac structures can be found in~\cite{BDiracintro}\cite{MMbook}.

The structure of this chapter is as follows:
\begin{itemize}
  \item Section~\ref{section:Diracstructures} gives a brief review of important concepts from the theory of Dirac structures. Topics covered include the definition of a Dirac structure, pullbacks and gauge transformations of Dirac structures.
  \item Section~\ref{section:Diracsite} will define the site of Dirac manifolds $\DMan$ and establish our notation. It concludes with a proof that the site of Dirac structures is good.
  \item Section~\ref{section:dmangroupoids} develops the theory of $\DMan$-groupoids called \emph{D-Lie groupoids}. While a D-Lie groupoid is more general than a symplectic groupoid, it turns out that this generalization is faithful.
  \item Section~\ref{section:principalgbundles} studies principal bundles and Morita equivalences of D-Lie groupoids. It proves that symplectic Morita equivalences and Morita equivalences of symplectic groupoids (as D-Lie groupoids) are the same.
  \item Section~\ref{section:stacksindman} turns to the study of stacks in the world of Dirac manifolds. It collects the work we have done so far and includes the proof of Theorem~\ref{thm:stackequiv} which is the main result of the chapter.
\end{itemize}

\section{Dirac structures}\label{section:Diracstructures}

\subsection{Generalized tangent bundle}
The Courant bracket was developed by Theodore Courant~\cite{Courant} with the introduction of Dirac structures.
We begin with defining the generalized tangent bundle, which is the vector bundle for which the Courant bracket is defined.
\begin{definition}\label{defn:gtangentbundle}
  Let $M$ be a smooth manifold. The \emph{generalized tangent bundle} is the direct sum of the tangent and cotangent bundles.
  \[ \T M : = TM \oplus T^* M \]
  Elements of $\T M$ are denoted $v \oplus \eta$ for some vector $v \in TM$ and covector $\eta \in T^*M$.
\end{definition}

The generalized tangent bundle comes with a bracket called the \emph{Courant bracket}.
\begin{equation}\label{eqn:Courantbracket}
{[V \oplus \eta , W \oplus \zeta ]}_{\T M} := [V,W] \oplus \L_V(\zeta) - \L_W(\eta) + \dif(\eta(W)) \, .
\end{equation}
For a 2-form $\omega$, the notation $\omega^\flat$ denotes the associated linear map $TM \to T^*M$ given by vector contraction.
Notably, the above bracket does not form a Lie bracket. While it satisfies the Jacobi and Leibnitz identities, it is not anti-symmetric.

If we are supplied with a 3-form $\phi$ on $M$ we can ``twist'' the bracket.
\begin{equation}\label{eqn:twistedCourantbracket}
{[V \oplus \eta , W \oplus \zeta ]}^\phi_{\T M} := [V,W] \oplus \L_V(\zeta) - \L_W(\eta) + \dif(\eta(W)) + \phi(X,Y,-)  \, .
\end{equation}
If we are considering the twisted case, then we call $\phi$ the \emph{background 3-form}. Twisted brackets are interesting because they make a natural appearance in Lie theory. See Example~\ref{example:twistedliegroups}.

The generalized tangent bundle comes with two pairings, one symmetric and one anti-symmetric.
\begin{equation}\label{eqn:gtangentsymmetric}
\langle v \oplus \eta, w \oplus \zeta \rangle_+ = \zeta(v) + \eta(w)
\end{equation}
\begin{equation}\label{eqn:gtangentasymmetric}
\langle v \oplus \eta, w \oplus \zeta \rangle_{-} = \frac{1}{2} (\zeta(v) - \eta(w))
\end{equation}

\subsection{Dirac geometry}

\begin{definition}\label{defn:diracstructure}
  Let $M$ be a smooth manifold. A \emph{Dirac structure} on $M$ is a subbundle $L \le \T M$ which is maximally isotropic with respect to $\langle \cdot , \cdot \rangle_+$ and whose sections are involutive (closed) under the Courant bracket.

  A $\phi$-\emph{twisted Dirac structure} is defined similarly, except we require that $L$ is involutive under the $\phi$-twisted Courant bracket.
\end{definition}

\begin{example}\label{example:2forms}
  Suppose $\omega$ is a 2-form on $M$ such that $\dif \omega = \phi$. Let $L := \{ v \oplus \omega^\flat(v) \}$ be the \emph{graph} of $\omega$. Then $L$ is a $\phi$-twisted Dirac structure.
\end{example}
\begin{example}
  Let $F$ be an integrable distribution on $M$ (i.e. a foliation on $M$). Now let $F^\circ \le T^* M$ denote the \emph{annihilator} of $F$. That is, it is the vector bundle of 1-forms $\eta$ such that $\eta(v) = 0$ for all $v \in F$. Then $L_\omega := F \oplus F^\circ$ is a Dirac structure.
\end{example}
\begin{example}\label{example:bivectors}
  Let $\pi \in \mathfrak{X}^2(M)$ on $M$ and denote by $\pi^\sharp$ the contraction map $T^* M \to TM$. Let $L_\pi$ be the graph of $\pi^\sharp$. Then $L_\pi$ is a Dirac structure if and only if $\pi$ is a \emph{Poisson} bivector. For our purposes, this will serve as our definition of a Poisson bivector.

  If $L$ is a $\phi$-twisted Dirac structure, then we say $\pi$ is a $\phi$-twisted Poisson bivector.
\end{example}
\begin{example}\label{example:twistedliegroups}
  Let $G$ be a compact Lie group. Let $\theta$ be the (right invariant) Mauer-Cartan 1-form on $G$. That is, for each $v \in T_g G$,
  \[ \theta(v) = \dif R_{g\inv} (v)   \]
  Since $G$ is compact, we can assume there is an bi-invariant metric $\rho( \cdot , \cdot)$ on $G$. Then
  \[ \phi(v_1,v_2,v_3) := \frac{1}{2} \rho \left(\theta(v_1),[\theta(v_2),\theta(v_3)] \right) \]
  defines a 3-form on $G$ called the \emph{Cartan 3-form}.

  Using the metric, we can identify $\T G$ with $TG \oplus TG$ and at each $g \in G$ consider the elements of $\T \G$ which have the form
  \[ (\dif L_g(v) - \dif R_g(v)) \oplus \frac{1}{2}  \left( \dif L_g(v)) - \dif R_g(v)) \right) \]
  It was observed by Severa and Weinstein~\cite{SeveraWeinstein} that this defines a $\phi$-twisted Dirac structure on $G$.
\end{example}

A Dirac structure is an example of a Lie algebroid. Any Dirac structure $L \le \T M$ comes with a projection to $TM$ which we will call the \emph{anchor map} and denote with $\rho$.
Since sections of $L$ are involutive, it comes with a Lie bracket $[ \cdot , \cdot]_L$. Anti-symmetry follows from the condition that $L$ is isotropic.
The bracket and anchor are compatible in the following sense:
\begin{equation}\label{eqn:diracleibnitz}
  [\alpha,f\beta]_L = f [\alpha, \beta]_L + \L_{\rho(\alpha)}(f)\beta \quad \forall \alpha,\beta \in \Gamma(L)
\end{equation}

At each $x \in M$, there is a pairing on the image of $\rho$ given by
\[ \omega(v,w) := \langle v \oplus \eta , w \oplus \zeta \rangle_- \quad \forall v \oplus \eta, w \oplus \zeta \in L_x \]
Well definedness of this pairing follows from the maximally isotropic condition on $L$.
Since $L$ is an algebroid, it follows that we can integrate the image of $\rho$ to a singular foliation of $M$ by regularly immersed submanifolds.
The pairing $\omega$ can be restricted to each orbit $\O$ to get a 2-form $\omega^\O \in \Omega^2(\O)$.
If $L$ is Dirac structure, then it follows that $\dif \omega^\O$ is closed. When $L$ is a $\phi$-twisted Dirac structure we instead get that $\dif \omega^\O + \phi = 0$.

\subsection{Morphisms}

Dirac structures come with two natural notions of morphism. To begin, we will explain the basic \emph{pushforward} and \emph{pullback} operations on Dirac structures.
\begin{definition}
  Suppose $f \colon M \to N$ is a smooth map and $L$ is a $\phi$-twisted Dirac structure on $M$. The \emph{pushforward} of $L$ along $f$ is defined to be
  \[
  f_* L := \{ \dif f(v) \oplus \eta : v \oplus f^* \eta \in L  \}
  \]
  Now suppose $L'$ is a Dirac structure on $N$. The \emph{pullback} of $L$ along $f$ is defined to be:
  \[
  f^* L' := \{ v \oplus f^* \eta : \dif f (v) \oplus \eta \in L' \}
  \]
  For each point $x \in M$, the pushforward and pullback operations always give maximally isotropic subspaces. However, the pullback operation will not always give a continuous subbundle of $\T M$. The pushforward operation has the same potential problems as the pushforward of vector fields in that it may give different answers over the same fiber.
  In the twisted case, the pullback of a $\phi$-twisted Dirac structure along $f$ is a $f^* \phi$ twisted Dirac structure (if it exists).

  If we are given Dirac structures $L_M$ on $M$ and $L_N$ on $N$, then we say that $f$ is \emph{forward Dirac} if $f_* L_M = N$. We say that $f$ is \emph{backwards Dirac} if $f^* L_N = L_M$.
\end{definition}
\begin{example}
  Let $f \colon M \to N$ be any smooth map and $L = TM \le \T M$ be the \emph{tangent Dirac structure}.
  Now consider the function $f(x) = x^2$. Then $f^* TM$ is not a valid Dirac structure since it is not a continuous subbundle of $\T N$.
\end{example}
\begin{example}
  Suppose $f \colon M \to N$ is a smooth map and $\omega$ is a 2-form on $M$. Then $f^* L_\omega = L_{f^* \omega}$.
\end{example}
\begin{example}
  Let $\pi_M$ and $\pi_N$ be Poisson bivectors on $M$ and $N$ respectively. Then $f_* L_{\pi_M} = L_{\pi_N}$ if and only if $\pi_M$ and $\pi_N$ are $f$-related.
\end{example}

Before we can define our site of Dirac manifolds, we will introduce one more operation on subbundles of $\T M$.
\begin{definition}
  Suppose $L_M$ is a ($\phi$-twisted) Dirac structure on $M$. Given a 2-form $\beta \in \Omega^2(M)$ then we define the \emph{gauge transformation} of $L_M$ by $\beta$ to be:
  \[
  L_M + \beta := \{ v \oplus (\eta + \omega^\flat(v)) : v \oplus \eta \in L_M \, . \}
  \]
  If $L_M$ is a $\phi$-twisted Dirac structure then $L_M + \beta$ is a $(\phi - \dif \beta)$-twisted Dirac structure.
\end{definition}
\begin{lemma}
  Suppose $f \colon M \to N$ is a smooth map and $L_N$ is a Dirac structure on $N$. Then for any 2-form $\beta \in \Omega^2(N)$ we have that:
  \[ f^* (L_N + \beta) = f^* L_M + f^* \beta \]
\end{lemma}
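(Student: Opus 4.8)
The identity is purely pointwise, so the plan is to fix $x \in M$, set $y = f(x)$, and let $\Phi \colon T_x M \to T_y N$ be the differential of $f$ at $x$; I would then prove that the two prescriptions agree as subspaces of $\T_x M = T_x M \oplus T_x^* M$ for every $x$. (The right-hand side is of course to be read as $f^* L_N + f^* \beta$, the pullback Dirac structure gauged by the pulled-back form.) Both operations — the pullback $f^*$ and the gauge transformation $+\beta$ — are defined fiber by fiber, so it suffices to establish equality of subspaces at a single arbitrary point. In particular the equality holds as an identity of maximally isotropic fibrewise distributions, and whatever subbundle or smoothness issues might arise affect the two sides identically, so no separate argument about continuity is needed.

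The key step is an elementary compatibility identity relating contraction and pullback. Writing $\beta^\flat \colon TN \to T^* N$ for contraction as in the text, I claim that for every $v \in T_x M$,
\[ (f^* \beta)^\flat(v) = f^*\!\big( \beta^\flat(\Phi(v)) \big) \in T_x^* M . \]
This is verified by evaluating both sides on an arbitrary $v' \in T_x M$: the left side gives $(f^*\beta)(v, v') = \beta(\Phi(v), \Phi(v'))$, while the right side gives $\beta^\flat(\Phi(v))\big(\Phi(v')\big) = \beta(\Phi(v), \Phi(v'))$, so the two coincide. In words, contracting the pulled-back form with $v$ equals contracting $\beta$ with $\Phi(v)$ and then pulling the resulting covector back.

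With this identity in hand, I would prove the two inclusions directly. A generic element of $f^*(L_N + \beta)$ has the form $v \oplus f^* \eta$ with $\Phi(v) \oplus \eta \in (L_N + \beta)_y$; unwinding the definition of the gauge transformation, this means $\eta = \zeta + \beta^\flat(\Phi(v))$ for some $\zeta$ with $\Phi(v) \oplus \zeta \in (L_N)_y$. Pulling back and applying the compatibility identity,
\[ f^* \eta = f^* \zeta + f^*\!\big(\beta^\flat(\Phi(v))\big) = f^*\zeta + (f^*\beta)^\flat(v), \]
and since $v \oplus f^* \zeta \in f^* L_N$ by the definition of the pullback, the element $v \oplus f^*\eta$ lies in $f^* L_N + f^* \beta$. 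The reverse inclusion is the same computation read backwards, starting from $v \oplus \big(f^*\zeta + (f^*\beta)^\flat(v)\big)$ with $\Phi(v) \oplus \zeta \in (L_N)_y$ and setting $\eta := \zeta + \beta^\flat(\Phi(v))$. Since the only genuine content is the single contraction–pullback identity, I expect no serious obstacle; the one point demanding care is notational bookkeeping, namely keeping the two meanings of $f^*$ — cotangent pullback of a covector versus pullback of a $2$-form — distinct and invoking the identity above to pass between them.
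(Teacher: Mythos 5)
Your proof is correct and takes essentially the same route as the paper: both arguments reduce to unwinding the definitions of pullback and gauge transformation together with the contraction--pullback compatibility $(f^*\beta)^\flat(v) = f^*\bigl(\beta^\flat(\dif f(v))\bigr)$, which the paper uses implicitly and you verify explicitly. The only difference is at the end: the paper proves a single inclusion and invokes the maximal isotropy of both sides to conclude equality, while you check the reverse inclusion directly --- an equally valid, if slightly longer, finish.
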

\begin{proof}
  Due to the maximality condition on Dirac structures, it suffices to show that one is contained in the other. An arbitrary element of $f^* (L_M + \beta)$ can be written in the form
  \[ v \oplus f^*(\eta + \beta^\flat(\dif f(v))) \]
  For some $v \oplus \eta \in L_M$. This expression equals
  \[ v \oplus f^*\eta + {(f^*\beta)}^\flat((v)) \, . \]
  But this is clearly an element of $f^* L_M + f^* \beta$.
\end{proof}

At each orbit $\O$ of the Dirac structure, the effect of the gauge transformation is that it adds $\beta|_{\O}$ to the to 2-form $\omega^\O$. Therefore, we should not be surprised by its compatibility with the pullback operation.

\section{DMan}\label{section:Diracsite}

\subsection{The category of Dirac manifolds}

We can now define the site of Dirac manifolds. Let $\DMan$ be the category defined as follows:
\begin{definition}\label{defn:DMan}
  Let $\DMan$ be the category defined as follows:
  \begin{itemize}
    \item The \emph{objects} of $\DMan$ are triples $(M, \phi, L_M)$ where $M$ is a smooth manifold and $L_M$ is a $\phi$-twisted Dirac structure.
    \item The \emph{morphisms} of $\DMan$ are pairs $(f, \beta) \colon (M, \phi_M, L_M) \to (N, \phi_N, L_N)$ where $f \colon M \to N$ is a smooth map and $\beta$ is a 2-form on $M$ such that:
    \[
    f^* \phi_N = \phi_M + \dif \beta \quad \mbox{ and } \quad
    f^* L_N = L_M + \beta \, .
    \]
  \end{itemize}
  Composition of pairs is given by the rule $(f, \beta_1) \circ (g, \beta_2) = (f \circ g, g^* \beta_1 + \beta_2)$.
\end{definition}

\begin{example}[Gauge Transformations]
Suppose $(M,L_M)$ is a Dirac manifold and $\beta$ is a closed 2-form on $M$, then $(\Id, \beta): (M,L_M) \to (M,L_M+\beta)$ is a morphism in $\DMan$.
We call such morphisms \emph{gauge transformations}.
\end{example}
\begin{example}[Smooth Maps]
Let $M$ and $N$ be any smooth manifolds.
Then $(M, TM)$ and $(N,TN)$ are Dirac manifolds.
For any smooth map $f: M \to N$ we have that $(f,0) : (M, TM) \to (N, TN)$ is a morphism in $\DMan$.
\end{example}
\begin{example}[Symplectic Leaves]
Suppose $(M, L_M)$ is a manifold and $L_M$ is the graph of a Poisson bivector.
Any orbit $\O$ of $M$ has an associated symplectic form $\omega^\O$ and the immersion $i:\O \to M$ satisfies $i^* L_M = L_{\omega^\O}$.
\end{example}
To simplify our notation we will sometimes denote a morphism $(f,\beta)$ in $\DMan$ by $f$ alone and the 2-form $\beta$ will be called the \emph{gauge part} of $f$.
Similarly we may sometimes denote a Dirac manifold $(M, \phi_M , L_M)$ by $M$ alone.
The notation $L_M$ and $\phi_M$ will always denote the $\phi_M$-twisted Dirac structure on $M$.
Lastly, if we say a morphism in $\DMan$ is a \emph{submersion} we mean that the underlying smooth map is a surjective submersion.
When we give $\DMan$ a topology, we will see that this agrees with our site theoretic notion of a submersion.

The category $\DMan$ comes with a natural functor $\Pr_1 : \DMan \to \Man$ by projection to the first factor of each triple.
This functor is split by a fully faithful functor $\mathbf{i}: \Man \to \DMan$ which takes any manifold $M$ to the Dirac manifold $(M, \phi_M = 0, TM)$ and any smooth map $f$ to $(f,0)$.

We can characterize commutative diagrams in $\DMan$ by considering the associated diagram in $\Man$ together with a \emph{gauge equation}.
For example, suppose we are given a triangle $T$ of morphisms in $\DMan$ as per (\ref{eqn:triangle}).
\begin{equation}\label{eqn:triangle}
T =
\begin{tikzcd}
 & M_2 \arrow[rd, "f_2"] &  \\
M_1 \arrow[ru, "f_1"] \arrow[rr,"f_3"] & & M_3
\end{tikzcd}
\end{equation}
The \emph{gauge part} of $T$ is the equation $\beta_1 + f_1^*\beta_2 = \beta_3$ (here $\beta_i$ is the gauge part of $f_i$).
More generally, any diagram $D$ in $\DMan$ comes with a set of gauge equations coming from each triangle in $D$.
It is not difficult to note that $D$ is a commuting diagram if and only if $\Pr_1(D)$ commutes in $\Man$ and each gauge equation holds.

Suppose we are given two morphisms $(f,\beta): M \to X$ and $(g,\alpha): N \to X$ in $\DMan$ such that the manifold $M \times_X N$ exists.
Then the \emph{fiber product} is defined to be $M \times_X N$ where
\begin{equation}\label{eqn:fiberproduct}
L_{M \times_X N} := {(f \circ \pr_1)}^* L_X - \pr_1^* \beta - \pr_2^* \alpha .
\end{equation}
Such a fiber product fits into a corresponding pullback square in $\DMan$:
\[
\begin{tikzcd}
M \times_X N \arrow[r, "\pr_2"] \arrow[d, "\pr_1"]  & N  \arrow[d,"g"] \\
M \arrow[r, "f"] & X
\end{tikzcd}
\]
We take the gauge parts of $\pr_1$ and $\pr_2$ to be $\pr_2^* \alpha$ and $\pr_1^* \beta$ respectively.
Observe that such a fiber product always exists if either $f$ or $g$ is cartesian in $\Man$.

Fiber products in $\DMan$ are true fiber products in that they still satisfy the same universal property.
Suppose we have the following diagram in $\DMan$:
\[
\begin{tikzcd}
Y  \arrow[dr, "k", dashrightarrow] \arrow[drr, "h_2", bend left] \arrow[ddr, "h_1" swap, bend right] & & \\
  & M \times_X N \arrow[r, "\pr_2"] \arrow[d, "\pr_1"]  & N  \arrow[d,"g"] \\
  & M \arrow[r, "f"] & X
\end{tikzcd}
\]
Let $\eta_1$ and $\eta_2$ be the gauge parts of $h_1$ and $h_2$, respectively.
Then the gauge equation arising from the outermost square is
\begin{equation}\label{eqn:universalprop}
h_1^* \beta + \eta_1 = h_2^* \alpha + \eta_2 \, .
\end{equation}
We already know that there is a unique smooth map $k:Y \to M \times_X N$ which makes this diagram commute.
We can define the gauge part of $k$, call it $\kappa$, one of two ways:
\[ \kappa + k^* \pr_1^* \beta = \eta_2 \quad \mbox{ or equivalently} \quad  \kappa + k^* \pr_2^* \beta = \eta_1 \, . \]
In the presence of (\ref{eqn:universalprop}), these definitions are equivalent.
They must hold in order for the diagram to commute since they represent the gauge equations of the top and left triangles created by inserting $k: Y \to M \times_X N$ into the diagram above.
Hence, $(k,\kappa)$ is the unique morphism which completes the diagram in $\DMan$.
\subsection{Topology}

The category $\DMan$ inherits a natural topology from the forgetful functor $\Pr_1 \colon \DMan \to \Man$.
\begin{definition}
  A collection of morphisms $C = \{ u_i \colon U_i \to M \}$ in $\DMan$ is a \emph{covering family} if $\Pr_1(C)$ is a covering family in $\Man$.
\end{definition}
A routine check shows that this satisfies the axioms of a pre-topology and so we obtain a Grothendieck topology on $\DMan$.
A site theoretic \emph{submersion} then is just a morphism in $\DMan$ which projects to a submersion in $\Man$.

\begin{proposition}\label{prop:Dmangood}
  $\DMan$ is a good site.
\end{proposition}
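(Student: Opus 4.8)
The plan is to verify the five conditions (GS1)--(GS5) of Definition~\ref{defn:goodsite} directly, exploiting the forgetful functor $\Pr_1 \colon \DMan \to \Man$ together with the fact (recorded in the introduction) that $\Man$ is itself a good site. The guiding principle is that a morphism of $\DMan$ is a submersion precisely when its underlying smooth map is a surjective submersion, so every condition that concerns only the base reduces to the corresponding statement in $\Man$; what remains in each case is to glue the extra data --- the twisting $3$-form $\phi$, the Dirac structure $L$, and the gauge $2$-form $\beta$ --- all of which are \emph{local} in nature (sections of sheaves on the underlying manifold, or subbundles of $\T M$) and therefore descend along covers.

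First the three easy conditions. For (GS1), the triple $(\emptyset, 0, 0)$ is an initial object: for any $(M,\phi_M,L_M)$ the unique smooth map $\emptyset \to M$ lifts uniquely, since the only $2$-form on $\emptyset$ is the zero form, so the gauge part is forced. For (GS2), a covering family of $(M,\phi_M,L_M)$ is by definition a family $\{(u_i,\beta_i)\}$ whose underlying maps $u_i$ form a covering family of open embeddings (or \'etale maps) in $\Man$; since such $u_i$ are cartesian in $\Man$, the fiber product formula~(\ref{eqn:fiberproduct}) shows each $(u_i,\beta_i)$ admits all fiber products, hence is cartesian in $\DMan$. Thus every covering sieve is generated by cartesian morphisms. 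For (GS5), if $(f,\beta)\circ(g,\gamma)$ is a submersion then its underlying map $f\circ g$ is a surjective submersion in $\Man$; since $\Man$ satisfies (GS5), $f$ is a surjective submersion, which is exactly the condition for $(f,\beta)$ to be a submersion in $\DMan$.

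For (GS3) (locality of morphisms) I would separate the smooth and gauge data. Given a covering sieve $\{s_i\colon U_i\to M\}$ and morphisms $(f_i,\beta_i)\colon U_i\to N$ agreeing on the overlaps $U_{ij}$, locality of smooth maps in $\Man$ produces a unique smooth $f\colon M\to N$ with $f\circ s_i = f_i$, and the compatible $2$-forms $\beta_i$ glue, by the sheaf property of $\Omega^2$, to a unique $\beta$ on $M$ restricting to each $\beta_i$. It then remains to check the two defining identities $f^*\phi_N = \phi_M + \diff\beta$ and $f^*L_N = L_M + \beta$; but pullback of forms, exterior derivative, pullback of Dirac structures, and gauge transformation all commute with restriction to the $U_i$, so each identity holds on $M$ because it holds on the cover. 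This yields the unique $(f,\beta)$ and, in particular, shows that $\Sub$ is a pre-stack.

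The main work, and the principal obstacle, is (GS4): that $\Sub$ is a stack, i.e. that descent data for submersions can be glued. Axiom (S1) is immediate from (GS3). For (S2), suppose we are given submersions $Q_i\to U_i$ over a cover $\{U_i\to M\}$ together with isomorphisms $\phi_{ij}$ of the restrictions over $U_{ij}$ satisfying the cocycle condition. I would glue in stages: the underlying manifolds $Q_i$ glue to a manifold $Q$ by descent in $\Man$ and the projections assemble to a map $Q\to M$ which is a surjective submersion, because this is a local property of the base (here one uses that $\Man$ is good); the Dirac structures $L_{Q_i}$, being maximal isotropic subbundles of $\T Q_i$, glue to a subbundle $L_Q\le\T Q$, which is again Dirac since maximal isotropy and Courant-involutivity are local conditions; and the background $3$-forms $\phi_{Q_i}$ together with the gauge $2$-forms of the projections glue by the sheaf property. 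The delicate point is the bookkeeping of the gauge parts: the isomorphisms $\phi_{ij}$ carry gauge $2$-forms, and one must check that these are consistent with the cocycle condition so that the glued Dirac structure and the glued gauge datum are genuinely well defined and make $Q\to M$ a morphism of $\DMan$ whose restrictions recover the $\phi_{ij}$. Once this consistency is confirmed, the resulting object of $\Sub_M$ is the required gluing, completing the verification that $\DMan$ is good.
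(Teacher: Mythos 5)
Your treatment of (GS1), (GS2), (GS3) and (GS5) is fine and agrees in substance with the paper (which only spells out (GS1) and (GS3), treating the others as routine). The genuine gap is in (GS4), and it sits exactly where you flag ``the delicate point.'' Your sentence asserting that the Dirac structures $L_{Q_i}$, being maximally isotropic subbundles of $\T Q_i$, glue to a subbundle $L_Q \le \T Q$ is false as stated: the gluing maps are the $\DMan$-isomorphisms $(\phi_{ij},\beta_{ij})$, and these intertwine the local Dirac structures only up to gauge, $\phi_{ij}^* L_{Q_i}|_{U_{ij}} = L_{Q_j}|_{U_{ij}} + \beta_{ij}$. So unless the $\beta_{ij}$ vanish, the $L_{Q_i}$ are \emph{not} restrictions of a single subbundle of $\T Q$, locality of isotropy and involutivity notwithstanding. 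Deferring this with ``once this consistency is confirmed'' leaves unproven the one step that is the actual content of (GS4).

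The paper closes this hole with a normalization argument rather than a fiberwise gluing of Dirac structures. One first replaces each local datum by a canonically isomorphic one in which the gauge parts of the inclusions $U_i \into M$ and of the projections $p_i \colon P_i \to U_i$ are zero; this is always possible, since precomposing with the gauge transformation $(\Id,\sigma)\colon (P,\phi_P,L_P) \to (P,\phi_P - \dif\sigma, L_P + \sigma)$ absorbs the gauge part of any morphism. After this normalization, the gauge equation of the commuting triangle $p_i|_{U_{ij}} \circ \phi_{ij} = p_j|_{U_{ij}}$ reads $\phi_{ij}^* \, 0 + \beta_{ij} = 0$, forcing every $\beta_{ij} = 0$. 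At that point no gluing of Dirac structures is needed at all: one glues only the underlying manifolds and smooth maps (descent in $\Man$, already known), and then defines $\phi_P$ and $L_P$ \emph{globally} as the pullbacks of $\phi_M$ and $L_M$ along the glued submersion $P \to M$; compatibility with the identifications $\phi_i$ (taken with zero gauge parts) is automatic, and the remaining cocycle identity has gauge equation $\phi_j^*(\beta_{ij}) = 0$, which now holds trivially. If you wish to retain your sheaf-theoretic formulation, you must insert this vanishing argument for the $\beta_{ij}$ before the subbundle-gluing step; without it, that step does not go through.
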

\begin{proof}
  $\DMan$ has an initial object. It is the empty manifold equipped with the unique Dirac structure on the empty manifold. Morphisms in $\DMan$ are clearly defined locally since any $(f,\beta)$ is uniquely determined by its restrictions to an open cover. We only need to show that $\Sub$ is a stack. In particular, we need to prove that given the following data:
  \begin{itemize}
    \item an open cover $\{ U_i \into M \}$ on some object $M$ in $\DMan$;
    \item submersions $p_i \colon P_i \to U_i$;
    \item and morphisms $\phi_{ij} \colon P_j|_{U_{ij}} \to P_i|_{U_{ij}}$;
  \end{itemize}
  such that $\phi_{ij}|_{U_{ijk}} \circ \phi_{jk}|_{U_{ijk}} = \phi_{ik}|_{U_{ijk}}$
  then there exists a submersion $P \to M$ in $\DMan$ and identifications $\phi_i \colon P|_{U_i} \to P_i$ such that:
  \[ \phi_{ij} \circ \phi_j |_{U_ij} = \phi_{j}|_{U_{ij}} \]
  Since $\Man$ is a good site we already know that a manifold and smooth maps $\phi_i$ exist. We only need to come up with a Dirac structure and a 3-form on $P$ and the gauge parts for each $\phi_i$.

  Without loss of generality, we can assume the gauge parts of the embeddings $\{ U_i \into M \}$ are zero and the gauge parts of the submersions $P_i \to U_i$ are zero. Let $\phi_P$ and $L_P$ be defined to be the pullback of $\phi_M$ and $L_M$ respectively. Then this structure is compatible with the identifications $\phi_i \colon P|_{U_i} \to P_i$ if we take the gauge parts to be zero.

  To finish, we need to show that $\phi_{ij} \circ \phi_j|_{U_{ij}} = \phi_i |_{U_{ij}}$. Let $\beta_{ij}$ be the gauge part of each $\phi_{ij}$. Since we took the gauge parts of each $\phi_i$ to be zero, the gauge equation associated to this expression is:
  \[ \phi_j^* (\beta_{ij}) = 0 \, . \]
  This is true since every $\beta_{ij}$ is zero. To see why note that $p_i|_{U_{ij}} \circ \phi_{ij} = p_j|_{U_{ij}}$ and the gauge part of each $p_i$ is zero. Then the gauge equation for this expression says $\beta_{ij} = 0$.
\end{proof}

  With this result, all of our work from Chapter~\ref{chap:stacks} and Chapter~\ref{chap:morita} can be applied to $\DMan$.

\section{DMan-groupoids}\label{section:dmangroupoids}

\subsection{D-Lie groupoids}
\begin{definition}
  A \emph{D-Lie groupoid} is a $\DMan$-groupoid. In other words, it is a groupoid internal to $\DMan$.
\end{definition}

This definition is elegant, but we should provide a geometric interpretation for such an object.

\begin{theorem}\label{thm:dliedata}
  D-Lie groupoids are in one-to-one correspondence with the following data:
  \begin{itemize}
    \item A Lie groupoid $\G \grpd M$,
    \item (a possibly twisted) Dirac structure on $M$,
    \item and a pair of two forms $\sigma$ and $\tau$ on $\G$.
  \end{itemize}
  such that for $\Omega := \tau - \sigma$:
  \begin{enumerate}[(DL1)]
    \item $\t^* \phi_M - \s^* \phi_M = \dif \Omega$,
    \item and $\t^* L_M = \s^* L_M + \Omega$
    \item $\m^* \Omega = \pr_1^* \Omega + \pr_2^* \Omega$ for $\m \colon \G \times_M \G \to \G$
  \end{enumerate}
\end{theorem}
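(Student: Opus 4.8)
The plan is to read a D-Lie groupoid strictly as a $\DMan$-groupoid in the sense of Definition~\ref{defn:internalgroupoid} and to translate each piece of internal structure into the gauge calculus of $\DMan$. The key mechanism is that every morphism in $\DMan$ carries, besides its underlying smooth map, a \emph{gauge part} (a 2-form on the domain), and that a diagram in $\DMan$ commutes if and only if its image under $\Pr_1$ commutes in $\Man$ and the associated gauge equations hold. The forms $\sigma$ and $\tau$ of the statement will be exactly the gauge parts of the source and target maps $\s,\t \colon \G \to M$, and the whole proof splits into extracting the data $(\G \grpd M, L_M, \sigma, \tau)$ from a D-Lie groupoid and, conversely, reconstructing a D-Lie groupoid from such data, then checking the two operations are mutually inverse.

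For the forward direction I would apply $\Pr_1$ to recover an underlying Lie groupoid $\G \grpd M$, take $L_M$ as the Dirac structure on the units, and set $\sigma,\tau$ to be the gauge parts of $\s,\t$. Condition (DL1) then follows by subtracting the two background-3-form equations $\s^*\phi_M = \phi_\G + \dif\sigma$ and $\t^*\phi_M = \phi_\G + \dif\tau$, and (DL2) follows by eliminating $L_\G$ from $\s^* L_M = L_\G + \sigma$ and $\t^* L_M = L_\G + \tau$ using that gauge transformations add. For (DL3) I would write $\mu$ for the gauge part of $\m$ and use the prescribed gauge parts of the fiber-product projections; the gauge equations attached to $\s\circ\m = \s\circ\pr_2$ and $\t\circ\m = \t\circ\pr_1$ read $\m^*\sigma + \mu = \pr_1^*\sigma + \pr_2^*\sigma$ and $\m^*\tau + \mu = \pr_1^*\tau + \pr_2^*\tau$, and subtracting eliminates $\mu$ to give $\m^*\Omega = \pr_1^*\Omega + \pr_2^*\Omega$.

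For the backward direction I would promote the given Lie groupoid to a $\DMan$-groupoid by \emph{defining} $\phi_\G := \s^*\phi_M - \dif\sigma$ and $L_\G := \s^* L_M - \sigma$; here (DL1) and (DL2) are precisely the conditions ensuring the alternative definitions coming from $\t$ agree, so that $\s,\t$ are genuine morphisms in $\DMan$ with gauge parts $\sigma,\tau$. The gauge parts of the remaining maps are then forced. The relation $\s\circ\u = \Id_M$ forces the gauge part of $\u$ to be $-\u^*\sigma$, and the identity $\u^*\Omega = 0$ (obtained by pulling (DL3) back along $x \mapsto (\u(x),\u(x))$ and using $\m\circ(\u,\u)=\u$) shows this agrees with the value $-\u^*\tau$ forced by $\t\circ\u = \Id_M$. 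Similarly the gauge part of $\i$ is determined by $\s\circ\i = \t$, and its compatibility with $\t\circ\i = \s$ reduces to $\i^*\Omega = -\Omega$, which again follows from (DL3) by pulling back along $g \mapsto (g,\i(g))$ and using $\u^*\Omega = 0$. Finally $\mu$ is defined by the formula read off above, and (DL3) is exactly what makes its two defining expressions consistent. It remains to verify that $\m$ is a genuine morphism of Dirac manifolds $\G^{(2)} \to \G$ against the fiber-product structure of \eqref{eqn:fiberproduct}, and that the groupoid axioms (G1)--(G3) together with the remaining category axioms hold as gauge equations; since the underlying Lie groupoid already satisfies the smooth axioms, only the 2-form components need checking.

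The two constructions are mutually inverse essentially by design: the backward construction produces $\phi_\G$, $L_\G$ and every gauge part purely from $(\G \grpd M, L_M, \sigma, \tau)$, so the forward extraction returns exactly this data, while the forward extraction loses no information because in any D-Lie groupoid $L_\G$, $\phi_\G$, and all gauge parts are recovered from $L_M,\sigma,\tau$ through the very same formulas. I expect the main obstacle to be the backward verification that $\m$ is a bona fide $\DMan$-morphism: one must confront the fiber-product Dirac structure on $\G^{(2)}$ directly and show that (DL3), together with (DL2), forces $\m^* L_\G = L_{\G^{(2)}} + \mu$ on the nose. The unit and inverse relations are comparatively light, since $\u^*\Omega = 0$ and $\i^*\Omega = -\Omega$ are pure consequences of (DL3); the delicate point is confirming that these three conditions are genuinely sufficient and that no further hidden consistency constraint is required to assemble the Dirac data into an internal groupoid.
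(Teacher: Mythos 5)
Your proposal is correct and follows essentially the same route as the paper: extract $\sigma,\tau$ as the gauge parts of $\s,\t$ and obtain (DL1)--(DL3) by subtracting the paired gauge equations, then reconstruct by setting $L_\G := \s^*L_M - \sigma$, $\phi_\G := \s^*\phi_M - \dif\sigma$ and taking the forced gauge parts of $\m$, $\u$, $\i$, with (DL1)--(DL3) ensuring the $\s$- and $\t$-versions agree. The remaining verification you flag (that the forced choices satisfy all groupoid axioms as gauge equations, including facts like $\u^*\Omega = 0$) is exactly the content the paper defers to Lemma~\ref{lemma:dlieconstruction} in the appendix, and your sketch of it matches that lemma's computations.
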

\begin{proof}
  If we are given a D-Lie groupoid $\G \grpd M$. Then let $\sigma$ and $\tau$ be the gauge parts of the source and target map. We only need to show that $\Omega := \tau - \sigma$ satisfies the desired properties. (DL1) follows from combining $\s^* \phi_M = \phi_\G + \sigma$ and $\t^* \phi_M = \phi_\G + \tau$. (DL2) follows from the observation that $\t^* L_M = L_\G + \tau$ and $\s^* L_M = L_\G + \sigma$.

  To recover (DL3) we consider the gauge part of the compatibility of $\m$ with $\s$. That is:
  \[ \s \circ \m = \s \circ \pr_2 \colon \G \times_M \G \to \G \, . \]
  Let $\mu$ be the gauge part of $\m$. Then the gauge part of this equation is:
  \begin{equation}\label{eqn:gaugepartsandm}
  \m^* \sigma + \mu = \pr_2^* \sigma + \pr_1^* \sigma \, .
  \end{equation}
  If the reader is not sure how this equation is obtained, they should look at the construction of the fiber product in $\DMan$ from the previous section. The target map yields a similar equation:
  \begin{equation}\label{eqn:gaugeparttandm}
  \m^* \sigma + \mu = \pr_2^* \tau + \pr_1^* \tau \, .
  \end{equation}
  Then (DL3) is obtained by subtracting these equations.

  To obtain the opposite correspondence, we must define a Dirac structure on $\G$ and the gauge parts of the structure maps. Let $\phi_\G$ be defined to be $\s^* \phi_M - \sigma$ or equivalently $\t^* \phi_M - \tau$. Let $L_\G$ be defined to be $\s^* L_M - \sigma$ or equivalently $\t^*L_M - \tau$.

  We already have gauge parts for $\s$ and $\t$. To define the gauge part of $\m$, call it $\mu$, we can use either Equation~\ref{eqn:gaugepartsandm} or Equation~\ref{eqn:gaugeparttandm}. Property (DL3) guarantees that these are equivalent.

  For the unit map $\u$ we take consider the axiom $\s \circ \u = \Id_M$. The gauge part of this axiom is
  \begin{equation}\label{eqn:gaugepartsandu}
    \u^* \sigma + \upsilon = 0 \, m
  \end{equation}
  if $\upsilon$ is the gauge part of $\u$. Hence we can take Equation~\ref{eqn:gaugepartsandu} to be a definition for $\upsilon$.

  To define the gauge part of the inverse map $\i$ we take inspiration from the fact that $\s \circ \i = \t$.
  Let the gauge part of $\i$, called $\iota$ be defined to be the unique 2-form such that:
  \begin{equation}\label{eqn:gaugepartsandi}
    \i^* \sigma + \iota = \tau \, .
  \end{equation}

  If this choice of $\iota$, $\mu$ and $\upsilon$ define a D-Lie groupoid, then surely this inverts the correspondence we defined at the start.
  The only possible problem is that our choices may not result in a well defined D-Lie groupoid. That is, our assumptions (DL1), (DL2) and (DL3) may not be enough to imply the gauge parts of every groupoid axiom holds. It turns out this does not happen and we leave the proof of this fact to the appendix (see Lemma~\ref{lemma:dlieconstruction}).
\end{proof}

The 2-form $\Omega := \tau - \omega$ clearly plays an important role in the study of D-Lie groupoids. We will call this the \emph{characteristic form} of $\G$. When $\sigma = 0$ we say that $\G$ is \emph{target aligned}. We will see later that (up to isomorphism) every D-Lie groupoid is target aligned. The pair $(\tau,\sigma)$ is called the \emph{gauge pair} of $\G$.

Now let's look at a few examples of such data.
\begin{example}[Symplectic Groupoids]
Given $(\G, \Omega)$, a symplectic groupoid integrating a Poisson manifold $(M,L_\pi)$, then $t^* L_\pi = s^*L_\pi + \Omega$.
Therefore, a symplectic groupoid is the same as a target aligned D-Lie groupoid with non-degenerate characteristic form.
\end{example}
\begin{example}[Symplectic orbifolds]
Let $\G \rightrightarrows M$ be an {\'e}tale Lie groupoid and suppose $\omega$ is a symplectic form on $M$.
Suppose further that $\t^* \omega - \s^* \omega = 0$.
When $\G$ is proper, it can be thought of as the presentation of a (possibly non-effective) symplectic orbifold.
By thinking of $M$ as a Dirac manifold, then $\G$ can also be thought of as a D-Lie groupoid with characteristic form $0$.
\end{example}
\subsection{D-Lie groupoid morphisms}\label{subsection:groupoidmorphisms}
We will now take a closer look at homomorphisms of D-Lie groupoids.
A morphism of D-Lie groupoids is just a morphism of $\DMan$-groupoids as discussed in Chapter~\ref{chap:stacks}.
Throughout, $\G$ and $\H$ are D-Lie groupoids over $M$ and $N$ respectively.
Also, $\Omega^\G$ and $\Omega^\H$ will denote their respective characteristic forms.

There is a characterization of morphisms in terms of the characteristic forms.
\begin{lemma}\label{lemma:dliemorphism}
There is a one-to-one correspondence between morphisms of D-Lie groupoids, and the following data:
\begin{itemize}
  \item a {\bf Lie} groupoid homomorphism $F \colon \G \to \H$ covering $f \colon M \to N$;
  \item a 2-form $\beta$ which makes $(f,\beta)$ into a morphism of Dirac manifolds.
\end{itemize}
such that:
\begin{equation}\label{eqn:morphism}
F^* \Omega^\H = t^* \beta - s^* \beta + \Omega^\G \, .
\end{equation}
\end{lemma}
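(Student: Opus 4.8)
The plan is to establish the claimed bijection by unwinding the definition of a D-Lie groupoid morphism (which is a $\DMan$-groupoid morphism, i.e. it satisfies (GM1) and (GM2) from Definition~\ref{defn:internalmorphism}) into its two constituent pieces: the underlying smooth data, and the gauge data. Recall from Theorem~\ref{thm:dliedata} that a D-Lie groupoid $\G$ is recovered from its underlying Lie groupoid together with gauge pair $(\tau^\G, \sigma^\G)$, and that the characteristic form is $\Omega^\G = \tau^\G - \sigma^\G$. A morphism $(F,\beta) \colon \G \to \H$ in $\DMan$ consists of a smooth map $F_1 \colon \G \to \H$ with gauge part (a $2$-form on $\G$, which I shall call $\gamma$) together with $F_0 = (f,\beta)$, and these must satisfy the compatibility squares (GM1) and (GM2), each of which splits into a statement in $\Man$ (giving that $F$ is a Lie groupoid homomorphism covering $f$) plus a gauge equation.

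**Extracting the gauge equations.**

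First I would observe that the ``manifold part'' of (GM1) and (GM2) is precisely the condition that $F \colon \G \to \H$ is a Lie groupoid homomorphism covering $f$, so that direction of the correspondence is immediate. The content is therefore entirely in the gauge equations. I would write down the gauge part of each of the two squares in (GM1): compatibility of $F_1$ with $\s$ reads (on underlying maps) $\s^\H \circ F_1 = f \circ \s^\G$, whose gauge equation, read off using the construction of composition in $\DMan$ (recall $(f,\beta_1)\circ(g,\beta_2) = (f\circ g, g^*\beta_1 + \beta_2)$), relates the source-gauge parts $\sigma^\G$, $F_1^*\sigma^\H$, $\s^{\G*}\beta$, and $\gamma$. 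Similarly the target square relates $\tau^\G$, $F_1^*\tau^\H$, $\t^{\G*}\beta$, and $\gamma$. Explicitly, I expect
\[
F_1^* \sigma^\H = \sigma^\G + \gamma - \s^{\G*}\beta \qquad \text{and} \qquad F_1^* \tau^\H = \tau^\G + \gamma - \t^{\G*}\beta.
\]
Subtracting these two equations eliminates the unknown gauge part $\gamma$ of $F_1$ and yields exactly
\[
F_1^*\Omega^\H = F_1^*\tau^\H - F_1^*\sigma^\H = (\tau^\G - \sigma^\G) + (\t^{\G*}\beta - \s^{\G*}\beta) = \Omega^\G + \t^*\beta - \s^*\beta,
\]
which is the desired Equation~\eqref{eqn:morphism}.

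**The reverse direction and consistency.**

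For the converse, given a Lie groupoid homomorphism $F$ covering $f$, a $2$-form $\beta$ making $(f,\beta)$ a $\DMan$-morphism $M \to N$, and the relation~\eqref{eqn:morphism}, I would construct the gauge part $\gamma$ of $F_1$ by \emph{defining} it via one of the two displayed source/target equations, say $\gamma := F_1^*\sigma^\H - \sigma^\G + \s^{\G*}\beta$, and then verify that the target equation holds automatically — this verification is exactly where~\eqref{eqn:morphism} is consumed. I would then check that $(F_1,\gamma)$ is a genuine $\DMan$-morphism (i.e. that $\gamma$ satisfies $F_1^*\phi_\H = \phi_\G + \dif\gamma$ and $F_1^* L_\H = L_\G + \gamma$), which should follow from the corresponding relations defining the Dirac structures on $\G$ and $\H$ in Theorem~\ref{thm:dliedata} together with the hypothesis that $(f,\beta)$ is a morphism. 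Finally I would confirm (GM2), the multiplicativity square; its gauge part should be a formal consequence of the multiplicativity of the characteristic forms (DL3) and the already-established source/target gauge equations, so I expect it to reduce to an identity rather than impose a new constraint.

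**Main obstacle.**

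The bookkeeping step that I expect to be the crux is correctly tracking the pulled-back gauge parts through the $\DMan$ composition rule and the fiber-product gauge conventions, so that the source and target gauge equations come out with the right signs; the actual algebra of eliminating $\gamma$ is then trivial. The only genuinely substantive point is verifying that the multiplicativity square (GM2) imposes no further condition beyond~\eqref{eqn:morphism} — i.e. that (DL3) for both $\G$ and $\H$ together with the source/target compatibilities already force the multiplicative gauge equation — and I would isolate and check that compatibility carefully, since it is the place where the argument could in principle fail.
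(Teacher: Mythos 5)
Your proposal is correct and follows the paper's own proof essentially step for step: the paper likewise reads off the gauge equations of the two squares in (GM1), namely $F^*\sigma^\H + \alpha = \s^*\beta + \sigma^\G$ and $F^*\tau^\H + \alpha = \t^*\beta + \tau^\G$, subtracts them to eliminate the gauge part $\alpha$ of $F_1$, and, for the converse, defines $\alpha$ by either of these equations (with \eqref{eqn:morphism} guaranteeing the two choices agree) and then verifies by direct computation, using \eqref{eqn:gaugepartsandm}, that the gauge equation of the multiplicativity square (GM2) holds automatically rather than imposing a new constraint. The one caveat is a sign slip in your displayed intermediate equations --- as you wrote them, subtraction would yield $F^*\Omega^\H = \Omega^\G + \s^*\beta - \t^*\beta$ rather than \eqref{eqn:morphism} --- but since you explicitly flagged this bookkeeping as the step requiring care and your stated conclusion and method are correct, this is a cosmetic matter of the $\DMan$ composition conventions rather than a genuine gap.
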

\begin{proof}
One direction of the correspondence is clear. We simply let $\beta$ be the gauge part of the base map of the given D-Lie groupoid morphism.
One only has to show that Equation~\ref{eqn:morphism} holds. This follows immediately by looking at the gauge parts of compatibility of $F$ with the source and target maps:
\begin{equation}\label{eqn:scompatible}
F^* \sigma^\H + \alpha = \s^* \beta + \sigma^\G .
\end{equation}
\begin{equation}\label{eqn:tcompatible}
F^* \tau^\H + \alpha = \t^* \beta + \tau^\G.
\end{equation}
In the above, $\alpha$ is the gauge part of $F \colon \G \to \H$.

To reverse this correspondence, we need to supply $\alpha$. This is easy since we can use either of the two equations above as definitions for $\alpha$. Our assumption means that these choices are equivalent. By construction, it is clear that this choice of $\alpha$ makes $(F,\alpha)$ compatible with the source and target maps. We only need to check that $(F, \alpha)$ is compatible with multiplication.

To see why, first note that the gauge part of $(F,F) \colon \G \times_M \G \to \H \times_N \H$ is $\pr_1^* \alpha + \pr_2^* \alpha - \pr_2^* \t^* \beta$.
That is:
\[ (F,F)^* \mu^\H + \pr_1^* \alpha + \pr_2^* \alpha = \m^* \alpha + \mu^\G \]
If we take Equation~\ref{eqn:scompatible} as the definition of $\alpha$ and apply Equation~\ref{eqn:gaugepartsandm}, we get:
\[
  \m^* \alpha + \mu^\G = \m^* \s^* \beta - \m^* F^* \sigma^\H + \pr_1^* \sigma^\G + \pr_2^* \sigma^\G
\]
On the other hand:
\begin{align*}
  (F,F)^* \mu + \pr_1^* \alpha + \pr_2^* \alpha - \pr_2^* \t^* \alpha &= (F,F)^* (\pr_1^* \sigma^\H + \pr_2^* \sigma^\H - \m^* \sigma^\H) + \pr_1^* \alpha + \pr_2^* \alpha - \pr_2^* \t^* \alpha \\
  &= \pr_1^* F^* \sigma^\H + \pr_2^* F^* \sigma^\H - \m^* F^* \sigma^\H + \pr_1^* \alpha + \pr_2^* \alpha - \pr_2^* \t^* \alpha \\
  &= \pr_1^* \s^* \beta + \pr_1^* \sigma^\G + \pr_2^* \s^* \beta + \pr_2^* \sigma^\G - \m^* F^* \sigma^\H - \pr_2^* \t^* \alpha \\
  &=   \m^* \s^* \beta - \m^* F^* \sigma^\H + \pr_1^* \sigma^\G + \pr_2^* \sigma^\G
\end{align*}
Which shows that $(F,\alpha)$ is a D-Lie groupoid homomorphism.
\end{proof}
Now we observe a useful corollary which clarifies why we mainly need to consider the characteristic form of a D-Lie groupoid.
\begin{lemma}\label{lemma:targetalign}
Every D-Lie groupoid is canonically isomorphic to a target aligned D-Lie groupoid.
\end{lemma}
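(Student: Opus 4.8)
The plan is to use the dictionary of Theorem~\ref{thm:dliedata} to reduce the statement to a bookkeeping exercise about gauge pairs, and then to construct the isomorphism directly from Lemma~\ref{lemma:dliemorphism}. First I would encode the given D-Lie groupoid $\G$ as the data $(\G \grpd M,\, L_M,\, (\tau,\sigma))$ with characteristic form $\Omega := \tau - \sigma$. The key observation is that conditions (DL1), (DL2), and (DL3) depend only on $\Omega$ together with the base structure $(M,\phi_M,L_M)$ and the underlying Lie groupoid, and never on $\sigma$ and $\tau$ individually. Consequently the data $(\G \grpd M,\, L_M,\, (\tau',\sigma') = (\Omega, 0))$ — the same Lie groupoid, the same base Dirac structure, but with the source gauge part set to zero — again satisfies (DL1)--(DL3) with the identical characteristic form $\Omega$. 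By Theorem~\ref{thm:dliedata} this data therefore corresponds to a genuine D-Lie groupoid $\G'$, which is target aligned by construction. This $\G'$ is manifestly canonical, being determined by $\G$ with no choices involved.

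Next I would produce the morphism $\G \to \G'$. Since $\G$ and $\G'$ share the same underlying Lie groupoid and the same base Dirac manifold, I would take the underlying Lie groupoid homomorphism to be $\Id_\G$ covering $\Id_M$, and the base gauge part to be $\beta = 0$; the pair $(\Id_M, 0)$ is trivially a morphism of Dirac manifolds $(M,\phi_M,L_M) \to (M,\phi_M,L_M)$. To invoke Lemma~\ref{lemma:dliemorphism} it then suffices to verify the morphism equation~(\ref{eqn:morphism}), which with $F = \Id_\G$ and $\beta = 0$ reads $\Omega^{\G'} = \Omega^{\G}$; this holds by the definition of $\G'$. Lemma~\ref{lemma:dliemorphism} then reconstructs the arrow gauge part via~(\ref{eqn:scompatible}) as $\alpha = \s^*\beta + \sigma^\G - F^*\sigma^{\G'} = \sigma$, so the resulting D-Lie morphism has arrow component $(\Id_\G, \sigma)$ and base component $(\Id_M, 0)$.

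Finally I would check invertibility. The candidate inverse has arrow component $(\Id_\G, -\sigma)$ and base component $(\Id_M, 0)$, and it is a D-Lie morphism $\G' \to \G$ by the same application of Lemma~\ref{lemma:dliemorphism} (the morphism equation again collapses to $\Omega^\G = \Omega^{\G'}$). Using the composition rule $(f_2,\beta_2)\circ(f_1,\beta_1) = (f_2\circ f_1,\, f_1^*\beta_2 + \beta_1)$ on both the arrow and base levels, both composites reduce to the respective identities, so the morphism is an isomorphism. The only step requiring genuine care — and the one I expect to be the main obstacle — is the gauge-part bookkeeping: one must track the signs and the pullbacks $\s^*,\t^*$ correctly to confirm that $L_\G = \s^* L_M - \sigma$ and $L_{\G'} = \s^* L_M$ are indeed related by the gauge transformation $\sigma$ (and likewise for the twists $\phi_\G$ and $\phi_{\G'}$), so that $(\Id_\G, \sigma)$ is honestly a morphism of Dirac manifolds rather than merely a formal identity. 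Once the conventions of Definition~\ref{defn:DMan} and of the proof of Theorem~\ref{thm:dliedata} are aligned, this verification is routine.
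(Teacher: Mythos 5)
Your proposal is correct and follows essentially the same route as the paper's own proof: both replace the gauge pair $(\tau,\sigma)$ by $(\Omega,0)$, noting that (DL1)--(DL3) depend only on the characteristic form, and then realize the isomorphism as the gauge transformation $(\Id_\G,\sigma)$ obtained from Lemma~\ref{lemma:dliemorphism} applied to $F=\Id_\G$ with $\beta=0$. The additional details you supply --- the explicit inverse $(\Id_\G,-\sigma)$ and the verification that $L_{\G'}=L_\G+\sigma$ and $\phi_{\G'}=\phi_\G+\dif\sigma$ --- are precisely the routine checks the paper leaves implicit.
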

\begin{proof}
Let $\G \rightrightarrows M$ be a D-Lie groupoid over $M$ with gauge pair $(\tau,\sigma)$.
Then the pair gauge pair $(\Omega,0)$ also determines a target aligned D-Lie groupoid. Furthermore, the identity map $\Id_\G$ of Lie groupoids together with $\beta = 0$ satisfies Lemma~\ref{lemma:dliemorphism} and so the gauge transformation:
\[ (\Id_\G, \sigma): ( \G, L_\G ) \to (\G , L_\G + \sigma), \, \]
is an isomorphism of D-Lie groupoids.
\end{proof}
\begin{example}[Symplectic Groupoids]
Suppose $\G \rightrightarrows M$ and $\H \rightrightarrows N$ are symplectic groupoids.
If we think of $\G$ and $\H$ as target aligned D-Lie groupoids then a morphism consists of a homomorphism of Lie groupoids $F: \G \to \H$ together with a closed 2-form $\beta \in \Omega^2(M)$ such that (\ref{eqn:morphism}) holds.
\end{example}
\subsection{D-Lie algebroids}
There is an infinitesimal version of D-Lie groupoids.
Burzstyn and Cabrera~\cite{BImf} showed that given a $\phi$-closed multiplicative form on a Lie groupoid, there is a corresponding \emph{infinitesimal multiplicative form} on the corresponding algebroid $A$.
An infinitesimal multiplicative form relative to a 3-form $\phi$ on $M$ is a bundle map
\[ \rho^*: A \to T^*M  \]
which satisfies for all $a_1,a_2 \in \Gamma(A)$:
\begin{enumerate}[(MF1)]
\item (Anti-symmetry)
\[  {\rho}^* (a_1)(\rho(a_2)) = -{\rho}^* (a_2) (\rho(a_1)) \]
\item (Compatibility with the bracket twisted by $\phi$)
\[ \rho^*([a_1,a_2]) = \L_{\rho(a_1)} \mu(a_2) - \L_{\rho(a_2)} \mu(a_1) + \phi(\rho(a_1),\rho(a_2), - ) \]
\end{enumerate}
Where $\rho \colon A \to TM$ is the anchor map of $A$. We should clarify that the $^*$ notation is intended to indicate that $\rho_A^*$ takes values in the cotangent bundle, not that it is the dual of the anchor map $\rho \colon A \to TM$.
 a compatibility condition with the bracket on $A$ and is \emph{anti-symmetric}
Note that we have used the $^*$ notation to emphasize that the map takes values in the cotangent bundle.
We do not mean that $\rho_A^*$ is the linear dual of the anchor.

\begin{example}
  Let $L$ be a $\phi$-twisted Dirac structure on $M$ and let $\rho^* \colon L \to T^* M$ be the projection to the cotangent bundle. Then $\rho^*$ is an infinitesimal multiplicative form relative to $\phi$. To see this one should observe that the anti-symmetry condition follows from the fact that $L$ is isotropic and the bracket condition follows from the closure of $L$ under the Courant bracket.
\end{example}

Now suppose $\Omega$ is a multiplicative form on a Lie groupoid.
From~\cite{BImf}, we can recover an IMF by defining $\rho_A^*$ in the following way:
\begin{equation}\label{eqn:IMFdef}
\rho_A^*(v) := \eta  \quad \Leftrightarrow \quad  t^* \eta = \Omega^\flat(v) \, .
\end{equation}
We now proceed to a simple lemma, which will motivate our definition of D-Lie algebroid.
\begin{lemma}\label{lemma:dliealg}
Suppose $\G \rightrightarrows M$ is a D-Lie groupoid with characteristic form $\Omega$.
Let $A$ be the corresponding algebroid and $\rho_A^*$ be the associated infinitesimal multiplicative form and $\rho_A$ be the anchor map.
Then $\rho_A(v) \oplus \rho_A^*(v) \in L_M$ for all $v \in A$ and $(\rho \oplus \rho_A^*): A \to L_M$ is a Lie algebroid homomorphism.
\end{lemma}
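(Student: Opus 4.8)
The plan is to read the two assertions as a single statement: that the vector bundle map $\Phi := \rho_A \oplus \rho_A^* \colon A \to \T M$ takes values in $L_M$ and is a homomorphism of Lie algebroids over $\Id_M$, where $A$ carries its algebroid structure and $L_M$ carries the Lie algebroid structure coming from the $\phi_M$-twisted Courant bracket. I would split the argument into three steps: (i) the image of $\Phi$ lies in $L_M$; (ii) $\Phi$ intertwines the anchors; (iii) $\Phi$ intertwines the brackets. Steps (i) and (ii) are short, while step (iii) carries the real content.

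For step (i), fix $x \in M$ and $v \in A_x \subseteq \ker(\dif\s)_{1_x}$. Since $\dif\s(v) \oplus 0 = 0 \oplus 0 \in (L_M)_x$, the element $v \oplus 0$ lies in $(\s^* L_M)_{1_x}$; applying the gauge transformation by $\Omega$ adds $\Omega^\flat(v)$ to its cotangent part, so $v \oplus \Omega^\flat(v) \in (\s^* L_M + \Omega)_{1_x}$. By property (DL2) this subspace equals $(\t^* L_M)_{1_x}$, whence $v \oplus \Omega^\flat(v) \in \t^* L_M$. Unwinding the definition of the pullback now says precisely that there is a covector $\eta \in T_x^* M$ with $\t^*\eta = \Omega^\flat(v)$ and $\dif\t(v) \oplus \eta \in (L_M)_x$. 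As $\t$ is a submersion, $\t^*$ is injective on covectors, so $\eta = \rho_A^*(v)$ by \eqref{eqn:IMFdef}, while $\dif\t(v) = \rho_A(v)$. Therefore $\rho_A(v) \oplus \rho_A^*(v) \in (L_M)_x$, which proves the first claim and shows en route that $\Phi$ is a well-defined bundle map $A \to L_M$.

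Step (ii) is immediate: the anchor of the Dirac structure $L_M$ is the projection $L_M \to TM$, $w \oplus \eta \mapsto w$, and this projection sends $\Phi(v) = \rho_A(v) \oplus \rho_A^*(v)$ to $\rho_A(v)$, so $\rho_{L_M} \circ \Phi = \rho_A$. In particular the tangent component of the bracket identity below holds automatically, since $\rho_A$ is itself a bracket morphism onto vector fields.

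Step (iii) is where I expect the main obstacle. I must verify $\Phi([a_1,a_2]_A) = [\Phi(a_1), \Phi(a_2)]^{\phi_M}$ for all $a_1, a_2 \in \Gamma(A)$, the right-hand bracket being the $\phi_M$-twisted Courant bracket of \eqref{eqn:twistedCourantbracket}, whose restriction to $\Gamma(L_M)$ is the Lie bracket of $L_M$. The $TM$-component is handled by step (ii). For the $T^*M$-component I would expand the bracket with \eqref{eqn:twistedCourantbracket} and rewrite it using the Cartan identity $\L_V = \dif \iota_V + \iota_V \dif$, bringing it to the Dorfman shape $\L_{\rho_A(a_1)} \rho_A^*(a_2) - \iota_{\rho_A(a_2)} \dif \rho_A^*(a_1) + \phi_M(\rho_A(a_1), \rho_A(a_2), -)$. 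The remaining task is to recognize this $1$-form as $\rho_A^*([a_1,a_2]_A)$, which is exactly the infinitesimal-multiplicative equation (MF2) satisfied by $\rho_A^*$, with the antisymmetry (MF1) used to control the non-skew part of the bracket. The genuinely delicate bookkeeping is the twist: the term $\phi_M(\rho_A(a_1), \rho_A(a_2), -)$ produced by the bracket must line up with the $\phi_M$-term in (MF2), and this alignment is precisely why the form in \eqref{eqn:IMFdef} was arranged relative to $\phi_M$ through (DL1). Once this cotangent identity is established, combining it with steps (i) and (ii) completes the proof.
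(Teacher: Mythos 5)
Your proposal is correct and takes essentially the same route as the paper's proof: your step (i) is exactly the paper's chain $v \oplus 0 \in \s^* L_M \Rightarrow v \oplus \Omega^\flat(v) \in \t^* L_M$ (via (DL2)) $\Rightarrow \rho_A(v) \oplus \rho_A^*(v) \in L_M$, and your steps (ii)--(iii) mirror the paper's treatment of the bracket, with the tangent component coming from anchor compatibility and the cotangent component from (MF2). The only difference is that you spell out the Cartan-calculus bookkeeping (Dorfman form, the role of (MF1) and the twist) that the paper compresses into ``follows immediately from (MF2).''
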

\begin{proof}
We have two things to show.
First, the claim that, for any $v \in A$, $\rho_A(v) \oplus \rho_A^*(v) \in L_M$.
Let $v \in A := \ker \dif \s|_M$ and suppose $\eta = \rho_A^*(v)$.
By the definition of the pullback, we know that $v \oplus 0 \in \s^* L_M$.
Consequently, $v \oplus \Omega^\flat(v) \in \t^* L_M$ by Theorem~\ref{thm:dliedata}.
Hence, by the definition of $\rho_A^*$, we have that $v \oplus \eta \in \t^* L_M$.
Therefore, we can conclude that $\rho_A(v) \oplus \rho_A^*(v) \in L_M$ (again by the definition of the pullback).

Now for the second part.
Recall the definition of the Courant bracket.
\begin{equation}\label{eqn:Courantbracket2}
{[V \oplus \eta , W \oplus \zeta ]}_{L_M} := [V,W] \oplus \L_V(\zeta) - {(\dif \eta)}^\flat(W) + \phi(V,W,-) \, .
\end{equation}
We need to show that
\[ {[\rho_A(V) \oplus \rho_A^*(V), \rho_A(W) \oplus \rho_A^*(W) ]}_{L_M} = \rho_A({[V,W]}_A) \oplus \rho_A^*({[V,W]}_A) \, . \]
This is clearly true for the $TM$ component, since $\rho: A \to TM$ is compatible with the standard Lie bracket.
For the $T^*M$ component, the result follows immediately from (MF2).
\end{proof}
\begin{definition}
A \emph{D-Lie algebroid} is a Lie algebroid $(A, {[ \cdot , \cdot ]}_A, \rho)$ over a Dirac manifold $(M,L)$ together with a Lie algebroid homomorphism $\til{\rho}: A \to L_M$
\end{definition}

We can recover an infinitesimal multiplicative form from this definition by taking $\rho_A^*$ to be the $T^*M$ component of $\til\rho$.
We say that a D-Lie algebroid is \emph{integrable} if there exists a D-Lie groupoid $\G$ whose corresponding infinitesimal multiplicative form is $\rho_A^*$.
When $\G$ is source simply connected and target aligned we say that $\G$ is the canonical integration.
\begin{example}[Poisson Manifolds]
Let $(\G,\Omega)$ be a symplectic groupoid over a Poisson manifold $(M,\pi)$.
Then let $\til\rho: A \to T^*M$ be the standard identification of the algebroid of $\G$ with the cotangent bundle of $M$.
In this way, we can think of $A$ as a D-Lie algebroid.
\end{example}
\begin{example}[Dirac Structures]
Let $L_M$ be any Dirac structure.
Then if we take $\til\rho: L_M \to L_M$ to be the identity morphism, we can think of $L_M$ as a D-Lie algebroid.
\end{example}
\begin{example}[Trivial algebroids]
Let $A$ be a rank zero vector bundle, thought of as a trivial algebroid.
Then for any Dirac structure $L_M$ on the base of $A$, we can take $\til\rho: A \to L_M$ to be the zero map.
\end{example}
The last example illustrates the interesting fact that integrability of $L_M$ is neither a necessary nor a sufficient condition for integrablity of the D-Lie algebroid.
\begin{definition}
Suppose $(A, \rho_A^*)$ and $(B, \rho_B^*)$ are D-Lie algebroids.
A \emph{morphism} of D-Lie algebroids is a Lie algebroid morphism $F: A \to B$ which cover a morphism of Dirac manifolds $(f,\beta): M \to N$ such that
\[ f^* \circ \rho_B^* \circ F = \rho_A^* + \beta^\flat \, . \]
\end{definition}
The left side of the equation above is the \emph{pullback} of the IM 2-form $\rho_B^*$ along $F$.
The right side is the infinitesimal form of the gauge transformation $\Omega + \t^* \beta - \s^* \beta$.
Hence, this is just the infinitesimal version of (\ref{eqn:morphism}).

\section{Principal bundles and Morita equivalence}\label{section:principalgbundles}
In this section, we will define $G$-bundles and $(\G,\H)$-bibundles in the setting of D-Lie groupoids. By our work in Chapter~\ref{chap:stacks}, these objects determine the behavior of geometric stacks over $\DMan$.
Furthermore, we will see that Morita equivalence of D-Lie groupoids faithfully generalizes Morita equivalence for symplectic groupoids.
\subsection{Principal \texorpdfstring{$\G$}{} bundles}
\begin{definition}
Let $\G \rightrightarrows M$ be a D-Lie groupoid.
A \emph{left $\G$-bundle} over $N$ is a $\G$-bundle internal to $\DMan$, i.e., an ordinary $\G$-bundle $\s^P:P\to N$, where $P$ and $N$  are Dirac manifolds and all the structure maps $\s^P:P \to N$, $\t^P:P \to M$ and $\m_L:\G \times_M P \to P$ are $\DMan$-morphisms.
We say that $P$ is principal if the induced morphism $\G \times_M P \to P \times_N P$ is an isomorphism.
\end{definition}

The reader should note that, by our construction of fiber products in $\DMan$, a $\G$-bundle for a D-Lie groupoid is principal if and only if the underlying action of a Lie groupoid is principal.

The morphisms $\s^P, \t^P$ and $\m_L^P$ come with gauge parts $\sigma^P, \tau^P$ and $\mu_L^P$.
The equation associated to $\s^P \circ \m_L(g,p) = \s^\P(p)$ is
\[ \mu_L + \m^* \sigma^P = \pr_1^* \sigma^\G + \pr_2^* \sigma^P \, . \]
Similarly, the gauge equation of $\t^P \circ \m_L(g,p) = \t(g)$ is
\[ \mu_L + \m^* \tau^P = \pr_1^* \tau^\G + \pr_2^* \tau^P \, . \]
Therefore, when defining a principal $\G$-bundle it suffices to specify the 2-forms $\sigma^P$ and $\tau^P$.
As with D-Lie groupoids, we say the characteristic 2-form of $P$ is $\Omega^P := \tau^P - \sigma^P$.
Let $\Omega^\G$ be the characteristic 2-form of $\G$.
Then
\[ \m_L^* \Omega^P = \pr_1^* \Omega^\G + \pr_2^* \Omega^P \, . \]
That is, the 2-form $\Omega^P$ is left multiplicative.
When $\sigma^P = 0$ we call $P$ \emph{target aligned}.
By a similar argument as in Lemma~\ref{lemma:targetalign}, every principal $\G$-bundle is canonically isomorphic to a target aligned principal $\G$-bundle.

\subsection{Bibundles}
We can now proceed to tackle bibundles and Morita equivalence in $\DMan$.
Throughout this section $\G$ and $\H$ are D-Lie groupoids over the Dirac manifolds $M$ and $N$ respectively.
\begin{definition}
Suppose $\G$ and $\H$ are D-Lie groupoids.
A \emph{$(\G, \H)$-bibundle} is defined to be a bibundle object internal to the category $\DMan$.
Hence, it is an object $P$ in $\DMan$ together with morphisms $\t^P, \s^P, \m_L,\m_R$ (again in $\DMan$) which satisfy the axioms of commuting left and right actions over $N$ and $M$, respectively.

A bibundle $P$ is said to be \emph{left principal bibundle} if the left action makes $P$ into a left principal $\G$-bundle over $N$.
We define \emph{right principal} similarly.
We call $P$ a \emph{principal bibundle} if $P$ is both left and right principal.
A principal $(\G,\H)$-bibundle is also called a \emph{Morita equivalence} of $\G$ and $\H$.
\end{definition}

Just like $\G$-bundles, a $(\G, \H)$-bibundle $P$ of D-Lie groupoids is determined by the data of the underlying bundle and the gauge part of the source and target maps $\sigma^P$ and $\tau^P$.
The \emph{characteristic form} $\Omega^P$ of $P$ is defined to be $\sigma^P - \tau^P$ as before.
Using the same techniques as before we can show that $\sigma^P$ and $\tau^P$ define a bibundle if and only if $\Omega^P$ is left and right multiplicative.
That is
\[ \m_L^* \Omega^P = \pr_1^* \Omega + \pr_2^* \Omega^P \quad \mbox{ and } \quad \m_R^* \Omega^P = \pr_1^* \Omega^P + \pr_2^* \Omega^\H.  \]
We say that $P$ is \emph{target aligned} if $\sigma^P = 0$.

An equivariant map of $(\G, \H)$-bibundles is a morphism $\Phi: P \to Q$ which commutes with the source and target maps and respects the multiplication.
In terms of the characteristic 2-form the condition on $F: Q \to P$ is just
\[ F^* \Omega^P = \Omega^Q \, . \]
This makes sense when compared to the case of left $\G$-bundles since we can think of any $(\G,\H)$-bibundle morphism as a left $\G$-bundle morphism covering the identity on $N$.
As with D-Lie groupoids, for any bibundle $P$ the gauge transformation $(\Id,\sigma^P): (P,L_P) \to (P,L_P + \sigma^P)$ is an isomorphism of $P$ with a target aligned bibundle.

The next few examples demonstrate how this notion of Morita equivalence of D-Lie groupoid relates to existing definitions of Morita equivalence.
\begin{example}[Morita equivalence of Lie groupoids]
Given a Morita equivalence $P$ of Lie groupoids $\G$ and $\H$, then thinking of $\G$ and $\H$ as D-Lie groupoids with the tangent Dirac structure allows us to view $(P, TP)$ as a Morita equivalence of D-Lie groupoids $(\G, T\G)$ and $(\H, T\H)$.
Furthermore, it is a simple exercise to check that any Morita equivalence of the D-Lie groupoids $(\G, T\G)$ and $(\H, T \H)$ is isomorphic to such a $(P, T P)$.
\end{example}
\begin{example}[Symplectic Morita equivalence]\label{ex:sympl:morita}
Given a symplectic Morita equivalence $(P, \Omega^P)$ of symplectic groupoids $(\G, \Omega^\G)$ and $(\H, \Omega^\H)$, we can think of $(P, \Omega^P)$ as a target aligned Morita equivalence of $\G$ and $\H$ viewed as D-Lie groupoids.
\end{example}

We can improve on the observation from the preceding example.
\begin{proposition}\label{prop:symform}
Suppose $\G$ and $\H$ are symplectic groupoids, i.e.\ target aligned D-Lie groupoids with symplectic characteristic forms.
There is a one-to-one correspondence between symplectic Morita equivalences and target aligned principal $(\G,\H)$-bibundles.
\end{proposition}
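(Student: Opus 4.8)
The plan is to observe that both a symplectic Morita equivalence and a target aligned principal $(\G,\H)$-bibundle carry the \emph{same} underlying data: a biprincipal bibundle $P$ of the underlying Lie groupoids equipped with a single $2$-form on $P$. The correspondence will fix $P$ and identify the symplectic form $\omega$ of a symplectic Morita equivalence with the characteristic form $\Omega^P$ of the $\DMan$-bibundle. Viewing $\G$ and $\H$ as target aligned D-Lie groupoids over the Poisson manifolds $(M,L_{\pi_M})$ and $(N,L_{\pi_N})$, so that $\phi_M=\phi_N=0$ and $\Omega^\G,\Omega^\H$ are symplectic as in Example~\ref{ex:sympl:morita}, I first unwind what it means for a target aligned bibundle ($\sigma^P=0$, $\tau^P=\Omega^P$) to be an object of $\DMan$. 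Exactly as in the proof of Theorem~\ref{thm:dliedata}, the requirement that $\s^P$ and $\t^P$ be $\DMan$-morphisms with these gauge parts is equivalent to
\[ L_P = (\s^P)^* L_{\pi_N}, \qquad \phi_P = 0, \qquad \dif \Omega^P = 0, \qquad (\t^P)^* L_{\pi_M} = (\s^P)^* L_{\pi_N} + \Omega^P \]
together with the condition that $\Omega^P$ be left and right multiplicative (the bibundle analogue of (DL3)).

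For the forward implication I begin with a symplectic Morita equivalence $(P,\omega)$. Then $\omega$ is closed and left and right multiplicative, so setting $\sigma^P:=0$, $\tau^P:=\omega$, $L_P:=(\s^P)^*L_{\pi_N}$ and $\phi_P:=0$ makes every condition above hold \emph{except} possibly the gauge-compatibility $(\t^P)^*L_{\pi_M}=(\s^P)^*L_{\pi_N}+\omega$, which is the geometric heart of this direction. Since both sides are maximal isotropics of $\T P$, it suffices to verify one inclusion pointwise, and I will obtain it by rewriting the dual pair properties of $(P,\omega)$---namely that $\t^P$ is Poisson, $\s^P$ anti-Poisson, and $\ker\dif\t^P$, $\ker\dif\s^P$ are $\omega$-orthogonal complements, all of which follow from the multiplicativity and non-degeneracy of $\omega$---in terms of the generalized tangent bundle. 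The multiplicativity of $\omega$ then equips $\m_L$ and $\m_R$ with gauge parts just as $\m$ was treated in Theorem~\ref{thm:dliedata}, so that $(P,\omega)$ becomes a target aligned principal $\DMan$-bibundle.

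For the backward implication I start with a target aligned principal $(\G,\H)$-bibundle; its characteristic form $\Omega^P$ is automatically closed, left and right multiplicative, and satisfies the gauge-compatibility, so the only thing separating it from a symplectic Morita equivalence in the classical sense is \emph{non-degeneracy} of $\Omega^P$. This is the main obstacle, and it is the step where the non-degeneracy of $\Omega^\G$ and $\Omega^\H$ must be invoked. The argument I have in mind runs as follows: right multiplicativity makes the right $\H$-orbit directions $\ker\dif\t^P$ and the fibers $\ker\dif\s^P$ into $\Omega^P$-orthogonal complements, so $\ker\Omega^P \subseteq (\ker\dif\t^P)^{\perp} = \ker\dif\s^P$; that is, every degenerate vector is a fundamental vector field $\xi_P$ of the left $\G$-action. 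Left multiplicativity infinitesimalizes to the moment-map relation $\iota_{\xi_P}\Omega^P = (\t^P)^*(\rho_A^*\,\xi)$, and since $\G$ is symplectic the map $\rho_A^*\colon A \to T^*M$ of Equation~\ref{eqn:IMFdef} is an isomorphism while $\t^P$ is a surjective submersion; hence $\iota_{\xi_P}\Omega^P = 0$ forces $\rho_A^*\xi = 0$, so $\xi = 0$ and $\ker\Omega^P = 0$.

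Finally, the two assignments are visibly mutually inverse: each keeps the underlying bibundle $P$ fixed and exchanges $\omega$ with $\Omega^P$, while the remaining data $L_P=(\s^P)^*L_{\pi_N}$ and $\phi_P=0$ of the $\DMan$-bibundle are uniquely pinned down by target-alignment, with Lemma~\ref{lemma:targetalign} ensuring no generality is lost by restricting to target aligned representatives. I expect the non-degeneracy step of the backward direction to be the crux---it is precisely the point at which the a priori more flexible $\DMan$-theoretic notion is forced to coincide with Xu's symplectic one---with the dual-pair rewriting of the gauge-compatibility in the forward direction the other substantive computation.
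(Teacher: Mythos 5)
Your proposal is correct, and on the crux---non-degeneracy of $\Omega^P$ in the backward direction---it takes a genuinely different route from the paper. The paper argues by local trivialization: it chooses a local section $e$ of $\s^P$, identifies $P|_U \isom \G \times_{\s,f} U$ so that $\Omega^P = \pr_1^*\Omega^\G + \pr_2^*\beta$ with $\beta = e^*\Omega^P$, and then eliminates a kernel vector $(v_1,w_1)$ in several stages by pairing against chosen test vectors, using both the leafwise identity $\Omega^\G|_{\G_{\O_y}} = \t^*\omega^{\O_y} - \s^*\omega^{\O_y}$ and the fact that $(f,\beta)$ is a morphism of Dirac manifolds. You replace all of this with the two infinitesimal moment-map relations, and your logic is sound: these relations follow from multiplicativity alone, with no non-degeneracy assumed on $\Omega^P$ (evaluate $\m_L^*\Omega^P = \pr_1^*\Omega^\G + \pr_2^*\Omega^P$ on pairs $\bigl((\xi,0),(\dif\u\,\dif\t^P(v),v)\bigr)$, and similarly for the right action), after which surjectivity of the IM form of $\H$ forces $\ker\Omega^P \subseteq \ker\dif\s^P$, left principality writes any such vector as $\xi_P$, and injectivity of $\rho_A^*$ (Equation~\ref{eqn:IMFdef}) together with surjectivity of $\dif\t^P$ kills it. One precision: right multiplicativity by itself yields only the orthogonality $\ker\dif\t^P \subseteq (\ker\dif\s^P)^{\perp}$; the complement equality you invoke is exactly where symplecticity of $\Omega^\H$ enters, as you yourself flag. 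Your route is shorter, avoids the leafwise symplectic forms entirely, and isolates which non-degeneracy hypothesis is used where; the paper's computation is more elementary in that it needs nothing beyond the trivialized formula for $\Omega^P$. In the forward direction you in fact supply more than the paper, which simply cites Example~\ref{ex:sympl:morita} without verifying the gauge compatibility $(\t^P)^*L_{\pi_M} = (\s^P)^*L_{\pi_N} + \omega$; your dual-pair plan does close this gap: multiplicativity, non-degeneracy and biprincipality give that $\t^P$ is Poisson, $\s^P$ anti-Poisson, and that $\ker\dif\t^P$ and $\ker\dif\s^P$ are $\omega$-orthogonal complements (biprincipality also yields $\dim P = \dim M + \dim N$, which the complement claim needs), and since both sides of the gauge identity are pointwise maximal isotropic, a single inclusion---obtained by observing $v - \omega^{-1}\bigl((\t^P)^*\eta\bigr) \in \ker\dif\t^P$ and pushing $\omega^\flat(v) - (\t^P)^*\eta$ down along $\s^P$---suffices.
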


\begin{proof}

One direction is just Example~\ref{ex:sympl:morita}.
For the other direction, suppose $P$ is a target aligned principal $(\G,\H)$-bibundle.
We must show that $\Omega^P$ is symplectic at each $p \in P$. So fix $p$ and let $x=\s^P(p)  \in N$ and $y=\t(p)\in M$.
Suppose $e: U \to P$ is a local section of $\s^P$ around $x$ such that $e(x) = p$ and let $\beta := e^* \Omega^P$.
Next define $f:= \t^P \circ e$ and notice that
\begin{align*}
f^* L_M &= e^* {(\t^P)}^* L_M  \\
        &= e^* (L_P + \Omega) \\
        &= e^* L_P + \beta  \\
        &= e^* {(\s^P)}^* L_N + \beta = L_N + \beta \, .
\end{align*}
In other words, $(f,\beta): U \to M$ is a morphism in $\DMan$. Since $P$ is principal, $f$ is transverse to the orbits of $M$.

We will need these facts in a moment, but first we should use the section $e$ to `trivialize' our bibundle.

When $P$ is restricted to $U$, we can identify it with the trivial $\G$-bundle associated to this map.
That is,
\[ P|_U \isom \G \times_{\s,f} U, \text{ with $p$ corresponding to }(\u(f(x)),x) . \]
When $P|_U$ is written in this way, then we can use the left multiplicativity of $\Omega^P$ to see that
\[ \Omega^P = \pr_1^* \Omega^\G + \pr_2^* \beta.  \]
Hence, for any two vectors
\[ (v_i,w_i)\in T_p (\G \times_M N)=\{(v,w)\in T_{\u(f(x))}\G\times T_x N: \dif\s(v)=\dif f(w)\}, \]
we have that
\[ \Omega^P((v_1, w_1), (v_2, w_2)) = \Omega^\G(v_1, v_2) + \beta(w_1,w_2) \, . \]
Now suppose that $(v_1, w_1)$ is in the kernel of $\Omega^P$.
We will show that it must be zero by pairing it with a few careful choices of $(v_2,w_2)$.
First let us see what happens when $(v_2,w_2) = (v_2,0)$ for arbitrary $v_2 \in \ker \dif \s$. Then
\[ 0 = \Omega^\P((v_1,w_1),(v_2,0)) = \Omega^\G(v_1,v_2) \, . \]
Therefore, we can conclude that $v_1$ is $\Omega^\G$ orthogonal to $\ker \dif \s$. Since $\G$ is a symplectic groupoid, this implies that $v_1 \in \ker \dif \t$.

Now suppose $(v_2,w_2) = (\dif \u \dif f(w_2),w_2)$ for arbitrary $w_2 \in T \O_x$ tangent to orbit of $x$.
We can conclude that
\begin{equation}\label{eqn:symform1}
 0 = \Omega^\G(v_1, \dif \u \dif f(w_2)) + \beta(w_1,w_2) \, .
\end{equation}
Let $\G_{\O_y}=\s^{-1}(\O_y)=\t^{-1}(\O_y)$ be the restriction of $\G$ to the orbit $\O_y$ and let $\omega^{\O_y}$ be the leafwise symplectic form on the orbit.
For any symplectic groupoid, it turns out that
\[ \Omega^\G|_{\G_{\O_y}} = \t^* \omega^{\O_y} - \s^* \omega^{\O_y}. \]
Since both $v_1$ and $\dif \u \dif f(w_2)$ are tangent to $\G_{\O_y}$, we can conclude that
\begin{align*}
\Omega^\G(v_1, \dif \u \dif f(w_2)) &= \omega^{\O_y}(\dif \t (v_1), \dif f(w_2)) - \omega^{\O_y}(\dif \s (v_1), \dif f(w_2)) \\
                                   &= -\omega^{\O_y}(\dif \s (v_1), \dif f(w_2)) \\
                                   &= -\omega^{\O_y}(\dif f (w_1), \dif f(w_2)) \\
                                   &= -f^*\omega^{\O_y}(w_1,w_2) \\
                                   &= -\omega^{\O_x}(w_1,w_2) - \beta(w_1,w_2).
\end{align*}
In the second line we have use the fact that $v_1 \in \ker \dif \t$.
In the third line we used the fact that $(v_1,w_1)$ must be tangent to $\G \times_M U$.
The last line follows from the fact that $(f,\beta)$ is a morphism of Dirac manifolds.

Combining this with (\ref{eqn:symform1}) we get that
\begin{equation}\label{eqn:symform2}
 \omega^{\O_x}(w_1,w_2) = 0 \, .
\end{equation}
Recall that $w_2$ was an arbitrary vector tangent to $\O_x$.
Since $\omega^{\O_x}$ is symplectic, we can conclude that $w_1=0$.

So far we have shown that $(v_1,w_1) = (v_1,0)$ and that $v_1 \in \ker \dif \t$. It follows that $v_1 \in \ker \dif \s$.
We still need to show that $v_1= 0$.
To do this we will show that $\Omega^\G(v_1,v) = 0$ for arbitrary $v \in T_{\u(y)} \G$.
Since $\Omega^\G$ is symplectic, this will show that $v_1=0$.

First write $v$ in the form $v_A + v_\u$ for $v_A \in \ker \dif \s$ and $v_\u \in \mbox{Im}(\dif \u)$.
Let us see what happens when we pair it with $v_1$:
\[ \Omega^\G(v_1,v) = \Omega^\G(v_1,v_A) + \Omega^\G(v_1, v_\u)  = 0 + \Omega^\G(v_1, v_\u) \, . \]

Since $f$ is transverse to the foliation on $M$, we can write $v_\u = \dif \u \dif f(w) + \dif \u (v_\O)$, where $w \in T_x N$ and $v_\O \in T_y \O_y$.
Hence,
\begin{equation}\label{eqn:symform3}
\Omega^\G(v_1, v_\u) = \Omega^\G(v_1, \dif u \dif f(w)) + \Omega^\G(v_1, \dif \u(v_\O)) \, .
\end{equation}
Recall that we have assumed that $(v_1,0)$ is in the kernel of $\Omega^P$.
Therefore,
\begin{align*}
0 &= \Omega^P((v_1,0),(\dif{\u{}} \dif f (w),w)) \\
&= \Omega^\G(v_1, \dif \u \dif f(w)) + \beta(0,w) \\
 &= \Omega^\G(v_1, \dif \u \dif f(w)) \, .
\end{align*}
Therefore, we can conclude that the first summand on the right side of (\ref{eqn:symform3}) vanishes.
For the second summand, observe that both $v_1$ and $\dif\u(v_\O)$ are tangent to $\G_{\O_y}$ and so
\[ \Omega^\G(v_1,\dif\u(v_\O)) = \omega^{\O_x}(\dif \t (v_1), v_\O) -\omega^{\O_x}(\dif \s (v_1), v_\O) = 0 \, . \]
Hence, we conclude that $\Omega^\G(v_1,v)=0$.
Since $v$ was arbitrary and $\Omega^\G$ is symplectic, we conclude that $v_1 =0$.
So $\Omega^P$ is non-degenerate at $p$ and therefore symplectic.
 \end{proof}
%
%
\subsection{Weak equivalences}\label{subsection:weakequivalences}
Let $\G$ and $\H$ be D-Lie groupoids over $M$ and $N$.
Suppose $F: \H \to \G$ is a morphism of D-Lie groupoids covering $f: N \to M$.
Then we can construct the left principal $(\G, \H)$-bibundle:
\[ P_F := \G \times_{\s,f} N \, , \]
with the obvious commuting actions of $\G$ (on the left) and of $\H$ (on the right). We equip $P_F$ with the characteristic form:
\[ \Omega^{P_F} = \pr_1^* \Omega^\G + \pr_2^* \beta \, , \]
where $\beta$ is the gauge part of $f: N \to M$.
This is the same as the standard construction for Lie groupoids with the addition of the characteristic form.
The reader can easily check that these actions satisfy the axioms of a $(\G, \H)$-bibundle.

\begin{definition}
We say that a morphism of D-Lie groupoids $F: \H \to \G$ is a \emph{weak equivalence} if $P_F$ is a principal $(\G,\H)$-bibundle.
\end{definition}

In other words, a weak equivalence of D-Lie groupoids is a D-Lie groupoid morphism which gives rise to a Morita equivalence.
Later, it will be shown that these equivalences further correspond to an isomorphism of the underlying stacks.
The name weak equivalence is chosen because while they are not necessarily invertible as D-Lie groupoid morphisms, they become (weakly) invertible when passing to the 2-category of stacks.

Recall that $P_F$ is principal if and only if it is a principal bibundle of Lie groupoids.
Therefore, $F$ is a weak equivalence if and only if the underlying generalized map of Lie groupoids is a weak equivalence.
This immediately gives rise to a notion of symplectic weak equivalences.
\begin{example}[Symplectic Weak Equivalences]
Suppose $\G$ and $\H$ are symplectic groupoids (i.e., $\G$ and $\H$ are target aligned D-Lie groupoids and their characteristic 2-forms $\Omega^\G$ and $\Omega^\H$ are symplectic).
Then a weak equivalence $F: \H \to \G$ consists of a homomorphism of Lie groupoids, together with a closed 2-form $\beta$ on $N$ such that the following hold.
\begin{enumerate}[(a)]
\item $F: \H \to \G$ is fully faithful and essentially surjective.
\item $f: N \to M$ is transverse to $\pi_M$ (the Poisson structure on $M$).
\item $F^* \Omega^\G = \Omega^\H + \t^* \beta - \s^* \beta$.
\end{enumerate}
Condition (a) and (b) ensure that $F$ is a weak equivalence of the underlying Lie groupoids as per the usual definition.
That is, $P_F$ is principal as a Lie groupoid bibundle.
The last condition is the geometric condition for $F$ to consitute a morphism of D-Lie groupoid as per our discussion of D-Lie groupoid morphisms.
\end{example}
Composition of homomorphisms corresponds to the tensor product operation at the level of bimodules.
Our work in Chapter~\ref{chap:stacks} already constructed the tensor product site theoretically, but for completeness we will repeat the construction in our D-Lie setting.

Given a left principal $(\G_1, \G_2)$-bibundle $P$ and a left principal $(\G_2, \G_1)$-bibundle $Q$.
Assume that $\G_1$, $\G_2$, $\G_3$, $P$ and $Q$ are all target aligned.
Thinking of the $\G_i$ as Lie groupoids then
\[ P \otimes Q := P \times_{M_2} Q / \G_2 \, , \]
where the action of $\G_2$ on $(p,q)$ is defined to be $g_2 \cdot (p,q) = (p \cdot g_2\inv , g \cdot q)$.
In order to equip $P \otimes Q$ into a target aligned left principal $(\G_1,\G_3)$-bibundle, we only need to equip it with a multiplicative 2-form $\Omega^{P \otimes Q}$.
Multiplicativity of $\Omega^P$ and $\Omega^Q$ with respect to the action of $\G_2$ ensures that
\[ \widetilde{\Omega} := \pr_1^* \Omega^P + \pr_2^* \Omega^Q \, , \]
is basic with respect to the action of $\G_2$ on $P \times_{M_2} Q$.
Hence, $\widetilde{\Omega}$ descends to a 2-form on $P \otimes Q$.
Left and right multiplicativity of $\Omega^{P \otimes Q}$ can easily be checked.

\section{Stacks in DMan}\label{section:stacksindman}

Stacks over $\DMan$ are mainly interesting because they provide a rigorous foundation for taking about the stack associated to a (pre)-symplectic groupoid. We will make this claim precise in this section. First we begin with a somewhat obvious result.

\begin{proposition}
Suppose $\G$ is a D-Lie groupoid.
Let $\B\G$ denote the category whose objects are left principal $\G$-bundles and morphisms are equivariant maps $F: P \to Q$.
Let the functor $\pi: \B \G \to \DMan$ send $F: P \to Q$ to $f:M \to N$.
Then $\B\G$ is a stack.
\end{proposition}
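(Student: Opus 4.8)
The plan is to leverage the general machinery already established in Chapter~\ref{chap:stacks}, since the proposition is essentially an instance of Theorem~\ref{thm:bgstack} applied to the site $\DMan$. The key observation is that we have already proved (Proposition~\ref{prop:Dmangood}) that $\DMan$ is a good site, so every result concerning classifying stacks $\B\G$ for a $\C$-groupoid over a good site $\C$ applies verbatim once we recognize a D-Lie groupoid as a $\DMan$-groupoid. Thus the cleanest proof simply cites the abstract result and verifies that the definitions match up.

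First I would recall that a D-Lie groupoid is by definition a $\DMan$-groupoid (Definition~\ref{defn:internalgroupoid} specialized to $\C = \DMan$), and that a left principal $\G$-bundle in the sense used here is exactly a principal $\G$-bundle internal to $\DMan$. With these identifications, the category $\B\G$ described in the proposition is literally the classifying stack constructed in Chapter~\ref{chap:stacks}, and the functor $\pi \colon \B\G \to \DMan$ sending an equivariant map $F \colon P \to Q$ to its base map $f \colon M \to N$ is precisely the forgetful functor considered there. Consequently, the statement that $\pi$ makes $\B\G$ a category fibered in groupoids follows from the theorem preceding Theorem~\ref{thm:bgstack}, and the stack property follows from Theorem~\ref{thm:bgstack} itself.

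The body of the proof would therefore read as follows. By Proposition~\ref{prop:Dmangood}, $\DMan$ is a good site. A D-Lie groupoid is a $\DMan$-groupoid, and a left principal $\G$-bundle is a principal $\G$-bundle internal to $\DMan$, so the category $\B\G$ defined in the proposition coincides with the classifying stack of Chapter~\ref{chap:stacks}, and $\pi$ coincides with the forgetful functor to the base. Theorem~\ref{thm:bgstack} then asserts exactly that $\B\G$ satisfies (S1) and (S2) of Definition~\ref{defn:stack}, hence is a stack over $\DMan$.

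The only genuine content, and hence the main point one must not gloss over, is confirming that the geometric description of $\DMan$-objects and morphisms (triples $(M,\phi_M,L_M)$ and pairs $(f,\beta)$ with their gauge equations) does not introduce any new subtlety into the gluing arguments of (S1) and (S2). This reduces to the fact, already embedded in the proof of Proposition~\ref{prop:Dmangood}, that morphisms in $\DMan$ are local and that $\Sub$ is a stack: gluing a $\G$-action amounts to gluing the underlying smooth action together with gluing the gauge data, and the latter glues because the gauge parts satisfy the same locality as ordinary $2$-forms. Since these facts are precisely what (GS3) and (GS4) guarantee for a good site, no further verification is needed beyond invoking the abstract theorem. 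I would keep the proof short, emphasizing that the work has already been done and that this proposition is a direct specialization.
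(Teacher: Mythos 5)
Your proposal is correct and matches the paper's own proof, which is exactly the one-line argument you give: since Proposition~\ref{prop:Dmangood} establishes that $\DMan$ is a good site, the result is an immediate corollary of Theorem~\ref{thm:bgstack} applied to $\C = \DMan$. Your additional remarks verifying that the definitions line up are sound but not part of the paper's (shorter) proof.
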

\begin{proof}
This is just a corollary of Theorem~\ref{thm:bgstack} since Proposition~\ref{prop:Dmangood} showed that $\DMan$ is a good site.
\end{proof}

Our next theorem provides the basic correspondence for studying Morita equivalence of D-Lie groupoids.

\begin{theorem}\label{thm:stackequiv}
Suppose $\G \rightrightarrows M$ and $\H \rightrightarrows N$ are D-Lie groupoids.
Then the following are equivalent:
\begin{enumerate}[(i)]
\item $\B \G$ and $\B \H$ are isomorphic.
\item There exists a principal $(\G, \H)$-bibundle.
\item There exists a D-Lie groupoid $\K$ and weak equivalences $F_1: \K \to \G$ and $F_2:\K \to \H$.
\end{enumerate}
\end{theorem}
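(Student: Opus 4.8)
The plan is to deduce all three equivalences from the general theory of Chapter~\ref{chap:stacks} and Chapter~\ref{chap:morita}, the crucial input being that Proposition~\ref{prop:Dmangood} has already established that $\DMan$ is a good site. Consequently every result proved for a general good site $\C$ applies verbatim with $\C = \DMan$, so the real content here is not a fresh argument but the verification that the geometrically flavored objects appearing in (i)--(iii), namely the stacks $\B\G$, principal bibundles of D-Lie groupoids, and weak equivalences of D-Lie groupoids, are literally the site-theoretic objects to which the abstract theorems refer.

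First I would handle (i) $\Leftrightarrow$ (ii). Since $\DMan$ is good, the results of Chapter~\ref{chap:stacks} show that $\B\G$ and $\B\H$ are objects of $\GStacks$, and Theorem~\ref{thm:chapter1} tells us that $\B \colon \morpoid \to \GStacks$ is an equivalence of bicategories. An equivalence of bicategories both preserves and reflects equivalence of objects; hence $\B\G \isom \B\H$ in $\GStacks$ if and only if $\G$ and $\H$ are isomorphic in $\morpoid$, that is, if and only if there is a weakly invertible generalized morphism between them. By Lemma~\ref{lemma:invertiblegmor} a generalized morphism is weakly invertible precisely when it is biprincipal, which is exactly a principal $(\G,\H)$-bibundle. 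This yields (i) $\Leftrightarrow$ (ii).

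For (ii) $\Leftrightarrow$ (iii) I would apply Theorem~\ref{thm:weakequivalences} with $\C = \DMan$: a principal $(\G,\H)$-bibundle is by definition a Morita equivalence of $\DMan$-groupoids, and the theorem states that such a bibundle exists if and only if there is a $\DMan$-groupoid $\K$ together with a pair of weak equivalences $\H \from \K \to \G$. The only point to check is that the notion of weak equivalence entering Theorem~\ref{thm:weakequivalences} (the essentially surjective and fully faithful homomorphisms of Definition~\ref{defn:groupoidmaps}) agrees with the definition of a weak equivalence of D-Lie groupoids from Section~\ref{subsection:weakequivalences}; but both are characterized by the requirement that the associated bibundle $\bP(F) = P_F$ be principal, so they coincide.

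The argument is therefore essentially bookkeeping, and I expect the main obstacle to be purely definitional rather than computational: one must be confident that the geometric descriptions of bibundles and weak equivalences used in this chapter (phrased via characteristic forms and gauge parts) really are \emph{instances} of the categorical definitions of Chapter~\ref{chap:stacks}, not merely analogues of them. This is precisely what Theorem~\ref{thm:dliedata}, Lemma~\ref{lemma:dliemorphism}, and the construction of $P_F$ accomplish: they exhibit D-Lie groupoids, their morphisms, and their (bi)bundles as genuine objects internal to $\DMan$, so that the dictionary above is a literal identification and no further work is needed beyond invoking the general theorems.
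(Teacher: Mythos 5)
Your proposal is correct and follows essentially the same route as the paper, which proves the theorem in one line as a corollary of Theorem~\ref{thm:chapter1} and Theorem~\ref{thm:weakequivalences} applied with $\C = \DMan$, justified by Proposition~\ref{prop:Dmangood}. The additional bookkeeping you supply---invoking Lemma~\ref{lemma:invertiblegmor} to translate equivalence in $\morpoid$ into biprincipality, and checking that weak equivalences of D-Lie groupoids coincide with the site-theoretic ones because both are characterized by principality of $\bP(F) = P_F$---merely makes explicit the identifications the paper leaves implicit (and which it records in Section~\ref{subsection:weakequivalences}).
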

\begin{proof}
This result can be thought of as an analogue of Theorem 2.26 in~\cite{PXstacks}.
The above theorem is a corollary of Theorem~\ref{thm:chapter1} and Theorem~\ref{thm:weakequivalences} applied to the case where $\C = \DMan$.
\end{proof}
\begin{theorem}\label{thm:main1}
Let $\G$ and $\H$ be symplectic groupoids.
The following are equivalent:
\begin{enumerate}[(1)]
\item $\G$ and $\H$ are symplectically Morita equivalent.
\item $\B\G$ is isomorphic to $\B\H$.
\item There exists a principal $(\G,\H)$-bibundle of D-Lie groupoids.
\item There exists a pre-symplectic groupoid $\G'$ and a pair of weak equivalences of D-Lie groupoids $\G \from \G' \to \H$.
\end{enumerate}
\end{theorem}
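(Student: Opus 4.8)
The plan is to obtain the theorem by specializing the general machinery of the first two chapters, now available for $\DMan$ by Proposition~\ref{prop:Dmangood}, and feeding in the one genuinely Poisson-geometric input, Proposition~\ref{prop:symform}. The first observation is that a symplectic groupoid is a D-Lie groupoid (a target aligned one with non-degenerate characteristic form), so Theorem~\ref{thm:stackequiv} applies to $\G$ and $\H$ verbatim. Its conditions (i) and (ii) are exactly conditions (2) and (3) here, giving $(2)\Leftrightarrow(3)$ for free. To splice in (1), I would use that every principal $(\G,\H)$-bibundle $P$ is isomorphic, via the gauge transformation $(\Id,\sigma^P)$, to a target aligned one, together with Proposition~\ref{prop:symform}, which identifies target aligned principal $(\G,\H)$-bibundles with symplectic Morita equivalences. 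This yields $(1)\Leftrightarrow(3)$ and closes the triangle $(1)\Leftrightarrow(2)\Leftrightarrow(3)$.

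It then remains to insert (4). The implication $(4)\Rightarrow(3)$ is formal: a pre-symplectic groupoid is a D-Lie groupoid, so a span of weak equivalences $\G \from \G' \to \H$ is an instance of condition (iii) of Theorem~\ref{thm:stackequiv}, which produces a principal bibundle. For $(3)\Rightarrow(4)$ I would rerun, inside the good site $\DMan$, the construction from the proof of Theorem~\ref{thm:weakequivalences}. Starting from a target aligned principal $(\G,\H)$-bibundle $P$, which by Proposition~\ref{prop:symform} is a symplectic manifold, set $\G' := (\t^P)^!\G = P \times_M \G \times_M P$, the pullback groupoid along the anchor $\t^P \colon P \to M$. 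Lemma~\ref{lemma:pullbackequiv} shows the canonical projection $\G' \to \G$ is a weak equivalence, and the division map of the (principal) right $\H$-action furnishes the second weak equivalence $\G' \to \H$ exactly as in that proof; since the argument is internal to $\DMan$, both are weak equivalences of D-Lie groupoids automatically.

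The main obstacle is the only new verification, namely that this $\G'$ is a genuine pre-symplectic groupoid and not merely a D-Lie groupoid. Its base is $P$ equipped with its bibundle Dirac structure $L_P$, which is a gauge transform of the pullback of $L_M$ along $\t^P$, and the kernel-chasing already performed in the proof of Proposition~\ref{prop:symform} records that principality of $P$ forces $\t^P$ to be transverse to the Poisson structure on $M$, so that $L_P$ is indeed a (possibly non-Poisson) Dirac structure. I would then confirm the defining conditions of a pre-symplectic groupoid for $\G'$: the dimension count gives $\dim \G' = 2\dim P + \dim \G - 2 \dim M = 2\dim P$, using $\dim \G = 2 \dim M$; and, via Lemma~\ref{lemma:dliemorphism} expressing $\Omega^{\G'}$ through the pullback of $\Omega^\G$, the closed multiplicative characteristic form of $\G'$ satisfies the pointwise non-degeneracy condition $\ker \Omega^{\G'} \cap \ker \dif \s \cap \ker \dif \t = 0$ along the units. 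I expect this last computation to mirror closely the degeneracy analysis in the proof of Proposition~\ref{prop:symform}, which is precisely why that proposition, rather than any formal stack-theoretic step, is the technical heart underpinning the whole theorem.
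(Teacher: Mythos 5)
Your proposal is correct and follows the same skeleton as the paper, but it is genuinely more complete than the paper's argument, which consists of the single sentence that the theorem ``is an immediate corollary of Proposition~\ref{prop:symform} and Theorem~\ref{thm:stackequiv}.'' The equivalences $(1)\Leftrightarrow(2)\Leftrightarrow(3)$ you set up exactly as the paper intends: gauge-transform an arbitrary principal bibundle to a target aligned one via $(\Id,\sigma^P)$ and invoke Proposition~\ref{prop:symform}, then quote $(i)\Leftrightarrow(ii)$ of Theorem~\ref{thm:stackequiv}. Where you differ is in noticing that condition (iii) of Theorem~\ref{thm:stackequiv} only produces a D-Lie groupoid $\K$ with a span of weak equivalences, whereas statement (4) of the present theorem asserts the stronger conclusion that the middle term can be taken to be a \emph{pre-symplectic} groupoid; the paper's ``immediate corollary'' silently elides this upgrade. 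Your patch --- rerun the construction from the proof of Theorem~\ref{thm:weakequivalences} inside $\DMan$ with $\G' := (\t^P)^!\G = P \times_M \G \times_M P$, get one weak equivalence from Lemma~\ref{lemma:pullbackequiv} and the other from the division map of the principal right $\H$-action, and then verify the pre-symplectic conditions directly --- is the right way to close this, and your dimension count $\dim \G' = 2\dim P + \dim\G - 2\dim M = 2\dim P$ is correct. So what your route buys is an actual proof of $(3)\Rightarrow(4)$ as stated, at the cost of one new geometric verification the paper never performs.

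Two small points on that verification. First, closedness of $\Omega^{\G'}$ is not automatic from Lemma~\ref{lemma:dliemorphism} alone: writing $\Omega^{\G'} = F^*\Omega^\G - \t^*\beta + \s^*\beta$ with $\beta = \Omega^P$ the gauge part of $\t^P$, you need $\dif\Omega^P = 0$, which holds precisely because Proposition~\ref{prop:symform} makes $\Omega^P$ symplectic (equivalently, $\phi_P = 0$ since $\phi_M = 0$); it is worth saying this explicitly. Second, the pointwise condition $\ker\Omega^{\G'} \cap \ker\dif\s \cap \ker\dif\t = 0$ along the units is asserted rather than carried out; your expectation that it mirrors the kernel-chasing in the proof of Proposition~\ref{prop:symform} is well founded (the same decomposition of tangent vectors into $\ker\dif\s$, unit, and orbit directions applies fiberwise on $P \times_M \G \times_M P$), but as written this step is a sketch, and it is the one place where your argument, like the paper's, stops short of a complete verification.
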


\begin{proof}
This is an immediate corollary of Proposition~\ref{prop:symform} and Theorem~\ref{thm:stackequiv}.
\end{proof}

\begin{example}[The Stack of a Poisson Manifold]
We saw earlier that there is a one-to-one correspondence between symplectic groupoids $(\G, \Omega)$ integrating a Poisson manifold $(M, \pi)$ and target aligned D-Lie groupoid $\G$ with a symplectic characteristic form.

From this point of view, if $\G$ is a proper symplectic groupoid then $\B\G$ is the analogous in $\DMan$ of the notion of a \emph{separated stack}.
The space of objects of such proper symplectic groupoids are the \emph{Poisson manifolds of compact type}, studied by Crainic, Fernandes and Martinez-Torrez in~\cite{PMCT1,PMCT2}.
\end{example}
\begin{example}[A non-presentable stack]
One weakness of the category $\DMan$ is that it lacks a terminal object.
This is remedied by passing to stacks over $\DMan$.
In fact, the category $\DMan$ equipped with the identity projection is a terminal object in the 2-category of stacks.
This is, perhaps, the simplest example of a stack over $\DMan$ which does not admit a presentation.
\end{example}

\chapter{b-Symplectic structures}\label{chap:bsymp}
In this chapter, we solve the classification problem, up to Morita equivalence, for a remarkable class of Poisson manifolds.
\footnote{Most of the content of this chapter has appeared previously in a pre-print by the author~\cite{VillatoroBS}}
Along the way, we compute the Picard group of Poisson manifolds of this class.
The chapter is organized as follows:
\begin{itemize}
\item In Section \ref{section:bsymplecticintro} and Section \ref{section:picgroups} we will establish our notation and give a brief overview of b-symplectic structures and their symplectic groupoids.
\item In Section \ref{section:trategyofproof} we will outline the general strategy of our proof and reduce the problem to 2-dimensions.
\item Section \ref{section:affineplane} and Section \ref{section:affinecylinder} are concerned with classifying groupoids over the `affine plane' and the `affine cylinder' respectively via discrete data.
\item Finally, in Section \ref{section:bsymplecticmanifolds} we will complete our proofs of the two main theorem, concerning the classification and the Picard groups of our b-symplectic manifolds.
\item Section \ref{section:examples4} goes over a few explicit applications. In particular, we will compare these results with the classification of b-symplectic compact surfaces due to Bursztyn and Radko~\cite{BDirac} and the computation of the Picard group for b-symplectic surfaces obtained by Radko and Shlyakhtenko~\cite{Radko2}.

\end{itemize}
\section{Stable b-symplectic structures}\label{section:bsymplecticintro}
In this section we recall some basic facts on b-symplectic manifolds that we will need later and, at the same time, we establish our notation conventions and terminology for b-symplectic manifolds.
Since b-symplectic manifolds are examples of Poisson manifolds they are also examples of Dirac manifolds. Hence, they can be considered to be objects in $\DMan$.

A \emph{b-symplectic structure} (also known as a \emph{log symplectic structure}) on an $2n$ dimensional smooth manifold $M$ is a Poisson structure $\pi$ on $M$ such that the section $\wedge^n \pi$ of $\wedge^{2n} TM$ intersects the zero section transversely. The \emph{singular locus} $Z$ of $\pi$ is the zero set of $\wedge^n \pi$ and is a codimension one embedded submanifold (an hypersurface) in $M$.

There are two alternative languages used in the study of b-symplectic structures. The \emph{b-geometry} point of view treats b-symplectic structures as non-degenerate closed 2-forms on the \emph{b-tangent bundle}, while the \emph{Poisson} point of view treats b-symplectic structures as a special class of Poisson bivectors on $M$. We will mostly use the language of Poisson geometry.

We will always assume that $M$ is orientable. A choice of volume form $\mu$ determines a \emph{modular vector field} $X_\mu$: it is the unique Poisson vector field such that:
\[ L_{X_h}\mu=X_\mu(h)\mu, \]
for any hamiltonian vector field $X_h$. If $\mu'=e^f \mu$ is another choice of volume form, then the corresponding modular vector fields are related by $X_{\mu'}=X_{\mu}+X_f$.

In dimension 2, the following two examples of b-symplectic manifolds will play an important role. Understanding these examples and their integrations will be critical to our main result.
\begin{example}[Affine plane]\label{example:aff}
The plane $\R^2$ equipped with the Poisson structure $x \partial{y} \wedge \partial{x}$, is a b-symplectic manifold which we call the \emph{affine plane} and denote it by $\aff$. If $\mu=\diff x\wedge\diff y$ is the standard volume form in $\R^2$, the associated modular vector field is $X_\mu=\partial_y$. As a Poisson manifold, $\aff$ is the linear Poisson structure associated to the dual of the 2-dimensional affine Lie algebra.

For any real number, $\rho \neq 0$, we can modify the Poisson structure on $\aff$ by setting:
\[ \pi^\rho = \frac{1}{\rho}\pi.\]
The modular vector field of $\pi^\rho$ relative to the standard volume is $\rho \partial_y$.
\end{example}
\begin{example}[Affine cylinder]\label{example:caff}
Consider the action of $\Z$ on $\aff^\rho$ given by $n \cdot (x,y) = (x,y+n)$. This is an action by Poisson diffeomorphisms and the Poisson structure of the quotient $\R^2/\Z=\R \times \S^1$ takes the form:
\[ \left( \frac{x}{\rho}\right) \frac{\partial}{\partial \theta} \wedge \frac{\partial}{\partial x} \, . \]
Here $\partial_\theta$ denotes the projection of $\partial_y$ to $\R^2/\Z$. We call this manifold the \emph{affine cylinder of modular period $\rho$} and denote it by $\caff^\rho$. When $\rho=1$ we may denote $\caff^\rho$ by just $\caff$.

The modular vector field of $\pi^\rho$ relative to the standard volume $\mu=\diff x\wedge\diff \theta$ is $X_\mu=\rho \partial_\theta$. The modular period $\rho$ turns out to be a complete Morita invariant of $\caff^\rho$: the manifolds $\caff^{\rho_1}$ and $\caff^{\rho_2}$ are Morita equivalent if and only if $\rho_1 = \rho_2$.
\end{example}
For a b-symplectic manifold $(M,\pi)$ we will denote the corresponding singular 2-form as $\pi\inv$. In a neighborhood of a point on the singular locus, the Darboux-Weinstein splitting theorem gives the local normal form:
\[ \pi = x \dd{y} \wedge \dd{x} + \sum_{i}^{n-1} \dd{p_i} \wedge \dd{q_i}.  \]
Alternatively, the corresponding singular 2-form is given by:
\[ \pi\inv = \frac{1}{x} \diff x \wedge \diff y + \sum_{i}^{n-1} \diff q_i \wedge \diff p_i. \]
 In Darboux-Weinstein coordinates, the modular vector field associated to the canonical volume form $\mu$ in such coordinates is $X_\mu=\dd{y}$.
\begin{example}[Semi-local model]\label{example:mappingtoruspoiss}
Given a symplectic manifold $(L,\omega^L)$ and a symplectomorphism $f: L \to L$ recall that the \emph{symplectic mapping torus}:
\[ T_f := \frac{\R \times L}{(y+1,p) \sim (y,f(p))} \, , \]
yields a symplectic fibration $T_f\to \S^1$. The corresponding regular Poisson structure $\pi^f$  on $T_f$ can naturally be extended to a b-symplectic structure $\pi$ on $\R \times T_f$ with singular locus $Z=T_f$ by setting:
\[ \pi=\frac{x}{\rho}\dd{\theta}\wedge \dd{x} + \pi^f  ,\]
where $\rho \in \R$ is any non-zero real number.  We call it the \emph{canonical b-symplectic extension of $T_f$ with period $\rho$}.
\end{example}
The previous example furnishes the semi-local model around the connected components of $Z$ of the class of b-symplectic structures of interest to us:
\begin{definition}
A \emph{stable b-symplectic structure} is an oriented b-symplectic structure on $M$ for which each component $Z_i$ of the singular locus $Z$ admits a tubular neighborhood $U$ isomorphic to $\R \times T_f$, for some mapping torus $f: L \to L$ and some real number $\rho \neq 0$. The number $\rho$ is called the \emph{modular period} of $Z_i$.
\end{definition}
Notice that the leaf space of a stable b-symplectic manifold has a simple structure: it is a collection of circles connected by open points, one for each connected component of $M-Z$.

The results of Guillemin, Miranda, and Pires \cite{GMP1,GMP2} lead to the following useful criteria for stability (combine Theorem 50 from \cite{GMP2} and Theorem 59 from \cite{GMP1}):
\begin{theorem}[Guillemin, Miranda, Pires]\label{thm:GMPtorus}
Let $(M,\pi)$ be a orientable b-symplectic manifold. Then $(M,\pi)$ is stable if and only if $Z$ is compact and each of its connected components admits a closed leaf.
\end{theorem}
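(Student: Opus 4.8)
The plan is to deduce the criterion by combining the intrinsic description of the germ of the b-symplectic structure along $Z$ with the classical characterization of symplectic mapping tori. The central observation is that, by the semi-local normal form, the isomorphism type of a tubular neighborhood of a component $Z_i$ is completely determined by the geometry induced on $Z_i$ itself; stability therefore becomes a statement purely about the hypersurface $Z$, and the two cited theorems supply exactly the two halves of the needed dictionary.

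First I would record the structure induced on the singular locus. Since $\wedge^n\pi$ vanishes transversally along $Z$, the modular vector field is tangent to $Z$ and nowhere vanishing there, and $\pi$ restricts to a corank-one regular (cosymplectic) Poisson structure on each component $Z_i$, whose symplectic foliation has $(2n-2)$-dimensional leaves. A \emph{closed leaf} is then precisely a compact symplectic leaf $L \subset Z_i$. With this language fixed, the forward implication is the easy one: if $(M,\pi)$ is stable, each component is by definition isomorphic to the singular locus $\{0\}\times T_f$ of a model $\R\times T_f$, hence $Z_i \cong T_f$ for the symplectic mapping torus of some $f\colon L\to L$. The symplectic leaves of $Z_i$ are the fibers of $T_f\to\S^1$, each a copy of the compact fiber $L$, so every component of $Z$ is compact and carries a closed leaf; this is immediate from the semi-local model of Example~\ref{example:mappingtoruspoiss}.

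The converse carries the substantive content. Assume $Z$ is compact and each $Z_i$ has a closed leaf $L_i$. The first input is the mapping-torus characterization (Theorem 59 of \cite{GMP1}): a compact cosymplectic manifold whose symplectic foliation possesses a compact leaf is itself a symplectic mapping torus. Concretely, one rescales the modular vector field so that its flow is periodic, uses the compact leaf $L_i$ as a global transversal, and identifies $Z_i$ with the mapping torus of the return map $f_i\colon L_i\to L_i$, the modular period $\rho$ recording the return time. The second input is the semi-local normal form (Theorem 50 of \cite{GMP2}): a Moser-type deformation argument promotes this intrinsic identification of $Z_i$ to an isomorphism of b-symplectic germs, showing that a neighborhood of $Z_i$ in $M$ is isomorphic to the canonical b-symplectic extension $\R\times T_{f_i}$ of period $\rho$. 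Assembling these identifications over the finitely many components of the compact hypersurface $Z$ produces the tubular neighborhoods required by the definition, so $(M,\pi)$ is stable.

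The main obstacle lies entirely in the converse, and within it in the passage from the abstract compact-leaf hypothesis to an honest mapping-torus presentation: one must show that the modular flow on a compact $Z_i$ becomes, after rescaling, periodic with the compact leaf serving as a global slice. This is the Tischler/Martinet-type argument underlying Theorem 59 of \cite{GMP1}, and it is where compactness of $Z$ is genuinely used. The subsequent Moser deformation of Theorem 50 of \cite{GMP2} is then comparatively routine, as it only upgrades a diffeomorphism established on $Z_i$ to an isomorphism of the surrounding b-symplectic structures.
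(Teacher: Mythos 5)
Your proposal is correct and follows exactly the route the paper itself indicates: it does not prove this theorem but cites it as the combination of Theorem~59 of \cite{GMP1} (the mapping-torus characterization of compact cosymplectic manifolds with a compact leaf) with Theorem~50 of \cite{GMP2} (the semi-local normal form promoting the identification of $Z_i$ to a b-symplectic isomorphism on a tubular neighborhood). Your elaboration of the two halves, including where compactness of $Z$ is genuinely used, is a faithful unpacking of that citation rather than a different argument.
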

In particular, when $M$ is compact we need only verify the existence of closed leaves in each connected component of the singular locus.
\begin{example}[Closed b-symplectic surfaces]
In dimension 2, every b-symplectic structure on a compact oriented surface $M$ is stable. Such stable b-symplectic (or just b-symplectic) structures on were studied by Olga Radko in \cite{Radko2} who called them \emph{topologically stable surfaces}. Radko showed that, up to Poisson diffeomorphism, stable b-symplectic structures on a surface $M$ were classified by the following data:
\begin{enumerate}[(a)]
\item The topological arrangement of the singular curves.
\item The period of the modular vector field around each singular curve.
\item The `regularized' volume of $\pi\inv$ on $M$.
\end{enumerate}
Later, Bursztyn and Radko \cite{BDirac} proved that (a) and (b) alone classify a topologically stable surface, up to Morita equivalence.
\end{example}
We quote here one more fact from the work of Guillemen, Miranda, and Pires, that will be useful for us in the sequel (see \cite{GMP2}):
\begin{theorem}[Guillemin, Miranda, Pires]\label{thm:bcohomology}
The Poisson cohomology of an arbitrary b-symplectic structure is given by:
\[ H_\pi^n(M) \isom H_{\dR}^n(M) \oplus H_{\dR}^{n-1}(Z). \]
\end{theorem}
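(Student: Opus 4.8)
The plan is to prove the theorem in two stages: first reduce the computation of Poisson cohomology to the \emph{b-de Rham cohomology} of the underlying b-manifold $(M,Z)$, and then compute the latter by a Mazzeo--Melrose-type residue argument. This follows the strategy of Guillemin, Miranda and Pires.

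For the first stage, recall that the Poisson cohomology $H^\bullet_\pi(M)$ is the cohomology of the Lichnerowicz complex $(\mathfrak{X}^\bullet(M), d_\pi)$ with $d_\pi = [\pi, -]$. A b-symplectic structure is the same datum as a closed nondegenerate $2$-form $\omega$ on the b-tangent Lie algebroid ${}^{b}TM$ (the algebroid of vector fields tangent to $Z$), with $\pi$ the image of $\omega^{-1} \in \Gamma(\wedge^2 {}^{b}TM)$ under the anchor $a \colon {}^{b}TM \to TM$. Exactly as in the symplectic case, contraction with $\omega$ identifies the Lie-algebroid de Rham complex $({}^{b}\Omega^\bullet(M), {}^{b}d)$ of ${}^{b}TM$ with the algebroid Lichnerowicz complex $(\Gamma(\wedge^\bullet {}^{b}TM), [\omega^{-1}, -])$; and since $a$ is a Lie algebroid morphism, the induced map $a_* \colon \Gamma(\wedge^\bullet {}^{b}TM) \to \mathfrak{X}^\bullet(M)$ is a cochain map into the Lichnerowicz complex. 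The content of the first stage is that $a_*$ is a quasi-isomorphism, so that $H^\bullet_\pi(M) \isom {}^{b}H^\bullet(M)$. Away from $Z$ this is immediate since $a$ is an isomorphism there, so the work is entirely local near $Z$, where one checks it in the Darboux--Weinstein normal form $\pi = x\,\dd{y}\wedge\dd{x} + \sum_i \dd{p_i}\wedge\dd{q_i}$.

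For the second stage I would compute ${}^{b}H^\bullet(M)$ via a short exact sequence of complexes. A b-form $\xi$ of degree $n$ decomposes near $Z$, with respect to a defining function $x$, as $\xi = \frac{\diff x}{x}\wedge\alpha + \beta$ with $\alpha,\beta$ smooth, and the \emph{residue} $r(\xi) := \iota_Z^*\alpha \in \Omega^{n-1}(Z)$ is a well-defined cochain map (here $\iota_Z\colon Z\into M$). A short computation shows $\ker r$ is exactly the complex of ordinary smooth forms: if $\iota_Z^*\alpha=0$ then $\alpha$ is divisible by $x$ and $\frac{\diff x}{x}\wedge\alpha$ is smooth. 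One also checks that $r$ is surjective at the level of forms using a collar of $Z$. This yields the exact sequence of complexes
\[ 0 \to \Omega^\bullet(M) \to {}^{b}\Omega^\bullet(M) \xrightarrow{\;r\;} \Omega^{\bullet-1}(Z) \to 0, \]
whose long exact sequence reads $\cdots \to H^n_{\dR}(M) \to {}^{b}H^n(M) \xrightarrow{r} H^{n-1}_{\dR}(Z) \xrightarrow{\delta} H^{n+1}_{\dR}(M) \to \cdots$.

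Finally I would show the connecting map $\delta$ vanishes and the sequence splits, by exhibiting an explicit cohomology-level section of $r$. Choosing a tubular neighborhood $U \isom Z\times(-\epsilon,\epsilon)$ with projection $p\colon U\to Z$ and a cutoff $\chi=\chi(x)$ equal to $1$ near $Z$, the b-form $\til\eta := \chi\,\frac{\diff x}{x}\wedge p^*\eta$ is globally defined, is closed whenever $\eta$ is (because $\diff\chi\wedge\frac{\diff x}{x}=0$ for $\chi$ a function of $x$ alone), and satisfies $r(\til\eta)=\eta$. Hence $\eta \mapsto [\til\eta]$ is a well-defined linear section of $r$ on cohomology, forcing $\delta=0$ and splitting each piece of the long exact sequence into $0\to H^n_{\dR}(M)\to {}^{b}H^n(M)\to H^{n-1}_{\dR}(Z)\to 0$. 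Combined with the first stage this gives $H^n_\pi(M)\isom H^n_{\dR}(M)\oplus H^{n-1}_{\dR}(Z)$. The hard part will be the first stage: verifying that the anchor induces a quasi-isomorphism near $Z$ (equivalently, that passing from b-multivector fields to ordinary multivector fields does not change cohomology) is the only step that genuinely uses the b-symplectic normal form rather than formal homological algebra. The Mazzeo--Melrose computation of the second stage, by contrast, is clean once the residue sequence is in place.
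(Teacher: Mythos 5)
You should know at the outset that the thesis contains no proof of this statement: it is imported verbatim from Guillemin--Miranda--Pires (the citation to \cite{GMP2} immediately preceding the theorem), so there is no in-paper argument to compare against. What your sketch does is reconstruct, correctly, the standard published proof: stage one ($H^\bullet_\pi(M) \isom {}^{b}H^\bullet(M)$ via the nondegenerate pairing on the b-tangent algebroid and the anchor) is exactly the GMP result, and stage two is the Mazzeo--Melrose residue computation. Your stage two is essentially complete as written: the residue is well defined independently of the defining function (if $x' = fx$ with $f \neq 0$ then $\diff x'/x' = \diff x/x + \diff f/f$ with $\diff f/f$ smooth, so no coorientability of $Z$ is needed --- worth saying explicitly), the identification of $\ker r$ with smooth forms is right, and your section $\eta \mapsto \chi\,\frac{\diff x}{x}\wedge p^*\eta$ is a genuine chain-level splitting device since $\diff\chi \wedge \frac{\diff x}{x} = 0$; it even descends exactness of $\eta$ to exactness of $\til\eta$, so $\delta = 0$ and the sequence splits as you say.

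Stage one is where the theorem actually lives, and you have correctly isolated it but not carried it out: the anchor $a_*$ is an injective cochain map, and the content is acyclicity of the quotient complex $\mathfrak{X}^\bullet(M)/\Gamma(\wedge^\bullet\,{}^{b}TM)$ under $d_\pi$ (note that $a^* \colon T^*M \to {}^{b}T^*M$ fails to be a bundle isomorphism along $Z$, so this is not formal). The step does not fail --- it is true --- and the way to close it is the one you name: a Mayer--Vietoris argument reduces to the Darboux--Weinstein model $\pi = x\,\dd{y}\wedge\dd{x} + \sum_i \dd{p_i}\wedge\dd{q_i}$, where a K\"unneth decomposition leaves only the two-dimensional factor $x\,\dd{y}\wedge\dd{x}$, for which both $H^\bullet_\pi$ and ${}^{b}H^\bullet$ can be computed by hand (the extra class in degree $n$ relative to the de Rham answer being generated by $x\,\dd{x}$-type, i.e.\ $\frac{\diff x}{x}$-dual, terms). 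One bonus of your route worth recording: it shows that under the isomorphism the canonical map $H^n_{\dR}(M) \to H^n_\pi(M)$, $[\eta] \mapsto [\wedge^n\pi^\sharp\eta]$, is precisely the inclusion of the first summand --- which is the naturality statement the thesis actually uses right after quoting the theorem, in the Picard Lie algebra computation, and which the bare statement of the theorem does not by itself supply.
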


\section{Picard groups}\label{section:picgroups}
We defined the Picard group of $\C$-groupoid in Chapter~\ref{chap:morita}.
In this section $\C = \DMan$ (see Chapter~\ref{chap:dman}). Every (integrable) Dirac structure comes with an integrating groupoid.
In the case of b-symplectic manifolds, the integrating groupoid is a symplectic groupoid. In this section, we will review these definitions, with particular focus on the b-symplectic case.
One slight change from the previous discussion is that technically we permitting Dirac structures on \emph{non-Hausdorff} manifolds. However, we will always assume the space of \emph{objects} is Hausdorff. Formally, we are considering stacks over non-hausorff Dirac manifolds which are presented by Hausdorff manifolds.

\subsection{Symplectic groupoids over b-symplectic structures}

Recall that a \emph{symplectic groupoid} is a pair $(\G \grpd M,\Omega)$, where the symplectic form on $\G$ is required to be multiplicative:
\[ \m^* \Omega = \pr_1^* \Omega + \pr_2^* \Omega. \]
A symplectic groupoid induces a unique Poisson structure $\pi$ on its manifold of units $M$ for which the target map $\t: \G \to M$ is Poisson. In such a case we say that $\G$ \emph{integrates} $(M,\pi)$. If $\G$ has 1-connected $\s$-fibers, then we say that $\G$ is the unique (up to isomorphism) \emph{canonical integration} of $(M,\pi)$ and we write $\G=\Sigma(M)$.

The general integrability criteria of Crainic and Fernandes imply that a Lie algebroid with injective anchor map on an open dense set is integrable (see \cite{Cint}). It follows that every b-symplectic manifold $(M,\pi)$ admits an integration. Recalling our two examples of b-symplectic manifolds defined earlier, we give their canonical integrations.
\begin{example}[Integration of the affine plane]\label{example:sigmaaff}
The two dimensional \emph{affine group}, which we denote by $\Aff$, is $\R^2$ equipped with the product
\[ (c,d) \cdot (a,b) = (a + c, b + e^a d). \]
The Lie algebra of $\Aff$ is $\aff$, the two dimensional affine Lie algebra. Identifying $\aff$ and $\aff^*$ using the standard metric on $\R^2$, leads to the linear Poisson structure on $\aff$ from Example \ref{example:aff}. We can now use the fact that the integrations of linear Poisson structures on the dual of a Lie algebra $\mathfrak{g}$ are the action groupoids $T^*G\simeq G\times\mathfrak{g}^*\rightrightarrows \mathfrak{g}^*$ associated with the coadjoint action of a Lie group $G$ integrating $\mathfrak{g}$, equipped with the canonical symplectic form on the cotangent bundle.

We can write the co-adjoint action of $\Aff$ on $\aff\simeq \aff^*$ explicitly as:
\[ (a,b) \cdot (x,y) = (x e^a, y + xb). \]
The resulting action groupoid $\G \Aff=\Aff\times\aff\simeq \R^4$ has source and target maps defined by:
\[ \s(a,b,x,y) = (x,y),\qquad \t(a,b,x,y) = (a,b)\cdot(x,y). \]
and multiplication given by:
\[ (c,d,xe^a , y + xb) \cdot (a,b,x,y) = (a + c, b +e^a d, x, y). \]
The multiplicative symplectic form on the groupoid $\G\Aff$ is:
\begin{align*}
\Omega &= t^* (\diff \log x \wedge \diff y) - s^*( \diff \log x \wedge \diff y) \\
&= \diff x \wedge \diff b + \diff a \wedge \diff y + b \diff a \wedge \diff x + x \diff a \wedge \diff b.
\end{align*}
Since $\Aff$ is simply connected, $\G\Aff$ has simply connected source fibers, and we conclude that $\G\Aff \isom \Sigma(\aff)$, the canonical integration.

The Poisson manifolds $\aff^\rho$ and $\aff$ have isomorphic algebroids. Hence, $\Sigma(\aff^\rho)$ has the same underlying Lie groupoid as $\Sigma(\aff)=\G\Aff$ but with the new symplectic form $\rho \Omega$. We will denote this modified symplectic groupoid by $\G\Aff^\rho$.
\end{example}
\begin{example}[Integration of the affine cylinder]\label{example:sigmacaff}
The affine cylinder $\caff$ was constructed in Example \ref{example:caff} as a quotient of the affine plane $\aff$ by a free and proper action of $\Z$ by Poisson diffeomorphisms. The $\Z$-action and the $\Aff$-action on $\aff$ commute. We obtain a lifted $\Z$-action on $\G\Aff$ by symplectic groupoid isomorphisms and its quotient is the action groupoid:
\[ \Caff := \Aff \times \caff = \{ (a,b,x,\theta) \in \R^2 \times (\R \times \S^1) \}. \]
The quotient symplectic structure is:
\[ \Omega = \diff x \wedge \diff b + \diff a \wedge \diff \theta + b \diff a \wedge \diff x + x \diff a \wedge \diff b. \]
Again, the source fibers of $\Caff$ are simply connected so $\Caff \isom \Sigma(\caff)$.
\end{example}
\begin{remark}
Gualtieri and Li in \cite{Gualt} have found the source connected integrations of b-symplectic manifolds. Although we do not explicitly use their classification here, our definition of the discrete presentation bears resemblance to their work.
\end{remark}
For a Lie groupoid $\G \rightrightarrows M$ we will be using the following notations. The \emph{isotropy group} over $x \in M$ is the Lie group of arrows with source and target $x$:
\[ \G_x := \t\inv(x)\cap \s\inv(x).  \]
The \emph{restriction} of $\G$ to a subset $U \subset M$ is the subset of arrows in $\G$ whose source and target lie in $U$:
\[  \G|_U := \s\inv(U)\cap\t\inv(U). \]
The quotient space $M/\G$ is the topological quotient of $M$ by the orbits of $\G$ and we call it the \emph{orbit space} of $\G$. A \emph{bisection} of $\G$ is a smooth section $\sigma: M \to \G$ of the source map such that $\t \circ \sigma: M \to M$ is a diffeomorphism. A \emph{local bisection} around $g \in \G$ is a map $\sigma:U \to \G$ where $U \subset M$ is an open neighborhood of $\s(g)$ such that $\sigma(\s(g)) = g$ and $\t \circ \sigma: U \to \t\circ\sigma(U)$ is a diffeomorphism. A basic fact is:
\begin{lemma}
For a Lie groupoid $\G \rightrightarrows M$ and an arrow $g \in \G$ there always exists a local bisection $\sigma:U \to \G$ around $g$.
\end{lemma}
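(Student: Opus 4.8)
The plan is to reduce the statement to a single linear-algebra condition at the arrow $g$ and then integrate it back up. Write $x := \s(g)$ and $y := \t(g)$, and recall that, being the source and target of a Lie groupoid, $\s$ and $\t$ are surjective submersions. A local bisection around $g$ is nothing but a local section $\sigma \colon U \to \G$ of $\s$ (so $\s \circ \sigma = \Id_U$) with $\sigma(x) = g$ whose composite $\t \circ \sigma$ is a local diffeomorphism; equivalently, it is determined by a submanifold $\Sigma \subseteq \G$ through $g$ of dimension $\dim M$ on which both $\s$ and $\t$ restrict to local diffeomorphisms near $g$.

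First I would phrase the obstruction infinitesimally. Set $K_\s := \ker \dif\s_g$ and $K_\t := \ker\dif\t_g$ inside $T_g\G$. Because $\s$ is a submersion, sections of $\s$ through $g$ exist, and, by the submersion normal form, as the section varies their images realise at $g$ every linear complement of $K_\s$ in $T_g\G$. For such a section the map $\t\circ\sigma$ is a local diffeomorphism at $x$ exactly when $\dif\t_g$ restricts to an isomorphism on $W := T_g(\operatorname{im}\sigma)$, i.e.\ when $W \cap K_\t = 0$. Since $\s$ and $\t$ are submersions we have $\dim K_\s = \dim K_\t = \dim\G - \dim M$, so it suffices to find a \emph{common complement}: a subspace $W \subseteq T_g\G$ with $T_g\G = K_\s \oplus W = K_\t \oplus W$.

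The main (indeed the only) non-formal step is the existence of this common complement, which follows from the standard fact that any two subspaces of equal dimension in a finite-dimensional vector space share a complement. One clean justification: the complements of a fixed subspace form a nonempty open dense subset of the appropriate Grassmannian of $T_g\G$, and a dense set meets every nonempty open set, so the dense open corresponding to $K_\s$ and the one corresponding to $K_\t$ intersect. Applying this to $K_\s$ and $K_\t$ produces the required $W$.

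Finally I would integrate. Choose any section of $\s$ through $g$ whose image has tangent space $W$ at $g$, and let $\Sigma$ be a neighborhood of $g$ in its image, so that $g\in\Sigma$ and $T_g\Sigma = W$. By construction $\dif\s_g$ and $\dif\t_g$ are isomorphisms on $W$, so after shrinking $\Sigma$ both $\s|_\Sigma$ and $\t|_\Sigma$ are diffeomorphisms onto their images. Setting $U := \s(\Sigma)$ and $\sigma := (\s|_\Sigma)\inv \colon U \to \Sigma \subseteq \G$ gives $\s\circ\sigma = \Id_U$, $\sigma(x) = g$, and $\t\circ\sigma = (\t|_\Sigma)\circ(\s|_\Sigma)\inv$, a composite of diffeomorphisms; hence $\sigma$ is a local bisection around $g$. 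I expect every part of this to be routine except the common-complement lemma, which is where the one genuine idea sits.
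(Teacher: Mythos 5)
Your proof is correct; note that the paper does not actually supply one---it states the lemma and explicitly leaves the ``easy'' proof to the reader---so there is no argument of the paper's to compare yours against, and what you have written is exactly the standard argument that omission points to. Your reduction is accurate: a local bisection through $g$ is a local section of $\s$ through $g$ whose image is also transverse to the $\t$-fibers, and since $\dim \ker \dif\s_g = \dim \ker \dif\t_g = \dim \G - \dim M$, everything comes down to producing a common complement $W$ of the two kernels in $T_g\G$. Your genericity justification of that step is valid: the $\dim M$-planes complementary to a fixed subspace of complementary dimension form a dense open subset of the Grassmannian of $\dim M$-planes in $T_g\G$, and two dense open subsets of a manifold intersect (one could equally well exhibit $W$ by elementary linear algebra, but nothing is gained). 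The remaining steps---using the submersion normal form for $\s$ to realize any complement of $\ker\dif\s_g$ as the tangent space at $g$ of a local section of $\s$ through $g$, then shrinking $\Sigma$ via the inverse function theorem so that $\t$ restricts to a diffeomorphism onto its image, which is what the paper's definition of local bisection demands rather than merely that $\t\circ\sigma$ be a local diffeomorphism at $\s(g)$---are routine and correctly executed. Since the whole argument is local, it also goes through verbatim for the non-Hausdorff groupoids the paper later permits.
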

We leave the (easy) proof to the reader. Notice that such a bisection, in general, is not unique.
\subsection{The Picard group}
We defined the Picard and Bibundles in Chapter~\ref{chap:morita}. Lets take a look at a few examples which arise in the b-symplectic setting.
Recall that we use the following diagram to illustrate the notion of $(\G_2,\G_1)$-bibundle:
\[
\begin{tikzcd}
\G_2 \darrow & P \arrow[dl, "\t"] \arrow[loop left] \arrow[loop right, leftarrow] \arrow[dr, "\s" swap] & \G_1 \darrow \\
M_2 & & M_1
\end{tikzcd}
\]
From here on out, all bibundles are assumed to be \emph{principal} bibundles.
Recall that any $(\G_2,\G_1)$-bibundle $P$ induces a map of orbit spaces which we denote by the same letter $P$:
\[ P:M_1/\G_1 \to M_2/\G_2,\quad [x]\longmapsto [\t(p)] \text{ for any } p \in P\text{ with }\s(p) = x. \]
\begin{example}
\label{example:transversal}
Given a symplectic groupoid $(\G,\Omega)$ over a Poisson manifold $M$ and a complete Poisson transversal $T$ (i.e., a submanifold intersecting each symplectic leaf of $M$ transversely in a symplectic submanifold), then $(\G|_T,\Omega_{\G|_T})$ is a symplectic groupoid Morita equivalent to $(\G,\Omega)$: a $(\G,\G|_T)$-bibundle is given by:
\[
\begin{tikzcd}
\G \darrow & s^{-1}(T) \arrow[dl, "\t"] \arrow[loop left, looseness=2.5] \arrow[leftarrow, loop right, looseness=2.5] \arrow[dr, "\s" swap] & \G|_T \darrow \\
M & & T \, ,
\end{tikzcd}
\]
with the obvious left/right actions. By Proposition 8 in \cite{Crainic2}, both $s^{-1}(T)$ and $\G|_T$ are symplectic submanifolds. Using the multplicativity of $\Omega$, it follows easily that this is indeed a symplectic bibundle.
\end{example}
A bibundle $P$ is a generalized isomorphism $\G_1 \to \G_2$, where an element $p\in P$ is thought of as an ``arrow" from a point in $M_1$ to a point in $M_2$.
For any $(\G_2,\G_1)$-bibundle $P$ the restriction of $P$ to $U \subset M_1$ is the collection of all elements of $p$ whose source lies in $U$.
\[ P|_U := \s\inv(U) .\]
When $\G_1,\G_2 \rightrightarrows M$ have the same unit manifold, the isotropy of $P$ at $x \in M$ are the points in $P$ which have source and target equal to $x$.
\[ P_x := \s\inv(x) \cap \t\inv(x). \]
A bisection of ($\G_2$,$\G_1$)-bibundle $P$ is a section $\sigma:M_1\to P$ of $\s:P\to M_1$ such that $\t\circ \sigma:M_1\to M_2$ is a diffeomorphism.

We will now review the construction of \emph{map-like} bibundles. Suppose $F:\G_1 \to \G_2$ is an isomorphism of symplectic groupoids. The induced map $f: M_1 \to M_2$ on the space of units is necessarily a Poisson diffeomorphism and we say that $F$ \emph{covers} $f$. The map $F$ gives rise to a symplectic bibundle:
\[ P_(F):= \G_2 \times_{s,f} M_1, \]
with anchor maps:
\[ \t(g,x) = \t(g),\quad \s(g,x) = x, \]
and left/right multiplication defined by:
\[ g_2 \cdot (g,x) = (g_2g,x),\qquad (g,x) \cdot g_1 = (gF(g_1),x). \]
The symplectic structure on $P_F$ is the pullback $\pr_1^* \Omega_2$. The symplectic $(\G_2,\G_1)$-bibundle $P_F$is called the \emph{symplectic bibundle associated to} $F$.

In general, not every bibundle will arise from a symplectomorphism of symplectic groupoids. In fact, it is shown in \cite{ruipic} that:
\begin{proposition}\label{prop:lagbisection}
A symplectic $(\G_2,\G_1)$-bibundle $P$ is isomorphic to $P_F$ for some isomorphism of symplectic groupoids $F:\G_1 \to \G_2$ if and only if the bibundle admits a lagrangian bisection.
\end{proposition}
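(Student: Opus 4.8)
The plan is to bootstrap from the purely smooth characterization of map-like bibundles and then control the symplectic data with the results of Chapter~\ref{chap:dman}. Recall that Lemma~\ref{lemma:maplike} already tells us that a principal bibundle admitting a section as a left $\G_2$-bundle is isomorphic to $\bP(F) = P_F$ for a \emph{Lie} groupoid homomorphism $F \colon \G_1 \to \G_2$, and that the bisection condition (that $\t \circ \sigma$ be a diffeomorphism) forces $F$ to be an isomorphism on units, hence an isomorphism of Lie groupoids. The only genuinely new content of the proposition is therefore the claim that the \emph{lagrangian} condition on the bisection is exactly what promotes $F$ from a Lie groupoid isomorphism to a symplectic one, i.e.\ to one satisfying $F^* \Omega_2 = \Omega_1$. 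I would isolate this point using Lemma~\ref{lemma:dliemorphism}, which characterizes morphisms of D-Lie groupoids via the gauge equation (\ref{eqn:morphism}).

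For the easy direction, suppose $P \isom P_F$ for a symplectic groupoid isomorphism $F \colon \G_1 \to \G_2$ covering the Poisson diffeomorphism $f \colon M_1 \to M_2$. Then $P_F = \G_2 \times_{\s,f} M_1$ carries the symplectic form $\pr_1^* \Omega_2$ and the canonical bisection $\sigma(x) = (\u(f(x)),x)$. I would compute $\sigma^*(\pr_1^* \Omega_2) = f^* \u^* \Omega_2$, which vanishes because the unit submanifold of any symplectic groupoid is lagrangian, so $\u^* \Omega_2 = 0$. A dimension count ($\dim P = \dim \G_1 = 2 \dim M_1 = 2 \dim \sigma(M_1)$) shows $\sigma(M_1)$ is half-dimensional, so $\sigma$ is a lagrangian bisection.

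The harder direction is the converse. Starting from a lagrangian bisection $\sigma \colon M_1 \to P$, I would first invoke the smooth statement above to obtain the Lie groupoid isomorphism $F$ with $P \isom P_F$ as bibundles of Lie groupoids. Next I would use $\sigma$ to trivialize $P$, exactly as in the proof of Proposition~\ref{prop:symform}: writing the symplectic form of $P$ in the trivialization $P \isom \G_2 \times_{\s,f} M_1$ as $\pr_1^* \Omega_2 + \pr_2^* \beta$, where $\beta := \sigma^* \Omega^P$ and $f = \t \circ \sigma$. The lagrangian hypothesis says precisely $\beta = \sigma^* \Omega^P = 0$, so the symplectic form is honestly $\pr_1^* \Omega_2$ and $P \isom P_F$ as symplectic bibundles. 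Finally, reading the gauge datum $\beta = 0$ back through Lemma~\ref{lemma:dliemorphism}, equation (\ref{eqn:morphism}) collapses to $F^* \Omega_2 = \Omega_1$; hence $F$ is a symplectomorphism and an isomorphism of symplectic groupoids, completing the identification $P \isom P_F$.

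I expect the main obstacle to be the trivialization step in the converse: one must verify that pulling the left-multiplicative symplectic form $\Omega^P$ back through the bisection yields a well-defined gauge $2$-form $\beta$ on $M_1$ and that, after trivializing, $\Omega^P$ splits as $\pr_1^* \Omega_2 + \pr_2^* \beta$ with no cross terms --- this is where left-multiplicativity of $\Omega^P$ (the defining property of a symplectic bibundle) does the real work. Once that normal form is in hand, the equivalence ``lagrangian $\Leftrightarrow \beta = 0 \Leftrightarrow F$ symplectic'' is immediate from Lemma~\ref{lemma:dliemorphism}.
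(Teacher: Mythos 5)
The paper never proves this proposition: it is quoted verbatim from Bursztyn--Fernandes \cite{ruipic}, so there is no in-text argument to compare against. Judged on its own, your proof is essentially correct and has the merit of being self-contained within the thesis's machinery: Lemma~\ref{lemma:maplike} handles the smooth map-like reduction, the trivialization-and-splitting computation is exactly the one carried out in the proof of Proposition~\ref{prop:symform} (so the normal form $\Omega^P = \pr_1^* \Omega_2 + \pr_2^* \beta$ with $\beta = \sigma^* \Omega^P$ is legitimate), and the forward direction --- isotropy of the unit section plus the half-dimension count --- is fine as written.

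Two steps, however, need an explicit line of justification. First, ``isomorphism on units, hence an isomorphism of Lie groupoids'' is not automatic from Lemma~\ref{lemma:maplike} alone: it uses biprincipality of $P$, which makes $F$ a weak equivalence, and a fully faithful homomorphism covering a diffeomorphism is an isomorphism --- the same argument as the bisection lemma of Chapter~\ref{chap:morita}. (Recall the Chapter~\ref{chap:bsymp} convention that all bibundles are principal, so this hypothesis is available.) Second, your final appeal to Lemma~\ref{lemma:dliemorphism} is mildly circular as stated: that lemma converts a D-Lie groupoid \emph{morphism} with gauge part $\beta$ into equation~(\ref{eqn:morphism}), but at that stage you do not yet know that $(F,\beta=0)$ underlies a D-Lie morphism; what you know is that $P \isom (\G_2 \times_{\s,f} M_1,\, \pr_1^* \Omega_2)$ as symplectic bibundles with right $\G_1$-action through $F$. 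The missing line comes from \emph{right} multiplicativity (not the left multiplicativity you used for the splitting): writing the right action in the trivialization as $(g,\t(g_1)) \cdot g_1 = (g \cdot F(g_1), \s(g_1))$ and using multiplicativity of $\Omega_2$, one computes
\[
\m_R^* \Omega^P = \pr_1^* \Omega^P + \pr_2^* F^* \Omega_2 ,
\]
which compared with the bibundle axiom $\m_R^* \Omega^P = \pr_1^* \Omega^P + \pr_2^* \Omega_1$ gives $\pr_2^*\left( F^* \Omega_2 - \Omega_1 \right) = 0$; since $\pr_2$ is a surjective submersion, $F^* \Omega_2 = \Omega_1$, i.e.\ $F$ is an isomorphism of symplectic groupoids. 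With these two insertions your argument is complete, and it matches in spirit the $\beta = 0$ specialization of the D-Lie correspondence rather than the original proof in \cite{ruipic}.
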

For a symplectic manifold $M$ the obstructions to finding a symplectomorphism $F:\Sigma(M) \to \Sigma(M)$ such that $P_F \isom P$ become a symplectic version of the Nielson realization problem (see \cite{ruipic}). For b-symplectic manifolds the obstructions are even more complicated, but when
$M$ is a closed b-symplectic surface these obstructions vanish \cite{Radko} and one can always find a bisection.
\begin{example}[Picard group of a closed b-symplectic surface]
Radko and Shylakthenko in \cite{Radko} computed the Picard group of any toplogical stable surface. They showed that for a toplogical stable surface $M$, every symplectic $(\Sigma(M),\Sigma(M))$-bibundle admits a lagrangian bisection, from which it follows that the Picard group of such a surface is isomorphic to the group of outer Poisson automorphisms.
\[ \Pic(M) \isom \mathbf{OutPoiss}(M) \]
Their result depends critically on the Dehn-Nielsen-Baer theorem which is false for dimensions greater than 2. They went on to describe this group with the help of labeled graphs. This is the primary inspiration for our definition of a \emph{discrete presentation} which we will use to both classify and compute invariants for stable b-symplectic manifolds in higher dimensions.
\end{example}
\subsection{The Picard Lie algebra}
The Lie algebra of the Picard group was defined and studied by Burzstyn and Fernandes in \cite{ruipic}. In general, the Picard group of a Poisson manifold can be infinite dimensional, however Corollary 1.3 from \cite{ruipic} says that the Picard Lie algebra, $\mathfrak{pic}(M)$, fits into a long exact sequence together with the Poisson and de Rham cohomologies of $M$:
\[
\begin{tikzcd}[column sep=small]
\dots \arrow[r]
& H^1_\dR(M) \arrow[r]
& H^1_\pi(M) \arrow[r]
& \mathfrak{pic}(M) \arrow[r]
& H^2_\dR(M) \arrow[r]
& H_\pi^2(M) \arrow[r]
& \dots
\end{tikzcd}
\]
Applying Theorem~\ref{thm:bcohomology}, the map $H^n_\dR(M) \to H^n_\pi(M)=H^n_\dR(M)\oplus H^{n-1}_\dR(Z)$ is just injection to the first coordinate. Hence the exact sequence becomes:
\[
\begin{tikzcd}[column sep=small]
& H^1_\dR(M) \arrow[r,hook]
& H^1_\dR(M) \oplus R^N \arrow[r]
& \mathfrak{pic}(M) \arrow[r]
& H^2_\dR(M) \arrow[r,hook]
& H_\dR^2(M) \oplus H^1_\dR(Z)
\end{tikzcd}
\]
where $N$ is the number of connected components of $Z$. We conclude that:
\begin{proposition}
If $M$ is a stable b-symplectic manifold whose singular locus $Z$ has $N$ connected components, then its Picard Lie algebra is the $N$-dimensional abelian Lie algebra:
\[ \mathfrak{pic}(M) \isom \R^N. \]
In particular, the Picard group of $M$ is finite dimensional.
\end{proposition}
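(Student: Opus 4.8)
The plan is to run a diagram chase on the Bursztyn--Fernandes exact sequence (Corollary 1.3 of~\cite{ruipic}) after substituting the Poisson cohomology of a b-symplectic manifold computed by Theorem~\ref{thm:bcohomology}. First I would record that, since $Z$ has precisely $N$ connected components, $H^0_\dR(Z) \isom \R^N$; feeding this into Theorem~\ref{thm:bcohomology} yields $H^1_\pi(M) \isom H^1_\dR(M) \oplus \R^N$ and $H^2_\pi(M) \isom H^2_\dR(M) \oplus H^1_\dR(Z)$. The structural input, already noted above, is that under these identifications each natural map $H^n_\dR(M) \to H^n_\pi(M)$ is the inclusion of the first summand. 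This is exactly what collapses the abstract long exact sequence into the short five-term sequence displayed before the statement.

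Next I would extract the underlying vector space isomorphism by exactness alone. Because the rightmost map $H^2_\dR(M) \hookrightarrow H^2_\dR(M) \oplus H^1_\dR(Z)$ is injective, its kernel is trivial, so exactness at $H^2_\dR(M)$ forces the connecting map $\mathfrak{pic}(M) \to H^2_\dR(M)$ to vanish identically. Exactness at $\mathfrak{pic}(M)$ then shows that the map $j \colon H^1_\pi(M) \to \mathfrak{pic}(M)$ is surjective, while exactness at $H^1_\pi(M)$ identifies $\ker j$ with the image of the first-factor inclusion $H^1_\dR(M) \hookrightarrow H^1_\dR(M) \oplus \R^N$. Combining these gives
\[ \mathfrak{pic}(M) \isom \bigl( H^1_\dR(M) \oplus \R^N \bigr) / \bigl( H^1_\dR(M) \oplus 0 \bigr) \isom \R^N , \]
which is the claimed dimension. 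Finite dimensionality of $\Pic(M)$ is then immediate, since a Lie group has the same dimension as its Lie algebra.

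To promote this to an isomorphism of Lie algebras I would work with the bracket that $\mathfrak{pic}(M)$ inherits as the quotient $H^1_\pi(M)/\ker j$ of the Poisson cohomology Lie algebra, having first confirmed (from~\cite{ruipic}) that $j$ is genuinely a homomorphism of Lie algebras so that $\ker j$ is an ideal. It then suffices to show $[H^1_\pi(M), H^1_\pi(M)] \subseteq \ker j = H^1_\dR(M) \oplus 0$. Here I would use the explicit geometry of the splitting: the $\R^N$ summand is spanned by the classes of the modular vector fields attached to the distinct components $Z_1, \dots, Z_N$, and these representatives can be chosen with pairwise disjoint supports, hence commute; it remains to see that the mixed brackets, and the brackets among the $H^1_\dR(M)$ representatives, also reduce into $\ker j$. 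Verifying that every such commutator lands in the kernel is the one genuinely geometric step, and it is the part I expect to be the main obstacle, since $H^1_\pi$ need not be abelian for a general Poisson manifold; the diagram chase that produces the underlying vector space and the finiteness conclusion are otherwise purely formal.
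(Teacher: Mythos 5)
Your argument is, in substance, the paper's own: the proof in the text consists of (i) exactly your long exact sequence chase, which gives $\dim \mathfrak{pic}(M) = N$ together with finite dimensionality of $\Pic(M)$, and (ii) exactly your geometric input, namely that by choosing appropriate volume forms in the local model one obtains compactly supported modular flows around the components $Z_1, \dots, Z_N$, hence $N$ commuting Poisson vector fields whose time-$t$ flows produce an $N$-parameter family of bibundles.

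The one step you flag as the ``main obstacle'' is, however, not an obstacle, and your own setup already dispatches it. If, as you propose, you first confirm from~\cite{ruipic} that $j \colon H^1_\pi(M) \to \mathfrak{pic}(M)$ is a homomorphism of Lie algebras, then $\ker j$ is automatically an ideal (kernels of Lie algebra homomorphisms always are), so every bracket with at least one entry in $\ker j \isom H^1_\dR(M)$ lands in $\ker j$ for free: the mixed brackets $[a, m]$ and the brackets $[a,b]$ among de Rham representatives require no geometric verification whatsoever. The only brackets needing input are the $[m_i, m_j]$ among the spanning modular classes, which vanish by your disjoint-support choice. Even more economically --- and this is how the paper phrases it --- once the dimension count gives $\dim \mathfrak{pic}(M) = N$, it suffices to exhibit $N$ commuting one-parameter subgroups of $\Pic(M)$ whose infinitesimal generators span $\mathfrak{pic}(M)$: a bilinear bracket vanishing on a spanning set vanishes identically, so one never needs to control the bracket on all of $H^1_\pi(M)$ at all. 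The only point you should make explicit in either version is that the compactly supported modular representatives really do map onto a basis of the $\R^N$ summand under the identification coming from Theorem~\ref{thm:bcohomology} (this is the content of the paper's ``choosing appropriate volume forms in the local model''), since otherwise the commuting family is not known to span $\mathfrak{pic}(M)$. With that remark added, your proof closes and coincides with the paper's.
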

\begin{proof}
The long exact sequence argument above gives the dimension of $\mathfrak{pic}(M)$. Moreover, by choosing appropriate volume forms in the local model, one can find compactly supported modular flows around each connected component of $Z$, which lead to $N$ commuting Poisson vector fields. The time-1 flows of these vector fields yield a $N$ dimensional family of bibundles. Hence, the Picard Lie algebra is abelian and the connected component of the identity of $\Pic(M)$ is a quotient of $\R^N$ by some (possibly trivial) discrete subgroup.
\end{proof}

\section{Strategy of the proof}\label{section:trategyofproof}
The main aim of this chapter is to prove Theorem~\ref{maintheorem1}, describing the Picard group $\Pic(M)$ of a stable b-symplectic manifold $M$. The main steps in the proof are:
\begin{enumerate}
\item[Step 1.] Reduce the computation of $\Pic(M)=\Pic(\Sigma(M))$ to the computation of $\Pic(\G)$, where $\G \rightrightarrows \C$ is a symplectic groupoid integrating a disjoint union $\C$ of affine cylinders (see Example \ref{example:caff}).
\item[Step 2.] Describe $\Pic(\G)$ in terms of discrete data associated with a graph whose vertices are the connected components of $M-Z$ and whose edges are the connected components of $Z$.
\end{enumerate}
Section \ref{subsection:reduction} takes care of Step 1, while Step 2 we will be taken care of in the remaining sections. Section \ref{subsection:gluing} proves a simple lemma about gluing bibundles. Finally Section \ref{subsection:pointedbibundles} will define \emph{pointed bibundles} which will be a bridge between discrete data and geometric data.
\subsection{Reduction to 2 dimensions}\label{subsection:reduction}
Step 1 will follow from restricting $\Sigma(M)$ to a complete 2-dimensional Poisson transversal $\C$: $\Sigma(M)$ and $\G=\Sigma(M)|_{\C}$ are Morita equivalent (see Example \ref{example:transversal}) so that $\Pic(\Sigma(M))=\Pic(\G)$. We will use the semi-local models around the singular hypersurface of a stable b-symplectic manifold  to construct the Poisson transversal.

Let the singular hypersurface $Z$ be decomposed into a disjoint union of connected components $Z = \sqcup_{i \in I} Z_i$. By the definition of stable b-symplectic structure, each $Z_i$ has trivial normal bundle and is a symplectic mapping torus $T_{f_i}$, for some symplectomorphism $f_i: L_i \to L_i$.  We have a Poisson diffeormorphism
\[ \phi_i:U_i\to \R \times Z_i,\]
defined on an open neighborhood $U_i$ of $Z_i$, where $\R \times Z_i$ is furnished with the Poisson structure:
\[ \pi=\frac{x}{\rho_i}\dd{\theta}\wedge \dd{x} + \pi^{f_i}, \]
where $\rho_i$ be the modular period of $Z_i$. We think of $L_i$ as a  leaf of $Z_i$, i.e, a fiber of the mapping torus $p: Z_i \to \S^1$. Also, we can assume that $f_i$ is the holonomy of the flat connection on $p: Z_i \to \S^1$ induced by the modular vector field on $Z_i$.
\begin{lemma}
\label{lemma:transversal}
For each component $Z_i$ there exists an embedded Poisson transversal:
\[ \iota_i: (-\epsilon,\epsilon) \times \S^1 \hookrightarrow M , \]
with induced Poisson structure the b-symplectic affine cylinder $ \caff^{\rho_i}$:
\[ \iota_i^* (\pi)=  \frac{x}{\rho_i}\dd{\theta}\wedge \dd{x}. \]
\end{lemma}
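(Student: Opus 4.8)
The plan is to build the transversal directly inside the semi-local model $\phi_i \colon U_i \to \R \times Z_i$, exploiting the mapping torus structure $Z_i = T_{f_i}$. Write $p \colon Z_i \to \S^1$ for the fibration of the mapping torus, with symplectic fiber $L_i$, so that the symplectic leaves of the regular Poisson structure on $Z_i$ are exactly the fibers. Since $L_i$ is a symplectic leaf it is connected, so there is a smooth path in $L_i$ relating a chosen point $\ell_0$ and its image $f_i(\ell_0)$; suspending this path produces a smooth section $s \colon \S^1 \to Z_i$ of $p$. Being a section, $s$ is automatically an embedding, and its image $C := s(\S^1)$ is an embedded circle transverse to the fibers $L_i$, meeting each in a single point, with $p|_C$ of degree one (this last point is what will pin down the modular period). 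I would then set
\[ T := (-\epsilon, \epsilon) \times C \subset \R \times Z_i, \]
for $\epsilon$ small enough that $T$ lies in the image of $\phi_i$, and define $\iota_i := \phi_i\inv|_T$, an embedding $(-\epsilon,\epsilon) \times \S^1 \hookrightarrow M$.

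First I would verify that $T$ is a Poisson transversal, treating the two leaf types separately. Away from the singular locus ($x \neq 0$) the leaves are open, so the only requirement is that $T \cap \{x \neq 0\}$ be symplectic, which is immediate once the induced $2$-form is computed below. Along $Z_i$ (where $x = 0$) the leaf through a point of $C$ is a fiber $L_i$, of codimension one in $Z_i$; here $T_pT = \langle \partial_x, \dot C \rangle$ meets $T_p L_i$ trivially, since $\partial_x$ is transverse to $Z_i$ and $\dot C$ is transverse to $L_i$ inside $Z_i$. Hence $T_p T \oplus T_p L_i = T_p M$ and the intersection $T_pT \cap T_pL_i = 0$ is trivially symplectic. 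Thus $T$ is cosymplectic and inherits a Poisson structure.

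The key computation is that this induced structure is exactly the affine cylinder. For $x \neq 0$ the inverse $2$-form of $\frac{x}{\rho_i}\dd{\theta}\wedge\dd{x} + \pi^{f_i}$ has the shape $\omega = \frac{\rho_i}{x}\diff x \wedge \diff\theta + \omega_{\mathrm{fib}}$, where $\omega_{\mathrm{fib}}$ inverts the leafwise part $\pi^{f_i}$ and, crucially, contains no $\diff x$. Contracting with the transverse direction therefore gives $\iota_{\partial_x}\omega = \frac{\rho_i}{x}\diff\theta$, and pairing with the section direction $\dot C$ yields $\omega(\partial_x, \dot C) = \frac{\rho_i}{x}\,\diff\theta(\dot C) = \frac{\rho_i}{x}$, using that $C$ is a degree-one section so $\diff\theta(\dot C) = 1$. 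As $T$ is two dimensional this value determines $\omega|_T = \frac{\rho_i}{x}\diff x \wedge \diff\theta$ completely, and inverting gives $\iota_i^*\pi = \frac{x}{\rho_i}\dd{\theta}\wedge\dd{x}$, which is precisely $\caff^{\rho_i}$.

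The main obstacle is this last computation: ensuring that no cross terms from the leafwise symplectic form $\omega_{\mathrm{fib}}$ survive in the induced structure, which is what separates obtaining $\caff^{\rho_i}$ on the nose (with the correct modular period $\rho_i$) from obtaining a reparametrized or gauge-shifted cylinder. The contraction argument above shows the vanishing is forced by the single transverse direction $\partial_x$ together with the absence of $\diff x$ in $\omega_{\mathrm{fib}}$, and the degree-one condition on $C$ is exactly what guarantees the period is $\rho_i$ rather than a submultiple. Everything else — existence and embeddedness of $C$, and the transversality checks — is soft and follows from the mapping torus structure and connectedness of the leaf $L_i$.
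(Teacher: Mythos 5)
Your proof is correct and follows essentially the same route as the paper: construct a section $\gamma_i\colon \S^1 \to Z_i$ of the mapping torus fibration using connectedness of the leaf $L_i$, extend it in the normal direction inside the semi-local model $\R \times Z_i$ to get the embedded cylinder, and verify transversality and the induced structure. The only difference is that you carry out explicitly the block-diagonal contraction computation (showing $\omega_{\mathrm{fib}}$ annihilates $\partial_x$, so $\iota_i^*\omega = \frac{\rho_i}{x}\diff x \wedge \diff\theta$ on the nose) that the paper simply defers to Example~\ref{example:mappingtoruspoiss} as ``clear'' --- a worthwhile addition, since that is where the modular period $\rho_i$ is actually pinned down.
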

\begin{proof}
Since $L_i$ is connected there exists a section $\gamma_i: \S^1 \to Z_i$ of $p: Z_i \to \S^1$. Since the normal bundle to $Z_i$ is trivial we can extend $\gamma_i$ to a 2 dimensional embedded submanifold
\[ \iota_i: (-\epsilon,\epsilon) \times \S^1 \to \R \times Z_i. \]
The first coordinate corresponds to the normal directions to $Z_i$ so that $\iota_i|_{\{ 0 \} \times \S^1} = \gamma_i$. Thus the image of $\iota_i$ is transversal to each leaf. It is clear from Example \ref{example:mappingtoruspoiss} that the pullback by $\iota_i^* \pi\inv$ of the b-form will satisfy the (inverse) of the formula above.
\end{proof}
Fix an orientation for $M$. We say that an open symplectic leaf of $M$ is \emph{positive} (respectively, \emph{negative}) if the orientation provided by the symplectic form coincides (respectively, is the opposite) of the orientation of $M$. Notice that we can orient the transversal of the previous lemma such that $\iota_i(x,\theta)$ lies in a positive (respectively, negative) leaf if and only if $x$ is positive (respectively, negative). We will assume that we have done this from now on.
\begin{corollary}
Suppose $(M,\pi)$ is a stable b-symplectic manifold. There exists a complete Poisson transversal $\iota: \C \hookrightarrow M$, where $\C = \sqcup_{i \in I} C_i$ is a disjoint union of affine cylinders $C_i\simeq \caff^{\rho_i}$ with modular period $\rho_i$. In particular, $\Sigma(M)$ is Morita equivalent to the restriction:
\[ \G := \Sigma(M)|_{\iota(\C)}. \]
\end{corollary}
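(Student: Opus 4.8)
The plan is to assemble the local transversals produced by Lemma~\ref{lemma:transversal} into a single global Poisson transversal and then to invoke Example~\ref{example:transversal}. First I would record that stability forces $Z$ to be compact (Theorem~\ref{thm:GMPtorus}), so that $Z$ decomposes into only finitely many connected components $\{Z_i\}_{i \in I}$, and distinct components are separated by a positive distance. Applying Lemma~\ref{lemma:transversal} to each $Z_i$ yields an embedded two-dimensional Poisson transversal $\iota_i \colon (-\epsilon_i, \epsilon_i) \times \S^1 \into M$ whose image $C_i$ carries the induced b-symplectic structure $\caff^{\rho_i}$. Shrinking the $\epsilon_i$ (using compactness of each $Z_i$ and the separation of the components) makes the images pairwise disjoint, so that $\iota := \bigsqcup_{i} \iota_i$ is a well-defined embedding of $\C := \bigsqcup_i C_i$ into $M$.

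The heart of the argument is verifying that $\iota(\C)$ is \emph{complete}, i.e.\ that it meets every symplectic leaf of $M$ transversally in a symplectic submanifold. The transversality and symplecticity conditions hold leaf-by-leaf, since each $\iota_i$ is already a Poisson transversal of the semi-local model $\R \times T_{f_i}$ around $Z_i$; what remains is to check that the images collectively exhaust all leaves. I would split the leaves of $M$ into two types. The leaves contained in the singular locus are the fibers $L_i$ of the mapping-torus fibration $p \colon Z_i \to \S^1$; since $\iota_i|_{\{0\} \times \S^1} = \gamma_i$ is a section of $p$, the transversal meets each such leaf in exactly one point. The remaining leaves are the open top-dimensional leaves, namely the connected components $U$ of $M - Z$. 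Here the relevant observation is that, because $U$ and $M - Z$ are open, the frontier of $U$ lies in $Z$, so that (provided $M$ has no purely symplectic connected components) $U$ is adjacent to some $Z_i$ and is met by the points $\iota_i(x,\theta)$ with $x \neq 0$ on the appropriate side.

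The step I expect to be the main obstacle is precisely this last point: one must argue, using the normal form $\R \times T_{f_i}$ and the orientation conventions fixed in the discussion preceding the corollary, that the two sides $x>0$ and $x<0$ of each local model genuinely sit inside open leaves of $M$ and that this assignment accounts for every component of $M - Z$. Once completeness is established, Example~\ref{example:transversal} applies directly: $\G := \Sigma(M)|_{\iota(\C)}$ is a symplectic groupoid Morita equivalent to $\Sigma(M)$. In particular, since the Picard group is a Morita invariant (Chapter~\ref{chap:morita}), this yields $\Pic(M) = \Pic(\Sigma(M)) = \Pic(\G)$, which completes Step~1 of the strategy.
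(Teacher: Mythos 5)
Your argument is correct and is essentially the paper's own proof, which simply takes $\C$ to be the disjoint union of the transversals from Lemma~\ref{lemma:transversal}, observes that it meets every orbit (the compact leaves of each $Z_i$ via the sections $\gamma_i$, the open leaves via the points with $x \neq 0$), and invokes Example~\ref{example:transversal}; your extra details (finiteness of $I$ from compactness of $Z$, shrinking the $\epsilon_i$ so the images are disjoint, and the frontier argument showing each component of $M - Z$ is adjacent to some $Z_i$) are exactly the fleshing-out the paper leaves implicit. Your proviso excluding purely symplectic connected components is likewise an implicit standing assumption of the paper (compare the leaf-space description following Theorem~\ref{thm:GMPtorus}), not a divergence from its route.
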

\begin{proof}
We take for $\C$ the disjoint union of the transversals constructed in Lemma \ref{lemma:transversal}. Since the embedding $\C \to M$ intersects every orbit of $M$, it is a complete Poisson transversal. Hence, $\G$ and $\Sigma(M)$ are Morita equivalent (see Example \ref{example:transversal}).
\end{proof}
The groupoids $\G$ integrating disjoint unions of affine cylinders arising from a complete Poisson transversal are not totally arbitrary. For example, they share with $\Sigma(M)$ one useful feature: the modular vector field can be lifted to a family of symplectic groupoid automorphisms of $\G$ as shown by the next lemma.
\begin{lemma}
Let $M$ be a stable b-symplectic manifold with a complete Poisson transversal $\iota: \C \to M$. There exists a choice of volume form $\mu$ on $M$ such that the modular vector field $X_\mu$ is tangent to $\iota(\C)$. In particular, if $\G=\Sigma(M)|_{\iota(\C)}$, then the 1-parameter family of symplectic groupoid automorphisms $\Phi^t:\Sigma(M)\to\Sigma(M)$ induced by $X_\mu$ preserves $\G$.
\end{lemma}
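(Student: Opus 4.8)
The statement has two parts: the existence of a volume form $\mu$ whose modular vector field $X_\mu$ is tangent to $\iota(\C)$, and the consequence that the induced flow preserves $\G = \Sigma(M)|_{\iota(\C)}$. The second part is formal once the first is established, so the plan is to concentrate on producing $\mu$. Since the components $\iota_i(C_i)$ lie in pairwise disjoint tubular neighborhoods $U_i\cong \R\times Z_i$, and since the value of a modular vector field at a point depends only on the $1$-jet of the volume form there (because $L_{X_h}\mu = \dif(\iota_{X_h}\mu)$ for a top form $\mu$), it suffices to build, for each $i$, a volume form $\mu_i$ on $U_i$ whose modular vector field is tangent to $\iota_i(C_i)$, and then to glue. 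Concretely, I take a global positive volume form $\mu$ agreeing with each $\mu_i$ on a neighborhood of $\iota_i(C_i)$ and interpolating elsewhere by a partition of unity; convex combinations of orientation-compatible volume forms on the oriented manifold $M$ stay positive, so this is a genuine volume form, and $X_\mu = X_{\mu_i}$ along $\iota_i(C_i)$, giving tangency on all of $\iota(\C)$.

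For the local construction I would exploit that $\iota_i(C_i)$ is a Poisson transversal whose induced structure is the affine cylinder $\caff^{\rho_i}$. Passing to the normal-form model of a neighborhood of a Poisson transversal, a neighborhood of $\iota_i(C_i)$ splits as the total space of the symplectic normal bundle $p\colon N\to \iota_i(C_i)$, whose fibers are symplectic Poisson submanifolds and whose zero section carries the Poisson structure $\caff^{\rho_i}$. I then choose $\mu_i = p^*\nu\wedge\lambda$, where $\nu = \dif x\wedge \dif\theta$ is the affine-cylinder volume on the base and $\lambda$ is the fiberwise Liouville volume of the symplectic normal bundle. The key computation is that the modular vector field of such an adapted volume form is additive in the two factors and that the fiberwise part, being the Liouville volume of a symplectic structure, contributes nothing; hence along the zero section $X_{\mu_i}$ equals the modular vector field of $\caff^{\rho_i}$, namely $\rho_i\,\partial_\theta$. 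This is tangent to $\iota_i(C_i)$ and, having no $\partial_x$ component, points along the circle direction. The reason the transversal-adapted model (rather than the semilocal model $\frac{x}{\rho_i}\partial_\theta\wedge\partial_x + \pi^{f_i}$ on $\R\times Z_i$) is needed is precisely that in the latter the modular vector field points along the suspension direction of $Z_i$, which is generically transverse to the section $\gamma_i$ defining $\iota_i(C_i)$; the adapted model absorbs this discrepancy into the choice of Liouville volume on the normal directions.

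With $\mu$ in hand, the modular flow $\phi^t$ of $X_\mu$ is a one-parameter family of Poisson diffeomorphisms of $M$, since $X_\mu$ is a Poisson vector field. Because $X_\mu$ is tangent to $\iota(\C)$ and restricts there to $\rho_i\partial_\theta$, the integral curves through points of $\iota(\C)$ remain on the circles $\{x=\mathrm{const}\}\subset \iota_i(C_i)$; these orbits are complete, so $\phi^t(\iota(\C)) = \iota(\C)$. A Poisson vector field integrates to a one-parameter group of symplectic groupoid automorphisms $\Phi^t$ of the source-simply-connected integration $\Sigma(M)$ covering $\phi^t$ (one may additionally arrange $X_\mu$ to be compactly supported near $Z$, as in the proof of the Picard Lie algebra proposition, to secure completeness of the lift). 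Since $\Phi^t$ intertwines $\s,\t$ with $\phi^t$, for any $g$ with $\s(g),\t(g)\in\iota(\C)$ we get $\s(\Phi^t g),\t(\Phi^t g)\in \phi^t(\iota(\C)) = \iota(\C)$, so $\Phi^t$ maps $\G = \s\inv(\iota(\C))\cap\t\inv(\iota(\C))$ onto itself. The main obstacle is the local tangency computation: establishing additivity of the modular vector field in the transversal normal form and vanishing of the normal contribution, while verifying that the compensation for the mapping-torus monodromy is globally consistent around $\S^1$. This is where choosing the fiberwise Liouville volume in the symplectic normal directions is essential.
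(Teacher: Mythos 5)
Your global scheme (construct a good local volume near each $\iota_i(C_i)$, glue by a partition of unity with the cutoffs identically $1$ near the cylinders, use that $X_\mu$ depends only on the $1$-jet of $\mu$) is sound, and so is the final flow argument. The gap is in the central local claim: that in the normal form around the Poisson transversal the adapted volume $\mu_i = p^*\nu\wedge\lambda$, with $\lambda$ the fiberwise Liouville volume, has modular vector field tangent to the zero section. The normal-form Poisson structure is not a product but of coupling type, and the coupling term contributes a \emph{vertical} modular term that your computation omits. Concretely, work in the semilocal model $\bigl(\R\times T_{f_i},\ \tfrac{x}{\rho}\,\partial_\theta\wedge\partial_x + \pi^{f_i}\bigr)$ and straighten the section $\gamma_i(\theta)=(\theta,c(\theta))$ by a $\theta$-dependent family of symplectomorphisms $\phi_\theta$ of the leaf with $\phi_\theta(c_0)=c(\theta)$; this is exactly how one realizes your tubular coordinates, with fibers the transported vertical leaves. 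In the new coordinates the bivector becomes
\[ \til\pi \;=\; \frac{x}{\rho}\,\bigl(\partial_\theta - U_\theta\bigr)\wedge\partial_x \;+\; \pi^{L}, \qquad U_\theta(q) := (\dif\phi_\theta)\inv\Bigl(\tfrac{d\phi_\theta}{d\theta}(q)\Bigr), \]
and $U_\theta(c_0) = (\dif\phi_\theta)\inv\bigl(c'(\theta)\bigr)$ is \emph{forced} to be nonzero whenever $\gamma_i$ is not horizontal for the modular connection --- precisely your generic case; no choice of tubular neighborhood can remove it, since the generator of $\phi_\theta$ along the section must equal $c'(\theta)$. Now compute $X_{\mu_i}$ for $\mu_i = \dif x\wedge\dif\theta\wedge\lambda$ term by term (the modular field is linear in the bivector): the base term gives the horizontal $\partial_\theta$-part; the vertical term $\pi^L$ gives zero (this is the only place Liouville helps); but the coupling term $\partial_x\wedge\frac{x}{\rho}U_\theta$ has both factors divergence-free, so it contributes $-\bigl[\partial_x, \tfrac{x}{\rho}U_\theta\bigr] = -\tfrac{1}{\rho}U_\theta$, a vertical field which is $x$-independent and hence nonzero along the entire zero section, including at $x=0$. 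So $X_{\mu_i}$ is \emph{not} tangent to $\iota_i(C_i)$: the fiberwise Liouville choice does not absorb the mapping-torus monodromy, contrary to the last sentence of your proposal. (Relatedly, the normal-bundle fibers are symplectic submanifolds but not Poisson submanifolds away from $x=0$, which is why the additivity you invoke is unavailable.)

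What repairs this is a conformal correction rather than a choice of fiber volume, and that is the paper's actual proof: since $X_{e^g\omega} = X_\omega + X_g$, one picks a time-dependent Hamiltonian $g_\theta$ on the leaf $L_i$ whose Hamiltonian isotopy drags the horizontal lift of $\partial_\theta$ onto the section $\gamma_i$ (equivalently, $X_{g_\theta}$ realizes the vertical discrepancy $c'(\theta)$ along $\gamma_i$), arranges $g_\theta$ to be trivial near the endpoints of $[0,1]$ so that it descends through the identification $(0,f_i(p))\sim(1,p)$ --- this settles the ``global consistency around $\S^1$'' worry you raise --- then suspends it to $g$ on $T_{f_i}$, cuts off by a bump near $Z_i$, and sets $\mu = e^g\omega$. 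One should also note, as your write-up does not, that tangency then holds on the whole cylinder and not just the central curve: for $x\neq 0$ the extra component of $X_g$ coming from $\pi^\sharp(\dif\theta)$ is proportional to $\partial_x$, which is tangent to $\iota_i(C_i)$ by construction of the transversal. In short, once the coupling contribution in your normal-form model is computed honestly, killing it amounts to exactly the rescaling $\mu \mapsto e^g\mu$ of the paper, so your route does not bypass that step --- it reduces to it.
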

\begin{proof}
Let $\omega$ be any choice of volume form on $M$. We claim that we there exists a smooth function $g: M \to \R$ such that the modular vector field associated to the volume form $\mu := e^g \omega$ is tangent to the embedding  $\iota: \C \to M$.

We will perform this adjustment locally around each $Z_i$. Let $\gamma_i$ be the section of $M_{f_i} \to \S^1$ as in Lemma \ref{lemma:transversal}. We can lift $\gamma_i$ to a curve $\til \gamma_i: [0,1] \to [0,1] \times L_i$ which respects the equivalence relation $(0,f_i(p)) \sim (1,p)$. Without loss of generality assume that $\til \gamma_i$ is constant near the endpoints.

Let $g_t: L_i \to \R$ be a time dependent family of smooth functions such that $\pr_2 \circ \til \gamma_i$ is an integral curve of the (time dependent) Hamiltonian vector field $X_{g_t}$. Again, we can assume without loss of generality that the $g_t$ are zero near the endpoints of $[0,1]$. We can think of the family $g_t$ as a function $\til g: [0,1] \times L_i \to \R$. Since $g_t$ is trivial near the endpoints, it respects the equivalence relation $\sim$ and descends to a function $g: M_{f_i} \to \R$.

Let $X_g$ be the Hamiltonian vector field associated to $g$. If $X_\omega$ is the modular vector field relative to $\omega$ then $X_\omega + X_g$ is tangent to $\iota$ and hence
\[ X_\omega + X_g = X_{e^g \omega} = \X_\mu , \]
is tangent to the embedding $\iota_i$. By choosing a local bump function around $Z_i$ we can arrange that the support of $g$ to be contained in a small neighborhood of $Z_i$.
\end{proof}
\begin{remark}
Although the transversals we have defined are only `finite' cylinders of the form $(-\epsilon,\epsilon) \times \S^1$, any integration of a finite cylinder is Morita equivalent to an integration of $\R \times \S^1$.
\end{remark}
We summarize this discussion for future reference in the following proposition.
\begin{proposition}
Suppose $M$ is a stable b-symplectic manifold. Then $\Sigma(M)$ is Morita equivalent to an integration $\G$ of a disjoint union $\C$ of affine cylinders. Furthermore $\G$ satisfies the following properties:
\begin{enumerate}[(a)]
    \item the restriction of $\G$ to any single cylinder has connected orbits;
    \item the open orbits of $\G$ can be split into two categories, positive and negative. Positive (respectively, negative) orbits are disjoint unions of positive (respectively, negative) half cylinders;
     \item there exists a smooth family of symplectic groupoid automorphisms $\Phi^t: \G \to \G$ covering the flow of the modular vector field $X_\mu$, where $\mu$ is the standard volume form on $\C$.
\end{enumerate}
\end{proposition}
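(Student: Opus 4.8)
The plan is to assemble the three preceding results into the stated form; no genuinely new construction is required, only an analysis of the leaf (orbit) structure of the transversal. First I would invoke the Corollary following Lemma~\ref{lemma:transversal}, which produces a complete Poisson transversal $\iota \colon \C \hookrightarrow M$ with $\C = \sqcup_{i \in I} C_i$ a disjoint union of affine cylinders $C_i \simeq \caff^{\rho_i}$ and identifies $\Sigma(M)$ with the Morita equivalent restriction $\G := \Sigma(M)|_{\iota(\C)}$; the preceding Remark lets us replace the finite transversal cylinders $(-\epsilon,\epsilon)\times\S^1$ by the full affine cylinders up to Morita equivalence. This settles the first sentence of the proposition and fixes the notation for the remaining verifications.

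For item (a), the key observation is that $\G|_{C_i}$ is itself a symplectic groupoid integrating $C_i \simeq \caff^{\rho_i}$, so its orbits are exactly the symplectic leaves of the affine cylinder. I would then read these leaves off directly from the Poisson bivector $\frac{x}{\rho_i}\dd{\theta}\wedge\dd{x}$: the anchor is an isomorphism on $\{x > 0\}$ and on $\{x < 0\}$, each of which is connected, and vanishes identically on the central circle $\{x = 0\}$, whose points are therefore zero-dimensional (hence trivially connected) orbits. Every orbit of $\G|_{C_i}$ is thus connected.

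For item (b), I would use the orientation convention set up just before the Corollary: each open symplectic leaf of $M$ (equivalently, each connected component of $M - Z$) is either positive or negative, and the transversals $\iota_i$ were oriented so that $\iota_i(x,\theta)$ lands in a positive leaf precisely when $x > 0$ and in a negative leaf precisely when $x < 0$. An open orbit of $\G$ is the intersection of a single open leaf $\O \subset M$ with $\iota(\C)$; since such a leaf has a fixed sign, this intersection is a disjoint union of half-cylinders all of the same sign, which is exactly the assertion.

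Finally, item (c) is the content of the Lemma above guaranteeing a volume form $\mu$ on $M$ with $X_\mu$ tangent to $\iota(\C)$: the resulting $1$-parameter family $\Phi^t$ of symplectic groupoid automorphisms of $\Sigma(M)$ preserves $\G$ and hence restricts to the required family covering the modular flow. The points requiring the most care — and where I expect the bookkeeping in (b) to be delicate — are to check that the sign of an open leaf is well defined and that each transversal cylinder $C_i$ meets a given open leaf $\O$ in a \emph{full} half-cylinder rather than a proper subset, so that the open orbits of $\G$ genuinely decompose into whole half-cylinders of a single sign. Both facts follow from the semi-local normal form $\pi = \frac{x}{\rho_i}\dd{\theta}\wedge\dd{x} + \pi^{f_i}$ together with the orientation normalization of the $\iota_i$, but they are the substantive content behind an otherwise purely organizational proof.
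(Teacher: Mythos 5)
Your overall route is the same as the paper's: the proposition is stated there explicitly as a summary of the immediately preceding discussion (the transversal corollary, the remark replacing the finite cylinders $(-\epsilon,\epsilon)\times\S^1$ by full affine cylinders up to Morita equivalence, and the lemma producing a volume form $\mu$ with $X_\mu$ tangent to $\iota(\C)$), and your assembly of exactly those three ingredients for the Morita equivalence, for (b), and for (c) matches that intent. Your added bookkeeping for (b) — that the sign of a connected open leaf is well defined and that each $C_i$ meets an adjacent open leaf in a \emph{full} half-cylinder — is correct and is the right thing to make explicit.

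The one step that would fail as written is the opening inference in your treatment of (a): from ``$\G|_{C_i}$ is a symplectic groupoid integrating $C_i \simeq \caff^{\rho_i}$'' you conclude that ``its orbits are exactly the symplectic leaves of the affine cylinder.'' For a Lie groupoid that is not source-connected, orbits are in general only \emph{unions} of algebroid leaves and can be strictly larger; connectedness of orbits is precisely the nontrivial content of condition (a) and of the paper's notion of a \emph{natural} integration, so it is a property of this particular $\G$ rather than of integrations of $\caff^{\rho_i}$ in general. In particular, your claim that vanishing of the anchor on $\{x=0\}$ forces the points of the critical circle to be orbits does not follow: an arrow of $\Sigma(M)$ could a priori join two distinct points of the circle lying in the same symplectic leaf of $Z_i$, producing a disconnected orbit (this is exactly what would happen if the curve $\gamma_i$ met some fiber of $p\colon Z_i \to \S^1$ more than once). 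The repair uses facts you already deploy for (b): since $\Sigma(M)$ is source simply connected, its orbits are the symplectic leaves of $M$, hence the orbits of $\G|_{C_i}$ are the intersections of leaves of $M$ with $\iota_i(C_i)$; each open leaf meets $C_i$ in exactly one full half-cylinder (by the orientation normalization, a connected leaf cannot be both positive and negative), and each compact leaf of $Z_i$ meets the critical circle in exactly one point because $\gamma_i$ was constructed in Lemma~\ref{lemma:transversal} as a \emph{section} of the mapping torus fibration. With that substitution your argument for (a) closes, and the rest of the proposal is sound.
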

\begin{definition}\label{def:natural}
Suppose $\C$ is a disjoint union of affine cylinders. An integration $\G$ of $\C$ satisfying (a)-(c) above is said to be \emph{natural}.
\end{definition}
It will be our goal to find all natural integrations $\G$ of $\C$. In order to do this, we will first classify natural integrations of the affine plane in Section \ref{section:affineplane}, which will enable us to classify integrations of the affine cylinder in Section \ref{section:affinecylinder}. Next, we will need to extend the classification of natural integrations of the affine cylinder to a disjoint union of affine cylinders. The needed data will be a labeled graph called the \emph{discrete presentation} consisting of a labeled graph which encodes the topology of the orbit space together with isotropy and holonomy data.
\subsection{Gluing bibundles}\label{subsection:gluing}
Once one describes the integrations $\G$ of $\C$ in terms of a discrete presentation, we will see that it is possible to describe the bibundles of $\G$. In order to do this, ones needs a gluing lemma for bibundles.

Let $\G \rightrightarrows M$ be any symplectic groupoid. Suppose $\{ U_i \}_{i \in I}$ is a saturated open cover of $M$, so each $U_i$ is a collection of orbits of $\G$. We set $U_{ij}:=U_i \cap U_j$ and, in particular, $U_{ii}= U_i$.
\begin{lemma}\label{lemma:cocycle}
Let $f: I \times I \to I \times I$ be a bijection and assume we have a family of $(\G|_{U_{f(i)}},\G|_{U_i})$-bibundles:
\[
\begin{tikzcd}
\G|_{U_{f(i)}} \darrow & P_i \arrow[dl] \arrow[loop left] \arrow[loop right, leftarrow] \arrow[dr] & \G|_{U_i} \darrow \\
U_{f(i)} & & U_i \, .
\end{tikzcd}
\]
Suppose further that for each pair $(i,j)\in I \times I$ we have an isomorphism of bibundles $\phi_{ij}: P_j|_{U_i \cap U_j} \to P_i|_{U_i \cap U_j}$ satisfying the cocycle condition:
\[ \phi_{ij} \circ \phi_{jk} = \phi_{ik}. \]
Then there exists a $(\G,\G)$-bibundle $P$ together with an isomorphism of bibundles $\phi_i:P|_{U_i} \to P_i$ such that:
\[ \phi_{ij} \circ \phi_j = \phi_i. \]
\end{lemma}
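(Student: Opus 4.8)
The plan is to deduce the lemma from the fact that $\B\G$ is a stack (Theorem~\ref{thm:bgstack}) together with the locality of morphisms in the good site $\DMan$ (Proposition~\ref{prop:Dmangood}). The key observation is that, although the cover $\{U_i\}$ lives on the unit manifold $M$, it is exactly a cover of the \emph{base} of each $P_i$ once $P_i$ is regarded as a left principal $\G$-bundle. Indeed, since each $U_i$ is saturated, $\G|_{U_i}=\s\inv(U_i)$ is an open subgroupoid of $\G$, and a $(\G|_{U_{f(i)}},\G|_{U_i})$-bibundle is in particular a left principal $\G$-bundle whose base (the quotient by the left action) is $U_i$, with projection $\s\colon P_i\to U_i$ and left anchor $\t\colon P_i\to U_{f(i)}\subseteq M$. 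A left $\G|_{U_{f(i)}}$-action with anchor into the saturated set $U_{f(i)}$ is the same datum as a left $\G$-action, so each $P_i$ is genuinely an object of $\B\G$ over $U_i$.

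First I would set up the descent datum for $\B\G$. Each $\phi_{ij}\colon P_j|_{U_{ij}}\to P_i|_{U_{ij}}$, being a bibundle isomorphism, is in particular a left-equivariant isomorphism covering the identity, hence a morphism in $\B\G$; and the hypothesis $\phi_{ij}\circ\phi_{jk}=\phi_{ik}$ is precisely the compatibility required by axiom (S2) of Definition~\ref{defn:stack} for the covering family $\{U_i\to M\}$. Applying (S2) to the stack $\B\G$ produces a left principal $\G$-bundle $\s\colon P\to M$ together with isomorphisms $\phi_i\colon P|_{U_i}\to P_i$ satisfying $\phi_{ij}\circ\phi_j=\phi_i$ on overlaps. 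This furnishes the object $P$, its source map, the gluing isomorphisms, and a left principal $\G$-action in one stroke.

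Next I would glue the remaining structure by locality. After transport along $\phi_{ij}$, the target maps $\t_i\colon P_i\to M$ and the right actions $\m_R^i$ agree on overlaps (each $\phi_{ij}$ intertwines targets and right actions), and the sets $\{\s\inv(U_i)\}$ cover $P$; therefore axiom (GS3) glues them into a global target $\t\colon P\to M$ and a global right action $\m_R\colon P\times_M\G\to P$. Because we work internally to $\DMan$, the characteristic $2$-forms $\Omega^{P_i}$, which satisfy $\phi_{ij}^*\Omega^{P_i}=\Omega^{P_j}$, are carried along as part of this glued data and need no separate treatment. The bibundle axioms (commutativity of the two actions and the equivariance identities) hold for each $P_i$ and hence hold for $P$, since a diagram in a good site commutes globally once it commutes on the members of a cover, exactly as in the proof of Theorem~\ref{thm:bgstack}.

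Finally I would verify that $P$ is a principal $(\G,\G)$-bibundle. Left principality is immediate from the stack construction. For right principality one checks that the total right action $\m_R\times\pr_2$ is an isomorphism, which holds because it holds over each $\s\inv(U_i)\isom P_i$ and isomorphisms glue; the only essential use of the hypothesis on $f$ is that $\t\colon P\to M$ is surjective, its image being $\bigcup_i U_{f(i)}=M$ precisely because $f$ is surjective. I expect this last bit of bookkeeping—keeping straight which structure is glued by the stack property (the object and the left action, over the source cover) versus which is glued by bare locality (the target and the right action), and confirming that biprincipality survives the gluing—to be the only real subtlety; each individual gluing step is routine once the descent datum is correctly recognized as a datum for $\B\G$.
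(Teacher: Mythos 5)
Your proof is correct, and it reaches the same two-stage gluing as the paper but runs it in the opposite order and through different machinery. The paper's proof first glues the total spaces as \emph{right} principal bundles, appealing to the classical Haefliger-cocycle description of principal $\G$-bundles, and then transports the left actions along the resulting identifications, gluing them because they agree on overlaps. You instead recognize the data as a descent datum for the stack $\B\G$ of Theorem~\ref{thm:bgstack}---using the observation, left implicit in the paper, that saturation of $U_{f(i)}$ promotes each left $\G|_{U_{f(i)}}$-action to a genuine $\G$-action, so that $P_i$ really is an object of $\B\G$ over $U_i$ and the $\phi_{ij}$ are morphisms in $\B\G$---and then obtain $P$, its left principal structure, and the $\phi_i$ in one stroke from axiom (S2), gluing the remaining right-hand structure ($\t$, $\m_R$, the bibundle identities) by bare locality (GS3). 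What your route buys is a proof internal to the thesis's own formalism, valid verbatim over any good site (in particular the gauge parts and characteristic $2$-forms in $\DMan$ ride along automatically, since $\Sub$ is a stack there), whereas the paper's appeal to Haefliger cocycles is external to the framework and stated only as a sketch. One fine point deserves more than the clause you give it: in verifying right principality, the pieces $P|_{U_i}\times_{\t,\t}P|_{U_i}$ do \emph{not} a priori cover $P\times_{\t,\t}P$, so ``it holds over each $\s\inv(U_i)$ and isomorphisms glue'' is not yet an argument. Given $\t(p)=\t(q)$ with $\s(p)\in U_i$ and $\s(q)\in U_j$, one must first show the two sources lie on a common orbit inside $U_{ij}$---equivalently, that the inverses of the induced orbit maps $U_i/\G\to U_{f(i)}/\G$ agree on target overlaps, which is where biprincipality of each $P_i$ and the cocycle identifications must actually be used---before the local right division maps exist, agree, and glue by (GS3). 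To be fair, the paper's own proof elides exactly the same point, so your write-up matches its level of rigor while making the descent mechanism explicit.
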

\begin{proof}
Using the well-known description of principal $\G$-bundles in terms of Haeflieger cocycles, it is clear that one can glue them along a saturated cover, provided they are related on the intersections by isomorphisms satisfying the cocycle condition. Hence, starting with the right principal $\G|_{U_i}$-bundles $P_i$ we construct a right principal $\G$-bundle $P$ and isomorphism of principal $\G|{U_i}$-bundles $\phi_i:P|_{U_i} \to P_i$. These isomorphisms allow to define left $\G|{U_i}$-principal actions on $P|_{U_i}$ from the ones on the $P_i$, commuting with the right action, and which agree on intersections. Hence, we obtain a $\G$-left principal action that commutes with the right $\G$-action, making $P$ is a principal $(\G,\G)$-bibundle, and for which the $\phi_i:P|_{U_i} \to P_i$ become  isomorphism of bibundles.
\end{proof}
\subsection{Pointed bibundles}\label{subsection:pointedbibundles}
\begin{definition}\label{defn:pointedbibundle}
Suppose $\G \rightrightarrows (M, m_0)$ and $\H \rightrightarrows (N, n_0)$ are groupoids over pointed manifolds. We say $(P,p_0)$ is a \emph{pointed bibundle} if $P$ is a $(\G,\H)$-bibundle and the anchor maps are base-point preserving.
\end{definition}
To any such $(P,p_0)$ there is a canonical isomorphism $\psi_{p_0}: \G_{m_0} \to \H_{n_0}$ such that:
\[ p_0 g = \psi_{p_0}(g) p_0.  \]
An \emph{isomorphism} $\phi: (P,p_0) \to (Q,q_0)$ of pointed bibundles is an isomorphism of bibundles such that the source (target) of $p_0$ and $q_0$ are equal. We say $\phi$ is a \emph{strong isomorphism} if $\phi(p_0) = \phi(q_0)$. To any isomorphism of pointed bibundles, we can associate a unique element $h_\phi \in \H_{n_0}$ characterized by the property:
\[ h_\phi \phi(p_0) = q_0 .\]
We can check easily that $h_\phi$ satisfies:
\[ C_{h_\phi} \circ \psi_{p_0} = \psi_{q_0} . \]
This leads us to the following lemma:
\begin{lemma}\label{lemma:transbimid}
Suppose $\G \rightrightarrows (M,m_0)$ and $\H \rightrightarrows (N,n_0)$ are transitive groupoids over pointed manifolds. Let $(P,p_0)$ and $(Q,q_0)$ be pointed bibundles. The relation $\phi \mapsto h_\phi$ gives 1-1 correspondence between isomorphisms $\phi: (P,p_0) \to (Q,q_0)$ and elements $h \in H$ such that:
\begin{equation}\label{eqn:conjprop}
C_h \circ \psi_{p_0} = \psi_{q_0}.
\end{equation}
\end{lemma}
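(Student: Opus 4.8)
The plan is to reduce the whole statement to the fibre of each bibundle over the pair of base-points, where transitivity turns the problem into elementary bookkeeping with the isotropy groups. Set $\Phi_P := \t\inv(m_0)\cap\s\inv(n_0)\subset P$ and $\Phi_Q\subset Q$ similarly. Because $P$ and $Q$ are biprincipal, the two commuting actions are free and transitive on the respective anchor fibres, so $\Phi_P$ is a bitorsor: $\G_{m_0}$ acts freely and transitively on it (written $p\mapsto pg$) and so does $\H_{n_0}$ (written $p\mapsto kp$), with $p_0$ a distinguished point, and likewise for $(\Phi_Q,q_0)$. The essential geometric input is that, since $\G$ and $\H$ are transitive, every point of $P$ lies in the combined orbit of $p_0$: given $p$, an arrow of $\G$ carries $\t(p)$ to $m_0$ and an arrow of $\H$ then carries the remaining anchor to $n_0$, landing in $\Phi_P=\H_{n_0}\,p_0$. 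Consequently an equivariant map out of $P$ is rigid, i.e.\ determined by its value at $p_0$.

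First I would check that $\phi\mapsto h_\phi$ takes values in $\{h\in\H_{n_0}:C_h\circ\psi_{p_0}=\psi_{q_0}\}$. For $g\in\G_{m_0}$, using that $\phi$ is equivariant for both actions together with $p_0 g=\psi_{p_0}(g)p_0$ and $q_0=h_\phi\phi(p_0)$, one computes
\[ \psi_{q_0}(g)\,q_0 = q_0 g = h_\phi\,\phi(p_0 g) = h_\phi\,\psi_{p_0}(g)\,\phi(p_0) = h_\phi\,\psi_{p_0}(g)\,h_\phi\inv\,q_0, \]
and freeness of the action gives $\psi_{q_0}(g)=C_{h_\phi}(\psi_{p_0}(g))$, which is the required identity. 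Injectivity is then immediate from rigidity: if $h_{\phi_1}=h_{\phi_2}=h$ then $h\,\phi_1(p_0)=q_0=h\,\phi_2(p_0)$, so $\phi_1(p_0)=\phi_2(p_0)$ by freeness, and two equivariant maps agreeing at $p_0$ agree on the orbit of $p_0$, which is all of $P$.

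For surjectivity, given $h\in\H_{n_0}$ with $C_h\circ\psi_{p_0}=\psi_{q_0}$, I would set $\phi(p_0):=h\inv q_0$ and extend by equivariance, $\phi(k\,p_0\,g):=k\,\phi(p_0)\,g$ for $k\in\H$, $g\in\G$. The only source of ambiguity is the isotropy: two expressions $k\,p_0\,g=k'\,p_0\,g'$ force, after freeness, a pair $(\kappa,\gamma)\in\H_{n_0}\times\G_{m_0}$ with $\kappa\,p_0=p_0\,\gamma$, hence $\kappa=\psi_{p_0}(\gamma)$, so well-definedness is exactly the assertion $\psi_{\phi(p_0)}=\psi_{p_0}$. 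Since $\psi_{kp}=C_k\circ\psi_p$ for any point $p$ and $k\in\H_{n_0}$, we get $\psi_{h\inv q_0}=C_{h\inv}\circ\psi_{q_0}=\psi_{p_0}$ by hypothesis, so $\phi$ is well defined; it is smooth because the surjective-submersion anchors admit local sections letting one choose $k$ and $g$ smoothly, and it is equivariant by construction. By Lemma~\ref{lemma:bibundlemorphisms} it is automatically an isomorphism, and $h_\phi\,\phi(p_0)=h_\phi h\inv q_0=q_0$ forces $h_\phi=h$.

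The step I expect to be the main obstacle is this last construction: verifying that extension-by-equivariance yields a globally well-defined and smooth bibundle morphism. The algebraic consistency is governed precisely by the conjugation relation $C_h\circ\psi_{p_0}=\psi_{q_0}$ --- this is exactly why that relation appears in the statement --- but one still has to know that the combined action is transitive with enough freeness for the isotropy computation to exhaust all ambiguities, and that the prescription descends smoothly. Both points rest on transitivity of $\G$ and $\H$ and biprincipality of $P$ and $Q$, which guarantee the anchor maps are surjective submersions with local sections; granting these, the remaining verifications are the routine equivariance and smoothness checks sketched above.
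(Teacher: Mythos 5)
Your proposal is correct and follows essentially the same route as the paper's proof: injectivity via the rigidity of equivariant maps (agreement at $p_0$ forces equality on the combined orbit, which is all of $P$ by transitivity), and surjectivity via the same extension-by-equivariance $\phi(k\,p_0\,g) := k\,(h\inv q_0)\,g$, with well-definedness governed exactly by the relation $C_h \circ \psi_{p_0} = \psi_{q_0}$. You merely spell out what the paper leaves as routine --- the isotropy bookkeeping $\kappa = \psi_{p_0}(\gamma)$, the identity $\psi_{kp} = C_k \circ \psi_p$, and the smoothness check via local sections of the anchors --- all of which is sound.
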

\begin{proof}
We begin by commenting that two isomorphisms $\phi_1,\phi_2: P \to Q$ are equal if and only if there exists $p \in P$ such that $\phi_1(p) =\phi_2(p)$. This immediately implies that $\phi \mapsto h_\phi$ is injective.

It only remains to show that given $h \in \H_{n_0}$ we can construct $\phi$ such that $h_\phi = h$. Any $p \in Q$ can be written in the form $p= h_1 p_0 g_1$ for $g_1 \in \G$ and $h_1 \in H$, we define $\phi(p) = h_1 (h\inv q_0) g_1$. Property (\ref{eqn:conjprop}) implies that this definition is invariant with respect to the decomposition of $p$. Clearly $h \phi(p_0) = q_0$ and therefore $h_\phi = h$.
\end{proof}
By interpreting isomorphisms of bibundles in this way, we get the following useful properties. For $\phi_2: P \to Q$ and $\phi_1: Q \to R$ then:
\begin{equation}\label{eqn:compositionisom}
h_{\phi_1 \circ \phi_2} = h_{\phi_1} h_{\phi_2}.
\end{equation}
On the other hand, given $\phi_1: P_1 \to Q_1$ and $\phi_2: P_2 \to Q_2$ such that $P_1 \otimes P_2$ is defined, then:
\begin{equation}\label{eqn:tensorproductisom}
h_{\phi_1 \otimes \phi_2} = h_{\phi_1} \psi_{p_1}(h_{\phi_2}).
\end{equation}

\section{Picard groups of the affine plane}\label{section:affineplane}
In this section, $\G$ will denote a natural integration of $\aff$ (see Definition \ref{def:natural}). We will also denote by $\G^+$ and $\G^-$ the restrictions of $\G$ to the positive plane $\aff^+$ and to the negative plane $\aff^-$. The symbol $\pm$ will be used to indicate that cases for both $+$ and $-$ are being treated simultaneously.

Our aim is to compute $\Pic(\G)$ and we will proceed as follows:
\begin{itemize}
\item in Section \ref{subsection:ses}, we show that any integration $\G$ of $\aff$ arises as a semi direct product  $\G\Aff \times_\aff \K$ where $\K$ is a discrete bundle of Lie groups;
\item in Section \ref{subsection:isodata}, we will show how to construct $\K$ from discrete data which we will refer to as the \emph{isotropy data} of $\G$;
\item in Section \ref{subsection:mapsofisodata}, we obtain a correspondence between maps of isotropy data and bibundles;
\item finally, in Section \ref{subsection:bimoveraff}, we compute the Picard group of $\G \rightrightarrows \aff$.
\end{itemize}
\subsection{The short exact sequence of $\G$}\label{subsection:ses}

Our goal is to find a `split fibration' of $\G$. Note that $\G\Aff$ is the only source connected integration of $\aff$, up to isomorphism. Hence, $\G^0\simeq \G\Aff$ and we denote by $i: \G\Aff \to \G$ the inclusion. The proof of the following lemma is inspired in Proposition 3 from \cite{Radko}.
\begin{lemma}\label{lemma:projection}
There is a Lie groupoid morphism $p:\G \to \G\Aff$ which is split by the canonical map $i: \G\Aff \to \G$.
\end{lemma}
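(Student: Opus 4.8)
The plan is to reduce the construction of $p$ to that of a single multiplicative $\Aff$-valued function on $\G$. Write an arrow of $\G\Aff=\Aff\ltimes\aff$ as a pair $(g,\xi)$ with $\s(g,\xi)=\xi$ and $\t(g,\xi)=g\cdot\xi$, and let $\varpi\colon\G\Aff\to\Aff$, $\varpi(g,\xi)=g$, be the (multiplicative) projection onto the group. I claim that giving a base-identity groupoid morphism $p\colon\G\to\G\Aff$ with $p\circ i=\Id$ is the same as giving a Lie groupoid morphism $P\colon\G\to\Aff$ (to $\Aff$ viewed as a groupoid over a point) with $P(\gamma)\cdot\s(\gamma)=\t(\gamma)$ for all $\gamma$ and with $P\circ i=\varpi$; the correspondence is $p(\gamma):=(P(\gamma),\s(\gamma))$. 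One checks directly that these two conditions on $P$ are exactly what is needed for $p$ to be a well-defined morphism covering $\Id_\aff$, that multiplicativity of $P$ gives multiplicativity of $p$, and that $P\circ i=\varpi$ is equivalent to $p\circ i=\Id$. So everything reduces to building $P$.

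Over the open leaves, $P$ is forced. The open symplectic leaves of $\aff$ are the half-planes $\aff^{\pm}=\{\pm x>0\}$, on each of which the $\Aff$-action is free and transitive (see Example~\ref{example:sigmaaff}). Hence for any arrow $\gamma$ with $\s(\gamma),\t(\gamma)\in\aff^{+}$ (or both in $\aff^{-}$) there is a unique $g\in\Aff$ with $g\cdot\s(\gamma)=\t(\gamma)$, and I must set $P(\gamma):=g$. Because $g$ depends smoothly on $(\s(\gamma),\t(\gamma))$ by freeness and properness of the action, this prescription is smooth and multiplicative on $\G|_{\aff^{+}\cup\aff^{-}}$, and it is independent of the isotropy component of $\gamma$ (any two arrows between the same points give the same $g$). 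In particular $P\bigl(i(g,\xi)\bigr)=g=\varpi(g,\xi)$ for $\xi$ in an open leaf, so $P$ agrees with $\varpi$ on the part of $\G^0\isom\G\Aff$ lying over $\aff^{+}\cup\aff^{-}$.

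The heart of the matter is to extend $P$ across the singular line $Z=\{x=0\}$ and to check multiplicativity there. Since each point of $Z$ is its own orbit, over $Z$ the groupoid is a bundle of isotropy groups $\G_{(0,y)}$, each with identity component $i(\Aff)=(\G\Aff)_{(0,y)}=\Aff$ and discrete quotient $D_y:=\G_{(0,y)}/\Aff$. I would first use the naturality hypothesis (c) from Definition~\ref{def:natural}: the modular flow lifts to symplectic groupoid automorphisms $\Phi^t\colon\G\to\G$ covering $(x,y)\mapsto(x,y+t)$, which acts transitively on $Z$, so all the $\G_{(0,y)}$ are conjugate and it suffices to retract a single isotropy group $\G_{(0,0)}$ onto $\Aff$ and transport the result by $\Phi^t$. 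A retraction homomorphism $\G_{(0,0)}\to\Aff$ restricting to the identity on $\Aff$ exists if and only if $\G_{(0,0)}\isom\Aff\times D_0$; and because $\Aff$ has trivial center, for each outer action $D_0\to\OutAut(\Aff)$ there is a \emph{unique} extension of $D_0$ by $\Aff$, so such a splitting exists precisely when this outer action is trivial, i.e.\ when the components of $\G_{(0,0)}$ act on $\Aff$ through $\InnAut(\Aff)$. This triviality I would extract from the symplectic data: an element of $\G_{(0,0)}$ acts on the isotropy Lie algebra $\aff$ by a linear Poisson automorphism constrained to preserve the multiplicative characteristic form $\Omega$, which pins the induced automorphism down to an inner one. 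Granting this, the projection $\Aff\times D_0\to\Aff$ defines $P$ on the isotropy, a final continuity check near $Z$ (via the local normal form together with $\Phi^t$) shows that $P$ is globally smooth and that $P\circ i=\varpi$ everywhere, and then $p(\gamma)=(P(\gamma),\s(\gamma))$ is the desired split projection.

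I expect the genuine obstacle to be exactly this last rigidity statement: proving that the extra (discrete) components of the isotropy over $Z$ act on $\Aff$ by inner automorphisms, so that $\G_{(0,0)}\isom\Aff\times D_0$. The reduction to a multiplicative $\Aff$-valued function, the forced definition of $P$ over the open leaves, and the homogenization along $Z$ by the modular flow are all routine; the real work is showing that the multiplicative symplectic structure forbids outer automorphisms in the isotropy, which is what ultimately makes $P$, and hence the splitting $p$, exist.
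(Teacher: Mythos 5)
Your reduction to a multiplicative map $P\colon\G\to\Aff$ and the forced definition over the half-planes both match the actual situation, but the route you take across $Z$ has the logic backwards, and that is where the lemma lives. Since $\s$ is a submersion, no connected component of $\G$ can sit over the line $Z$ (its image under $\s$ is open), so $\s\inv(\aff - Z)$ is dense in $\G$, and any continuous $P$ is therefore \emph{uniquely determined} on $\G|_Z$ by limits of the forced values from the open part. You cannot prescribe $P$ on the isotropy groups over $Z$ by an abstract group-theoretic retraction and relegate the comparison with those limits to ``a final continuity check'': proving that the limits exist, depend only on the arrow $g_0\in\G_{(0,y_0)}$ being approached, and assemble into a smooth map is the entire content of the proof. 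This is exactly what the paper does: it sets $g(t)=\sigma(t,y_0)$ for a local bisection $\sigma$ through $g_0$, defines $p(g_0)=(\log(x'(0)),y'(0),0,y_0)$ where $\t(g(t))=(x(t),y(t))$, and establishes well-definedness by observing that for two bisections the curve $g_1(t)g_2(t)\inv$ lies in $\G^0\isom\G\Aff$ through the unit, whence $\diff\t\cdot g_1'(0)=\diff\t\cdot g_2'(0)$. Nothing in your outline replaces this computation; even granting your splitting of each $\G_{(0,y)}$, you would still have to verify that it agrees with the boundary values of $P$, which amounts to redoing the paper's argument.

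The rigidity step you lean on is, moreover, not delivered by the mechanism you cite. The tangent space to an isotropy group over a point of $Z$ lies in $\ker\diff\s\cap\ker\diff\t$, and for a symplectic groupoid these two kernels are $\Omega$-orthogonal complements of one another, so $\Omega$ restricts to \emph{zero} on $\G_{(0,y)}$ (concretely, in $\G\Aff$ the coefficient of $\diff a\wedge\diff b$ in $\Omega$ is $x$, which vanishes on $x=0$). Hence ``preserving the multiplicative characteristic form'' places no direct constraint on the conjugation action of the discrete components on $\aff$, and excluding the nontrivial class in $\OutAut(\Aff)\isom\Z_2$ (the sign of the action on the derived line) would require a genuinely different argument, which you do not supply. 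Note also that this rigidity is a \emph{consequence} of the lemma rather than a usable input: once $p$ exists, $i(\Aff)$ and $\K_{(0,y)}=\ker p$ are both normal subgroups of $\G_{(0,y)}$ with trivial intersection, forcing $\G_{(0,y)}\isom\Aff\times\K_{(0,y)}$. Finally, the paper's construction uses neither $\Omega$ nor the lifted modular flow (the remark after the semidirect-product lemma notes these results hold for any integration with connected orbits), so your appeal to naturality (c) imports hypotheses the proof does not need while leaving its analytic crux unaddressed.
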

\begin{proof}
Recall that $\G\Aff \isom \Aff \times \aff$. Let $g \in \G$ be an arrow with $\t(g) = (x_2,y_2)$ and $\s(g) = (x_1,y_1)$, where $x_1\not=0$. Then we can set:
\[ p(g):= \left( \log \left( \frac{x_2}{x_1}\right), \frac{y_2-y_1}{x_1},x_1,y_1 \right) \]
This map is a local symplectomorphism. Morover, its restriction to $\G^0\simeq \G\Aff$ is easily seen to be the identity, for an arrow $(a,b,x,y)\in  \G\Aff$ has source $(x,y)$ and target $(e^a x, y+xb)$, so that: $p(a,b,x,y)=(a,b,x,y)$ (see Example \ref{example:sigmaaff}). Hence, $p\circ i$ is the identity and it remains to show that $p$ extends to all of $\G$.

Suppose $g_0 \in \G_{(0,y_0)}$. We choose a local bisection $\sigma$ around $g_0$ and let  $g(t) = \sigma(t,y_0)$, so:
\[ \t(g(t)) = (x(t),y(t)) \qquad \s(g(t)) = (t,y_0). \]
Observe that for $t\ne 0$, one has $p(g(t)) = (\log(x(t)/t),(y-y_0)/t,t,y_0)$. Also:
\[ \lim_{t \to 0} \left( \frac{x(t)}{t} \right) = x'(0)>0, \quad \lim_{x \to 0}  \left( \frac{y(t) - y_0}{t} \right) = y'(0). \]
Since $\diff t \cdot g'(0) = (x'(0),y'(0))$, these limits exist. We define $p$ on all of $\G$ by letting:
\[ p(g_0) = (\log(x'(0)), y'(0),0,y_0). \]

We must check that this definition is independent of the choice of local bisection $\sigma$. If $\sigma_1$ and $\sigma_2$ are two local sections through $g_0$ we claim that
\[ \diff \t \cdot g'_1(0) = \diff \t \cdot g'_2(0),\]
so it follows that $p$ is well-defined. For this observe that $t\mapsto g_1(t) g_2(t)^{-1}$ is a smooth curve in $\G^0\simeq \G\Aff$ that passes through the point $(0,0,0,0)$ at $t=0$. Hence, if we write $g_1(t) g_2(t)^{-1}=(a(t),b(t),x(t),y(t))\in  \G\Aff$, we have:
\begin{align*}
\t(g_1(t))&=\t(g_1(t) g_2(t)^{-1})=\t(a(t),b(t),x(t),y(t))=(e^{a(t)} x(t), y(t)+x(t)b(t)),\\
\t(g_2(t))&=\s(g_1(t) g_2(t)^{-1})=\s(a(t),b(t),x(t),y(t))=(x(t), y(t)).
\end{align*}
Since we have $x(0)=y(0)=a(0)=b(0)=0$, it follows that:
\[ \diff \t \cdot g'_1(0) = (x'(0),y'(0))=\diff \t \cdot g'_2(0),\]
so the claim follows.

We leave the details of proving that $p$ is differentiable morphism of groupoids to the reader.
\end{proof}
The map $p$ fits into the following short exact sequence of Lie groupoids:
\begin{equation}\label{diagram:splitfibration}
\begin{tikzcd}
1 \arrow[r] & \mathcal{K} \darrow \arrow[r, hook] & \G \darrow \arrow[r, two heads, "p"] & \G\Aff \darrow \arrow[r] \arrow[l, dashed, bend right, "i", swap] \darrow & 1 \\
 & \aff  \arrow[r] & \aff  \arrow[r] & \aff \, . &
\end{tikzcd}
\end{equation}
Here $\mathcal{K}$, the kernel of $p$, is a bundle of discrete groups over $\aff$. The map $i$ yields a natural action of $\G\Aff$ on $\K$, and we have that $\G$ is the semi-direct product of $\G\Aff$ and $\K$:
\begin{lemma}
The map $F: \G\Aff \times_{\s,\t} \K \to \G$ such that $F(g,k) = i(g)k$ is an isomorphism.
\end{lemma}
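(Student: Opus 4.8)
The plan is to prove the lemma by writing down an explicit inverse to $F$, exploiting the splitting identity $p \circ i = \Id$ established in Lemma~\ref{lemma:projection}. This is the groupoid analogue of the classical fact that a split short exact sequence of groups realizes the middle term as a semidirect product, so the entire argument is driven by that single identity. Concretely, I would define $G \colon \G \to \G\Aff \times_{\s,\t} \K$ by $G(g) := (p(g),\, i(p(g))\inv \cdot g)$ and show that $G$ is a two-sided inverse for $F$.

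First I would check that $G$ is well defined, i.e. that its second component really lands in $\K$. Since $i$ and $p$ both cover the identity on $\aff$, the arrow $i(p(g))\inv$ has source $\t(g)$ and target $\s(g)$, so the product $i(p(g))\inv \cdot g$ is defined and is a loop based at $\s(g)$. Applying $p$ and using that $p$ is a homomorphism together with $p \circ i = \Id$ gives $p\big(i(p(g))\inv \cdot g\big) = p(g)\inv \cdot p(g) = \u(\s(g))$, so this element lies in $\K_{\s(g)} = \ker p$ over $\s(g)$, exactly as needed for membership in the fiber product $\G\Aff \times_{\s,\t}\K$.

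Next I would verify $F \circ G = \Id_\G$ and $G \circ F = \Id$. The first is immediate by associativity: $F(G(g)) = i(p(g)) \cdot \big(i(p(g))\inv \cdot g\big) = g$. For the second, given $(g,k)$ with $k \in \K_{\s(g)}$, the key computation is $p(i(g)\cdot k) = p(i(g))\cdot p(k) = g$, where $p(k)$ is a unit because $k \in \ker p$ and $p(i(g)) = g$ by the splitting; hence $G(F(g,k)) = \big(g,\, i(g)\inv \cdot i(g)\cdot k\big) = (g,k)$. Both $F$ and $G$ are smooth because they are built out of the structure maps $\m$, $\i$, the splitting $i$, and $p$, all of which are smooth (for $p$ this is part of Lemma~\ref{lemma:projection}); the fiber product itself exists as a manifold since $\s \colon \G\Aff \to \aff$ is a submersion. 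Finally, that $F$ intertwines the groupoid structures is automatic, since the semidirect product structure on $\G\Aff \times_{\s,\t}\K$ is precisely the one transported along $F$ by the conjugation action $g \cdot k = i(g)\, k\, i(g)\inv$; thus $F$ is an isomorphism of Lie groupoids.

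I do not expect a serious obstacle here: the whole content is the splitting identity, and the only points demanding care are the bookkeeping of source and target base points (to confirm that each product of arrows is composable and that $G$ lands in the correct isotropy fiber) and the appeal to smoothness of $p$ from the previous lemma. If one further wishes to record $F$ as an isomorphism of symplectic rather than merely Lie groupoids, the remaining step would be to observe that $\K$ carries no symplectic data and that $F$ is compatible with the multiplicative forms, but this is not needed for the decomposition statement itself.
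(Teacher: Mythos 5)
Your proof is correct, but it takes a genuinely different route from the paper's. The paper does not construct an inverse at all: it observes that $p$ is a fibration of Lie groupoids (it covers the identity and $\G \to \G\Aff \times_\aff \aff = \G\Aff$ is a submersion), notes that the splitting $i$ exhibits a flat cleavage of this fibration, and then invokes Mackenzie's general semidirect product theorem (Thm 2.5.3 in \cite{Makenzi}) to conclude. You instead replace the appeal to that external theorem with the explicit two-sided inverse $G(g) = \big(p(g),\, i(p(g))\inv \cdot g\big)$, which is the classical split-exact-sequence computation carried out internally to groupoids; your bookkeeping is sound throughout: the second component is a loop at $\s(g)$ killed by $p$, hence lies in $\K_{\s(g)}$, the identities $F \circ G = \Id$ and $G \circ F = \Id$ follow from associativity and $p \circ i = \Id$, and your observation that the semidirect multiplication $(g',k')\cdot(g,k) = (g'g,\, \theta_{g\inv}(k')k)$ with $\theta_g(k) = i(g)\,k\,i(g)\inv$ is exactly what makes $F$ a homomorphism matches the explicit formula the paper records immediately after the lemma. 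What each approach buys: the paper's proof is shorter and situates the lemma within the general theory of groupoid fibrations, but it hides the smooth-structure verification inside the cited theorem (and inside the claim that $p$ is a fibration); yours is self-contained and elementary, at the cost of arguing smoothness of $F$ and $G$ by hand --- which goes through, since smoothness of $p$ is supplied by Lemma~\ref{lemma:projection}, $\K$ is an embedded submanifold of $\G$ as the preimage of the unit section under the submersion $p$, and the fiber product exists because $\s$ is a submersion. Your closing remark on the symplectic side is also consistent with the paper: $\K$ is a bundle of discrete groups carrying no symplectic data, and the paper equips $\G(I)$ with the pullback of the symplectic form on $\G\Aff$.
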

\begin{proof}
The map $p$ is a fibration since it covers a submersion (the identity) and $\G \to \G\Aff \times_\aff  \aff  = \G\Aff$ is a submersion. The canonical map $i: \G \Aff \to \G$ is a section of $p$ so this exhibits a flat cleavage of $p$. Therefore, by \cite{Makenzi} (Thm 2.5.3) the map is an isomorphism.
\end{proof}
More explicitly, the multiplication in $\G\Aff \times_{\aff} \K$ is given by the formula:
\[ (g',k') \cdot(g,k) = (g' g , \theta_{g\inv} (k')k), \]
where $\theta_g(k)$ denotes the action of an element $g\in \G\Aff$ on $k\in \K$.
\begin{remark}
We note that we have not used all of our naturality assumptions about $\G$. In fact, the above lemmas are true for any integration of $\G$ with connected orbits.
\end{remark}
\subsection{The isotropy data of $\G$}\label{subsection:isodata}
Since $\G \isom \G\Aff \times_\aff \K$, the wide subgroupoid $\K$ determines $\G$ up to isomorphism. Our next task is to show that we can reduce $\K$ to a few pieces of discrete data. Since we can lift the modular vector field $\partial/\partial y$ to a flow of $\G$, we see that $\K|_{x=0}$ is a locally trivial bundle of discrete groups. Let,
\[  G^+ := \K_{(1,0)}, \quad G^-:= \K_{(-1,0)}, \quad H:= \K_{(0,0)}.\]
For each $h \in H$ there is a unique global section $\sigma_h : \aff \to \K$ such that $\sigma_h(0,0) = h$. Similarly, for $g \in G^\pm$ there is a unique section $\sigma_g: \aff^\pm \to \K$ such that $\sigma_g(\pm 1,0) = g$. So we can define maps
\begin{align*}
\aff \times H \to \K,&\quad (x,y,h)\mapsto \sigma_h(x,y),\\
\aff^\pm \times G^\pm \to \K,& \quad (x,y,g)\mapsto \sigma_g(x,y),
\end{align*}
which cover $\K$. Moreover, these give rise to group homomorphisms:
\[ \phi^\pm: H \to G^\pm,\quad \phi^\pm(h) := \sigma_h(\pm 1,0). \]
\begin{definition}\label{def:isodata}
We call \emph{isotropy data} $I$ a pair of homomorphisms $\phi^\pm: H \to G^\pm$, where $H,G^\pm$ are arbitrary discrete groups:
\[I := \left( \begin{tikzcd} G^- & H \arrow[l, "\phi^-" swap] \arrow[r, "\phi^+"] & G^+ \end{tikzcd} \right). \]
When $\phi^\pm$ arise from an integration $\G$ of $\aff$ as above, we call $I$ the \emph{isotropy data associated to $\G$}.
\end{definition}
Notice that isotropy data is only defined for $\G$ such that the discrete bundle $\K|_{x=0}$ is locally trivial. This condition is equivalent to requiring that the modular vector field lifts to a complete vector field on $\G$.

Given arbitrary isotropy data $I$, we denote by $\K(I)$ the following bundle of groups over $\aff$:
\[ \mathcal{K}(I) := \left(\left( \bigsqcup_{g \in G^\pm} \aff^\pm \times \{ g \} \right) \sqcup \left( \bigsqcup_{h \in H} \aff \times \{h \} \right)\right)/ \sim, \]
where $\sim$ is the equivalence relation generated by:
\[ (x,y,h_1) \sim (x,y,h_2)\quad \text{ if }\quad \phi^\pm(h_1)=\phi^\pm(h_2),\ (x,y)\in\aff^\pm.   \]
We equip $\K(I)$ with the quotient topology, so that it becomes a bundle of discrete groups over $\aff$. There is an obvious action of $\G\Aff$ on $\K(I)$ and we call the Lie groupoid:
\[ \G(I) := \G\Aff \times_\aff \K(I) \rightrightarrows \aff \]
the \emph{symplectic groupoid} associated to $I$. The symplectic structure is the pullback under the projection of the symplectic structure in $\G\Aff$. One checks easily that $\K(I)$ (and hence $\G(I)$) will be Hausdorff if and only if $\phi^\pm$ are injective.

The equivalence relation $\sim$ is precisely the equivalence relation given by the intersection of the images of $\sigma_h$ and $\sigma_g$ for $h \in H$ and $g \in G^\pm$. Therefore,
\begin{theorem}\label{thm:affclass}
Let $p: \G \to \G\Aff$ be the fibration from Lemma \ref{lemma:projection}, with kernel $\K$. If $\G$ is natural and $I$ is the isotropy data associated with $\G$ then $\K(I) \isom \K$ and $\G(I) \isom \G$.
\end{theorem}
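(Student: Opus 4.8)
The plan is to exhibit an explicit isomorphism of bundles of groups $\Psi \colon \K(I) \to \K$ built directly from the covering sections $\sigma_h$ and $\sigma_g$, and then to promote it to an isomorphism of symplectic groupoids $\G(I) \isom \G$ using the semidirect product decomposition $\G \isom \G\Aff \times_\aff \K$ established above. I would define $\Psi$ on the two generating families of $\K(I)$ by the rules $(x,y,h) \mapsto \sigma_h(x,y)$ for $h \in H$ and $(x,y,g)\mapsto \sigma_g(x,y)$ for $g\in G^\pm$. The first task is to check that $\Psi$ descends to the quotient by $\sim$. The crucial input is that $\K \to \aff$ is \emph{étale} (discrete fibers inside the $4$-manifold $\G$, with the anchor a local diffeomorphism) and that each of the strata $\aff^+$, $\aff^-$, and the line $\{x=0\}$ is connected; consequently a continuous section of $\K$ over any one of them is determined by its value at a single point. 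This rigidity of flat sections yields both $\sigma_{h_1}|_{\aff^\pm} = \sigma_{h_2}|_{\aff^\pm}$ whenever $\phi^\pm(h_1)=\phi^\pm(h_2)$ and $\sigma_h|_{\aff^\pm}=\sigma_{\phi^\pm(h)}$, which are exactly the relations generating $\sim$; hence $\Psi$ is well defined.

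Next I would verify that $\Psi$ is a fiberwise group isomorphism and a homeomorphism. Surjectivity follows because $\K|_{\aff^\pm}$ is a (necessarily trivial, since $\aff^\pm$ is contractible) locally constant bundle of groups whose sections through $(\pm 1,0)$ are exactly the $\sigma_g$, while $\K|_{\{x=0\}}$ is trivialized by the lifted modular flow and covered by the $\sigma_h$; so the section images exhaust $\K$. On the fiber over a point of $\aff^\pm$ the relation $\sim$ collapses the $h$-classes along $\phi^\pm$ and leaves one class per element of $G^\pm$, matching $\K_{(x,y)}\isom G^\pm$, while over the line the $h$-classes match $\K_{(0,y)}\isom H$. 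That $\Psi$ respects the fiberwise products reduces, again by section-rigidity, to the identities $\sigma_a\sigma_b=\sigma_{ab}$ evaluated at the basepoints together with the fact that $\phi^\pm$ are homomorphisms. Finally, since $\Psi$ is a continuous bijection that is a local homeomorphism on each section image (the étale property), its inverse is continuous, so $\Psi$ is an isomorphism of bundles of groups.

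The last step is routine: the decomposition $\G \isom \G\Aff \times_\aff \K$ together with the definition $\G(I):=\G\Aff\times_\aff \K(I)$ lets me induce the map $\Id_{\G\Aff}\times \Psi$, which is a diffeomorphism of Lie groupoids once one checks that $\Psi$ intertwines the two $\G\Aff$-actions; this is immediate because both actions are described on sections by the same formula $\theta_g$. Since the symplectic form on each side is by construction the pullback of $\Omega$ on $\G\Aff$ along the projection, the induced map is automatically symplectic, giving $\G(I)\isom\G$ as symplectic groupoids.

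The main obstacle is the topological matching near the singular line $\{x=0\}$: showing that $\Psi$ is a genuine homeomorphism rather than merely a fiberwise bijection, since this is where the open-leaf branches $\sigma_g$ and the line branches $\sigma_h$ interact and where the possible failure of Hausdorffness of $\K$ (reflected in the non-injectivity of $\phi^\pm$) is encoded. The resolution is to keep the étale structure of $\K\to\aff$ in play throughout, so that $\sim$ literally records the overlaps of the open section images and the claimed gluing becomes a statement about étale spaces. The delicate point to confirm is that a branch $\sigma_g$ limits onto $\sigma_h(0,y_0)$ exactly when $\phi^\pm(h)=g$, which is precisely the identification of $\sim$ with the intersection of section images noted just before the theorem.
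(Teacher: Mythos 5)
Your proposal is correct and takes essentially the same route as the paper, whose own justification is the terse remark preceding the theorem that $\sim$ is exactly the overlap relation of the images of the sections $\sigma_h$ and $\sigma_g$, combined with the semidirect product decomposition $\G \isom \G\Aff \times_{\aff} \K$; you have simply made the implicit map $\Psi$ and its well-definedness, bijectivity, and \'etale-homeomorphism checks explicit. One small caution: since $\K$ may be non-Hausdorff (precisely when $\phi^\pm$ fail to be injective), ``\'etale over a connected base'' alone does not yield rigidity of sections --- the agreement set of two sections is open but need not be closed --- so the per-stratum rigidity must rest, as you in fact arrange elsewhere in the argument, on the local triviality (hence Hausdorffness) of the restrictions $\K|_{\aff^\pm}$ and $\K|_{\{x=0\}}$, the latter being exactly where the naturality hypothesis (the lifted modular flow) enters.
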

Any point in $\G \isom \G(I)$ can be uniquely represented by a pair $(g,\alpha)$ where $g \in \G\Aff$ and
\[ \alpha \in
\left\{
\begin{array}{l}
G^\pm\quad \text{ if } g\in \G\Aff^\pm   \\
\\
H \quad \text{ if } g \in \G\Aff|_{x=0} \\
\end{array}
\right.
\]
In these ``coordinates'' the product is given by $(g,\alpha) \cdot (h,\beta) = (gh,\alpha \beta)$.

\subsection{Maps of isotropy data}\label{subsection:mapsofisodata}
In this section, $\G_1 \isom \G(I_1)$ and $\G_2 \isom \G(I_2)$ will be symplectic groupoids integrating $\aff$ with isotropy data $I_1$
and $I_2$ respectively where:

\[ I_i := \left( \begin{tikzcd} G_i^- & H_i \arrow[l, "\phi_i^-" swap] \arrow[r, "\phi_i^+"] & G_i^+ \end{tikzcd} \right) \mbox{ for }i=1,2. \]

Recall the discussion of pointed bibundles in Section \ref{subsection:pointedbibundles}. We think of $\G_1$ and $\G_2$ as groupoids over the pointed manifold $(\aff, (0,0))$. Therefore, if we say $(P,p_0)$ is a pointed $(\G_2,\G_2)$-bibundle we mean that $p_0 \in P_{(0,0)}$.

A bibundle $P$ is \emph{orientation preserving} if it relates the positive half-plane to the positive-half plane and \emph{orientation reversing} otherwise. A bisection $\sigma$ of a $Z$-static bibundle is called $Z$-static if $\s \circ \sigma (x,y) = (x,y)$ or $\s \circ \sigma(x,y) = (-x,y)$.
\begin{proposition}
Suppose $P$ is a $Z$-static $(\G_2,\G_1)$-bibundle. Then $P$ admits a $Z$-static bisection.
\end{proposition}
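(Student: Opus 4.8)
The plan is to produce, in the orientation-preserving case, a bisection $\sigma$ with $\t^P\circ\sigma = \Id_{\aff}$. The orientation-reversing case then reduces to this one by precomposing $P$ with the bibundle of the reflection $R(x,y) = (-x,y)$, which is a Poisson automorphism of $\aff$ (it preserves $x\,\dd{y}\wedge\dd{x}$) and lifts to an isomorphism of natural integrations via $(a,b,x,y)\mapsto(a,-b,-x,y)$ on the $\G\Aff$-factor together with the swap $G^+\leftrightarrow G^-$ of the isotropy data. So assume $P$ is orientation preserving, meaning its induced map on orbit spaces fixes the points of $Z$ and carries each open half-cylinder-leaf to the leaf of the same sign.

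First I would fix any global smooth section $\sigma_0$ of the source projection $\s^P\colon P\to\aff$; such a section exists since $\aff\isom\R^2$ is contractible and $\s^P$ is a principal $\G_2$-bundle, hence trivial. Writing $f:=\t^P\circ\sigma_0\colon\aff\to\aff$, Lemma~\ref{lemma:maplike} identifies $P$ with the map-like bibundle $\bP(F)$ of a homomorphism $F\colon\G_1\to\G_2$ covering $f$, and since $P$ is principal, $F$ is a weak equivalence. Two facts follow: (i) because $P$ is $Z$-static and each point of $Z$ is its own $\G_2$-orbit, $f|_Z=\Id_Z$, so the first component $f_1$ vanishes on $Z$; (ii) the base map of a weak equivalence of symplectic groupoids is transverse to the symplectic foliation, so at each $0$-dimensional leaf $(0,y)$ the map $f$ is a local diffeomorphism. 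Combining (i), (ii) and orientation-preservation, Hadamard's lemma gives $f(x,y)=(x\,u(x,y),\,y+x\,w(x,y))$ with $u,w$ smooth and $u>0$ everywhere.

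The heart of the argument is to straighten $f$ to the identity by the left $\G_2$-action, smoothly across $Z$. Using the inclusion $\G\Aff\into\G_2$, I would let $g\colon\aff\to\G\Aff\subset\G_2$ be the arrow from $f(m)$ to $m$ inside $\G\Aff$; on either open half-plane this arrow is unique, and solving $\s(g)=f(m),\ \t(g)=m$ in the coordinates $\G\Aff\isom\Aff\times\aff$ yields
\[ g(x,y)=\bigl(-\log u(x,y),\; -w(x,y)/u(x,y),\; x\,u(x,y),\; y+x\,w(x,y)\bigr). \]
Since $u>0$, every component is smooth on all of $\aff$, so $g$ extends smoothly over $Z$ (where it takes isotropy values in $\G\Aff|_Z\isom\Aff$, not units), with $\s\circ g=f$ and $\t\circ g=\Id_{\aff}$. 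Setting $\sigma:=g\cdot\sigma_0$ keeps $\s^P\circ\sigma=\Id_{\aff}$, so $\sigma$ is a section, and gives $\t^P\circ\sigma=\t\circ g=\Id_{\aff}$; thus $\sigma$ is a $Z$-static bisection.

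The step I expect to be the main obstacle is exactly the smoothness of $g$ across the singular locus $Z$: on each open leaf $\G\Aff$ is a pair-type groupoid whose arrows degenerate as $x\to 0$, so the correcting arrow from $f(m)$ to $m$ naively blows up. The resolution is the limiting mechanism already used to extend the projection $p$ in Lemma~\ref{lemma:projection}: the transversality of $f$ forces $u>0$ on $Z$, which is precisely what renders the logarithm and the quotient in the formula for $g$ finite and smooth. Verifying $u>0$ on $Z$ from the weak-equivalence property, and confirming that $g$ indeed lands in the subgroupoid $\G\Aff$ and acts admissibly, are the remaining technical points to be filled in.
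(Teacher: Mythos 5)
Your proof is correct in substance, but it takes a genuinely different route from the paper's. The paper never trivializes $P$ globally: it constructs a map $\Phi\colon P \to \G\Aff$ commuting with $(\t,\s)$ by the same explicit formula and limiting argument as in Lemma~\ref{lemma:projection}, and then produces the static bisection as the unique lift of the unit section of $\G\Aff$ through the local diffeomorphism $\Phi$, seeded by a section of the principal $H_2$-bundle $\Phi\inv(\u(Z)) \to Z$ (which exists because $H_2$ is discrete and $Z$ is simply connected). You instead start from an arbitrary global section $\sigma_0$ of $\s^P$, identify $P$ with a map-like bibundle $\bP(F)$ via Lemma~\ref{lemma:maplike}, and straighten the base map $f$ to the identity by an explicit smooth family of arrows $g$ in $\G\Aff$; your use of Hadamard's lemma together with the transversality-forced positivity $u>0$ plays exactly the role of the paper's limit computation, and your reduction of the orientation-reversing case by the reflection $(x,y)\mapsto(-x,y)$ matches the paper's replacement of $\s$ by its composition with that reflection. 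What your route buys is an explicit, monodromy-free formula for the bisection in which the smoothness across $Z$ is completely transparent; what it costs is the opening step, ``a principal $\G_2$-bundle over the contractible $\aff$ is trivial,'' which is true (homotopy invariance of principal groupoid bundles) but is nowhere in the paper and deserves a citation or proof --- in particular because the integrations here are allowed to be non-Hausdorff (e.g.\ $\G(I)$ with $\phi^\pm$ non-injective), so you should remark that the standard argument, which only glues local trivializations over the Hausdorff paracompact base, still applies; the paper's proof needs only simple connectedness of $Z$ and $\aff$ and avoids bundle classification entirely. The remaining steps check out: $f|_Z = \Id_Z$ holds because each point of $Z$ is a single orbit (this gives both $f_1(0,y)=0$ and $f_2(0,y)=y$, which your Hadamard factorization implicitly uses), transversality at the zero-dimensional leaves gives $u(0,y)\neq 0$, orientation-preservation upgrades this to $u>0$ on all of $\aff$, and $\sigma = g\cdot\sigma_0$ is composable with $\s^P\circ\sigma = \Id$ and $\t^P\circ\sigma = \Id$, as required.
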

\begin{proof}
Suppose $P$ orientation preserving. We first make the following claim:
There is a local symplectomorphism $\Phi:P \to \G\Aff$ which makes the following diagram commute:
\[
\begin{tikzcd}
P \arrow[dr, "\t \times \s" swap] \arrow[rr, "\Phi"] & &\G\Aff \arrow[dl, "\t \times \s"] \\
&\aff \times \aff \, . &
\end{tikzcd}
\]
We can think of the claim as an analogue of lemma \ref{lemma:projection} for bibundles and the proof of this claim is similar. If $P$ is orientation preserving then for $p \in P$ let $\s(p) = (x_2,y_2)$ and $\t(p) = (x_1,y_1)$. If $P$ is orientation reversing then let $\s(p) = (-x_2,y_2)$ and $\t(p)=(x_1,y_1)$. Then we define $\Phi$ for $x_1\ne 0$ by:
\[ \Phi(p):= \left(\log \left(\frac{x_2}{x_1}\right),\frac{y_2-y_1}{x_1},x_1,y_1 \right),\]
so the claim follows.

Let $p_0 \in P_{(0,0)}$ be such that $\Phi(p_0) = (0,0,0,0) \in \G\Aff|_Z$. Notice that $Q:= \Phi\inv(\u(Z))$ is a principal $H_2$ bundle over $Z$. Therefore, $p_0$ extends to a unique section of $Q$. On the other hand, since $\Phi$ is a local diffeomorphism, the identity section of $\G\Aff$ gives a unique local extension of any $q \in Q$ to a static bisection. Therefore $p_0$ extends to a unique static bisection in some neighborhood of $Z$ which we can extend uniquely to all of $\aff$.

When $P$ is orientation reversing, we replace $\s$ by composing it with $(x,y) \mapsto (-x,y)$, and then the same argument applies.
\end{proof}
A \emph{pointed} bibundle $(P,p_0)$ is called $Z$-\emph{static} if $P$ is a $Z$-static bibundle and $p_0$ extends to a $Z$-static bisection. Let $(P,p_0)$ be such a orientation preserving pointed $(\G_2, \G_1)$-bibundle and $\sigma$ the associated bisection. Then the points $\sigma(\pm 1,0)$ and $\sigma(0,0) \in P$ determine group homomorphism $\psi^\pm: G^\pm_1 \to G^\pm_2$ and $\psi: H_1 \to H_2$ such that:
\begin{equation}\label{diagram:opisotropymap}
\begin{tikzcd}
G_1^- \arrow[d, "\psi^-"'] & H_1 \arrow[l, "\phi_1^-"']\arrow[r, "\phi_1^+"] \arrow[d, "\psi"] & G_1^+ \arrow[d, "\psi^+"] \\
G_2^- & H_2 \arrow[l, "\phi_2^-"'] \arrow[r, "\phi_2^+"] & G_2^+ \, ,
\end{tikzcd}
\end{equation}
commutes. If $P$ is orientation reversing then we get a similar diagram:
\begin{equation}\label{diagram:orisotropymap}
\begin{tikzcd}
G_1^- \arrow[d, "\psi^-"'] & H_1 \arrow[l, "\phi_1^-"']\arrow[r, "\phi_1^+"] \arrow[d, "\psi"] & G_1^+ \arrow[d, "\psi^+"] \\
G_2^+ & H_2 \arrow[l, "\phi_2^+"'] \arrow[r, "\phi_2^-"] & G_2^- \, .
\end{tikzcd}
\end{equation}
This motivates the following definition:
\begin{definition}\label{def:isodatamap}
An \emph{orientation preserving isomorphism} $\Psi:I_1 \to I_2$ is a triple of group isomorphisms $\Psi =(\psi,\psi^\pm)$ such that (\ref{diagram:opisotropymap}) commutes. An \emph{orientation reversing isomorphism} $\Psi: I_1 \to I_2$ is a triple of group isomorphisms such that (\ref{diagram:orisotropymap}) commutes.
\end{definition}
This gives a category where composition corresponds to composing the vertical arrows. For a map of isotropy data $\Psi: I_1 \to I_2$ and $\alpha \in H,G^-,G^+$ we may sometimes write $\Psi(\alpha)$ to mean $\psi(\alpha),\psi^-(\alpha),\psi^+(\alpha)$, respectively.
\begin{example}[Inner automorphisms]
Given $\alpha \in H$ the \emph{inner automorphism} associated to $\alpha$ is the orientation preserving isomorphism $\C_\alpha:I \to I$ where $\psi:= C_\alpha:H \to H$ is conjugation by $\alpha$ and $\psi^\pm := C_{\phi^\pm(\alpha)}:G^\pm \to G^\pm$ is conjugation by $\phi^\pm(\alpha)$.
\end{example}
\begin{example}[Pointed bibundles]
In the above discussion, we constructed the \emph{isomorphism of isotropy data} associated to $(P,p_0)$, a $Z$-static pointed $(\G_2,\G_1)$-bibundle.
\end{example}
The next lemma says that the second example is generic:
\begin{lemma}\label{lemma:bimvsisomap} There are 1-1 correspondences between:
\begin{enumerate}[(i)]
\item orientation preserving isomorphisms of isotropy data $\Psi: I_1 \to I_2$ and orientation preserving $Z$-static pointed $(\G_2,\G_2)$-bibundles.
\item orientation reversing isomorphisms of isotropy data $\Psi: I_1 \to I_2$ and orientation reversing $Z$-static pointed $(\G_2,\G_2)$-bibundles.
\end{enumerate}
For this lemma, we are considering pointed bibundles up to \emph{strong isomorphism}.
\end{lemma}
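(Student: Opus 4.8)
The plan is to factor each correspondence through the \emph{map-like} bibundles. The forward map is essentially in hand from the discussion preceding the lemma: given a $Z$-static pointed $(\G_2,\G_1)$-bibundle $(P,p_0)$, the preceding proposition produces a unique $Z$-static bisection $\sigma$ through $p_0$, and the elements $\sigma(0,0)$ and $\sigma(\pm1,0)$ determine the group isomorphisms $\psi$ and $\psi^\pm$ filling Diagram~\ref{diagram:opisotropymap} (resp.\ Diagram~\ref{diagram:orisotropymap}), i.e.\ an isomorphism of isotropy data $\Psi$. Because $P$ admits the section $\sigma$, Lemma~\ref{lemma:maplike} shows that $P$ is map-like, so $P\cong\bP(F)$ for a homomorphism $F\colon\G_1\to\G_2$; since $P$ is principal and $\t\circ\sigma$ is a diffeomorphism, $F$ is in fact an isomorphism of symplectic groupoids covering the base diffeomorphism $\t\circ\sigma$, which $Z$-staticness forces to be $\Id_\aff$ in the orientation-preserving case and the reflection $r(x,y)=(-x,y)$ in the orientation-reversing case. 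Pointing $\bP(F)$ at its canonical element over $(0,0)$, strong-isomorphism classes of $Z$-static pointed bibundles correspond bijectively to such isomorphisms $F$.

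The key step is then a rigidity statement: a symplectic groupoid isomorphism $F\colon\G_1\to\G_2$ covering $\Id_\aff$ (resp.\ $r$) is completely determined by its effect on isotropy, and that effect is an arbitrary isotropy data isomorphism. Indeed, a groupoid homomorphism covering a fixed base map $f$ satisfies $\s\circ F=f\circ\s$ and $\t\circ F=f\circ\t$, and over the open orbits $\aff^\pm$ the identity component $\G\Aff$ is the pair groupoid: the assignment $(a,b,x,y)\mapsto((x,y),(xe^a,y+xb))$ is injective for $x\neq0$. Hence over $\aff^\pm$ the $\G\Aff$-component of $F$ is pinned down (to the identity, resp.\ the unique lift of $r$), and only the $\K$-component remains free, where $\G\Aff$-equivariance makes it a single homomorphism $\psi^\pm$ of the fibre groups; continuity across $Z$ supplies $\psi$. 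Conversely, using the coordinates $(g,\alpha)$ on $\G(I)$ from Theorem~\ref{thm:affclass} and the product rule $(g,\alpha)(h,\beta)=(gh,\alpha\beta)$, I would define $F_\Psi(g,\alpha)=(g,\Psi(\alpha))$ in the orientation-preserving case and check that the relations $\psi^\pm\circ\phi_1^\pm=\phi_2^\pm\circ\psi$ are exactly what is needed for $F_\Psi$ to respect the gluing $\sim$ defining $\K$, hence to be a well-defined homomorphism; it is automatically symplectic since both forms are pulled back from $\G\Aff$ along $(g,\alpha)\mapsto g$, which $F_\Psi$ intertwines. This exhibits $F\mapsto\Psi$ as a bijection.

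The orientation-reversing case runs in parallel, replacing $\Id_\aff$ by the reflection $r$, which is Poisson for $\pi=x\,\partial_y\wedge\partial_x$ and interchanges $\aff^+$ and $\aff^-$; one sets $F_\Psi(g,\alpha)=(R(g),\Psi(\alpha))$ with $R$ the unique symplectic groupoid lift of $r$ to $\G\Aff$, and the interchange of half-planes is what turns the commuting squares of Diagram~\ref{diagram:opisotropymap} into those of Diagram~\ref{diagram:orisotropymap}, with $\phi^+$ and $\phi^-$ swapped and $\psi^\pm\colon G_1^\pm\to G_2^\mp$. Composing the two bijections of the previous paragraphs yields the claimed $1$--$1$ correspondences, with strong isomorphism on the bibundle side matching equality of $F$, hence of $\Psi$.

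I expect the main obstacle to be the rigidity step: carefully justifying that preservation of source and target over the open orbits, together with the pair-groupoid structure of $\G\Aff^\pm$ and $\G\Aff$-equivariance along the orbits, forces the $\G\Aff$-component of $F$ and leaves only the discrete isotropy homomorphisms free, and that continuity extends this cleanly across the singular locus $Z$. The orientation-reversing bookkeeping---tracking the half-plane interchange and the resulting swap $\phi^+\leftrightarrow\phi^-$ throughout---is the most error-prone part, and keeping the two cases separated while reusing a single rigidity argument will require care.
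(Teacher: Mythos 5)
Your proposal is correct and follows essentially the same route as the paper's proof: the forward map is the construction preceding the lemma (evaluate the unique $Z$-static bisection through $p_0$ at $(0,0)$ and $(\pm 1,0)$), and the inverse is exactly the paper's, namely the groupoid isomorphism $F(g,\alpha) = (g,\Psi(\alpha))$ in the coordinates of Theorem~\ref{thm:affclass}, with $P_\Psi := \bP(F)$ pointed at the canonical unit over $(0,0)$. Your extra rigidity step (that $(\t,\s)$ pins the $\G\Aff$-component of $F$ over the dense open orbits, leaving only the discrete isotropy homomorphisms free) and your explicit lift $R$ of the reflection $r(x,y)=(-x,y)$ merely spell out what the paper asserts implicitly or leaves to the reader in the orientation-reversing case, and both check out.
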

\begin{proof}
We prove the orientation preserving case. The orientation reversing case is similar, so can be left to the reader. Given a orientation preserving $Z$-static pointed $(\G_2,\G_1)$-bibundle we have already provided the construction of an isomorphism of isotropy data above. Suppose $\Psi$ is an isomorphism of isotropy data. Then we can define a symplectic groupoid isomorphism $F: \G_1 \to \G_2$. In our `coordinates' from before, $F$ takes the form:
\[ F(g,\alpha) = (g, \Psi(\alpha)). \]
The bibundle associated to this map $P_\Psi$ is symplectomorphic to $\G_2$ and comes with a canonical point $p_\Psi = \u(0,0)$. Clearly $(P_\Psi, p_\Psi)$ is an orientation preserving $Z$-static pointed bibundle. The definition of the actions on $P_\Psi$ make it clear that the isomorphism of isotropy data associated to $(P_\Psi,p_\Psi)$ is $\Psi$.
\end{proof}
From now on, we will use the notation $P_\Psi$ to denote the bibundle associated to a map of isotropy data. The map $\Psi \mapsto P_\Psi$ is functorial, i.e.
\[ P_{\Psi_1 \circ \Psi_2} \isom P_{\Psi_1} \otimes P_{\Psi_2} . \]
When we pass to ordinary isomorphism classes of bibundles, we can think of the map $\Psi \mapsto P_\Psi$ as corresponding to the forgetful map $(P,p_0) \mapsto P$.
\subsection{The Picard group of $\G(I)$}\label{subsection:bimoveraff}
We need one more lemma before calculating the Picard group.
\begin{lemma}\label{lemma:staticbibundleclass}
For any isomorphism $\Psi: I \to I$, the bibundle $P_\Psi$ is trivial if and only if $\Psi$ is an inner automorphism.
\end{lemma}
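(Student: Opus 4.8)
The plan is to combine the characterisation of inner automorphisms from Chapter~\ref{chap:morita} with the explicit semidirect product description $\G \isom \G\Aff \times_\aff \K(I)$ provided by Theorem~\ref{thm:affclass}. I first dispose of the orientation-reversing case: an orientation-reversing $\Psi$ produces an orientation-reversing bibundle $P_\Psi$, which swaps the two open orbits and hence acts nontrivially on the orbit space of $\G(I)$; it can therefore be neither trivial nor inner (recall that conjugation by a bisection moves points only within their own orbit, so every inner automorphism induces the identity on the orbit space, and the inner automorphisms $C_\alpha$ of Definition~\ref{def:isodatamap} are in particular orientation preserving). Thus both sides of the equivalence fail and I may assume $\Psi=(\psi,\psi^\pm)$ is orientation preserving.

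Next I would reduce the statement to a purely groupoid-theoretic one. The bibundle $P_\Psi = \bP(F_\Psi)$ is the map-like bibundle attached to the symplectic groupoid isomorphism $F_\Psi \colon \G(I)\to\G(I)$, $(g,\alpha)\mapsto(g,\Psi(\alpha))$, from Lemma~\ref{lemma:bimvsisomap}. By the very definition of $\InnAut$ as the kernel of $\bP\colon\Aut(\G)\to\Pic(\G)$ in Section~\ref{section:picard}, the bibundle $P_\Psi$ is trivial in $\Pic(\G(I))$ if and only if $F_\Psi$ is an inner automorphism of $\G(I)$. Hence it suffices to show that $F_\Psi$ is inner precisely when $\Psi=C_\alpha$ for some $\alpha\in H$. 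For the ``if'' direction I would take the distinguished global section $\sigma_\alpha\colon\aff\to\K(I)$ with $\sigma_\alpha(0,0)=\alpha$ and view it as a bisection of $\G(I)$ via $\K\hookrightarrow\G$, noting $\t\circ\sigma_\alpha=\Id$. Since the source and target of any $g\in\G\Aff$ lie in the same region ($\aff^+$, $\aff^-$, or a single point of $Z$), the values $\sigma_\alpha(\t g)$ and $\sigma_\alpha(\s g)$ coincide and equal $\phi^+(\alpha)$, $\phi^-(\alpha)$, or $\alpha$ respectively. A short computation in the coordinates $(g,\alpha)(h,\beta)=(gh,\alpha\beta)$ then shows that conjugation by $\sigma_\alpha$ sends $(g,\beta)$ to $(g,C_{\phi^\pm(\alpha)}(\beta))$, resp.\ $(g,C_\alpha(\beta))$, which is exactly $F_{C_\alpha}(g,\beta)$ by the definition of $\psi^\pm=C_{\phi^\pm(\alpha)}$ and $\psi=C_\alpha$; Lemma~\ref{lemma:innaut} then certifies that $F_{C_\alpha}$ is inner.

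The ``only if'' direction is the core of the argument. Assuming $F_\Psi$ inner, Lemma~\ref{lemma:innaut} gives a bisection $\sigma$ of $\G(I)$ with $F_\Psi(\gamma)=\sigma(\t\gamma)\cdot\gamma\cdot(\i\circ\sigma\circ\s)(\gamma)$. Applying the homomorphism $p\colon\G\to\G\Aff$ of Lemma~\ref{lemma:projection} and using that $F_\Psi$ fixes the $\G\Aff$-component, I get $\tau(\t\gamma)\,g\,\tau(\s\gamma)\inv=g$ for all $g\in\G\Aff$, where $\tau:=p\circ\sigma$ is a bisection of $\G\Aff$; that is, conjugation by $\tau$ is trivial on $\G\Aff$. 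Over the dense open locus $\aff^\pm$ the groupoid $\G\Aff$ is simply transitive (a pair groupoid), where the only bisection conjugating trivially is the unit bisection, so $\tau=\u$ there, and hence everywhere by continuity (over $Z$ the isotropy of $\G\Aff$ is the abelian group $\Aff$, so the conjugation condition alone does not pin $\tau$ down, but continuity from the regular locus does). Therefore $\sigma$ is a locally constant global section $c$ of the discrete bundle $\K(I)$; continuity across the singular line $Z$, i.e.\ the gluing relation $\sim$ defining $\K(I)$, forces the constant $G^\pm$-values on $\aff^\pm$ to equal $\phi^\pm(\alpha)$ for $\alpha:=c(0,0)\in H$, so that $c=\sigma_\alpha$. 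The computation of the ``if'' direction then identifies $F_\Psi$ with $F_{C_\alpha}$, and since $\Psi\mapsto F_\Psi$ is injective I conclude $\Psi=C_\alpha$.

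I expect the main obstacle to be this final paragraph: the rigidity of $\tau$ (trivial conjugation on the pair-groupoid part, plus propagating $\tau=\u$ onto $Z$ where the isotropy is large and abelian) and the gluing argument forcing the $\K$-valued part to be one of the canonical sections $\sigma_\alpha$. Keeping the twisted semidirect multiplication consistent with the untwisted ``coordinate'' product $(g,\alpha)(h,\beta)=(gh,\alpha\beta)$ is the other point demanding care.
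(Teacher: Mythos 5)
Your proof is correct, but it takes a genuinely different route from the paper's. The paper settles the lemma in a few lines by citing the result of Bursztyn--Weinstein~\cite{BPic} that for an automorphism $F$ the bibundle $P_F$ is trivial if and only if $F$ is an inner automorphism associated to a static bisection, and then simply reads off $\psi = C_\alpha$ and $\psi^\pm = C_{\phi^\pm(\alpha)}$ from the canonical static bisection through $\alpha = \sigma(0,0)$. You instead stay inside the thesis's own machinery: triviality of $P_\Psi = \bP(F_\Psi)$ is, by the definition of $\InnAut$ as $\ker \bP$ in Section~\ref{section:picard} together with Lemma~\ref{lemma:innaut}, equivalent to $F_\Psi$ being conjugation by a bisection $\sigma$ of $\G(I)$; and you then \emph{prove} the rigidity statement that the paper imports from the citation. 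Your mechanism is sound: applying $p$ from Lemma~\ref{lemma:projection}, conjugation by $\tau = p \circ \sigma$ must be trivial on $\G\Aff$; since $\G\Aff|_{\aff^\pm}$ is the pair groupoid of the half-plane (the coadjoint action is free and transitive there), $\tau = \u$ on $\aff^\pm$ and hence everywhere, so $\sigma$ is a global section of $\K(I)$, and the gluing relation defining $\K(I)$ in Theorem~\ref{thm:affclass} forces $\sigma = \sigma_\alpha$ with $\alpha = \sigma(0,0)$, whence $\Psi = C_\alpha$ by injectivity of $\Psi \mapsto F_\Psi$. What your approach buys is self-containedness (no appeal to~\cite{BPic}) and an explicit disposal of the orientation-reversing case, which the paper leaves implicit; what the paper's buys is brevity, and its citation silently carries the symplectic side of the triviality condition (the~\cite{BPic} bisections are compatible with the symplectic structure), which your reduction also handles because the bisection furnished by Lemma~\ref{lemma:innaut} in $\DMan$ is a $\DMan$-morphism and your rigidity argument only uses its underlying smooth bisection.

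Two small corrections, neither of which breaks the argument. First, your parenthetical about the locus $Z$ is wrong in an ironic direction: the isotropy of $\G\Aff$ at a point of $Z$ is the full affine group $\Aff$, which is \emph{non-abelian with trivial center}, so the conjugation condition alone already forces $\tau(0,y) = \u(0,y)$ there --- you do not need the continuity fallback, though the continuity argument you actually invoke is also valid. Second, you can shortcut part of the pair-groupoid step: since $F_\Psi$ covers the identity and conjugation by $\sigma$ covers $\t \circ \sigma$, surjectivity of $\t$ gives $\t \circ \sigma = \Id$ directly, so $\sigma$ is automatically static; the projection argument is then only needed to kill the $\G\Aff$-component of $\sigma$ over $Z$, where the isotropy of $\G(I)$ is larger than $\K$.
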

\begin{proof}
It is proved in \cite{BPic} that for any automorphism $F: \G \to \G$ then $P_F$ is a trivial bibundle if and only if $F$ is an inner automorphism associated to a static bisection. Now suppose $P_\Psi$ is trivial. Recall the $F$ from the construction of $P_\Psi$ in Lemma \ref{lemma:bimvsisomap}. Since $P_\Psi$ is trivial, we must have that $F$ is given by conjugation by a static bisection $\sigma$. Let $\alpha = \sigma(0,0) \in H$. Then $\psi: H \to H$ must be $C_\alpha$ and $\psi^\pm = C_{\phi^\pm(\alpha)}$ and  so $\Psi$ is an inner automorphism. On the other hand, if $\Psi = \C_\alpha$ let $\sigma$ be the unique static bisection extending $\alpha$. Then clearly $F$ is the inner automorphism induced by $\sigma$.
\end{proof}
In particular, the inner automorphisms are in the kernel of the map $\Psi \mapsto P_\Psi$ and form a normal subgroup. Therefore, it makes sense to define $\OutAut(I)$ to be the automorphisms of $I$ modulo inner automorphisms. We now prove the main theorem of this section:
\begin{theorem}
Let $\G = \G(I)$ be a natural symplectic groupoid integrating $\aff$. Then:
\[ \Pic(\G) \isom \OutAut(I) \times \R. \]
\end{theorem}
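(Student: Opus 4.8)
The plan is to split $\Pic(\G)$ into a ``continuous'' part coming from the modular flow and a ``discrete'' part coming from the isotropy data, and then to show these two pieces are independent. First I would use property (c) of a natural integration (Definition \ref{def:natural}): the modular vector field $\partial_y$ on $\aff$ lifts to a $1$-parameter family of symplectic groupoid automorphisms $\Phi^t \colon \G \to \G$ covering the translations $(x,y)\mapsto(x,y+t)$. Setting $\mu(t) := [\bP(\Phi^t)]$, the functoriality of $\bP$ gives $\bP(\Phi^s)\otimes\bP(\Phi^t)\isom\bP(\Phi^{s+t})$, so $\mu\colon\R\to\Pic(\G)$ is a homomorphism. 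It is injective: for $t\neq 0$ the automorphism $\Phi^t$ moves the orbit space of $\aff$ (it translates the singular line $Z=\{x=0\}$), so $\bP(\Phi^t)$ cannot be trivial, since a map-like class is trivial exactly when the underlying automorphism is inner (the kernel of $\bP\colon\Aut(\G)\to\Pic(\G)$), and inner automorphisms act trivially on the orbit space.

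Second, I would establish the rigidity that pins down the role of the modular flow: any principal $(\G,\G)$-bibundle $P$ induces a homeomorphism $\bar P$ of the orbit space whose restriction to $Z\isom\R$ is a translation $y\mapsto y+c$. The key observation is that the symplectic leaves of $\aff$ lying in $Z$ are single points, so every Hamiltonian vector field vanishes along $Z$; consequently the modular vector field restricted to $Z$ is independent of the auxiliary volume form and is a canonical, Morita-invariant vector field $\partial_y|_Z$. A Morita self-equivalence must carry this canonical vector field to itself, which forces $\bar P|_Z$ to have derivative $1$ everywhere, i.e.\ to be a translation by some $c = c(P)\in\R$; in the orientation-reversing case the half-plane behaviour contributes only the flip $(x,y)\mapsto(-x,y)$, which still fixes the $y$-coordinate and leaves $\partial_y$ unreversed. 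I expect this rigidity step to be the main obstacle, since it is where the geometry of the singular locus, rather than formal bibundle manipulation, does the work.

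Third, with $c(P)$ in hand I would form $P' := P\otimes\bP(\Phi^{-c(P)})$, which now induces the identity (or the $x$-flip) on $Z$, so $P'$ is a $Z$-static bibundle. By the Proposition establishing that every $Z$-static bibundle admits a $Z$-static bisection, and by Lemma \ref{lemma:bimvsisomap}, the $Z$-static pointed bibundles are identified with isomorphisms of isotropy data $\Psi\colon I\to I$ (orientation preserving or reversing). Passing from pointed bibundles to ordinary isomorphism classes, Lemma \ref{lemma:transbimid} together with the functoriality $P_{\Psi_1\circ\Psi_2}\isom P_{\Psi_1}\otimes P_{\Psi_2}$ shows that $\Psi\mapsto[P_\Psi]$ is a homomorphism $\Aut(I)\to\Pic(\G)$, and Lemma \ref{lemma:staticbibundleclass} identifies its kernel as exactly the inner automorphisms. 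Hence the $Z$-static classes form a subgroup isomorphic to $\OutAut(I)$.

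Finally I would assemble the two pieces. The preceding step shows every class has the form $[P_\Psi]\otimes\mu(c)$, so $\Pic(\G)$ is generated by the subgroup $\OutAut(I)$ of $Z$-static classes together with the image of $\mu$; these intersect trivially, since a nonzero $\mu(t)$ translates $Z$ while $Z$-static classes fix it. It remains to check that $\mathrm{im}(\mu)$ is central: conjugating $\mu(t)$ by any class shifts and then unshifts the translation amount, and because orientation reversal is the $x$-flip and leaves $\partial_y$ unchanged, this conjugation returns $\mu(t)$ unchanged. Since $\R$ is then central (hence normal) and $\OutAut(I)$ is a complementary subgroup, the extension splits as a direct product, giving $\Pic(\G)\isom\OutAut(I)\times\R$, consistent with the Picard Lie algebra computation $\mathfrak{pic}(M)\isom\R^N$ in the case $N=1$.
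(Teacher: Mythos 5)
Your proposal is correct and follows essentially the same route as the paper's proof: identify the subgroup of $Z$-static classes with $\OutAut(I)$ via the $Z$-static bisection proposition, Lemma~\ref{lemma:bimvsisomap} and Lemma~\ref{lemma:staticbibundleclass}, use Morita invariance of the modular class to show every element of $\Pic(\G)$ acts on $Z$ by a translation (so $Z\Pic(\G)$ and the modular subgroup $\Mod \isom \R$ generate), and conclude by commutation and trivial intersection. Your extra elaborations --- the injectivity of $t \mapsto [\bP(\Phi^t)]$ via the inner-automorphism criterion, and the observation that Hamiltonian vector fields vanish along $Z$ so that $\partial_y|_Z$ is canonical --- simply fill in steps the paper asserts without detailed proof.
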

\begin{proof}
We start by observing that from Lemma \ref{lemma:staticbibundleclass} it follows that the subgroup $Z\Pic(\G) \subset \Pic(\G)$ of $Z$-static bibundles is isomorphic to $\OutAut(I)$. By naturality, we have a 1-parameter group of symplectic automorphisms of $\G$ integrating the modular vector field:
\[ \Modd^t: \G \to \G, \quad (a,b,x,y,\alpha)\longmapsto (a,b,x,y+t,\alpha).\]
We let $\mathcal{M}^t \in \Pic(M)$ be the element represented by the bibundle associated with $\Modd^t$, and this defines 1-parameter subgroup $\Mod \subset \Pic$ isomorphic to $\R$.

Every symplectic bibundle $P$ preserves the class of the modular vector field. It follows that an arbitrary $P \in \Pic(\G)$ acts on $\{x=0\}$ by translations. Therefore the subgroups $\Mod$ and $Z\Pic(\G)$ generate $\Pic(\G)$. Since $Z$-static bibundles commute with $\Mod^t$, elements in $Z \Pic(\G)$ commute with elements in $\Mod$. Hence:
\[  Z\Pic(\G) \times \Mod \to \Pic(\G), \quad (P,Q) \longmapsto PQ,\]
is a well defined surjective group homomorphism. We claim that the kernel is trivial: if $P \in Z\Pic(\G)$ and $Q \in \Mod$, then $PQ = 1$ if and only if $P = Q\inv$. But $Q\inv$ is $Z$-static if and only if $Q = 1$.
\end{proof}

\section{The Picard group of the affine cylinder}\label{section:affinecylinder}
Let $\caff$ be the affine cylinder. Points in $\caff$ are equivalence classes $[(x,y)]$ of pairs $(x,y) \in \R^2$, where:
\[  (x,y) \sim (x,y+n),\ \forall n \in \Z. \]
The projection $C:\aff \to \caff$, $(x,y) \mapsto [(x,y)]$, is a local Poisson diffeomorphism. Throughout this section $\G \rightrightarrows \caff$ is a natural integration of the affine cylinder. Our goal is to show that we can realize $\G$ as a quotient of some $\G(I)$. The projection $\G(I) \to \G$ will depend on the \emph{holonomy data} of $\G$. We will proceed as follows:
\begin{itemize}
    \item in Section \ref{subsection:holdata}, we extract discrete data from any natural integration of $\caff$ and give a definition of \emph{holonomy data}, denoted $(I,\Hol)$;
    \item in Section \ref{subsection:mapsofhol}, we classify bibundles over $\caff$ via maps of holonomy data;
    \item in Section \ref{subsection:caffpic}, we compute the Picard group a natural integration of $\caff$.
\end{itemize}
\subsection{Holonomy data}\label{subsection:holdata}
Similarly to the affine plane, natural integrations of $\G$ of the affine cylinder can be characterized with discrete data. We will begin by defining the isotropy data of $\G$.
\begin{lemma}
Suppose $\G$ is a natural integration of $\caff$. Then there exists isotropy data $I$ and a surjective groupoid homomorphism $\Proj: \G(I) \to \G$:
\[
\begin{tikzcd}
\G(I) \darrow \arrow[r, "\Proj"] & \G \darrow \\
\aff \arrow[r]                   & \caff \, ,
\end{tikzcd}
\]
such that $\Proj$ is an isomorphism at the level of isotropy groups and $\aff \to \caff$ is the universal cover.
\end{lemma}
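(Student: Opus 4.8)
The plan is to reduce the statement to the affine plane, which we have already analysed in Section~\ref{section:affineplane}, by passing to the universal cover $C\colon \aff \to \caff$, $(x,y)\mapsto[(x,y)]$, whose deck group is $\Z$ acting by $y\mapsto y+n$. First I would extract the isotropy data $I$ from $\G$. Restricting $\G$ to the singular circle $\{x=0\}\isom\S^1$ and to the two open half-cylinders gives in each case a bundle of discrete isotropy groups sitting inside the full isotropy, the continuous part being governed by $\G\Aff$ exactly as in the plane. By the naturality conditions of Definition~\ref{def:natural} --- specifically the lift of the modular vector field --- this discrete bundle is locally trivial along each orbit. Fixing a basepoint $\theta_0\in\S^1$ and one point in each half-cylinder, I set $H$, $G^+$, $G^-$ to be the corresponding discrete isotropy groups and use the local trivialisations near $\{x=0\}$ to produce the structure maps $\phi^\pm\colon H\to G^\pm$. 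This yields isotropy data $I=(\phi^\pm\colon H\to G^\pm)$, and $\G(I)=\G\Aff\times_\aff\K(I)$ is the associated natural integration of $\aff$.

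The construction of $\Proj$ is where the cylinder genuinely differs from the plane. On the plane the split fibration $p\colon\G\to\G\Aff$ of Lemma~\ref{lemma:projection} was built from an explicit formula involving $(y_2-y_1)/x_1$; on the cylinder the analogous expression $(\theta_2-\theta_1)/x$ is not single valued, since $\theta\in\S^1$, so there is no global fibration of $\G$ onto $\Caff$. This failure is precisely the holonomy around the circle, and it is the reason the statement is phrased through the universal cover. The remedy is to work over $\aff$, where the $\theta$-coordinate unrolls to $y\in\R$ and every discrete bundle along $\{x=0\}$ trivialises. Concretely, I would define $\Proj$ as a composite. For the continuous part I use $\G\Aff\xrightarrow{\,q\,}\Caff$, the $\Z$-quotient of Example~\ref{example:sigmacaff} (which covers $C$), followed by the canonical morphism $\Caff\to\G$ coming from the fact that $\Caff=\Sigma(\caff)$ is the source-simply-connected integration and hence maps to any integration of $\caff$. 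For the discrete part I use the covering $\K(I)\to\K$, where $\K$ is the discrete isotropy bundle of $\G$; since $\K(I)$ was obtained by trivialising $C^*\K$ on the simply connected cover, this map is a fibrewise isomorphism covering $C$. Combining these through the semidirect description $\G(I)=\G\Aff\times_\aff\K(I)$ produces a candidate $\Proj\colon\G(I)\to\G$ covering $C$.

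It then remains to check that $\Proj$ is a groupoid homomorphism that is surjective and an isomorphism on isotropy groups. The isotropy claim is arranged by the very definition of $I$: over each point of $\aff$ the continuous factor matches the $\Aff$-part of the isotropy of $\G$ while the discrete factor $\K(I)\to\K$ is a fibrewise isomorphism, so the product is an isomorphism on isotropy. Surjectivity follows from naturality, because connectedness of the orbits forces every arrow of $\G$ to factor as an arrow in the image of the source-simply-connected integration times a discrete isotropy element, both of which lie in the image of $\Proj$. I expect the main obstacle to be verifying that $\Proj$ is genuinely well defined as a homomorphism, that is, that the trivialisation of $\K$ on the cover is simultaneously compatible with the groupoid multiplication and with the $\G\Aff$-action in the semidirect product. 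The incompatibility measured here is exactly the monodromy of $\K$ around $\S^1$; it does not obstruct the existence of $\Proj$ (we merely quotient, rather than descend, along the cover), but it is the data that must be recorded separately, and this is precisely the holonomy data $\Hol$ introduced in the remainder of the section. With well-definedness in hand, the assertion that $\aff\to\caff$ is the universal cover is immediate from the description of $C$, completing the argument, and identifying $\K(I)\isom C^*\K$ then recovers the statement $\G(I)\isom\G$ of Theorem~\ref{thm:affclass} in the cylinder setting.
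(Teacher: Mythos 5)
Your strategy --- extract $I$ by hand, then assemble $\Proj$ from a continuous piece $\G\Aff\to\Caff\to\G$ and a discrete piece $\K(I)\isom C^*\K\to\K$ --- is workable in principle but genuinely different from the paper's, and as written it has two unresolved steps, one of which you flag and one of which you do not. The unflagged one: your ``discrete isotropy bundle $\K\subset\G$'' is only intrinsically defined over the open half-cylinders, where the isotropy of $\G$ is genuinely discrete. Over the singular circle the isotropy is an extension of a discrete group by the continuous group $\Aff$, and to realize the discrete part as an actual sub-bundle of groups inside $\G$ (which you need, since the discrete piece of $\Proj$ must land in arrows of $\G$, not in $\pi_0$ of isotropy) requires a splitting --- i.e., a local analogue on $\caff$ of the projection $p$ of Lemma~\ref{lemma:projection}, built from the same limit-of-bisections argument. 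You assert this bundle exists but never construct it, and everything downstream (the identification $\K(I)\isom C^*\K$, your surjectivity argument via ``$\G=\G^0\cdot\K$'') funnels through it. The flagged one: you correctly identify that $\Proj$ being a homomorphism amounts to equivariance of the trivialization of $C^*\K$ with respect to the $\G\Aff$-action in the semidirect product, but you do not verify it; your remark that the failure ``is exactly the monodromy'' conflates two things. The monodromy of $\K$ around $\S^1$ lives in the \emph{fibers} of $\Proj$ (it is why $\Proj$ is not injective) and never threatens the homomorphism property; what must actually be checked is that conjugation by arrows of the $\s$-connected component implements parallel transport in the discrete bundle, which is a real verification, not a bookkeeping convention.

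The paper avoids both issues at a stroke by pulling back the groupoid itself rather than its pieces: it forms the pullback groupoid $C^*\G$ over $\aff$, lets $\overline{\G}$ be the maximal open subgroupoid of $C^*\G$ with connected orbits (this deletes exactly the arrows joining $(0,y)$ to $(0,y+n)$, $n\neq 0$, lying over isotropy of $\G$, which make the orbits of $C^*\G$ along the singular line disconnected --- the shadow of the holonomy you are wrestling with), and takes $\Proj$ to be projection to the middle component. Then $\Proj$ is a surjective homomorphism and an isomorphism on isotropy groups by construction, with nothing to check, and since $\overline{\G}$ inherits naturality (Definition~\ref{def:natural}), Theorem~\ref{thm:affclass} applies and \emph{produces} the isotropy data $I$ with $\overline{\G}\isom\G(I)$, rather than requiring you to extract $I$ and the semidirect structure beforehand. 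In short: your order of operations (data first, map second) forces you to redo the splitting machinery of Section~\ref{section:affineplane} on the cylinder, while the paper's order (map first via pullback, data second via the plane classification) reuses that machinery wholesale; to complete your proof you would need to supply the local splitting over the singular circle and the equivariance check, at which point your final identification should read $\G(I)\isom\overline{\G}$, not $\G(I)\isom\G$ as in your closing sentence.
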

\begin{proof}
First consider the pullback groupoid:
\[  C^* \G :=  \{ ((x_2,y_2), \, g , \, (x_1,y_1)) \in \aff \times \G \times \aff : \t(g) = C(x_2,y_1), \,  \s(g) = C(x_1,y_1) \}.\]
Let $\overline{\G}$ be the maximal open subgroupoid of $C^* \G$ with connected orbits. Let $\Proj:\overline{\G} \to \G$ be projection to the middle component. The homomorphism $\Proj$ is surjective and is an isomorphism when restricted to any isotropy group of $\overline{G}$. Furthermore, since $\G$ is natural, it is clear that $\overline{\G}$ is natural and by Theorem \ref{thm:affclass} we can identify $\overline{\G}$ with $\G(I)$ for some isotropy data $I$.
\end{proof}
If $\Proj: \G(I) \to \G$ is as in the above lemma, we say $I$ is the \emph{isotropy data} of $\G$ which we denote as before $(G^- \from H \to G^+)$.

Our goal is to compute the fibers of $\Proj$. To do this, we will define some convenient notation.
\begin{itemize}
\item The image $\Proj(\K(I)) = \K \subset \G$ is a discrete bundle of Lie groups. Clearly $H \isom \K_{(0,0)}$ so let $\hol: H \to H$ be the holonomy of $\K|_{\{ 0 \} \times \S^1}$. In other words:
\[ \Proj(g - n, \hol^n(\alpha)) = \Proj(g ,\alpha ) \, . \]
Where if $g = (a,b,x,y)$ then $g+n:= (a,b,x,y+n)$ for any integer $n$.
\item Let $\eta$ be the unique bisection $\eta: (\aff - Z) \to \G\Aff$ such that:
\[ \t \circ \eta(x,y) = (x,y+1) \, . \]
Observe that $\eta$ cannot be defined on the critical locus $Z$.
\item Since $\Proj(\eta(\pm 1,0)) \in \K_{(\pm 1, 0)} \isom G^\pm$, let $\gamma^\pm \in G^\pm$ be such that:
\[ \Proj(\eta(x,y)) = (\u(x,y),\gamma^\pm) \quad x \neq 0 \, . \]
\item Let $\zeta: (\aff - Z) \to \G(I)$ be the bisection $\zeta$ such that
\[ \zeta(x,y) = (\eta(x,y),(\gamma^\pm)\inv) \quad \mbox{for }x \in \aff^\pm. \]
\end{itemize}
\begin{proposition}\label{prop:equivrelation}
Suppose $\Proj: \G(I) \to \G$ is as in the above lemma. Let $\zeta^\pm$ and $\hol: H \to H$ be as we just defined them. Then for any $(g,\alpha),(g',\alpha') \in \G(I)$ we have that $\Proj(g,\alpha) = \Proj(g,\alpha)$ if and only if one of the following holds:
\begin{enumerate}[(i)]
\item $g,g' \in \G\Aff|_{\aff^\pm}$ and there exists integers $n$ and $m$ such that:
\[ \zeta^n( g , \alpha) \zeta^m = (g', \alpha') \, ;\]
\item $g_1,g_2 \in \G\Aff|_Z$ and there exists an integer $n$ such that:
\[ (g_1 - n, \hol^n(\alpha_1)) = (g_2, \alpha_2) \, . \]
\end{enumerate}
\end{proposition}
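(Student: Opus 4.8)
The plan is to show that the fiber relation of $\Proj$ is generated entirely by the deck transformations of the universal cover $\aff \to \caff$, whose deck group is $\Z$ acting by $(x,y) \mapsto (x,y+1)$. Over the open regions $\aff^\pm$ these translations are implemented by the bisection $\zeta$, while over the singular locus $Z = \{x = 0\}$ they are coupled to the holonomy $\hol$. Two facts will drive the argument. First, by the preceding lemma $\Proj$ is an isomorphism on each isotropy group, so two arrows of $\G(I)$ with equal source, equal target, and equal $\Proj$-image must coincide. Second, $\Proj(\zeta(x,y))$ is a unit of $\G$; I would verify this by a direct computation in the semidirect product $\G(I) = \G\Aff \times_\aff \K(I)$, writing $\zeta(x,y) = (\eta(x,y), (\gamma^\pm)\inv)$ as the product of the isotropy elements $(\u(x,y),\gamma^\pm)$ and $(\u(x,y),(\gamma^\pm)\inv)$ via the defining property $\Proj(\eta(x,y)) = \Proj(\u(x,y),\gamma^\pm)$, so that the $\gamma^\pm$-factors cancel and the image is $\Proj(\u(x,y))$.

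The direction that (i) or (ii) implies $\Proj(g,\alpha) = \Proj(g',\alpha')$ is then immediate. In case (i), since $\Proj$ carries $\zeta$ to units at the relevant base points, $\Proj(\zeta^n (g,\alpha) \zeta^m) = \Proj(\zeta)^n\,\Proj(g,\alpha)\,\Proj(\zeta)^m = \Proj(g,\alpha)$. In case (ii), the identity $\Proj(g - n, \hol^n(\alpha)) = \Proj(g,\alpha)$ is exactly the definition of the holonomy.

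For the converse I would first record the orbit structure of $\G(I)$. An arrow of $\G\Aff$ preserves the sign of the $x$-coordinate, and over $Z$ the groupoid $\G\Aff|_Z$, hence $\G(I)|_Z$, consists entirely of isotropy arrows. Thus every arrow of $\G(I)$ is either entirely over $\aff^+$, entirely over $\aff^-$, or an isotropy arrow over a point of $Z$, and this trichotomy is precisely the split between cases (i) and (ii). Now suppose $\Proj(g,\alpha) = \Proj(g',\alpha')$. Matching base points through the cover $\aff \to \caff$ forces the source and target of $(g',\alpha')$ to be $\Z$-translates of those of $(g,\alpha)$. If the arrows lie over $\aff^\pm$, I would left- and right-multiply $(g,\alpha)$ by the powers of $\zeta$ that translate its target and its source by the required integers $n$ and $m$; the result has the same source and target as $(g',\alpha')$ and, since $\Proj(\zeta)$ is a unit, the same $\Proj$-image, so isotropy injectivity gives $\zeta^n (g,\alpha)\zeta^m = (g',\alpha')$, which is (i). If instead the arrows lie over $Z$, both are isotropy; choosing the integer $n$ so that the base points are $C$-matched, the defining property of $\hol$ yields $\Proj(g-n,\hol^n(\alpha)) = \Proj(g',\alpha')$, and since $(g-n,\hol^n(\alpha))$ and $(g',\alpha')$ are isotropy at the same point, injectivity gives (ii).

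The main obstacle is the open-region case: one must check that left- and right-multiplication by $\zeta$ translate the target and the source \emph{independently}, so that the two integers $n$ and $m$ are genuinely unconstrained. This is what distinguishes (i) from the single-integer relation (ii), and it reflects the fact that $\zeta$ extends over all of $\aff^\pm$ but cannot be continued across $Z$. Making this precise requires treating $\zeta$ carefully as a bisection of $\G(I)$ over $\aff - Z$, checking that the products $\zeta^n (g,\alpha) \zeta^m$ are defined with the correct base-point evaluations and that these two operations commute; the remaining verifications are routine given the isotropy isomorphism property and the unit computation above.
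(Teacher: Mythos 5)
Your proposal is correct and follows essentially the same route as the paper's proof: both directions are handled exactly as in the text, with the forward implications coming from the fact that $\Proj(\zeta)$ is the identity section together with the defining property of $\hol$, and the converses obtained by matching base points through the cover $\aff \to \caff$, translating by powers of $\zeta$ (resp.\ by the deck shift coupled to $\hol^n$ over $Z$), and then invoking the injectivity of $\Proj$ on isotropy groups to upgrade equal images with equal source and target to equality in $\G(I)$. The only differences are cosmetic: you spell out the cancellation of the $\gamma^\pm$-factors showing $\Proj(\zeta)$ is a unit, which the paper merely asserts, and you flag the independence of the two integers $n$ and $m$ as a point needing care, which the paper treats as implicit in the bisection $\eta$.
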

\begin{proof}
Suppose $g_1,g_2 \in \G\Aff|_{\aff^\pm}$ and there exists $n$ and $m$ as above. Observe that $\Proj(\zeta)$ is the identity section whenever $\zeta$ is defined and so $\Proj(g,\alpha) \Proj(\zeta^n (g, \alpha) \zeta^m) = \Proj(g',\alpha')$.

Now suppose $g_1,g_2 \in \G\Aff|_{\aff^\pm}$ such that $\Proj(g,\alpha) = \Proj(g',\alpha')$. Then the $y$ coordinates of the source and target of $g$ and $g'$ differ by integers. Therefore there exist $n$ and $m$ such that $\eta^{-n} g \eta^{-m} = g'$. Hence $\Proj(\zeta^n (g,\alpha) \zeta^m) = \Proj(g',\alpha')$. Since $\zeta^n \cdot (g,\alpha) \cdot \zeta^m$ has the same source and target as $(g',\alpha')$ and are mapped by $\Proj$ to the same point, they must be equal (recall $\Proj$ is a bijection at the level of isotropy groups).

Now suppose $g_1,g_2 \in \G\Aff|_Z$ and there exists an $n$ satisfying (ii). By the definition of the holonomy map $\Proj(g_1 - n, \hol^n(\alpha_1))= \Proj(g_1,\alpha_1)$ and so $\Proj(g_1,\alpha_1) = \Proj(g_2,\alpha_2)$. On the other hand, suppose $\Proj(g_1,\alpha_1) = \Proj(g_2,\alpha_2)$. Then there exists a unique $n$ such that $g_2 = g_1 - n$. By the definition of holonomy, we conclude that $\Proj(g_2, \hol^n(\alpha_1) ) = (g_2, \alpha_2)$. Since $\Proj$ is an isomorphism at the level of isotropy groups, we conclude that $\hol^n(\alpha_1) = \alpha_2$.
\end{proof}

Hence, the topology of $\G$ is uniquely determined by $I$, $\hol$ and $\gamma^\pm$. We call the tuple $\Hol = (I,\hol,\gamma^\pm)$ the \emph{holonomy data} of $\G$. The holonomy map $\hol$ and $\gamma^\pm$ also satisfy a compatibility condition. Let $\hol^\pm: G^\pm \to \G^\pm$ be conjugation by $\gamma^\pm$.
\begin{lemma}
The triple $\mathbf{H}=(\hol,\hol^\pm): I \to I$ is an automorphism of isotropy data.
\end{lemma}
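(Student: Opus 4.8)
The plan is to verify directly the two conditions in Definition~\ref{def:isodatamap} for $\mathbf{H} = (\hol, \hol^\pm)$ to be an orientation preserving automorphism of $I$: that each of $\hol, \hol^+, \hol^-$ is a group isomorphism, and that the two squares of Diagram~(\ref{diagram:opisotropymap}) commute, i.e.\ $\phi^\pm \circ \hol = \hol^\pm \circ \phi^\pm$. The isomorphism condition is immediate. By definition $\hol^\pm = C_{\gamma^\pm}$ is conjugation by an element of $G^\pm$, hence an automorphism of $G^\pm$. The map $\hol$ is the holonomy of the locally trivial bundle of discrete groups $\K|_{\{0\} \times \S^1}$, whose trivialization is provided by the lifted modular flow; by naturality (Definition~\ref{def:natural}(c)) this flow consists of symplectic groupoid automorphisms, which restrict to group isomorphisms on the isotropy fibers. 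Thus transport once around the circle is a group automorphism of $H = \K_{(0,0)}$, with inverse the transport in the opposite direction.

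The real content is the commutativity of the squares, and here I would argue through $\Proj$ using Proposition~\ref{prop:equivrelation}. Fix $h \in H$ and treat the positive side (the negative side is identical). Recall from Section~\ref{subsection:isodata} that $\phi^+(h) = \sigma_h(1,0)$, where $\sigma_h$ is the global section of $\K(I)$ with $\sigma_h(0,0) = h$, and that over $\aff^+$ this section is constant equal to $\phi^+(h)$. First, a direct computation in the semidirect product $\G(I) = \G\Aff \times_\aff \K(I)$, using the explicit bisection $\zeta(x,y) = (\eta(x,y), (\gamma^+)\inv)$, shows that transporting the isotropy element $(\u(x,1), \phi^+(h))$ back to $(x,0)$ via $\zeta$ produces the isotropy element over $(x,0)$ whose fiber component is the conjugate of $\phi^+(h)$ by $\gamma^+$, namely $\hol^+(\phi^+(h))$ (a routine check of conventions confirms the conjugating element is $\gamma^+$ and not its inverse). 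Since $\zeta$ projects to the identity, Proposition~\ref{prop:equivrelation}(i) gives
\[ \Proj(\u(x,0), \hol^+(\phi^+(h))) = \Proj(\u(x,1), \phi^+(h)) \qquad (x > 0). \]
Second, the defining holonomy relation over $Z$ reads $\Proj(\u(0,0), \hol(h)) = \Proj(\u(0,1), h)$; I would then propagate this to nearby leaves to obtain
\[ \Proj(\u(x,0), \phi^+(\hol(h))) = \Proj(\u(x,1), \phi^+(h)) \qquad (0 < x \text{ small}). \]
Comparing the two displays and using that $\Proj$ is injective on isotropy groups yields $\hol^+(\phi^+(h)) = \phi^+(\hol(h))$, which is exactly the $+$ square; the $-$ square follows by the same reasoning over $\aff^-$.

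The step I expect to be the main obstacle is the second identity, the propagation of the holonomy relation off $Z$. The bisections $\eta$ and $\zeta$ are undefined on $Z$ (indeed $\eta$ degenerates there), so one cannot simply evaluate at $x = 0$. Instead I would exploit that $\K$ is a genuine bundle with discrete fibers: the two expressions $\Proj(\u(x,0), \phi^+(\hol(h)))$ and $\Proj(\u(x,1), \phi^+(h))$ are, for each small $x > 0$, isotropy elements of $\G$ over the same point of $\caff$, they depend continuously on $x$ through the continuous sections $\sigma_{\hol(h)}$ and $\sigma_h$, and they agree at $x = 0$ by the holonomy relation. Their quotient is therefore a continuous section of the discrete isotropy bundle equal to the identity at $x = 0$, hence locally constant and equal to the identity for small $x$. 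Once this continuity argument is secured, the remaining manipulations are routine bookkeeping in the semidirect product, and the $\aff^-$ case is symmetric.
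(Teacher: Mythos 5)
Your proof is correct and takes essentially the same route as the paper: you identify $\hol^\pm$ as the holonomy of the discrete bundle $\K$ around the circles $\{\pm 1\} \times \S^1$ (your conjugation of isotropy elements by the bisection $\zeta$ is the paper's explicit path $g(t)$ built from the family $\eta^t$), and your continuity argument for propagating the holonomy relation off $Z$ through the sections of the discrete isotropy bundle is precisely the content the paper compresses into its one-line appeal to the continuity of $\K(I) \to \K$. The only difference is that you spell out the étale-bundle step (agreeing sections of a discrete bundle agree on an open set) and the final comparison via injectivity of $\Proj$ on isotropy groups, both of which the paper leaves implicit.
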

\begin{proof}
We must show that:
\begin{equation}\label{diagram:hol}
\begin{tikzcd}
G^- \arrow[d, "\hol^-" swap] & H \arrow[l] \arrow[r] \arrow[d,"\hol"] & G^+ \arrow[d, "\hol^+"] \\
G^-                    & H \arrow[l] \arrow[r]                  & G^+ \, ,
\end{tikzcd}
\end{equation}
commutes.
We first claim that $\hol^\pm$ are the holonomy of $K$ around the circles $\{ \pm 1 \} \times \S^1$.
Let $\eta^t: \aff - Z \to \G\Aff$ be the unique 1-parameter family of bisections such that:
\[ \t(\eta^t(x,y)) = (x,y+t) \, . \]
Let $g(t): [0,1]$ be the path in $\G$ defined as follows:
\[ g(t) = \Proj( \eta^t(\pm 1,0) \cdot \Proj(\u(\pm 1,0), \alpha) \cdot (\eta^{-t}(\pm 1,t) ). \]
Then $g(t)$ is a path in $\K$ covering the circle $\{ \pm 1 \} \times \S^1 \subset \caff$. Furthermore:
\[ g(0) = \Proj(\u( \pm 1, 0) , \alpha ), \mbox{ and }  g(1) = \Proj(\u( \pm 1, (\gamma^\pm) \alpha (\gamma^\pm)\inv. \]
Hence the holonomy of $\K|_{\{ \pm 1 \} \times \S^1}$ is conjugation by $\gamma^\pm$ and the claim follows.

The holonomy of $\K$ can also be understood in terms of the projection $\K(I) \to \K$. In particular, (\ref{diagram:hol}) commutes as a consequence of the continuity of $\K(I) \to I$.
\end{proof}
This motivates our definition of holonomy data:
\begin{definition}\label{def:holofGdef}
Let $I$ be isotropy data $G^- \from H \to G^+$, $\hol: H \to H$ be an isomorphism, and $\gamma^\pm \in G^\pm$. Then $\Hol = (I,\hol,\gamma^\pm))$ is called \emph{holonomy data} if $\hol^\pm = \C_{\gamma^\pm}$ and $\mathbf{H}:=(\hol,\hol^\pm): I \to I$ is an automorphism of isotropy data.

A \emph{strong isomorphism} $(I_1,\hol_1,\gamma^\pm_1) \to (I_2,\hol_2,\gamma^\pm_2)$ is an isomorphism $(\psi,\psi^\pm): I \to I$ such that $\Psi$ commutes $\mathbf{H}$ and $\psi^\pm(\gamma_1^\pm) = \gamma_2^\pm$.
\end{definition}
We say strong isomorphism above since we will need a weaker notion of isomorphism in our treatment of bibundles over $\caff$. By Proposition \ref{prop:equivrelation} the fibers of $\G(I) \to \G$ can be computed entirely in terms of the holonomy data, hence:
\begin{theorem}
Suppose $\G_1$ and $\G_2$ are natural integrations of $\caff$. Then $\G_1$ and $\G_2$ are isomorphic if and only if their holonomy data is strongly isomorphic.
\end{theorem}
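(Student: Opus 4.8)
The plan is to use the two surjections $\Proj_i \colon \G(I_i) \to \G_i$ from the previous lemma as universal covers and to transfer the entire problem to the level of the affine-plane groupoids $\G(I_i)$, where the classification of Theorem~\ref{thm:affclass} already applies. Concretely, the statement should reduce to the assertion that a strong isomorphism of holonomy data is exactly the extra data needed to descend an isomorphism $\G(I_1) \to \G(I_2)$ through the covering equivalence relation described in Proposition~\ref{prop:equivrelation}. Thus the whole proof is organized around the identification $\G_i \isom \G(I_i)/\!\sim_i$.

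For the ``if'' direction, suppose $\Psi = (\psi, \psi^\pm) \colon \Hol_1 \to \Hol_2$ is a strong isomorphism. Since $\Psi$ is in particular an isomorphism of the underlying isotropy data, Theorem~\ref{thm:affclass} (via the construction in Lemma~\ref{lemma:bimvsisomap}) produces an isomorphism of symplectic groupoids $\widetilde F \colon \G(I_1) \to \G(I_2)$ over the identity of $\aff$, given in the coordinates of Section~\ref{subsection:isodata} by $(g,\alpha) \mapsto (g, \Psi(\alpha))$. I would then verify that $\widetilde F$ carries $\sim_1$ to $\sim_2$. By Proposition~\ref{prop:equivrelation} this relation is generated by two moves, one coming from the bisections $\zeta^\pm_i$ over $\aff^\pm$ and one from $\hol_i$ over $Z$. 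The condition $\psi^\pm(\gamma_1^\pm) = \gamma_2^\pm$ guarantees that $\widetilde F$ intertwines $\zeta^\pm_1$ and $\zeta^\pm_2$, hence respects the first move; the condition that $\Psi$ commute with $\mathbf{H}$, that is $\psi \circ \hol_1 = \hol_2 \circ \psi$, guarantees it respects the second. Consequently $\widetilde F$ descends to a well-defined isomorphism $F \colon \G_1 \to \G_2$, which is a symplectic groupoid isomorphism because $\widetilde F$ is one and both projections $\Proj_i$ are local symplectomorphisms.

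For the ``only if'' direction, suppose $F \colon \G_1 \to \G_2$ is an isomorphism; it covers a Poisson diffeomorphism of $\caff$ that, up to the orientation bookkeeping, preserves the singular circle $Z$ and the open half-cylinders. I would lift $F$ through the universal covers to an isomorphism $\widetilde F \colon \G(I_1) \to \G(I_2)$ compatible with the $\Proj_i$; such a lift exists and is canonical up to the deck action because each $\Proj_i$ is the natural open subgroupoid of the pullback of $\G_i$ along $\aff \to \caff$. By Theorem~\ref{thm:affclass}, $\widetilde F$ is recorded by an isomorphism of isotropy data $\Psi = (\psi,\psi^\pm)\colon I_1 \to I_2$. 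Since $\hol_i$ and $\gamma_i^\pm$ were extracted from $\G_i$ using the bisections $\eta$ and the holonomy of $\K|_{\{0\}\times \S^1}$, compatibility of $\widetilde F$ with the deck translation $y \mapsto y+1$ forces $\Psi$ to commute with $\mathbf{H}$ and to send $\gamma_1^\pm$ to $\gamma_2^\pm$, i.e. $\Psi$ is a strong isomorphism. Orientation-reversing isomorphisms are handled by the parallel argument that swaps the roles of $G^+$ and $G^-$ (and of $\gamma^+,\gamma^-$), as in Diagram~\ref{diagram:orisotropymap}.

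The hard part will be the descent bookkeeping: making precise that the two generating moves of Proposition~\ref{prop:equivrelation} are respected in both directions, and in particular, in the ``only if'' direction, constructing the lift $\widetilde F$ canonically enough that the induced $\Psi$ is \emph{forced} to intertwine $\hol_i$ and $\gamma_i^\pm$, rather than being merely an abstract isomorphism of isotropy data. Once the interplay between the deck translation and the holonomy map is pinned down, the remaining verifications that $F$ and $\widetilde F$ are symplectic and groupoid-compatible are routine.
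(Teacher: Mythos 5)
Your proposal is correct and takes essentially the same route as the paper: the paper obtains this theorem directly from Proposition~\ref{prop:equivrelation} (the fibers of $\Proj_i \colon \G(I_i) \to \G_i$ are computed entirely in terms of the holonomy data) together with the classification of Theorem~\ref{thm:affclass}, which is exactly your lift-and-descend argument through the covers $\G(I_i)$, with the two generating moves ($\zeta^\pm$ and $\hol$) matched by the two conditions defining a strong isomorphism. Your write-up is in fact more detailed than the paper's, which states the theorem as an immediate consequence and leaves the descent bookkeeping you flag implicit.
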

\subsection{Bibundles over $\caff$}\label{subsection:mapsofhol}
Suppose $\G_1$ and $\G_2$ are natural integrations with holonomy data $(I_1,\Hol_1)$ and $(I_2,\Hol_2)$ respectively. We will denote this data as follows:
\[
I_i := \begin{tikzcd}
\G^-_i & H_i \arrow[l,"\phi_i^-" swap] \arrow[r,"\phi_i^+"] & \G^+_i \, .
\end{tikzcd}
\]
\[ \Hol_i := (\hol_i,\gamma^\pm_i) \, . \]
Throughout, $\Proj_i : \G(I_i) \to \G_i$ denotes the projection from Section \ref{subsection:holdata}. As in the affine plane case, we say that a $(\G_2,\G_1)$-bibundle is $Z$-static if the induced map of orbit spaces fixes the critical line. By fixing a covering $\aff \to \caff$ we can think of $\caff$ together with the image of the origin in $\aff$ as a pointed manifold. Our aim is characterize pointed $(\G_1, \G_2)$-bibundles in terms of holonomy data.
\begin{lemma}\label{lemma:phi}
Suppose $P$ is a $Z$-static pointed $(\G_2,\G_1)$-bibundle. Then there exists a unique pointed $(\G(I_2),\G(I_1))$-bibundle $(P_\Psi,p_0)$ and a surjective submersion $\Phi: P_\Psi \to P$ satisfying:
\begin{equation}\label{eqn:caffbim}
\Phi( g \cdot p) = \Proj_2(g) \cdot \Phi(p) \, , \qquad \Phi( p \cdot g) = \Phi(p) \cdot \Proj_1(g) \, .
\end{equation}
\end{lemma}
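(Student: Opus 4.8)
The plan is to lift $P$ to the universal cover $C \colon \aff \to \caff$, which is precisely the cover used to build both $\Proj_1$ and $\Proj_2$. First I would form the pullback of $P$ simultaneously along both anchor maps,
\[
\til P := \aff \times_{C,\t^P} P \times_{\s^P, C} \aff = \{ ((x_2,y_2), p, (x_1,y_1)) : \t^P(p) = C(x_2,y_2),\ \s^P(p) = C(x_1,y_1) \}.
\]
Exactly as in the construction of $C^*\G_i$ preceding the definition of $\Proj_i$, the space $\til P$ inherits commuting left $C^*\G_2$- and right $C^*\G_1$-actions, acting on the outer $\aff$-coordinates and on $p$ through the middle projection; and projection to the middle component gives a map $\Phi_0 \colon \til P \to P$. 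I would then pass to the maximal open sub-bibundle $P_\Psi \subseteq \til P$ on which the connected-orbit subgroupoids $\G(I_2)$ and $\G(I_1)$ act and whose orbits are connected, selecting the piece containing the lifted base point $p_0 := ((0,0), p_0^P, (0,0))$, where $p_0^P$ is the base point of $P$. Since $C$ is a local diffeomorphism, pulling back preserves submersivity and principality, so $P_\Psi$ is a principal $(\G(I_2),\G(I_1))$-bibundle over $\aff$; and since $P$ is $Z$-static, $P_\Psi$ fixes the critical line, hence is $Z$-static.

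Next I would identify $P_\Psi$ with a map-like bibundle. By the affine-plane result that every $Z$-static bibundle admits a $Z$-static bisection, $(P_\Psi, p_0)$ is a $Z$-static pointed bibundle, so Lemma~\ref{lemma:bimvsisomap} supplies a unique isomorphism of isotropy data $\Psi \colon I_1 \to I_2$ (orientation preserving or reversing according to $P$) with $(P_\Psi,p_0)$ strongly isomorphic to the map-like bibundle attached to $\Psi$; this justifies the notation. The restriction $\Phi := \Phi_0|_{P_\Psi}$ satisfies \eqref{eqn:caffbim} by construction, since both the $\G(I_i)$-actions and the homomorphisms $\Proj_i$ operate only by projection to the middle components. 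It is a submersion because $C$ is \'etale, and it is surjective because the chosen piece meets every orbit of $P$: the critical line surjects onto the critical circle and each open half-plane orbit surjects onto the corresponding half-cylinder orbit, mirroring the surjectivity of $\Proj_i$.

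For uniqueness, I would argue that any pair $(P', p_0')$, $\Phi'$ satisfying \eqref{eqn:caffbim} with $\Phi'$ a surjective submersion must agree with $(P_\Psi, p_0)$. The equivariance forces $\Phi'$ to intertwine the principal $\G(I_i)$-actions over $\aff$ with the $\G_i$-actions over $\caff$ through $\Proj_i$; since $\Proj_i$ realizes $\aff \to \caff$ as the universal cover on the base and is an isomorphism on isotropy, $\Phi'$ is forced to be the quotient by the residual $\Z$-deck action. Hence $P'$ is canonically identified with $P_\Psi$, the base points match, and uniqueness of $\Psi$ follows from the uniqueness clause of Lemma~\ref{lemma:bimvsisomap}.

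The main obstacle I anticipate is the sub-bibundle step: making precise the maximal open sub-bibundle with connected orbits containing $p_0$, verifying that it is a genuine principal $(\G(I_2),\G(I_1))$-bibundle rather than merely a union of orbits, and checking that $\Phi$ restricted to it remains surjective onto all of $P$. This is the bibundle analogue of passing from $C^*\G$ to its maximal connected-orbit subgroupoid $\G(I)$, and the same care about which components survive and cover $P$ is required; once it is in place, principality and the equivariance \eqref{eqn:caffbim} are inherited formally from the \'etale covering $C$.
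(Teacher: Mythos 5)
Your proposal follows the paper's proof essentially step for step: the paper forms $C^*P$ exactly as your $\til P$, restricts to an open sub-bibundle $\overline{P}$, points it at $((0,0),p_0,(0,0))$, and invokes Lemma~\ref{lemma:bimvsisomap} to identify it with $(P_\Psi,p_\Psi)$, the equivariance (\ref{eqn:caffbim}) holding by construction since $\Phi$ is projection to the middle component. The one step you flag as an obstacle is handled concretely there: $\overline{P}$ is simply the complement in $C^*P$ of $\{((0,y_2),p,(0,y_1)) : y_2 \neq y_1\}$, which is open (as $Z$-staticity forces critical triples to satisfy $y_2 - y_1 \in \Z$) and meets every orbit, so your abstractly described maximal connected-orbit piece is made explicit.
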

\begin{proof}
Consider the pullback groupoids $C^* \G_1$ and $C^* \G_2$. Let $C^* P$ be the corresponding $(C^* \G_2 , C^* \G_1)$-bibundle:
\[ C^* P := \{ ((x_2,y_2), \, p , \, (x_1,y_1) ) \in \aff \times P \times \aff: C(x_2,y_2) = \t(p) \, , C(x_1,y_1) = \s(p) \} \, .\]
Let $\overline{P}$ be the complement of $\{ (0,y_2) , p , (0,y_1) ) : y_2 \neq y_1 \}$ in $C^* P$. Then $\overline{P}$ is an open submanifold of $C^* P$. Let $\overline{p}:= ((0,0),p_0, (0,0))$ so that $(\overline{P},\overline{p})$ is a pointed manifold. The left and right actions of $C^* \G_2$ and $C^* \G_2$ make $(\overline{P},\overline{p}))$ into a $Z$-static pointed $(\overline{\G_2}, \overline{\G_1})$-bibundle. Hence we can identify $(\overline{P},\overline{p})$ with an isomorphism $\Psi: I_1 \to I_2$ and the associated pointed bibundle $(P_\Psi,p_\Psi)$. The construction of $\overline{P}$ makes it clear that (\ref{eqn:caffbim}) holds.
\end{proof}
Let $\sigma$ be the canonical static bisection of $(P_\Psi, p_\Psi)$. Since the left action is principal, we know that there exists an $h \in H_2$ such that $h \cdot \Phi(\sigma(0,0))= \Phi(\sigma(0,1))$. Let $\delta$ be the unique static bisecion of $\G(I_2)$ extending $h$. The bisection $\delta$ satisfies:
\begin{equation}\label{eqn:hprop}
\Phi(\delta(x,y) \cdot \sigma(x,y)) = \Phi(\sigma(x,y+1)).
\end{equation}
We can think of $\delta$ as measuring the failure $\sigma$ to descend to a bisection of $P$. The topology of $P$ is determined by the pair $(\Psi,\delta(0,0)=h)$ since:
\[ \Phi((g,\alpha) \cdot \sigma(x,y)) = \Phi((g',\alpha') \cdot \sigma(x,y+n)) \, \Leftrightarrow \, \]
\begin{equation}\label{eqn:bimprojcriteria}
\Proj_2(g,\alpha) = \Proj_2(g',\alpha') \cdot \left ( \prod_{i=0}^{n-1} \Proj_2(\delta(x,y+i)) \right).
\end{equation}
The pairs $(\Psi,h)$ arising in this manner are not arbitrary. The next proposition lays out the appropriate compatibility condition.
\begin{proposition}\label{prop:propertiesofmaps}
Let $(P,p_0)$ be a $Z$-static pointed $(\G_2,\G_1)$-bibundle and suppose $\Psi: I_1 \to I_2$ and $h \in H$ are as defined above. Then the following holds:
\begin{enumerate}[(i)]
    \item $\Hol_2 \circ \Psi = \C_h \circ \Psi \circ \Hol_1$;
    \item $\phi^\pm(h) = (\gamma^\pm_2) \Psi(\gamma^\pm_1)\inv$.
\end{enumerate}
\end{proposition}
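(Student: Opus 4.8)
The plan is to trace through the construction of $\Psi$ and $h$ from the bibundle $(P,p_0)$, and verify the two compatibility conditions by unwinding the definitions of $\Hol_1$, $\Hol_2$, and the map $\Phi\colon P_\Psi \to P$ from Lemma~\ref{lemma:phi}. The key observation is that both identities ultimately express the fact that $\Phi$ intertwines the deck-transformation structure of the universal covers $\aff \to \caff$ on both sides, mediated by the bisection $\delta$ that extends $h$. Since everything is determined by holonomy data, I expect both conditions to follow from carefully applying equation~(\ref{eqn:hprop}) together with the definitions of the holonomies $\hol_i$ and the elements $\gamma_i^\pm$.

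For part (i), the idea is to compare the two ways of transporting an isotropy element $\alpha \in H_1$ around the generating loop of $\caff$. On the one hand, $\Hol_1$ records the holonomy of $\K(I_1)$ around $\{0\} \times \S^1$, and $\Psi$ then maps this into $I_2$; on the other hand, we can first map by $\Psi$ and then apply $\Hol_2$. Both describe the action of a full $y$-translation by $1$, but computed relative to the bisection $\sigma$ rather than the canonical one, so they differ precisely by conjugation by $h = \delta(0,0)$. Concretely, I would take $\alpha \in H_1$, consider $\Phi(\sigma(0,0)\cdot(\u(0,0),\alpha))$, and push it around the loop two ways using (\ref{eqn:hprop}): translating the source/target by $1$ introduces $\hol_1$ on the $\G_1$ side and $\hol_2$ on the $\G_2$ side, while the failure of $\sigma$ to descend contributes the conjugation $\C_h$. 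Matching the two expressions via the $\Phi$-equivariance~(\ref{eqn:caffbim}) and the injectivity of $\Proj_i$ on isotropy groups yields $\Hol_2 \circ \Psi = \C_h \circ \Psi \circ \Hol_1$ as automorphisms of (the relevant groups attached to) $I$.

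For part (ii), the plan is to apply the analogous argument on the circles $\{\pm 1\} \times \S^1$ instead of $\{0\}\times \S^1$, where the relevant holonomy is encoded by $\gamma^\pm$ rather than $\hol$. Here I would use the bisection $\zeta^\pm$ from Section~\ref{subsection:holdata} (the lift of the $y$-translation) together with the definition $\Proj_i(\eta(x,y)) = (\u(x,y), \gamma_i^\pm)$. Transporting $p_0$ around the loop at $x = \pm 1$ via $\sigma$, the discrepancy between the $\G_1$-transport (governed by $\gamma_1^\pm$) and the $\G_2$-transport (governed by $\gamma_2^\pm$) is measured by $\phi^\pm(h)$, since $h = \delta(0,0)$ maps under $\phi^\pm$ to the obstruction at the positive/negative components. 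Comparing $\Phi(\zeta_2^\pm \cdot \sigma)$ computed through both projections and using $\Phi$-equivariance gives the relation $\phi^\pm(h) = (\gamma_2^\pm)\Psi(\gamma_1^\pm)^{-1}$.

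The main obstacle I anticipate is bookkeeping: keeping careful track of which side of the actions each element lives on, ensuring the deck transformations by integer $y$-translations are composed in the right order (note the product $\prod_{i=0}^{n-1}$ in (\ref{eqn:bimprojcriteria}) is ordered), and confirming that the conjugation $\C_h$ lands correctly when $\Psi$ is orientation-reversing versus orientation-preserving. The conceptual content is light — both identities are forced by the requirement that $\Phi$ be well-defined and equivariant — but the proof requires disciplined manipulation of the holonomy cocycle relations, and I would verify the orientation-reversing case by the same method after replacing $\s$ with its composition with $(x,y)\mapsto(-x,y)$, exactly as in the preceding lemmas.
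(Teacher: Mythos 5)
Your plan is correct and is essentially the paper's own proof: part (i) is established by evaluating $\Phi\bigl((\u(x,y+1),\Psi(\alpha))\cdot\sigma(x,y+1)\bigr)$ two ways --- once through the right $\G_1$-action and once through the left $\G_2$-action --- using the equivariance relations (\ref{eqn:caffbim}), the defining property (\ref{eqn:hprop}) of $\delta$, and the fact that $\Proj_i$ is an isomorphism on isotropy, exactly as you describe. Part (ii) likewise matches the paper: compare $\Phi(\sigma(\pm 1,1))$ and $\Phi(\sigma(\pm 1,0))$ transported by the bisections $\zeta_i=(\eta,\gamma_i^\pm)$, whose projections $\Proj_i(\zeta_i)$ are identities, and the paper also defers the orientation-reversing case to the same substitution you propose.
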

\begin{proof}
We will prove the case where $P$ is orientation preserving and leave the orientation reversing case to the reader. We first show that \emph{($i$)} holds. Notice that:
\begin{align*}
    \Phi(\,(\u(x,y+1),\Psi(\alpha)) \cdot \sigma(x,y+1) \,) &= \Phi(\sigma(x,y+1)) \cdot \Proj_1(\u(x,y+1),\alpha); \\
    &= \Phi(\, \delta(x,y) \cdot \sigma(x,y) \,) \cdot \Proj_1(\u(0,0),\Hol_1(\alpha)); \\
    &= \Phi( \,\delta(x,y) \cdot ( \u(x,y),\Psi \circ \Hol(\alpha) ) \cdot \sigma(x,y) \,) .
\end{align*}
On the other hand:
\begin{align*}
    \Phi( \, (\u(x,y+1),\Psi(\alpha)) \cdot \sigma(x,y+1) \, ) &= \Proj_2(\u(x,y+1),\Psi(\alpha)) \cdot \Phi(\sigma(x,y+1)); \\
    &= \Proj_2(\u(0,0),\Hol\circ \Psi(\alpha)) \cdot \delta(x,y) \Phi(\sigma(x,y)); \\
    &= \Phi( \, (\u(x,y), \Hol \circ \Psi(\alpha)) \cdot \delta(x,y) \cdot \sigma(x,y) \,).
\end{align*}
Combining these two equalities at the values $(x,y) = (\pm 1,0),(0,0)$ yields:
\[ \Hol \circ \Psi(\alpha) = \C_h \circ \Psi \circ \Hol(\alpha). \]
We now show \emph{($ii$)} holds. Recall the bisection $\eta: \aff -Z \to \G\Aff$ from Section \ref{subsection:holdata}. Let $\zeta_i(x,y) = (\eta(\pm 1,0),\gamma^\pm_i) \in \G(I_i)$ for $i=1,2$. Then $\Proj_i(\zeta_i)$ is the identity at every point and therefore:
\begin{align*}
\Phi(\sigma(\pm 1,1)) &= \Proj_2(\zeta_2)\inv \cdot \Phi(\sigma( \pm 1,1)) \cdot \Proj_1(\zeta_1); \\
&= \Phi_2(\zeta_2\inv \cdot \Psi(\zeta_1)) \cdot \Phi(\sigma(\pm 1,0)).
\end{align*}
Therefore $\phi^\pm(h) = (\gamma_2^\pm)\Psi(\gamma_1^\pm)\inv$.
\end{proof}
This motivates our definition of maps of holonomy:
\begin{definition}\label{def:mapofholonomy}
Suppose $h \in H$ and $\Psi: I_1 \to I_2$  is an isomorphism of isotropy data. We say that $(\Psi, h ): (I_1, \Hol_1) \to (I_2, \Hol_2)$ is an \emph{isomorphism of holonomy data} if ($i$) and ($ii$) from Proposition \ref{prop:propertiesofmaps} hold.
\end{definition}
This produces a category in which the objects are holonomy data and isomorphisms are composed via the following rule:
\[ (\Psi_1,h_1) \circ (\Psi_2,h_2) := (\Psi_1 \circ \Psi_2, h_1 \Psi_1(h_2)). \]
\begin{example}[Inner automorphisms]\label{example:holinnerauto}
Suppose $\Psi = \C_\alpha: I \to I$ is an inner automorphism and $h$ satisfies:
\begin{equation}\label{eqn:innerprop}
\hol(\alpha) = h \alpha.
\end{equation}
Then the isomorphism $(\Psi, h ): (I,\Hol) \to (I,\Hol)$ is called an \emph{inner automorphism}.
\end{example}
\begin{example}[Pointed bibundles]\label{example:holpointedbim}
Given any $Z$-static pointed $(\G_2, \G_1)$-bibundle $(P,p_0)$ then Proposition \ref{prop:propertiesofmaps} says that the $\Psi$ and $h$ we constructed is an isomorphism of isotropy data. In such a case we call $(\Psi, h)$ the holonomy isomorphism of $(P,p_0)$.
\end{example}
The reader is encouraged to check that composition of bibundles corresponds to composition of isomorphisms of holonomy.
Our main end with this definition is to show that Example \ref{example:holpointedbim} is the generic case. The next proposition says that the holonomy isomorphism of a $Z$-static pointed bibundle classifies $(P,p_0)$ strongly.

If we combine (\ref{eqn:bimprojcriteria}) with Proposition \ref{prop:propertiesofmaps} and Proposition \ref{prop:equivrelation} then:
\begin{proposition}\label{prop:bibundlequotientversions}
Suppose $\Phi: P_\Psi \to P$ is as in Lemma \ref{lemma:phi}. Let $\hol_h: H \to H$ be the bijection $\hol_h(\alpha) = \hol(\alpha) h$. Then $\Phi((g,\alpha) \cdot \sigma(x,y)) = \Phi((g',\alpha') \cdot \sigma(x',y'))$ if and only if one of the following holds:
\begin{enumerate}[(i)]
\item $(x,y),(x',y') \in Z$ and there exists an integer $n$ such that:
\[ (g - n , \, \hol_h^n(\alpha)) = (g', \, \alpha') \, ; \]
\item $(x,y),(x',y') \in \aff^\pm$ and there exists integers $n$ and $m$ such that:
\[ \zeta_2^{-n} (g, \alpha ) \cdot \Psi(\zeta_1^{-m})  = (g',\alpha') .\]
\end{enumerate}
\end{proposition}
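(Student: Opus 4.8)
The plan is to read off both implications directly from the three facts assembled just before the statement: the trivialization criterion (\ref{eqn:bimprojcriteria}), the description of the fibers of $\Proj_2$ in Proposition \ref{prop:equivrelation}, and the compatibility identities of Proposition \ref{prop:propertiesofmaps}. First I would align the base points. Since $\Phi$ covers the universal covering $C\colon\aff\to\caff$ and every element of $P_\Psi$ is written uniquely as $(g,\alpha)\cdot\sigma(x,y)$ with source $(x,y)$, the equality $\Phi((g,\alpha)\cdot\sigma(x,y))=\Phi((g',\alpha')\cdot\sigma(x',y'))$ forces $C(x,y)=C(x',y')$, hence (in the orientation preserving case, the reversing case being entirely analogous via the substitution $(x,y)\mapsto(-x,y)$) $x'=x$ and $y'=y+m$ for a unique integer $m$. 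In particular the two base points lie in the same stratum, either both in $Z$ or both in $\aff^\pm$, which is exactly the dichotomy of the statement.

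With the base points aligned I would invoke (\ref{eqn:bimprojcriteria}). Because the left $\G_2$-action on the principal bibundle $P$ is free, the two images under $\Phi$ coincide if and only if $\Proj_2(g,\alpha)=\Proj_2(g',\alpha')\cdot\prod_{i=0}^{m-1}\Proj_2(\delta(x,y+i))$, where $\delta$ is the $Z$-static bisection extending $h=\delta(0,0)$ from (\ref{eqn:hprop}). Since $\Proj_2$ is a groupoid homomorphism and $\delta$ is a bisection of $\G(I_2)$, the right-hand side equals $\Proj_2\!\big((g',\alpha')\cdot\prod_{i=0}^{m-1}\delta(x,y+i)\big)$, so the condition becomes the equality of $\Proj_2$ on two genuine elements of $\G(I_2)$, which is precisely the situation analyzed in Proposition \ref{prop:equivrelation}.

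It then remains to apply Proposition \ref{prop:equivrelation} in each stratum and simplify the $\delta$-contribution. On $Z$ its criterion supplies a translation $-n$ together with a power $\hol^n$, while each factor $\delta(x,y+i)$ contributes one copy of the twisting element $h$; keeping track of the multiplication order and using Proposition \ref{prop:propertiesofmaps}(i), namely $\Hol_2\circ\Psi=\C_h\circ\Psi\circ\Hol_1$, the groupoid holonomy and the bibundle twist amalgamate into the single twisted bijection $\hol_h(\alpha)=\hol(\alpha)\,h$, giving case (i). On $\aff^\pm$ the criterion instead produces a two-sided $\zeta$-conjugation $\zeta^n(\cdot)\zeta^m$; here I would use Proposition \ref{prop:propertiesofmaps}(ii), $\phi^\pm(h)=\gamma_2^\pm\,\Psi(\gamma_1^\pm)^{-1}$, to rewrite the left factor through the bisection $\zeta_2$ of $\G(I_2)$ and the right factor through $\Psi(\zeta_1)$ for the bisection $\zeta_1$ of $\G(I_1)$, arriving at $\zeta_2^{-n}(g,\alpha)\cdot\Psi(\zeta_1^{-m})=(g',\alpha')$, which is case (ii). Every step is a biconditional, so the forward and backward directions are obtained simultaneously.

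The main obstacle I anticipate is entirely one of bookkeeping rather than of ideas: correctly ordering the noncommutative products and pinning down the exponents $n$ and $m$ when the groupoid holonomy $\hol$, the conjugations $\C_{\gamma^\pm}$, and the defect $h$ of $\sigma$ all interact. The conceptual content, that the holonomy of $\G(I_2)$ and the failure of the canonical section $\sigma$ to descend to $P$ combine coherently into $\hol_h$ and into the $\zeta$/$\Psi$ relation, is exactly what the two identities of Proposition \ref{prop:propertiesofmaps} are there to guarantee; the only genuine care needed is to verify that $\hol_h$ is a bijection, being the composite of the isomorphism $\hol$ with right translation by $h$, and that the two cases are exhaustive and mutually exclusive.
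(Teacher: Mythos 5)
Your proposal is correct and follows exactly the route the paper takes: the paper offers no separate argument, stating only that the proposition follows by combining (\ref{eqn:bimprojcriteria}) with Proposition \ref{prop:propertiesofmaps} and Proposition \ref{prop:equivrelation}, which is precisely the three-step reduction you carry out. Your write-up in fact supplies more detail than the paper (base-point alignment, absorbing the $\delta$-factors into $\Proj_2$ of a single element, and the bookkeeping that turns $\hol$ and $h$ into $\hol_h$), all of it consistent with the intended argument.
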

Since we can characterize $P$ in terms of $\Psi$ and $h$, two $Z$-static pointed bibundles are strongly isomorphic if and only if their associated holonomy isomorphisms are equal. We conclude this section with our main result regarding bibundles over $\caff$.
\begin{theorem}\label{thm:holcorrespondence}
There is a 1-1 correspondence between the following:
\begin{enumerate}[(i)]
\item orientation preserving isomorphisms of holonomy data and orientation preserving $Z$-static pointed $(\G_2,\G_2)$-bibundles;
\item orientation reversing isomorphisms of holonomy data and orientation reversing $Z$-static pointed $(\G_2,\G_2)$-bibundles.
\end{enumerate}
Again, we consider pointed bibundles up to strong isomorphism.
\end{theorem}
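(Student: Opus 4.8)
The plan is to exhibit the correspondence as the assignment $(P,p_0)\mapsto(\Psi,h)$ taking a $Z$-static pointed $(\G_2,\G_2)$-bibundle to its holonomy isomorphism (Example~\ref{example:holpointedbim}), and to prove that this is a bijection onto isomorphisms of holonomy data $(I_2,\Hol_2)\to(I_2,\Hol_2)$, running the orientation preserving and orientation reversing cases in parallel. Specializing the earlier results to the case $\G_1=\G_2$, Lemma~\ref{lemma:phi} attaches to each such $(P,p_0)$ a unique pointed $(\G(I_2),\G(I_2))$-bibundle $(P_\Psi,p_\Psi)$ together with a surjective submersion $\Phi\colon P_\Psi\to P$ intertwining the left and right actions through $\Proj_2$; from $\Phi$ one reads off the isotropy isomorphism $\Psi$ and the element $h\in H_2$ measuring the failure of the canonical static bisection $\sigma$ to descend across the generator of $\pi_1(\caff)$. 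By Proposition~\ref{prop:propertiesofmaps} the pair $(\Psi,h)$ satisfies conditions (i) and (ii) of Definition~\ref{def:mapofholonomy}, so the assignment genuinely lands in isomorphisms of holonomy data.

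For well-definedness and injectivity I would invoke the analysis preceding the theorem. Proposition~\ref{prop:bibundlequotientversions} describes $P$ explicitly via an equivalence relation on $P_\Psi$ that depends only on $\Psi$, on $h$ (through the bijection $\hol_h$), and on the off-$Z$ gluing bisections $\zeta_2$; together with the basepoint $p_0=\Phi(p_\Psi)$ this shows that $(\Psi,h)$ determines $(P,p_0)$ up to strong isomorphism, which is precisely the statement that the holonomy isomorphism is a complete strong invariant. Hence the assignment is both well defined on strong-isomorphism classes and injective.

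It remains to prove surjectivity, which is the heart of the argument. Given an abstract isomorphism of holonomy data $(\Psi,h)\colon(I_2,\Hol_2)\to(I_2,\Hol_2)$, I would start from the affine-plane bibundle $P_\Psi$ supplied by Lemma~\ref{lemma:bimvsisomap} and form the quotient of $P_\Psi$ by the relation written out in Proposition~\ref{prop:bibundlequotientversions}, namely translation by $\hol_h$ over $Z$ and the gluing $\zeta_2^{-n}(g,\alpha)\Psi(\zeta_2^{-m})$ off $Z$. The substance is to verify that this is a free and proper equivalence relation compatible with both the left and right $\G(I_2)$-actions, so that the quotient is a smooth principal $(\G_2,\G_2)$-bibundle; here conditions (i) and (ii) do the work, (i) making the $\hol_h$-translation consistent with the holonomy $\Hol_2$ so the relation closes up over the circle $\{x=0\}$, and (ii) matching the elements $\gamma^\pm$ that govern the off-$Z$ gluing on the two sides. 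The image $\overline{p_\Psi}$ of the canonical point then makes the quotient a $Z$-static pointed bibundle whose holonomy isomorphism is, by construction, $(\Psi,h)$; this inverts the assignment.

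I expect the compatibility and smoothness check in surjectivity to be the main obstacle: one must confirm that the candidate relation from Proposition~\ref{prop:bibundlequotientversions}, built from an abstract $(\Psi,h)$, is genuinely an equivalence relation with Hausdorff quotient carrying commuting principal actions, and that conditions (i) and (ii) are exactly what prevent the two potential failures, namely an inconsistent closing-up over $\{x=0\}$ or a mismatched $\gamma^\pm$-gluing. The orientation reversing case is handled by the same construction after replacing diagram~(\ref{diagram:opisotropymap}) with~(\ref{diagram:orisotropymap}) and composing the source map with $(x,y)\mapsto(-x,y)$, exactly as in the affine-plane treatment, so no new ideas are required there.
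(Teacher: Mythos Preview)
Your proposal is correct and follows essentially the same route as the paper: injectivity is read off from Proposition~\ref{prop:bibundlequotientversions}, and surjectivity is obtained by quotienting $P_\Psi$ by the equivalence relation of (\ref{eqn:bimprojcriteria}) and checking that the left and right $\G$-actions descend, with conditions (i) and (ii) of Definition~\ref{def:mapofholonomy} doing exactly the work you describe. Two small remarks: the ``$(\G_2,\G_2)$'' in the statement is a typo for ``$(\G_2,\G_1)$'' and the paper's proof is carried out in that generality (so the off-$Z$ gluing reads $\zeta_2^{-n}(g,\alpha)\Psi(\zeta_1^{-m})$, not $\zeta_2$ on both sides), so your specialization to $\G_1=\G_2$ is unnecessary; and the paper does perform explicitly the case-by-case verification of right-action compatibility (over $Z$ via $\hol_h$, off $Z$ via the $\zeta_i$) that you flag as the main obstacle.
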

\begin{proof}
We have already shown how to obtain holonomy data from a bibundle. Hence, we need to show that given a holonomy isomorphism $(\Psi,h)$, we can construct a $Z$-static pointed bibundle. Let $\sim$ be the following equivalence relation on $P_\Psi$:
\[ (g,\alpha) \cdot \sigma(x,y) \sim (g',\alpha') \cdot \sigma(x,y+n) \Leftrightarrow (\ref{eqn:bimprojcriteria}) \mbox{ holds}.   \]
Let $\Phi: \G \to P_\Psi/\sim$ be the natural projection. We need to check that the equivalence relation respects the left and right actions. That is:
\begin{equation}\label{eqn:leftaction}
\Phi((g_2,\alpha_2) \cdot p ) = \Phi((g_2',\alpha_2') \cdot p') \, ,
\end{equation}
whenever $\Proj_2(g_2, \alpha_2) = \Proj_2(g'_2, \alpha_2') \mbox{ and } \Phi(p) = \Phi(p')$; and
\begin{equation}\label{eqn:rightaction}
\Phi(p \cdot (g_1, \alpha_1) ) = \Phi(p' \cdot (g_1',\alpha_1') )
\end{equation}
whenever $\Phi(p) = \Phi(p') \mbox{ and } \Proj_1(g_1,\alpha_1) = \Proj_1(g_1',\alpha_1'),$ for appropriately composable pairs.

We first check, (\ref{eqn:leftaction}) holds. Suppose $p = (g,\alpha) \cdot \sigma(x,y)$ and $p' = (g',\alpha') \cdot \sigma(x,y+n)$. Since $p \sim p'$ we have:
\[ \Proj_2(g,\alpha) = \Proj_2(g',\alpha') \cdot \left( \prod_{i=0}^{n-1} \delta(x,y+i) \right) .  \]
Since $\Proj_2(g_2, \alpha_2) = \Proj(g_2' , \alpha_2')$:
\[ \Proj_2(g_2 g, \alpha_2 \alpha) = \Proj_2(g_2' g', \alpha_2' \alpha') \left( \prod_{i=0}^{n-1} \delta(x,y+i) \right) .\]
So (\ref{eqn:leftaction}) holds.

To check the right action, we use the interpretation of $\sim$ from Proposition \ref{prop:bibundlequotientversions} and separate into cases where $g,g_1 \in \G\Aff|_{\aff^\pm}$ and $g \in \G\Aff|_Z$. Suppose $n, m ,k$ and $l$ are integers such that:
\[ \zeta_2^{-n} \cdot (g, \alpha)  \cdot \Psi(\zeta^{-m}) =  (g',\alpha') \,; \]
\[ \zeta_1^{-k} \cdot (g_1, \alpha_1) \cdot \zeta_1^{-l} = (g_1',\alpha_1' ) \, . \]
Composability tells us that $m$ and $k$ must be equal. Hence:
\[ \zeta_2^{-n} \cdot (g g_1, \alpha \Psi(\alpha) ) \cdot  \Psi(\zeta_1^{-l}) = (g' g_1', \alpha \Psi(\alpha_1') ) \,. \]
So the right action is well defined. Now suppose $g,g_1 \in \G\Aff|_Z$. So there exists $n$ and $m$:
\[ (g -n , \hol_h^n(\alpha) ) = (g', \alpha' ) \, ; \]
\[ (g_1 - m , \hol^m(\alpha) ) = (g_1', \alpha_1' ) . \]
Again, we observe that $n = m$ by composability. Since, it suffices to prove the result for $n=1$ we will just show this case. Hence:
\begin{align*}
(g', \alpha ' ) \cdot (g_1' \Psi(\alpha')) &=  (g - 1 , \hol(\alpha) h ) \cdot (g'-1, \Psi(\hol(\alpha'))) \\
&= ((g g_1) -1 , \hol(\alpha) h (\Psi \circ) \hol(\alpha_1) h\inv h ) \\
&= ( (g g_1) -1 , \hol( \alpha \alpha_1) h ) \\
&= ( (g g_1) - 1, \hol_h(\alpha \alpha_1) ).
\end{align*}
We leave it to the reader to check that $P_\Psi/ \sim$ inherits a (non-hausdorff) smooth structure from $P_\Psi$.
\end{proof}
\subsection{The Picard group $\G(I,\Hol)$}\label{subsection:caffpic}
Before computing the Picard group of $\G = \G(I,\Hol)$ we need this next lemma:
\begin{lemma}\label{lemma:innerautocylinder}
Suppose $(\Psi,h): (I,\Hol) \to (I,\Hol)$ is an isomorphism. Then $P(\Psi,h)$ is isomorphic to the trivial bibundle if and only if $(\Psi,h)$ is an inner automorphism.
\end{lemma}
\begin{proof}
Suppose $(P,p_0)$ is a $Z$-static pointed bibundle such that $P$ is isomorphic to the identity bibundle. Suppose $(\Psi,h)$ is the holonomy isomorphism of $(P,p_0)$. Then $\overline{P} \isom P_\Psi$ is the identity bibundle and therefore $\Psi = C_\alpha: I \to I$ is an inner automorphism. Let $F:P_\Psi \to \G(I)$ be the trivialization of $P_\Psi$ such that $F(\sigma(x,y)) = (\u(x,y),\alpha)$. Then:
\[
\begin{tikzcd}
\G(I) \arrow[rr, "F"]\arrow[dr, "\Proj"] & & P_\Psi \arrow[dl, "\Phi"] \\
 & \G \isom P \, , &
\end{tikzcd}
\]
commutes and therefore $\Phi(\tau(x,y) \cdot \sigma(x,y)) = \Proj(\u(x,y), h \alpha)$. On the other hand:
\[ \Phi(\tau(x,y) \cdot \sigma(x,y)) = \Phi(\sigma(x,y+1)) = \Proj(\u(x,y+1),\alpha) = \Proj(\u(x,y),\Hol(\alpha)) . \]
Therefore: $\hol(\alpha) = h \alpha$.

Now suppose $(\Psi,h)$ is an inner automorphism and $\Psi = C_\alpha$. Consider $(\G(I),\alpha)$ as a $Z$-static pointed bibundle. Then $(\G,\Proj(\alpha))$ is a $Z$-static pointed $(\G,\G)$-bibundle with holonomy isomorphism $(\Psi,h)$ and therefore by Theorem \ref{thm:holcorrespondence} $P(\Psi,h) \isom \G$.
\end{proof}
Since the inner automorphisms occur as the kernel of a homomorphism, they are a normal subgroup and it makes sense to define $\OutAut(I,\Hol)$ to be the automorphisms modulo inner automorphisms.

The results of the preceding subsections work just as well for $\caff^\rho$ (see Example \ref{example:caff}), the only distinction is the scaling of the appropriate symplectic forms the constant term $\rho$. In this section we will fix some $\rho$ and natural integration $\G \rightrightarrows \caff^\rho$. We can now compute the Picard group of $\G \isom \G(I,\Hol)$. As we did for $\G(I)$ denote the subgroup of $\Pic(\G)$ generated by the modular flow by $\mathbf{Mod}$. In this section, $\rho$ will be the modular period of $\caff^\rho$ (see Example \ref{example:caff}).
\begin{lemma}\label{lemma:cafftwist}
The element $\mbox{\normalfont{Mod}}^t \in \mathbf{Mod}$ for $t=\rho$ is isomorphic to $P(\Hol,e)$.
\end{lemma}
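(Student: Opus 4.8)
The plan is to identify $\Modd^\rho$ with the holonomy bibundle by computing the \emph{holonomy isomorphism} of its underlying pointed bibundle and then appealing to Theorem~\ref{thm:holcorrespondence}. Since the modular vector field of $\caff^\rho$ is tangent to the orbit circles and has modular period $\rho$, the time-$\rho$ modular flow $\Modd^\rho\colon\G\to\G$ is a symplectic groupoid automorphism covering the \emph{identity} diffeomorphism of $\caff^\rho$ (it traverses each orbit exactly once). Consequently the bibundle $P_{\Modd^\rho}$ representing $\Modd^\rho\in\mathbf{Mod}$ is a map-like, orientation-preserving, $Z$-static pointed $(\G,\G)$-bibundle, with basepoint the unit $\u$ over the image of $(0,0)$. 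By Theorem~\ref{thm:holcorrespondence} such a bibundle is determined, up to strong isomorphism, by its holonomy isomorphism $(\Psi,h)\colon(I,\Hol)\to(I,\Hol)$, so it suffices to show $(\Psi,h)=(\mathbf{H},e)$, where $\mathbf{H}=(\hol,\hol^\pm)$ is the holonomy automorphism and $P(\Hol,e):=P(\mathbf{H},e)$.

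First I would record that $(\mathbf{H},e)$ is a genuine isomorphism of holonomy data in the sense of Definition~\ref{def:mapofholonomy}: condition (i) reads $\mathbf{H}\circ\mathbf{H}=\C_e\circ\mathbf{H}\circ\mathbf{H}$, which holds since $\C_e=\mathrm{id}$, and condition (ii) reads $\phi^\pm(e)=(\gamma^\pm)\,\hol^\pm(\gamma^\pm)\inv$, which holds because $\hol^\pm=\C_{\gamma^\pm}$ fixes $\gamma^\pm$. This is precisely the data that the lemma preceding Definition~\ref{def:holofGdef}, showing $\mathbf{H}$ is an automorphism of $I$, was set up to produce.

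Next I would compute $(\Psi,h)$ directly. The component $\Psi$ is the isomorphism of isotropy data induced by $\Modd^\rho$, i.e.\ its action on the isotropy groups over the basepoint and over the points $(\pm1,0)$. But the action of the once-around modular flow on an isotropy group is precisely the holonomy of $\K$ around the corresponding orbit circle, and these holonomies were \emph{defined} in Section~\ref{subsection:holdata} to be $\hol$ on $\K|_{\{0\}\times\S^1}$ and $\hol^\pm=\C_{\gamma^\pm}$ on $\K|_{\{\pm1\}\times\S^1}$. Hence $\Psi=(\hol,\hol^\pm)=\mathbf{H}$. For the holonomy element $h$, recall that $h\in H$ is characterized by $h\cdot\Phi(\sigma(0,0))=\Phi(\sigma(0,1))$, where $\Phi\colon P_\Psi\to P_{\Modd^\rho}$ is the map of Lemma~\ref{lemma:phi} and $\sigma$ is the canonical static bisection. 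Because $P_{\Modd^\rho}$ is map-like for an automorphism of $\G$ over $\caff^\rho$, it carries the globally defined unit bisection $s\colon\caff^\rho\to P_{\Modd^\rho}$, and $\sigma$ is its lift through $\Phi$; since $(0,0)$ and $(0,1)$ have the same image in $\caff^\rho$, we get $\Phi(\sigma(0,0))=s(\text{basept})=\Phi(\sigma(0,1))$, and principality of the left action forces $h=e$.

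With $(\Psi,h)=(\mathbf{H},e)$ established, Theorem~\ref{thm:holcorrespondence} identifies $P_{\Modd^\rho}$ with $P(\mathbf{H},e)=P(\Hol,e)$ up to strong isomorphism, and in particular $[\Modd^\rho]=[P(\Hol,e)]$ in $\Pic(\G)$. The main obstacle is the verification that $h=e$: the nontrivial holonomy of the cylinder is entirely absorbed into $\Psi=\mathbf{H}$, and one must argue carefully---via the descent of the canonical unit bisection through $\Phi$---that no additional twist appears at the basepoint. Checking the compatibility of the canonical bisections under the covering construction of Lemma~\ref{lemma:phi} is the one point requiring genuine care; the remaining identifications are forced by the definitions of $\hol$ and $\gamma^\pm$.
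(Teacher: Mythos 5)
Your proof is correct and takes essentially the same route as the paper: both identify the holonomy isomorphism of the pointed bibundle $\Modd^{\rho}$ as $(\mathbf{H},e)$ via the covering $\Phi$ of Lemma~\ref{lemma:phi} and conclude by the classification in Theorem~\ref{thm:holcorrespondence}. The only difference is direction of travel---the paper starts from $(P_{\Hol},p_{\Hol})$ and projects down, while you lift $\Modd^{\rho}$ and compute $\Psi=\mathbf{H}$ and $h=e$ explicitly (your descent-of-the-unit-bisection argument for $h=e$ correctly supplies a detail the paper leaves implicit).
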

\begin{proof}
It suffices to show that there exists $p_0 \in \mbox{Mod}^\rho$ such that $(\mbox{Mod}^\rho,p_0)$ is a $Z$-static pointed bibundle and $(\Hol,e)$ is its associated holonomy isomorphism. The $(\G(I),\G(I))$-bibundle $(P_\Hol,p_\Hol)$ comes with a projection $\Phi: P_\Hol \to \mbox{Mod}^\rho$. If we take $p_0 = \Phi(p_\Hol)$ then the holonomy data of $(\mbox{Mod}^\rho,p_0)$ is $(\Hol,e)$ so we are done.
\end{proof}
There is an action of $\R$ on $Z\Pic(\G) \isom \OutAut(I,\Hol)$ by conjugation
\[ t \cdot P = (\mbox{Mod}^t) \otimes P \otimes (\mbox{Mod}^{-t}). \]
Using this action, we can compute the Picard group.
\begin{theorem}\label{thm:picofghol}
For any $\G \isom \G(I,\Hol)$ an integration of $\caff$ with connected orbits there is a surjective homomorphism:
\[ \OutAut(I,\Hol) \rtimes \R  \to \Pic(\G). \]
The kernel of this map is subgroup generated by $(P(\Hol,e),-\rho)$.
\end{theorem}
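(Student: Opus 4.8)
The plan is to build the map explicitly and verify it is a surjective homomorphism whose kernel is the prescribed cyclic group, exactly paralleling the affine plane computation but accounting for the holonomy twist. Define
\[ \Theta\colon \OutAut(I,\Hol)\rtimes\R \to \Pic(\G), \qquad ([\Psi,h],t)\mapsto [P(\Psi,h)]\otimes\Mod^t, \]
where $[\Psi,h]\in\OutAut(I,\Hol)$ is identified with the $Z$-static bibundle $P(\Psi,h)$ via the correspondence $Z\Pic(\G)\isom\OutAut(I,\Hol)$ coming from Theorem~\ref{thm:holcorrespondence} and Lemma~\ref{lemma:innerautocylinder}, and $\Mod^t$ is the modular flow element of the subgroup $\Mod\isom\R$. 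That $\Theta$ is a homomorphism is then immediate from two inputs: the functoriality $P((\Psi_1,h_1)\circ(\Psi_2,h_2))\isom P(\Psi_1,h_1)\otimes P(\Psi_2,h_2)$ for isomorphisms of holonomy data, and the fact that the semidirect product is formed using precisely the conjugation action $t\cdot P=\Mod^t\otimes P\otimes\Mod^{-t}$. Expanding $\Theta$ of a product and inserting $\Mod^{-t_1}\otimes\Mod^{t_1}$ in the middle recovers $\Theta([\Psi_1,h_1],t_1)\otimes\Theta([\Psi_2,h_2],t_2)$ after a one-line cancellation, so no coherence issues arise.

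For \textbf{surjectivity} I would argue as in the affine plane case. Any $P\in\Pic(\G)$ induces a homeomorphism of the orbit space of $\caff$, which consists of the two open half-cylinder orbits together with the critical circle $Z=\{x=0\}$. Since every symplectic bibundle preserves the modular class, the induced action on $Z$ is a rotation, and each such rotation is realized by the modular flow on $\G$; hence there is a $t$ with $\Mod^{-t}\otimes P$ acting as the identity on $Z$, i.e. $\Mod^{-t}\otimes P$ is $Z$-static (the orientation-reversing case is absorbed because $\OutAut(I,\Hol)$ already contains orientation-reversing isomorphisms). By the identification above this $Z$-static bibundle is some $P(\Psi,h)$, so $P\isom P(\Psi,h)\otimes\Mod^t=\Theta([\Psi,h],t)$.

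For the \textbf{kernel}, one inclusion is quick: $(\mathbf{H},e)$ is an isomorphism of holonomy data (conditions (i)--(ii) of Proposition~\ref{prop:propertiesofmaps} hold because $\hol^\pm=\C_{\gamma^\pm}$ fixes $\gamma^\pm$), and Lemma~\ref{lemma:cafftwist} gives $\Mod^\rho\isom P(\Hol,e)$, so $\Theta([\mathbf{H}],-\rho)=P(\Hol,e)\otimes\Mod^{-\rho}=\Mod^{\rho}\otimes\Mod^{-\rho}$ is trivial; thus $\langle(P(\Hol,e),-\rho)\rangle\subseteq\ker\Theta$. Conversely, suppose $\Theta([\Psi,h],t)=P(\Psi,h)\otimes\Mod^t$ is trivial, so $P(\Psi,h)\isom\Mod^{-t}$. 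Composing with the homomorphism $\Pic(\G)\to\mathrm{Aut}(Z)$ recording the rotation of the critical circle kills the $Z$-static factor and sends $\Mod^{-t}$ to its rotation; since $\Mod^s$ is $Z$-static exactly when $s\in\rho\Z$ (by the explicit form of the flow together with Lemma~\ref{lemma:cafftwist}), we must have $t=-n\rho$. Then $\Mod^{-t}=(\Mod^\rho)^n\isom P(\Hol,e)^{\otimes n}=P(\mathbf{H}^n,e)$ by functoriality, so $P(\Psi,h)\isom P(\mathbf{H}^n,e)$; Lemma~\ref{lemma:innerautocylinder} forces $[\Psi,h]=[\mathbf{H}]^n$ in $\OutAut(I,\Hol)$. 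Finally, because $\Mod^\rho$ commutes with all modular flows, the conjugation action of $-\rho$ fixes $[\mathbf{H}]$, whence $([\mathbf{H}],-\rho)^n=([\mathbf{H}]^n,-n\rho)=([\Psi,h],t)$, exhibiting the element as a power of the stated generator.

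The \textbf{main obstacle} I anticipate is the kernel's lower bound rather than the homomorphism or surjectivity: precisely, establishing that $\Mod^s$ is $Z$-static only for $s\in\rho\Z$ (so that $t$ is forced to lie in $\rho\Z$) and that the resulting $[\mathbf{H}]^n$ assembles into an honest power in the \emph{semidirect} product. This requires being careful that the rotation-of-$Z$ homomorphism is genuinely nontrivial and has kernel exactly $\rho\Z$, and that the conjugation action is trivial on the class $[\mathbf{H}]=[P(\Hol,e)]$; both should follow cleanly from Lemma~\ref{lemma:cafftwist} and the commutativity of the one-parameter group $\Mod$, but they are the delicate points deserving full detail.
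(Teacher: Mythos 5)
Your proposal is correct and takes essentially the same route as the paper: the same map $(P(\Psi,h),t)\mapsto P(\Psi,h)\otimes \Mod^t$, the same insertion trick $\Mod^{-t_1}\otimes\Mod^{t_1}$ for the homomorphism property, the same surjectivity argument via translating $P$ into $Z\Pic(\G)$ by a modular flow, and the same kernel computation resting on Lemma~\ref{lemma:cafftwist} and Lemma~\ref{lemma:innerautocylinder}. The only difference is that you spell out the details the paper leaves implicit (that $\Mod^s$ is $Z$-static exactly for $s\in\rho\Z$, and that conjugation by $\Mod$ fixes $[P(\Hol,e)]$ so the kernel element is an honest power of the generator in the semidirect product), which is a faithful elaboration rather than a new argument.
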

\begin{proof}
Let $\Phi$ be the map given by $(P(\Psi,h),t) \mapsto P(\Psi,h) \otimes \mbox{Mod}^t$. Any bibundle in $\Pic(\G)$ can be written in the form $P(\Psi,h) \otimes [\mbox{Mod}^t]$ since for any $P$ we can find a $t$ such that $P \otimes M^t \in Z\Pic(\G)$ so the map is surjective.. Furthermore, the map is a homomorphism since for any $P_1,P_2 \in Z\Pic \isom \OutAut(I,\Hol)$ and $t_1,t_2 \in \R$:
\begin{align*}
\Phi(P_1,t_1) \otimes \Phi(P_2,t_2) &= P_1 \otimes \mbox{Mod}^{t_1} \otimes P_2 \otimes \mbox{Mod}^{t_2} \\
&= P_1 \otimes \mbox{Mod}^{t_1} \otimes P_2  \otimes\mbox{Mod}^{-t_1} \otimes \mbox{Mod}^{t_1} \otimes \mbox{Mod}^{t_2} \\
&= P_1 \otimes (\mbox{Mod}^{t_1} \otimes P_2  \otimes\mbox{Mod}^{-t_1}) \otimes \mbox{Mod}^{t_1 + t_2} \\
&= \Phi((P_1,t_1) \cdot (P_2,t_2))
\end{align*}
Finally, if $\Phi(P,t) = e$ then $P \isom [\mbox{Mod}^{-t}]$ meaning that $t = n \rho$ for some $n \in \Z$ and $P \isom P(\Hol,e)^{-n}$.
\end{proof}

\section{Picard groups of stable b-symplectic manifolds}\label{section:bsymplecticmanifolds}
Throughout this section it will be useful to index sets and group elements with superscripts. To avoid confusion, when the symbols $i,j, k, l, n$ and $m$ occur in
superscripts, we intend to treat them as indices. I.e an object $X^n$ denotes the element with index $n$ and not `$X$ to the power $n$'. Let $\Cy = \sqcup_{i \in I} C^i$
($I$ finite or countable) be a disjoint union of affine cylinders such that each $C^i \isom \caff^{\rho^i}$. In this section $\G$ will denote a natural integration of $\Cy
$. Our main end is to classify $(\G_2,\G_1)$-bibundles and compute $\Pic(\G)$.

Recall that the numbers $\rho_i$ are called the \emph{modular periods} of $\Cy$. Let $\{V^n \}_{n \in N}$ ($N$ finite or countable) be an enumeration of the open orbits of $\Cy$. To each $C^i$ let $E^i$ denote the saturation of $C^i$ in the foliation induced by $\G$. We also define adjacency maps $+: I \to N$ and $-: I \to N$ such that $V^{+(i)}$ is the positive orbit adjacent to $C^i$ (similarly for $-$). Throughout, $\G^i$ denotes the restriction of $\G$ to $C^i$ and $\G^n$ denotes the restriction of $\G$ to $V^n$.
\subsection{Discrete data of $\G$}
Let $\G^i:= \G|_{C^i}$ be the restrictions of $\G$ to a given cylinder. Since each $\G^i$ is an integration of an affine cylinder we can assume it has the form $\G^i \isom \G(I^i,\Hol^i)$ where $\Hol^i= (\hol^i,(\gamma^i)^\pm)$ and:
\[
\begin{tikzcd}
G^{-(i)} \arrow[d,"(\hol^i)^-" swap] & H^{i} \arrow[d, "\hol^i"]\arrow[l,"(\phi^{i})^-" swap] \arrow[r,"(\phi^{i})^+"] & G^{+(i)}  \arrow[d, "(\hol^i)^+"] \\
G^{-(i)} & H^{i} \arrow[l,"(\phi^{i})^-" swap] \arrow[r,"(\phi^{i})^-"] & G^{+(i)} \, ,
\end{tikzcd}
\]
commutes. If $\G$ is constructed by restricting to transverse cylinders of a stable b-symplectic manifold (as in Section \ref{section:trategyofproof}) then we have following geometric interpretation:
\[ G^n \isom \pi_1(U_n), \qquad H^i \isom \pi_1(L_i), \qquad \hol^\pm = C_{(\gamma^i)^\pm}, \]
\[ \hol^i = (f_i)_*: \pi_1(L_i) \to \pi_1(L_i). \]
Where each $U_n$ is an open orbit, $f_i: L^i \to L^i$ is the holonomy of the mapping torus $Z_i$ and $(\gamma^i)^\pm_i$ are the homotopy classes of the section of $Z_i \to \S^1$ in adjacent orbits.
\begin{definition}
The \emph{orbit graph}, $\mbox{Gr}(\G)$, of $\G$ is defined as follows:
\begin{itemize}
    \item there is one vertex $v^n$ for each open orbit $V^n \subset \Cy$;
    \item there is one edge $e^{i}$ for each cylinder $C^i \subset \Cy$;
    \item each edge $e^i$ is adjacent to a positive and negative vertex $v^{+(i)}$ and $v^{-(i)}$ respectively.
\end{itemize}
\end{definition}
For a stable b-symplectic manifold, the \emph{orbit graph}, $\mbox{Gr}(M)$, of $M$ is defined to be the orbit graph of any Morita equivalent $\G \rightrightarrows \Cy$. We will need to attach more data to the orbit graph in order to construct a complete set of Morita invariants and enable computation of the Picard group of $\Sigma(M)$.
\begin{definition}\label{def:discretedata}
To each vertex, $v_n$, assign the following data:
\begin{itemize}
    \item the \emph{sign} of a vertex $v^n$ is the sign of $V^n$;
    \item the \emph{isotropy} of $v^n$ is the isotropy group $G^n$.
\end{itemize}
To each edge, $e^{i}$ we assign the following data;
\begin{itemize}
    \item the \emph{modular period} of an edge $e^{i}$ is $\rho_i$ (the modular period of $C^i$);
    \item the \emph{isotropy data and holonomy data} of $e^{i}$ is the pair $(I^i,\Hol^i)$ such that $\G^i \isom \G(I^i,\Hol^i)$.
\end{itemize}
We call the orbit graph together with the above data $\mathfrak{Gr}$ the \emph{discrete data} of $\G$ and a \emph{discrete presentation} of $M$ (If $\G$ was obtained by a restriction of $\Sigma(M)$).
\end{definition}
\begin{remark}
The discrete data actually characterizes $\G \rightrightarrows \mathcal{C}$ up to isomorphism. We omit a proof of this for brevity, as it is not needed for our calculation. Also note that a discrete presentation of $M$ is not constructed in a canonical manner as it depends on a choice of transversal $\mathcal{C} \to M$.
\end{remark}
\subsection{Isomorphisms of discrete data}
Suppose $\Cy_1 := \sqcup_{i \in I_1} C_1^i$ and $\Cy_2 := \sqcup_{i \in I_2} C_2^i$ and $\G_1$ and $\G_2$ are natural integrations of each. Enumerate the open orbits of $\G_1$ and $\G_2$ as $V_1^{n}$ for $n \in N_1$ and $V_2^{m}$ for $m \in N_2$. We now need to characterize $(\G_2,\G_1)$-bibundles in terms of discrete data. In order to do this, it will be convenient to fix a choice of base points $v_1^n \in V^n$ and $v_2^m \in V^m$ as well as a choice of groupoid elements $g_n^i$ and $g_m^j$ for each $\pm(i) = n$ or $\pm(j) = m$ such that the source of $g_n^i$ is $v^n$ and the target of $g_n^i$ is $[\pm 1,0] \in C^i$. The purpose of this is to fix an identification of the isotropy groups of each half cylinder in the same orbit and so we can treat each open orbit as a pointed manifold.

Suppose $P$ is a $(\G_2, \G_1)$-bibundle. Clearly $P$ induces an isomorphism of the orbit graphs $F:\mbox{Gr}_1 \to \mbox{Gr}_2$. We can think of $F$ as a map $F: I_1 \to I_2$ (also $N_1 \to N_2$). We say $P$ is \emph{orientation preserving} if $P$ preserves the signs of the vertices and orientation reversing otherwise. Since $P$ preserves the modular vector field $F$ must preserve the modular periods.

Now restrict $P$ to the affine cylinders $C_2^{F(i)}$ and $C_1^i$ to obtain a $(\G_2^{F(i)}, \G_1^{i})$-bibundle $P^i$. Suppose $\{p^i \}_{i \in I_1} \subset P$ is a set of points such that $(P^i,p^i)$ is a $Z$-static pointed bibundle for all $i \in I_1$. Then we call $(P, \{ p_i \})$ a $Z$-static \emph{marked bibundle}. Any such marked bibundle has an associated collection of holonomy isomorphisms $(\Psi_i,h_i): (I_1^i, \Hol_1^i) \to (I_2^i, \Hol_2^i)$. Notice that $(\Psi_i,h_i)$ is orientation preserving/reversing if and only if $F$ is orientation preserving/reversing.

For each pointed bibundle $(P^i,p^i)$ let $\Phi^i:P_{\Psi^i} \to P^i$ be the projection from Lemma \ref{lemma:phi}. Let $\sigma^i$ be the static bisection of $P_{\Psi^i}$ extending $p^i$ and let:
\[ p_\pm^i := (g_{F(n)}^{F(i)})\inv \cdot \sigma(\pm 1,0) \cdot g^i_n \in P \, . \]
If $P^{\pm(i)}$ is the restriction of $P$ to the open orbits $V^{\pm(i)}$ and $V^{F(\pm(i))}$. Then $(P^{\pm(i)},p_\pm^{i})$ are pointed $(\G^{F(\pm (i))},\G^{\pm(i)})$-
bibundles of transitive groupoids. If $\pm (i) = \pm (j)$, the bibundles $P^{\pm(i)}$ and $P^{\pm(j)}$ are equal. There is a unique $g^\pm_{ij} \in G^{F(\pm(i))}$ such
that $g^\pm_{ij} \cdot p_\pm^j = p_\pm^i$. Furthermore by Lemma \ref{lemma:transbimid} $g^\pm_{ij}$ must satisfy:
\begin{equation}\label{eqn:cocyclecondition1}
\psi_i^\pm = C_{g^\pm_{ij}} \circ \psi_j^\pm,
\end{equation}
and for $\pm(i)=\pm(j)=\pm(k)$:
\begin{equation}\label{eqn:cocyclecondition2}
g^\pm_{ij} g^\pm_{jk} = g^\pm_{ik}
\end{equation}
This motivates our definition of an isomorphism of discrete data. Let $\mathfrak{Gr}_1$ and $\mathfrak{Gr}_2$ be the discrete data of $\G_1$ and $\G_2$ respectively.
\begin{definition}
An \emph{isomorphism} $\mathcal{F}:\mathfrak{Gr}_1 \to \mathfrak{Gr}_2$ consists of the following:
\begin{itemize}
    \item an \emph{underlying graph isomorphism} $F: \mbox{Gr}_1 \to \mbox{Gr}_2$;
    \item \emph{holonomy isomorphisms} $(\Psi_i,h_i):(I_1^i,\Hol_1^i) \to (I_2^{F(i)},\Hol_2^{F(i)})$;
    \item \emph{cocycles} $g^\pm_{ij} \in G_2^{F(\pm(i))}$ for all $i,j \in I_1$ such that $\pm(i)=\pm(j)$.
\end{itemize}
The underlying graph isomorphism, holonomy isomorphisms, and cocycles must satisfy the following compatibility conditions:
\begin{enumerate}[(i)]
    \item the holonomy isomorphisms are orientation preserving/reversing if and only if $F$ is orientation preserving/reversing;
    \item if $i \in I_1$ then $\rho_1^{i}=\rho_2^{F(i)}$;
    \item for any $i,j \in I_1$ such that $\pm(i) = \pm(j)$ then (\ref{eqn:cocyclecondition1}) holds;
    \item for any $i,j,k \in I_1$ such that $\pm(i) = \pm(j) = \pm(k)$ then (\ref{eqn:cocyclecondition2}) holds.
\end{enumerate}
\end{definition}
\begin{example}[Inner automorphisms]
Suppose $\Gr$ is discrete data. Define $\mathcal{F}: \Gr \to \Gr$ to be an automorphism of $\Gr$ such that:
\begin{itemize}
\item $F$ is the identity;
\item $(\Psi_i,h_i) = (\C_{\alpha_i},h_i)$ are inner automorphisms;
\item and $g^\pm_{ij} = (\phi_2^i)^\pm(\alpha_i) (\phi_2^j)^\pm(\alpha_j)\inv$ for all $\pm(i) = \pm(j)$.
\end{itemize}
Such an $\mathcal{F}$ is called an \emph{inner automorphism} of $\Gr$.
\end{example}
\begin{example}[Bibundles]
Modulo the choices of base-points made at the beginning of this section, we can canonically construct an isomorphism of discrete data $\mathcal{F}$ from any $Z$-static marked bibundle $(P,\{p^i \})$. In this case we say that $\mathcal{F}$ is the \emph{discrete isomorphism} associated to $(P,\{p^i\})$.
\end{example}
A \emph{strong isomorphism} $\varphi: (P,\{p^i\}) \to (Q, \{q^i \})$ is an isomorphism of bibundles which preserves the markings.
\begin{proposition}\label{prop:bsympcoresp}
There is a 1-1 correspondence between orientation preserving (reversing) isomorphisms of discrete data $\Gr_1 \to \Gr_2$ and orientation preserving (reversing) $Z$-static marked bibundles modulo strong isomorphisms.
\end{proposition}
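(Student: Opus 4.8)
The plan is to assemble the global correspondence out of the single--cylinder correspondence of Theorem~\ref{thm:holcorrespondence} and the gluing Lemma~\ref{lemma:cocycle}. One direction has essentially been built in the discussion preceding the statement: given a $Z$-static marked bibundle $(P,\{p^i\})$, restricting to each cylinder $C_1^i$ yields pointed bibundles $(P^i,p^i)$ whose holonomy isomorphisms $(\Psi_i,h_i)$ are read off via Theorem~\ref{thm:holcorrespondence}, while restricting to the shared open orbits produces transitive bibundles in which the marked points $p^i_\pm$ are related by the cocycle elements $g^\pm_{ij}$ supplied by Lemma~\ref{lemma:transbimid}; the compatibility conditions (\ref{eqn:cocyclecondition1}) and (\ref{eqn:cocyclecondition2}) were verified there. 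So the content of the proof is the reverse construction and the check that the two assignments are mutually inverse modulo strong isomorphism.

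For the reverse direction I would begin from an isomorphism of discrete data $\mathcal{F}=(F,(\Psi_i,h_i),g^\pm_{ij})$. For each edge $i\in I_1$, Theorem~\ref{thm:holcorrespondence} produces a $Z$-static pointed $(\G_2^{F(i)},\G_1^i)$-bibundle $(P^i,p^i)$ over $C_1^i$, oriented according to the orientation compatibility condition in the definition of $\mathcal{F}$. Since the groupoid $\G_1|_{E^i}$ is transitive over each open orbit, $P^i$ extends uniquely to a $\G_1|_{E^i}$-bibundle over the saturation $E^i$, and $\{E^i\}$ is a saturated open cover of $\Cy_1$. The overlap $E^i\cap E^j$ is the union of the open orbits shared by $C_1^i$ and $C_1^j$; on such an orbit $V^n$ (with $\pm(i)=\pm(j)=n$) the restrictions are transitive bibundles, so using the fixed base points $v^n$ and elements $g^i_n$ to identify isotropy groups, Lemma~\ref{lemma:transbimid} turns each $g^\pm_{ij}\in G_2^{F(\pm(i))}$ into a bibundle isomorphism $\phi_{ij}$ of these restrictions. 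Condition (\ref{eqn:cocyclecondition1}) guarantees that $\phi_{ij}$ intertwines the isotropy identifications, and (\ref{eqn:cocyclecondition2}) is exactly the cocycle identity $\phi_{ij}\circ\phi_{jk}=\phi_{ik}$ required by Lemma~\ref{lemma:cocycle}. That lemma then glues the extended $P^i$ into a global $(\G_2,\G_1)$-bibundle $P$ with isomorphisms $\phi_i\colon P|_{E^i}\to P^i$, and transporting the $p^i$ back through $\phi_i$ equips $P$ with a $Z$-static marking.

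It then remains to check the two constructions are inverse up to strong isomorphism. Surjectivity is immediate: the gluing isomorphisms $\phi_i$ identify $P|_{C_1^i}$ with $P^i$ and $P|_{V^n}$ with the glued transitive bibundle, so the discrete isomorphism associated to the reconstructed $(P,\{p^i\})$ is $\mathcal{F}$ again. For injectivity, suppose two marked bibundles induce the same $\mathcal{F}$. Cylinder by cylinder they are strongly isomorphic by the bijectivity half of Theorem~\ref{thm:holcorrespondence}, and these strong isomorphisms are compatible over the shared open orbits because the cocycles $g^\pm_{ij}$ agree and $\phi\mapsto h_\phi$ is injective (Lemma~\ref{lemma:transbimid}); hence they glue, again by Lemma~\ref{lemma:cocycle} applied to isomorphisms, into a global strong isomorphism. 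The orientation--reversing case runs in parallel, using the orientation--reversing halves of Theorem~\ref{thm:holcorrespondence} and Definition~\ref{def:isodatamap}, and can be left to the reader as with the component lemmas.

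The main obstacle I anticipate is bookkeeping rather than conceptual. The delicate point is that the pointed bibundles $(P^i,p^i)$ are built independently over each cylinder by Theorem~\ref{thm:holcorrespondence}, yet their restrictions to a shared open orbit $V^n$ must have transitive--bibundle gluing data compatible with the prescribed cocycles. Concretely, one must reconcile the group isomorphisms $\psi_i^\pm$ carried by the holonomy isomorphisms with the conjugation relation (\ref{eqn:cocyclecondition1}) imposed on the $g^\pm_{ij}$, so that each $\phi_{ij}$ is simultaneously a genuine bibundle isomorphism over $V^n$ and satisfies the strict cocycle identity; making this canonical is exactly what the base--point conventions $v^n$ and $g^i_n$ are for, and keeping that bookkeeping consistent is where the care lies.
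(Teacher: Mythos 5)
Your proposal is correct and follows the paper's own proof in all essentials: you extract the holonomy isomorphisms and cocycles from a marked bibundle as in the discussion preceding the statement, reconstruct $P$ from $\mathcal{F}$ by taking $P(\Psi_i,h_i)$ over each cylinder, inducing pointed bibundles over the adjacent transitive orbits, converting the $g^\pm_{ij}$ into gluing isomorphisms via Lemma~\ref{lemma:transbimid} (with (\ref{eqn:cocyclecondition2}) and (\ref{eqn:compositionisom}) giving the cocycle identity needed for Lemma~\ref{lemma:cocycle}), and prove injectivity by gluing the cylinder-wise strong isomorphisms from Theorem~\ref{thm:holcorrespondence}, which agree on shared orbits because the cocycles coincide. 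This is precisely the paper's argument, including the role of the base-point conventions $v^n$ and $g^i_n$ in making the orbit-level identifications canonical.
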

\begin{proof}
Suppose $(P,\{ p^i \})$ and $(Q, \{ q^i \})$ are $Z$-static marked $(\G_2,\G_1)$-bibundles with identical discrete isomorphisms $\mathcal{F}$. By Theorem \ref{thm:holcorrespondence} there is a strong identification $\varphi^i :(P^i,p^i) \to (Q,q^i)$ of the restrictions of $P$ and $Q$ to the cylinders $C_1^i$ and $C_2^{F(i)}$. The isomorphisms $\varphi^i$ induce strong isomorphisms:
\[ \varphi^{\pm(i)}: (P^{\pm(i)},p^i_\pm ) \to (Q^{\pm(i)},q_\pm^i ), \]
of pointed bibundles. Since $P$ and $Q$ have the same cocycle, these isomorphisms satisfy $\varphi^{\pm(i)} = \varphi^{\pm(j)}$ whenever $\pm(i) = \pm(j)$. Therefore the $\varphi^i$ extend to a unique strong isomorphism $\varphi: (P, \{p^i \}) \to (Q, \{q^i \})$.

On the other hand, suppose $\mathcal{F}$ is an isomorphism of discrete data. Let $(P^i, p^i)$ be $P(\Psi_i,h_i)$ with the standard point. As we saw in the beginning of this section, each $(P^i,p^i)$ induces a pointed bibundle over the adjacent orbits. We denote these pointed $(\G^{F(\pm(i))},\G^{\pm(i)})$-bibundles $(P_\pm^i,p_\pm^i)$. To glue these bibundles together, Lemma \ref{lemma:cocycle} says that it suffices to construct bibundle isomorphisms $\varphi^\pm_{ij}: P_\pm^j \to P_\pm^i$ for each $\pm(i) = \pm(j)$ such that:
\begin{equation}\label{eqn:cocycleforbim}
\varphi^\pm_{ij} \circ \varphi^\pm_{jk} = \varphi^\pm_{ik} \quad \forall \pm(i)=\pm(j)=\pm(k).
\end{equation}
We apply Lemma \ref{lemma:transbimid} and define $\varphi^\pm_{ij}$ to be the pointed bibundle isomorphisms associated to $g_{ij}^\pm$. Then equation (\ref{eqn:compositionisom}) implies that (\ref{eqn:cocycleforbim}) holds if and only if (\ref{eqn:cocyclecondition2}) holds and so the proposition follows.
\end{proof}
Under this 1-1 correspondence, given isomorphisms of discrete data $\mathcal{F}: \G_1 \to \G_2$ and $\mathcal{F}': \G_2 \to \G_3$ the composition corresponds to the following isomorphism:
\begin{itemize}
\item the underlying graph map is the composition $F' \circ F$;
\item the holonomy isomorphisms are the compositions $(\Psi'_{F(i)},h'_{F(i)} )\circ (\Psi_i,h_i)$;
\item and the cocycles are $\{(g')^\pm_{F(ij)} (\psi')_{F(i)}^\pm(g^\pm_{ij}) \}$ for $\pm (i) = \pm(j)$.
\end{itemize}
The last one is a consequence of (\ref{eqn:tensorproductisom}). Given any $(\G_1,\G_2)$-bibundle $P$ one can make a suitable choice of base-points on $\Cy_1$, $\Cy_2$ and $P$ which make $P$ a marked bibundle. Therefore, Theorem \ref{maintheorem1} follows:
\begin{theorem}\label{maintheorem1}
Suppose $M_1$ and $M_2$ are stable b-symplectic manifolds and $\Gr_1$ and $\Gr_2$ are discrete presentations of each, respectively. Then $M_1$ and $M_2$ are Morita equivalent if and only if there exists an isomorphism $\Gr_1 \to \Gr_2$.
\end{theorem}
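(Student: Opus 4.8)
The plan is to prove Theorem~\ref{maintheorem1} by assembling the local-to-global correspondence built up over the preceding sections, reducing the problem from b-symplectic manifolds to natural integrations of disjoint unions of affine cylinders, and then applying the classification of marked bibundles. First I would fix discrete presentations $\Gr_1$ and $\Gr_2$ of $M_1$ and $M_2$. By the reduction carried out in Section~\ref{section:trategyofproof}, each $\Sigma(M_\ell)$ is Morita equivalent to a natural integration $\G_\ell \rightrightarrows \Cy_\ell$ of a disjoint union of affine cylinders, and $\Gr_\ell$ is by definition the discrete data of $\G_\ell$. Since Morita equivalence is transitive and symmetric (it is isomorphism in $\morpoid$, hence an equivalence relation), $M_1$ and $M_2$ are Morita equivalent if and only if $\G_1$ and $\G_2$ are. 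Thus the theorem reduces to showing: \emph{$\G_1$ and $\G_2$ are Morita equivalent if and only if there is an isomorphism of discrete data $\Gr_1 \to \Gr_2$.}

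For the forward direction, suppose $P$ is a principal $(\G_2,\G_1)$-bibundle. I would first choose base-points $v_\ell^n$ on the open orbits and the auxiliary groupoid elements $g_n^i$ as described at the start of Section~\ref{section:bsymplecticmanifolds}, and then select points $\{p^i\}$ making each restriction $(P^i,p^i)$ a $Z$-static pointed bibundle, so that $(P,\{p^i\})$ becomes a $Z$-static marked bibundle. The existence of such a $Z$-static marking relies on the fact that any bibundle of natural integrations induces an isomorphism of orbit graphs preserving signs and modular periods, together with the existence of $Z$-static bisections over each cylinder (the proposition in Section~\ref{subsection:mapsofisodata}, carried over to $\caff$). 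Once the marking is in place, Proposition~\ref{prop:bsympcoresp} directly produces the associated isomorphism of discrete data $\mathcal{F} \colon \Gr_1 \to \Gr_2$. Conversely, given an isomorphism $\mathcal{F} \colon \Gr_1 \to \Gr_2$, the same proposition constructs a $Z$-static marked bibundle realizing it; forgetting the marking yields a principal $(\G_2,\G_1)$-bibundle, witnessing the Morita equivalence of $\G_1$ and $\G_2$, and hence of $M_1$ and $M_2$.

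The one subtlety I must address is that not every bibundle between the $\G_\ell$ is \emph{a priori} $Z$-static; a generic Morita equivalence could in principle move the singular locus. Here I would invoke that any symplectic bibundle preserves the class of the modular vector field (used already in Section~\ref{section:affineplane} and Section~\ref{section:affinecylinder}) together with the structural fact that the singular locus $Z$ is intrinsically characterized as the union of orbits at which the transverse Poisson structure degenerates — equivalently, the circles $\{x=0\}$ in each cylinder are exactly the orbits whose isotropy is not locally constant in the normal direction. Since a Morita equivalence carries orbits to orbits preserving this transverse data, the induced orbit-graph map must send edges (singular components) to edges, so every bibundle between natural integrations is automatically $Z$-static. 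This is the step I expect to require the most care, since it is where the geometry of b-symplectic structures, rather than pure bibundle formalism, enters.

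Having dispatched this point, the proof is essentially a bookkeeping assembly: the local correspondences of Theorem~\ref{thm:affclass}, Theorem~\ref{thm:holcorrespondence}, and the gluing Lemma~\ref{lemma:cocycle} were precisely engineered so that Proposition~\ref{prop:bsympcoresp} delivers the two-way correspondence between marked bibundles and isomorphisms of discrete data. The final theorem then follows by passing from marked bibundles to ordinary isomorphism classes of bibundles (forgetting the markings and base-point choices), which corresponds to the passage from strong isomorphisms of discrete data to the mere existence of \emph{some} isomorphism $\Gr_1 \to \Gr_2$. I would close by remarking that the choice of transversal $\Cy \to M$ renders the presentation non-canonical, but any two presentations of the same $M$ are related by an isomorphism of discrete data, so the criterion is well posed.
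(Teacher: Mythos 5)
Your overall architecture is exactly the paper's: reduce via the Poisson transversal to natural integrations $\G_\ell \rightrightarrows \Cy_\ell$, note that $\Gr_\ell$ is the discrete data of $\G_\ell$, apply Proposition~\ref{prop:bsympcoresp} in both directions, and pass from marked bibundles to bare bibundles at the end. However, there is a genuine error in the one step you flagged as delicate. Your claim that \emph{every} bibundle between natural integrations is automatically $Z$-static is false, and your argument for it establishes only a strictly weaker statement. Preserving the modular class and the intrinsic characterization of the singular locus shows that the induced map of orbit spaces carries singular circles to singular circles --- edges to edges --- which is precisely what produces the orbit-graph isomorphism $F$. But ``$Z$-static,'' as used in Section~\ref{section:affineplane} through Section~\ref{section:bsymplecticmanifolds}, is stronger: the induced map of orbit spaces must restrict to the \emph{identity} on each critical circle, under the fixed identifications $C_1^i \isom \caff^{\rho_i} \isom C_2^{F(i)}$. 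The modular-flow bibundles $Q_i(t)$ (equivalently $\mbox{Mod}^t$ over a single cylinder) are bibundles between natural integrations that act on $\{x=0\}$ by nontrivial rotations, hence are \emph{not} $Z$-static; indeed the entire Picard computation (Theorem~\ref{thm:picofghol} and Theorem~\ref{maintheorem2}) rests on $Z\Pic(\G)$ being a proper subgroup, with $\Pic(\G)$ generated by $Z\Pic(\G)$ together with the modular subgroup. So ``automatically $Z$-static'' contradicts facts your own outline relies on elsewhere.

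The repair is short, and it is what the paper's terse assertion (``one can make a suitable choice of base-points on $\Cy_1$, $\Cy_2$ and $P$ which make $P$ a marked bibundle'') encodes. Since $P$ preserves the class of the modular vector field, the map it induces on each critical circle is a rotation; one can either compose $P$ with suitable modular bibundles $Q_i(t_i)$ --- which replaces $P$ by another Morita equivalence, and mere existence is all Theorem~\ref{maintheorem1} requires --- or, equivalently, re-choose the base points and the identifications of the transverse cylinders (a shift in $\theta$ on each $C_2^{F(i)}$, done independently cylinder by cylinder) so that each induced circle map becomes the identity. After this normalization the restrictions $P^i$ are $Z$-static, pointed markings $\{p^i\}$ exist, and Proposition~\ref{prop:bsympcoresp} applies as you intend. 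With that substitution for your ``automaticity'' paragraph, your proof coincides with the paper's.
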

\subsection{The Picard Group of a stable b-symplectic manifold}
In this section $\G$ is a natural integration of $\Cy$. We continue to use the notation we have established thus far and fix a choice of base points $v^n \in V^n$ and arrows $g_n^i$ as before. We need one last lemma about $Z$-static bibundles.
\begin{lemma}\label{lemma:lastinner}
An automorphism $\mathcal{F}: \Gr \to \Gr$ of the discrete presentation of $\G \rightrightarrows \Cy$ gives rise to the trivial bibundle if and only if $\mathcal{F}$ is an inner automorphism.
\end{lemma}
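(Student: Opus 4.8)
The plan is to reduce the global statement to the already-established cylinder-wise version, Lemma~\ref{lemma:innerautocylinder}, by means of the correspondence of Proposition~\ref{prop:bsympcoresp} together with the gluing data that defines an isomorphism of discrete presentations. Throughout, let $(P, \{p^i\})$ be the $Z$-static marked $(\G,\G)$-bibundle corresponding to $\mathcal{F}$ under Proposition~\ref{prop:bsympcoresp}, so that $\mathcal{F}$ is recovered as the discrete isomorphism associated to $(P,\{p^i\})$. Recall that $\mathcal{F}$ consists of an underlying graph isomorphism $F$, holonomy isomorphisms $(\Psi_i,h_i)$, and cocycles $g^\pm_{ij}$, and that ``trivial bibundle'' means $P \isom \G$ as a $(\G,\G)$-bibundle.

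First I would treat the easy direction: assume $\mathcal{F}$ is inner, so that $F = \Id$, each $(\Psi_i,h_i) = (\C_{\alpha_i}, h_i)$ is an inner automorphism of $(I^i, \Hol^i)$, and $g^\pm_{ij} = (\phi^i)^\pm(\alpha_i)(\phi^j)^\pm(\alpha_j)\inv$. By Lemma~\ref{lemma:innerautocylinder}, each restriction $P^i$ of $P$ to the cylinder $C^i$ is isomorphic to the trivial bibundle $\G^i$. Tracing these trivializations through Lemma~\ref{lemma:transbimid}, the isomorphism of $P^i$ with $\G^i$ sends the marking $p^i$ to the element $\alpha_i$, and the induced identifications over the adjacent open orbits are exactly the conjugations $C_{(\phi^i)^\pm(\alpha_i)}$. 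The defining cocycle formula then asserts precisely that these local trivializations agree, after conjugation, on each shared open orbit, so the cocycle hypothesis of the gluing Lemma~\ref{lemma:cocycle} holds for the trivial identifications; gluing produces a global isomorphism $P \isom \G$.

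Next I would treat the converse: assume $P \isom \G$. Since the trivial bibundle induces the identity on the orbit space, the underlying graph isomorphism $F$ must be the identity, and $\mathcal{F}$ preserves all signs and modular periods automatically. Restricting the trivialization to each cylinder $C^i$ shows that $P^i$ is trivial, so Lemma~\ref{lemma:innerautocylinder} forces each holonomy isomorphism $(\Psi_i,h_i)$ to be inner; write $\Psi_i = \C_{\alpha_i}$. It remains to identify the cocycles. Here I would fix the canonical bisection of $\G \isom P$ and compare it with the markings $p^i_\pm$ obtained from $p^i$ and the reference arrows $g^i_n$: by Lemma~\ref{lemma:transbimid}, the group element $g^\pm_{ij}$ relating $p^j_\pm$ to $p^i_\pm$ is computed from the two cylinder trivializations, and unwinding the definition of $\alpha_i$ yields $g^\pm_{ij} = (\phi^i)^\pm(\alpha_i)(\phi^j)^\pm(\alpha_j)\inv$, which is exactly the inner-automorphism cocycle. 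The compatibility relations (\ref{eqn:cocyclecondition1}) and (\ref{eqn:cocyclecondition2}) are then automatic.

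The main obstacle is the bookkeeping on the open orbits shared by distinct cylinders. The elements $\alpha_i$ are extracted independently on each cylinder $C^i$ using the marking $p^i$, whereas the cocycle $g^\pm_{ij}$ compares the two resulting pointed bibundles $(P^{\pm(i)}, p^i_\pm)$ and $(P^{\pm(j)}, p^j_\pm)$ over a single open orbit. Establishing the precise formula $g^\pm_{ij} = (\phi^i)^\pm(\alpha_i)(\phi^j)^\pm(\alpha_j)\inv$ requires careful tracking of the chosen base points $v^n$ and reference arrows $g^i_n$, together with the multiplicativity relations (\ref{eqn:compositionisom}) and (\ref{eqn:tensorproductisom}) for pointed-bibundle isomorphisms. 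Once this identification is in place, both directions follow by comparison with the definition of an inner automorphism of $\Gr$.
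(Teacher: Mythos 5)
Your proposal is correct and takes essentially the same route as the paper: both directions reduce to the cylinder-wise statement (Lemma~\ref{lemma:innerautocylinder}), use Lemma~\ref{lemma:transbimid} at the shared open orbits to identify the cocycles with $(\phi^i)^\pm(\alpha_i)(\phi^j)^\pm(\alpha_j)\inv$, and conclude via the marked-bibundle correspondence of Proposition~\ref{prop:bsympcoresp}. The only cosmetic difference is in the inner-implies-trivial direction, where you glue the local trivializations citing Lemma~\ref{lemma:cocycle} (which glues bibundles, whereas what is needed is gluing of isomorphisms, as in the uniqueness part of Proposition~\ref{prop:bsympcoresp}), while the paper instead re-marks $P$ so that its cocycles become trivial and invokes the classification directly.
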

\begin{proof}
Suppose $(P, \{ p^i \})$ is a marked $(\G,\G)$-bibundle and let $\varphi:P \to \G$ be an isomorphism as bibundles. Let $\mathcal{F}$ be the isomorphism of discrete data associated to $(P, \{ p^i \} )$. By Lemma \ref{lemma:staticbibundleclass} the holonomy isomorphisms are all inner automorphisms $(\C_{\alpha_i}, h_i)$. As a bibundle, $\G$ has a canonical marking induced by the identity section. Hence if we restrict $\varphi$ to open orbits, denoted $\varphi^n$ we get an isomorphisms of marked bibundles:
\[ \varphi^n:(P^{\pm(i)},p_\pm^i ) \to (\G^{\pm(i)}, \u(v^{\pm(i)})). \]
By Lemma \ref{lemma:transbimid}, for each $i$ in $I$ the isomorphism $\varphi$ corresponds to $(\phi^i)^\pm(\alpha_i)$. Furthermore, since:
\[
\begin{tikzcd}
 & (\G^n, \u(v^{\pm(i)})) & \\
(P^{\pm(i)}, p^{\pm(i)} ) \arrow[rr, "\Id"] \arrow[ur, "\phi^n"] & & (P^{\pm(j)}, p^{\pm(j)}) \, , \arrow[lu, "\phi^n", swap]
\end{tikzcd}
\]
commutes, we conclude that $g_{ij}^\pm \cdot (\phi^j)^\pm(\alpha_j) = (\phi^i)^\pm(\alpha_i)$. Therefore $\mathcal{F}$ is an inner automorphism.

Now assume $\mathcal{F}$ is an inner automorphism. Since the holonomy isomorphisms $(\C_{\alpha_i},h_i )$ are all inner, there are unique identifications of $\varphi^i:\G^i \to P^i$ such that $\alpha_i \cdot \varphi(\u(0,0)) = p^i$ for all $i \in I$. Let $\{ \overline{p}^i \}$ be a new marking of $P$ obtained from this identification. With this new marking the cocycles of $(P, \{ \overline{p}^i \})$ take the form:
\[ (\phi^i)^\pm (\alpha_i)\inv g^\pm_{ij} (\phi^j)^\pm(\alpha_j) = (\phi^i)^{\pm}(\alpha_i)\inv \cdot (\phi^i)^{\pm}(\alpha_i) \cdot (\phi^j)^{\pm}(\alpha_j)\inv \cdot (\phi^j)^\pm(\alpha_j) = e     \]
Since $(P, \{ \overline{p}^i \})$ and $\G$ have the same discrete data, $P \isom \G$.
\end{proof}
The inner automorphisms of $\Gr$ form a normal subgroup of $\Aut(\Gr)$. Therefore it makes sense to define the outer automorphisms $\OutAut(\Gr)$ to be the inner automorphisms modulo outer automorphisms. We can summarize the preceding work thus far with the equation:
\[ \OutAut(\Gr) \isom Z \Pic(\G), \]
where $Z \Pic(\G)$ are the $(\G, \G)$-bibundles such that the restriction to each cylinder is a $Z$-static bibundle. To compute the full Picard group, we also need to identify the subgroup of bibundles which correspond to modular flows.

Let $X_i \in \mathcal{X}(\Cy)$ be the vector fields on $\Cy$ such that $X_i|_{C_i}$ is the modular vector field of $C_i$ and $X_i = 0$ outside of $C_i$. Since $\G$ is natural, the flow of each such vector field produces a family of groupoid isomorphisms $\Phi^t_i: \G \to \G$. Let
\[ Q_i(t) := P_{\Phi^i(t)} \]
be the symplectic bibundles induced by these flows. Since the isomorphisms $\Phi^t_i$ commute, the bibundles $Q_i(t)$ must commute with each other. In other words, we have a homomorphism:
\[ (\R^N,+) \to \Pic(M) \]
where $\R^N$ denotes the vector space of functions $N \to \R$ (recall that $N$ may be an infinite index). We call the image of this map $\Mod(M) \subset \Pic(M)$. Topologically it is a connected Lie group integrating the abelian Lie algebra $\R^N$.
\begin{lemma}
Let $\mathcal{F}_i: \Gr \to \Gr$ be the automorphism of $\Gr$ given by the following data:
\begin{itemize}
    \item The underlying graph automorphism is trivial;
    \item The holonomy isomorphisms $(\Psi_k,h_k)$ are trivial for $k \neq i$ and $(\Psi_i,h_i) = (\Hol_i,e)$;
    \item The cocycles $g^\pm_{kj} = e$ for $k,j \neq i$ and $g^\pm_{ij} = \gamma_i^\pm$ for $j \neq i$.
\end{itemize}
We will call $\mathcal{F}_i$ the \emph{twisting automorphism} about $C_i$. Then the bibundle associated to the automorphism $\mathcal{F}_i$ is $Q_i(\rho_i)$.
\end{lemma}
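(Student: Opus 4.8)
The plan is to use the dictionary of Proposition~\ref{prop:bsympcoresp} in reverse: rather than building the bibundle attached to $\mathcal{F}_i$ by hand, I would equip the geometrically defined bibundle $Q_i(\rho_i)$ with a natural marking and verify that the resulting $Z$-static marked bibundle has $\mathcal{F}_i$ as its associated discrete isomorphism. Since strong isomorphism classes of $Z$-static marked bibundles are in bijection with isomorphisms of discrete data, this identifies the class of $Q_i(\rho_i)$ with the one attached to $\mathcal{F}_i$. Because $\mathcal{F}_i$ has trivial underlying graph automorphism, we remain in the orientation-preserving case throughout, so only the orientation preserving half of Proposition~\ref{prop:bsympcoresp} is needed. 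I would begin by recording that $Q_i(\rho_i) = P_{\Phi^{\rho_i}_i}$ is map-like, hence automatically $Z$-static, with the canonical bisection of a map-like bibundle furnishing the markings $p^k$ on each cylinder.

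The holonomy isomorphisms are handled by restriction. Since the automorphism $\Phi^{\rho_i}_i$ is the identity away from $C_i$ and the time-$\rho_i$ modular flow on $C_i$, the restriction of $Q_i(\rho_i)$ to a cylinder $C_k$ with $k \neq i$ is the trivial bibundle $P_{\mathrm{Id}}$, whose holonomy isomorphism is trivial, exactly as prescribed by $\mathcal{F}_i$. The restriction to $C_i$ is the single-cylinder modular bibundle $\mbox{Mod}^{\rho_i}$, and Lemma~\ref{lemma:cafftwist} identifies this with $P(\Hol_i, e)$; hence its holonomy isomorphism is $(\Hol_i, e)$, again matching $\mathcal{F}_i$. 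This disposes of the first two pieces of discrete data.

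The cocycles are the heart of the matter. On each open orbit adjacent only to cylinders $C_j$ with $j \neq i$ the markings come from identity sections, so $g^\pm_{jj'} = e$; the content is in computing $g^\pm_{ij}$ on the orbit $V^{\pm(i)}$ shared by $C_i$ and some $C_j$. By definition $g^\pm_{ij} \cdot p^j_\pm = p^i_\pm$, with $p^i_\pm = (g^i_n)\inv \cdot \sigma^i(\pm 1,0) \cdot g^i_n$, where $\sigma^i$ is the static bisection of $\mbox{Mod}^{\rho_i}$. I would unwind this using the description in Section~\ref{subsection:holdata}: the element $\gamma_i^\pm$ is precisely the projection of the bisection $\eta$ realizing the unit shift in the $y$-direction, $\Proj(\eta(x,y)) = (\u(x,y), \gamma_i^\pm)$ for $x \neq 0$, and the static bisection of the modular bibundle carries exactly this shift into the adjacent orbits. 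Tracking $\sigma^i(\pm 1, 0)$ through the projection $\Phi$ of Lemma~\ref{lemma:phi} and comparing against the trivial marking coming from $C_j$ then yields $g^\pm_{ij} = \gamma_i^\pm$, the remaining datum of $\mathcal{F}_i$.

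The main obstacle is this last step: making the comparison of markings precise requires care with the normalization between the time-$\rho_i$ modular flow (which shifts the angular coordinate by the modular period) and the unit-shift bisection $\eta$ defining $\gamma_i^\pm$, together with the bookkeeping of the fixed reference arrows $g^i_n$ used to identify the isotropy groups of each half-cylinder with those of the base-pointed open orbits. Once the identification of $\sigma^i(\pm 1, 0)$ with the shift governed by $\eta(\pm 1, 0)$ is pinned down, the cocycle identity $g^\pm_{ij} = \gamma_i^\pm$ follows formally, and combining all three computations with the uniqueness in Proposition~\ref{prop:bsympcoresp} completes the proof.
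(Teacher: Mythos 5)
Your proposal is correct and follows essentially the same route as the paper: identify $Q_i(\rho_i) = P_{\Phi^{\rho_i}_i}$ as a map-like bibundle marked by the identity section, read off trivial holonomy data away from $C_i$ and $(\Hol_i,e)$ on $C_i$ via Lemma~\ref{lemma:cafftwist}, and extract the cocycle $g^\pm_{ij} = \gamma_i^\pm$ from the marking comparison. The ``main obstacle'' you flag is resolved in the paper exactly as you anticipate, by the explicit description of $\Phi^{\rho_i}_i$ on arrows crossing into $C_i$ (multiplication by $\gamma_i^\pm$), which makes the cocycle computation a one-line check $g \cdot q^j \cdot g\inv = (\gamma_i^\pm)\inv \cdot q^i$.
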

\begin{proof}
Observe that the twisting automorphism behaves in the following manner. Suppose $g \in \G$ is in an open orbit. Then for $g \in \G|_{\C - C_i}$ we have that $\Phi^{\rho_i}_i(g) = g$. If $g \in \G$ such that the target of $g$ lies in $C_i$ and the source of $g$ lies in $\C - \C_i$, then $\Phi^{\rho_i}_i (g) = \gamma_i^\pm g$. For $g \in \G^i$ we have that $\Phi^{\rho_i}_i$ is conjugation by $\gamma_i^\pm$.

The bibundle $Q_i(\rho_i)$ is diffeomorphic to $\G_2$ where the right action of $\G_1$ satisfies $g_2 \cdot g_1 = g_2 \Phi^{\rho_i}_i$. Since $Q_i(\rho_i)$ has this form, it inherits a natural marking from the identity section $\u: \C \to \G$. From this it is easy to deduce that there is natural strong isomorphism of $P|_{\C - \C_i}$ and $\G_{\C - \C_i}$ (the identity bibundle). We denote this marking by the tuple $\{ q^i \}$. Furthermore, given any $g \in \G$ whose target is the base point of $q^i$ and whose source is the base point of $q^j$ for $j \neq i$. We notice that:
\[ g \cdot q^j \cdot g\inv =  g g\inv (\gamma_i^\pm)\inv \cdot q^i = (\gamma_i^\pm)\inv \cdot q^i \, .  \]
Therefore the cocycle part of this marked bibundle must as above.
\end{proof}

We see that the additive group $(\R^n,+)$ acts on $Z\Pic(M) \isom \OutAut(\Gr)$ by conjugation:
\[ (t_1,...,t_N) = Q_1^{t_i} \cdot ... \cdot Q_N^{t_N} P  Q_N^{-t_N} \cdot ... \cdot  Q_1^{-t_1}.  \]
This action measures the failure of elements of $Z \Pic(\G)$ and $\Mod(\G)$ to commute. Since any element $P \in \Pic(\G)$ can be written as a product of elements in these groups, we have proved Theorem \ref{maintheorem2}.
\begin{theorem}\label{maintheorem2}
Suppose $M$ is a stable b-symplectic manifold and $\mathfrak{Gr}$ is a discrete presentation of $M$. There is a surjective group homomorphism:
\[ \OutAut(\mathfrak{Gr}) \ltimes R^N  \to \Pic(M).  \]
The kernel of this group homomorphism is generated by elements of the form $([\mathcal{F}_i]\inv,Q_i^{\rho_i})$.
\end{theorem}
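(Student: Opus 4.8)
The plan is to transfer the whole computation to the groupoid side and imitate the argument already used for a single cylinder in Theorem~\ref{thm:picofghol}. By the Corollary of Section~\ref{subsection:reduction} we may replace $M$ by a natural integration $\G \rightrightarrows \Cy$ of a disjoint union of affine cylinders, since $\Pic(M) = \Pic(\Sigma(M)) = \Pic(\G)$; thus it suffices to build the asserted surjection onto $\Pic(\G)$ and compute its kernel. The three ingredients assembled just above are: the identification $\OutAut(\Gr) \isom Z\Pic(\G)$ coming from Lemma~\ref{lemma:lastinner} together with Proposition~\ref{prop:bsympcoresp}; the homomorphism $\R^N \to \Mod(\G) \subset \Pic(\G)$, $t \mapsto Q^t$, where $Q^t := Q_1^{t_1} \otimes \cdots \otimes Q_N^{t_N}$; and the conjugation action of $\R^N$ on $Z\Pic(\G)$.

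Concretely, I would define $\Phi \colon \OutAut(\Gr) \ltimes \R^N \to \Pic(\G)$ by $\Phi([\mathcal{F}], t) := P(\mathcal{F}) \otimes Q^t$, where $P(\mathcal{F})$ denotes the $Z$-static bibundle associated to $[\mathcal{F}]$. The first step is to check that $\Phi$ is a group homomorphism. Writing the multiplication in the semidirect product with the conjugation action $t \cdot P := Q^t \otimes P \otimes Q^{-t}$, the verification is word-for-word the one in the proof of Theorem~\ref{thm:picofghol}:
\[ \Phi([\mathcal{F}_1],t_1) \otimes \Phi([\mathcal{F}_2],t_2) = P(\mathcal{F}_1) \otimes \left( Q^{t_1} \otimes P(\mathcal{F}_2) \otimes Q^{-t_1} \right) \otimes Q^{t_1 + t_2} = \Phi\big(([\mathcal{F}_1],t_1)\cdot([\mathcal{F}_2],t_2)\big). \]
Here one also needs that conjugating a $Z$-static bibundle by $Q^{t}$ is again $Z$-static, which holds because $Q^{t}$ acts on each circle $Z_i$ by a translation and these translations commute with the identity action of a $Z$-static bibundle on $Z_i$.

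Surjectivity is the second step. Any $P \in \Pic(\G)$ preserves the class of the modular vector field, so the induced map of orbit spaces acts on each singular circle $Z_i$ by a translation; choosing $t = (t_i)$ so that $Q^{-t}$ undoes these translations simultaneously makes $P \otimes Q^{-t}$ into a $Z$-static bibundle, whence $P = (P \otimes Q^{-t}) \otimes Q^t$ lies in the image of $\Phi$. Finally, for the kernel I would argue that $\Phi([\mathcal{F}],t) \isom \G$ forces $Q^t \isom P(\mathcal{F})\inv$ to be $Z$-static; since the $Q_i$ are supported on disjoint cylinders, $Q^t$ is $Z$-static exactly when every $t_i$ is an integer multiple $n_i \rho_i$ (the $Z$-static flow times form the discrete group $\rho_i\Z$, by the single-cylinder analysis behind Lemma~\ref{lemma:cafftwist}). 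Using the identification $Q_i^{\rho_i} \isom P(\mathcal{F}_i)$ from the twisting-automorphism lemma, one then rewrites $([\mathcal{F}],t)$ as a product of the elements $([\mathcal{F}_i]\inv, Q_i^{\rho_i})$ (each of which maps to $P(\mathcal{F}_i)\inv \otimes Q_i^{\rho_i} \isom \G$), so these generate the kernel.

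I expect the kernel computation to be the main obstacle: establishing that $\Phi([\mathcal{F}],t)$ is trivial \emph{only} for products of the listed generators requires the sharp statement that $Q^t$ is $Z$-static precisely when each $t_i \in \rho_i\Z$, together with the careful bookkeeping, inside the non-abelian semidirect product $\OutAut(\Gr) \ltimes \R^N$, that matches the power $Q_i^{n_i\rho_i}$ with $P(\mathcal{F}_i)^{n_i}$ in the $\OutAut(\Gr)$ factor. The homomorphism and surjectivity steps, by contrast, are routine given Proposition~\ref{prop:bsympcoresp} and the naturality of $\G$.
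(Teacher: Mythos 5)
Your proposal is correct and follows essentially the same route as the paper: the paper likewise identifies $Z\Pic(\G) \isom \OutAut(\Gr)$ via Lemma~\ref{lemma:lastinner}, uses the commuting modular-flow bibundles $Q_i(t)$ to produce $\Mod(\G)$ and the conjugation action of $\R^N$, invokes the twisting-automorphism lemma to match $Q_i^{\rho_i}$ with $P(\mathcal{F}_i)$, and concludes by the same product decomposition modeled on Theorem~\ref{thm:picofghol}. In fact your write-up makes explicit the two points the paper compresses — the surjectivity argument via undoing the translations on each singular circle, and the sharp statement that $Q^t$ is $Z$-static precisely when each $t_i \in \rho_i\Z$ — so it is a faithful, slightly more detailed rendering of the paper's proof.
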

This concludes the main portion of the chapter. The next section will examine a few examples.

\section{Applications and examples}\label{section:examples4}
\subsection{Compact surfaces}
We mentioned earlier Radko and other authors in studying b-symplectic structures on surfaces. In particular the Picard group was computed by Radko and Shylakhtenko, \cite{Radko}. Their work contained two important results. The first was that any $(\Sigma(M),\Sigma(M))$-bibundle admitted a lagrangian bisection (for $M$ a 2-dim compact, oriented, b-symplectic manifold). The consequence of this is that the Picard group of $M$ can be interpreted as the group of outer Poisson automorphisms of $M$ Secondly, they were able to provide combinatorial data which would assist in the explicit calculation of $M$.

In this section, we will see what Theorem \ref{maintheorem1} and Theorem \ref{maintheorem2} mean in dimension 2, and compare our results to existing work.

Suppose $M$ is a compact surface with a b-symplectic structure. Then $M$ is automatically stable. Our procedure of finding transverse cylinders to $M$ corresponds to restricting $\Sigma(M)$ to collars around the singular locus $Z$, which is just a finite union of disjoint circles.

When $M$ is a compact surface, we automatically get many simplifications of the data encoded by $\Gr(M)$.
\begin{enumerate}
\item Since the groups $H_i$ are all trivial, the isotropy data just consists of assigning to each $v_i$ the fundamental group of its associated open leaf. Given the orbit graph, we can instead simply state the genus of each open leaf.
\item The elements $\gamma^\pm_i$ are the homotopy classes of loops around the collars of each open leaf. Therefore, they are completely determined by the orbit graph, together with the fundamental group (or genus) of each open leaf.
\item The modular periods are specified as before.
\end{enumerate}
Therefore Theorem \ref{maintheorem1} reduces to the classification of compact b-symplectic surfaces from \cite{BDirac}.

Let us see what automorphisms of $\Gr(M)$ correspond to. First let us restrict to static automorphisms of $\Gr(M)$ (i.e. those which fix the underlying graph). Such automorphisms produce static bibundles which we denote $S\Pic(M)$.

Recall that to specify a morphism we must give automorphisms:
\[ (\Psi_i,h) = (I_i,\Hol_i) \to (I_i,\Hol_i)\, , \]
and cocycles $\{ g^\pm_{ij} \}$. The groups $H_i$ are all trivial and the various diagrams commute trivially, so an automorphism $(\Psi_i,h)$ is the same as a pair of maps $\psi_i^\pm$ such that $\psi_i^\pm(\gamma_i^\pm) = \gamma_i^\pm$.

The cocycles must satisfy $C_{g^\pm_{ij}} \circ \psi_j^\pm = \psi_i^\pm$. The morphism $\Gr(M) \to \Gr(M)$ will be trivial.

Let $V_n$ be any open orbit. Then the cocycles $g^\pm_{ij}$ for $i$ and $j$ adjacent to $V_n$ together with the maps $\phi^\pm_i: G_n \to \G_n$ and $\phi^\pm_j: G_n \to G_n$ are equivalent (modulo inner automorphisms) to specifying an element of the mapping class group of the associated surface. This recovers the combinatorial description of the Picard group from \cite{Radko}.
\subsection{Examples due to Cavalcanti}
In \cite{Caval}, Cavalcanti demonstrated several ways of constructing b-symplectic manifolds. In particular, he showed that given a symplectic manifold $M$ with a compact symplectic submanifold $L$ of codimension 2 with trivial normal bundle admits a log symplectic structure such that $Z \isom \sqcup_{i=1}^N \S^1 \times L$ (see thm 5.1 in \cite{Caval}).

We can immediately see that any such example is a \emph{stable} b-symplectic manifold. We will restate the construction here and then compute the discrete presentation and Picard group of a simple example.

Since $L$ has a trivial normal bundle, the symplectic neighborhood theorem says that there is a tubular neighborhood, $U$ of $L$ such that $U \isom D^2 \times L$ as a symplectic manifold. Then for any embedding of closed curves $\Gamma:\sqcup_{i=1}^N \S^1 \into D^2$ we can rescale the Poisson bivector by a function which vanishes on $\Gamma$ linearly and which is constant near the boundary of $D^2$.
We can easily extend this function to all of $M$ and rescale the bivector on $M$.
The result will be a b-symplectic structure which agrees with the symplectic structure on $M$ outside of $U$, but which $Z$ has $N$ connected components, all of them contained in $U$.

The orbit graph of $M$ will be determined by the topological arrangement of the closed curves in $D^2$. There will be one orbit for each connected component of $D^2 \setminus Z$. We will now consider the concrete example mentioned in \cite{Caval}.
\begin{example}
Consider $M = \C P^2 \# \bar{\C P^2}$ (i.e. the blowup of $\C P^2$ at a point). $M$ has the structure of a locally trivial $\S^2$ bundle over $\S^2$. Let $p$ be a point in $\S^2$. Then
\[ M|_{{\S^2 \setminus p}} \isom \S^2 \times D^2. \]
We can now choose curves in $D^2$ to define the singular locus. Suppose there is only one. Then the induced b-symplectic structure on $M$ has two open leaves and every symplectic leaf of $M$ is simply connected.

The orbit graph of $M$ consists of two vertices connected by a single edge. The isotropy data and holonomy are all trivial.
When constructing the b-symplectic structure above, we can arrange for whichever modular period $\rho$ we want.
We see immediately from Theorem~\ref{maintheorem1} that $M$ is Morita equivalent to the Radko sphere, $\S^2$ with modular period $\rho$.

In fact, for any topological arrangement of curves in $D^2$, we can see that the induced b-symplectic structure on $M$ will be Morita equivalent to the sphere $\S^2$ with the same arrangement of singular curves (thinking of $D^2$ as a punctured sphere).

The Picard group can therefore be computed by existing methods for surfaces. The case of a single singular curve provides an easy calculation. The holonomy, and isotropy data is all trivial and therefore the Picard group of $M$ is $\Z_2 \times \S^1$. The factor of $\Z_2$ corresponds to the orientation reversing graph swap while the $\S^1$ factor corresponds to the modular flows.
\end{example}
\subsection{Cosymplectic boundaries}
The following class of examples is mentioned in the work of Frejlich et al (see \cite{Frei}). Suppose $N$ is a connected symplectic manifold with a cosymplectic boundary $\partial N$. We say that the boundary of $N$ is \emph(stable) if $Z:= \partial M$ is a disjoint union of mapping tori. We can construct a b-symplectic manifold $M$ by gluing two copies of $N$ along their boundary.
The orbit graph of $M$ will have two vertices and one edge for each connected component of $Z$.

The isotropy and holonomy data will be determined by the topology of the embeddings $Z = \partial N \to N$. Note that $\Gr(M)$ comes with a cannonical orientation reversing automorphism and $Z\Pic(M) \isom \Z_2 \times Z\Pic^+(M)$ where $Z\Pic^+(M)$ are the orientation preserving automorphisms of $\Gr(M)$.

Lefschetz fibrations over the two dimensional disk provide a well studied class of examples. Let $M$ be a 4 dimensional manifold and $p: M \to \mathbb{D}^2$ be a Lefschetz fibration such that $p\inv(x) \isom S_g$ is a compact surface of genus $g$ for any regular value $p$. By a theorem of Gompf \cite{Gompf} we can equip $M$ with a symplectic structure such that the boundary of $M$ is a cosymplectic mapping torus $M_f$.

In a paper by Eliashberg~\cite{Elia}, it was proved that any symplectic mapping torus could be realized as the boundary of a Lefschetz fibration over $\mathbb{D}^2$. The applications of these results to b-symplectic geometry were pointed out in \cite{Frei}. The topology of Lefschetz fibrations has been a subject of keen interest, and is determined by the monodromy around singular fibers. By computing the monodromy of these fibrations, one can apply Theorem \ref{maintheorem2} to compute the Picard group of the assocaited b-manifolds. We will conclude our comments with simple example.
\begin{example}
Let $T^2$ be two dimensional torus and $f: T^2 \to T^2$ be a single (right handed) Dehn twist (i.e $f(\theta_1,\theta_2) = (\theta_1 + \theta_2,\theta_2)$. There is a unique Lefschetz fibration $p: M' \to \mathbb{D}^2$ with monodromy $f$ around the the singular point $0 \in \mathbb{D}^2$.

As per our previous comments we can glue two copies of $M'$ along the boundaries $\partial M$ to form a b-symplectic manifold $M$ with a connected singular locus of the form $Z = M_f$.

By studying the topology of $M$ we can conclude that the isotropy data takes the form:
\[
\begin{tikzcd}
\Z \arrow[d, "\mbox{Id}"] & \Z \oplus \Z \arrow[l, "\pr_2", swap] \arrow[r, "\pr_2"] \arrow[d, "\hol"] & \Z \arrow[d, "\mbox{Id}"] \\
\Z  & \Z \oplus \Z \arrow[l, "\pr_2", swap] \arrow[r, "\pr_2"] & \Z
\end{tikzcd}
\]
Where the map $\hol$ is given by the matrix $(\begin{smallmatrix} 1 & 1 \\ 0 & 1 \end{smallmatrix})$. Notice that the $+$ and $-$ maps are not injective and so $\Sigma(M)$ is not Hausdorff.

An orientation preserving automorphism $(\Psi,h)$ of this data turns out to be a choice of matrix $(\begin{smallmatrix} 1 & a \\ 0 & 1 \end{smallmatrix})$ and an element $h= 0 \oplus z \in \Z \oplus \Z$. Since every group here is abelian, the inner automorphisms are trivial. The orientation reversing automorphisms can always be written as the obvious swapping map and an orientation preserving automorphism. Therefore, $Z \Pic(M) \isom \Z \times \Z \times \Z_2$.

The twisting automorphism from the modular flow corresponds to $((\begin{smallmatrix} 1 & 1 \\ 0 & 1 \end{smallmatrix}), 0 \oplus 0)$. Since this element has infinite order in $Z \Pic$ we can conclude that $\Mod(M) \isom \R$. The action of $\R$ on $\Z \Pic$ is trivial so we have a surjective group homomorphism,
\[ \R \times (\Z \times \Z \times \Z_2) \to \Pic(M). \]
The kernel of this map are elements of the form $(n,(-n,m,k))$ and so
\[ \Pic(M) \isom \R \times \Z \times \Z_2. \]
\end{example}

\begin{appendices}
\chapter{2-Category theory}\label{chap:app2cats}
There are many possible choices to make and competing definitions when it comes to higher categorical structures.
We have included this appendix in the hope that it will clarify the precise definitions we are using.
Loosely speaking, the philosophy is that when one invokes the existence of higher morphisms it is necessary to make specific and coherent choices. For instance, the data of a 2-commutative square must include the 2-morphism witnessing commutativity. More generally, a 2-commutative diagram must include the relevant 2-morphisms and the supplied 2-morphisms must satisfy the `obvious' identities.
\section{Bicategories}
We refer the interested reader to Leinster~\cite{Leinster} for a good discussion of the topic of bicategories. Our definition is equivalent to his. However, since there are differences in notation, it is worthwhile to state a definition here.
\begin{definition}\label{defn:strict2category}
  A \emph{bicategory} $\C$ is the following data:
  \begin{itemize}
    \item a collection of objects $\C_0$;
    \item for each pair of objects, $x,y \in \C_0$, a (possibly empty) category $\C(x,y) = \C_2(x,y) \grpd \C_1(x,y)$;
    \item for any three objects $x,y,z \in \C_0$, a functor (called \emph{horizontal composition}):
    \[ \circ \colon \C(y,z) \times \C(x,y) \to \C(x,z) \quad g \circ h :=\circ(g, h) \]
    \[ (f,g) \mapsto f \circ g \]
    \item For any x $\in \C_0$ a functor:
    \[ \{ x \} \to \C(x,x)  \]
    \[ x \mapsto \Id_x \]
    Here $\{ x \}$ is thought of as a trivial category with one morphism.
  \end{itemize}
    The elements of $\C_2 := \sqcup_{x,y \in \C_0} \C_2(x,y)$ are called the \emph{2-morphisms} of $\C$. The elements of $\C_1 := \sqcup_{x,y \in \C_0} C_1(x,y)$ are called the \emph{1-morphisms} of $\C$. The composition operation on $\C(x,y)$ is called \emph{vertical composition} and will be denoted by $*$.
    We do not assume that either of $\C_2$ or $\C_1$ together with $\circ$ constitutes a category over $\C_0$.
    We will use the notation $\C^{(n)} = \C_2^{(n)} \grpd \C_1^{(n)}$ to denote the \emph{category} of $n$ horizontally composable morphisms.
    Since we do not require that $\circ$ satisfies the axioms of a category, we instead require that $\C$ comes with the following data:
    \begin{itemize}
    \item A natural isomorphism of functors
    \[ \mathbb{A} \colon A_L \to A_R \]
    where $A_L$ and $A_R$ are the parallel functors $\C^{(3)} \to \C_2$ defined below:
    \[ A_L(\alpha,\beta,\gamma) := (\alpha \circ \beta) \circ \gamma \qquad A_R (\alpha, \beta, \gamma) := \alpha \circ (\beta \circ \gamma) \]
    Such a natural transformation is determined by a function $\mathbb{A}$ which associates to each $(g,h,k) \in \C_1^{(3)}$, a 2-morphism:
    \[ \mathbb{A}(g,h,k) \colon (g \circ h) \circ k \to g \circ (h \circ k )  \]
    \item A pair of natural isomorphisms of functors
    \[ \mathbb{U}_L \colon U_L \to \Id_{C_2} \qquad \mathbb{U}_R \colon U_R \to \Id_{\C_2} \]
    where $U_L$ and $U_R$ are parallel functors (with respect to vertical composition) $\C \to \C$ such that for any $\alpha \in \C_2(x,y)$:
    \[ U_L(\alpha) = 1_y \circ \alpha \qquad U_R(\alpha) = \alpha \circ 1_x \]
    Such natural transformations are determined by functions $\mathbb{U}_L$ and $\mathbb{U}_R$ which associate to
    We also For every 1-morphism $f\colon x \to y$ in $\C_1$, two choices of 2-morphisms:
    \[ \mathbb{U}_L(f) \colon \Id_y \circ f \to f \qquad \mathbb{U}_R(f) \colon f \circ \Id_x \to f  .\]
  \end{itemize}
  This data must satisfy the following \emph{coherence} axioms which are encoded as (vertically) commutative diagrams.
    \begin{enumerate}[(BC1)]
    \item for all 1-morphisms $f,g,h,k$ which are horizontally composable, the following diagram commmutes:
    \[\begin{tikzcd}[column sep = tiny]
                                     &                               & ((f \circ g) \circ h) \circ k
                                                                        \arrow[dll, "{\mathbb{A}(f,g,h) \circ 1_k}" swap]
                                                                        \arrow[drr, "{\mathbb{A}(f \circ g, h,k)}"]& & \\
      ( f \circ (g \circ h)) \circ k
      \arrow[dr, "{\mathbb{A}(f,g \circ h, k)}"] &                               &                               & &(f \circ g) \circ (h \circ k) \arrow[dl, "{\mathbb{A}(f,g,h \circ k)}" swap]  \\
                                     & f \circ ((g \circ h) \circ k) \arrow[rr, "{\Id_f \circ \mathbb{A}(g,h,k)}"] &                               & f \circ (g \circ (h \circ k)) &
    \end{tikzcd}\]
    \item for all 1-morphisms $f$ and $g$ which are horizontally composable:
    \[
    \begin{tikzcd}
      (f \circ \Id_x) \circ g \arrow[dr, "\mathbb{U}_R(f) \circ 1_g" swap] \arrow[rr, "{\mathbb{A}(f,\Id_x,g)}"] &  & f \circ (\Id_x \circ g) \arrow[dl, "{1_f \circ \mathbb{U}_L(g)}"] \\
      & f \circ g &
    \end{tikzcd}
    \]
  \end{enumerate}
      A bicategory is called \emph{strict} if $\mathbb{A}$, $\mathbb{U}_L$, and $\mathbb{U}_R$ take values in vertical identities.
\end{definition}
\begin{example}[Category of Categories]
  Suppose $\C$ is a category of categories. That is, the objects of $\C$ are categories and the morphisms of $\C$ are functors.
  Then we can make $\C$ into a bicategory by taking our 2-morphisms to be natural transformations between functors.

  Recall that given functors $F_i\colon \mathcal{A} \to \mathcal{B}$, a natural transformation $\alpha\colon F_1 \to F_2$ is a map $\alpha \colon \mathcal{A}_0 \to \mathcal{B}_1$ such that (\ref{eqn:naturaltransformation}) commutes.
  \begin{equation}\label{eqn:naturaltransformation}
  \begin{tikzcd}
    F_1(x) \arrow[r, "F_1(f)"] \arrow[d, "\alpha(x)"] & F_1(y) \arrow[d, "\alpha(y)"] \\
    F_2(x) \arrow[r, "F_2(f)"]                        & F_2(y)
  \end{tikzcd}
  \end{equation}
  Given $F_i\colon \mathcal{X} \to \mathcal{Y}$ and 2-morphisms $\alpha_1\colon F_1 \to F_2$ and $\alpha_2\colon F_2 \to F_3$ we can take $(\alpha_2 * \alpha_1) (x) := \alpha_2(x) \alpha_1(x)$ for any $x \in \mathcal{A}_0$.

  For horizontal composition, suppose we are given functors $F_i\colon: \X \to \Y$, $G_i\colon \Y \to \mathcal{Z}$ and 2-morphisms $\beta\colon G_1 \to G_2$, $\alpha\colon F_1 \to F_2$. Then
  \[ (\beta \circ \alpha) (x):= (\beta(F_2(x)) \circ (G_1(\alpha(x)) \in \mathcal{Z}_1 \quad \forall x \in \X_1 \, . \]

  A sub-example to this case is the category of stacks over a given site.
  In this case the natural transformations can be shown to be natural isomorphisms. Hence, all 2-morphisms are invertible.
  \end{example}
  \begin{example}[Bibundles]
    Take the objects of $\C$ to be Lie groupoids, the 1-morphisms of $\C$ to be Lie groupoid bibundles and the 2-morphisms to be bibundle isomorphisms.
    Then the resulting object satisfies the axioms of a bicategory. The associativity map is obtained from the cannonical isomorphism:
    \[ P_1 \otimes (P_2 \otimes P_3) \to (P_1 \otimes P_2) \otimes P_3 \qquad (p_1 \otimes (p_2 \otimes p_3) \mapsto (p_1 \otimes p_2) \otimes p_3  \, . \]
  \end{example}
  \begin{definition}\label{defn:weakisomorphism}
    Let $\C$ be a bicategory. Let $f\colon x \to y$ and $g \colon y \to x$ be 1-morphisms. We say that $g$ is a \emph{weak inverse} of $f$ if there exist 2-isomorphisms $\alpha\colon f \circ g \to 1_y$ and $\beta\colon g \circ f \to 1_x$. If a 1-morphism admits a weak inverse then we call it a \emph{weak isomorphism}. Two objects in $\C$ are \emph{weakly isomorphic} if there exists a weak isomorphism between them.
  \end{definition}
  \begin{example}
    Suppose $C$ is the bicategory of categories. Then a functor is a weak isomorphism if and only if it is an equivalence of categories.
  \end{example}
  \begin{definition}\label{defn:21category}
    A \emph{(2,1)-category} is a bicategory such that all 2-morphisms are vertical isomorphisms. Every (2,1)-category $\C$ has a 1-category $\bar{\C}$ associated to it which we call the \emph{truncation of $\C$}. The objects of $\bar{\C}$ are the same as $\C$ while the morphisms of $\bar{\C}$ are the 2-isomorphism classes of the 1-morphisms of $\C$.
  \end{definition}
  \section{Pseudofunctors}
  \begin{definition}\label{defn:pseudofunctor}
    Given bicategories $\C$ and $\D$, a \emph{pseudofunctor} consists of a function $F_0 \colon \C_{0} \to \D_{0}$, and for each pair of objects $x$ and $y$ in $\C_0$ a functor $F \colon \Hom(x,y) \to \Hom(F_0(x),F_0(y))$. Furthermore, a pseudo functor $F$ must be equipped with the following data:
    \begin{itemize}
      \item a natural isomorphism $\mathbb{F} \colon F( - \circ -) \to F( -) \circ F(-)$.
    Such a natural transformation is witnessed by a function $\mathbb{F}$ which assigns to each composable pair $(f,g) \in \C^{(2)}_1$ a 2-isomorphism:
    \[ \mathbb{F}_1(f,g) \colon F(f \circ g) \to F(f) \circ F(g) \]
    \item for every object $x \in \C_0$, a 2-isomorphism $\mathbb{F}_2(x) \colon F(\Id_x) \to \Id_{F(x)}$.
  \end{itemize}
    This data must be \emph{coherent}, which is encoded via vertical diagrams of 2-morphisms. We will not write the coherence conditions here but we instead refer the reader to Leinster~\cite{Leinster}.

    A pseudofunctor $F \colon \C \to \D$ is called an \emph{equivalence of 2-categories} if it is an equivalence of categories for each $\Hom(x,y)$ and is \emph{biessentially surjective}. That is, every object in $\D$ is weakly equivalent to an object in the image of $F$.
  \end{definition}
  \begin{remark}
    Equivalences of categories can also be defined in terms of pseudonatural transformations of pseudo functors. The precise statement is that a pseudofunctor $F$ is an equivalence if and only if there exists a pseudofunctor $G$ such that $F \circ G$ and $G \circ F$ are pseudonaturally equivalent to the identities. See Leinster~\cite{Leinster} for more detail.
  \end{remark}
  \begin{example}
    Suppose $\C$ is a weak (2,1) category with invertible 2-morphisms and let $\bar\C$ be its truncation. We can think of $\bar\C$ as a strict bicategory where all 2-morphisms are the identity. Then there is a natural pseudofunctor $\Pi: \C \to \bar\C$ which is obtained by passing to equivalence classes in the truncation.
  \end{example}
  \section{Fiber products in bicategories}
  \begin{definition}\label{defn:2square}
    Let $\C$ be a bicategory. A \emph{2-commutative square} consists of a square of 1-morphisms:
    \[
    \begin{tikzcd}
      x \arrow[r, "f"] \arrow[d, "g"] & y \arrow[d, "h"] \\
      z \arrow[r, "k"] & w
    \end{tikzcd}
    \]
    together with a 2-isomorphism $\alpha \colon h \circ g \to k \circ f$.
  \end{definition}
  \begin{definition}\label{defn:2pullback}
    Suppose $\C$ is a bicategory. A 2-commutative square is a diagram:
    \[
    \begin{tikzcd}
      w \arrow[r, "\til g"] \arrow[d, "\til f"]& y \arrow[d, "g"] \\
      x \arrow[r, "f"] & z
    \end{tikzcd}
    \]
    together with a 2-isomorphism $\alpha \colon f \circ \til g \to g \circ \til f$ is \emph{a pullback square} if given any other 2 commutative square:
    \[
    \begin{tikzcd}
      w' \arrow[r, "\til f'"] \arrow[d, "\til g'"]& x \arrow[d, "g"] \\
      y \arrow[r, "f"] & z
    \end{tikzcd}
    \]
    with associated 2-isomorphism $\alpha' \colon f \circ \til g ' \to g \circ \til f '$, there exists a morphism $h\colon w' \to w$, and 2-isomorphisms $\beta_1\colon \til f' \to \til f \circ h$ and $\beta_2 \colon \til g ' \to \til g \circ h$ such that the following square of 2-morphisms commutes:
    \begin{equation}\label{eqn:2pullback}
      \begin{tikzcd}
      f \circ \til g' \arrow[r, "{\alpha'}"] \arrow[d, "{1_f \circ \beta_2}"] & g \circ \til f' \arrow[d, "{1_g \circ \beta_1}"] \\
      f \circ \til g \circ h \arrow[r, "{\alpha \circ 1_h}"] & g \circ \til f \circ h
      \end{tikzcd}
    \end{equation}
    Lastly, we require such an $h$, $\beta_1$, $\beta_2$ satisfying this property to be unique in the following sense: Given $h'$, $\beta_1'$, and $\beta_2'$ with the same property, then there exists a unique 2-isomorphism $\gamma: h \to h'$ such that the below diagrams commute:
    \[\begin{tikzcd}
    \til f ' \arrow[r, "\beta_1"] \arrow[dr, "{\beta_1'}"] & \til f \circ h \arrow[d, "1_{\til f} \circ \gamma"] \\
    & \til f \circ h'
    \end{tikzcd}
    \quad
    \begin{tikzcd}
    \til g ' \arrow[r, "\beta_2"] \arrow[dr, "\beta_2'"] & \til g \circ h \arrow[d, "1_{\til g} \circ \gamma"] \\
    & \til g \circ h'
    \end{tikzcd}\]
    The element $w$ in a 2-categorical pullback square is called the \emph{homotopy fiber product of x and y} and may sometimes be written $x \til\times_z y$.
  \end{definition}
  \begin{example}\label{example:HFPofcats}
    For the bicategory of categories (and the bicategory of CFGs), there is a canonical construction of the homotopy fiber product of $F\colon \X \to \mathcal{Z}$ and $G \colon \Y \to \mathcal{Z}$.
    Let us first define the category $ \X \til\times_{\mathcal{Z}} \Y$ as follows:
    \begin{itemize}
      \item The objects $\X \times_{\mathcal{Z}} \Y$ are triples $(X,a,Y)$ such that $X$ and $Y$ are objects of $\X$ and $\Y$ respectively, and $a:F(X) \to G(Y)$ is an isomorphism in $\mathcal{Z}$.
      \item The morphisms are diagrams:
      \[\begin{tikzcd}
        X_1 \arrow[d, "b_1"] & F(X_1) \arrow[d, "F(b_2)"] \arrow[r, "a_1"] & G(Y_1) \arrow[d, "G(b_1)"]  & Y_1 \arrow[d, "b_1"] \\
        X_2    & G(Y_2 \arrow[r, "a_2"])                                        & G(Y_2) & Y_1 \\
      \end{tikzcd}\]
      such that the inner square commutes. The source and target of such diagrams are given by passing to the top and bottom row, respectively. Composition of two such diagrams is obtained by composing the vertical morphisms.
    \end{itemize}
      This construction comes with natural projections to $\X$ and $\Y$ which fit into a pullback square:
      \[
      \begin{tikzcd}
        \X \til\times_\mathcal{Z} \Y \arrow[r, "\pr_2"] \arrow[d, "\pr_1"] & \Y \arrow[d, "G"] \\
        \X \arrow[r, "F"] & \mathcal{Z}
      \end{tikzcd}
      \]
      The cannonical 2-morphism associated to this pullback square is obtained from the map
      \[ \pr_{1.5}\colon (\X \til\times_\mathcal{Z} \Y)_0 \to \mathcal{Z}_1 \quad \mbox{ where } \quad \pr_{1.5}(X,a,Y) = a . \]
      We can see that this satisfies the definition of a pullback since given any other 2-commutative square:
      \[
      \begin{tikzcd}
        \mathcal{W} \arrow[r, "\til F"] \arrow[d, "\til G"] & \Y \arrow[d, "G"]\\
        \X \arrow[r, "F"] & \mathcal{Z}
      \end{tikzcd}
      \]
      with 2-morphism $\alpha\colon F \circ \til G \to \til F \circ G$, then we can define a functor $H: \W \to \X \times_\mathcal{Z} \Y$ by letting $H(a\colon w_1 \to w_2)$ be equal to:
      \[\begin{tikzcd}
        \til G(w_1) \arrow[d, "\til G(a)"] & F \circ \til G (w_1) \arrow[d, "F \circ \til G (a)"] \arrow[r, "\alpha(w_1)"] & G \circ \til F(w_1) \arrow[d, "G \circ \til F (a)"]  & \til F (w_1) \arrow[d, "\til F (a)"] \\
        \til G(w_2)    & F \circ \til G (w_2) \arrow[r, "\til G(a)"])                                        & G \circ \til F(w_2) & \til F(w_2) \\
      \end{tikzcd}\]
      All that remains to show that $\X \times_\mathcal{Z} \Y$ is a 2-pullback is to define $\beta_1$ and $\beta_2$ from the definition. By taking $\beta_1= 1_{\til G}$ and $\beta_2 = 1_{\til F}$ we see that $\X \times_{Z} \Y$ does, in fact, fit into a 2-pullback square.
  \end{example}
  \begin{definition}\label{defn:CFGgroupoid}
    A (strict) groupoid $\G$ internal to a bicategory $\C$ consists of two objects $\G_1$ and $\G_0$ together with 1-morphisms
    \[ \s\colon \G_1 \to \G_0 \, ,\quad \t\colon \G_1 \to \G_0 \, , \quad \u\colon \G_0 \to \G_1 \, , \]
    \[ \m\colon \G_1 \til\times_{\s, \t} \G_1 \to \G_1 \, , \quad \i\colon \G \to \G \]
    which satisfy the axioms of a groupoid. Note that in this setting $\G_1^{(n)} := \G_1 \til\times_{\s,\t} \dots \til\times_{\s,\t} \G_1$.

    When $\C$ is the bicategory of CFGs then we call such an object a \emph{CFG groupoid}.
    A (strict) morphism of CFG groupoids is a pair of functors, one for the CFG of arrows and one for the CFG of objects which commute with the groupoid structure.
    Such a pair of functors is called an \emph{isomorphism} if they are equivalences of categories.
  \end{definition}
  \begin{proposition}\label{prop:CFGgroupoidstruct}
    Let $F: \X \to \Y$ be a functor. Then $\X \til\times_\Y \X$ is canonically a strict groupoid internal to the bicategory of categories.
  \end{proposition}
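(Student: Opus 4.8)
The plan is to exhibit the groupoid structure maps explicitly on the canonical model of the homotopy fiber product from Example~\ref{example:HFPofcats}, and then to check that they satisfy the axioms of a groupoid \emph{strictly}, so that Definition~\ref{defn:CFGgroupoid} applies with $\G_0 = \X$ and $\G_1 = \X \til\times_\Y \X$ (taking both legs of the fiber product to be $F$).

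First I would recall from Example~\ref{example:HFPofcats} that an object of $\X \til\times_\Y \X$ is a triple $(X_1, a, X_2)$ with $X_1, X_2 \in \X_0$ and $a\colon F(X_1) \to F(X_2)$ an \emph{isomorphism} in $\Y$, while a morphism is a pair $(b_1,b_2)$ of morphisms of $\X$ making the evident inner square commute. I would then define the source and target functors $\s,\t\colon \G_1 \to \G_0$ by $\s(X_1,a,X_2) := X_2$ and $\t(X_1,a,X_2) := X_1$, the unit $\u\colon \G_0 \to \G_1$ by $\u(X) := (X,\Id_{F(X)},X)$, and the inverse $\i\colon \G_1 \to \G_1$ by $\i(X_1,a,X_2) := (X_2, a\inv, X_1)$. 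Here the definition of $\i$ uses crucially that $a$ is an isomorphism, which is built into the construction of the homotopy fiber product. Each of these is routinely verified to be a functor (for $\i$, one checks that a commuting square for $(b_1,b_2)$ yields a commuting square for $(b_2,b_1)$ after inverting $a$ and $a'$).

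Next I would build multiplication. The objects of $\G_1 \til\times_{\s,\t} \G_1$, again formed by the recipe of Example~\ref{example:HFPofcats}, are pairs of arrows together with a connecting isomorphism $c\colon X_2 \to Y_1$ of $\X$ from the source of the left factor to the target of the right factor; note that the matching is witnessed by $c$ rather than by an equality. I set
\[ \m\big( (X_1,a,X_2),\, c,\, (Y_1,b,Y_2) \big) := \big( X_1,\; b \circ F(c) \circ a,\; Y_2 \big), \]
with the analogous formula on morphisms, and I observe that $b \circ F(c) \circ a$ is again an isomorphism of $\Y$, so the output does lie in $\G_1$.

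Finally I would verify the groupoid axioms (A)--(F) of Definition~\ref{defn:category} and (G1)--(G3) of Definition~\ref{defn:internalgroupoid}, all of which reduce to bookkeeping in $\Y$. Associativity holds \emph{strictly}, because composition in $\Y$ is associative and $F$ is a functor: composing three arrows with matchings $c\colon X_2 \to Y_1$ and $d\colon Y_2 \to Z_1$ yields, in either parenthesization, the object $(X_1,\, a_3 \circ F(d) \circ a_2 \circ F(c) \circ a_1,\, Z_2)$. The unit laws follow since the connecting isomorphism associated to a unit is an identity and $F(\Id) = \Id$, so the factors $F(c)$ and the identity $a$-part drop out. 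The involution axioms follow from $(a\inv)\inv = a$ (G1), from the definitions of $\s$ and $\t$ (G2), and from $a\inv \circ a = \Id_{F(X_1)}$, which gives $\m \circ (\Id \times \i) = \u \circ \t$ (G3). Since the entire construction refers only to $F$ and to the canonical fiber product, it is canonical, proving the proposition. The only genuinely delicate point is precisely the $\til\times$-nature of $\G_1 \til\times_{\s,\t} \G_1$: the matching of composable arrows is an isomorphism $c$, not an equality of middle objects, so I would emphasize that it is exactly the insertion of $F(c)$ into the multiplication formula that makes associativity and the unit laws hold on the nose, rather than merely up to a coherent $2$-isomorphism.
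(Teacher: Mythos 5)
Your proposal matches the paper's proof essentially step for step: the paper likewise takes $\G_0 = \X$, sets $\s = \pr_2$ and $\t = \pr_1$, defines the unit as the diagonal $x \mapsto (x, 1_{F(x)}, x)$, the inverse by horizontal flipping $(x_1, a, x_3) \mapsto (x_3, a\inv, x_1)$, and multiplication by concatenation with the image of the connecting isomorphism inserted, $(x_1,\, a_3 \circ F(b_3) \circ a_1,\, x_5)$, which is exactly your $b \circ F(c) \circ a$. Your only addition is to spell out the strict associativity, unit, and inverse verifications that the paper declares straightforward, and your emphasis on the insertion of $F(c)$ as the mechanism making the axioms hold on the nose is correct.
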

  \begin{proof}
    We will construct this groupoid structure directly. The category of objects will be $\X$. The source and target will be defined to be $\pr_2$ and $\pr_1$ respectively. The unit morphism $\X \to \X \til\times_Y \X$ is the 2-categorical diagonal embedding:
    \[
    a: x_1 \to x_2 \quad \mapsto \quad \begin{tikzcd}
      x_1 \arrow[d, "a"] & F(x_1) \arrow[d, "F(a)"] \arrow[r, "1"] & F(x_1) \arrow[d, "F(a)"]  & x_1 \arrow[d, "a"] \\
      x_2    & F(y_2 \arrow[r, "1"])                                        & F(x_2) & x_2 \\
    \end{tikzcd}
    \]
    The multiplication functor will be defined by horizontal concatenation:
    \[
    \left(\begin{tikzcd}
      x_1 \arrow[d, "f"] & F(x_1) \arrow[d, "F(f)"] \arrow[r, "a_1"] & F(x_3) \arrow[d, "F(g)"]  & x_3 \arrow[d, "g"] \arrow[r, "b_3"]
      & x_3 \arrow[d, "g"] & F(x_3) \arrow[d, "F(g)"] \arrow[r, "a_3"] & F(x_5) \arrow[d, "F(k)"]  & x_5 \arrow[d, "k"] \\
      x_2    & F(x_2 \arrow[r, "a_2"])                                        & F(x_4) & x_4 \arrow[r, "b_4"]
      & x_4    & F(x_4 \arrow[r, "a_4"])                                        & F(x_6) & x_6 \\
    \end{tikzcd}\right)
    \mapsto
    \]
    \[
    \begin{tikzcd}[column sep=huge]
      x_1 \arrow[d, "f"] & F(x_1) \arrow[d, "F(f)"] \arrow[r, "a_3 \circ F(b_3) \circ a_1"] & F(x_5) \arrow[d, "F(g)"]  & x_5 \arrow[d, "k"] \\
      x_2    & F(x_2 \arrow[r, "a_4 \circ F(b_4) \circ a_2"])                                        & F(x_6) & x_6 \\
    \end{tikzcd}
    \]
    Lastly, the inverse functor is obtained by flipping horizontally:
    \[
    \begin{tikzcd}
      x_1 \arrow[d, "f"] & F(x_1) \arrow[d, "F(f)"] \arrow[r, "a_1"] & F(x_3) \arrow[d, "F(g)"]  & x_3 \arrow[d, "g"] \\
      x_2    & F(x_2 \arrow[r, "a_2"])                                        & F(x_4) & x_4 \\
    \end{tikzcd}
    \quad \mapsto \quad
    \begin{tikzcd}
      x_3 \arrow[d, "g"] & F(x_3) \arrow[d, "F(g)"] \arrow[r, "a_1\inv"] & F(x_1) \arrow[d, "F(f)"]  & x_1 \arrow[d, "f"] \\
      x_4    & F(x_4 \arrow[r, "a_2\inv"])                                        & F(x_2) & x_2 \\
    \end{tikzcd}
    \]
    Recall that the composition operation on the arrows of $\X \til\times_\Y \X$ is by vertical composition. Since these operations clearly commute with vertical composition we conclude that they are all indeed functors. Checking that these maps satisfy the axioms of a groupoid is straightforward.
  \end{proof}
  \begin{lemma}\label{lemma:CFGbundle}
    Let $F: \X \to \Y$ and $G: \Zcal \to \Y$ be full functors. Then there is a left $ \G:= \X \til\times_\Zcal \X$ action on $\P := \X \til\times_\Y \Zcal$ defined by a functor:
    \[ \m_L \colon \G \til\times_{\s, \pr_1} \P \to \P \]
     Furthermore, this action is principal in the sense that the \emph{total action}
    \[ \m_L \times \pr_2 \colon \G \til\times_{\s, \pr_1} \P \to \P \til\times_{\pr_2} \P \]
    is an equivalence of categories.
  \end{lemma}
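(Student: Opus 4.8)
The plan is to exhibit the action and its inverse by completely explicit formulas, using the description of homotopy fibre products in Example~\ref{example:HFPofcats}, and then to verify the required properties by elementary manipulation of composites in $\Y$. Here $\G = \X \til\times_\Y \X$ is the groupoid supplied by Proposition~\ref{prop:CFGgroupoidstruct}, so an object of $\G$ is a triple $(X_1, a, X_2)$ with $a \colon F(X_1) \to F(X_2)$ an isomorphism, $\s = \pr_2$, $\t = \pr_1$, and the unit $\u$ is the diagonal $X \mapsto (X, \mathrm{id}_{F(X)}, X)$; an object of $\P$ is a triple $(X, b, Z)$ with $b \colon F(X) \to G(Z)$ an isomorphism. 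An object of $\G \til\times_{\s,\pr_1} \P$ is thus a triple $(g,\xi,p)$ with $g = (X_1,a,X_2)$, $p = (X,b,Z)$, and $\xi \colon X_2 \to X$ an isomorphism in $\X$. I would set the anchor to be $\pr_1 \colon \P \to \X$ and define
\[ \m_L(g,\xi,p) := \bigl(X_1,\; b \circ F(\xi) \circ a,\; Z\bigr), \]
sending a morphism given by a compatible pair $(\mu_g, \mu_p)$, with $\mu_g = (\beta_1,\beta_2)$ and $\mu_p = (\nu^\X, \nu^\Zcal)$, to the pair $(\beta_1, \nu^\Zcal)$.

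First I would check that $\m_L$ is a well-defined functor. The one nonformal point is that $(\beta_1, \nu^\Zcal)$ is a genuine morphism of $\P$, which amounts to the identity $(b_2 \circ F(\xi_2) \circ a_2) \circ F(\beta_1) = G(\nu^\Zcal) \circ (b_1 \circ F(\xi_1) \circ a_1)$, where subscripts $1,2$ denote source and target data. This follows by combining the $F$-compatibility of $\mu_g$, the fibre-product relation $\xi_2 \circ \beta_2 = \nu^\X \circ \xi_1$ pushed through $F$, and the $\P$-compatibility of $\mu_p$, each step being associativity of composition in $\Y$. The unit axiom (GA1) and the associativity axiom (GA2) are then immediate, since $\m_L$ is assembled entirely from composition of isomorphisms in $\Y$ and $\u$ is the diagonal.

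The heart of the matter is principality, i.e. that the total action $\m_L \times \pr_2 \colon \G \til\times_{\s,\pr_1} \P \to \P \til\times_{\pr_2} \P$ is an equivalence, and I would prove this by writing down a division functor $D$ as a quasi-inverse. An object of $\P \til\times_{\pr_2} \P$ is $\bigl((X_1, b_1, Z_1),\, c,\, (X_2, b_2, Z_2)\bigr)$ with $c \colon Z_1 \to Z_2$ an isomorphism in $\Zcal$, and I would define
\[ D\bigl((X_1,b_1,Z_1),c,(X_2,b_2,Z_2)\bigr) := (g,\xi,p), \quad p = (X_2,b_2,Z_2),\ \xi = \mathrm{id}_{X_2},\ g = \bigl(X_1,\; b_2^{-1} \circ G(c) \circ b_1,\; X_2\bigr), \]
the last entry being a legitimate isomorphism $F(X_1) \to F(X_2)$ because $b_1$, $b_2$, $c$ are invertible. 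A direct check gives $D \circ (\m_L \times \pr_2) = \mathrm{id}$ on the nose, while $(\m_L \times \pr_2) \circ D$ is the identity up to the natural isomorphism whose component at the displayed object is $(\mathrm{id}_{X_1}, c)$ on the first $\P$-factor and the identity on the second. Establishing that $D$ respects morphisms and that these assemble into natural isomorphisms compatible with the defining square of Definition~\ref{defn:2pullback} is the main bookkeeping task.

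The step I expect to be most delicate is exactly this homotopy-coherence bookkeeping: one must track the comparison isomorphisms $\xi$ in the domain against $c$ and the identities $\mathrm{id}_Z$ in the codomain, and confirm the coherence square of the $2$-pullback. I would also note that the constructions of $\m_L$ and of $D$ use only that $F$ and $G$ preserve isomorphisms; accordingly I expect the hypothesis that $F$ and $G$ are \emph{full} to enter not in the categorical equivalence above but in the intended applications, where it ensures that $\pr_1$, $\s$, and $\pr_2$ induce submersions of the associated representable stacks, so that ``principal'' is read in the bundle-theoretic sense of the earlier definitions.
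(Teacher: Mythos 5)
Your construction is essentially the paper's proof. The action formula $\m_L(g,\xi,p) = (X_1,\, b\circ F(\xi)\circ a,\, Z)$ and the division formula $b_2^{-1}\circ G(c)\circ b_1$ are precisely the composites the paper writes out diagrammatically (there the division appears as $s\inv \circ G(r)\circ q$ in the essential-surjectivity step and as $g_1:=b_2\inv \circ g_2 \circ b_1$ in the fullness step); the only difference is packaging, since the paper verifies essential surjectivity, faithfulness, and fullness of the total action directly rather than assembling the division into a quasi-inverse functor $D$. You are also right to read $\G$ as $\X\til\times_\Y\X$ (the statement's $\X\til\times_\Zcal\X$ is a typo), and your observation that fullness of $F$ and $G$ is never used is consistent with the paper's own argument, which only ever inverts comparison isomorphisms.

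One claim needs correction: $D\circ(\m_L\times\pr_2)$ is \emph{not} the identity on the nose. Applying the total action to $(g,\xi,p)$ with $g=(X_1,a,X_2)$, $\xi\colon X_2\to X$, $p=(X,b,Z)$ yields $\bigl((X_1,\, b\circ F(\xi)\circ a,\, Z),\ \mathrm{id}_Z,\ (X,b,Z)\bigr)$, and $D$ then returns $\bigl((X_1,\, F(\xi)\circ a,\, X),\ \mathrm{id}_X,\ (X,b,Z)\bigr)$: the comparison isomorphism $\xi$ has been absorbed into the $\G$-component and replaced by an identity, so the composite recovers the input only when $\xi$ is already an identity. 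It is, however, naturally isomorphic to the identity, via the isomorphism whose components are $(\mathrm{id}_{X_1},\xi)$ on the $\G$-factor and the identity on the $\P$-factor; one checks that $(\mathrm{id}_{X_1},\xi)\colon (X_1,a,X_2)\to(X_1,\,F(\xi)\circ a,\,X)$ is a morphism of $\G$ because $F(\xi)\circ a\circ F(\mathrm{id}_{X_1})=F(\xi)\circ a$, and that it is compatible with the comparison data. This costs nothing — a functor admitting natural isomorphisms to the identity on both composites is still a quasi-inverse, so your conclusion stands — but the proof should supply this natural isomorphism rather than claim strictness in that direction.
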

  \begin{proof}
    $\m_L$ is defined by the rule:
    \[
    \left(\begin{tikzcd}
      x_1 \arrow[d, "f"] & F(x_1) \arrow[d, "F(f)"] \arrow[r, "a_1"] & F(x_3) \arrow[d, "F(g)"]  & x_3 \arrow[d, "g"] \arrow[r, "b_3"]
      & x_3 \arrow[d, "g"] & F(x_3) \arrow[d, "F(f)"] \arrow[r, "c_1"] & G(y_1) \arrow[d, "G(k)"]  & y_1 \arrow[d, "k"] \\
      x_2    & F(x_2) \arrow[r, "a_2"]                                        & F(x_4) & x_4 \arrow[r, "b_4"]
      & x_4    & F(x_4) \arrow[r, "c_2"]                                        & G(y_2) & y_2 \\
    \end{tikzcd}\right)
    \mapsto
    \]
    \[
    \begin{tikzcd}[column sep=huge]
      x_1 \arrow[d, "f"] & F(x_1) \arrow[d, "F(f)"] \arrow[r, "c_1 \circ F(b_3) \circ a_1"] & G(y_1) \arrow[d, "G(g)"]  & y_1 \arrow[d, "k"] \\
      x_2    & F(x_2 \arrow[r, "c_2 \circ F(b_4) \circ a_2"])                                        & G(y_2) & y_2 \\
    \end{tikzcd}
    \]
    That this satisfies the axioms of a left action is straightforward. The only thing remaining to check is that the mapping:
    \[
    \left(\begin{tikzcd}
      x_1 \arrow[d, "f"] & F(x_1) \arrow[d, "F(f)"] \arrow[r, "a_1"] & F(x_3) \arrow[d, "F(g_1)"]  & x_3 \arrow[d, "g_1"] \arrow[r, "b_1"]
      & x_5 \arrow[d, "g_2"] & F(x_5) \arrow[d, "F(g_2)"] \arrow[r, "c_1"] & G(y_1) \arrow[d, "G(k)"]  & y_1 \arrow[d, "k"] \\
      x_2    & F(x_2) \arrow[r, "a_2"]                                        & F(x_4) & x_4 \arrow[r, "b_2"]
      & x_6    & F(x_6) \arrow[r, "c_2"]                                        & G(y_2) & y_2 \\
    \end{tikzcd}\right)
    \mapsto
    \]
    \[
    \begin{tikzcd}[column sep=large]
      x_1 \arrow[d, "f"] & F(x_1) \arrow[d, "F(f)"] \arrow[r, "c_1 \circ F(b_1) \circ a_1"] & G(y_1) \arrow[d, "G(k)"]  & y_1 \arrow[d, "k"] \arrow[r, "\Id"]
      & y_1 \arrow[d, "k"] & G(y_1) \arrow[d, "G(k)"] & F(x_3) \arrow[d, "F(g_2)"] \arrow[swap, l, "c_1"] & x_3 \arrow[d, "g_2"]\\
      x_2    & F(x_2 \arrow[r, "c_2 \circ F(b_2) \circ a_2"])                                        & G(y_2) & y_2 \arrow[r, "\Id"]
      & y_2                & G(y_2)                   & F(x_4) \arrow[swap, l, "c_2"]                   & x_4 \\
    \end{tikzcd}
    \]
    is an equivalence of categories. We begin by showing that it is essentially surjective. Consider the following diagram, which is a morphism in $\P \til\times_\Zcal \P$:
    \[
    \begin{tikzcd}
      x' \arrow[d, "1"] & F(x') \arrow[d, "1"] \arrow[r, "q"]            & G(y') \arrow[d, "G(r)"]  & y' \arrow[d, "r"] \arrow[r, "r"]      & y'' \arrow[d, "1"] & G(y'') \arrow[d, "1"] & F(x'') \arrow[d, "1"] \arrow[swap, l, "s"] & x'' \arrow[d, "1"] \\
      x' & F(x') \arrow[r, "G(r) \circ q"] & G(y'') & y'' \arrow[r, "1"] & y'' & G(y'') & F(x'') \arrow[swap, l, "s"] & x''
    \end{tikzcd}
    \]
    Note that the top row of the diagram can be an arbitrary object in $\P \til\times_Y \P$. On the other hand, the bottom row is the image of the following object in $\G \til\times_\X \P$ under the total action:
    \[
    \begin{tikzcd}[column sep=large]
      x' & F(x') \arrow[r, "s\inv \circ G(r) \circ q"] & F(x'') & x'' \arrow[r, "1"] & x'' & F(x'') \arrow[r, "s"] & G(y'') & y''
    \end{tikzcd}
    \]
    Hence the total action is essentially surjective. That the total action is faithful is relatively clear from the definition. To show that it is full, suppose we have a pair of objects in $\G \til\times_\X \P$:
    \[
    \begin{tikzcd}
      x_1  & F(x_1) \arrow[r, "a_1"] & F(x_3)   & x_3  \arrow[r, "b_1"]
      & x_5  & F(x_5) \arrow[r, "c_1"] & G(y_1)  & y_1  \\
      x_2    & F(x_2) \arrow[r, "a_2"]                                        & F(x_4) & x_4 \arrow[r, "b_2"]
      & x_6    & F(x_6) \arrow[r, "c_2"]                                        & G(y_2) & y_2 \\
    \end{tikzcd}
    \]
    as well as an arbitrary morphism in $\P \til\times_\Zcal \P$ between their images:
    \begin{equation}\label{eqn:principalbundlelemma}
    \begin{tikzcd}[column sep=large]
      x_1 \arrow[d, "f"] & F(x_1) \arrow[d, "F(f)"] \arrow[r, "c_1 \circ F(b_1) \circ a_1"] & G(y_1) \arrow[d, "G(k)"]  & y_1 \arrow[d, "k"] \arrow[r, "\Id"]
      & y_1 \arrow[d, "k"] & G(y_1) \arrow[d, "G(k)"] & F(x_3) \arrow[d, "F(g_2)"] \arrow[swap, l, "c_1"] & x_3 \arrow[d, "g_2"]\\
      x_2    & F(x_2 \arrow[r, "c_2 \circ F(b_2) \circ a_2"])                                        & G(y_2) & y_2 \arrow[r, "\Id"]
      & y_2                & G(y_2)                   & F(x_4) \arrow[swap, l, "c_2"]                   & x_4 \\
    \end{tikzcd}
    \end{equation}
    Then let $g_1 := b_2\inv \circ g_2 \circ b_1$. Then we claim that:
    \begin{equation}\label{eqn:principalbundlelemma2}
    \begin{tikzcd}
      x_1 \arrow[d, "f"] & F(x_1) \arrow[d, "F(f)"] \arrow[r, "a_1"] & F(x_3) \arrow[d, "F(g_1)"]  & x_3 \arrow[d, "g_1"] \arrow[r, "b_1"]
      & x_5 \arrow[d, "g_2"] & F(x_5) \arrow[d, "F(g_2)"] \arrow[r, "c_1"] & G(y_1) \arrow[d, "G(k)"]  & y_1 \arrow[d, "k"] \\
      x_2    & F(x_2) \arrow[r, "a_2"]                                        & F(x_4) & x_4 \arrow[r, "b_2"]
      & x_6    & F(x_6) \arrow[r, "c_2"]                                        & G(y_2) & y_2 \\
    \end{tikzcd}
    \end{equation}
    is a morphism in $\G \til\times_\X \P$. To prove this we need to show that the three squares commute. The central square commutes by the definition of $g_1$. We have already assumed the right square commutes since it occurs in (\ref{eqn:principalbundlelemma}). Lastly, the left square commutes by combining the commutativity of the following diagrams:
    \[
    \begin{tikzcd}[column sep = large]
      F(x_3) \arrow[r, "F(b_1)"] \arrow[d, "F(g_1)"] & F(x_5) \arrow[d, "F(g_2)"] &
      F(x_5) \arrow[r, "c_1"] \arrow[d, "F(g_2)"] & G(y_1) \arrow[d, "G(k)  "] &
      F(x_1) \arrow[r, "c_1 \circ F(b_1) \circ a_1"] \arrow[d, "F(f)"] & G(y_1) \arrow[d, "G(k)"] \\
      F(x_4) \arrow[r, "F(b_2)"] & F(x_6) &
      F(x_6) \arrow[r, "c_2"] & G(y_2) &
      F(x_2) \arrow[r, "c_2 \circ F(b_2) \circ a_2"] & G(y_2)
    \end{tikzcd}
    \]
  \end{proof}
\chapter{Groupoids and stacks}

\section{Groupoids and actions}
In this section we include several lemmas about groupoids, groupoid actions and principal bundles. Throughout $\C$ is a category with an initial object and is endowed with Grothendieck topology making it into a \emph{good site}. That is, a site where $\mbox{Sub}$ is a stack and morphisms in $\C$ are locally defined.

Importantly, we do not assume that $\C$ has a terminal object and, relatedly, we cannot assume that morphisms in $\C$ are characterized by their behavior on points.
All proofs must hold in terms of morphisms and commuting diagrams. However, since the notation for constructing morphisms can become cumbersome, we may sometimes include the ``pointwise" versions of our definitions to help the reader in parsing them.

\begin{lemma}\label{lemma:equivariantmaps}
  Let $\G$ have a left action on $P$ and suppose that $P / \G = B$ exists (i.e. the action is almost principal. That is, there exists a submersion $\s: P \to B$ such that the total action $\G \times_M P \to P \times_B P$ is a submersion. Then for any $f: P \to X$ such that $f \circ \m_L = f \circ pr_2$, there exists a unique $f' \colon B \to X$ such that the below diagram commutes:
  \[
  \begin{tikzcd}
    \G \times_{\s,J} P \arrow[r, "\m_L"] \arrow[d, "\pr_2"] & P \arrow[d, "\pi"] \arrow[rdd, "f", bend left] & \\
    P \arrow[r, "\pi"] \arrow[rrd, "f", bend right] & B \arrow[dr, dashed, "{f'}"] & \\
    & & X
  \end{tikzcd}
  \]
\end{lemma}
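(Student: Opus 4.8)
The statement is the universal property exhibiting $\pi \colon P \to B$ as the quotient of $P$ by the $\G$-action, so the plan is to construct $f'$ by descent along the submersion $\pi$. First I would use that $\pi$ is a submersion to choose a covering sieve $\{s_i \colon U_i \to B\}$, which by (GS2) may be taken to consist of cartesian morphisms, together with sections $\sigma_i \colon U_i \to P$ satisfying $\pi \circ \sigma_i = s_i$. Setting $f'_i := f \circ \sigma_i \colon U_i \to X$ gives a candidate for the restriction of $f'$ to each $U_i$; the content is then to show these local pieces are compatible and to glue them using the locality of morphisms (GS3).

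The compatibility on overlaps is where the hypothesis that the total action $A \colon \G \times_M P \to P \times_B P$, $A = \m_L \times \pr_2$, is a submersion enters. On $U_{ij} := U_i \times_B U_j$ the composites $\sigma_i \circ \pr_1$ and $\sigma_j \circ \pr_2$ satisfy $\pi \circ (\sigma_i \circ \pr_1) = s_i \circ \pr_1 = s_j \circ \pr_2 = \pi \circ (\sigma_j \circ \pr_2)$, so together they define a morphism $(\sigma_i \circ \pr_1, \sigma_j \circ \pr_2) \colon U_{ij} \to P \times_B P$. Because $A$ is a submersion it admits local sections, so after passing to a cover $U'_{ij} \to U_{ij}$ this morphism lifts through $A$ to some $g \colon U'_{ij} \to \G \times_M P$ with $\m_L \circ g = \sigma_i \circ \pr_1$ and $\pr_2 \circ g = \sigma_j \circ \pr_2$. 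The invariance hypothesis $f \circ \m_L = f \circ \pr_2$ then yields $f \circ \sigma_i \circ \pr_1 = f \circ \sigma_j \circ \pr_2$ on $U'_{ij}$, and since this constitutes a covering of $U_{ij}$, locality (GS3) upgrades it to equality on all of $U_{ij}$. Thus the $f'_i$ agree on overlaps and glue to a unique $f' \colon B \to X$ with $f' \circ s_i = f'_i$.

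It remains to verify $f' \circ \pi = f$ and uniqueness. For the former I would check the equality locally on the covering $\{\pr_2 \colon U_i \times_B P \to P\}$ of $P$, obtained by pulling back $\{s_i\}$ along $\pi$ (a covering by (T2)). A generalized element of $U_i \times_B P$ is a pair $(u,p)$ with $s_i(u) = \pi(p)$; then $p$ and $\sigma_i(u)$ lie in the same $\pi$-fibre, so, exactly as above, a local section of $A$ produces a $\G$-element carrying $\sigma_i(u)$ to $p$, and invariance of $f$ shows that $f$ and $f' \circ \pi$ agree on a cover of $U_i \times_B P$; (GS3) then finishes it. Uniqueness is immediate: if $f'_1 \circ \pi = f'_2 \circ \pi$, precomposing with the sections $\sigma_i$ gives $f'_1 \circ s_i = f'_2 \circ s_i$ for all $i$, whence $f'_1 = f'_2$ by locality.

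The main obstacle is the overlap compatibility: it is the only step that requires the full strength of the almost-principal hypothesis, and care is needed because the translating $\G$-element exists only locally, the total action being merely a submersion rather than an isomorphism. Everything else is a formal consequence of $\pi$ being a submersion together with the locality axiom (GS3) of a good site, which is precisely what allows the pointwise-looking arguments with generalized elements to be descended to honest equalities of morphisms.
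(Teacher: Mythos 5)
Your proof is correct and follows essentially the same route as the paper: define $f'$ locally via sections of $\pi$, use local sections of the total action (the paper phrases this as a locally defined division map $\delta \colon P \times_B P \to \G$) together with the invariance $f \circ \m_L = f \circ \pr_2$ to get overlap compatibility, and glue and prove uniqueness by the locality axiom (GS3). Your write-up is in fact slightly more complete than the paper's, since you explicitly verify $f' \circ \pi = f$ on the pulled-back cover, a step the paper leaves implicit.
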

\begin{proof}
We we will define $f'$ locally. Suppose $S = \{ s_i \} $ is a sieve on $B$ such that there exist sections $\sigma_i \colon U_i \to P$ along $s_i$. Then we define $f'_i \colon U_i \to X$ to be $f \circ \sigma_i$. To see that this defines a morphism $B \to X$, we need to check that the $f_i'$ agree on intersections. This follows from the following claim: Given $g_1, g_2 \colon N \to P$ such that $\s \circ g_1 = \s \circ g_2$, then $f \circ g_1 = f \circ g_2$.

Since we only need to check this property locally, we can assume without loss of generality that there exists a right inverse of the total action, and hence a division map $\delta \colon P \times_B P \to \G$. That is, a mapping $\delta$ such that:
\[ \m_L (\delta(\pr_1,\pr_2),\pr_2) = \pr_1 \colon P \times_N P \to P \, . \]
Then $g_1 = \m_L(\delta(g_1,g_2),g_2)$ and
\begin{align*}
  f \circ g_1 &= (f \circ g_1)(\m_L(\delta(g_1,g_2),g_2)) \\
              &= (f \circ \pr_2)(\m_L(\delta(g_1,g_2),g_2)) \\
              &- f \circ g_2 \, .
\end{align*}
Hence, there exists $f'$ such that $f \circ s_i = f_i$. Uniqueness of $f'$ follows from the fact that any other such morphism must agree with $f'$ locally.
\end{proof}
\begin{corollary}\label{corollary:pmodgunique}
  Suppose $\G$ has an almost principal action on $P$. Then $P/\G$ is unique up to a unique isomorphism.
\end{corollary}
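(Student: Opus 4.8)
The plan is to invoke the universal property established in Lemma~\ref{lemma:equivariantmaps}, which characterizes $B = P/\G$ as the universal target of $\G$-invariant morphisms out of $P$. The corollary is then a formal consequence, following the standard argument that an object defined by a universal property is unique up to a unique isomorphism.

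First I would set up the two candidate quotients. Suppose $\pi_1 \colon P \to B_1$ and $\pi_2 \colon P \to B_2$ both exhibit $P$ as an almost principal $\G$-bundle in the sense of the lemma. The key preliminary observation is that each projection $\pi_i$ is $\G$-invariant, meaning $\pi_i \circ \m_L = \pi_i \circ \pr_2$. This is precisely the content of the requirement that the total action $\m_L \times \pr_2 \colon \G \times_M P \to P \times_{B_i} P$ take values in the fiber product over $B_i$, since a generalized element $(g,p)$ is sent to $(\m_L(g,p), p)$ and membership in $P \times_{B_i} P$ forces $\pi_i(\m_L(g,p)) = \pi_i(p)$. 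This is the one point where a definition must be unwound rather than argued purely formally.

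Next I would apply Lemma~\ref{lemma:equivariantmaps} twice. Taking $B = B_1$, $X = B_2$, and $f = \pi_2$ produces a unique $\phi \colon B_1 \to B_2$ with $\phi \circ \pi_1 = \pi_2$; symmetrically, taking $B = B_2$, $X = B_1$, and $f = \pi_1$ produces a unique $\psi \colon B_2 \to B_1$ with $\psi \circ \pi_2 = \pi_1$. The composite $\psi \circ \phi \colon B_1 \to B_1$ then satisfies $(\psi \circ \phi) \circ \pi_1 = \psi \circ \pi_2 = \pi_1 = \Id_{B_1} \circ \pi_1$, so both $\psi \circ \phi$ and $\Id_{B_1}$ solve the factorization problem for $f = \pi_1$. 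The uniqueness clause of the lemma forces $\psi \circ \phi = \Id_{B_1}$, and the same argument gives $\phi \circ \psi = \Id_{B_2}$, so $\phi$ is an isomorphism.

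Finally, uniqueness of $\phi$ as an isomorphism compatible with the projections is again immediate from the uniqueness in Lemma~\ref{lemma:equivariantmaps}: any morphism $B_1 \to B_2$ commuting with $\pi_1$ and $\pi_2$ must equal $\phi$. I do not expect any genuine obstacle here, as the argument is entirely formal once the $\G$-invariance of the projections has been checked; the only care needed is in verifying that hypothesis and in being explicit about which half of the universal property (existence versus uniqueness) is being invoked at each step.
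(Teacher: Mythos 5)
Your proposal is correct and takes essentially the same approach as the paper: the paper's proof also rests entirely on the universal property from Lemma~\ref{lemma:equivariantmaps}, observing that $P/\G$ is the coequalizer of $\m_L$ and $\pr_2$ and citing the standard fact that coequalizers are unique up to a unique isomorphism. You have merely unwound that standard categorical argument explicitly, including the (correct) verification that each projection is $\G$-invariant, so there is nothing to add.
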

\begin{proof}
  Lemma~\ref{lemma:equivariantmaps} shows that $P/\G$ must satisfy a universal property. In more technical terms, it is the coequalizer of $\m_L \colon \G \times_M P \to P$ and $\pr_2 \colon \G \times_M P \to P$. It is a standard result of category theory that coequalizers are unique up to a unique isomorphism.
\end{proof}
\begin{corollary}\label{corollary:equivariantmaps}
      Suppose $\G$ acts on $P$, $Q$ and $R$ in such a way that $P/ \G$, $Q / \G$ and $R/\G$ exist.
  \begin{itemize}
    \item Given an equivariant morphism $f: P \to Q$ there exists a unique morphism $f/\G \colon P/\G \to Q/ \G$ compatible with the projections.
    \item In such a case, $f/\G$ is an isomorphism if $f$ is an isomorphism.
    \item Lastly, given another equivariant morphism $g\colon Q \to R$ for a $\G$-action on $R$, then $(g \circ f)/\G = (g /\G \circ f/\G)$
  \end{itemize}
\end{corollary}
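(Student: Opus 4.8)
The plan is to deduce all three bullets from the single universal property established in Lemma~\ref{lemma:equivariantmaps}, together with its uniqueness clause. Recall from Corollary~\ref{corollary:pmodgunique} that the projection $\pi_P \colon P \to P/\G$ is the coequalizer of $\m_L^P$ and $\pr_2 \colon \G \times_M P \to P$; concretely, a morphism out of $P$ that is $\G$-invariant factors uniquely through $\pi_P$. So the whole corollary is really the statement that a coequalizer construction is functorial, and each clause will be an instance of ``an invariant morphism descends uniquely.''

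First I would construct $f/\G$. Given the equivariant $f \colon P \to Q$, I would consider the composite $\pi_Q \circ f \colon P \to Q/\G$ and check that it is $\G$-invariant, i.e.\ that $\pi_Q \circ f \circ \m_L^P = \pi_Q \circ f \circ \pr_2$. Here equivariance of $f$ is used in the form (GE1) (so that the induced morphism $(\pr_1, f \circ \pr_2) \colon \G \times_M P \to \G \times_M Q$ is well defined over the shared anchor) and (GE2) (so that $f \circ \m_L^P = \m_L^Q \circ (\pr_1, f \circ \pr_2)$). Combining this with the defining relation $\pi_Q \circ \m_L^Q = \pi_Q \circ \pr_2$ gives
\[
\pi_Q \circ f \circ \m_L^P = \pi_Q \circ \m_L^Q \circ (\pr_1, f \circ \pr_2) = \pi_Q \circ \pr_2 \circ (\pr_1, f \circ \pr_2) = \pi_Q \circ f \circ \pr_2 ,
\]
which is exactly the hypothesis of Lemma~\ref{lemma:equivariantmaps} applied with $X = Q/\G$. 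The lemma then yields a unique $f/\G \colon P/\G \to Q/\G$ with $(f/\G) \circ \pi_P = \pi_Q \circ f$, proving the first bullet.

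For the third bullet I would argue by uniqueness rather than by any further computation. Both $(g \circ f)/\G$ and $(g/\G) \circ (f/\G)$ are morphisms $P/\G \to R/\G$, and precomposing the latter with $\pi_P$ gives $(g/\G) \circ (f/\G) \circ \pi_P = (g/\G) \circ \pi_Q \circ f = \pi_R \circ g \circ f$, which is precisely the invariant morphism that $(g\circ f)/\G$ descends. By the uniqueness part of Lemma~\ref{lemma:equivariantmaps} the two agree. The same uniqueness shows $\Id_P/\G = \Id_{P/\G}$, since $\Id_{P/\G}$ already descends $\pi_P$. The second bullet then follows formally: if $f$ is an isomorphism, then $f\inv$ is again equivariant (a routine check against (GE1), (GE2)), so $f\inv/\G$ exists, and functoriality gives $(f/\G)\circ(f\inv/\G) = (f \circ f\inv)/\G = \Id_{Q/\G}$ and symmetrically on the other side, so $f/\G$ is an isomorphism.

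I expect the only real friction to be the $\G$-invariance verification in the first step, which must be carried out diagrammatically in $\C$ rather than on points: one has to exhibit the morphism $(\pr_1, f \circ \pr_2) \colon \G \times_M P \to \G \times_M Q$ and confirm the fiber products are compatible via the anchor condition (GE1) before invoking (GE2). Once that bookkeeping is in place, everything else is a direct appeal to the universal property and its uniqueness, with no additional hypotheses on $\C$ needed beyond what Lemma~\ref{lemma:equivariantmaps} already assumes.
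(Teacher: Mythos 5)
Your proof is correct, and for the first bullet it coincides with the paper's: both apply Lemma~\ref{lemma:equivariantmaps} to the composite of $f$ with the projection $Q \to Q/\G$, using (GE1) and (GE2) to verify $\G$-invariance. Where you diverge is in the second and third bullets. You handle composition and invertibility purely through the uniqueness clause of the universal property: $(g/\G)\circ(f/\G)$ and $(g\circ f)/\G$ both factor $\pi_R \circ g \circ f$ through $\pi_P$, hence agree; then $\Id_P/\G = \Id_{P/\G}$ plus functoriality applied to the equivariant inverse $f\inv$ gives the isomorphism statement formally. The paper instead argues both bullets locally: it checks the relevant equalities on a covering where local sections of the projections exist, and invokes the good-site axiom that morphisms in $\C$ are locally determined to conclude they hold globally (and similarly constructs $f\inv/\G$ ``by the local construction''). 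Your route is the cleaner one: it needs nothing beyond the coequalizer property already isolated in Lemma~\ref{lemma:equivariantmaps} and Corollary~\ref{corollary:pmodgunique}, so it would work verbatim in any category where the quotient satisfies that universal property, quarantining all site-theoretic input inside the lemma rather than re-invoking (GS3) at each step. The paper's local argument buys concreteness --- one sees directly what $f/\G$ does on a trivializing cover, which is occasionally useful later --- but is logically redundant given the lemma. One small point worth making explicit in your writeup is the verification that $f\inv$ is equivariant, i.e.\ that $(\pr_1, f \circ \pr_2)\colon \G \times_M P \to \G \times_M Q$ is an isomorphism with inverse $(\pr_1, f\inv \circ \pr_2)$ (this uses that $f$ respects the anchors, so both fiber products are taken over the same maps to $M$); you flag it as routine, and it is, but it is the one place where (GE1) does real work.
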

\begin{proof}
  \begin{itemize}
  \item Since $f$ is equivariant, we can apply Lemma~\ref{lemma:equivariantmaps} to $\s^Q \circ f \colon P to Q/\G$ to obtain a unique morphism $f/\G \colon P/\G \to Q/\G$. Since any morphism compatible with the projections must satisfy Lemma~\ref{lemma:equivariantmaps} this morphism is unique.
  \item This follows by checking that $f\inv /\G \colon Q/\G \to P/\G$ is the inverse of $f/\G$. This is clear by looking at the local construction of $f/\G$ from Lemma~\ref{lemma:equivariantmaps}.
  \item This holds by observing that the equation is true locally. Since morphisms in $\C$ are locally defined, such an equation must hold globally.
  \end{itemize}
\end{proof}
\begin{lemma}\label{lemma:diagonalaction}
  Let $\G \grpd M$ be a $\C$-groupoid.
  Suppose $P \to N$ is a principal left $\G$-bundle and there is a left action of $\G$ on $Q$ such that the target map $\t^Q \colon Q \to M$ is a submersion.
  Then the diagonal action of $\G$ on $Q \times_M P$ is principal.
\end{lemma}
\begin{proof}
  The diagonal action of $\G$ on $Q \times_M P$ is defined to be
  \[ \m^\Delta_L = (\m^Q_L(\pr_1, \pr_1 \circ \pr_2), \m^P_L(\pr_1, \pr_2 \circ \pr_2)) \colon \G \times_M (P \times_M Q) \to P \times_M Q \]
  We must construct an object $\til N$ and a submersion $\s^\Delta \colon P \times_M P \to \til N$ which makes $P \times_M P$ a principal bundle over $\til N$.

  We will construct $\til N$ by realizing it as an object in $\Sub_N$. Let $S \colon \{ s_i \colon U_i \to N$ be a covering sieve of $N$ and $\sigma_i \colon U_i \to P$ be sections along $S$.
  Now suppose that $\til N_i := Q \times_{\t, \t \circ \sigma_i} U_i$. Each $\til N_i$ comes with a submersion $\til N_i \to U_i$, so this is a family of objects of in $\Sub$. Let $\gamma_{ij} \colon U_{ij} \to \G$ be the cycle associated to the sections $\sigma_i$ of $P$. Then $\Phi_{ij} \colon \til N_i|_{U_ij} \to \til N_j|_{U_ij}$ is defined to be:
  \[ \Phi_{ij} := (\m_L(\gamma_{ij}, \pr_1), \pr_2) \colon N \times_{\t^N, \t \circ \sigma_{ij}} U_ij \to N \times_{\t^N, \t \circ \sigma_{ij}} U_ij   \]
  Defines a coherent gluing map for the $\til N_i$. Therefore, there exists an object $\til N$ and identifications $\Phi_i \colon \til N|_{U_i} = \til N_i$.

  We still need to define a projection $Q \times_M P \to \til N$. To find this, we will repeat our argument, except instead gluing an object over $N$, we glue an object over $\til N$.
  Notice that the set $\{ \til N_i \to \til N \}$ generates a covering sieve of $\til N$. Furthermore, for each $i$ the morphism $\pr_1 \times \s^P \colon Q \times_M P_i \to Q \times_M U_i$ is a submersion.
  Again, we have a canonical way of gluing the $P_i := P|_{U_i}$ together, which we can extend to a gluing of the $Q \times_M P_i$. Since $\Sub$ is a stack we obtain an submersion $Q \times_M P \to \til N$.

  This submersion makes $Q \times_M P$ into a $\G$-bundle over $\til N$ since each restriction $Q \times_M P_i = (Q \times_M P)|_{N_i}$ makes the necessary diagram commute. Since it suffices to show diagrams commute locally, we conclude that $Q \times_M P \to \til N$ is a $\G$-bundle. To finish, we need to explain why the diagonal action is principal. We will be somewhat brief on this point and just provide the reader with the inverse of the total action.
  Recall $\til \m^P_L$ is the division map for the left action on $P$. Then
  \[ (\til \m^P(\pr_1 \circ \pr_2,\pr_2 \circ \pr_2), (\pr_2 \circ \pr_1,\pr_2 \circ \pr_1)) \colon (Q \times_M P) \times_M (Q \times_M P) \to \G \times_M (Q \times_M P) \]
  is the inverse of the total action. The trick here is that the $\G$ component is uniquely determined by the terms two terms coming from $P$. That this is the inverse of the total action is a straightforward computation.
\end{proof}
\subsection{Principal Bundle Lemmas}
The next lemma is the first step to understanding principal $\G$-bundles in terms of cocycles.
\begin{lemma}\label{lemma:gbundlecycle}
  Suppose $\G \grpd M$ is a $\C$-groupoid. Let $f \colon N \to M$ and $g \colon N \to M$ be morphisms in $\C$ and $\G \times_{\s, f} N$ and $\G \times_{\s, g} N$ be the trivial bundles associated to them.

  There is a one-to-one correspondence between principal bundle morphisms $\phi \colon \G \times_{\s, f} N \to \G \times_{\s, g} N$ covering the identity and morphisms $\gamma \colon N \to \G$ such that $\t \circ \gamma = f$ and $\s \circ \gamma = g$.
\end{lemma}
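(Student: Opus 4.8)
The plan is to reduce everything to the trivialization--section correspondence of Lemma~\ref{lemma:bundletrivialization}. First I would fix the bundle structures: $f^*\G = \G \times_{\s,f} N$ carries the anchor $\t^{f^*\G} = \t \circ \pr_1$, the base projection $\s^{f^*\G} = \pr_2$, and the canonical section $\til\sigma_f := (\u\circ f, \Id)$, which satisfies $\t^{f^*\G}\circ\til\sigma_f = f$; the bundle $g^*\G$ is treated identically. Recall also that by Lemma~\ref{lemma:principalbundlemorphisms} any principal bundle morphism covering the identity is automatically an isomorphism, so the principal bundle morphisms $\phi\colon f^*\G\to g^*\G$ covering the identity are precisely the trivializations of $g^*\G$ along $f$ in the sense of Lemma~\ref{lemma:bundletrivialization}.

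The key observation is that sections $\sigma\colon N\to g^*\G$ of the base projection satisfying $\t^{g^*\G}\circ\sigma = f$ are in bijection with the morphisms $\gamma$ in the statement. Indeed, a section of $\pr_2\colon \G\times_{\s,g}N\to N$ is exactly a morphism of the form $\sigma = (\gamma, \Id)$ with $\gamma := \pr_1\circ\sigma\colon N\to\G$; the defining condition of the fiber product forces $\s\circ\gamma = g$, while the anchor condition reads $\t^{g^*\G}\circ\sigma = \t\circ\gamma = f$, i.e.\ exactly $\t\circ\gamma = f$. Thus $\gamma\mapsto(\gamma,\Id)$ is a bijection between the $\gamma$ of the statement and the sections of $g^*\G$ anchored at $f$.

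Composing this bijection with Lemma~\ref{lemma:bundletrivialization} completes the proof. Explicitly, given such a $\gamma$ the associated trivialization is the morphism $\phi(g', x) = g'\cdot(\gamma(x),x) = (g'\cdot\gamma(x), x)$ produced by Lemma~\ref{lemma:bundletrivialization}, whose $\G$-equivariance and compatibility with the base projection are guaranteed by that lemma. Conversely, given $\phi$ I would recover $\gamma$ as the first component of the section $\sigma := \phi\circ\til\sigma_f$, checking $\s\circ\gamma=g$ because $\sigma$ is a section and $\t\circ\gamma = \t^{g^*\G}\circ\phi\circ\til\sigma_f = \t^{f^*\G}\circ\til\sigma_f = f$ since a bundle morphism preserves the anchor. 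As both assignments factor through the already-bijective correspondence of Lemma~\ref{lemma:bundletrivialization}, they are mutually inverse.

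There is little genuine difficulty once Lemma~\ref{lemma:bundletrivialization} is available; the only point requiring care is that $\C$ need not be concrete, so the manipulation of the section $(\gamma,\Id)$ and the translation of the anchor condition into $\t\circ\gamma = f$ must be phrased through the universal property of the fiber product rather than by chasing points. The generalized-element notation above is shorthand for precisely these diagram identities, and the main bookkeeping step is to confirm that the two conditions $\t\circ\gamma = f$ and $\s\circ\gamma = g$ correspond respectively to the anchor-compatibility of $\phi$ and to the fiber-product constraint defining $g^*\G$.
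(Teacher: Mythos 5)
Your reduction is mathematically sensible, but as written it is circular within the paper's logical structure: you invoke Lemma~\ref{lemma:principalbundlemorphisms} to upgrade an arbitrary principal bundle morphism covering the identity to an isomorphism, so that it qualifies as a trivialization in the sense of Lemma~\ref{lemma:bundletrivialization}. However, the paper proves Lemma~\ref{lemma:principalbundlemorphisms} \emph{from} Lemma~\ref{lemma:gbundlecycle}: there the morphism is localized over a cover trivializing both bundles, each local piece $\phi_i$ is identified with a cycle $\gamma_i$ precisely by the lemma you are asked to prove, and $\i \circ \gamma_i$ supplies the local inverses. Since a trivialization is by definition an isomorphism, while the statement of Lemma~\ref{lemma:gbundlecycle} concerns arbitrary bundle morphisms covering the identity, the step you delegate to Lemma~\ref{lemma:principalbundlemorphisms} cannot be justified by that citation.

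The gap is repairable without leaving your framework, and the repair is worth making explicit. Every generalized element of $f^*\G$ factors through the canonical section: $(g',x) = g' \cdot (\u(f(x)),x) = g' \cdot \til\sigma_f(x)$, using the fiber-product constraint $\s(g') = f(x)$ and the unit law. Equivariance then forces $\phi(g',x) = g' \cdot \phi(\til\sigma_f(x)) = (g' \cdot \gamma(x), x)$, so any bundle morphism covering the identity is already determined by its composite with $\til\sigma_f$ and coincides with the trivialization that Lemma~\ref{lemma:bundletrivialization} attaches to $\gamma$. This gives injectivity of $\phi \mapsto \gamma$ directly and shows, as a byproduct rather than a hypothesis, that such morphisms are automatically isomorphisms --- recovering the trivial-bundle case of Lemma~\ref{lemma:principalbundlemorphisms} instead of assuming it. Note that the paper's own proof takes exactly this direct route: it sets $\gamma := \pr_1 \circ \phi \circ \til\sigma_f$ in one direction and $\phi := (\m \circ (\pr_1 \times (\gamma \circ \pr_2))) \times \pr_2$ in the other, verifying equivariance by associativity, and never cites Lemma~\ref{lemma:principalbundlemorphisms}. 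Your section-versus-$\gamma$ dictionary and the anchor computation $\t \circ \gamma = \t^{g^*\G} \circ \phi \circ \til\sigma_f = f$ are both correct and match the paper's bookkeeping; only the dependency order needs to be fixed.
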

\begin{proof}
  Suppose $\phi \colon P \to Q$ is an isomorphism covering the identity. Let $\sigma := (\u \circ f) \times \Id \colon N \to \G \times_{\s, f} N$ be the canonical section.
  Then we define
  \[ \gamma := \pr_1 \circ \phi \circ \sigma \colon N \to \G. \]
  Observe that $\s \circ \gamma = g$ and $\t \circ \gamma = f$.

  On the other hand, suppose we are given such a $\gamma$. Then let
  \[ \phi := (\m \circ (\pr_1 \times (\gamma \circ pr_2))) \times \pr_2 \colon \G \times_{\s, f} N \to \G \times_{\s, g} N \]
  If $\C$ has points this formula is equivalent to $\phi(g,x) = (g \cdot \gamma(x), x)$.
  This morphism is $\G$-equivariant thanks to the associativity of $\G$. The correspondence is one-to-one since
  \[ \gamma = \pr_1 \circ \phi \circ \sigma   \, .\]
\end{proof}
\begin{lemma}\label{lemma:principalbundlemorphisms}
  Let $\G \grpd M$ be a $\C$-groupoid and suppose $P$ and $Q$ are principal left $\G$-bundles. If $\phi \colon P \to Q$ is a principal $\G$-bundle morphism covering the identity, then $\phi$ is an isomorphism.
\end{lemma}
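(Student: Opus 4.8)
The plan is to reduce to a trivializing cover, where a bundle morphism covering the identity becomes the identity map and is therefore manifestly invertible, and then to glue the resulting local inverses using the good-site axioms. I must be careful to avoid invoking Lemma~\ref{lemma:bgtosub}, since that result is proved \emph{using} the present lemma; the only inputs I will use are Lemma~\ref{lemma:bundletrivialization} and the locality of morphisms in a good site, condition (GS3).

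First I would choose a covering sieve $S = \{ s_i \colon U_i \to N \}$ of the common base $N$ together with sections $\sigma_i \colon U_i \to s_i^* P$ of the pulled-back bundles; this is possible because $\s^P \colon P \to N$ is a submersion and hence admits local sections. Writing $\phi_i \colon s_i^* P \to s_i^* Q$ for the pullback of $\phi$ and $f_i := \t^P \circ \sigma_i$, the section $\phi_i \circ \sigma_i$ of $s_i^* Q$ has the \emph{same} anchor $f_i$, since equivariance (GE1) gives $\t^Q \circ \phi = \t^P$. Lemma~\ref{lemma:bundletrivialization} then produces trivializations $\psi_i^P \colon f_i^* \G \to s_i^* P$ and $\psi_i^Q \colon f_i^* \G \to s_i^* Q$ built from these two sections.

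The key local observation is that $\phi_i \circ \psi_i^P = \psi_i^Q$: informally, $\phi_i(g \cdot \sigma_i(x)) = g \cdot (\phi_i \circ \sigma_i)(x)$ by equivariance, so under the two trivializations $\phi_i$ becomes the identity on $f_i^* \G$. Consequently $\phi_i = \psi_i^Q \circ (\psi_i^P)\inv$ is an isomorphism, with explicit inverse $\phi_i\inv = \psi_i^P \circ (\psi_i^Q)\inv$. Equivalently, one may phrase this through Lemma~\ref{lemma:gbundlecycle}, observing that the cocycle $\gamma_i \colon U_i \to \G$ attached to $\phi_i$ is the unit $\u \circ f_i$, which is trivially invertible.

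Finally I would glue. The maps $\{ \pr_1 \colon s_i^* Q \to Q \}$ generate a covering sieve of $Q$, and the local morphisms $\pr_1 \circ \phi_i\inv$ agree on the overlaps $s_{ij}^* Q$, since each restricts to an inverse of $\phi$ and inverses are unique. By (GS3) these glue to a unique morphism $\psi \colon Q \to P$, and the identities $\psi \circ \phi = \Id$ and $\phi \circ \psi = \Id$ hold after restriction to the covering sieve, hence globally by locality of morphisms. I expect the gluing step to be the main obstacle: one must verify that the local inverses are genuine equivariant $\G$-bundle morphisms and that they are compatible on the overlaps so that the good-site hypotheses apply, whereas the invertibility itself is essentially free once the problem has been trivialized.
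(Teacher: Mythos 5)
Your proof is correct and follows essentially the same route as the paper: restrict along a covering sieve admitting sections, identify the localized morphism with a cocycle via Lemma~\ref{lemma:bundletrivialization}/Lemma~\ref{lemma:gbundlecycle}, invert locally, and glue by (GS3); your only (harmless) variation is taking the section of $s_i^* Q$ to be $\phi_i \circ \sigma_i$, so the cocycle is the unit $\u \circ f_i$ rather than a general $\gamma_i$ inverted by $\i \circ \gamma_i$ as in the paper. If anything, you are more explicit than the paper, whose proof stops after showing each $\phi_i$ is an isomorphism and leaves the gluing implicit.
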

\begin{proof}
Let $S = \{ s_i \colon U_i \to N \}$ be a covering sieve such that there exist $\{\sigma_i^P \}$ and $\{ \sigma_i^Q \}$ be sections of $s_i^* P$ and $s_i^*Q$. Such a covering exists since $P \to N$ and $Q \to N$ are submersions. Let $\phi_i$ be the unique morphism which makes
\[
\begin{tikzcd}
  s_i^* P \arrow[d] \arrow[r, "{\phi_i}"] & s_i^* Q \arrow[d]\\
  P \arrow[r, "\phi"] & Q
\end{tikzcd}
\]
commute. Then by Lemma~\ref{lemma:gbundlecycle} each $\phi_i$ is related to some $\gamma_i \colon N \to \G$. By the construction in the lemma, it is clear that $\i \circ \gamma_i$ gives rise to $\phi_i\inv$. Hence each $\phi_i$ is an isomorphism.
\end{proof}

\begin{lemma}\label{lemma:bibundlemorphisms}
  Let $P$ and $Q$ be left principal $(\G, \H)$-bibundles. Then any bibundle morphism $\phi \colon P \to Q$ covering the identity is an isomorphism.
\end{lemma}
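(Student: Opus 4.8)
The plan is to reduce the statement entirely to Lemma~\ref{lemma:principalbundlemorphisms}, which already disposes of the one-sided (principal $\G$-bundle) case. By hypothesis $P$ and $Q$ are \emph{left principal} $(\G,\H)$-bibundles, so forgetting the right $\H$-action exhibits both of them as principal left $\G$-bundles over the same base $N$, with structure projection $\s^P \colon P \to N$ and $\s^Q \colon Q \to N$. A bibundle morphism $\phi \colon P \to Q$ covering the identity is in particular a morphism of left $\G$-bundles covering $\Id_N$. Hence Lemma~\ref{lemma:principalbundlemorphisms} applies verbatim and produces a two-sided inverse $\phi\inv \colon Q \to P$ in $\C$.

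The remaining work is to check that this inverse $\phi\inv$ is again a bibundle morphism, i.e.\ that it is equivariant for both the left $\G$-action and the right $\H$-action. I would carry this out diagrammatically so as to respect the point-free setting of a general good site. Writing $\Phi := \pr_1 \times (\phi \circ \pr_2) \colon \G \times_M P \to \G \times_M Q$, the $\G$-equivariance of $\phi$ is exactly the identity $\phi \circ \m_L^P = \m_L^Q \circ \Phi$. Since $\phi$ is an isomorphism, $\Phi$ is an isomorphism with inverse $\pr_1 \times (\phi\inv \circ \pr_2)$; composing the equivariance identity on the left with $\phi\inv$ and on the right with $\Phi\inv$ yields $\m_L^P \circ (\pr_1 \times (\phi\inv \circ \pr_2)) = \phi\inv \circ \m_L^Q$, which is precisely the $\G$-equivariance of $\phi\inv$. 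The identical manipulation applied to the right multiplication $\m_R$ gives the $\H$-equivariance.

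Since $\phi\inv$ is therefore a $(\G,\H)$-bibundle morphism and is a two-sided inverse of $\phi$ in $\C$, it is the inverse of $\phi$ in the category of $(\G,\H)$-bibundles, so $\phi$ is an isomorphism. The only step requiring any attention is the passage from equivariance of $\phi$ to equivariance of $\phi\inv$, and even there the argument is purely formal --- it is the standard observation that the inverse of an equivariant isomorphism is equivariant, rephrased through commuting squares. I do not anticipate any genuine obstacle, as the substantive content of the lemma is carried entirely by Lemma~\ref{lemma:principalbundlemorphisms}.
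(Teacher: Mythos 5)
Your proof is correct, and it reaches the conclusion by a slightly different decomposition than the paper's. The paper localizes directly: it passes to a covering over which $P$ and $Q$ become trivial bundles and invokes the cocycle description of morphisms of trivial bundles (Lemma~\ref{lemma:gbundlecycle}), where the inverse is exhibited explicitly via $\i \circ \gamma$, the local inverses then gluing because $\C$ is a good site. You instead black-box that localization argument through Lemma~\ref{lemma:principalbundlemorphisms} --- which is itself proved by exactly the localize-and-apply-Lemma~\ref{lemma:gbundlecycle} scheme --- and then supply the purely formal observation that the inverse of a bi-equivariant isomorphism is bi-equivariant, via $\m_L^P \circ (\pr_1 \times (\phi\inv \circ \pr_2)) = \phi\inv \circ \m_L^Q$ and its $\m_R$ analogue. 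So the substantive content (local triviality plus the cocycle lemma) is the same, but your factoring buys something concrete: the paper's ``the result follows immediately'' leaves the $\H$-equivariance of the glued inverse implicit, whereas your formal step records it explicitly and is valid verbatim in the point-free setting of a general good site. One small point deserves a sentence in your write-up: to apply Lemma~\ref{lemma:principalbundlemorphisms} you need $\phi$ to satisfy the projection condition (GBM) over $\Id_N$, i.e.\ $\s^Q \circ \phi = \s^P$; in a bibundle the map $\s^P \colon P \to N$ is simultaneously the left-bundle projection and the anchor of the right $\H$-action, so this identity is exactly condition (GE1) for the $\H$-action and is therefore part of the bibundle-morphism hypothesis --- but it is an appeal to the right action, not automatic from ``forgetting'' it. Likewise, for $\Phi := \pr_1 \times (\phi \circ \pr_2)$ and its claimed inverse $\pr_1 \times (\phi\inv \circ \pr_2)$ to be well defined as maps into the fiber products, one uses $\t^Q \circ \phi = \t^P$ and hence $\t^P \circ \phi\inv = \t^Q$, which again is (GE1), this time for the $\G$-action.
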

\begin{proof}
  It suffices to show that $\phi$ is locally an isomorphism. Hence we can assume without loss of generality that $P = f^* P$ and $Q = f^* Q$ are trivial bundles. Then the result follows immediately from our work in Lemma~\ref{lemma:gbundlecycle}.
\end{proof}

\section{Stack lemmas}

Our stack lemmas will be made more concise by adopting the following terminology.
\begin{definition}
  Let $\pi\colon \X \to \C$ be a CFG over a site. Given an object $M$ in $\C$, descent data over $M$ consists of:
  \begin{itemize}
    \item a covering family ${\{ i_a \colon U_a \to M \}}_{a \in A}$ of $M$,
    \item a collection of objects ${\{ P_a \in \X_{U_a} \}}_{a \in A}$,
    \item morphisms $\phi_{ab}: P_b|_{U_{ab}} \to P_a|_{U_{ab}}$
  \end{itemize}
  such that, for all $a,b,c \in A$, $\phi_{ab}|_{U_{abc}} \circ \phi_{bc}|_{U_{abc}} = \phi_{ac}|_{U_{abc}}$.
  Given a morphism $f\colon M \to N$ in $\C$, descent data over $f$ consists of:
  \begin{itemize}
    \item a covering family ${\{ i_a \colon U_a \to M \}}_{a \in A}$ of $M$ in $\C$,
    \item objects $P \in \X_M$ and $Q \in \X_N$,
    \item morphisms $\phi_a \colon P|_{U_a} \to f^*Q|_{U_a}$ covering the identity,
  \end{itemize}
  such that
  \[ F_a|_{P|_{U_{ab}}} = F_b|_{f^*Q|_{U_{ab}}} \, . \]
\end{definition}
The above definition is designed to correspond with the stack axioms $(S1)$ and $(S2)$. Given descent data over $f$ as above, we say that $\phi: P \to Q$ is a \emph{realization} of this data if $\phi|_{U_a} = \phi_a$ for every $a$.
Similarly, given descent data over an object $M$, we say that $P$, together with maps $\phi_a: P|_{U_a} \to P_a$ is a \emph{realization} of this data if $\phi_{ab} \circ \phi_b|_{U_{ab}} = \phi_a|_{U_ab}$.

Using this terminology the stack axioms can be restated as follows.
\begin{enumerate}[(S1)]
    \item descent data over a morphism $f$ always admits a unique realization.
    \item descent data over an object $M$ always admits a realization.
\end{enumerate}

\begin{lemma}\label{lemma:basechangelemma}
  Suppose
  \[
  \begin{tikzcd}
    \til \X \arrow[d, "\til F"] \arrow[r] & \X \arrow[d, "F"] \\
    \til \Y \arrow[r] & Y
  \end{tikzcd}
  \]
  is a 2-pullback square of CFGs. Then:
  \begin{itemize}
    \item $F$ is an epimorphism implies that $\til F$ is an epimorphism
    \item $F$ is representable implies that $\til F$ is representable
    \item $F$ is a submersion implies that $\til F$ is a submersion
  \end{itemize}
\end{lemma}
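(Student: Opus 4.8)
The plan is to verify each of the three implications by exploiting the canonical construction of the homotopy fiber product of CFGs given in Example~\ref{example:HFPofcats}, so that all claims reduce to statements about this explicit model. Throughout, write $\til\X = \til\Y \til\times_\Y \X$ with its projections $\til F \colon \til\X \to \til\Y$ and $p \colon \til\X \to \X$ (the second projection), so that $F \circ p \isom (\mbox{bottom arrow}) \circ \til F$ via the canonical 2-isomorphism. The key observation I would use repeatedly is that an object of $\til\X$ over an object $\til Y \in \til\Y$ is a triple $(\til Y, a, X)$ with $a$ an isomorphism in $\Y$ identifying the images of $\til Y$ and $X$, and that restriction (pullback along a covering) is computed componentwise in this model.

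First I would treat the epimorphism case. Given an object $\til Y$ in $\til\Y_M$, push it down to $\Y$ along the bottom functor to obtain an object $Y \in \Y_M$. Since $F$ is an epimorphism (locally essentially surjective), there is a covering sieve $\{U_a \to M\}$ such that each $Y|_{U_a}$ is isomorphic to an object $F(X_a)$ in the image of $F$, witnessed by some isomorphism $a_a \colon (\mbox{image of } Y|_{U_a}) \to F(X_a)$. Composing with the restriction isomorphisms, the triple $(\til Y|_{U_a}, a_a, X_a)$ is then an object of $\til\X$ whose image under $\til F$ is isomorphic to $\til Y|_{U_a}$. This exhibits $\til Y|_{U_a}$ as locally in the image of $\til F$, so $\til F$ is an epimorphism.

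Next, for representability, I would take an arbitrary morphism $\bar N \to \til\Y$ with $N \in \C$ and show that $\til\X \til\times_{\til\Y} \bar N$ is representable. Using associativity and commutativity of homotopy fiber products in the bicategory of CFGs (Definition~\ref{defn:2pullback}), this iterated fiber product is canonically equivalent to $\X \til\times_\Y \bar N'$, where $\bar N' \to \Y$ is the composite $\bar N \to \til\Y \to \Y$; the representability of $F$ then furnishes a representing object $\bar Q$, and equivalent CFGs are representable together, giving representability of $\til F$. The main point to get right here is the rearrangement of the nested 2-pullbacks, which I would justify by the universal property rather than by manipulating the explicit model, since chasing triples through three levels of fiber product is where sign-of-coherence bookkeeping is easiest to botch --- this is the step I expect to be the principal obstacle. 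Finally, the submersion case is immediate: a submersion is by definition a representable epimorphism, so combining the first two bullets yields that $\til F$ is a representable epimorphism, hence a submersion, completing the proof.
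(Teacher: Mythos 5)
Your proposal is correct and follows essentially the same route as the paper: the epimorphism case via the explicit triple model of the homotopy fiber product, the representability case via pasting the two 2-pullback squares into a rectangle so that $\bar N \til\times_{\til \Y} \til \X \isom \bar N \til\times_\Y \X$, and the submersion case as an immediate combination of the two. The ``pasting'' step you flag as the principal obstacle is exactly what the paper invokes (two inner 2-pullback squares compose to an outer one), so no extra work beyond the standard universal-property argument is needed.
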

\begin{proof}
  It suffices to show the first two.
  \begin{itemize}
    \item Suppose $F$ is an epimorphism. Let the map $\til \Y \to \Y$ be denoted by $G$. Given an object $\til Y \in \til \Y|_M$, we know that there exists a covering $\{ U_i \}$ of $M$ and isomorphisms $a_i \colon G(\til Y)|_{U_i} \to F(X_i)$ for some objects $X_i \in \X$. Hence the triples:
    \[ \til X_i := (\til Y |_{U_i}, a_i, X_i) \]
    are objects in the fiber product $\til \X \isom \til \Y \times_\Y \X$. Since $\til F (X_i) = \til Y|_{U_i}$ the claim follows.
    \item Suppose $\bar N \to \til \Y$ is a CFG morphism for $\bar N$, representable. Then we have a rectangle:
    \[
    \begin{tikzcd}
      \bar N \til\times_{\til \Y} \til \X \arrow[r] \arrow[d] & \til \X \arrow[d, "\til F"] \arrow[r] & \X \arrow[d, "F"] \\
      \bar N \arrow[r] & \til \Y \arrow[r] & Y
    \end{tikzcd}
    \]
    Since both of the inner squares are pullback squares, it follows that the outer square is also a pullback square. Hence
    \[ N \til\times_{\til \Y} \til \X \isom N \til\times_\Y \X \, .\]
    Since $F$ is representable, it follows that $N \til\times_{\til \Y} \til \X$ is a representable CFG.
  \end{itemize}
\end{proof}
\begin{corollary}\label{cor:basechangeofsubmersions}
  Suppose
  \[
  \begin{tikzcd}
    \til M \arrow[d, "\til f"] \arrow[r] & M \arrow[d, "f"] \\
    \til N \arrow[r] & N
  \end{tikzcd}
  \]
  is a pullback square in a site $\C$. Then $f$ is a submersion implies that $\til f$ is a submersion.
\end{corollary}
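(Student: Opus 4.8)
The plan is to deduce this from the base change lemma for CFGs (Lemma~\ref{lemma:basechangelemma}) by passing to representable stacks. Recall from Example~\ref{ex:morphismsofreps} and Definition~\ref{defn:sub} that a morphism $g$ of $\C$ is a submersion precisely when the associated morphism $\bar g$ of representable CFGs is a submersion (a representable epimorphism). Thus it suffices to show that the functor $M \mapsto \bar M$ carries the given pullback square in $\C$ to a $2$-pullback square of CFGs; then, since $\bar f$ is a submersion by hypothesis, Lemma~\ref{lemma:basechangelemma} forces $\bar{\til f}$ to be a submersion, and hence $\til f$ is a submersion in $\C$.

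First I would make the identification $\til M = \til N \times_N M$ explicit, with $\til f = \pr_1$, which is legitimate because $f$ is cartesian so this fiber product exists. The crux is then to verify that the strict commuting square
\[
\begin{tikzcd}
\bar{\til M} \arrow[r] \arrow[d, "\bar{\til f}"] & \bar M \arrow[d, "\bar f"] \\
\bar{\til N} \arrow[r] & \bar N
\end{tikzcd}
\]
is a $2$-pullback in the sense of Definition~\ref{defn:2pullback}. To do this I would build the canonical comparison functor $\bar{\til M} \to \bar{\til N}\,\til\times_{\bar N}\bar M$ into the homotopy fiber product, using the explicit model of Example~\ref{example:HFPofcats}, and check that it is an equivalence of CFGs. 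On objects, a generalized element $x\colon P \to \til N\times_N M$ over $P$ amounts to a pair of morphisms $P \to \til N$ and $P \to M$ agreeing over $N$, whereas an object of $\bar{\til N}\,\til\times_{\bar N}\bar M$ over $P$ is a triple consisting of morphisms $P \to \til N$, $Q \to M$, and an isomorphism in $\bar N$ identifying their images in $N$. Up to this isomorphism the two descriptions coincide, so fullness, faithfulness, and essential surjectivity all reduce to the Yoneda lemma (full faithfulness of $M \mapsto \bar M$) together with the strict universal property of $\til N\times_N M$.

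The main obstacle is precisely this $2$-pullback verification: one must check carefully that the extra isomorphism datum in the homotopy fiber product does not enlarge the category, i.e. that every object $(a,\alpha,b)$ is canonically isomorphic to one arising from a genuine morphism into $\til M$, and that the induced correspondence on $2$-morphisms is bijective. Once the comparison functor is shown to be an equivalence, the conclusion is immediate from Lemma~\ref{lemma:basechangelemma} and the correspondence between submersions in $\C$ and submersions of representable CFGs.

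Alternatively, I note that the statement also admits a direct proof from Definition~\ref{defn:sub} that sidesteps the CFG machinery, which I would include if the $2$-pullback identification proves unwieldy. Cartesianness of $\til f$ follows from the canonical identification $M' \times_{\til N} \til M \isom M' \times_N M$ for any $g\colon M' \to \til N$ (composing with $h\colon \til N \to N$), the right-hand product existing because $f$ is cartesian. For local sections, I would pull back a local-section covering $\{s_i\colon U_i \to N\}$ of $f$ along $h$ to a covering family $\{\til U_i := \til N \times_N U_i \to \til N\}$ (using (PT2) for stability of covering families under base change), and then set $\til\sigma_i := (p_i, \sigma_i\circ q_i)\colon \til U_i \to \til N \times_N M$, where $p_i,q_i$ are the projections of $\til U_i$ and $\sigma_i$ is the given section; the relation $f\circ\sigma_i\circ q_i = s_i\circ q_i = h\circ p_i$ shows that $\til\sigma_i$ lands in $\til M$ and splits $\til f$ over $\til U_i$. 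Either route works, but the lemma-based argument is the one consistent with the corollary's placement.
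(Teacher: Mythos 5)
Your primary route is exactly the paper's: the paper's entire proof is the one-line remark that the functor $M \mapsto \bar M$ is representable (i.e.\ sends the pullback square to a $2$-pullback of representable CFGs), after which Lemma~\ref{lemma:basechangelemma} and the identification of submersions in $\C$ with submersions of representable stacks finish the argument. You correctly identify and fill in the $2$-pullback verification that the paper leaves implicit (it is trivialized by the fact that fibers of representable CFGs are discrete, so the isomorphism datum in the homotopy fiber product is vacuous), and your alternative direct argument via Definition~\ref{defn:sub} is also sound.
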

\begin{proof}
  Note that the functor which sends an object of $\C$ to its associated stack is representable.
\end{proof}

\begin{lemma}\label{lemma:fiberproductofstacks}
  Let $F\colon \X \to \Zcal$ and $G\colon \Y \to \Zcal$ be CFG morphisms such that $\X$ and $\Y$ are stacks and $\Zcal$ be is a pre-stack. Then $\X \til\times_\Zcal \Y$ is a stack.
\end{lemma}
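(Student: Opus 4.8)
The plan is to verify the two stack axioms (S1) and (S2) for $\W := \X \til\times_\Zcal \Y$ directly, working with the explicit model of the homotopy fiber product from Example~\ref{example:HFPofcats}. Recall that an object of $\W$ over $M$ is a triple $(X, a, Y)$ with $X \in \X_M$, $Y \in \Y_M$, and $a \colon F(X) \to G(Y)$ an isomorphism in $\Zcal_M$, while a morphism is a pair of morphisms in $\X$ and $\Y$ making the evident square commute. The key structural observation — which is what makes the weaker hypothesis on $\Zcal$ suffice — is that the $\X$- and $\Y$-components of any descent datum can be glued using the full stack structure of $\X$ and $\Y$, whereas the role of $\Zcal$ is only ever to glue a \emph{morphism} between two already-constructed global objects $F(X)$ and $G(Y)$; this is exactly the content of (S1), which holds because $\Zcal$ is a pre-stack.

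For (S1), I would start from descent data over a morphism: local morphisms $\phi_a = (\beta_a^\X, \beta_a^\Y)$ of $\W$ agreeing on overlaps. Projecting to each factor gives descent data over the same morphism in $\X$ and in $\Y$; since both are stacks, (S1) yields unique glued morphisms $\beta^\X$ and $\beta^\Y$ restricting to the $\beta_a^\X$ and $\beta_a^\Y$. It then remains to check that the pair $(\beta^\X, \beta^\Y)$ satisfies the commuting square defining a morphism of $\W$. Both sides of this equation are morphisms in $\Zcal$ that agree after restriction to each $U_a$ (because each $\phi_a$ is a morphism and $F$, $G$ commute with restriction), so the locality of morphisms in the pre-stack $\Zcal$ forces them to be equal. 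Uniqueness of the realization in $\W$ is then immediate from uniqueness of $\beta^\X$ and $\beta^\Y$.

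For (S2), I would take descent data over an object $M$, say objects $P_a = (X_a, a_a, Y_a)$ with transition isomorphisms $\phi_{ab} = (\beta_{ab}^\X, \beta_{ab}^\Y)$, which are isomorphisms by Lemma~\ref{lemma:CFGgroupoid}. Projecting to $\X$ and $\Y$ produces descent data there, and (S2) for the stacks $\X$ and $\Y$ yields objects $X \in \X_M$, $Y \in \Y_M$ together with local isomorphisms $\phi_a^\X \colon X|_{U_a} \to X_a$ and $\phi_a^\Y \colon Y|_{U_a} \to Y_a$ realizing the data. The substantive step — and the main obstacle — is to manufacture the gluing isomorphism $a \colon F(X) \to G(Y)$. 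I would set $\tilde a_a := G(\phi_a^\Y)^{-1} \circ a_a \circ F(\phi_a^\X)$ on each $U_a$ and verify, using the realization relations $\beta_{ab}^\bullet \circ \phi_b^\bullet = \phi_a^\bullet$ together with the commuting square encoding that each $\phi_{ab}$ is a morphism of $\W$, that $\tilde a_a$ and $\tilde a_b$ agree on $U_{ab}$. Because $\Zcal$ is a pre-stack, these local isomorphisms glue to a unique global morphism $a$, whose local inverses likewise glue to an inverse, so $a$ is an isomorphism and $(X, a, Y)$ is the desired object of $\W_M$; the pairs $(\phi_a^\X, \phi_a^\Y)$ are then morphisms of $\W$ realizing the original descent data, the square condition holding by the very construction of $a$. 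The only delicate point is this compatibility computation for the $\tilde a_a$, but it is a short diagram chase once the realization relations are substituted, and crucially it never requires gluing \emph{objects} of $\Zcal$ — only morphisms — which is precisely why $\Zcal$ being merely a pre-stack is all that is used.
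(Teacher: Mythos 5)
Your proof is correct and takes essentially the same route as the paper's: project the descent data to the stack factors $\X$ and $\Y$, glue there, and use only the pre-stack axiom (S1) of $\Zcal$ to glue morphisms — first to verify the commuting square in the (S1) case, then to assemble the isomorphism $a \colon F(X) \to G(Y)$ in the (S2) case. If anything, your explicit conjugation $\tilde a_a := G(\phi_a^\Y)^{-1} \circ a_a \circ F(\phi_a^\X)$ and the overlap check make precise a step the paper only glosses when it asserts that the collection $\{a_i\}$ "constitutes descent $\Zcal$ data over the identity morphism."
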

\begin{proof}
  Recall that an object of $\X \til\times_\Zcal \Y$ consists of a triple $(X,a, Y)$ for objects $X \in \X$ and $Y \in \Y$ and a morphism $a\colon F(X) \to G(Y)$ in $\Zcal$ which covers the identity. Recall that a morphism in $\X \til\times_\Zcal \Y$ from $(X_1, a_1, Y_1) \to (X_2, a_2, Y_2)$ consists of two morphisms $(b,b')$ such that $a_2 \circ F(b) = G(b') \circ a_1$.

  (S1) Let $f\colon M \to N$ be a morphism in $\C$ and let $\{ (b_i, b'_i ) \}$ be descent data over $f$ for $\X \til\times_\Zcal \Y$ (we will denote the associated covering family by $\{ U_i \}$).
  Then $\{ b_i \}$ and $\{ b_i ' \}$ constitute descent data in $\X$ and $\Y$ over $f$ respectively.
  Let $b$ and $b'$ be morphisms given by the first stack axiom. Then we need to show that $(b,b')$ is a morphism in $\X \til\times_\Zcal \Y$ which realizes the given data. We conclude that $a_2 \circ F(b) = G(b') \circ a_1$ since this holds when restricted to each $U_i$ in the covering. Furthermore $(b, b')$ is clearly a realization of this data, since the restriction of $(b,b')$ to $U_a$ is just $(b|_{U_a}, b'|_{U_a})$.

  (S2) Suppose $(X_i, a_i, Y_i)$ is descent $\X \til\times_\Zcal \Y$ data over an object $M \in \C$. Let $(\phi_ab, \phi'_ab)$ be the cocycle associated to this data. The definition of composition in $\X \til\times_Zcal \Y$ implies that the collections $\{ \phi_ab \}$ and $ \{ \phi'_{ab} \}$ satisfy the cocycle condition for descent $\X$ and $\Y$ data respectively. Therefore, there must exist objects $X$ and $Y$ realizing this data. Furthermore, the collection $\{ a_i \}$ constitutes descent $\Zcal$ data over the identity morphism.
  Since $\Zcal$ is a prestack we conclude that there exists a morphism $a$ which realizes this data.
  Then the triple $(X, a, Y)$ together with the associated morphisms $(\phi_a, \phi'_a)$ constitutes a realization of the original data.
\end{proof}

\begin{lemma}\label{lemma:representabledomain}
  Let $\X$ be a CFG and $M$ be an object in $\C$. Suppose $F_1 \colon \bar M \to \X$ and $F_2 \colon \bar M \to \X$ are CFG morphisms such that $F_1(\Id_M) \isom F_2(\Id_M)$. Then is a one-to-one correspondence between natural transformation from $F_1$ to $F_2$ and $\Isom(F_1(\Id_M),F_2(\Id_M))$.
\end{lemma}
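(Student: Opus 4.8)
The plan is to exploit the fact that $\Id_M\colon M\to M$ is a terminal object of the slice category $\bar M = \C/M$: for any object $f\colon N\to M$ of $\bar M$ there is a unique morphism $\hat f\colon f \to \Id_M$, namely $f$ itself viewed as a morphism of the slice. A (vertical) natural transformation $\eta\colon F_1\to F_2$ has components $\eta(f)\colon F_1(f)\to F_2(f)$ lying over $\Id_N$ — so each is automatically invertible, since $\X_N$ is a groupoid by Lemma~\ref{lemma:CFGgroupoid} — and the restriction map $\eta\mapsto\eta(\Id_M)$ lands in $\Isom(F_1(\Id_M),F_2(\Id_M))$. I will show this restriction map is a bijection by constructing an explicit inverse.

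First I would establish that $\eta$ is determined by $\eta(\Id_M)$. Since $F_i$ commutes with the projection to $\C$, the morphism $F_i(\hat f)\colon F_i(f)\to F_i(\Id_M)$ lies over $f\colon N\to M$. Naturality of $\eta$ at $\hat f$ gives $F_2(\hat f)\circ\eta(f)=\eta(\Id_M)\circ F_1(\hat f)$. Reading this through (CFG2) with the base factorization $f = f\circ\Id_N$ — taking $a=\eta(\Id_M)\circ F_1(\hat f)$ over $f$ and $b=F_2(\hat f)$ over $f$ — shows that $\eta(f)$ is the \emph{unique} morphism over $\Id_N$ satisfying $F_2(\hat f)\circ\eta(f)=\eta(\Id_M)\circ F_1(\hat f)$. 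Hence two natural transformations agreeing at $\Id_M$ agree on every $f$, so the restriction map is injective.

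For surjectivity, given $\alpha\in\Isom(F_1(\Id_M),F_2(\Id_M))$ I would \emph{define} $\eta_\alpha(f)$ to be the unique morphism over $\Id_N$ produced by (CFG2) with $F_2(\hat f)\circ\eta_\alpha(f)=\alpha\circ F_1(\hat f)$. Taking $f=\Id_M$ (so $\hat f$ is an identity) immediately recovers $\eta_\alpha(\Id_M)=\alpha$. The real content is checking naturality: for a morphism $g\colon f_1\to f_2$ in $\bar M$ one has $\hat{f_1}=\hat{f_2}\circ g$ by uniqueness of maps into the terminal object, whence $F_i(\hat{f_1})=F_i(\hat{f_2})\circ F_i(g)$. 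Both $\eta_\alpha(f_2)\circ F_1(g)$ and $F_2(g)\circ\eta_\alpha(f_1)$ lie over $g$, and postcomposing each with $F_2(\hat{f_2})$ and substituting the defining equations reduces both to $\alpha\circ F_1(\hat{f_1})$; the uniqueness clause of (CFG2), applied to the factorization $f_1=f_2\circ g$, then forces the two composites to coincide, establishing the naturality square. Each $\eta_\alpha(f)$ is invertible because it lies over an identity, so $\eta_\alpha$ is a genuine (invertible) $2$-morphism, and $\alpha\mapsto\eta_\alpha$ inverts the restriction map.

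The main obstacle is purely organizational: keeping straight which base arrow each morphism of $\X$ lies over, and deploying (CFG2) in its two distinct roles — as an existence statement to build the components $\eta_\alpha(f)$, and as a cancellation (uniqueness) statement to verify injectivity and naturality. No geometry or topology enters; the hypothesis $F_1(\Id_M)\isom F_2(\Id_M)$ merely guarantees that both sides of the claimed correspondence are nonempty.
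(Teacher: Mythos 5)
Your proposal is correct and follows essentially the same route as the paper's proof: the paper also defines $\eta(f)$ as the unique morphism over $\Id_N$ (supplied by (CFG2)) making the square with the structure maps $F_i(f)\to F_i(\Id_M)$ and the given isomorphism $a$ commute, with the inverse correspondence given by restriction at $\Id_M$. The only difference is one of thoroughness: the paper asserts uniqueness and stops, whereas you explicitly verify naturality and injectivity via the uniqueness clause of (CFG2), which the paper leaves to the reader.
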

\begin{proof}
  Suppose $F_1(\Id_M) = X$ and $F_2(\Id_M) = Y$. Let $a \colon X \to Y$ be an isomorphism.
  To a natural transformation, we need a function $\eta \colon \bar M_0 \to \bar \X_1$. Let $\eta(f)$ is defined to be the unique isomorphism $F_1(f) \to F_2(f)$ which makes the below diagram commute:
  \[
  \begin{tikzcd}
    F_1(f) \arrow[r] \arrow[d, "\eta(f)"] & X \arrow[d, "a"] \\
    F_2(f) \arrow[r]                      & Y
  \end{tikzcd}
  \]
  To get the reverse correspondence, given such a natural transformation $\eta$, we can let $a := \eta(\Id_M)$.
\end{proof}

\subsection{Groupoids vs Stacks}
\begin{lemma}\label{lemma:gbgpullback}
  Suppose $\G$ is a $\C$-groupoid. Let $p \colon \bar{M} \to \B \G$ be the morphism which send $f \colon N \to M$ to the trivial $\G$-bundle $f^* \G$.
  Then there is a 2-pullback square:
  \[
  \begin{tikzcd}
    \bar \G \arrow[r, "\s"] \arrow[d, "t"] & \bar M \arrow[d, "p"] \\
    \bar M \arrow[r, "p"]                  & \B\G
  \end{tikzcd}
  \]
\end{lemma}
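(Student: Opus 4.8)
The plan is to show that $\bar\G$, equipped with the structure maps $\s,\t \colon \bar\G \to \bar M$, realizes the canonical model of the homotopy fiber product $\bar M \til\times_{\B\G} \bar M$ described in Example~\ref{example:HFPofcats}, and then invoke uniqueness of 2-pullbacks. First I would set up the comparison at the level of objects. An object of $\bar M \til\times_{\B\G} \bar M$ over $N \in \C$ is a triple $(f_1, a, f_2)$ where $f_1, f_2 \colon N \to M$ and $a \colon p(f_1) \to p(f_2)$ is an isomorphism in $\B\G$ covering $\Id_N$; since $p(f_i) = f_i^*\G$, this is precisely an isomorphism of trivial principal $\G$-bundles $a \colon f_1^*\G \to f_2^*\G$ over the identity. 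By Lemma~\ref{lemma:gbundlecycle}, such isomorphisms are in natural bijection with morphisms $\gamma \colon N \to \G$ satisfying $\t\circ\gamma = f_1$ and $\s\circ\gamma = f_2$. An object of $\bar\G$ over $N$ is exactly such a morphism $\gamma \colon N \to \G$, and its images under $\t$ and $\s$ recover $f_1 = \t\circ\gamma$ and $f_2 = \s\circ\gamma$. This defines the comparison functor $E \colon \bar\G \to \bar M \til\times_{\B\G} \bar M$ on objects by $\gamma \mapsto (\t\circ\gamma,\,\phi_\gamma,\,\s\circ\gamma)$, where $\phi_\gamma$ is the bundle isomorphism attached to $\gamma$.

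The 2-cell witnessing commutativity of the square is supplied by the same correspondence: define $\alpha \colon p\circ\t \to p\circ\s$ by $\alpha(\gamma) := \phi_\gamma \colon (\t\circ\gamma)^*\G \to (\s\circ\gamma)^*\G$, which has the direction prescribed by Definition~\ref{defn:2square} for the given square. Naturality of $\alpha$ follows from the functoriality built into Lemma~\ref{lemma:gbundlecycle}, namely that the bijection between cocycles and bundle isomorphisms is compatible with pullback along morphisms of $\C$. By construction $E$ commutes strictly with the two projections to $\bar M$ and carries the canonical 2-cell of the explicit model of Example~\ref{example:HFPofcats} to $\alpha$. Hence it remains only to check that $E$ is an equivalence of categories; because 2-pullbacks are unique up to equivalence (Definition~\ref{defn:2pullback}), this promotes the square with 2-cell $\alpha$ to a genuine 2-pullback square.

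Essential surjectivity is immediate from the object-level bijection: every triple $(f_1,a,f_2)$ equals $E(\gamma)$ for the unique $\gamma = \gamma_a$ produced by Lemma~\ref{lemma:gbundlecycle}. For full faithfulness I would unwind the morphisms on each side. A morphism in $\bar\G$ from $\gamma$ (over $N$) to $\gamma'$ (over $N'$) is a morphism $c \colon N \to N'$ in $\C$ with $\gamma'\circ c = \gamma$. A morphism in $\bar M \til\times_{\B\G}\bar M$ is a base morphism $c$ together with the lifts $b \colon f_1 \to f_1'$ and $b' \colon f_2 \to f_2'$ covering $c$ (which are unique, since $\bar M$ is representable), subject to the compatibility $a'\circ p(b) = p(b')\circ a$. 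The relations $\gamma'\circ c = \gamma$, $\t\circ\gamma = f_1$, $\s\circ\gamma = f_2$ force $f_i'\circ c = f_i$, so the lifts $b,b'$ exist and are determined by $c$; what then needs to be shown is that the compatibility square for $(a,a') = (\phi_\gamma,\phi_{\gamma'})$ holds exactly when $\gamma'\circ c = \gamma$.

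I expect this last translation to be the main obstacle: one must pass the identity $a'\circ p(b) = p(b')\circ a$ through Lemma~\ref{lemma:gbundlecycle} and reduce it to the cocycle equation $\gamma = \gamma'\circ c$, using that pulling back the bundle isomorphism $\phi_{\gamma'}$ along $c$ yields $\phi_{\gamma'\circ c}$. This amounts to a careful diagram chase tracking how pullback squares in $\B\G$ interact with the canonical sections of trivial bundles, but involves no essentially new ingredients beyond Lemma~\ref{lemma:gbundlecycle} and the construction of $p$. Granting it, $E$ is a bijection on $\Hom$-sets, hence an equivalence, and the square is a 2-pullback.
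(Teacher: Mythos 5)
Your proposal is correct and is essentially the paper's own argument: the paper likewise uses the explicit model of the homotopy fiber product from Example~\ref{example:HFPofcats}, takes the 2-cell $\gamma \mapsto \phi_\gamma$ from Lemma~\ref{lemma:gbundlecycle}, and concludes that the comparison functor is a bijection from the same one-to-one correspondence. The only difference is expository: the paper compresses full faithfulness into the phrase ``the 1-1 correspondence tells us the functor is a bijection,'' whereas you correctly isolate the one computation it hides, namely that pulling back $\phi_{\gamma'}$ along $c$ gives $\phi_{\gamma'\circ c}$, which is immediate from the explicit formula $\phi_\gamma = (\m \circ (\pr_1 \times (\gamma \circ \pr_2))) \times \pr_2$ in the proof of Lemma~\ref{lemma:gbundlecycle}.
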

\begin{proof}
  To make the above into a 2-commutative diagram, we must provide the natural transformation. Given an object $\gamma \colon N \to \G$ we need to exhibit an isomorphism $\phi_\gamma \colon (\t \circ \gamma)^* \G \to (\s \circ \gamma)^* \G$.
  Let $\phi_\gamma$ be the isomorphism associated to $\gamma$ via the correspondence provided by Lemma~\ref{lemma:gbundlecycle}. It is routine to check that the resulting function is a natural transformation.

  Since we have a 2-commutative diagram, we obtain a morphism $\G \to \bar M \til\times_{\B\G} \bar M$. On morphisms, this functor acts as below:
  \[
  \begin{tikzcd}
    N_1 \arrow[dr, "\gamma_1"] \arrow[dd, "h"] & \\
    & \G \\
    N_2 \arrow[ur, "\gamma_2"] &
  \end{tikzcd}
  \quad
  \mapsto
  \quad
  \begin{tikzcd}
    \t \circ \gamma_1 \arrow[d, "h"] & (\t \circ \gamma_1)^* \G \arrow[r, "\phi_{\gamma_1}"] \arrow[d] & (\s \circ \gamma_1)^* \G \arrow[d] & \s \circ \gamma_1 \arrow[d, "h"] \\
    \t \circ \gamma_2                & (\t \circ \gamma_2)^* \G \arrow[r, "\phi_{\gamma_2}"] & (\s \circ \gamma_2)^* \G & \s \circ \gamma_2
  \end{tikzcd}
  \]
The 1-1 correspondence from Lemma~\ref{lemma:gbundlecycle} tells us that this functor is actually a bijection. Therefore, the diagram is a pullback square.
\end{proof}
\begin{lemma}\label{lemma:gbgpullback2}
  The identification of $\bar \G$ with $\bar M \til\times_{\B\G} \bar M$ from Lemma~\ref{lemma:gbgpullback} is a CFG groupoid isomorphism.
\end{lemma}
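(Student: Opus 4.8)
The plan is to realize the claimed isomorphism as the pair of functors $(\Phi, \Id_{\bar M})$, where $\Phi \colon \bar\G \to \bar M \til\times_{\B\G} \bar M$ is the equivalence built in Lemma~\ref{lemma:gbgpullback} and $\Id_{\bar M}$ is the identity on the object CFG, and then to verify that this pair commutes with all five groupoid structure maps in the sense of Definition~\ref{defn:CFGgroupoid}. Since $\Phi$ is already known to be an equivalence of categories and $\Id_{\bar M}$ trivially is, only the compatibility with the structure maps remains. Recall that on $\bar\G$ the structure is the image under $M' \mapsto \bar{M'}$ of the $\C$-groupoid operations $\s,\t,\u,\m,\i$ of $\G \grpd M$, while on $\bar M \til\times_{\B\G}\bar M$ it is the canonical structure of Proposition~\ref{prop:CFGgroupoidstruct}: source $=\pr_2$, target $=\pr_1$, unit the diagonal $f \mapsto (f,\Id_{f^*\G},f)$, multiplication by horizontal concatenation, and inverse by horizontal flipping.

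Compatibility with source and target is immediate from the definition $\Phi(\gamma)=(\t\circ\gamma,\ \phi_\gamma,\ \s\circ\gamma)$, since then $\pr_1\circ\Phi=\bar\t$ and $\pr_2\circ\Phi=\bar\s$. The remaining three identities all reduce to statements about the cocycle assignment $\gamma\mapsto\phi_\gamma$ of Lemma~\ref{lemma:gbundlecycle}, whose defining formula is $\phi_\gamma(h,x)=(h\cdot\gamma(x),x)$ on $(\t\circ\gamma)^*\G=\G\times_{\s,\t\circ\gamma}N$. For the unit, the axioms $\t\circ\u=\s\circ\u=\Id_M$ give the outer coordinates $f$, while $\phi_{\u\circ f}(h,x)=(h\cdot\u(f(x)),x)=(h,x)$, so $\Phi(\u\circ f)=(f,\Id_{f^*\G},f)$ is exactly the diagonal; hence $\Phi\circ\bar\u$ is the unit of the fiber-product groupoid.

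For multiplication, the identities $\t\circ\m=\t\circ\pr_1$ and $\s\circ\m=\s\circ\pr_2$ show that for a composable pair ($\s\circ\gamma_1=\t\circ\gamma_2$) the outer coordinates of $\Phi(\gamma_1\cdot\gamma_2)$ and of the horizontal concatenation $\Phi(\gamma_1)\cdot\Phi(\gamma_2)$ agree (they are $\t\circ\gamma_1$ and $\s\circ\gamma_2$), so the claim becomes the cocycle identity $\phi_{\gamma_1\cdot\gamma_2}=\phi_{\gamma_2}\circ\phi_{\gamma_1}$. This follows in one line from associativity of $\m$: $(\phi_{\gamma_2}\circ\phi_{\gamma_1})(h,x)=(h\cdot\gamma_1(x)\cdot\gamma_2(x),x)=(h\cdot(\gamma_1\cdot\gamma_2)(x),x)$, and the order $\phi_{\gamma_2}\circ\phi_{\gamma_1}$ is precisely the one produced by the concatenation of Proposition~\ref{prop:CFGgroupoidstruct}. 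Similarly, for the inverse the axioms $\s\circ\i=\t$ and $\t\circ\i=\s$ match the outer coordinates of $\Phi(\i\circ\gamma)$ with those of the horizontal flip of $\Phi(\gamma)$, reducing the claim to $\phi_{\i\circ\gamma}=\phi_\gamma^{-1}$; and indeed $\phi_\gamma^{-1}(h,x)=(h\cdot\i(\gamma(x)),x)=\phi_{\i\circ\gamma}(h,x)$, since $\i(\gamma(x))\cdot\gamma(x)$ is a unit.

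The main obstacle is bookkeeping rather than depth. One must keep straight the right-to-left convention for $\G$ so that ``source of the first equals target of the second'' corresponds to $\s\circ\gamma_1=\t\circ\gamma_2$ and to the matching of the middle objects in the concatenation; and one must identify the representable fiber product $\bar\G\til\times_{\bar\s,\bar\t}\bar\G\isom\overline{\G\two}$ on the domain side with the homotopy fiber product of the target groupoid, so that $\Phi^{(2)}(\gamma_1,\gamma_2)=(\Phi(\gamma_1),\Phi(\gamma_2))$ is a legitimate map of the two-fold products — this is valid because $\s$ is cartesian. Finally, although the verifications above were phrased on objects, each extends verbatim to morphisms of the slice categories: a morphism there is determined by its base map in $\C$ together with functoriality of pullback, and since $\Phi$ and all the structure functors are assembled from these base maps, the object-level equalities propagate to the morphism level. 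This completes the proof that $(\Phi,\Id_{\bar M})$ is an isomorphism of CFG groupoids.
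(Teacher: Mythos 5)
Your proof is correct and follows essentially the same route as the paper: both hinge on the cocycle identity $\phi_{\gamma_1 \cdot \gamma_2} = \phi_{\gamma_2} \circ \phi_{\gamma_1}$ from Lemma~\ref{lemma:gbundlecycle}, combined with the explicit groupoid structure of Proposition~\ref{prop:CFGgroupoidstruct}. The only difference is that the paper declares the remaining verifications (units, inverses, source/target, morphism level) ``routine,'' whereas you carry them out explicitly, including the bookkeeping identification $\bar\G\til\times_{\bar\s,\bar\t}\bar\G\isom\overline{\G\two}$, all of which checks out.
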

\begin{proof}
  Suppose we are given $\gamma_1,\gamma_2 \colon N \to \G$ such that $\s \circ \gamma_1 = \t \circ \gamma_2$.
  As in Lemma~\ref{lemma:gbgpullback}, let $\phi_\gamma \colon (\t \circ \gamma)^* \G \to (\s \circ \gamma)^* \G$ denote the $\G$-bundle isomorphism provided in Lemma~\ref{lemma:gbundlecycle}.
  Then $\phi_{\m \circ (\gamma_1 \times \gamma_2)} = \phi_{\gamma_2} \circ \phi_{\gamma_1}$.

  Using this relationship and the definition of the groupoid structure on $\bar M \til\times_{\B\G} \bar M$ (see Proposition~\ref{prop:CFGgroupoidstruct}), it is routine to check that $\bar \G \to \bar M \til\times_{\B\G} \bar M$ is an isomorphism of CFG groupoids.
\end{proof}
\begin{lemma}\label{lemma:bundleispullback}
  Let $\G \grpd$ be a $\C$-groupoid and $Q$ be a $\G$ bundle over $N$. Let $q \colon \bar N \to \B\G$ be the CFG morphism which sends $g\colon N' \to N$ to $g^* Q$. Then there is a 2-pullback square:
  \[
  \begin{tikzcd}
    \bar Q \arrow[r, "\s"] \arrow[d, "\t"] & \bar N \arrow[d, "q"] \\
    \bar M \arrow[r]           & \B\G
  \end{tikzcd}
  \]
  Furthermore, the identification $\bar Q \to \bar M \til\times_{\B\G} \bar N$ is a $\bar \G$-bundle isomorphism.
\end{lemma}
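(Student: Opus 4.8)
The plan is to mimic the proof of Lemma~\ref{lemma:gbgpullback}, of which this is the generalization from $Q = \G$ to an arbitrary principal bundle, with Lemma~\ref{lemma:bundletrivialization} playing the role that Lemma~\ref{lemma:gbundlecycle} played there. Write $\s^Q \colon Q \to N$ and $\t^Q \colon Q \to M$ for the source and target of the bundle, so that the functors $\s \colon \bar Q \to \bar N$ and $\t \colon \bar Q \to \bar M$ are post-composition with $\s^Q$ and $\t^Q$. First I would supply the $2$-morphism witnessing commutativity of the square. Since $\bar Q$ is representable, Lemma~\ref{lemma:representabledomain} identifies natural transformations $p \circ \t \to q \circ \s$ with the set $\Isom\bigl( (\t^Q)^* \G, (\s^Q)^* Q \bigr)$ of $\G$-bundle isomorphisms over $Q$ between the values of the two functors at $\Id_Q$. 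Now $(\t^Q)^* \G = \G \times_M Q$ and $(\s^Q)^* Q = Q \times_N Q$, and the required isomorphism is precisely the total action $A \colon \G \times_M Q \to Q \times_N Q$, $A(g,q) = (g \cdot q, q)$, which is an isomorphism exactly because $Q$ is principal. This makes the square $2$-commutative, and the universal property of the homotopy fibre product yields a comparison functor $\Theta \colon \bar Q \to \bar M \til\times_{\B\G} \bar N$ sending an object $h$ to $(\t^Q \circ h, h^* A, \s^Q \circ h)$.

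Next I would show $\Theta$ is an equivalence by exhibiting its inverse explicitly. Recalling the description of the homotopy fibre product from Example~\ref{example:HFPofcats}, an object of $\bar M \til\times_{\B\G} \bar N$ over $Q'$ is a triple $(f, a, g)$ with $f \colon Q' \to M$, $g \colon Q' \to N$, and $a \colon f^* \G \to g^* Q$ an isomorphism of principal $\G$-bundles. Given such a triple, the composite $a \circ \til\sigma$ of $a$ with the canonical section $\til\sigma = (\u \circ f, \Id)$ of $f^* \G$ is a section of $g^* Q = Q \times_{\s^Q, g} Q'$, whose first component $h := \pr_1 \circ a \circ \til\sigma \colon Q' \to Q$ satisfies $\s^Q \circ h = g$ and, since $a$ preserves anchors, $\t^Q \circ h = f$. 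This $h$ is an object of $\bar Q$ and defines the inverse assignment. That the two assignments are mutually inverse is exactly the content of the correspondence between trivializations $a \colon f^* \G \to g^* Q$ and sections of $g^* Q$ in Lemma~\ref{lemma:bundletrivialization}, together with the identification of sections of $Q \times_{\s^Q, g} Q'$ with maps $h \colon Q' \to Q$ over $g$. The same bookkeeping on morphisms, copied from the morphism square displayed in Lemma~\ref{lemma:gbgpullback}, shows $\Theta$ is fully faithful, so the square is a $2$-pullback.

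Finally, for the ``furthermore'' statement, I would equip both sides with their left $\bar\G$-bundle structures, where $\bar\G \isom \bar M \til\times_{\B\G}\bar M$ is the CFG groupoid of Lemma~\ref{lemma:gbgpullback2}: the fibre product $\bar M \til\times_{\B\G}\bar N$ carries the canonical left action of Lemma~\ref{lemma:CFGbundle}, while $\bar Q$ carries the representable action induced by $\m_L \colon \G \times_M Q \to Q$. It then remains to check that $\Theta$ intertwines these two actions, which I expect to be the main obstacle: unwinding the action on the fibre product defined in Lemma~\ref{lemma:CFGbundle}, given by horizontal concatenation of the defining diagrams, and matching it with the geometric multiplication $\m_L$ through the total action $A$ is the one genuinely diagram-heavy verification. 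Everything else reduces, via Lemma~\ref{lemma:bundletrivialization}, to the trivialization--section dictionary already established, so the bulk of the argument is formal once $\Theta$ and its inverse are written down.
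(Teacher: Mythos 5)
Your proposal is correct and takes essentially the same approach as the paper: your comparison functor $\Theta$, sending $h$ to $(\t^Q \circ h,\, h^*A,\, \s^Q \circ h)$, is exactly the paper's functor, because the pullback $h^*A$ of the total action coincides with the trivialization $\phi_\rho$ that the paper builds from the canonical section via Lemma~\ref{lemma:bundletrivialization}, and your inverse is the paper's section-to-trivialization bijection read backwards. The only cosmetic difference is that you construct the $2$-cell once at the universal object $\Id_Q$ via Lemma~\ref{lemma:representabledomain} rather than objectwise, and, like the paper, you leave the $\bar\G$-equivariance as a routine unwinding (the paper isolates it as the identity $\phi_{\m_L \circ (\gamma \times \rho)} = \phi_\rho \circ \phi_\gamma$).
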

\begin{proof}
We should clarify that the left action of $\bar \G$ on $\bar M \til\times_{\B\G} \bar N$ comes from combining Lemma~\ref{lemma:gbgpullback2} with Lemma~\ref{lemma:CFGbundle}.

First off, we observe that a morphism $\rho \colon N' \to Q$ gives rise to a canonical section $\sigma_\rho \colon N' \to (\s \circ \rho)^* Q$.
Furthermore, by Lemma~\ref{lemma:bundletrivialization}, a section provides us with a canonical trivialization $\phi_\rho \colon (\t \circ \rho)^*G \to (\s \circ \rho)^*Q$.
Hence, we can define a functor $\bar Q \to \bar M \til\times_{\B\G} \bar N$:
\[
\begin{tikzcd}
  N_1 \arrow[dr, "\rho_1"] \arrow[dd, "h"] & \\
  & Q \\
  N_2 \arrow[ur, "\rho_2"] &
\end{tikzcd}
\quad
\mapsto
\quad
\begin{tikzcd}
  \t \circ \rho_1 \arrow[d, "h"] & (\t \circ \rho_1)^* \G \arrow[r, "\phi_{\rho_1}"] \arrow[d] & (\s \circ \rho_1)^* Q \arrow[d] & \s \circ \rho_1 \arrow[d, "h"] \\
  \t \circ \rho_2                & (\t \circ \rho_2)^* \G \arrow[r, "\phi_{\rho_2}"]           & (\s \circ \rho_2)^* Q           & \s \circ \rho_2
\end{tikzcd}
\]
The functor is a bijection since trivializations $\phi \colon f^* \G \to g^*Q$ over the identity $N' \to N'$ are in 1-1 correspondence with morphisms $\rho \colon N' \to Q$ such that $\s \circ \rho = g$.
Therefore $\bar Q \to \bar M \til\times_{\B\G} \bar N$ is an isomorphism. Now suppose $\gamma \colon N' \to \G$ and $\rho \colon N' \to Q$. Then one can easily check that
\[ \phi_{\m_L \circ (\gamma \times \rho)} = \phi_{\rho} \circ \phi_{\gamma}  \, .  \]
From this formula, it follows from a routine calculation using the definition of the $\bar M \til\times_{\B\G} \bar N$ action that the identification $\bar Q \to \bar M \til\times_{\B\G} \bar N$ is (strictly) $\bar \G$-equivariant.
\end{proof}
\begin{proposition}\label{prop:groupoidaspullback}
Let $\G$ be a $\C$-groupoid and $\bar M \to \B\G$ be the canonical presentation of its stack. Then $\bar \G \isom \bar M \til\times_{\B \G} \bar M$ as CFG groupoids.
\end{proposition}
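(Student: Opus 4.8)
The plan is to assemble this proposition directly from the two preceding lemmas, since the substantive work has already been carried out there. First I would recall that the object $\bar M \til\times_{\B\G} \bar M$ carries a canonical structure of a strict groupoid internal to the bicategory of CFGs: this is exactly the output of Proposition~\ref{prop:CFGgroupoidstruct} applied to the presentation functor $p \colon \bar M \to \B\G$, which sends $f\colon N \to M$ to the trivial bundle $f^* \G$. Thus both $\bar \G \grpd \bar M$, with the representable structure maps inherited from the $\C$-groupoid $\G \grpd M$, and $\bar M \til\times_{\B\G} \bar M \grpd \bar M$ are CFG groupoids over the common base $\bar M$, and the goal is to exhibit an isomorphism between them in the sense of Definition~\ref{defn:CFGgroupoid}.

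Next I would invoke Lemma~\ref{lemma:gbgpullback}, which produces an explicit comparison functor $\bar \G \to \bar M \til\times_{\B\G} \bar M$ realizing $\bar\G$ as the homotopy fiber product. Concretely, this functor sends a generalized element $\gamma \colon N \to \G$ to the triple $(\t \circ \gamma, \phi_\gamma, \s \circ \gamma)$, where $\phi_\gamma \colon (\t\circ\gamma)^*\G \to (\s\circ\gamma)^*\G$ is the trivial-bundle isomorphism associated to $\gamma$ by the cocycle correspondence of Lemma~\ref{lemma:gbundlecycle}. That lemma already records that this assignment is a bijection, so the comparison is an isomorphism of CFGs; in particular it intertwines the source and target maps, which on both sides are $\s \circ \gamma$ and $\t \circ \gamma$ (the second and first projections of the fiber product, respectively).

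It then remains only to check that this isomorphism of CFGs is compatible with the remaining groupoid operations: the unit, the multiplication, and the inverse. This is precisely the content of Lemma~\ref{lemma:gbgpullback2}. The crucial identity is $\phi_{\m \circ (\gamma_1 \times \gamma_2)} = \phi_{\gamma_2} \circ \phi_{\gamma_1}$, which says that multiplying cocycles in $\G$ corresponds to composing the associated trivializations. Matching this against the definition of multiplication on $\bar M \til\times_{\B\G}\bar M$ (horizontal concatenation of the defining squares, from Proposition~\ref{prop:CFGgroupoidstruct}) shows that the comparison functor is multiplicative, and the unit and inverse are handled by the same type of computation.

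I do not anticipate a genuine obstacle: the proposition is essentially the conjunction of Lemma~\ref{lemma:gbgpullback} and Lemma~\ref{lemma:gbgpullback2}. The only point requiring care is the bookkeeping of verifying that the two a priori different groupoid structures, namely the representable one descending from $\m$ on $\G$ and the concatenation structure on the homotopy fiber product, are carried to one another, and this is exactly what the functoriality identity for the family $\{\phi_\gamma\}$ accomplishes.
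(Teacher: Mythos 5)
Your proof is correct and takes essentially the same approach as the paper: the paper likewise realizes the comparison functor $\gamma \mapsto (\t \circ \gamma, \phi_\gamma, \s \circ \gamma)$ via the cocycle correspondence of Lemma~\ref{lemma:gbundlecycle} and reduces compatibility with the groupoid operations to the identity $\phi_{\m(\gamma_1,\gamma_2)} = \phi_{\gamma_2} \circ \phi_{\gamma_1}$, which is exactly the content of Lemma~\ref{lemma:gbgpullback2} that you invoke. The only cosmetic difference is that the paper cites Lemma~\ref{lemma:bundleispullback} for the 2-pullback square (applying it to $\G$ as a bundle over $M$) and repeats the multiplicativity check inline, closing with the remark that both sides have trivial isotropy, whereas you assemble the two preceding lemmas directly.
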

\begin{proof}
  By Lemma~\ref{lemma:bundleispullback}, we already know that $\G$ fits into a 2-pullback square:
  \[
  \begin{tikzcd}
    \bar \G \arrow[d, "\t"]  \arrow[r, "\s"] & \bar M \arrow[d] \\
    \bar M \arrow[r]                         & \B \G
  \end{tikzcd}
  \]
  We only need to show that the isomorphism $\bar \G \to \bar M \times_{\B\G} \bar M$ is a CFG groupoid homomorphism.
  The mapping $\bar \G \to \bar M \times_{\B\G} \bar M$ sends $\gamma \colon N \to \G$ to the triple $(\t \circ \gamma, \phi_\gamma , \s \circ \gamma)$ where $\phi_\gamma$ is the isomorphism covering the identity associated to $\gamma$ (see Lemma~\ref{lemma:gbundlecycle}).

  The groupoid operation on $\bar \G$ is defined as follows: suppose $\gamma_1 \colon N \to \G$ and $\gamma_2 \colon N \to \G$ are morphisms such that $\s \circ \gamma_1 = \t \circ \gamma_2$. Then $\gamma_1 \cdot \gamma_2 = \m(\gamma_1,\gamma_2)$.
  We also know that $\phi_{\gamma_2} \circ \phi_{\gamma_1} = \phi_{\m(\gamma_1,\gamma_2)}$. Then it is clear from the definition of the CFG groupoid structure on $\bar M \til\times_{\B\G} \bar M$ that the functor is a groupoid homomorphism at the level of objects. Since both categories have trivial isotropy, we conclude that the functor $\bar \G \to \bar M \times_{\B\G} \bar M$ is a CFG groupoid isomorphism.
\end{proof}
\subsection{Stackification}
\begin{definition}
  Let $\pi\colon \X \to \C$ be a CFG over a site $\C$. Then the \emph{canonical stackification} of $\X$, denoted $\mathcal{S}\X$ is constructed as follows:
  \begin{itemize}
    \item An object of $\Scal\X$ over $M$ consist of descent data over $M$.
    \item Suppose $\{P_i \}$ and $\{Q_j \}$ are descent data over $M$ and $N$ respectively. Let the coverings associated to this data be $S := \{ U_i \to M \}$ and $T := \{ V_j \to N \}$.
    A morphism $f \colon \{ P_i \} \to \{ Q_j \}$ covering $f \colon M \to N$ is represented by a collection of morphisms $\{\phi_i \colon P_i \to Q_j \}$ defined relative to a refinement $\{ W_i \to M \} \subset V \subset S \cap f^*T$, which satisfy the descent data over $f$ condition relative to the cocycles identifying the $P_i$ and $Q_j$ on each double intersection.
    Two representations of a morphism are declared equivalent if they are equal on some refinement of their respective coverings.
  \end{itemize}
  Composition of morphisms is performed by finding representatives which are defined relative to the same covering and then composing in the natural way.
\end{definition}
The CFG structure comes from projecting descent data over $f$ to $f$. The axiom (CFG1) is satisfied by just pulling back the descent data along $f$. On the other hand, (CFG2) follows by observing that it holds locally, and since morphisms (by construction) are characterized by their local behavior, the necessary morphism must exist. That the canonical stackification results in a stack is true tautologically since it replaces objects and morphisms with descent data.
\begin{definition}\label{defn:localequivalence}
Suppose $F \colon \X \to \Y$ is a morphism of CFGs. Then $F$ is called a \emph{local equivalence} if both a monomorphism (fully faithful) and an epimorphism (locally essentialy surjective).
\end{definition}
The following is the main property of local equivalences that we are interested in.
\begin{lemma}\label{lemma:localequivalenceofstacks}
  Suppose $F \colon \X \to \Y$ is a local equivalence for $\X$ a stack and $\Y$ a CFG. Then $F$ is an equivalence of CFGs and therefore $Y$ is a stack.
\end{lemma}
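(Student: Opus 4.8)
The plan is to show that a local equivalence $F \colon \X \to \Y$ with $\X$ a stack forces $\Y$ to be a stack as well, and moreover that $F$ is an honest equivalence of CFGs (i.e.\ admits a pseudo-inverse). The key tool is the canonical stackification $\Scal$ together with the universal property that any CFG morphism factors through the canonical map into its stackification. Since $F$ is a monomorphism (fully faithful) and an epimorphism (locally essentially surjective), I will first argue that the induced map on stackifications $\Scal F \colon \Scal \X \to \Scal \Y$ is an equivalence. Because $\X$ is already a stack, the canonical map $\X \to \Scal \X$ is an equivalence of CFGs, so we may identify $\Scal \X \isom \X$.

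First I would record the two consequences of the hypotheses in terms of descent data. Full faithfulness of $F$ means that for objects $X_1, X_2 \in \X_M$ the map $\Hom(X_1,X_2) \to \Hom(F X_1, F X_2)$ is a bijection; this is exactly what is needed to show that $F$ reflects and preserves the realizations appearing in axiom (S1). Local essential surjectivity says every object $Y \in \Y_M$ admits a covering ${\{ U_a \to M \}}$ with each $Y|_{U_a} \isom F(X_a)$ for some $X_a \in \X_{U_a}$. The plan is to use these $X_a$, transported across $F$ via full faithfulness, to assemble descent data in $\X$: the isomorphisms $Y|_{U_{ab}} \isom F(X_a)|_{U_{ab}}$ and $Y|_{U_{ab}} \isom F(X_b)|_{U_{ab}}$ compose to give $F(X_b|_{U_{ab}}) \to F(X_a|_{U_{ab}})$, which by full faithfulness descends to a unique $\phi_{ab} \colon X_b|_{U_{ab}} \to X_a|_{U_{ab}}$ in $\X$; the cocycle condition then follows from full faithfulness applied on triple overlaps.

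Next, since $\X$ is a stack, axiom (S2) supplies an object $X \in \X_M$ realizing this descent data, with gluing isomorphisms $X|_{U_a} \isom X_a$. Applying $F$ and composing with the original local trivializations of $Y$ gives isomorphisms $F(X)|_{U_a} \isom Y|_{U_a}$ that agree on overlaps (again by full faithfulness of $F$), so that the pre-stack axiom (S1) for $\Y$—which I would verify directly, as it transfers along a fully faithful $F$—produces a global isomorphism $F(X) \isom Y$. This shows $F$ is essentially surjective, hence an equivalence of categories; combined with full faithfulness, $F$ is an equivalence of CFGs. To conclude that $\Y$ is a stack, I would transport the descent axioms along the equivalence $F$: (S1) and (S2) are invariant under equivalence of CFGs over $\C$, since realizations in $\Y$ correspond bijectively to realizations in $\X$ under the equivalence.

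The main obstacle I anticipate is the careful bookkeeping in transporting descent data back and forth across $F$ while keeping all the coverings and refinements compatible—specifically, ensuring the $\phi_{ab}$ constructed via full faithfulness genuinely satisfy the cocycle condition and that the glued object $F(X)$ recovers $Y$ rather than merely a locally isomorphic object. This is where full faithfulness must be invoked repeatedly to promote local identities of morphisms to global ones, and where one must check that (S1) for $\Y$ (the pre-stack condition) can be established first, independently, so that the uniqueness needed to glue the isomorphisms $F(X)|_{U_a} \isom Y|_{U_a}$ is available. The verification that (S1) holds in $\Y$ is essentially immediate from full faithfulness, so I would dispatch that at the outset; the genuine work lies in the (S2)-style gluing argument.
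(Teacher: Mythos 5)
Your main line is the same as the paper's: reduce everything to essential surjectivity, trivialize $Y \in \Y_M$ over a covering $\{U_a \to M\}$ as $F(X_a)$, use full faithfulness to descend the comparison isomorphisms to a cocycle $\phi_{ab} \colon X_b|_{U_{ab}} \to X_a|_{U_{ab}}$ in $\X$, glue by (S2) in the stack $\X$ to obtain $X \in \X_M$, and push the gluing maps forward to isomorphisms $F(X)|_{U_a} \isom Y|_{U_a}$ compatible on overlaps. (The opening detour through the stackification $\Scal$ is never actually used in your argument, and would risk circularity anyway, since the paper's Lemma~\ref{lemma:stackification} is proved \emph{from} this lemma.) The gap is in your last step, and specifically in the claim you use to dispatch it: you assert that (S1) for $\Y$ ``is essentially immediate from full faithfulness'' and can be verified at the outset. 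It cannot. Axiom (S1) for $\Y$ quantifies over arbitrary $P, Q \in \Y_M$, which a local equivalence controls only \emph{locally}: full faithfulness identifies $\Hom(X_1,X_2)$ with $\Hom(FX_1,FX_2)$ only when both objects are globally in the image of $F$, and that is exactly what is in question. Concretely, take $\C$ to be the opens of the two-point discrete space $\{a,b\}$ (a good site), let $\X$ be the terminal (representable) stack with one object over each open, and let $\Y$ be the CFG of the presheaf with two objects $s \neq t$ over $\{a,b\}$, one object over each proper open, and the evident unique arrows; send the top object of $\X$ to $s$. Then $F$ is fully faithful and locally essentially surjective (both $t|_{\{a\}}$ and $t|_{\{b\}}$ lie in the image), yet $t$ is isomorphic to no $F(X)$, so $F$ is not an equivalence, and $\Y$ violates (S1): the identities $s|_{\{a\}} \to t|_{\{a\}}$ and $s|_{\{b\}} \to t|_{\{b\}}$ agree on the overlap but glue to no morphism $s \to t$. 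So no bookkeeping closes your final gluing for a bare CFG $\Y$; the existence half of (S1) in $\Y$ is an independent hypothesis, not a consequence of the hypotheses stated.

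In fairness, the paper's own proof is elliptical at precisely the same spot: it concludes that $F(X)$ and $Y$, being two realizations of the same descent data, must be isomorphic, and uniqueness of realizations up to isomorphism is itself an (S1)-type property of $\Y$. The lemma is correct, and is the one actually invoked, when $\Y$ is additionally a pre-stack --- which holds in every application in the paper (in Lemma~\ref{lemma:stackification} the target is a stackification, hence a stack). So your instinct that the morphism-gluing axiom for $\Y$ must be secured \emph{before} the (S2)-style argument is exactly right; the error is the claim that it comes for free from full faithfulness. The repair is to add the pre-stack hypothesis on $\Y$ (or verify (S1) by hand in each application), after which your argument and the paper's proof coincide step for step.
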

\begin{proof}
  We only need to show that $F$ is essentially surjective. Suppose $Y$ is an object in $\Y_M$. Since $F$ is locally essentially surjective, we know that there exists a covering $\{ U_i \to M \}$ of $M$ and objects $X_i \in \X_{U_i}$ such that $F(X_i) = Y_i := Y|_{U_i}$. The $Y_i$ come with natural identifications $\phi_{ij} \colon Y_j \to Y_i$. We have assumed that $F$ is fully faithful so there exist corresponding identifications $\psi_{ij} \colon X_j \to X_i$.
  These identifications must satisfy the cocycle condition and (since we have assumed that $\X$ is a stack), we can conclude that there exists $X \in \X_M$ and $\psi_i \colon X|_{U_i} \to X_i$ realizing this data.

  Since $\psi_i := F(\psi_i) \colon F(X)|_{U_i} \to Y_i$ realizes the $\{Y_i \}$ descent data, we can conclude that $F(X)$ is isomorphic to $Y$.
\end{proof}

\begin{lemma}\label{lemma:prestacklocalequiv}
  If $\X$ is a pre-stack. Then the canonical map $\X \to \mathcal{S}\X$ is a local equivalence.
\end{lemma}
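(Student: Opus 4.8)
The plan is to prove the statement
\begin{lemma*}
  If $\X$ is a pre-stack, then the canonical map $\iota \colon \X \to \mathcal{S}\X$ is a local equivalence.
\end{lemma*}
by unwinding the definitions of \emph{local equivalence} (fully faithful and locally essentially surjective) and of the canonical stackification $\mathcal{S}\X$, and verifying each of the two conditions directly. The map $\iota$ sends an object $X \in \X_M$ to the trivial descent datum over $M$ given by the covering family $\{\Id_M\}$ (equivalently, any covering family together with $X$ restricted to it and the canonical cocycles), and sends a morphism to its tautological representative. I would first record this definition of $\iota$ precisely so that the subsequent verifications have something concrete to refer to.

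First I would establish \emph{local essential surjectivity}. An arbitrary object of $(\mathcal{S}\X)_M$ is by construction a piece of descent data: a covering family $\{U_i \to M\}$, objects $P_i \in \X_{U_i}$, and cocycles $\phi_{ij}$. Restricting this datum along each $U_i \to M$ yields an object of $(\mathcal{S}\X)_{U_i}$ which is, by the refinement relation defining morphisms in $\mathcal{S}\X$, canonically isomorphic to $\iota(P_i)$. Thus the given object becomes isomorphic to something in the image of $\iota$ after pulling back along the covering $\{U_i \to M\}$, which is exactly the epimorphism (locally essentially surjective) condition. This step is essentially formal once the refinement equivalence on morphisms is used correctly.

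Next I would prove that $\iota$ is \emph{fully faithful}, and this is where the pre-stack hypothesis is used and is the main obstacle. Fix $X, Y \in \X_M$; I must show $\Hom_\X(X,Y) \to \Hom_{\mathcal{S}\X}(\iota X, \iota Y)$ is a bijection. A morphism $\iota X \to \iota Y$ covering $\Id_M$ is represented by a family $\{\phi_i \colon X|_{U_i} \to Y|_{U_i}\}$ over some covering $\{U_i \to M\}$ satisfying the descent-over-$f$ compatibility, i.e.\ agreeing on double overlaps $U_{ij}$; injectivity follows because two global morphisms $\X \to \Y$ agreeing on a covering are equal, which is precisely the uniqueness clause in the pre-stack axiom (S1). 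Surjectivity is the assertion that any such compatible local family $\{\phi_i\}$ glues to a genuine morphism $\phi \colon X \to Y$ in $\X$; this is exactly the existence clause of (S1), which holds since $\X$ is a pre-stack. One must also check that the equivalence relation defining morphisms of $\mathcal{S}\X$ (equality on a common refinement) does not identify distinct morphisms of $\X$, again a consequence of the uniqueness half of (S1). Because a pre-stack satisfies (S1) but not necessarily (S2), it is important that full faithfulness requires only gluing of \emph{morphisms}, not of objects.

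Finally I would assemble these two verifications: $\iota$ is a monomorphism (fully faithful) and an epimorphism (locally essentially surjective), hence a local equivalence by Definition~\ref{defn:localequivalence}. I expect the only delicate bookkeeping to be the careful handling of refinements of coverings when comparing representatives of morphisms in $\mathcal{S}\X$, and the clean identification of the two halves of axiom (S1) with injectivity and surjectivity of the $\Hom$-map; no nontrivial construction beyond (S1) is needed, which is consistent with the fact that stackification of a pre-stack should only adjust objects, leaving the morphism theory unchanged up to gluing.
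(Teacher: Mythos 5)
Your proposal is correct and follows essentially the same route as the paper's proof: full faithfulness is exactly the two halves of the pre-stack axiom (S1) applied to morphisms between $\iota X$ and $\iota Y$, and local essential surjectivity comes from restricting a descent datum $\{P_i\}$ along its own covering $\{U_i \to M\}$ to recover objects in the image of $\X$. The paper states both steps more tersely, but your expanded bookkeeping of refinements and of the injectivity/surjectivity split is just a careful unwinding of the same argument.
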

\begin{proof}
  Since $\X$ is a pre-stack and so morphisms in $\X$ are assumed to be determined by descent data, it follows that $\X \to \mathcal{S}\X$ is a monomorphism. That $\X$ is an epimorphism is clear since any descent data $ \{ P_i \}$ over $M$ in $\X$ relative to a covering $\{ U_i \to M \}$ can be restricted to any single component $P_i$, which is certainly in the image of $\X$.
\end{proof}
Our last lemma on the topic of stackification is the observation that it is functorial, and it is well behaved for local equivalences.
\begin{lemma}\label{lemma:stackification}
  Suppose $F \colon \X \to \Y$ be a morphism of pre-stacks. There is a canonical morphism of CFGs
  \[ \mathcal{S}F \colon \mathcal{S} \X \to \mathcal{S} \Y \]
  which makes
  \[
  \begin{tikzcd}
    \X \arrow[d, hook] \arrow[r, "F"] & \Y \arrow[d, hook] \\
    \mathcal{S} \X \arrow[r, "\mathcal{S}F"] & \mathcal{S} \Y
  \end{tikzcd}
  \]
  commute. Lastly, $\mathcal{S}F$ is an equivalence of stacks if $F$ is a local equivalence.
\end{lemma}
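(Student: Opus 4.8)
The plan is to first construct $\mathcal{S}F$ by transport of descent data, and then to reduce the equivalence claim to Lemma~\ref{lemma:localequivalenceofstacks}. For the construction, recall that an object of $\mathcal{S}\X$ over $M$ is descent data consisting of a covering $\{U_i \to M\}$, objects $P_i \in \X_{U_i}$, and gluings $\phi_{ij}$. Since $F$ is a functor over $\C$ commuting with the projection, applying it componentwise yields the triple $(\{U_i \to M\}, \{F(P_i)\}, \{F(\phi_{ij})\})$, which is again descent data in $\Y$ because $F$ preserves the cocycle condition on triple overlaps. On a morphism of descent data, represented relative to a common refinement by local morphisms, I apply $F$ to each local piece; this is independent of the chosen representative (two representatives agree on a finer cover, hence so do their images) and is manifestly functorial. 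The square commutes on the nose: writing $\iota_\X \colon \X \to \mathcal{S}\X$ and $\iota_\Y \colon \Y \to \mathcal{S}\Y$ for the canonical embeddings, both $\iota_\Y \circ F$ and $\mathcal{S}F \circ \iota_\X$ send an object $P$ to the trivial descent datum of $F(P)$.

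Now suppose $F$ is a local equivalence (Definition~\ref{defn:localequivalence}). I would show that $\mathcal{S}F$ is again a local equivalence. For full faithfulness, fix objects $\xi,\eta$ of $\mathcal{S}\X$ over $M$ and a morphism $\mathcal{S}F(\xi) \to \mathcal{S}F(\eta)$ in $\mathcal{S}\Y$. Choosing a covering on which $\xi$ and $\eta$ are represented by genuine objects $P_i, Q_j$ of $\X$, this morphism is represented by local morphisms $F(P_i) \to F(Q_j)$ in $\Y$. Because $F$ is fully faithful, each lifts to a unique morphism $P_i \to Q_j$ in $\X$, and faithfulness forces these lifts to satisfy the descent condition over the base map (their images do, and $F$ reflects equalities of morphisms). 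Hence they assemble into a morphism $\xi \to \eta$ whose image is the given one, and uniqueness follows from faithfulness of $F$ together with the fact that morphisms in $\mathcal{S}\X$ are determined by local data.

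For local essential surjectivity, take $\zeta \in \mathcal{S}\Y_M$, say descent data along $\{U_i \to M\}$ with pieces $R_i \in \Y_{U_i}$; restricting to $U_i$, the object $\zeta|_{U_i}$ is isomorphic to the trivial descent datum of the single object $R_i$. Since $F$ is locally essentially surjective there is a covering $\{U_{ia} \to U_i\}$ with $R_i|_{U_{ia}} \isom F(P_{ia})$ for objects $P_{ia}$ of $\X$, so $\zeta|_{U_{ia}} \isom \mathcal{S}F(\iota_\X(P_{ia}))$. As $\{U_{ia} \to M\}$ is a covering, each restriction of $\zeta$ lies in the image of $\mathcal{S}F$ up to isomorphism, which is precisely local essential surjectivity. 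Combining the two parts, $\mathcal{S}F$ is a local equivalence between the stacks $\mathcal{S}\X$ and $\mathcal{S}\Y$; since $\mathcal{S}\X$ is a stack, Lemma~\ref{lemma:localequivalenceofstacks} upgrades $\mathcal{S}F$ to an equivalence of CFGs, and as $\mathcal{S}\Y$ is also a stack this is an equivalence of stacks.

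The main obstacle I anticipate is the bookkeeping around refinements: verifying that the lifted local morphisms $P_i \to Q_j$ genuinely satisfy the descent-over-$f$ condition relative to the gluing cocycles of $\xi$ and $\eta$ (and not merely after applying $F$), and confirming that the whole assignment is well defined on the equivalence classes of locally represented morphisms that constitute $(\mathcal{S}\X)_1$. These are exactly the points where the fully faithfulness of $F$ must be used with care; once they are settled, the passage through Lemma~\ref{lemma:localequivalenceofstacks} is immediate.
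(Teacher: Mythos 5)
Your proposal is correct and takes essentially the same route as the paper: both construct $\mathcal{S}F$ by pushing descent data forward along $F$, observe the square commutes because both composites send an object to the trivial descent datum of its image, and reduce the equivalence claim to showing $\mathcal{S}F$ is a local equivalence so that Lemma~\ref{lemma:localequivalenceofstacks} applies (using that $\mathcal{S}\X$ is a stack). Your argument merely fills in details the paper's terser proof leaves implicit --- lifting local representatives via full faithfulness of $F$, using faithfulness to see the lifted morphisms satisfy the descent conditions, and refining covers for local essential surjectivity --- all of which check out.
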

\begin{proof}
  Since a morphism of CFGs sends descent data to descent data, the functor $\mathcal{S}F$ is defined in the obvious manner such that it pushes forward descent data along $F$. This clearly makes the necessary diagram commute.

  Suppose $F$ is a local equivalence. For the second part, we only need to show that $\mathcal{S}F$ is a local equivalence by Lemma~\ref{lemma:localequivalenceofstacks}. Since $\Y \to \mathcal{S} \Y$ is a local equivalence and the image of $\mathcal{S}F$ includes the image of $\Y$, we can conclude that $\mathcal{S} F$ is locally essentially surjective.
  To show that $\mathcal{S}F$ is fully faithful, it suffices to show that $\Hom(\{ X_i \}, \{ X'_i \}) \to \Hom(\{ F(X_i) \}, \{ F(X'_i) \})$ is a bijection for small objects of $\mathcal{S} \X$. But this certainly holds for objects in the image of $\X \into \mathcal{S} \X$.
\end{proof}
\chapter{Dirac structures}

\section{D-Lie groupoids}

\begin{lemma}\label{lemma:dlieconstruction}
  Suppose $\G \grpd M$ is a Lie groupoid together with the following data:
  \begin{itemize}
    \item a pair of 2-forms $\sigma$ and $\tau$ on $\G$;
    \item and a $\phi$-twisted Dirac structure $L_M$ on $M$.
  \end{itemize}
  Such that, for $\Omega := \tau - \sigma$,
  \begin{itemize}
    \item $\m^* \Omega = \pr_1^* \Omega + \pr_2^* \Omega$
    \item $\t^* L_M = \s^* L_M + \Omega$
  \end{itemize}
  Then we can equip $\G$ with a $\s^* \phi - \dif \sigma$-twisted Dirac structure $L_\G := \s^* L_M - \sigma = \t^* L_M - \tau$. Furthermore, if we take the 2-forms
  $\mu$, $\iota$, and $\upsilon$ defined by the equations below to be the gauge parts of $\m$, $\i$, and $\u$ respectively. Then these maps constitute a well defined D-Lie groupoid.
  \begin{equation}
  \m^* \sigma + \mu = \pr_2^* \sigma + \pr_1^* \sigma \,
  \end{equation}
  \begin{equation}
    \i^* \sigma + \iota = \tau \,
  \end{equation}
  \begin{equation}
    \u^* \sigma + \upsilon = 0 \, \,
  \end{equation}
\end{lemma}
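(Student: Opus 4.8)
The statement asserts that the data $(\sigma, \tau, L_M)$ on a Lie groupoid $\G$, subject to the two multiplicativity/compatibility conditions on $\Omega = \tau - \sigma$, assembles into a genuine D-Lie groupoid once we declare $L_\G := \s^* L_M - \sigma$ and read off the gauge parts $\mu, \iota, \upsilon$ of $\m, \i, \u$ from the three displayed equations. What needs checking is precisely that this choice of Dirac structure together with these gauge parts makes each structure morphism a legitimate $\DMan$-morphism and that the groupoid axioms hold \emph{in $\DMan$}, not merely in $\Man$. Since the underlying Lie groupoid axioms already hold in $\Man$, the entire content is a collection of \emph{gauge equations}: for each commuting triangle or square among the structure maps, the associated gauge part must balance. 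This is exactly the perspective developed in Section~\ref{section:Diracsite}, where a diagram commutes in $\DMan$ iff its $\Pr_1$-image commutes in $\Man$ and each gauge equation holds.

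**Key steps, in order.** First I would verify that $L_\G := \s^* L_M - \sigma = \t^* L_M - \tau$ is well defined, i.e.\ that the two expressions agree; this is immediate from $\t^* L_M = \s^* L_M + \Omega$ and $\Omega = \tau - \sigma$, using the gauge-transformation formula. Simultaneously this shows $\s$ and $\t$ are $\DMan$-morphisms with the prescribed gauge parts $\sigma, \tau$, since $\s^* L_M = L_\G + \sigma$ and $\t^* L_M = L_\G + \tau$ hold by construction, and the twisting 3-form $\phi_\G := \s^*\phi_M - \dif\sigma$ satisfies the required compatibility $\s^*\phi_M = \phi_\G + \dif\sigma$ and $\t^*\phi_M = \phi_\G + \dif\tau$ (the latter using $\dif\Omega = \t^*\phi_M - \s^*\phi_M$ from (DL1), which is a consequence of the hypotheses). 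Next I would check that $\m, \i, \u$ are $\DMan$-morphisms for the gauge parts defined by the three equations; each such check reduces to showing that the relevant pullback of $L$ transforms correctly, and these follow from the defining equations together with the formula for the Dirac structure on a fiber product. The bulk of the work is then the third step: systematically running through each groupoid axiom (unitality, associativity, involutivity of $\i$, and the compatibility conditions (G1)--(G3)) and confirming that its gauge equation is an algebraic consequence of the three defining equations for $\mu, \iota, \upsilon$ together with (DL1)--(DL3). Each axiom produces one gauge equation obtained by pulling back $\sigma$ along the relevant composite and bookkeeping the accumulated gauge contributions.

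**The main obstacle.** The hard part will be the associativity axiom, since it involves the triple fiber product $\G\two$ and $\G\times_M\G\times_M\G$ and the gauge part $\mu$ of $\m$ appears on both sides composed in two different orders. The gauge equation for associativity will be a relation among several pullbacks of $\sigma$ and of $\mu$ over $\G^{(3)}$, and I expect one must expand $\mu$ using its defining equation $\m^*\sigma + \mu = \pr_1^*\sigma + \pr_2^*\sigma$ and then repeatedly apply the multiplicativity $\m^*\Omega = \pr_1^*\Omega + \pr_2^*\Omega$ (equivalently its $\sigma$ and $\tau$ forms (DL3)) to collapse the two sides to the same expression. The cochain-level bookkeeping here is the genuinely technical point; everything else is comparatively mechanical. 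I would organize the computation by always writing each gauge contribution as a pullback of $\sigma$ (eliminating $\tau$ via $\tau = \sigma + \Omega$ when convenient) so that the multiplicativity of $\Omega$ can be applied uniformly. Once associativity is dispatched, the unit and inverse axioms follow by the same method with strictly less index-tracking, and one concludes that the data indeed defines a D-Lie groupoid, which inverts the correspondence of Theorem~\ref{thm:dliedata}.
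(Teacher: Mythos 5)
Your proposal is correct and follows essentially the same route as the paper's proof: the paper likewise reduces the whole verification to the gauge equations of the groupoid axioms (tabulated as (G1)--(G9)) and checks each one by substituting the defining equations for $\mu$, $\iota$, $\upsilon$. One small point of difference: associativity is easier than you anticipate --- after substituting $\mu = \pr_1^*\sigma + \pr_2^*\sigma - \m^*\sigma$ into the associativity gauge equation, the $\m^*\sigma$ pullback terms cancel in pairs and both sides collapse to $\pr_1^*\sigma + \pr_2^*\sigma + \pr_3^*\sigma$ minus the pullback of $\sigma$ along the two associativity composites, which agree because $\G$ is already a Lie groupoid, so multiplicativity of $\Omega$ is not needed there; it is instead invoked for the unit and inverse axioms that involve $\tau$, such as (G6).
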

\begin{proof}
  We need to check that the groupoid axioms are satisfied, this amounts to checking that some diagrams in $\DMan$ commute. Since $\G \grpd M$ is already a Lie groupoid, we only need to check that the gauge equation associated to each axiom holds. In the table below, we have enumerated the axioms of a groupoid and computed the corresponding equations of 2-forms.
  \begin{center}
  \rowcolors{1}{gray!25}{white}
  \begin{tabular}{ c|c|c|c}
  \hline\noalign{\smallskip}

              & Axiom & Domain & Gauge Part  \\
  \noalign{\smallskip}\hline\noalign{\smallskip}
  (G1) & $\s \circ \u = \Id_M$ & $M$ & $\u^* \sigma + \upsilon = 0$\\
  (G2) & $\s \circ \m = \s \circ \pr_2$ &$ \G^{(2)}$& $\m^* \sigma + \mu = \pr_1^* \sigma + \pr_2^* \sigma$\\
  (G3) & $\s \circ \i = \t$&$ \G$& $\i^* \sigma + \iota = \tau$\\
  (G4) & $\i \circ \u = \u$&$ M$ &$\u^* \iota + \upsilon = \upsilon$ \\
  (G5) & $\m \circ ((\u \circ \t) \times \Id_\G) = \Id_\G$&$ \G$ & ${((\u \circ \t) \times \Id)}^* \mu = {(\u \circ \t)}^* \sigma$  \\
  (G6) & $\m \circ (\Id_\G \times (\u \circ \s)) = \Id_\G$&$ \G$& ${(\Id \times (\u \circ \s) )}^* \mu = {(\u \circ \s)}^* \tau$\\
  (G7) & $\m \circ (\i \times \Id_\G) = \u \circ \s$&$ \G$&${(\i \times \Id)}^* \mu - \i^* \sigma = \s^* \upsilon + \sigma$ \\
  (G8) & $\m \circ (\Id_\G \times \i) = \u \circ \t$&$ \G$&${(\Id \times \i)}^* \mu - \i^* \tau = \t^* \upsilon + \tau$\\
  (G9) & ${\!\begin{aligned} & \m\circ (\m (\pr_1 \times \pr_2) \times \pr_3) = \\
                             & \m \circ (\pr_1 \times \m(\pr_2 \times \pr_3))
                             \end{aligned}}$ & $\G^{(3)}$ &  see (\ref{eqn:assocgauge}) below.\\
                             \noalign{\smallskip}\hline
  \end{tabular}
  \end{center}
  The equations from (G1-G3) follow immediately by definition.
  The equation for (G4) holds since
  \[ \u^*(\iota) = \u^*(\tau - \sigma). \]
  And the pullback of a multiplicative form along $\u$ is always zero.
  The first equality follows from (G3) while the second follows from the fact that $\tau-\sigma$ is multiplicative.

  Next we show (G5) by computing directly.
  \begin{align*}
  {((\u \circ \t) \times \Id)}^* \mu  &={((\u \circ \t) \times \Id)}^*( \pr_1^* \sigma + \pr_2^* \sigma - \m^* \sigma) \\
                                    &= {(\u \circ \t)}^* \sigma + \sigma - {(\m ((\u \circ \t) \times \Id))}^* \sigma \\
                                    &= {(\u \circ \t)}^* \sigma + \sigma - \sigma = {(\u \circ \t)}^* \sigma \, .
  \end{align*}
  It follows from the multiplicativity of $\tau - \sigma$ that
  \begin{equation}
  \m^* \tau + \mu = \pr_1^* \tau + \pr_2^* \tau \, .
  \end{equation}
  By using this expression for $\mu$ we can show (G6) by a calculation essentially identical to (G5).
  Next up, we show (G7):
  \begin{align*}
  {(\i \times \Id)}^* \mu - \i^* \sigma &= {(\i \times \Id)}^* (\pr_1^* \sigma + \pr_2^* \sigma - \m^* \sigma) - \i^* \sigma \\
  &= \i^* \sigma + \sigma - {(\u \circ \s)}^* \sigma - \i^* \sigma \\
  &= -\s^* \u^* \sigma + \sigma = \s^* \iota + \sigma
  \end{align*}
  Since (G8) is similar we can proceed to (G9).
  The gauge equation for (G9) is
  \begin{equation}\label{eqn:assocgauge}
  \begin{aligned}
  {(\pr_1 \times \pr_2)}^* \mu + {(\m \circ (\pr_1 \times \pr_2) \times \pr_3)}^* \mu &= \\
  {(\pr_2 \times \pr_3)}^* \mu + {(\pr_1 \times \m \circ (\pr_2 \times \pr_3))}^* \mu & \,.
  \end{aligned}
  \end{equation}
  If we apply the substitution $\mu = \pr_1^* \sigma + \pr_2^* \sigma - \m^* \sigma$ throughout, we get:
  \begin{align*}
  \pr_1^* \sigma + \pr_2^* \sigma-{(\pr_1 \times \pr_2)}^*\m^* \sigma + {(\pr_1 \times \pr_2)}^*\m^* \sigma + \pr_3^* \sigma - \mathbf{A}_L^* \sigma &= \\
  \pr_2^* \sigma + \pr_3^* \sigma-{(\pr_2 \times \pr_3)}^*\m^* \sigma + {(\pr_2 \times \pr_3)}^*\m^* \sigma + \pr_1^* \sigma - \mathbf{A}_R^* \sigma \, . &
  \end{align*}
  Here $\mathbf{A}_L, \mathbf{A}_R: \G^{(3)} \to \G$ are the left and right hand associativity maps.
  Since $\G$ is a Lie groupoid and assumed to be associative, it follows immediately that (9) holds.

\end{proof}
\end{appendices}

\newpage\addcontentsline{toc}{chapter}{Bibliography}
\bibliography{articledb}

\end{document}